\tikzset{use fpu reciprocal/.code={%
\def\pgfmathreciprocal@##1{%
    \begingroup
    \pgfkeys{/pgf/fpu=true,/pgf/fpu/output format=fixed}%
    \pgfmathparse{1/##1}%
    \pgfmath@smuggleone\pgfmathresult
    \endgroup
}}}%
\newtheorem{theorem}{Theorem}[section]
\newtheorem{lemma}[theorem]{Lemma}
\newtheorem{proposition}[theorem]{Proposition}
\newtheorem{corollary}[theorem]{Corollary}
\theoremstyle{definition}
\newtheorem{rem}[theorem]{Remark}
\DeclareMathOperator*{\argmax}{argmax}
\DeclareMathOperator*{\emaxop}{max}
\NewDocumentCommand{\emax}{oe{_}}{%
  \IfNoValueTF{#2}
    {\emaxop\nolimits\IfValueT{#1}{^{#1}}}
    {\IfNoValueTF{#1}{\emaxop_{#2}}{\emaxcomplex{#1}{#2}}}%
}
\newcommand{\emaxcomplex}[2]{\mathop{\mathpalette\emax@{{#1}{#2}}}}
\newcommand{\emax@}[2]{\emax@@{#1}#2}
\newcommand{\emax@@}[3]{% #1 = style, #2 = superscript, #2 = subscript
  \ifx#1\displaystyle\emax@@@{#2}{#3}\else\emaxop_{#3}^{#2}\fi
}
\newcommand{\emax@@@}[2]{%
  \begingroup\m@th
  \sbox0{$\displaystyle\emaxop$}%
  \sbox2{$\displaystyle\emaxop_{#2}$}%
  \dimen@=\dimexpr(\wd2-\wd0)/2\relax
  \sbox4{$^{#1}$}%
  \ifdim\wd4>\dimen@ \dimen@=\dimexpr\wd4-\dimen@ \else \dimen@=0pt\fi
  \operatorname*{max^{#1\kern-\wd4}}_{#2}\kern\dimen@
  \endgroup
}
\newcommand{\maxbeta}{\emax[(\beta)]}
\newcommand{\mrm}{\mathrm}
\newcommand{\mc}{\mathcal}
\newcommand{\mf}{\mathfrak}
\newcommand{\EE}{\mathbb E}
\newcommand{\E}{\mathbb E}
\newcommand{\PP}{\mathbb P}
\renewcommand{\P}{\mathbb P}
\newcommand{\ZZ}{\mathbb Z}
\newcommand{\Z}{\mathbb Z}
\newcommand{\RR}{\mathbb R}
\newcommand{\R}{\mathbb R}
\newcommand{\NN}{\mathbb N}
\newcommand{\N}{\mathbb N}
\newcommand{\rDe}{\mathring{\Lambda}}
\newcommand{\don}{\mathds{1}}
\newcommand{\cK}{\mathcal K}
\newcommand{\cF}{\mathcal F}
\newcommand{\cE}{\mathcal E}
\newcommand{\cL}{\mathcal L}
\newcommand{\cM}{\mathcal M}
\newcommand{\cS}{\mathcal S}
\newcommand{\cZ}{\mathcal Z}
\newcommand{\F}{\mathcal F}
\newcommand{\Fext}{\mathcal F_\mathrm{ext}}
\newcommand{\PF}{\mathbb P_{\mathcal F}}
\newcommand{\EF}{\mathbb E_{\mathcal F}}
\newcommand{\one}{\mathbbm{1}}
\newcommand{\B}{\mathcal{B}}
\newcommand{\bx}{\mathbf x}
\newcommand{\by}{\mathbf y}
\newcommand{\fh}{\mathfrak h}
\newcommand{\hfh}{\hat{\mathfrak h}}
\newcommand{\bz}{\mathbf z}
\newcommand{\midd}{\ \Big|\ }
\newcommand{\diff}{\,\mathrm{d}}
\newcommand{\h}{\mathfrak h^\beta}
\newcommand{\polyP}{\boldsymbol{\mathscr{P}}}
\newcommand{\qq}[1]{[\![{#1}]\!]}
\numberwithin{equation}{section}
\title{
\vspace*{-0.75cm}Brownian bridge limit of path measures \\ in the upper tail of KPZ models}
\author{Shirshendu Ganguly}%
\thanks{Department of Statistics, UC Berkeley, Berkeley, CA, USA. e-mail: sganguly@berkeley.edu}
\author{Milind Hegde}%
\thanks{Division of Mathematical Sciences, School of Physical and Mathematical Sciences, Nanyang Technological Uni-
versity, Singapore. e-mail: milind.hegde@ntu.edu.sg}
\author{Lingfu Zhang}%
\thanks{The Division of Physics, Mathematics and Astronomy, California Institute of Technology, Pasadena, CA, USA. e-mail: lingfuz@caltech.edu}
\date{}
\begin{document}

\begin{abstract}
For models in the KPZ universality class, such as the zero temperature model of planar last passage-percolation (LPP) and the positive temperature model of directed polymers, the upper tail behavior has been a topic of recent interest, with particular focus on the associated path measures (i.e., geodesics or polymers).
For Exponential LPP, diffusive fluctuation had been established in \cite{basu2023connecting}. In the directed landscape, the continuum limit of LPP, the limiting Gaussianity at one point, as well as of related finite-dimensional distributions of the KPZ fixed point, were established in \cite{liu2022geodesic,liu2022conditional} using exact formulas. 
It was further conjectured in these works that the limit of the corresponding geodesic should be a Brownian bridge. We prove this in both zero as well as positive temperatures; for the latter, neither the one-point limit nor the scale of fluctuations was previously known.
Instead of relying on formulas (which are still missing in the positive temperature literature), our arguments are geometric and probabilistic, using results on the shape of the weight and free energy profiles under the upper tail from \cite{GH22,GHZ25} as a starting point. Another key ingredient involves novel coalescence estimates, developed using the recently discovered shift-invariance \cite{borodin2022shift} in these models. 
Finally, our proof also yields insight into the structure of the polymer measure under the upper tail conditioning, establishing a quenched localization exponent around a random backbone.
\end{abstract}

\maketitle

\setcounter{tocdepth}{1}

\tableofcontents{}

\section{Introduction} 

Planar last passage percolation (LPP) models are paradigm examples of models of random planar geometry. In such models, one studies
the weight and geometry of the maximum weight directed path (termed as the geodesic) between two far away points in a 2D i.i.d.~random field, such as $\Z^2$ with i.i.d.~random variables at each vertex, or a homogeneous Poisson point process in $\R^2$. The geodesic weight is often also termed as the last passage time between the corresponding endpoints. A handful of such models, when the underlying noise is given by i.i.d.~Exponentials, a Poisson point process (connected to the problem of the longest increasing subsequence in a random permutation) or Brownian motions, admit exact-solvability.
Pioneered by the seminal work of Baik, Deift and Johansson \cite{BDK}, for these models exact formulae stemming from algebraic combinatorics and representation theory have been employed to rigorously deduce the predicted KPZ behavior: the weight of a geodesic between points with Euclidean separation $n$ fluctuates by $n^{1/3}$ while the geodesic itself deviates from the straight line joining the endpoints by $n^{2/3}$ (leading to the well-known $1:2:3$ scaling of KPZ). Moreover, the geodesic weight scaled by $n^{1/3}$ is known to converge to the GUE Tracy-Widom distribution. 
More recently, in a breakthrough work \cite{DOV}, a continuum object known as the directed landscape was constructed as the scaling limit of Brownian LPP, and further in \cite{DV} it was shown that all the known exactly solvable LPP models indeed converge to it under the KPZ scaling.

A particularly important topic of study has been tail estimates of the last passage time. Beyond the above mentioned Tracy-Widom limit, a line of work has emerged relying on moderate deviations estimates \cite{ledoux2010small,lowe2001moderate,lowe2002moderate}  that have proved to be key in the study of several central problems. This includes, for instance, the solution of the slow bond problem of TASEP \cite{BSS,SSZ} and the study of the correlation structure in KPZ \cite{basu2021time,basu2021temporal,corwin2021kpz}. 
An important tool featuring in many of these works is the geometry of geodesics. Some recent work focusing on the study of geodesics include \cite{basu2022nonexistence,balazs2020non} which rules out bi-infinite geodesics in LPP as well as the study of local statistics around geodesics \cite{MSZ}. An approach using geometric ideas to obtain tail estimates was also developed in \cite{ganguly2023optimal}.

Going beyond moderate deviation estimates, various attempts have been directed towards the understanding of the large deviation behavior of passage times. The study in the setting of exactly solvable LPPs appears across the papers \cite{logan1977variational,seppalainen1998coupling,seppalainen1998large,deuschel1999increasing,johansson2000shape}, using a range of methods.
More recently there has been significant progress in studying the large deviations behavior of the entire passage time profile, extending the one point results \cite{olla2019exceedingly,quastel2021hydrodynamic,GH22}. 

While the above results primarily considered the last passage time values, in another line of work the object of focus has been the distribution of the geodesic under the large deviation conditioning. The behavior of the geodesic differs starkly across the upper and lower tail regimes. This is already manifested in the disparity between the upper and lower tail behaviors of the Tracy-Widom distribution (which has upper and lower tail exponents of $\frac{3}{2}$ and $3$ respectively). This stems from the basic reasoning that the upper tail event simply requires an atypically large weighted path while the lower tail event stipulates that \emph{all} paths must have low weights, leading to a more global event and consequently a much lower probability. In the large deviation scale, for two points with Euclidean distance $n$,  the logarithm of the probability for the geodesic weight to be $\delta n$ larger (resp. smaller) than its expectation is of order $n$ (resp. $n^2$). 

Such a discrepancy between the upper and lower tails has been noted in the rigorous literature even beyond exactly-solvable models going back to Kesten \cite{kesten1986ecole} who studied the problem in the context of first passage percolation (FPP) (see also \cite{basu2021upper} for a more recent result about the existence of a rate function in this context). 

Significantly more has been established for exactly solvable models. Under upper tail large deviation (i.e., the rate $n$ tail), in \cite{deuschel1999increasing} it was shown that the transversal fluctuation for the geodesic still stays $o(n)$ for Poissonian  LPP. This was improved significantly in \cite{basu2023connecting} where the geodesic fluctuation exponent was shown to decrease from $\frac{2}{3}$ to $\frac{1}{2}$.
Under the lower tail large deviation of LPP (i.e., the rate $n^2$ tail) for LPP models for a large class of weight distributions, including and going beyond exactly-solvable models, it was shown in 
\cite{basu2019delocalization} that 
the transversal fluctuation of the geodesic becomes order $n$, i.e. the transversal fluctuation exponent equals $1$ and hence the geodesic becomes fully delocalized.

The exponents $\frac{1}{2}$ and $1$ could be attributed, at least heuristically, to the fact that the upper tail event entails local changes effected by picking a uniformly random directed path connecting the endpoints and making its weight large, whereas the lower tail event forces every path to have a small weight; this makes even paths with large transversal fluctuations competitive.

This reasoning suggests in particular that LPP geodesics in the upper tail large deviation regime should scale to a Brownian bridge. Evidence in this direction was provided in \cite{liu2022conditional}, where it was shown that in the directed landscape, if the weight between two points is conditioned to be $>L$, for a process that one expects to be close to the geodesic, as $L\to\infty,$ the multi-point distribution converge to a joint Gaussian with correlation structure matching that of a Brownian bridge; we also mention \cite{liu2022geodesic}, which proves the one-point convergence for the geodesic itself. These results led Liu and Wang to conjecture the Brownian bridge limit for the geodesic. Such a statement was also previously posed as a question in the prior work \cite{basu2023connecting} in the context of Exponential LPP.

\begin{center}{
\textit{Proving this conjecture is the main focus of this article.}
}\end{center}

We consider this problem in both the zero and the positive temperature settings.
In the former case, the pre-limiting geodesics in exactly-solvable LPP were shown in \cite{DOV} and \cite{DV} to converge to their scaling limits, which may be termed as geodesics in the directed landscape. This is the first object that we will work with.
For these processes various properties have been established, such as being $\smash{\frac{2}{3}^-}$-H\"older regular and possessing a finite $\smash{\frac{3}{2}}$-variation in \cite{DSV}. Associated local time processes were constructed and shown to be $\smash{\frac{1}{3}^-}$-H\"older regular in \cite{GZ22}. Finally, an explicit, albeit somewhat complicated, expression for the one point distribution of the geodesic was established in \cite{liu2022one} relying on exact formulae.

The positive temperature case describes the more general directed polymer models, where one still has a 2D i.i.d.~random field, but where one now considers a Gibbs measure parametrized by temperature on the space of all directed paths between two points. 
More precisely, the probability density of a path $\gamma$ is proportional to $\exp(\beta H(\gamma))$, where $\beta>0$ is the inverse temperature and $H(\gamma)$ is the weight of the path in the random field.
We henceforth refer to the random path under this measure as the polymer (between the two points).
At least formally, when $\beta\to\infty$, the polymer degenerates into the corresponding geodesic, which can therefore be viewed as the zero-temperature polymer.

Certain integrable features persist even for positive temperature models, such as the log-gamma polymer and the O'Connell-Yor polymer {\cite{MR1865759,MR2917766}}. Exploiting such special properties, KPZ behavior has also been established, at least to some degree, for these examples as well.
In particular, for the KPZ equation from the original paper of Kardar, Parisi, and Zhang \cite{KPZ}, the Cole-Hopf solution turns out to be the free energy of the continuum directed random polymer (CDRP) model \cite{alberts2014continuum}, and the KPZ scaling convergence to the directed landscape has been established in a series of recent works \cite{QS,dimitrov2021characterization,Wu21,wu2023kpz}. The polymer measure in the CDRP is the second object we work with.

The tail probabilities of these positive temperature models have also been studied. In particular, there have been extensive works on tails of the KPZ equation: its one-point upper tail probability is similar to that of the Tracy-Widom distribution, as established in \cite{khoshnevisan2017intermittency, corwin2020kpz, das2021fractional, GH22};
the lower tail probability is much more involved with a cross-over behavior, as has been shown in \cite{CG20, corwin2020kpz, cafasso2022riemann, tsai2022exact}. The behavior of the profile under large deviations has also been studied \cite{lin2023spacetime,lin2022lower,GH22}.
Besides the KPZ equation, estimates on both tails have also been obtained for the O'Connell-Yor polymer in \cite{landon2022tail}, adapting zero temperature techniques from \cite{ganguly2023optimal} to the polymer setting, as well as the log gamma model \cite{barraquand2021fluctuations,rassoul2023coalescence}.

The decision behind working with the directed landscape and the CDRP is guided by the various symmetries and scaling invariance properties that are absent in the pre-limit, which help make our arguments more transparent. 
However, we emphasize that we rely on probabilistic and geometric arguments rather than exact formulae, in contrast to \cite{liu2022conditional} and \cite{liu2022geodesic}. 
In particular, our proofs primarily rely on the line ensemble representation through the RSK and geometric RSK correspondences, and hence should be adaptable to any pre-limiting exact-solvable LPP or polymer models, 
where similar line ensembles given by different Brownian motions or random walks are available \cite{DNV}.

We now move on to the main results of this paper which prove the full Brownian bridge conjecture for both the directed landscape (zero temperature) and the continuum directed random polymer (positive temperature). Further, in the latter case, our results also establish a quenched localization phenomenon, where the polymer localizes around a random backbone, the law of the latter being a Brownian bridge.

\subsection{Main results}

We start by defining some of the key objects to help set up the framework to state our main results.
The directed landscape $\cL$, constructed in \cite{DOV}, is a continuous random function from the parameter space 
$$
\RR^4_\uparrow = \left\{(x, s; y, t) \in \RR^4 : s < t\right\}
$$
to $\RR$; $(x,s)$ and $(y,t)$ should be thought of as specifying a pair of space-time coordinates. 
It satisfies the composition law
\begin{equation}\label{e.compopsition law zero temp}
\cL(x,r;z,t)=\max_y \cL(x,r;y,s) + \cL(y,s;z,t)
\end{equation}
for any $r<s<t$, yielding the `reverse triangle inequality' $\cL(x,r;z,t) \geq \cL(x,r;y,s) + \cL(y,s;z,t)$ and thus making it a `directed metric'.

 We next describe the `directed geometry' induced by  $\cL$ and record some facts about it, following \cite[Section 12]{DOV}.
For any $s<t$, and $x, y$, a path from $(x, s)$ to $(y, t)$ is a continuous function $\pi:[s, t] \to \R$ with $\pi(s) = x$ and $\pi(t) = y$; and its length is given by
\[
\|\pi\|_\cL=\inf_{k\in \N}\inf_{s=t_0<t_1<\ldots<t_k=t}\sum_{i=1}^k\cL(\pi(t_{i-1}),t_{i-1};\pi(t_i),t_i).
\]
A path $\pi$ is a geodesic if $\|\pi\|_\cL$ is maximal among all paths with the same start and endpoints.
Equivalently, a geodesic between $(x,s)$ and $(y,t)$ is any path $\pi$ with $\|\pi\|_\cL = \cL(x,s;y,t)$. Almost surely, geodesics exist between every pair $(x, s), (y, t)$ with $s < t$. Moreover, there is almost surely a unique geodesic between any fixed pair $(x, s), (y, t)$, and we use $\pi_{(x,s;y,t)}$ to denote any such geodesic. Let $\pi_0:[0,1]\to \RR$ denote the geodesic from $(0,0)$ to $(0,1)$. 

We now arrive at our first main result concerning its limit, under the upper tail event.
\begin{theorem}  \label{thm:main-dl}
{As $L\to\infty$, $2L^{1/4}\pi_0$ conditioned on $\cL(0,0;0,1)>L$ converges to a standard Brownian bridge, weakly in the topology of uniform convergence.}
\end{theorem}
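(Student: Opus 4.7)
The proof splits naturally into (i) finite-dimensional convergence and (ii) tightness in $C[0,1]$. The driving heuristic is the following. For any slicing $0 = t_0 < t_1 < \cdots < t_k < t_{k+1} = 1$, the composition law \eqref{e.compopsition law zero temp} gives that $\{\pi_0(t_i) = x_i \text{ for } 1 \le i \le k\}$ is equivalent to the event $\cL(0,0;0,1) = \sum_{i=0}^{k} \cL(x_i, t_i; x_{i+1}, t_{i+1})$, with $x_0 = x_{k+1} = 0$. Combining this with the sharp Tracy--Widom-type upper tail
\[
\P(\cL(x,s;y,t) > M) \asymp \exp\!\Bigl(-\tfrac{4}{3}\bigl(M + (y-x)^2/(t-s)\bigr)^{3/2} (t-s)^{-1/2}\Bigr),
\]
the event $\cL(0,0;0,1) > L$ is optimally realized, by a Lagrange multiplier computation, by splitting $L$ proportionally to subinterval length, $\cL(x_i,t_i;x_{i+1},t_{i+1}) \approx (t_{i+1}-t_i)L$. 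A quadratic expansion of the minimum total cost $\tfrac{4}{3}(L + q)^{3/2}$, with $q := \sum_i (x_{i+1}-x_i)^2/(t_{i+1}-t_i)$, yields an excess exponential cost of $-2L^{1/2} q$, which under the change of variables $y_i := 2L^{1/4} x_i$ becomes exactly $-\tfrac{1}{2}\sum_i (y_{i+1}-y_i)^2/(t_{i+1}-t_i)$, i.e.\ the standard Brownian bridge finite-dimensional density with covariance $t_i \wedge t_j - t_i t_j$.

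To make this rigorous for the finite-dimensional convergence, I would sandwich $\P(\pi_0(t_i) \in dx_i \text{ for all } i,\ \cL(0,0;0,1) > L)$ between matching upper and lower bounds. The upper bound combines the tail estimate above with a union bound over the slicing, together with convexity to justify the quadratic expansion. The matching lower bound uses the weight-profile shape theorem of \cite{GH22}: under $\cL(0,0;y,t) > M$ the profile $y \mapsto \cL(0,0;y,t)$ is well-approximated by $Mt - y^2/t$ on the scale $|y| \lesssim L^{-1/4}$ relevant here, with fluctuations of lower order. Inserting this shape theorem into each subinterval of the slicing and summing then gives the matching lower bound, and dividing through by $\P(\cL(0,0;0,1) > L)$ produces the Gaussian density.

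For tightness, I would prove a modulus-of-continuity estimate of the form
\[
\P\bigl(|\pi_0(s) - \pi_0(t)| > \lambda L^{-1/4} |t-s|^{1/2-\eta} \bigm| \cL(0,0;0,1) > L\bigr) \le \phi(\lambda),
\]
with $\phi(\lambda) \to 0$ as $\lambda \to \infty$, uniformly in $0 < s < t < 1$ bounded away from the endpoints; the pinching behavior as $t\to 0, 1$ must be controlled separately by ruling out large excursions close to the endpoints via the same shape theorem. Both reductions come from the composition law, which converts the modulus estimate into two-point upper-tail estimates on a single subinterval, coupled with the coalescence estimates derived in the paper from the shift-invariance of \cite{borodin2022shift}; these allow one to compare $\cL(x_i, t_i; x_{i+1}, t_{i+1})$ with $\cL(0, t_i; 0, t_{i+1})$ uniformly in small $x_i$.

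The hardest step is precisely this decoupling across subintervals: turning the one-dimensional optimality-of-split heuristic into a matching joint estimate when $k$ is large requires that, after suitable normalization and under the conditioning, the weights of adjacent subintervals behave as approximately independent. This is exactly the role played by the shift-invariance-based coalescence estimates, and without them the second-order expansion sketched above cannot be upgraded from a heuristic to an asymptotic; establishing and then leveraging these estimates is the main technical obstacle of the argument.
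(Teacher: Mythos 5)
Your overall architecture---tightness via modulus-of-continuity plus finite-dimensional convergence, both driven by the composition law, shear/shift symmetries, and the tent picture of \cite{GH22}---matches the paper's. Your heuristic Lagrange-multiplier computation of the excess cost $-2L^{1/2}q$ and the change of variables $y_i = 2L^{1/4}x_i$ are exactly the correct leading-order calculation. However, there is a genuine gap in how you propose to turn the exponent calculation into a proof of finite-dimensional convergence, and also a conceptual mix-up in which symmetry is doing the work in each half.

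\textbf{The sandwich approach to FDD does not close.} You propose to bound $\P(\pi_0(t_i)\in dx_i \ \forall i,\ \cL(0,0;0,1)>L)$ from above by the tail estimate and from below by the shape theorem of \cite{GH22}, then divide by $\P(\cL(0,0;0,1)>L)$. The problem is that both the one-point tail bound from \cite{GH22} and the tent-shape theorem carry multiplicative errors of size $\exp(O(L^{3/4}))$ in the positive-temperature case and polynomial-in-$L$ prefactors even in zero temperature; meanwhile the Gaussian ratios you need to distinguish are of order $1$. An absolute two-sided sandwich therefore cannot pin down the Gaussian density. The paper explicitly sidesteps this by comparing \emph{ratios}: for $\vec x$ and $\vec y$ in a compact set, it bounds $\P(\max_i|\pi^*(s_i)-x_iL^{-1/4}|\le w_\beta\mid\cL^\beta>L)\big/\P(\max_i|\pi^*(s_i)-y_iL^{-1/4}|\le w_\beta\mid\cL^\beta>L)$ so that all uncontrolled prefactors cancel, and then recovers the density by the normalization argument of Lemma~\ref{l.fdd convergence}. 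To make your sketch work you would need a tail-ratio statement like Theorem~\ref{t.comparison} at the level of the increments $\cL(x_i,t_i;x_{i+1},t_{i+1})$ for pairs of locations $\vec x, \vec y$, together with a way to argue that the geometric factor (the conditional probability that the geodesic passes near $\vec x$ given the peak heights) is insensitive to the shift $\vec x\to\vec y$. This second point is precisely Lemma~\ref{l.maximizer location prob relation} and is the hardest step; the paper proves it by a resampling and coalescence argument, and nothing in your sketch supplies a substitute.

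\textbf{Which symmetry does what.} You write that the tightness two-point estimates rely on coalescence coming from the shift-invariance of \cite{borodin2022shift}, and describe this as ``allow one to compare $\cL(x_i,t_i;x_{i+1},t_{i+1})$ with $\cL(0,t_i;0,t_{i+1})$ uniformly in small $x_i$.'' That comparison is \emph{shear invariance} (Lemma~\ref{l.dl symmetries}), a much more elementary fact, and together with the tail-ratio Theorem~\ref{t.comparison} it is the entirety of what is needed for zero-temperature tightness (Lemma~\ref{l.trans-fluc}, Proposition~\ref{p.zero temperature two-point}); no coalescence enters. Conversely, shift-invariance and the resulting coalescence (Proposition~\ref{prop:dp-tent-coal}) are \emph{not} needed for tightness but are essential for the multi-point FDD argument, where they decouple the maximizer locations across subintervals. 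You have the right ingredients in mind but assigned them to the wrong steps, which matters because the decoupling role of coalescence is exactly the piece your FDD sketch is silent about.

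Finally, a minor point: you describe the event $\{\pi_0(t_i)=x_i\}$ as equivalent to $\cL(0,0;0,1)=\sum_i\cL(x_i,t_i;x_{i+1},t_{i+1})$, which is correct, but the actual argument must further decompose this over the possible peak-height vectors $\vec h$ (the set $E_{\mathrm{val}}$ in the paper) and show the heights concentrate proportionally (Lemma~\ref{l.k-point proportionality}); this intermediate conditioning is what makes the tent picture applicable on each subinterval, and your sketch omits it.
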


While the source of the exponent $\frac{1}{4}$ will be reviewed shortly in the upcoming idea of proofs section, momentarily we switch to our positive temperature ($\beta=1$) model of the continuum directed random polymer (CDRP).

For $(x,s;y,t) \in \R^4_{\uparrow}$, let $(x,s; y,t) \mapsto \cZ(x,s;y,t)$ be the (mild) solution to the multiplicative stochastic heat equation (SHE) defined by requiring, for all $x,s\in\R$,
\begin{equation}\label{e.SHE definition}
\begin{cases}
\partial_t \cZ(x,s;y,t) = \frac{1}{4}\partial_{yy} \cZ(x,s;y,t) + \cZ(x,s;y,t)\xi(x,s;y,t) & s<t \\
\cZ(x,s; \cdot, s) = \delta_x,
\end{cases}
\end{equation}
where $\xi$ is a space-time white noise (which is the same for all choices of initial coordinates $(x,s)$) and $\delta_x$ is the delta mass at $x$. The initial condition is understood in the weak sense, i.e., with probability one $\lim_{t\to s}\int f(y)\cZ(x,s; y, t)\, \diff y = f(x)$ for all smooth functions $f$ of compact support.

This random field was constructed in \cite[Theorem 3.1]{alberts2014continuum} and is a continuous process. (More precisely, the field constructed in \cite{alberts2014continuum} satisfies \eqref{e.SHE definition} with coefficient $\frac{1}{2}$ in place of $\frac{1}{4}$, but this is related to our solution by a simple scaling by constants; we will discuss this a bit more in Section~\ref{sec:pre}. We adopt this variant of the SHE in order to match more closely with the directed landscape.) 

The KPZ equation is the stochastic PDE
\begin{align*}
\partial_t \mc H(x,t) = \tfrac{1}{4}(\partial_{x} \mc H(x,t))^2 + \tfrac{1}{4}\partial_{xx}\mc H(x,t) + \xi(x,t)
\end{align*}
(again we have adopted coefficients of $\frac{1}{4}$ in place of the more standard $\frac{1}{2}$). It is related to the SHE by the \emph{Cole-Hopf} transform: the solution $\mc H$ to the KPZ equation is defined to be $\log \cZ$; this solution, which corresponds to delta initial conditions for the SHE, is called the \emph{narrow-wedge} solution.

A key property of $\cZ$, allowing one to define the CDRP, is the following semi-group property, which can be understood as a positive temperature analogue of the composition law \eqref{e.compopsition law zero temp}: almost surely, for any $x, y \in \RR$ and $s<r<t$,
\begin{equation}  \label{eq:cZ-comp}
   \cZ(x,s;y,t) = \int \cZ(x,s;z,r)\cZ(z,r;y,t)\diff z
\end{equation}
(see e.g.~\cite[Theorem 2.6(v)]{AJRS}).
As in \cite{alberts2014continuum}, conditional on $\cZ$, we can define the random polymer from $(x,s)$ to $(y,t)$ (for any $s<t$), as the continuous random function $\Gamma:[s,t]\to \RR$, such that for any $k\in\N$ and $s=s_0<s_1<\cdots<s_k<s_{k+1}=t$, and $x=x_0, x_1, \ldots, x_k, x_{k+1}=y$, the probability density for $\Gamma(s_1)=x_1, \ldots, \Gamma(s_k)=x_k$ is proportional to
\begin{equation}  \label{eq:gafi}
\prod_{i=0}^k \cZ(x_i,s_i; x_{i+1}, s_{i+1}).
\end{equation}
We will use $\polyP$ to denote the (quenched) measure of the polymer. Thus $\mc Z$ is the partition function of this polymer model and $\mc H$ is the free energy.

Letting $\Gamma_0$ denote the (annealed) random polymer from $(0,0)$ to $(0,1)$, the counterpart of \Cref{thm:main-dl} in the setting of the CDRP is the following.

\begin{theorem}  \label{thm:main-dp}
As $L\to\infty$, $2L^{1/4}\Gamma_0$ conditioned on $\log\cZ(0,0;0,1)>L$ converges to a standard Brownian bridge, weakly in the topology of uniform convergence.
\end{theorem}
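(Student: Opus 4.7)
The plan is to reduce the Brownian bridge convergence to (i) convergence of finite-dimensional marginals and (ii) tightness in the uniform topology. For (i), the central object is the joint density of the polymer at intermediate times $0 = s_0 < s_1 < \cdots < s_k < s_{k+1} = 1$, which by \eqref{eq:gafi} is proportional to $\prod_{i=0}^{k} \cZ(x_i,s_i;x_{i+1},s_{i+1})$ with $x_0 = x_{k+1} = 0$. After rescaling $x_i = (2L^{1/4})^{-1} y_i$, I want to show that this density, averaged under the conditioning $\log\cZ(0,0;0,1) > L$, converges to the joint density of a standard Brownian bridge at times $s_1,\ldots,s_k$.

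The guiding picture, which also yields the quenched localization claim advertised in the abstract, is that under the upper tail event the polymer concentrates at scale $L^{-1/4}$ around a random backbone whose annealed law is the Brownian bridge. To make this precise I would view the annealed measure as a mixture over environments and study the quenched one-time Gibbs measure: the density of $\Gamma_0(s)$ given $\cZ$ is proportional to $\exp(V_s(x))$ with potential $V_s(x) := \log\cZ(0,0;x,s) + \log\cZ(x,s;0,1)$. The profile results of \cite{GH22}, applied to both free-energy factors, identify the shape of $V_s$ under the upper tail: once the rate-$L$ cost is paid, each factor is, to leading order, parabolic near its argmax with curvature of order $L^{1/2}$, so $V_s$ is approximately parabolic with curvature of order $L^{1/2}$, and the Boltzmann measure is approximately Gaussian with variance of order $L^{-1/2}$ centred at the argmax. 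Identifying the law of the argmax as having the correct marginal variance $s(1-s)/4$ then gives the one-time Brownian-bridge marginal after the stated $2L^{1/4}$ rescaling.

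For multi-point marginals I would iterate this picture via the composition law \eqref{eq:cZ-comp}, restricting the polymer to the intervals $[s_i,s_{i+1}]$ and applying the shape analysis to each free-energy increment. The critical new input is the coalescence estimate built from the shift-invariance of \cite{borodin2022shift}: it should show that $\log\cZ(x,s;y,t)$, viewed as a function of the endpoints at mesoscopic scale, decouples quickly enough in the time variable that, after centering at the backbone, the polymer increments across disjoint time intervals become approximately conditionally independent Gaussians with the correct bridge covariance $\min(s,t) - st$. Finally, tightness under the conditioning I would obtain by a Kolmogorov-type criterion, combining a two-time second-moment bound coming out of the finite-dimensional analysis with a quenched modulus-of-continuity estimate on the Gibbs measure that holds uniformly on a set of environments carrying almost all the conditional mass.

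The main obstacle I foresee is obtaining quadratic-order control of $V_s$ near its argmax that is sufficiently uniform in $s$ and simultaneous across the finitely many marginal times, rather than mere leading-order tail asymptotics at a single location; the estimates of \cite{GH22} give the global shape, but for the Brownian-bridge identification one also needs the correct joint covariance of $V_s$ and $V_t$ for $s\neq t$, and this is precisely where the shift-invariance-based coalescence machinery is indispensable. A secondary difficulty, largely of a bookkeeping nature, is transferring the profile estimates of \cite{GH22}, which are stated with specific initial/boundary data, to the point-to-point setting relevant here; I expect this to reduce to standard polymer chaining arguments together with the composition law.
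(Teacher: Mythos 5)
Your high-level plan — finite-dimensional convergence plus tightness, quenched localization around a random backbone, coalescence from shift-invariance to decouple increments across time strips — matches the structure of the paper's proof. But there is a conceptual error in the middle that obscures the actual mechanism, and the sketch leaves unaddressed precisely the step that produces the Gaussian law.

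The error is in the description of the potential $V_s(x) = \log\cZ(0,0;x,s) + \log\cZ(x,s;0,1)$ near its argmax. Under the upper-tail conditioning, the shape result of \cite{GH22} does not give a parabola with curvature of order $L^{1/2}$; it gives a \emph{tent}: $V_s$ falls off approximately \emph{linearly} with slope $\sim L^{1/2}$ on either side of the maximizer $\pi(s)$. Consequently the quenched Gibbs measure $\propto \exp(V_s)$ is not Gaussian with variance $L^{-1/2}$; it is Laplace-like, concentrating on a window of width $O(L^{-1/2}\log L)$ (this is Proposition~\ref{p.closeness of maximizer and polymer}). This distinction matters: the quenched localization scale $L^{-1/2}$ is an order of magnitude \emph{smaller} than the $L^{-1/4}$ scale at which the Brownian bridge lives, which is exactly why the annealed fluctuations of $2L^{1/4}\Gamma_0(s)$ are governed entirely by the backbone $\pi(s)$ and not by the quenched spread. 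If the quenched measure really were Gaussian at scale $L^{-1/4}$ as your sketch has it, those fluctuations would be of the same order as the backbone and the argument would not close.

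The bigger gap is the source of the Gaussian law for $\pi(s)$ itself. Your proposal says to "identify the law of the argmax as having the correct marginal variance $s(1-s)/4$" but gives no mechanism. The paper's mechanism is shear invariance of $\cL^\beta$ combined with the sharp tail-ratio estimate (Theorem~\ref{t.comparison}): translating the height-$s$ pivot outward by $\varepsilon$ incurs a deterministic free-energy cost of $\varepsilon^2/(s(1-s))$ via the shear, and the ratio $\P(\cL^\beta > L+\delta)/\P(\cL^\beta>L) = \exp(-2\delta L^{1/2} + o(\delta L^{1/2}))$ turns that quadratic cost into a Gaussian density at scale $L^{-1/4}$. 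This is what produces both the exponent $1/4$ and the bridge covariance; no amount of local shape analysis of $V_s$ alone (tent or parabola) produces it. The multi-point extension then requires the ratio comparison Lemma~\ref{l.maximizer location prob relation} (which is the technical heart, requiring the Brownian Gibbs resampling argument together with the coalescence and overshoot estimates), not just "approximately conditionally independent Gaussian increments" as you propose. Finally, your tightness sketch underestimates the positive-temperature subtlety: shear invariance alone yields only $O(1)$-scale control of the polymer (not $L^{-1/4}$ as in zero temperature), and the paper must bootstrap through the quenched concentration to obtain tightness, at the cost of weakening the H\"older exponent from $1/2$ to $1/11$.
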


Further, our proof will show that in this case a further exponent arises in the form of a quenched structural result. More precisely, under the upper tail event and conditional on the environment, the polymer fluctuates on a scale $L^{-1/2}$ (up to logarithmic factors), which is much smaller than $L^{-1/4}$, around a random path which we call the random backbone. The latter when scaled by $L^{-1/4},$ converges to a Brownian bridge. 
We postpone a further discussion of this to later in the article, once the relevant arguments have already been presented (see \Cref{rem:backbone}).

Our arguments involve several ingredients which we now provide an overview of. Owing to the length of the paper and the technical nature of some of the arguments, to aid the reader, we have attempted to include a fair amount of detail in the ensuing discussion.

\subsection{Idea of proofs: zero temperature} \label{iop}
In this section and the next we outline the proofs for \Cref{thm:main-dl} and \Cref{thm:main-dp}, focusing for the most part on the zero temperature setting in this section.
Further complications arise in the positive temperature setting due to the extra layer of randomness in the path measure, and we will expand on these in Section~\ref{s.intro.outline.pos temp}.

Given that our main result is a weak convergence statement, unsurprisingly, the proof broadly consists of two components: (i) tightness of the path measures and (ii) the convergence of the finite dimensional distributions to that of the Brownian bridge. We highlight two main difficulties: first, the existing upper tail estimates are not sharp enough (in positive temperature) for several of the purposes we require; second, while one-point distributions of a geodesic or the polymer measure can be described relatively straightforwardly using structures known as line ensembles, there is no clear description of multi-point joint distributions. Thus new ideas are required to address these challenges. For the first one, we instead prove and apply a new upper tail comparison estimate. For the second, we make the observation that under the upper tail, the geodesic and polymer measure at different points are approximately independent in a certain sense, due to the phenomenon of \emph{coalescence} which we elaborate on shortly.

Turning to details, we start off by introducing the starting point of many of the arguments.

\subsubsection{The tent picture}\label{s.iop.tent picture} The basic perspective of our proof relies on a result recently proved by the first two authors in \cite{GH22} on the shape of the profile $\cL(0,0; \cdot, 1)$ conditional on $\cL(0,0;0,1) > L$. First we recall that $x\mapsto\cL(0,0;x, 1) + x^2$ is stationary, so when there is no conditioning $\cL(0,0;x,1)$ fluctuates around the parabola $-x^2$. (This is a consequence of the mentioned shear invariance of $\cL$, and a fuller form of this invariance will play a crucial role in our arguments; we expand on it in the next subsection.)

In short, the result in \cite{GH22} says that the conditioned profile adopts a \emph{tent}-like shape (see Figure~\ref{fig:tent}) on the interval $[-L^{1/2}, L^{1/2}]$ (which is determined by locating the points of tangency of the tangent lines to $-x^2$ which pass through $(0,L)$). Further, the fluctuations of the profile around the tent are essentially Brownian, in particular with a fluctuation scale of $L^{1/4}$.

This description then allows to produce sharp upper tail asymptotics; while the one-point estimate in the zero temperature case that we will need was already known, \cite{GH22} also provides these estimates for the KPZ equation, using a correlation inequality from \cite{GHZ25} as a key input.

\begin{figure}[!hbt]
    \centering
\begin{tikzpicture}[scale=0.95, use fpu reciprocal,x=1cm,y=1.5cm]
\clip(-5,-1.8) rectangle (5,.8);

\pgfmathsetseed{23657}
\draw[green!70!black, thick]  plot[smooth, domain=-2.5:2.5] (\x, -0.3*\x * \x - 0.1);
\draw[blue!70!black, very thick, decorate, decoration={random steps,segment length=1.8pt,amplitude=0.8pt}]  plot[smooth, domain=-2.7:-1.5] (\x, -0.3*\x * \x);

\begin{scope}
\clip (1.5, -0.3*1.5*1.5) rectangle (2.7, -2.2);
\pgfmathsetseed{23658}
\draw[blue!70!black, very thick, decorate, decoration={random steps,segment length=1.8pt,amplitude=0.8pt}]  plot[smooth, domain=1.4:2.7] (\x, -0.3*\x * \x);
\end{scope}

\draw[blue!70!black, very thick, decorate, decoration={random steps,segment length=1.8pt,amplitude=0.8pt}]  plot[smooth, domain=-1.5:0] (\x, 2*0.3*1.5*\x + 0.3*1.5*1.5);
\draw[blue!70!black, very thick, decorate, decoration={random steps,segment length=1.8pt,amplitude=0.8pt}]  plot[smooth, domain=-1.5:0] (-\x, 2*0.3*1.5*\x + 0.3*1.5*1.5);

\newcommand{\ltan}{1.5}

\draw[blue, thick, dashed] (-1.5, -0.3*1.5*1.5) -- ++(-3.05, -2*0.3*\ltan*3.05);
\draw[blue, thick, dashed] (1.5, -0.3*1.5*1.5) -- ++(3.05, -2*0.3*\ltan*3.05);

\node[circle, fill, blue, inner sep = 1pt] at (0, -0.3*1.5*1.5 + 2*0.3*1.5*1.5) {};

\node[circle, fill, blue, inner sep = 1pt] at (1.5, -0.3*1.5*1.5) {};

\node[circle, fill, blue, inner sep = 1pt] at (-1.5, -0.3*1.5*1.5) {};

\begin{scriptsize}
\draw (0, -0.3*1.5*1.5 + 2*0.3*1.5*1.5) node[anchor=east]{$(0,L)$};
\draw (-1.5, -0.3*1.5*1.5) node[anchor=south east]{$(-L^{1/2},-L)$};
\draw (1.5, -0.3*1.5*1.5) node[anchor=south west]{$(L^{1/2},-L)$};
\end{scriptsize}

\end{tikzpicture}
    \caption{An illustration of the profile $\cL(0,0;\cdot, 1)$ conditional on it equaling $L$ at $0$, and the parabola $-x^2$ that it fluctuates around when there is no conditioning.
    }
    \label{fig:tent}
\end{figure}
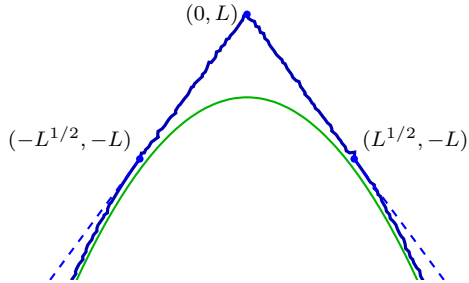

A guiding principle in many of our arguments will be that the conditioning $\cL(0,0;0,1) > L$ corresponds to the existence (at some location at height $s$ for any fixed $s$) of such a peak of height approximately $sL$.

\subsubsection{Tightness}

We prove tightness of $\pi_0$, conditioned on the upper tail at depth $L$ of the path weight/free energy and as a family of continuous random functions on $[0,1]$ indexed by $L$, using the Kolmogorov-Chentsov criterion for tightness (e.g.~\cite[Theorem 23.7]{Kallenberg}). This reduces the task to estimating the transversal fluctuation (on the $\smash{L^{-1/4}}$ scale) at two points.
More precisely, we will prove an upper bound on the probability that $|\pi_0(s)-\pi_0(t)|\smash{(t-s)^{-1/2}L^{1/4}} > M$, conditional on $\cL(0,0;0,1)>L$, that is uniform in $L$ and $0<s<t<1$. 

The strategy uses a fuller form of the earlier mentioned shear invariance property of the $\cL$ field in a crucial way. The event under consideration implies that there exist some $x$, $y$ such that 
$$|x-y|  > M(t-s)^{1/2}L^{-1/4} \quad\text{and}\quad \cL(0,0; x,s) + \cL(x,s; y,t) + \cL(y,t; 0,1)>L.$$
By doing a trivial bound on the conditional probability, we must show that the unconditional probability of this event is much smaller than the probability of $\cL(0,0;0,1)>L$.
Shear invariance of $\cL$ says that, for any fixed $z\in\R$ 
$$\cL(x,s; y,t) \stackrel{d}{=} \cL(x, s; y+z,t) + \frac{(y+z-x)^2}{t-s} - \frac{(y-x)^2}{t-s}$$
as processes in $(x,s; y,t)$ (see \Cref{ssec:dlg} below). Using this, the probability of the displayed event can be reduced to that of $\cL(0,0;0,1) > L + \frac{(M(t-s)^{1/2}L^{-1/4})^2}{t-s} = L + M^2 L^{-1/2}$. So, essentially, shear invariance yields that the geodesic moving out by an amount $\varepsilon$ between times $s$ and $t$ corresponds to a loss in weight of $\varepsilon^2/(t-s)$ for any $\varepsilon>0$ and $s<t$, which must be made up in order to achieve the upper tail conditioning.

To implement this strategy, we must obtain a relatively sharp comparison of the probabilities of $\cL(0,0;0,1) > L+\delta$ and that of $\cL(0,0;0,1) > L$ for $\delta>0$. The distribution of $\cL(0,0;0,1)$ is the same as that of the GUE Tracy-Widom distribution, for which extremely precise tail asymptotics are available using the exact formulas it is described by: indeed, for instance, it is known that $\P(\cL(0,0;0,1) > L) = (16\pi)^{-1}L^{-3/2}\exp(-\frac{4}{3}L^{3/2} + O(L^{-3/2}))$ (\cite[eq. (25)]{baik2008asymptotics}) which immediately implies that $\P(\cL(0,0;0,1) > L+\delta) = \exp(-2\delta \smash{L^{1/2}} + O(L^{-1/2}))\P(\cL(0,0;0,1) > L)$.

However, our proof must also work for the positive temperature case, and there such precise asymptotics are not available (the discussion above using shear invariance will not apply verbatim to the KPZ equation as we will explain in Section~\ref{s.intro.outline.pos temp}, but we will still need a tail comparison statement). Indeed, the sharpest upper tail asymptotics currently available for $\mc H(0,1)$ are from \cite{GH22} and state that $\P(\mc H(0,1) > L) = \exp(-\frac{4}{3}L^{3/2} + O(L^{3/4}))$; the error term is too large to see that the ratio is $\exp(-2\delta L^{1/2} + o(L^{1/2}))$. However, the methods from \cite{GH22} can be used to couple together the tent pictures coming from the events $\cL(0, 0; 0,1) > L+\delta$ and the same with $\delta=0$ to directly obtain a bound on the ratio which also holds for the positive temperature case:

\begin{theorem}\label{t.intro.comparison}
Let $L\geq 2$ and $0 < \delta < L^{1/4}$. Then
\begin{align*}
\frac{\P\left(\cL(0,0;0,1) > L + \delta\right)}{\P\left(\cL(0,0;0,1) > L\right)} = \exp\left(-2\delta L^{1/2} + O(\delta L^{-1/4}\log L)\right),
\end{align*}
and the same bound also holds in the positive temperature case of the KPZ equation.
\end{theorem}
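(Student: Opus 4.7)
The plan is to leverage the tent picture from \cite{GH22}, which characterizes $E_L := \{\cL(0, 0; 0, 1) > L\}$ (and its KPZ analog) by asserting that with overwhelming probability the profile $x \mapsto \cL(0, 0; x, 1)$ is close, up to $O(L^{1/4} \log L)$ fluctuations, to a tent function $T_L(x) = L - 2 L^{1/2} |x|$ on $[-L^{1/2}, L^{1/2}]$. The target exponent $2L^{1/2}$ is the $L$-derivative of the ``action cost'' $\frac{4}{3} L^{3/2}$ of the tent of height $L$ (equivalently, the upper-tail rate function for $\cL(0,0;0,1)$). Heuristically, then, conditional on $E_L$ the excess $V := \cL(0, 0; 0, 1) - L$ is approximately exponentially distributed with rate $2 L^{1/2}$, which immediately yields the theorem via $\P(E_{L+\delta}) / \P(E_L) = \P(V > \delta \mid E_L)$.

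To make this rigorous I would define, for each $h \geq L$, a slab event $T_h$ that the profile's tent approximation has peak height close to $h$, and show via the \cite{GH22} tent picture that up to negligible errors $\P(E_L)$ and $\P(E_{L+\delta})$ equal $\int_L^\infty \P(T_h)\,dh$ and $\int_{L+\delta}^\infty \P(T_h)\,dh$, respectively. The crux is then a local coupling between $T_h$ and $T_{h+dh}$, implemented at the level of the Airy or KPZ line ensemble: starting from a profile realizing $T_h$, modify the top line inside $[-h^{1/2}, h^{1/2}]$ (with corresponding adjustments to the lower lines that interact with it) so as to raise the peak by $dh$, while leaving the behavior outside untouched. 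By the Brownian Gibbs property (zero temperature) or the $H$-Brownian Gibbs property (positive temperature), the Radon--Nikodym derivative of this modification is explicitly computable, and it equals $\exp(-2 h^{1/2}\,dh)$ to leading order, where the factor $2h^{1/2}$ arises both from the parabolic curvature the shifted profile must compete with and from the ensemble interaction penalty.

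Because this coupling uses only the tent picture and the (H-)Brownian Gibbs resampling identity, and no exact formulas, it extends to the positive temperature case essentially verbatim; the only change is that the resampling involves $H$-Brownian bridges (with exponential penalties for crossings) rather than strictly non-crossing Brownian bridges. Integrating the local couplings and applying a Laplace-type approximation to the decompositions of $\P(E_L)$ and $\P(E_{L+\delta})$ then produces the claimed ratio $\exp(-2\delta L^{1/2} + O(\delta L^{-1/4} \log L))$; the error absorbs the subleading corrections $\delta^2/L^{1/2}$ (which for $\delta < L^{1/4}$ is $O(\delta L^{-1/4})$), $\delta/L$, and the $\log L$ factor inherent in the quantitative tent picture.

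The main obstacle is carrying out the line ensemble coupling sharply enough to achieve the $O(\delta L^{-1/4} \log L)$ error. Three points require care: (a) the adjustments to the lower lines needed to preserve the non-crossing or $H$-Brownian Gibbs structure under the top-line shift must be shown to contribute no more than $O(\log L)$ per unit $\delta$ to the exponent; (b) the slab events $T_h$ must be defined robustly enough that residual endpoint fluctuations of size $L^{1/4} \log L$ translate into only logarithmic errors in the Laplace approximation; and (c) in the positive temperature case, the interaction integrals of the $H$-Brownian Gibbs property require more delicate bookkeeping than their hard-constraint zero-temperature counterparts.
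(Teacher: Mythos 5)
Your high-level diagnosis is on target: the tent picture pins the free energy profile down on the interval $[-L^{1/2}, L^{1/2}]$, and the ratio $\P(E_{L+\delta})/\P(E_L)$ should be governed by an effective exponential rate $2L^{1/2}$ that Taylor-expands $\frac{4}{3}L^{3/2}$. Your bookkeeping of the error budget (the $\delta^2 L^{-1/2}$ term, bounded by $\delta L^{-1/4}$ for $\delta < L^{1/4}$, and the $\log L$ from the tent picture window) is also correct. But the proposed implementation has a genuine conceptual gap, and the mechanism you single out as the ``main obstacle'' is actually a symptom of the gap.

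You claim that ``the factor $2h^{1/2}$ arises both from the parabolic curvature the shifted profile must compete with \emph{and} from the ensemble interaction penalty,'' and you therefore structure the coupling so as to modify the lower lines along with the top line (obstacle (a)). That is the wrong picture. The interaction with the second line contributes nothing to leading order: under the tent conditioning, the top curve sits at height $\approx L$ near the center while the second curve decays parabolically near $-x^2$, so the gap between them is of order $L$ and the reweighting factor $W(B, \fh^\beta_{t,2})$ from the $(H$-)Brownian Gibbs property is $1 + O(e^{-c(\log L)^2})$ -- multiplicatively negligible relative to the target precision. All of $2L^{1/2}$ comes from the Brownian bridge calculation alone: conditionally on the exterior of the interval and on the second line, $B(0)$ is Gaussian with mean $\mu \approx -L$ (forced by the parabolic boundary values at $\pm L^{1/2}$) and variance $\approx L^{1/2}$, so the ratio of tails at $L+\delta$ versus $L$ is $\exp\bigl(-\tfrac{1}{2L^{1/2}}[(L+\delta-\mu)^2 - (L-\mu)^2]\bigr) = \exp(-2\delta L^{1/2} + \text{lower order})$. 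There is no need -- and indeed no clean Gibbs identity -- for simultaneously shifting lower lines; the correct move is to \emph{condition} on the second line via $\Fext([-L_M^{1/2},L_M^{1/2}])$, compute the Gaussian tail ratio, and show that the weight factor is negligibly different from 1 on a high-probability event controlling the boundary values and the gap to the second curve.

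Relatedly, the detour through slab events $T_h$ and Laplace approximation is unnecessary: one gets the ratio in one shot by comparing $\PF(\hfh^\beta_{t,1}(0)>L+\delta)$ with $\PF(\hfh^\beta_{t,1}(0)>L)$ directly, because both events are measurable with respect to the same resampled bridge. This sidesteps the need to make the decomposition of $E_L$ into slabs precise (your obstacle (b)), and it also makes the positive-temperature case essentially painless (your obstacle (c)): the only place the $H$-Brownian Gibbs interaction enters is the weight-factor ratio, which you bound below by $1$ trivially and above by $(1-O(e^{-c(\log L)^2}))^{-1}$ on the good event. So your proposal contains the right ingredients but routes around the step that actually makes the proof short, and in doing so it posits (and worries about) a contribution from the ensemble interaction that the correct argument reveals to be absent.
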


This theorem is restated and proven in a more precise and quantified form as Theorem~\ref{t.comparison} in Section~\ref{ssec:tailcom}.

Applying this ratio estimate with $\delta = M^2L^{-1/2}$ thus yields the desired transversal fluctuation bound with a tail of $\exp(-2M^2)$ (which we note matches the upper tail asymptotic of $\frac{1}{2}(s(1-s))^{-1/2}B(s)$), which more than suffices to invoke the Kolmogorov-Chentsov criterion.

Thus in our argument, the fluctuation scale of $L^{-1/4}$ of the geodesic is obtained as a consequence of the shear invariance of $\cL$ and the precise one-point upper tail asymptotics. However, the implicit source of the exponent $\smash{\frac{1}{4}}$ is the underlying Brownian behavior of the passage time profiles. Ignoring $s$-dependent constants, from the tent picture, $\cL(0,0; \cdot,s) + \cL(\cdot, s; 0,1)$ is essentially a sum of independent rate two Brownian bridges on $[-L^{1/2}, L^{1/2}]$ which reach order $L$ above their starting points, and we are interested in the maximizer of the sum. The maximizer density at $x$ is essentially the density that $L$ will be reached at $x$ (ignoring fluctuations above $L$). Doing a Taylor expansion, we see that at $x$ (where \ the variance is $(L^{1/2}+x)(L^{1/2}-x)/L^{1/2} = L^{1/2}-x^2L^{-1/2}$), the one-point density is proportional to
\begin{align*}
\exp\left(-c\frac{L^2}{L^{1/2}-x^2L^{-1/2}}\right) = \exp\left(-cL^{3/2} -cx^2L^{1/2} + O(L^{-1/2})\right).
\end{align*}
The first term is the same for all $x$ and gets removed in the normalization, and thus we see that the distribution of the maximizer is on scale $L^{-1/4}$.

However, we do not use this intuition explicitly in our arguments. The details of our argument will be given in \Cref{sec:tight}. The argument in the positive temperature case is substantially more complicated, and we will discuss it in Section~\ref{s.intro.outline.pos temp} below.

For the moment we turn to proving the Gaussianity of the finite dimensional distributions. We proceed by setting up the primary tool we rely on in this endeavor.

\begin{figure}[b]
\includegraphics[scale=0.75]{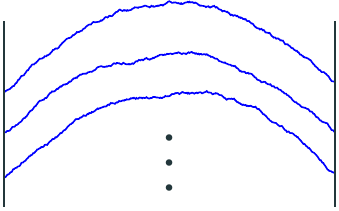}
\caption{A depiction of the parabolic Airy line ensemble.}\label{f.airy line ensemble}
\end{figure}

\subsubsection{Line ensembles} First, consider the one point distribution, say $\pi_0(s)$ for some $s\in(0,1)$.
Observe that it is precisely the argmax of $x\mapsto \cL(0,0;x,s)+\cL(x,s;0,1)$,
whereas the conditioning of $\cL(0,0;0,1)>L$ is precisely $\max_x \cL(0,0;x,s)+\cL(x,s;0,1) > L$.
The two processes $\cL(0,0;\cdot,s)$ and $\cL(\cdot,s;0,1)$
(without conditioning on the upper tail) are independent, and up to a rescaling each has the distribution of the (parabolic) Airy$_2$ process (see \cite{quastel2014airy} for a survey about it). For us an important property of the parabolic Airy$_2$ process is the earlier mentioned fact that it is equal to a stationary process minus a parabola $x^2$.

This process is also the top (or lowest indexed) line  of the Airy line ensemble, a $\N$-indexed family of continuous non-intersecting processes on $\R$ constructed by Corwin and Hammond \cite{CH11} who also showed that it admits a crucial resampling invariance property which they termed as the Brownian Gibbs property.
Essentially, this states that, for any fixed interval $[a,b]$, conditional on the second line of the Airy line ensemble and the values of the Airy$_2$ process at the boundary points of this interval, inside $[a,b]$ the distribution of the process is that of a (rate two) Brownian bridge conditioned to stay above the second line (which can be thought of as the negative parabola $-x^2$ for the purposes of this discussion, ignoring the stationary fluctuations of it around the parabola).

It is using this tool that \cite{GH22} deduces the tent description of the shape of the top curve conditioned on the upper tail mentioned in Section~\ref{s.iop.tent picture}, which we recall briefly.
Conditional on the Airy$_2$ process at $0$ equaling $h$ for some large value $h$, the process on either side of 0 behaves like two independent Brownian bridges, one each on $[-h^{1/2}, 0]$ and $[0, h^{1/2}]$, with values $-h$ (plus a random fluctuation term of lower order)  at $\pm h^{1/2}$ (see \Cref{fig:tent}). In particular, the slope of the lines around which the Brownian bridges fluctuate are respectively $\pm h^{1/2}$ to first order.
By stationarity, there is a similar picture when the Airy$_2$ process is conditioned to be large at any other point.

In the positive temperature case, the narrow wedge solution to the KPZ equation (the logarithm of \eqref{e.SHE definition}) can also be embedded as the lowest indexed curve in an $\N$-indexed family of continuous curves (though they are no longer non-intersecting) known as the KPZ line ensemble. This ensemble too has a resampling property in terms of a rate two Brownian bridge, though instead of being conditioned to not intersect the next curve, it is reweighted by an energetic penalization for intersection with the lower curve (see \eqref{e.rn derivative}).
The arguments in \cite{GH22} plus an input from \cite{GHZ25} (see \Cref{ssec:ee}) also established such a tent picture in the setting of the KPZ line ensemble, which was then  further used as a key input to prove the mentioned sharp upper tail estimates for the KPZ equation.

\subsubsection{One-point Gaussianity}
Discretizing space using a fine mesh, and considering the closest mesh point to the geodesic, leads to the following simplified form of the basic idea which drives this argument.
For any $s>0$, the conditioning that $\cL(0,0; 0,1) > L$ can be realized as a union of the events that $\cL(0,0; x,s) = h_1, \cL(x,s; 0,1) = h_2$ over a fine mesh of points $x$ and a collection of heights $h_1$ and $h_2$ (which we will often refer to as \emph{peak heights} in the discussion) such that their sum is close to $L$ and they are approximately proportionate, i.e., $h_1 \approx sL$ and $h_2 \approx (1-s)L$. Conditioning on one of the latter events where $x$, $h_1$, $h_2$ are fixed then allows us to make use of the just described tent picture. We also note that the tent picture for $\cL(0,0; \cdot, s)$ corresponds to a rescaled parabolic Airy$_2$ process such that the parabola is $-x^2/s$, and thus the tangent lines under $\cL(0,0; \cdot, s) = h_1$ will have slope $\smash{\pm s^{-1/2}h_1^{1/2}(1+o(1))}$; so if $h_1\approx sL$, the slope is to first order $\pm L^{1/2}$ independent of $s$, and similarly for the time interval $[s,1]$.

With this idea in mind, we want to use the tent picture to understand the distribution of $\argmax_x\cL(0,0;x,s)+\cL(x,s;0,1)$, conditional on $\cL(0,0;x_*,s)=h_1$ and $\cL(x_*,s;0,1)=h_2$, for given $h_1$, $h_2$ and $x_*$. Then we would like to average over $h_1$ and $h_2$ such that $h_1 + h_2 > L$ to obtain the distribution of $\pi_0(s)$. 
However, in the above strategy, the tent picture will only be useful if $h_1$ and $h_2$ are such that $\pi_0(s),$ with high probability, lands in the mesh interval adjacent to $x_*$. 
Even then, this as just described is technically very delicate; indeed, apart from obtaining the correct expression in the exponent for the density of $L^{1/4}\pi_0(s)$ as $L\to\infty$, one also needs to obtain the correct constant prefactor of $(2\pi_0 \cdot \frac{1}{4}s(1-s))^{-1/2}$ (as we are trying to prove $L^{1/4}\pi_0(s) \stackrel{\smash{d}}{\to} \frac{1}{2}B(s) = \mathcal{N}(0,\frac{1}{4}s(1-s))$). 

To obtain such sharp estimates, instead of directly obtaining probability bounds, we instead study the ratio of the probability that $L^{1/4}\pi_0(s)$ is close to $x_*$ versus that it is close to $y_*$. In this approach, we simply need to obtain the correct expression in the exponent, since the constant pre-factor then gets determined by normalization considerations. By a decomposition based on the location and height of the tent, we essentially must compare the two probabilities
\begin{align}\label{e.iop.probabilities to compare}
\P\Bigl(\pi_0(s) \approx z, \cL(0,0;0,1) > L, \cL(0,0; z, s) \approx h_1, \cL(z, s; 0,1) \approx h_2\Bigr)
\end{align}
with $z = x_*L^{-1/4}$ or $y_*L^{-1/4}$ and $h_1, h_2$ in a nice set. By shear invariance of $\cL$, it holds that
\begin{align*}
\cL(0,0; x_*L^{-1/4}, s) &\stackrel{d}{=} \cL(0,0; y_*L^{-1/4}, s) + \left(\frac{y_*^2}{s} - \frac{x_*^2}{s}\right)L^{-1/2}\\
\cL(x_*L^{-1/4}, s; 0,1) &\stackrel{d}{=} \cL(y_*L^{-1/4}, s; 0,1) + \left(\frac{y_*^2}{1-s} - \frac{x_*^2}{1-s}\right)L^{-1/2}.
\end{align*}
Thus by Theorem~\ref{t.intro.comparison} and the independence of $\cL$ across disjoint temporal strips, it holds that
\begin{align*}
\MoveEqLeft[6]
\P\left(\cL(0,0; x_*L^{-1/4}, s) \approx h_1, \cL(x_*L^{-1/4}, s; 0,1) \approx h_2\right)\\
&= \exp\left(-2\frac{y_*^2-x_*^2}{s(1-s)} + o(1)\right)\cdot\P\left(\cL(0,0; y_*L^{-1/4}, s) \approx h_1, \cL(y_*L^{-1/4}, s; 0,1)\approx h_2\right).
\end{align*}

So to complete the comparison of the probabilities in \eqref{e.iop.probabilities to compare}, by a Bayes' argument, it remains to show that
\begin{align*}
\P\Bigl(\pi_0(s) \approx z, \cL(0,0;0,1) > L \midd \cL(0,0; z, s) \approx h_1, \cL(z, s; 0,1) \approx h_2\Bigr)
\end{align*}
is the same up to a $1+o(1)$ factor for $z=x_*L^{-1/4}$ and $y_*L^{-1/4}$ for $h_i$ in a nice set. Towards this, the tent description tells us that conditional on $\cL(0,0; z, s) \approx h_1$ and if $h_1 \approx sL$, $\cL(0,0; \cdot, s)$ is essentially a pair of independent Brownian bridges with slope $\pm 2L^{1/2}$ on either side of $0$ (shifting coordinates so that the new origin corresponds to $z$).  The event $\pi_0(s) \approx z, \cL(0,0;0,1) > L$ is that the sum of these Brownian bridges has maximizer near $0$ and maximum at least $L$. Now, a Brownian bridge with slope $-L^{1/2}$ drops by order $L^{-1/2}$ in a neighborhood of order $L^{-1}$, which is the same order as its fluctuations on the interval; this means that such a Brownian bridge typically achieves its maximum in an order $L^{-1}$ neighbourhood of zero, and its maximum is greater than the value at zero by $O(L^{-1/2})$. 

The effect of $z = x_*L^{-1/4}$ vs $y_*L^{-1/4}$ is manifest by an $O(L^{1/4})$ perturbation of the endpoint value of the Brownian bridge at the tangency locations  at location $\pm \Theta(L^{1/2})$ which does not significantly affect the distribution of the bridges on the size $L^{-1}$ size interval. This essentially establishes that the comparison in \eqref{e.iop.probabilities to compare} is exactly the ratio of Gaussian densities.

To make these arguments precise is actually a somewhat delicate task, particularly due to complexities introduced in the positive temperature case (that we discuss shortly). It is done in Sections~\ref{s.fdd convergence} and~\ref{s.joint comparison of probabilities}.

\subsubsection{Multi-point: coalescence of geodesics}
We next discuss the two-point joint distribution of $\pi_0$, namely that of $\pi_0(s)$ and $\pi_0(t)$, which are the argmax of $(x,y)\mapsto \cL(0,0;x,s)+\cL(x,s; y,t)+\cL(y,t; 0,1)$.
Even without conditioning on the upper tail, in contrast to the one-point distribution of $\pi_0$ which can be described as the argmax of the sum of two independent scaled Airy$_2$ processes, the two-point joint distribution does not have such a simple or accessible description.
The reason is that there is no exact formula for the two-variable process $\cL(\cdot, s;\cdot, t)$.
However, the crucial argument we make to address this is that in the upper tail regime, the argmax problem essentially decouples, i.e., we are able to write $\cL(\cdot, s;\cdot, t)$ as a sum of two one-argument processes, using a strong coalescence phenomenon which is brought about by the upper tail conditioning that we explain next.

The key first observation is that, assuming $\cL(0,0;0,1)>L$ for a large $L$, all the geodesics from $(x,s)$ to $(y,t)$ for any $|x|, |y|$ of order smaller than $L^{1/2}$ would tend to coalesce (see the left panel of \Cref{fig:coal}); that the window of coalescence is an interval of size of order $L^{1/2}$ is closely related to the fact that the tent profile under the upper tail holds on an interval with size of the same order, as can be seen in the proof in Section~\ref{sec:coal}.
The coalescence phenomenon can be described via the following quadrangle equality (see also the right panel of Figure~\ref{fig:coal} as well as Lemma~\ref{lem:DL-quad-st} below):
with high probability it holds that
\begin{equation}  \label{eq:quantpol}
\cL(x, s; y, t) = \cL(x', s; y, t) + \cL(x, s; y',t) - \cL(x', s; y', t),
\end{equation}
for all $|x|, |y|$ and $|x'|, |y'|$ that are of order smaller than $L^{1/2}$.
Such a quantitative description of coalescence can also be generalized to the positive temperature setting of CDRP, to be discussed shortly.
Therefore, to have $\pi_0(s)=x_*$ and $\pi_0(t)=y_*$, roughly one just needs to ensure that 
\[
x_*=\argmax_x \cL(0,0; x,s) + \cL(x,s; 0,t),\quad y_*=\argmax_y \cL(0,s; y,t)+\cL(y,t; 0,1).\]
Observe that the terms $\cL(x,s; 0,t)$ and $\cL(0,s; y,t)$ are still functions of the same temporal strip $[s,t]$. As a consequence, these two terms are a priori highly dependent. This leads us to one of the key ingredients of our proof, which is to use coalescence to obtain that these processes are in fact essentially independent. Indeed, coalescence ensures that when $x$ and $y$ are varied, the changes in $\cL(x,s; 0,t)$ and $\cL(0,s; y,t)$ are local; note that the changes themselves determine the argmax. In particular, with high probability, changes in $\cL(x,s; 0,t)$ and $\cL(0,s; y,t)$ are determined by the values of $\cL$ near temporal heights $s$ and $t$, respectively, which are disjoint parts of the environment and therefore approximately independent. 
We can then estimate the ratio of the probabilities of the event $\pi_0(s)=x_*$ and $\pi_0(t)=y_*$ using multi-point versions of the one-point arguments sketched above.

However, obtaining independence by an explicit use of geodesic coalescence seems challenging, in particular in the positive temperature setting. The route we take instead to obtain the above independence crucially makes use of recently developed tools such as the multi-point passage times (which in the setting of the directed landscape is studied in \cite{DZ}) and a shift-invariance symmetry of the directed landscape and CDRP from \cite{borodin2022shift} (see also \cite{Gal,DauS,zhang2023shift}). As far as we are aware, this is the first use of the shift-invariance phenomenon to study polymer behavior.
The details are presented in \Cref{sec:coal}.

An analogous analysis can also be done for the multi-point joint distributions. This accounts for a significant part of the technical effort, particularly in the positive temperature case. The last two sections of this paper are devoted to them.

\begin{figure}[!hbt]
    \centering
\begin{tikzpicture}[scale=1.3, use fpu reciprocal]

\begin{scope}
    \pgfmathsetseed{23351}

    \draw[black, dashed] (-1, 2) -- (1, 2);
    \draw[black, dashed] (-1, 3.5) -- (1, 3.5);

    % Main geodesic

    \draw[brown!75!black, thick, decorate, decoration={random steps,segment length=1pt,amplitude=0.6pt}] (0, 1) -- (0,2.3) -- (0,2.7); % bottom

    \draw[brown!75!black, thick, decorate, decoration={random steps,segment length=1pt,amplitude=0.6pt}] (0,4.5) -- (0,3.2) -- (0,3); % top 

    \draw[green!70!black, thick, decorate, decoration={random steps,segment length=1pt,amplitude=0.6pt}] (0,3) -- (0,2.7); %common portion

    % the branches

    \draw[blue, thick, decorate, decoration={random steps,segment length=0.8pt,amplitude=0.6pt}]  plot[smooth, domain=2:2.8] (-0.24, 2) -- (0, 2.7);

    \draw[blue, thick, decorate, decoration={random steps,segment length=1pt,amplitude=0.6pt}] (0.15, 2) -- (0, 2.3); % bottom right

    \draw[blue, thick, decorate, decoration={random steps,segment length=1pt,amplitude=0.6pt}]  plot[smooth, domain=3:3.5] (0.4, 3.5) -- (0,3);

    \draw[blue, thick, decorate, decoration={random steps,segment length=1pt,amplitude=0.4pt}]  plot[smooth, domain=3.2:3.5] (-0.12, 3.5) -- (0,3.2);

    \node[circle, fill, black, inner sep = 1pt] at (0, 1) {};
    \node[circle, fill, black, inner sep = 1pt] at (0, 4.5) {};

    \begin{scriptsize}
    \draw (0, 1) node[anchor=east]{$(0,0)$};
    \draw (0, 4.5) node[anchor=east]{$(0,1)$};

    \draw (-1, 2) node[anchor=east]{$\frac{1}{3}$};
    \draw (-1, 3.5) node[anchor=east]{$\frac{2}{3}$};
    \end{scriptsize}
\end{scope}

\begin{scope}[shift={(4.5,0)}]

\draw[dashed] (-1,1) -- (1,1);
\draw[dashed] (-1,4.5) -- (1,4.5);

\draw[blue, thick, decorate, decoration={random steps,segment length=1pt,amplitude=0.8pt}]  plot[smooth, domain=2:2.8] (0.3, 1) -- ++(0, 0.3) -- ++(-0.1, 0.3) --++ (-0.05, 0.3) --coordinate[at end](S1) ++(-0.2, 0.5);

\draw[blue, thick, decorate, decoration={random steps,segment length=1pt,amplitude=0.8pt}]  plot[smooth, domain=2:2.8] (-0.3, 1) -- ++(0.1, 0.3) -- ++(0, 0.3) --++ (0, 0.2) -- (S1);

\draw[blue, thick, decorate, decoration={random steps,segment length=1pt,amplitude=0.8pt}]  plot[smooth, domain=2:2.8] (S1) -- ++(-0.1, 0.4) -- ++(0.05, 0.4) --coordinate[at end](S2) ++(0, 0.2);

\draw[blue, thick, decorate, decoration={random steps,segment length=1pt,amplitude=0.8pt}] (S2) -- ++(-0.1, 0.3) -- ++(0,0.3) -- ++(-0.1,0.3) --coordinate[at end] (y) ++(-0.1,0.2);

\draw[blue, thick, decorate, decoration={random steps,segment length=1pt,amplitude=0.8pt}] (S2) -- ++(0.1, 0.3) -- ++(0,0.3) -- ++(0.05,0.3) --coordinate[at end] (y') ++(0.1,0.2);

\node[anchor=north, scale=0.8] at (-0.3, 0.98) {$x$};
\node[anchor=north, scale=0.8] at (0.3, 1.05) {$x'$};

\node[anchor=south, scale=0.8] at (y) {$y$};
\node[anchor=south, scale=0.8] at (y') {$y'$};

\end{scope}

\end{tikzpicture}
    \caption{Left: An illustration of the coalescence phenomenon in time $[\frac{1}{3}, \frac{2}{3}]$, under upper tail. The geodesic from $(0,0)$ to $(0,1)$ is shown in brown except for the portion common to all the paths, which is in dark green. Right: a depiction of how coalescence implies equality in the quadrangle inequality. It is clear that for fixed $x< x'$ and $y<y'$, under coalescence, the union of the geodesic from $x$ to $y$ and the geodesic from $x'$ to $y'$ equals (with multiplicity) the union of the geodesic from $x$ to $y'$ and the geodesic from $x'$ to $y$.}
    \label{fig:coal}
\end{figure}
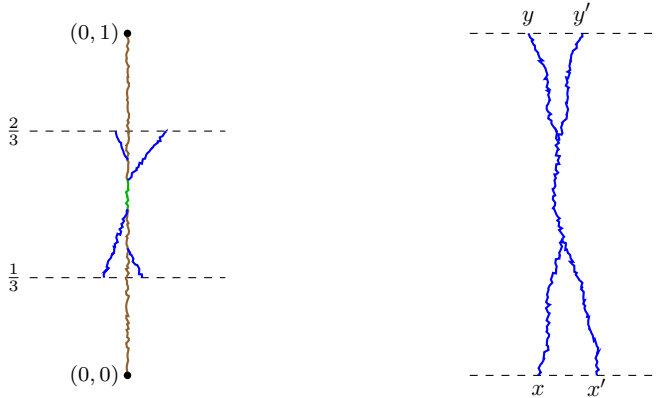

\subsection{Complications in positive temperature}\label{s.intro.outline.pos temp}
As has been mentioned several times, the positive temperature case introduces several significant complications. We explain the key differences in the proof of \Cref{thm:main-dp} (versus \Cref{thm:main-dl}) next.

We will work with the free energy landscape defined by $\cL^\beta(x,s; y,t) = \log \cZ(x,s; y,t) + (t-s)/12$. 

While it should be thought of as largely the same as the directed landscape $\cL$ in terms of properties such as shear invariance and independence in disjoint temporal strips, a significant difficulty already appears in the first step regarding tightness, which we discuss next. 

\subsubsection{Tightness and polymer concentration} The first difficulty can be seen by the fact that the zero temperature path measure, conditional on the environment and at any given height, is a delta mass. In contrast, a priori we have no non-trivial concentration of the quenched polymer measure. For this reason, though the shear invariance argument described above also applies in the positive temperature case, it only yields a bound on the transversal fluctuations of the polymer $\Gamma_0$ of order $1$, not order $L^{-1/4}$ as in the zero temperature case. 

The key new idea to obtain the correct scale $L^{-1/4}$ of fluctuations is to show that the quenched polymer measure in fact concentrates on an interval of size of lower order than $L^{-1/4}$, which is the quenched localization result mentioned after \Cref{thm:main-dp}.

More specifically, the polymer measure concentrates in an order $L^{-1/2}$ (up to a $\log L$ factor) window around a random backbone, which is $\pi(s):=\argmax_x \cL^{\beta}(0,0;x,s) + \cL^{\beta}(x,s;0,1)$ at level $s$:

\begin{proposition}
Fix $s\in(0,1)$. There exist $C$, $M_0>0$ such that, for any $L\ge 2$, $M>M_0$, 
\begin{align*}
\P\left(\polyP\left(|\Gamma_0(s)-\pi(s)| > ML^{-1/2}\log L\right) > L^{-2M} \midd \cL^{\beta}(0,0;0,1) > L\right) < C\exp(-c(\log(L))^2).
\end{align*}
\end{proposition}
The technically precise statement that we will actually prove and use is given as Proposition~\ref{p.closeness of maximizer and polymer}, where we allow $s$ to approach $0$ or $1$ in an $L$-dependent manner.

The exponent of $-\frac{1}{2}$ is due to the fact that, away from $\pi(s)$ and by the tent picture, the free energy profile $\smash{\cL^{\beta}}(0,0;\cdot,s) + \smash{\cL^{\beta}}(\cdot,s;0,1)$ decays  with a slope of order $\smash{L^{1/2}}$. Then the window of order $\smash{L^{-1/2}}$ around $\pi(s)$ is precisely where the free energy profile is within order one of its maximum, and so the polymer measure density (defined via the partition function, i.e., the exponential of the free energy) is uniformly positive there.

The complete proof of this concentration result will be given in \Cref{s.tightness.polymer measure concentration}, with preparations in the two sections before that.
This concentration of the polymer measure will allow us to upgrade the crude bounds coming from shear invariance in  \Cref{sec:tight} to tightness in \Cref{sec:pos_tight}; for technical reasons,  we only bound $|\Gamma_0(s)-\Gamma_0(t)|$ by order $(t-s)^{1/11}L^{-1/4}$ (rather than $(t-s)^{1/2}L^{-1/4}$ in the zero temperature setting).

\subsubsection{Quantitative coalescence} A second difficulty working with polymers is establishing the coalescence phenomenon, which we need to obtain the multi-point distributions. 
Indeed, unlike geodesics with different yet close by end points, which actually coalesce with high probability, the notion of coalescence for the CDRP certainly has to be qualified with an associated coupling of the corresponding Gibbs measures. For instance, independent samples of polymers with different endpoints will in fact  stay disjoint for almost all of their journey.

And indeed, while there have been a number of works studying coalescence in zero temperature (e.g.~\cite{PLPR, Zop,BSScoal,SX,BFu}), there has been only one previous studies on polymer coalescence in the positive temperature setting, namely \cite{rassoul2023coalescence}.

However, ultimately the zero temperature arguments outlined above only use coalescence by way of its effect on the weight profile, i.e., in the sense of \eqref{eq:quantpol} holding with high probability. Fortunately, this relation has the potential to be generalized more directly to positive temperature. In fact, we show that it still holds up to a error term which is exponentially small in $L$, as captured in the following statement.

\begin{proposition}
There exists $C,c$ such that for any $L>0$ and with $\delta=10^{-6}$, conditionally on $\cL^\beta(0,0;0,1) > L$, it holds with probability at least $1-C\exp(-cL^{3/2})$ that
\[
\sup_{|x|, |y| \leq \delta L^{1/2}}\left|\cL^\beta(x,0; y, 1) - \bigl(\cL^\beta(x,0; 0, 1)+\cL^\beta(0,0; y, 1)-\cL^\beta(0,0; 0, 1)\bigr)\right| < C\exp(-cL).
\]

\end{proposition}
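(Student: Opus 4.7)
The claim is equivalent, after exponentiating, to
\[
\cZ(x,0;y,1)\,\cZ(0,0;0,1) = \bigl(1 + O(e^{-cL})\bigr)\,\cZ(x,0;0,1)\,\cZ(0,0;y,1)
\]
uniformly over $|x|, |y| \leq \delta L^{1/2}$. My strategy is to establish an approximate multiplicative factorization
\[
\cZ(a,0;b,1) = \cZ(a,0;\pi(s),s)\,\cZ(\pi(s),s;b,1)\cdot\frac{\cZ(0,0;0,1)}{\cZ(0,0;\pi(s),s)\,\cZ(\pi(s),s;0,1)}\cdot\bigl(1 + O(e^{-cL})\bigr)
\]
for $|a|,|b| \leq \delta L^{1/2}$ at a fixed intermediate time $s \in (1/3,2/3)$ (say $s=1/2$). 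Substituting this into the ratio above causes the $a$-factor $\cZ(\cdot,0;\pi(s),s)$, the $b$-factor $\cZ(\pi(s),s;\cdot,1)$, and the endpoint-independent middle factor (its square in both numerator and denominator) to cancel exactly, yielding $1+O(e^{-cL})$ and hence the claim after taking logarithms.

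To prove the factorization, apply the Chapman--Kolmogorov identity
\[
\cZ(a,0;b,1) = \int \cZ(a,0;u,s)\,\cZ(u,s;b,1)\,du.
\]
Under the upper-tail conditioning, the tent picture from \cite{GH22} combined with the quenched polymer concentration result of the paper imply that the integrand is sharply peaked in an $O(L^{-1/2}\log L)$ window around $\pi(s)$, with slopes $\pm L^{1/2}(1+o(1))$, so the contribution from $|u-\pi(s)| > cL^{1/2}$ is bounded by $\exp(-cL)$ times the peak value. The essential analytical input on the bulk peak region is the additive-separability statement
\[
\cL^\beta(a,0;u,s) - \cL^\beta(a,0;\pi(s),s) = \cL^\beta(0,0;u,s) - \cL^\beta(0,0;\pi(s),s) + O(e^{-cL}),
\]
valid for $|a| \leq \delta L^{1/2}$, together with its symmetric counterpart for $\cL^\beta(u,s;b,1)$. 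Given these, exponentiating and integrating $u$ yield the factorization, with the $(a,b)$-independent remainder identified as $\cZ(0,0;0,1)/[\cZ(0,0;\pi(s),s)\cZ(\pi(s),s;0,1)]$ via a second application of the semigroup (to $\cZ(0,0;0,1)$).

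The separability claim is the positive-temperature manifestation of coalescence. To derive it, decompose at a still earlier time $r \in (0,s)$: $\cZ(a,0;u,s) = \int \cZ(a,0;z,r)\cZ(z,r;u,s)\,dz$. The tent picture implies that for all $|a| \leq \delta L^{1/2}$ and all $u$ in the bulk window at level $s$, both $\cZ(a,0;z,r)$ and $\cZ(z,r;u,s)$ are sharply peaked in $z$ within an $O(L^{-1/2}\log L)$ neighbourhood of $\pi(r)$. A Laplace-type analysis localizes the integral to this common window, where the $a$-dependence of the integrand enters only through the boundary value $\cZ(a,0;\pi(r),r)$ up to errors of order $e^{-cL}$. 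Forming the ratio $\cZ(a,0;u,s)/\cZ(0,0;u,s)$ then eliminates the $u$-dependence, giving separability.

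\textbf{Main obstacle.} The principal challenge is upgrading the quenched polymer concentration from the polynomial tail $L^{-2M}$ of the proposition above to an exponential tail of order $\exp(-cL)$. This should be achievable by exploiting the linear free-energy slopes of order $L^{1/2}$ on the tent: the polymer density outside a radius $r$ of the backbone is at most $\exp(-rL^{1/2})$ times the peak, so exponential localization holds on scales $r \gg L^{-1/2}$. Matching this exponential accuracy in the Laplace analysis also requires sharp Brownian Gibbs resampling estimates controlling residual endpoint-dependence of the shape function in the $L^{-1/2}$-sized peak window. Finally, upgrading pointwise bounds to the supremum over $(x,y) \in [-\delta L^{1/2}, \delta L^{1/2}]^2$ is routine, via a union bound over a logarithmically fine grid combined with standard modulus-of-continuity estimates for $\cL^\beta$.
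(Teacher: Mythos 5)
The structure of your argument is right up to a key point, but it contains a circularity that is not resolved. You correctly identify that the proposition reduces, via Chapman--Kolmogorov and Laplace localization, to a ``separability'' statement of the form
\[
\cL^\beta(a,0;u,s) - \cL^\beta(a,0;\pi(s),s) = \cL^\beta(0,0;u,s) - \cL^\beta(0,0;\pi(s),s) + O(e^{-cL})
\]
uniformly over $|a|\le\delta L^{1/2}$ and $u$ in a small window about $\pi(s)$, and you candidly note that ``the separability claim is the positive-temperature manifestation of coalescence.'' But this statement is \emph{exactly} the proposition being proved, applied to the time strip $[0,s]$ instead of $[0,1]$. Your proposed derivation of it decomposes at an earlier time $r$ and asserts that ``the $a$-dependence of the integrand enters only through the boundary value $\cZ(a,0;\pi(r),r)$ up to errors of order $e^{-cL}$'' --- which is again the same separability/coalescence statement, now on $[0,r]$. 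The recursion has no base case, so the argument is circular. There is also no evident way to terminate it by going to very small $r$: the coalescence window at time $r$ shrinks like $L^{1/2}r$, so for any $r$ bounded away from $1$ it still covers the full range $|a|\le\delta L^{1/2}$, and you would be proving a claim of the same strength at each stage.

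The paper's proof follows a genuinely different route that avoids the Laplace/Chapman--Kolmogorov decomposition entirely. The quadrangle inequality (Lemma~\ref{lem:cM-t-mon} in positive temperature, \eqref{eq:DL-quad} at zero temperature) gives a \emph{monotonicity}: the deficit $\cS^\beta_t(x,y) - (\cS^\beta_t(x,0)+\cS^\beta_t(0,y)-\cS^\beta_t(0,0))$ is monotone in $x$ and $y$ separately, so the supremum over the box $[-\delta L^{1/2},\delta L^{1/2}]^2$ is attained at the corners. This collapses the whole proposition to a \emph{single} scalar estimate on $\cS^\beta_t(-H,-H)+\cS^\beta_t(H,H)-\cS^\beta_t(-H,H)-\cS^\beta_t(H,-H)$ (Lemma~\ref{lem:dp-coal}), eliminating any need for a uniform Laplace argument or modulus-of-continuity union bounds. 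That scalar quantity is then controlled from two sides: the anti-diagonal terms $\cS^\beta_t(-H,H)+\cS^\beta_t(H,-H)$ are lower-bounded via the shift-invariance of \cite{borodin2022shift} (Lemma~\ref{lem:fr-shift}), which maps them to $\cS^\beta_t(0,2H)+\cS^\beta_t(0,-2H)$ and hence to a \emph{line} quantity controlled by the tent estimate; and the diagonal terms $\cS^\beta_t(-H,-H)+\cS^\beta_t(H,H)$ are upper-bounded by a two-point partition function $\cM$, whose tail is controlled by a surgery argument that brings the four endpoints together and invokes the tail bound on $\cZ_2$ (Lemma~\ref{lem:fh-2ut}), which is available because $\cZ_2/\cZ$ is the second line of the KPZ line ensemble. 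None of these three ingredients --- quadrangle monotonicity, shift-invariance, and multi-point partition function tails --- appear in your proposal, and they are what makes the argument non-circular.

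A secondary point: your ``main obstacle,'' upgrading the quenched polymer concentration from the polynomial tail $L^{-2M}$ of Proposition~\ref{p.closeness of maximizer and polymer} to exponential accuracy, is indeed necessary for your plan but is never carried out. The paper's proof of coalescence does not use polymer concentration at all (which is proved only afterward, in Section~\ref{s.tightness.polymer measure concentration}); instead, concentration is an input to the later tightness and localization arguments. Relying on it here inverts the paper's logical dependencies and, more importantly, does not address the circularity above, which is the central gap.
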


This is proven in a more technically flexible form as Proposition~\ref{prop:dp-tent-coal}, and suffices for the remaining arguments go through. (Note however that the conditioning and the coalescence is for free energies between heights $0$ and $1$, while we will need it for intermediate temporal intervals; Proposition~\ref{prop:dp-tent-coal} allows this flexibility and we obtain the conditioning on the intermediate free energies using the decomposition into tents with proportional peak heights mentioned above.)

In proving this coalescence, we make connections to multi-point partition functions of CDRP (analogs of the mentioned above multi-point passage times, studied in e.g.~\cite{o2016multi,nica2021intermediate}), and use geometric arguments of performing surgeries on the polymers.
We note that while such surgery techniques have appeared several times in zero temperature settings (in e.g.~\cite{hammond2016brownian}), their application in the positive temperature setting seems to be novel.

\subsubsection{Multi-point Gaussianity}
The CDRP setting also faces extra complexities in computing the joint multi-point Gaussian limit.
From the concentration result, it suffices to deduce the joint multi-point Gaussian limit of the backbone $\pi$.

As in the zero temperature setting, the upper tail conditioning on $\cL^\beta(0,0;0,1)$ shall be realized as conditioning on tents of certain height and certain locations, and we then show that these peaks are close to the random backbone $\pi$ (and this is implemented in \Cref{s:gasm}).

Both tasks of changing the conditioning and deducing the closeness would require more careful treatment in the CDRP setting.
There are two basic sources of complication. The first is that entropy effects must be taken into account, since in positive temperature we must take integrals over partition function profiles instead of maximums as in the zero temperature setting. The second is that in positive temperature the tent picture obtained by decomposing into peaks at different locations is not fully accurate on certain small scales.

The complication from entropy is simple to see: in zero temperature, the peak heights of the tents were essentially such that their sum was at least $L$, but in positive temperature there is an extra correction term of order $\log L$ which must be added. This is because, by the tent picture, the slopes around the peak locations are $\pm 2L^{1/2}$, which means almost all the contribution to the integral in the composition formula \eqref{eq:cZ-comp} comes from an interval of order $L^{-1/2}$ around the peak. On taking logarithms, the small size of the interval leads to a loss of order $\log L$ which must be made up by the peak heights.

The departure from the tent picture on small scales (in fact, on scale $L^{-1/2}$) is more stark. 
Recall that the integral of the free energy profile $\smash{\cL^{\beta}}(0,0;\cdot,s) + \smash{\cL^{\beta}}(\cdot,s;0,1)$ has dominant contributions on scale $\smash{L^{-1/2}}$ around the peak location (due to the fact that that the slopes in the tent picture are of order $\smash{L^{1/2}}$). Further, as in zero temperature and from Theorem~\ref{t.intro.comparison}, the integral (i.e., the total free energy) has fluctuations of order $\smash{L^{-1/2}}$. However, though the latter fluctuation scale matches with zero temperature, the fact that the integral is on a $O(\smash{L^{-1/2}})$ window and a Brownian computation indicate that a significant contribution to the partition function instead comes from the event of the Brownian bridges atypically rising up by $O(1)$ on a $\smash{L^{-1/2}}$ window, with the peak heights being slightly lower by $O(1)$.
Therefore, conditional on the total free energy upper tail, the Brownian bridges behave extremely atypically: they exhibit behaviors of probability $\smash{\exp(-cL^{1/2})}$. Thus the tents are flat in an order $L^{-1/2}$ interval around the peak location.
This is in contrast to zero temperature where under the analogous conditioning the free energy profiles to the left and right of the peak location are with high probability independent Brownian bridges, and thus are flat only on order $L^{-1}$ intervals (where Brownian fluctuations match with slope loss), which is their typical behavior.

\subsection*{Organization of the remaining text}
In \Cref{sec:pre} we give the formal setup and quote existing tools and estimates that we will use.
\Cref{sec:coal} establishes coalescence under the upper tail, and \Cref{ssec:tailcom} proves the estimate on the ratio of upper tail probabilities.
The next four sections prove the tightness; in particular, the zero temperature tightness is done in \Cref{sec:tight}, while the positive temperature tightness is much more involved and is proved in \Cref{sec:pos_tight}, along with the localization around a random backbone.
The last four sections are devoted to proving the finite-point Gaussian limit.

\subsection*{Acknowledgement}
SG was partially supported by NSF grant DMS-1855688, NSF CAREER grant DMS-1945172, and a Sloan Research Fellowship. MH is partially supported by NSF grant DMS-1937254.
LZ is supported by the Miller Institute for Basic Research in Science, at the University of California, Berkeley, and NSF grant DMS-2246664.
The authors are grateful to Zhipeng Liu for many useful discussions and explaining the ideas in \cite{liu2022geodesic,liu2022conditional}.

\section{Preliminaries}\label{sec:pre}

\noindent\textbf{Notation.}
We start by setting up some necessary notation. To make our proofs as short as possible, we will aim to use notation which allows us to unify the positive temperature and zero temperature arguments as much as possible.

Throughout this paper, we will use $C, c>0$ to denote large and small constants, whose values may, and often will, change from line to line.
We will also use the Bachmann-Landau notations: for $A>0$, $O(A)$ denotes a number $B$ such that $|B|<CA$, and $\Omega(A)$ denotes a number $B$ such that $B>cA$.
For any  $x,y\in\RR\cup\{-\infty, \infty\}$, $x\le y$, we denote $\qq{x, y} = [x,y]\cap \ZZ$.

For any $m\in\NN$, we introduce the following simplexes,
\[
\Lambda_m=\{(x_1, \cdots, x_m): x_1\le \cdots \le x_m\},
\]
and for any $a<b$, 
\[
\Lambda_m([a,b])=\{(x_1, \cdots, x_m): a\le x_1\le \cdots \le x_m\le b\}.
\]
We use $\rDe_m$ and $\rDe_m([a,b])$ to denote the interiors of $\Lambda_m$ and $\Lambda_m([a,b])$, respectively.
For any $\bx=(x_1, \cdots, x_m) \in \Lambda_m$, let $\Delta(\bx)=\prod_{i<j}(x_i-x_j)$.

For any $\mu$ and $\sigma$, we let $\mathcal{N}(\mu, \sigma^2)$ denote the normal distribution with mean $\mu$ and variance $\sigma^2$.

For any topological space $X$, we use $\mc C(X, \R)$ to denote the space of real continuous functions on $X$ with the uniform topology.

We will occasionally use notation such as $\P(\cdot \mid X \in (K, K+ \mrm dK))$ for a random variable $X$ and real number $K$ to denote the conditional probability distribution given that $X=K$. The precise meaning of this is to consider the regular conditional distribution $\P(\cdot \mid X)$ (which exists when conditioning on real random variables by well-known abstract results such as \cite[Theorem 8.5]{Kallenberg}) and evaluate the associated probability kernel at $K$.

Similarly the notation $\P(X\in (K,K+\mrm dK))/\mrm dK$ is simply the density of the random variable $X$ with respect to Lebesgue measure, evaluated at $K$. It is well-known that the one-point distributions of objects such as $\cL$ and $\cL^\beta$ have densities, e.g. from the Brownian Gibbs properties \cite{CH11,CH14}.

We next move on to some properties of the directed landscapes and the associated geodesics therein which will appear in our arguments repeatedly. 
\subsection{The directed landscape and geodesics}  \label{ssec:dlg}
The directed landscape $\cL$, which is a random continuous function on $\RR^4_\uparrow$, is shift, shear, reflection, and $1:2:3$ scaling invariant.
More precisely, we have the following:

\begin{lemma}[Lemma 10.2, \cite{DOV}]\label{l.dl symmetries}
$\cL$ has the same distribution as
\begin{itemize}
    \item (Shift and shear) $(x,s;y,t)\mapsto \cL(x+\nu s+\alpha, s+\eta; y+\nu t + \alpha, t + \eta)+ 2\nu(y-x) + \nu^2(t-s)$, for any $\nu, \alpha, \eta\in \RR$;
    \item (Reflection) $(x,s;y,t)\mapsto \cL(-x,s;-y,t)$, and $(x,s;y,t)\mapsto \cL(y,-t;x,-s)$;
    \item (Scaling) $(x,s;y,t) \mapsto w\cL(w^{-2}x,w^{-3}s;w^{-2}y,w^{-3}t)$, for any $w>0$.
\end{itemize}
Further, for any disjoint time intervals $\{(s_i,t_i)\}_{i=1}^k$, the functions $\cL(\cdot, s_i; \cdot, t_i)$ are independent.
\end{lemma}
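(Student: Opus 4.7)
The plan is to establish each of the stated symmetries at the pre-limit level of Brownian last passage percolation (BLPP)---whose $1{:}2{:}3$ KPZ scaling limit is $\cL$, as proved in the cited work of DOV---and then pass to the limit using the convergence of the rescaled BLPP process to $\cL$ in the topology of uniform convergence on compact subsets of $\RR^4_\uparrow$. Any distributional identity for the rescaled pre-limit that is expressible as a continuous functional of the weight profile then descends to $\cL$ by a standard weak-limit argument.

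To set up the pre-limit, let $(B_k)_{k\in\ZZ}$ be an i.i.d.\ family of two-sided standard Brownian motions, and for integers $n<m$ define the BLPP weight from $(x,n)$ to $(y,m)$ by $L(x,n;y,m)=\sup\sum_{k=n}^{m-1}(B_k(t_{k+1})-B_k(t_k))$, the supremum taken over non-decreasing $x=t_n\le t_{n+1}\le\cdots\le t_m=y$. The KPZ rescaling $\cL^N$ is obtained by scaling space by $N^{2/3}$, time by $N$, and subtracting a deterministic centering of order $(t-s)N$ together with a parabolic correction. With this in hand, several of the symmetries descend easily: shift invariance in $\alpha$ and $\eta$ comes from translation invariance of $(B_k)$ both spatially (additive constants cancel in the telescoping sums) and in the index direction (relabelling); the scaling invariance combines Brownian scaling $B(c\,\cdot)\stackrel{d}{=}\sqrt{c}\,B(\cdot)$ with the KPZ exponents $1{:}2{:}3$; spatial reflection follows from $B_k(-\cdot)\stackrel{d}{=}B_k(\cdot)$; temporal reflection follows by reversing the order of the stack. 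Independence across disjoint temporal strips reduces to the observation that disjoint intervals $[s_i,t_i]$ are determined, in the pre-limit, by disjoint index ranges $[\lfloor s_i N\rfloor,\lfloor t_i N\rfloor]$, hence by disjoint subfamilies of the independent Brownian motions $(B_k)$.

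The shear invariance is the only genuinely substantial piece, reflecting the Galilean invariance of the KPZ fixed point: if we simultaneously shift the initial and final spatial endpoints by $\nu s$ and $\nu t$ respectively, the weight profile is unchanged in law modulo a deterministic quadratic correction $2\nu(y-x)+\nu^2(t-s)$. At the pre-limit level the natural identity is to reparametrize each path by $t_k\mapsto t_k+\nu(k-n)$; this has the effect of shifting only the endpoint $y\mapsto y+\nu(m-n)$, and the BLPP weight of the reparametrized path equals the original weight plus a telescoping deterministic term that evaluates to $\nu(y-x)$ after the reparametrization. Carrying this identity through the KPZ rescaling and combining with the parabolic centering already present in the definition of $\cL^N$ produces, in the $N\to\infty$ limit, precisely the quadratic correction $2\nu(y-x)+\nu^2(t-s)$.

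The main obstacle is therefore the shear bookkeeping: one must track the parabolic centering in the definition of $\cL^N$ through the pre-limit shear identity and verify that the resulting deterministic offset both converges and survives unchanged in the scaling limit, and that the convergence is joint over the endpoint variables (so that the limiting identity holds as an equality of processes rather than at fixed points). Each of these steps is carried out in detail in DOV, and for our purposes it suffices to quote the resulting statement; we then use shift, shear, scaling, reflection, and strip-independence freely in the subsequent sections, most notably in the transversal fluctuation bound of \Cref{sec:tight} and in the decoupling argument underlying the multi-point Gaussian limit.
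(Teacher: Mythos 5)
The paper does not prove this lemma; it simply cites \cite[Lemma 10.2]{DOV}. Your proposal is to sketch the argument from the cited work, namely establishing the symmetries at the level of Brownian LPP and passing to the directed landscape limit. The shift, spatial/temporal reflection, scaling, and strip-independence parts of your sketch are sound and match the route taken in \cite{DOV}.

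However, your sketch of the shear invariance contains a genuine gap. You assert that reparametrizing each path by $t_k\mapsto t_k+\nu(k-n)$ (shifting the terminal endpoint by $\nu(m-n)$) changes the BLPP weight by ``a telescoping deterministic term.'' This is false. Writing $\hat\pi_k=\pi_k+\nu(k-n)$, the difference between the weight of $\hat\pi$ and that of $\pi$ is
\[
\sum_{k=n}^{m-1}\Bigl[\bigl(B_k(\pi_{k+1}+\nu(k+1-n))-B_k(\pi_{k+1})\bigr)-\bigl(B_k(\pi_k+\nu(k-n))-B_k(\pi_k)\bigr)\Bigr],
\]
which is a sum of Brownian increments over intervals of length $\nu$ at random locations, hence a nondegenerate random variable, not a deterministic constant. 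The path bijection is therefore not weight-preserving modulo a constant. If instead one tilts the Brownian motions by a linear drift $B_k(u)\mapsto B_k(u)-cu$, the weight of every path from $(x,n)$ to $(y,m)$ does change by the deterministic amount $-c(y-x)$ (since $y-x$ is fixed along the path), but this is a Girsanov change of measure, not a distributional identity, so it does not by itself give shear invariance. The shear invariance of $\cL$ is genuinely nontrivial; in \cite{DOV} it is derived from the stationarity of the parabolically shifted Airy sheet, $\mathcal{S}(x,y)+(x-y)^2$, which comes from the structure of the parabolic Airy line ensemble and the RSK-type isomorphism---not from a bare path reparametrization at the BLPP level. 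Since the shear is precisely the symmetry the present paper relies on most heavily (Sections 5--6 and the transversal fluctuation estimates), this gap in the sketch is material. That said, you do ultimately invoke the result as a black box from \cite{DOV}, which is what the paper itself does; the issue is only with the claimed elementary mechanism behind the shear part.
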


We next describe the multi-point passage times, studied in \cite{DZ}.
For any $k\in\NN$, $\mathbf{x}=(x_1,\cdots, x_k)\in \Lambda_k$, $\mathbf{y}=(y_1,\cdots, y_k)\in \Lambda_k$, and $s<t$, we define 
\[
\cL(\bx, s; \by, t) = \sup_{\pi_1, \dots, \pi_k} \sum_{i=1}^k \|\pi_i\|_\cL,
\]
where the supremum is over all $k$-tuples of paths $\pi = (\pi_1, \dots, \pi_k)$ where each $\pi_i$ is a path from $(x_i, s)$ to $(y_i, t)$, satisfying the disjointness condition $\pi_i(r) < \pi_j(r)$ for all $i < j$ and $r \in (s, t)$.
It is shown (in \cite[Theorem 1.7]{DZ}) that, almost surely, for every set of endpoints the supremum is achieved by some paths satisfying the disjointness condition.
Therefore the following statement holds.
\begin{lemma}[\protect{\cite[Corollary 1.11]{DZ}}]  \label{lem:quad-equa}
Almost surely the following holds. For any  $k\in\NN$, $\mathbf{x}=(x_1,\cdots, x_k), \mathbf{y}=(y_1,\cdots, y_k)\in \Lambda_k$, and $s<t$, we have
\[
\cL(\bx, s; \by, t) =\sum_{i=1}^k \cL(x_i, s; y_i, t),
\]
if and only if there exist geodesics $\pi_1, \dots, \pi_k$, where $\pi_i$ is from $(x_i, s)$ to $(y_i, t)$, satisfying $\pi_i(r) < \pi_{i+1}(r)$ for each $1\le i<k$ and $r \in (s, t)$.
\end{lemma}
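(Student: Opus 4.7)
The plan is to prove the equivalence as two separate inequalities, leveraging the existence statement for maximizing disjoint $k$-tuples that was quoted from \cite[Theorem 1.7]{DZ} just before the lemma. I will work on the almost sure event on which, for every admissible endpoint data, the supremum in the definition of $\cL(\bx, s; \by, t)$ is attained by some $k$-tuple of paths satisfying the strict ordering $\pi_i(r) < \pi_{i+1}(r)$ for $r \in (s,t)$.

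For the easy direction $(\Leftarrow)$, I would start from the hypothesis that there exist individual geodesics $\pi_1, \dots, \pi_k$ with $\pi_i$ going from $(x_i, s)$ to $(y_i, t)$ and satisfying the strict ordering. The tuple $(\pi_1, \dots, \pi_k)$ is then admissible in the supremum defining $\cL(\bx, s; \by, t)$, so it witnesses the bound
\begin{equation*}
\cL(\bx, s; \by, t) \;\geq\; \sum_{i=1}^k \|\pi_i\|_\cL \;=\; \sum_{i=1}^k \cL(x_i, s; y_i, t),
\end{equation*}
where the last equality uses that each $\pi_i$ is an individual geodesic. The reverse inequality is immediate and unconditional: for any admissible tuple $(\gamma_1, \dots, \gamma_k)$, the single-path bound $\|\gamma_i\|_\cL \leq \cL(x_i, s; y_i, t)$ holds termwise, and one takes the supremum over tuples.

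For the converse $(\Rightarrow)$, I would apply the quoted existence statement to extract a strictly ordered maximizing tuple $(\pi_1, \dots, \pi_k)$ with $\sum_i \|\pi_i\|_\cL = \cL(\bx,s;\by,t)$. Assuming the hypothesis $\cL(\bx,s;\by,t) = \sum_i \cL(x_i,s;y_i,t)$, we then have
\begin{equation*}
\sum_{i=1}^k \|\pi_i\|_\cL \;=\; \sum_{i=1}^k \cL(x_i, s; y_i, t),
\end{equation*}
and since each summand satisfies $\|\pi_i\|_\cL \leq \cL(x_i, s; y_i, t)$, the equality of sums forces equality term by term. Hence each $\pi_i$ is itself an individual geodesic from $(x_i,s)$ to $(y_i,t)$, and the required strict ordering is inherited from the maximizing tuple.

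There is no genuine obstacle here: the lemma is essentially a bookkeeping consequence of the multi-path existence theorem combined with the elementary termwise bound. The only point to flag is that the ``almost surely, for all endpoint data'' quantifier in the conclusion is absorbed directly into the almost sure event on which \cite[Theorem 1.7]{DZ} provides maximizing disjoint tuples uniformly in $\bx,\by,s,t$; no additional measurability or countable intersection argument is required beyond what is already encoded there.
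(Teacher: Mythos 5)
Your proof is correct and follows exactly the bookkeeping route the paper indicates via its lead-in sentence (``It is shown\dots that, almost surely, for every set of endpoints the supremum is achieved\dots\ Therefore the following statement holds''): unconditional $\le$ by the termwise single-path bound, $\ge$ by witnessing the supremum with the ordered geodesics, and, for the converse, termwise-forced equality applied to the maximizing ordered tuple supplied by \cite[Theorem 1.7]{DZ}. No gap here.
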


For any $s<t$ and $x,y$, we also denote
\[
\cL_k(x,s;y,t)=\cL(x\mathbf{1}_k, s; y\mathbf{1}_k, t),
\]
where $\mathbf{1}_k\in \RR^k$ is the vector with each coordinate equal $1$.

A key property of the directed landscape is the following inequality due to planarity.
\begin{lemma}[\protect{\cite[Lemma 5.7]{DZ}}]  \label{lem:quad-gen}
The following holds almost surely.
    Take any $k\in \NN$ and $\bx, \by, \bx', \by'\in \Lambda^k$. Define $\bx^{\ell}, \by^{\ell}, \bx^r, \by^r\in \Lambda^k$ by setting $x^{\ell}_i=x_i\wedge x_i'$, $y^{\ell}_i=y_i\wedge y_i'$, and $x^r_i=x_i\vee x_i'$, $y^r_i=y_i\vee y_i'$, for each $1\le i \le k$.
    Take any $s<t$. Then we have
\[
\cL(\bx^{\ell}, s; \by^{\ell}, t) + \cL(\bx^r, s;\by^r, t) \ge \cL(\bx, s;\by, t) + \cL(\bx', s;\by', t).
\]
\end{lemma}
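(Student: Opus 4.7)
The plan is to apply a coordinate-wise $\min$/$\max$ rearrangement to two optimal disjoint path systems. By the existence of optimizers mentioned just before Lemma~\ref{lem:quad-equa}, almost surely take disjoint $k$-tuples $\{\pi_i\}_{i=1}^k$ and $\{\pi_i'\}_{i=1}^k$ achieving $\cL(\bx, s; \by, t)$ and $\cL(\bx', s; \by', t)$ respectively, and set
\[
\tilde\pi_i(r) := \pi_i(r) \wedge \pi_i'(r), \qquad \tilde\pi_i'(r) := \pi_i(r) \vee \pi_i'(r).
\]
These are continuous paths, with $\tilde\pi_i$ going from $(x_i^\ell, s)$ to $(y_i^\ell, t)$ and $\tilde\pi_i'$ from $(x_i^r, s)$ to $(y_i^r, t)$. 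I would then verify that each new family is strictly disjoint on $(s,t)$ by a pointwise case check: if WLOG $\pi_i(r) \leq \pi_i'(r)$ at some $r \in (s,t)$, then $\tilde\pi_i(r) = \pi_i(r) < \pi_{i+1}(r)$ and $\pi_i(r) \leq \pi_i'(r) < \pi_{i+1}'(r)$, giving $\tilde\pi_i(r) < \tilde\pi_{i+1}(r)$; the case $\pi_i(r) > \pi_i'(r)$ and the analogue for $\tilde\pi_i'$ are symmetric. Hence $\{\tilde\pi_i\}$ and $\{\tilde\pi_i'\}$ are admissible $k$-tuples for $\cL(\bx^\ell, s; \by^\ell, t)$ and $\cL(\bx^r, s; \by^r, t)$.

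The main analytic step is the pairwise length inequality
\begin{equation}\label{e.proposal-pairwise}
\|\tilde\pi_i\|_\cL + \|\tilde\pi_i'\|_\cL \;\ge\; \|\pi_i\|_\cL + \|\pi_i'\|_\cL,
\end{equation}
which I would deduce from the single-point four-point inequality
\begin{equation}\label{e.proposal-four-point}
\cL(a, r_1; c, r_2) + \cL(b, r_1; d, r_2) \leq \cL(a \wedge b, r_1; c \wedge d, r_2) + \cL(a \vee b, r_1; c \vee d, r_2).
\end{equation}
Applying \eqref{e.proposal-four-point} on each sub-interval of an arbitrary partition $s = t_0 < \cdots < t_n = t$ with $(a, b, c, d) = (\pi_i(t_{j-1}), \pi_i'(t_{j-1}), \pi_i(t_j), \pi_i'(t_j))$ and noting that the min/max at each endpoint matches $\tilde\pi_i$ or $\tilde\pi_i'$, summing in $j$ yields the corresponding inequality for the Riemann-type sums defining $\|\cdot\|_\cL$. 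Since these sums are non-increasing under partition refinement (by the reverse triangle inequality for $\cL$), one can take a common refining sequence simultaneously minimizing both sums on the left, giving \eqref{e.proposal-pairwise}.

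The four-point inequality \eqref{e.proposal-four-point} itself is a planarity swap argument. When the signs of $a - b$ and $c - d$ agree, the $\min$/$\max$ operation just permutes the two pairs consistently and \eqref{e.proposal-four-point} is a trivial equality. In the ``crossed'' case, say $a \leq b$ and $c \geq d$ (the other crossed case is symmetric), geodesics $\pi_1$ from $(a, r_1)$ to $(c, r_2)$ and $\pi_2$ from $(b, r_1)$ to $(d, r_2)$ meet at some $(z, r^*)$ by the intermediate value theorem applied to $r \mapsto \pi_1(r) - \pi_2(r)$; swapping their tails at $(z, r^*)$ produces a path from $(a, r_1)$ to $(d, r_2)$ and one from $(b, r_1)$ to $(c, r_2)$. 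Since the restriction of a geodesic to a subinterval is itself a geodesic, concatenation-additivity of $\|\cdot\|_\cL$ makes the total length of the swapped pair equal to $\cL(a, r_1; c, r_2) + \cL(b, r_1; d, r_2)$, while bounding each new length by the corresponding single-point $\cL$ value (which uses the reverse triangle inequality) gives the right hand side of \eqref{e.proposal-four-point}, noting that $\cL(a, r_1; d, r_2) = \cL(a \wedge b, r_1; c \wedge d, r_2)$ and $\cL(b, r_1; c, r_2) = \cL(a \vee b, r_1; c \vee d, r_2)$ in this case.

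Combining, summing \eqref{e.proposal-pairwise} over $i$ and using admissibility of the new tuples for the left hand side passage times yields
\[
\cL(\bx^\ell, s; \by^\ell, t) + \cL(\bx^r, s; \by^r, t) \geq \sum_{i=1}^k \bigl(\|\tilde\pi_i\|_\cL + \|\tilde\pi_i'\|_\cL\bigr) \geq \cL(\bx, s; \by, t) + \cL(\bx', s; \by', t).
\]
The main delicacy I anticipate is the simultaneous refinement step for the two sums in \eqref{e.proposal-pairwise}, which is resolved by the monotonicity under refinement; the almost-sure uniformity over all $(k, \bx, \by, \bx', \by')$ is inherited from the full-measure event provided by the existence-of-optimizers statement cited from \cite{DZ}.
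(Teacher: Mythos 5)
The paper cites this lemma from \cite[Lemma~5.7]{DZ} without reproducing the proof, so there is no in-paper argument to compare against; your proposal supplies a self-contained proof. The argument is correct: the pointwise min/max rearrangement of two optimal disjoint systems preserves strict ordering (your case check is right), and the pairwise length inequality follows by applying the four-point (quadrangle) inequality on each subinterval of a partition and then passing to a common refinement, using monotonicity of the partition sums under refinement (which comes from the reverse triangle inequality). The convolution-of-refinements step is the only place that needs slight care; the cleanest way to phrase it is to fix $\varepsilon>0$, choose partitions $Q_1, Q_2$ whose sums for $\tilde\pi_i$ and $\tilde\pi_i'$ are within $\varepsilon$ of the respective infima, pass to $Q=Q_1\cup Q_2$ (still within $\varepsilon$, by monotonicity), apply the four-point bound over $Q$, and let $\varepsilon\to 0$; your ``common refining sequence'' description is essentially this. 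Two small observations. First, the $k=1$ four-point inequality you re-derive from scratch is precisely \eqref{eq:DL-quad}, already quoted in the paper from \cite[Lemma~9.1]{DOV}, so that portion of your write-up is dispensable. Second, this rearrangement strategy is the standard planarity argument for such statements and is almost certainly close to the proof in \cite{DZ}; the main value of including it would be to make the dependence on \cite{DZ} explicit (existence of maximizing disjoint tuples, plus the a.s. quadrangle and reverse triangle inequalities holding simultaneously for all arguments).
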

In the case of $k=1$, this is the quadrangle inequality: for any $s<t$, $x_1<x_2$, $y_1<y_2$, we have
\begin{equation}\label{eq:DL-quad}
 \cL(x_1,s;y_1,t) + \cL(x_2,s;y_2,t) \ge \cL(x_1,s;y_2,t) + \cL(x_2,s;y_1,t).   
\end{equation}
See e.g.~\cite[Lemma 9.1]{DOV}.
Besides, the strict inequality is known to be equivalent to the disjointness of geodesics.
\begin{lemma}[\protect{\cite[Lemma 3.15]{GZ22}}]  \label{lem:DL-quad-st}
For any fixed $s<t$, $x_1<x_2$, $y_1<y_2$, almost surely the inequality 
\[
 \cL(x_1,s;y_1,t) + \cL(x_2,s;y_2,t) > \cL(x_1,s;y_2,t) + \cL(x_2,s;y_1,t)
\]
is equivalent to that the geodesics $\pi_{(x_1,s;y_1,t)}$ and $\pi_{(x_2,s;y_2,t)}$ are disjoint; i.e., $\pi_{(x_1,s;y_1,t)}(r)<\pi_{(x_2,s;y_2,t)}(r)$, $\forall r\in [s,t]$.
\end{lemma}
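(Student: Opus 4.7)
The plan is to prove the two implications separately via their contrapositives, using a symmetric cut-and-paste surgery argument combined with the almost sure uniqueness of geodesics between a fixed pair of endpoints (noted after \eqref{e.compopsition law zero temp}). Throughout, I will use the elementary fact that any path $\pi$ from $(a,s)$ to $(b,t)$ which passes through an intermediate point $(z,r)$, $r \in (s,t)$, satisfies $\|\pi\|_\cL = \|\pi|_{[s,r]}\|_\cL + \|\pi|_{[r,t]}\|_\cL$, since the partition used to define the length can always be refined to include $r$; in particular, cut-and-paste operations at intersection points preserve total weight.

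First I would establish the direction (equality in the quadrangle $\Rightarrow$ geodesics intersect), which gives the implication (strict inequality $\Rightarrow$ disjoint). Suppose the geodesics $\pi_1 := \pi_{(x_1,s;y_1,t)}$ and $\pi_2 := \pi_{(x_2,s;y_2,t)}$ intersect at some $(z,r)$ with $r \in [s,t]$. Define the swapped paths
\[
\tilde{\pi}_{12} := \pi_1|_{[s,r]} \cup \pi_2|_{[r,t]}, \qquad \tilde{\pi}_{21} := \pi_2|_{[s,r]} \cup \pi_1|_{[r,t]},
\]
which run from $(x_1,s)$ to $(y_2,t)$ and from $(x_2,s)$ to $(y_1,t)$ respectively. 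By the additivity above,
\[
\|\tilde{\pi}_{12}\|_\cL + \|\tilde{\pi}_{21}\|_\cL = \|\pi_1\|_\cL + \|\pi_2\|_\cL = \cL(x_1,s;y_1,t) + \cL(x_2,s;y_2,t),
\]
so combined with the definition of passage times applied to $\tilde{\pi}_{12}$ and $\tilde{\pi}_{21}$ this yields
\[
\cL(x_1,s;y_1,t) + \cL(x_2,s;y_2,t) \le \cL(x_1,s;y_2,t) + \cL(x_2,s;y_1,t),
\]
which together with \eqref{eq:DL-quad} forces equality.

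Next I would prove the converse (strict inequality fails $\Rightarrow$ geodesics intersect), which gives (disjoint $\Rightarrow$ strict). Suppose the quadrangle inequality is an equality. Let $\pi_{12}$ and $\pi_{21}$ be the almost surely unique geodesics from $(x_1,s)$ to $(y_2,t)$ and from $(x_2,s)$ to $(y_1,t)$ respectively; both are continuous in the time variable. Since $\pi_{12}(s) = x_1 < x_2 = \pi_{21}(s)$ while $\pi_{12}(t) = y_2 > y_1 = \pi_{21}(t)$, the intermediate value theorem produces some $r \in (s,t)$ at which $\pi_{12}(r) = \pi_{21}(r) =: z$. Perform the same cut-and-paste at $(z,r)$, but on $\pi_{12}$ and $\pi_{21}$ instead, to obtain paths $\tilde\pi_{11}$ and $\tilde\pi_{22}$ from $(x_1,s)$ to $(y_1,t)$ and from $(x_2,s)$ to $(y_2,t)$. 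Then
\[
\|\tilde\pi_{11}\|_\cL + \|\tilde\pi_{22}\|_\cL = \cL(x_1,s;y_2,t) + \cL(x_2,s;y_1,t) = \cL(x_1,s;y_1,t) + \cL(x_2,s;y_2,t),
\]
where the second equality is our assumption. Since each summand on the left is bounded above by the corresponding passage time on the right, each inequality is in fact an equality, meaning $\tilde\pi_{11}$ and $\tilde\pi_{22}$ are geodesics. By almost sure uniqueness of geodesics between the fixed endpoint pairs, $\tilde\pi_{11} = \pi_{(x_1,s;y_1,t)}$ and $\tilde\pi_{22} = \pi_{(x_2,s;y_2,t)}$; both pass through $(z,r)$ by construction, so the two original geodesics intersect.

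The main technical point, rather than an obstacle, is keeping straight which pieces are being pasted together in each direction, and ensuring the uniqueness statement is applied only to geodesics with fixed (deterministic) endpoints so that a single almost sure event suffices. The intermediate-value argument in the second direction requires $x_1 < x_2$ and $y_1 < y_2$ strictly, which is part of the hypotheses.
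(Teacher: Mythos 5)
Your proof is correct. Note that the paper does not contain its own proof of this statement---it is cited from \cite[Lemma 3.15]{GZ22}---so there is no in-paper argument to compare against; but your cut-and-paste surgery combined with a.s.\ uniqueness of geodesics between fixed endpoints is the standard argument for such ``equality in the quadrangle inequality iff geodesics touch'' statements, and both directions are sound. The additivity of path length at an intermediate time follows, as you say, from the reverse triangle inequality making refinement of partitions weakly decreasing, and uniqueness is invoked only for the four deterministic endpoint pairs, so a single a.s.\ event suffices.

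One small slip in the exposition: in your first paragraph you announce the direction ``(equality in the quadrangle $\Rightarrow$ geodesics intersect), which gives $\ldots$ (strict inequality $\Rightarrow$ disjoint).'' What you actually prove there is the contrapositive ``intersect $\Rightarrow$ equality'' (which is indeed what gives strict $\Rightarrow$ disjoint); the parenthetical labeling is swapped relative to the content. The content of both paragraphs is consistent and correct; only that one announced label should read ``intersect $\Rightarrow$ equality.''
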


Another degeneration of \Cref{lem:quad-gen} is the following inequality which will be used later.
For any $x$ and $y_1\le y_2 \le y_3$, and $s<t$, we have almost surely,
\begin{equation} \label{eq:mult-quad}
\cL((x,x),s;(y_1,y_3),t) + \cL(x,s;y_2,t) \ge \cL((x,x),s;(y_1,y_2),t) + \cL(x,s;y_3,t).
\end{equation}
We note that by the reflection symmetry of $\cL$, this also holds for $y_1\ge y_2\ge y_3$.
\begin{proof}[Proof of \eqref{eq:mult-quad}]
We take $z<x\wedge y_1$ and apply \Cref{lem:quad-gen} with $k=2$, $\bx=(x,x)$, $\bx'=(z,x)$, $\by=(y_1,y_2)$, $\by'=(z,y_3)$, to get
\begin{equation} \label{eq:mult-quad-p}
\cL((x,x),s;(y_1,y_3),t) + \cL((z,x),s;(z,y_2),t) \ge \cL((x,x),s;(y_1,y_2),t) + \cL((z,x),s;(z,y_3),t).
\end{equation}
We note that by \Cref{lem:quad-equa}, and the fact that any geodesic is almost surely continuous (therefore bounded), as well as from the stationarity of $\cL$ (so that the geodesic from $(z,s)$ to $(z,t)$ has the same distribution as that from $(0,s)$ to $(0,t)$, shifted by $z$), we have
\[
\lim_{z\to-\infty}\PP\bigl(\cL(x,s;y_2,t)=\cL((z,x),s;(z,y_2),t)-\cL(z,s;z,t)\bigr) = 1,
\]
\[
\lim_{z\to-\infty}\PP\bigl(\cL(x,s;y_3,t)=\cL((z,x),s;(z,y_3),t)-\cL(z,s;z,t)\bigr) = 1.
\]
Plugging these into \eqref{eq:mult-quad-p} we get \eqref{eq:mult-quad}.    
\end{proof}

\subsection{Continuum directed random polymer}  \label{ssec:CDRP}

We define the multi-line continuum partition functions through the chaos expansion, as done in \cite{o2016multi}. 
For CDRP, we take the inverse temperature $\beta=1$ throughout this paper for simplicity of notations, while our arguments go through verbatim for any fixed $\beta$.

\subsubsection{Partition function}  \label{sss:pf}
Let $W$ be a cylindrical Brownian motion on $L^2(\RR)$, and $\dot{W}$ be the space-time white noise associated with $W$.
Denote 
\[
p_t(x)=\frac{1}{\sqrt{2\pi t}}\exp(-x^2/2t).
\]
For any $s<t$, $x, y \in \RR$, and $k\in\ZZ_+$, we let $\tilde\cZ_k(x,s;y,t)$ be defined by
\[
p_{t-s}(y-x)^k
\left(
1+\sum_{m=1}^\infty \int_{\Lambda_m([s,t])}\int_{\RR^m}R((x_1,t_1),\ldots, (x_m,t_m)) W(dt_1, dx_1)\cdots W(dt_m, dx_m)
\right),
\]
where $R$ denotes the $m$ point correlation function for a collection of $k$ non-intersecting Brownian bridges which all start at
 $x$ at time $s$ and end at $y$ at time $t$.
We write $\tilde\cZ=\tilde\cZ_1$, which, historically, was introduced as the solution to the multiplicative stochastic heat equation (SHE) with Dirac delta
initial condition, via the Feynman-Kac representation.
Specifically, $\tilde u(x,t)=\tilde\cZ(0,0;x,t)$ satisfies
\[
\partial_t \tilde u = \frac{1}{2}\partial_x^2 \tilde u + \tilde u\dot{W},
\]
with $\tilde u(\cdot, 0)$ being the delta mass at $0$.

In what sense is $\tilde\cZ_k(x,s;y,t)$ defined?
In \cite{o2016multi} this is defined for any fixed $k$ and $x,s,y,t$, by proving the $L^2(W)$ convergence of the chaos expansion. In \cite{nica2021intermediate}, it is shown that \[
(y,k)\mapsto \log(\tilde\cZ_k(0,0;y,t)/\tilde\cZ_{k-1}(0,0;y,t))\] is a (scaled) KPZ$_t$ line ensemble, as defined in \cite[Theorem 2.15]{CH14}; therefore $\tilde\cZ_k(x,s;y,t)$ can be thought of as a continuous function of $y$, for any fixed $k$ and $x, s, t$ (see \cite[Corollary 1.9, 1.11]{nica2021intermediate}).
In \cite{LW}, it is further shown that $(y,t)\mapsto \tilde\cZ_k(x,s;y,t)$ can be defined as a continuous function, for any fixed $k, x, s$.

Moreover, $\tilde\cZ$ can be defined as a four-parameter random continuous function.
It is also shift, shear, and reflection invariant (in distribution), as recorded ahead in Lemma~\ref{l.Z symmetries}.

\medskip

\noindent\textbf{Scaling.} Under certain limiting transitions (either $t\to \infty$ or $\beta\to\infty$) and appropriate scaling, the logarithm of $\tilde\cZ$, which can be understood as a solution to the KPZ equation, converges to the directed landscape \cite{QS,wu2023kpz}.
While we do not actually use this convergence in this paper, in light of the scaling involved in this limit transition, and for the purposes of being consistent with the directed landscape setting and reducing notations (which will be clear shortly), we denote $\cZ_k(x,s;y,t)=2^k\tilde\cZ_k(2x,2s;2y,2t)$
and $\cZ=\cZ_1$.\footnote{The reason for the outside factor of 2 in $\cZ(x,s;y,t) = 2\tilde\cZ(2x,2s;2y,2t)$ is to ensure that indeed $\cZ(x,s; \cdot, t)\to \delta_x$ as $t\to s$ in the weak sense.}
Now $\cZ$ satisfies the SHE of \eqref{e.SHE definition}.
For the shift, shear, and reflection invariances of $\cZ$, we have the following.

\begin{lemma}[Theorem~3.1, \cite{alberts2014continuum} or Proposition 2.3, \cite{AJRS}]\label{l.Z symmetries}
$\cZ$ has the same distribution as

\begin{itemize}
    \item (Shift and shear) $(x,s;y,t)\mapsto \cZ(x+\nu s+\alpha, s+\eta; y+\nu t + \alpha, t + \eta) \exp( \nu^2(t-s) + 2\nu(y-x) )$, for any $\nu, \alpha, \eta\in \RR$;
    \item (Reflection) $(x,s;y,t)\mapsto \cZ(-x,s;-y,t)$, and $(x,s;y,t)\mapsto \cZ(y,-t;x,-s)$.
\end{itemize}
Further, for any disjoint time intervals $\{(s_i,t_i)\}_{i=1}^k$, the functions $\cZ(\cdot, s_i; \cdot, t_i)$ are independent. Also, with probability one $\cZ(x, s; y, t) > 0$ for all $x,y\in\R$ and $0<s<t$
\end{lemma}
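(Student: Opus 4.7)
The plan is to derive each symmetry claim from known invariances of the space-time white noise $\xi$ together with a direct check against the SHE \eqref{e.SHE definition}; this is essentially the route taken in the cited references \cite{alberts2014continuum, AJRS}.

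For shifts (the case $\nu=0$) the argument is immediate: $\xi(\cdot+\alpha,\cdot+\eta)$ is a space-time white noise of the same law as $\xi$, so $(x,s;y,t)\mapsto\cZ(x+\alpha,s+\eta;y+\alpha,t+\eta)$ is the unique mild solution of \eqref{e.SHE definition} driven by the shifted noise with shifted delta initial data, and hence has the same law as $\cZ$. For the Galilean shear (the case $\alpha=\eta=0$) I would fix $(x,s)$ and verify by a short chain-rule computation that
\[
v(y,t) := \cZ\bigl(x,s;\,y+\nu(t-s),t\bigr)\exp\bigl(2\nu(y-x)+\nu^2(t-s)\bigr)
\]
solves the SHE driven by $\tilde\xi(y,t) := \xi(y+\nu(t-s),t)$ with delta initial data $\delta_x$ at time $s$; the coefficients $2$ and $1$ in the exponent are forced by matching the $O(\nu)$ and $O(\nu^2)$ terms generated by differentiating $v$. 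Since $\tilde\xi$ again has the law of a space-time white noise, this gives shear invariance, and combining with shifts yields the full shift-and-shear statement. Spatial reflection $(x,s;y,t)\mapsto\cZ(-x,s;-y,t)$ follows by the same reasoning using that $\xi(-\cdot,\cdot)$ is a space-time white noise.

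The time-reversal $(x,s;y,t)\mapsto\cZ(y,-t;x,-s)$ is the step I expect to be the main obstacle, and I would read it off the chaos expansion recalled in Section~\ref{sss:pf}. Under the map $(x_i,t_i)_{i=1}^m \mapsto (x_{m+1-i},-t_{m+1-i})_{i=1}^m$, which sends $\Lambda_m([s,t])\times\RR^m$ onto $\Lambda_m([-t,-s])\times\RR^m$ measure-preservingly, the $m$-point correlation function $R$ for non-intersecting Brownian bridges from $x$ at time $s$ to $y$ at time $t$ coincides with the corresponding correlation function for bridges from $y$ at time $-t$ to $x$ at time $-s$ (by time-symmetry of Brownian bridges), while the iterated stochastic integrals against $W$ are preserved in distribution; summing the chaos series then gives the equality in law. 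Independence across disjoint temporal strips $\{(s_i,t_i)\}_{i=1}^k$ is then automatic from the same representation, since $\cZ(\cdot,s_i;\cdot,t_i)$ is a measurable functional of $\xi|_{\RR\times(s_i,t_i)}$ and the white noise is independent across disjoint temporal strips. Finally, strict positivity $\cZ>0$ on $\RR^4_\uparrow$ follows from Mueller's strong positivity principle for the SHE with nonnegative initial data, upgraded to the simultaneous-in-$(x,s;y,t)$ statement by the joint continuity established in \cite{LW}.
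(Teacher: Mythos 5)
The paper states this lemma without proof, citing \cite{alberts2014continuum} and \cite{AJRS}, so there is no in-paper argument to compare against; your plan is essentially the route those references take, and the key steps you identify are the right ones: the chain-rule check that forces the coefficients $2\nu$ and $\nu^2$ in the Galilean transform, the time-reversal of bridge correlation functions plus reflection invariance of white noise at the level of the chaos expansion, measurability of $\cZ(\cdot,s_i;\cdot,t_i)$ with respect to $\xi|_{\R\times(s_i,t_i)}$ for the independence claim, and Mueller's comparison principle combined with the joint continuity from \cite{LW} for strict positivity.

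One technical point in the shear argument deserves attention. As written, you transform the noise by $\tilde\xi(y,t)=\xi\bigl(y+\nu(t-s),t\bigr)$, which \emph{depends on $s$}. This establishes the shear identity for each fixed initial time $s$ (i.e.\ for the three-parameter process $(x,y,t)\mapsto\cZ(x,s;y,t)$), but the lemma asserts equality in law of the full four-parameter field $(x,s;y,t)\mapsto\cZ(x,s;y,t)$, which requires a \emph{single} transformation of $\xi$ working simultaneously for all $s$. The fix is standard: use the $s$-independent shear $\hat\xi(y,t):=\xi(y+\nu t,t)$, noting $\tilde\xi(y,t)=\hat\xi(y-\nu s,t)$; under the field driven by $\hat\xi$, the delta mass launched at $x$ from time $s$ corresponds to the original field's delta mass launched at $x+\nu s$, which is exactly what produces the $\nu s$ and $\nu t$ in the first and third arguments of the lemma's displayed formula. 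Once phrased this way the shift and shear genuinely commute and your ``combine with shifts'' step goes through. With that adjustment the proposal is correct. (A quick sanity check of the coefficients against the noiseless kernel $G(x,s;y,t)=(\pi(t-s))^{-1/2}\exp(-(y-x)^2/(t-s))$ confirms the stated formula: the cross terms from expanding $(y-x+\nu(t-s))^2/(t-s)$ cancel exactly against $\nu^2(t-s)+2\nu(y-x)$.)
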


As already mentioned in the introduction, for any $s<t$ and $x, y$, in \cite{alberts2014continuum} a measure (denoted by $\polyP$) on $\mc C([s,t],\R)$ is defined and gives the random polymer from $(x,s)$ to $(y,t)$, with finite-dimensional distribution given by \eqref{eq:gafi}.

\subsubsection{Multi-point partition function with distinct endpoints}   \label{sss:mpp}

For any $\bx=(x_1,\cdots, x_k), \by=(y_1,\cdots, y_k)\in \rDe_k$, and $s<t$, we define
\[
\cM(\bx, s;\by, t)=\det[\cZ(x_i,s;y_j,t)]_{i,j=1}^k\Delta(\bx)^{-1}\Delta(\by)^{-1}
\]
where recall $\Delta(\bx) = \prod_{i<j}(x_i-x_j)$.
Then from the continuity of $\cZ=\cZ_1$, we have that $\cM(\bx, s;\by, t)$ is almost surely continuous in all the variables.

\bigskip

\noindent\textbf{Positivity and implications.} Using the Karlin-McGregor theorem, it is straightforward to deduce that $\cM$ is non-negative, as shown in \cite[Proposition 5.5]{o2016multi}.
The simultaneous strict inequality is proved in \cite[Theorem 1.4]{LW}, and also in \cite[Theorem 2.17]{AJRS} with a different method.
\begin{lemma} \label{lem:pos}
Almost surely, for any $s<t$, $k\in\NN$, and $\bx, \by\in \rDe_k$, there is $\cM(\bx,s;\by,t)>0$.
\end{lemma}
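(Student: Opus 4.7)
The plan is to prove the statement in two stages: first almost-sure non-negativity of $\cM$ simultaneously on $\rDe_k \times \rDe_k$, and then upgrade to strict positivity.

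For non-negativity, the core tool is a Karlin--McGregor/Lindström--Gessel--Viennot-type identity in the positive temperature setting. I would establish that, almost surely,
\[
\det\bigl[\cZ(x_i, s; y_j, t)\bigr]_{i,j=1}^k \;=\; \cZ^{NC}(\bx, s; \by, t),
\]
where $\cZ^{NC}$ denotes the partition function of $k$ continuous paths with $\pi_i(s)=x_i$, $\pi_i(t)=y_j$, subject to the pairwise non-intersection condition $\pi_1(r) < \pi_2(r) < \cdots < \pi_k(r)$ for all $r \in (s,t)$. This identity can be checked term-by-term at the level of Wiener chaos expansions: the expansion of the determinant on the left produces antisymmetrized kernels which, by the classical Karlin--McGregor formula for Brownian bridges, coincide with the correlation kernels of $k$ non-intersecting Brownian bridges appearing in the natural chaos expansion of $\cZ^{NC}$ (a multi-path generalization of the single-line chaos expansion used earlier to define $\tilde\cZ_k$). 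Since $\cZ^{NC} \ge 0$ as a partition function and $\Delta(\bx)\Delta(\by) > 0$ on $\rDe_k \times \rDe_k$, this gives $\cM \ge 0$ almost surely at each fixed $(\bx, \by)$. Continuity of $\cM$, inherited from that of $\cZ$, then upgrades this to a simultaneous statement on a full-probability event by taking a dense countable subset.

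For strict positivity at a fixed $(\bx, \by) \in \rDe_k$, I would argue as follows: since the endpoints $\bx, \by$ are strictly ordered, there plainly exist non-intersecting continuous paths realizing them, and the set of non-intersecting tuples is an open subset of path-tuple space with positive measure under the product quenched polymer measure $\bigotimes_{i=1}^k \polyP_{(x_i, s; y_i, t)}$. Combined with pointwise a.s.\ positivity of $\cZ$ (Lemma \ref{l.Z symmetries}), this yields $\cZ^{NC}(\bx, s; \by, t) > 0$ almost surely, hence pointwise strict positivity of $\cM$.

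\textbf{The main obstacle} is promoting pointwise strict positivity to the simultaneous statement over the uncountable set $\rDe_k \times \rDe_k$, since continuity plus a.s.\ positivity on a dense countable set rules out only isolated zeros, not, a priori, a boundary of the open positivity region interior to $\rDe_k \times \rDe_k$. My plan for this step is to exploit the connection with the multi-line partition function $\cZ_k$: by \cite{nica2021intermediate}, $y \mapsto \log(\cZ_k(x,s;y,t)/\cZ_{k-1}(x,s;y,t))$ is the $k$-th curve of a KPZ$_t$ line ensemble, which is a.s.\ finite and continuous, so $\cZ_k > 0$ simultaneously for all parameters. A confluent (L'Hôpital-type) limit identity, expressing $\cZ_k(x,s;y,t)$ as the limit of $\cM(\bx, s; \by, t)$ as $\bx \to (x,\ldots,x)$ and $\by \to (y,\ldots,y)$ with an explicit combinatorial Vandermonde factor, then forces $\cM > 0$ in shrinking neighborhoods of all coalesced diagonals simultaneously on a full-probability event. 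From these diagonal neighborhoods, positivity propagates to all of $\rDe_k \times \rDe_k$: any hypothetical boundary zero $(\bx_0, \by_0)$ of the open set $\{\cM > 0\}$ would, on local integration against smooth compactly supported test functions near $(\bx_0, \by_0)$, produce a vanishing partition-function integral, contradicting the positivity of $\cZ^{NC}$ on non-intersecting open regions together with a Fubini swap between the almost sure event and Lebesgue integration over test-function supports.
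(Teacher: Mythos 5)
The paper does not actually prove this lemma; the preceding sentence states that non-negativity of $\cM$ follows from the Karlin--McGregor theorem as in \cite[Proposition~5.5]{o2016multi}, while the simultaneous strict inequality is cited to \cite[Theorem~1.4]{LW} and \cite[Theorem~2.17]{AJRS}. Your opening steps (the LGV/Karlin--McGregor identity at the level of Wiener chaos, non-negativity, and then $L^2$-pointwise positivity for a fixed $(\bx,\by)$ via positivity of $\cZ$ and the polymer picture) track the cited route from \cite{o2016multi} reasonably well, and you are right that the crux of the lemma is the passage from pointwise a.s.\ positivity to simultaneous positivity over the uncountable set $\rDe_k\times\rDe_k$.

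However, your argument for that crucial upgrade has a genuine gap. Continuity of $\cM$, a.s.\ non-negativity everywhere, and a.s.\ positivity at each fixed point (hence, by Fubini, at Lebesgue-a.e.\ point) do not preclude the existence of zeros: a continuous non-negative function that is positive a.e.\ can still vanish on a nonempty (necessarily measure-zero) set. Your concluding ``propagation'' step --- that a hypothetical boundary zero $(\bx_0,\by_0)$ of $\{\cM>0\}$ would, upon smearing against a test function, produce a vanishing partition-function integral --- fails for exactly this reason: such a zero does \emph{not} cause $\int \cM\,\phi$ to vanish, since $\cM$ is still positive almost everywhere on the support of $\phi$. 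No contradiction is reached, and the Fubini swap is powerless because the exceptional set of $(\bx,\by)$ it produces is unknown and could a priori contain any given point. Your auxiliary route via the coalesced diagonal and $\cZ_k$ also does not close the gap: first, the positivity it would yield is only local in a (realization-dependent) neighborhood of the diagonal, with no mechanism to reach the interior of $\rDe_k\times\rDe_k$; and second, \Cref{lem:ext} as stated only provides the boundary extension of $\cM$ \emph{in $L^2(W)$}, not pathwise almost surely, so the ``confluent limit forces $\cM>0$ near the diagonal on a full-probability event'' is not licensed by the tools available here. Upgrading the $L^2$ extension to an a.s.\ pathwise one, and from there to simultaneous strict positivity, is precisely the content of the cited theorems of \cite{LW} and \cite{AJRS}; a self-contained proof would need to reproduce that input.
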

The case of \Cref{lem:pos} where $k=2$ can be viewed as an analog to \Cref{eq:DL-quad}; namely, for any $s<t$, $x_1<x_2$, $y_1<y_2$, we have
\begin{equation}  \label{eq:FR-quad}
\cZ(x_1,s;y_1,t)\cZ(x_2,s;y_2,t) >\cZ(x_1,s;y_2,t)\cZ(x_2,s;y_1,t).    
\end{equation}
Another useful statement that can be deduced from \Cref{lem:pos} is the following monotonicity.

\begin{lemma}  \label{lem:cM-t-mon}
Almost surely the following is true. For any $s<t$, $x_1<x_2<x_3$ and $y_1<y_2<y_3$, we have
\begin{align*}
\MoveEqLeft[6]
(y_3-y_1)\cM((x_1,x_3),s;(y_1,y_3),t) \cZ(x_2,s;y_2,t)
\\
&> (y_2-y_1)\cM((x_1,x_3),s;(y_1,y_2),t)\cZ(x_2,s;y_3,t)\\
&\qquad +(y_3-y_2)\cM((x_1,x_3),s;(y_2,y_3),t)\cZ(x_2,s;y_1,t).
\end{align*}
\end{lemma}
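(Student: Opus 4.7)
The plan is to recognize this inequality as equivalent, after cancellation, to strict positivity of the $3 \times 3$ determinant $\det[\cZ(x_i,s;y_j,t)]_{i,j=1}^3$, which by the definition of $\cM$ is exactly the $k=3$ multi-line partition function times Vandermonde factors, so the result will follow from \Cref{lem:pos}.

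Concretely, first I would write $Z_{ij} := \cZ(x_i, s; y_j, t)$ and expand each of the three occurrences of $\cM$ on either side of the inequality by its $2 \times 2$ determinantal formula:
\[
\cM((x_1,x_3),s;(y_a,y_b),t) = \frac{Z_{1a}Z_{3b} - Z_{1b}Z_{3a}}{(x_1-x_3)(y_a-y_b)}.
\]
The factors $(y_b - y_a)$ in the statement cancel against the denominators $(y_a - y_b)$ up to a sign, and the common factor $(x_3 - x_1)^{-1} > 0$ can be cleared from both sides. The inequality to prove thus reduces to
\[
(Z_{11}Z_{33} - Z_{13}Z_{31}) Z_{22} > (Z_{11}Z_{32} - Z_{12}Z_{31}) Z_{23} + (Z_{12}Z_{33} - Z_{13}Z_{32}) Z_{21}.
\]
Expanding and collecting terms, the difference between the left and right sides is
\[
Z_{11}Z_{22}Z_{33} - Z_{11}Z_{23}Z_{32} - Z_{12}Z_{21}Z_{33} + Z_{12}Z_{23}Z_{31} + Z_{13}Z_{21}Z_{32} - Z_{13}Z_{22}Z_{31},
\]
which, by the Leibniz formula, is precisely $\det[Z_{ij}]_{i,j=1}^3$.

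It then remains to check that this $3 \times 3$ determinant is strictly positive. By the definition of $\cM$ applied with $k=3$,
\[
\det[Z_{ij}]_{i,j=1}^3 = \cM((x_1,x_2,x_3),s;(y_1,y_2,y_3),t) \, \Delta((x_1,x_2,x_3)) \, \Delta((y_1,y_2,y_3)).
\]
Under $x_1<x_2<x_3$ the Vandermonde $\Delta((x_1,x_2,x_3)) = (x_1-x_2)(x_1-x_3)(x_2-x_3)$ is a product of three negative numbers, hence negative; the same holds for $y$, so their product is positive. \Cref{lem:pos} provides, on a single event of full probability, strict positivity of $\cM$ simultaneously for all $s<t$ and all $\bx, \by \in \rDe_3$, which gives the desired strict inequality, also simultaneously in all parameters. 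There is essentially no analytic obstacle here: the argument is purely algebraic bookkeeping, and the only non-trivial input, namely the strict positivity of the $3 \times 3$ minor, is exactly what \Cref{lem:pos} supplies.
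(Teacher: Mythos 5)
Your proof is correct and takes precisely the same route as the paper, which simply states that the inequality is equivalent to $\cM((x_1,x_2,x_3),s;(y_1,y_2,y_3),t)>0$ and invokes \Cref{lem:pos}; you have merely carried out the algebraic expansion and sign-bookkeeping that the paper leaves to the reader.
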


\begin{proof}
This is equivalent to $\cM((x_1,x_2,x_3), s; (y_1,y_2,y_3), t)>0$, which holds by \Cref{lem:pos}.
\end{proof}

\noindent\textbf{Composition.}
There is also a composition law of $\cM$, which can be obtained from \eqref{eq:cZ-comp} and the Cauchy-Binet formula: almost surely, for any $k\in\NN$, $\bx, \by \in \rDe_k$ and $s<r<t$, we have
\begin{equation}  \label{eq:cM-comp}
       \cM(\bx,s;\by,t) = \int_{\rDe_k} \cM(\bx,s;\bz,r)\cM(\bz,r;\by,t) \Delta(\bz)^2 d\bz .
\end{equation}
\noindent\textbf{Continuous extension.}
The function $\cM$ is connected to the multi-layer partition function, through the following extension of $\cM$ to the boundary of $\Lambda_k\times \Lambda_k$.
\begin{lemma}[\protect{\cite[Lemma 6.1]{o2016multi}}]  \label{lem:ext}
For any $s<t$ and $k\in\NN$, the function $\bx, \by\mapsto \cM(\bx,s;\by,t)$ extends continuously in $L^2(W)$ to $\Lambda_k\times \Lambda_k$, and the extension satisfies
\[
2^{-k(k-1)/2}(t-s)^{k(k-1)/2}\prod_{i=1}^{k-1}i!\cM(x\mathbf{1},s;y\mathbf{1},t)=\cZ_k(x,s;y,t),
\]
where $\mathbf{1}$ is the vector in $\R^k$ where each entry equals $1$. 
\end{lemma}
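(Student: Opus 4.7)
The plan is to verify both the continuous extension and the coincidence formula simultaneously by expanding the determinant in the definition of $\cM(\bx,s;\by,t)$ via the Wiener chaos expansion for each $\cZ(x_i,s;y_j,t)$ and identifying the resulting series with the one defining $\cZ_k$. The basic mechanism is that multilinearity of the determinant, combined with a Karlin--McGregor-type non-intersection argument, converts a determinant of single-path partition functions into a partition function over $k$ non-intersecting paths.

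First, I would write out the chaos expansion of each $\cZ(x_i,s;y_j,t)$ (each term being a scalar times an integral of the free bridge correlation kernel against white noise) and use multilinearity to expand
\[
\det\bigl[\cZ(x_i,s;y_j,t)\bigr]_{i,j=1}^k.
\]
Antisymmetrizing in the integration variables within each chaos level, the Karlin--McGregor reflection argument shows that only configurations corresponding to $k$ non-intersecting Brownian bridges from $(x_i,s)$ to $(y_i,t)$ contribute: the $m$-th order chaos kernel is precisely $\det[p_{t-s}(y_j-x_i)]$ times the $m$-point correlation function for such $k$ non-intersecting bridges.

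Next, I would extract the Vandermonde factors. Using $p_{t-s}(y_j-x_i)=(2\pi(t-s))^{-1/2}\exp(-x_i^2/(2(t-s)))\exp(-y_j^2/(2(t-s)))\exp(x_iy_j/(t-s))$, writing $x_i=x+\delta_i$, $y_j=y+\eta_j$, and expanding $\exp(\delta_i\eta_j/(t-s))$ by its power series, one finds by the Cauchy--Binet identity that the leading term as $\bx\to x\mathbf{1}$ and $\by\to y\mathbf{1}$ is
\[
\det\bigl[p_{t-s}(y_j-x_i)\bigr]\;\sim\;\frac{p_{t-s}(y-x)^k\,\Delta(\bx)\,\Delta(\by)}{(t-s)^{k(k-1)/2}\prod_{j=0}^{k-1}j!}.
\]
The same Vandermonde factorization propagates through every chaos kernel, because the non-intersecting correlation functions themselves vanish along every diagonal in the start and end points (by antisymmetry inherited from the non-intersection constraint). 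Dividing by $\Delta(\bx)\Delta(\by)$ therefore yields, at each chaos level, a kernel that is regular at $\bx=x\mathbf{1}$, $\by=y\mathbf{1}$, providing the desired continuous extension of $\cM$ to $\Lambda_k\times\Lambda_k$.

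Finally, to identify the boundary value at $\bx=x\mathbf{1}$, $\by=y\mathbf{1}$, observe that in the coincidence limit the underlying $k$ non-intersecting Brownian bridges become $k$ non-intersecting bridges from $(x,s)$ to $(y,t)$, so their $m$-point correlation function matches exactly the kernel $R$ used to define $\tilde\cZ_k$. Tracking the constants from the Vandermonde asymptotic above together with the scaling $\cZ_k(x,s;y,t)=2^k\tilde\cZ_k(2x,2s;2y,2t)$ produces the prefactor $2^{-k(k-1)/2}(t-s)^{k(k-1)/2}\prod_{i=1}^{k-1}i!$ claimed in the lemma. The main obstacle is upgrading the formal term-by-term cancellation to genuine $L^2(W)$ convergence uniformly as $\bx,\by$ approach the diagonal: although the singular Vandermonde denominator is exactly absorbed at every chaos level, one still needs a uniform bound on the $L^2$-norms of the normalized kernels summed over all levels. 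This can be achieved via standard Gaussian estimates on the non-intersecting Brownian bridge correlation functions, which quantify the vanishing along the coincidence stratum on scales matching the Vandermonde singularity.
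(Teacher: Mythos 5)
This lemma is quoted by the paper directly from \cite[Lemma 6.1]{o2016multi} without an accompanying proof, so there is no in-paper argument to compare against; the reference itself establishes the continuous extension and coincidence formula. Your sketch---expanding the determinant of the $\cZ(x_i,s;y_j,t)$ via their chaos series, invoking a Karlin--McGregor antisymmetrization to recast each chaos kernel in terms of $k$ non-intersecting Brownian bridge correlation functions multiplied by $\det[p_{t-s}(y_j-x_i)]$, factoring the Vandermonde singularity out of this determinantal prefactor, and then taking the coincidence limit---is indeed the standard mechanism underlying the cited result, and your identification of the main technical obstacle (uniform $L^2(W)$ control of the normalized chaos kernels as $\bx,\by$ approach the diagonal) is the right one to flag. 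One small caution: the Vandermonde vanishing you appeal to lives entirely in the prefactor $\det[p_{t-s}(y_j-x_i)]$; the non-intersecting bridge correlation kernels $R_m$ that survive after Karlin--McGregor are already normalized so as to remain finite and converge to the coincident-endpoint correlations, so the phrase ``the same Vandermonde factorization propagates through every chaos kernel'' should really be read as ``the singularity sits in a common multiplicative prefactor, while the chaos kernels themselves extend continuously,'' which is what actually yields the division by $\Delta(\bx)\Delta(\by)$ being regular at each chaos level.
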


\subsection{Line ensembles and Gibbs properties}\label{ss:legp}

As already indicated, a tool widely used in the study of the directed landscape is the Airy line ensemble constructed in \cite{CH11}.
Analogously, 
a tool widely used in the study of the KPZ equation and the free energy of the CDRP is the related KPZ$_t$ line ensemble from \cite{CH14}, and its Gibbs property.
To be concise, instead of giving the complete line ensemble setups, we quote some useful results from these connections.

For any $t>0$, $x\in\RR$, we denote 
\[
\fh^{\beta=1}_{t,1}(x)=\log\cZ(0,0;x,t)+t/12,\quad \fh^{\beta=1}_{t,2}(x)=\log(\cZ_2(0,0;x,t)/\cZ(0,0;x,t))+t/12,
\]
and
\[
\fh^{\beta=\infty}_{t,1}(x) = \cL(0,0;x,t), \quad \fh^{\beta=\infty}_{t,2}(x) = \cL_2(0,0;x,t) - \cL(0,0;x,t).
\]
We note that $\mc H(x,t)= \fh^{\beta=1}_{t,1}(x)$ solves the KPZ equation
\begin{align*}
\partial_t \mc H = \tfrac{1}{4}\partial_x^2 \mc H + \tfrac{1}{4}(\partial_x \mc H)^2 + \dot{W}.
\end{align*}
(Recall that $\dot{W}$ is the space-time white noise.)

Thanks to the scaling in defining $\cZ$, $\smash{\fh^{\beta=1}_{t,1}}$ has the same parabolic decay of $-x^2/t$ as $\smash{\fh^{\beta=\infty}_{t,1}}$. More precisely, $\smash{\fh^{\beta}_{t,1}(x)}+x^2/t$ for both $\beta=1$ and $\infty$ are stationary (which can be deduced from the shear invariance of $\cZ$ and $\cL$).
We also denote
\[
 \hat{\fh}^\beta_{t,1}(x):=t^{-1/3}\fh^\beta_{t,1}(t^{2/3}x), \quad \hat{\fh}^\beta_{t,2}(x):=t^{-1/3}\fh^\beta_{t,2}(t^{2/3}x).
\]
As will be clear later on, using $\smash{\hat{\fh}^\beta_{t,1}}$ and $\smash{\hat{\fh}^\beta_{t,2}}$ instead of $\smash{{\fh}^\beta_{t,1}}$ and $\smash{{\fh}^\beta_{t,2}}$ will reduce some notation, since $\smash{\hat{\fh}^\beta_{t,1}}$ and $\smash{\hat{\fh}^\beta_{t,2}}$ have the parabolic decay of $-x^2$ independent of $t$. Further, in the $\beta=\infty$ case, $\hfh^\beta_t$ has distribution independent of $t$, which is a consequence of scaling properties of the directed landscape.

Another consequence of the scaling in defining $\cZ$ is that  $\smash{\fh^{\beta=1}_{t,1}}$ is locally absolutely continuous with respect to a rate $2$ Brownian bridge, which is also true for $\smash{\fh^{\beta=\infty}_{t,1}}$.
To be more precise, we quote the following Gibbs properties of $\fh^\beta_{t,1}$ given $\fh^\beta_{t,2}$. 
For any $a<b$, denote by $\Fext([a, b])$ the $\sigma$-algebra generated by $\smash{\fh^\beta_{t,1}}$ in $\R\setminus (a, b)$, and $\smash{\fh^\beta_{t,2}}$.
\begin{lemma} \label{lem:Gibbs}
Take any $a<b$ and $t>0$.
Conditional on $\Fext([a, b])$,
for (1) law of $\fh^\beta_{t,1}$ in $[a,b]$, (2) the rate $2$ Brownian bridge connecting  $\smash{\fh^\beta_{t,1}(a)}$ and $\smash{\fh^\beta_{t,1}(b)}$, the former is absolutely continuous with respect to the latter, with Radon-Nikodym derivative (for a path $B$) proportional to $W(B, \fh^\beta_{t,2})$, where
\begin{equation}\label{e.rn derivative}
W(f, g)=
\begin{cases}
\exp\Big( - 2\int_a^b \exp(f(x)-g(x))  dx \Big) & \text{ when } \beta=1, \\ 
\don[g(x)\le f(x),\forall x\in [a,b]]  & \text{ when } \beta=\infty.
\end{cases}
\end{equation}
\end{lemma}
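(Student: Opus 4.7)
The strategy is to recognize this lemma as essentially a repackaging, in the present paper's normalization, of two known Brownian Gibbs properties: that of the parabolic Airy line ensemble from \cite{CH11} for $\beta=\infty$, and that of the KPZ$_t$ line ensemble from \cite{CH14} for $\beta=1$. In both cases the first two curves of the relevant ensemble coincide, up to deterministic scaling, with $(\fh^\beta_{t,1},\fh^\beta_{t,2})$, and the stated Gibbs formula is the image of the line-ensemble Gibbs property under that scaling. I will handle the two cases separately and reduce the content of the proof to bookkeeping of the scaling constants.

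\textbf{The case $\beta=\infty$.} First I would identify $\fh^\infty_{1,1}$ and $\fh^\infty_{1,2}$ with the top two curves of the parabolic Airy line ensemble $\mc P_i(x):=\mc A_i(x)-x^2$: the line $\mc P_1(x)=\cL(0,0;x,1)$ is the standard identification, and $\mc P_1(x)+\mc P_2(x)=\cL_2(0,0;x,1)$ from the description of the top two-path weight via two non-intersecting paths in the directed landscape given in \cite{DOV,DZ}. The Corwin--Hammond construction \cite{CH11} gives that $\{\mc A_i\}$ satisfies the Brownian Gibbs property with rate-$2$ bridges and non-intersection indicator; subtracting the deterministic function $x^2$ commutes with the resampling, so the same property holds for $\{\mc P_i\}$, which is the $t=1$ form of the claim. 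For general $t$, the scaling invariance of \Cref{l.dl symmetries} with $w=t^{1/3}$ gives jointly
\[
(\fh^\infty_{t,1}(x),\fh^\infty_{t,2}(x)) \stackrel{d}{=} t^{1/3}\bigl(\fh^\infty_{1,1}(t^{-2/3}x),\fh^\infty_{1,2}(t^{-2/3}x)\bigr).
\]
If $B$ is a rate-$2$ Brownian bridge on $[t^{-2/3}a,t^{-2/3}b]$, then $x\mapsto t^{1/3}B(t^{-2/3}x)$ is again a rate-$2$ bridge on $[a,b]$ (the factors $t^{2/3}$ in variance and $t^{-2/3}$ in time scale cancel); so the Gibbs property transfers at the level of measures with no change of the rate.

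\textbf{The case $\beta=1$.} Here I invoke \cite{CH14}: the process $\tilde{\mathcal H}_t(x):=\log\tilde\cZ(0,0;x,t)+t/24$ is the top curve of the KPZ$_t$ line ensemble, which satisfies the $H$-Brownian Gibbs property with $H(u)=e^u$ and rate-$1$ bridges, where the lower curves are built from $\log(\tilde\cZ_k/\tilde\cZ_{k-1})$; in particular, the top-two-curve sub-Gibbs property gives, conditionally on the second line and on boundary data, a density proportional to $\exp(-\int_a^b e^{B(x)-\tilde{\fh}_2(x)}\,dx)$ with respect to a rate-$1$ Brownian bridge $B$. To land in the present normalization I use $\cZ(x,s;y,t)=2\tilde\cZ(2x,2s;2y,2t)$ to get
\[
\fh^1_{t,1}(x) \;=\; \log\tilde\cZ(0,0;2x,2t)+\log 2+ t/12,
\]
and similarly for $\fh^1_{t,2}$. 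The spatial change of variable $x\mapsto 2x$ multiplies the Brownian variance by $2$, taking the rate-$1$ bridge of the CH14 statement to the rate-$2$ bridge of our statement; the same change of variable also multiplies the length element in the integral inside the Radon--Nikodym derivative by $2$, producing the constant $2$ in front of $\int_a^b \exp(B(x)-\fh^1_{t,2}(x))\,dx$ in \eqref{e.rn derivative} (the additive shifts $\log 2$ and $t/12$ cancel between $B$ and $\fh^1_{t,2}$ inside the exponent). The $\sigma$-algebra $\Fext([a,b])$ matches the conditioning of the line-ensemble sub-Gibbs property after this change of variable.

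\textbf{Main obstacle.} The only nontrivial part is the bookkeeping in the $\beta=1$ case: keeping track of the factors of $2$ coming from the $\cZ$ vs.\ $\tilde\cZ$ normalization and of the parabolic recentering $t/12$ vs.\ $t/24$, so that the rate of the bridge is exactly $2$ and the constant inside the exponential of the Radon--Nikodym derivative is exactly $2$. No new probabilistic input is needed once this is sorted; in particular, the joint identification of the two lowest curves with $(\log\cZ, \log(\cZ_2/\cZ))$ (up to the deterministic parabolic shift) is taken directly from \cite{nica2021intermediate,LW}, which is the input that legitimizes the use of the KPZ line ensemble here.
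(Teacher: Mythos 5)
The paper gives no proof of this lemma at all: it simply cites \cite{CH11} for $\beta=\infty$, \cite{CH14} together with \cite{nica2021intermediate} for $\beta=1$, and says the precise normalization is taken from \cite[Proposition~2.6, Theorem~2.8]{GH22}. Your proposal is therefore not a ``different route'' so much as an unpacking of the same route: you make explicit the scaling bookkeeping that links the $(\cZ,\tilde\cZ)$ normalizations and the $1:2:3$ rescaling to the CH11/CH14 conventions, which is exactly the content implicitly deferred to GH22. The identification of $(\fh^\beta_{t,1},\fh^\beta_{t,2})$ with the top two lines of the parabolic Airy resp.\ KPZ$_t$ ensembles, the transfer of the Gibbs property under $w\cL(w^{-2}\cdot,w^{-3}\cdot;\cdot)$ (rate preserved), and the bookkeeping for $\beta=1$ (spatial substitution $y=2x$ doubling the bridge rate $1\to2$ and producing the prefactor $2$ in the integral; the additive shifts $\log 2$ and $t/12$ cancelling inside the exponent because $\fh^1_{t,1}$ and $\fh^1_{t,2}$ carry the same shift) are all correct and are the right things to check.

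One point you should fix. You state that the CH14 sub-Gibbs property gives a density proportional to $\exp\bigl(-\int_a^b e^{B(x)-\tilde\fh_2(x)}\,dx\bigr)$. The $\bH$-Brownian Gibbs property of \cite{CH14} penalizes the resampled curve dropping \emph{below} the next curve, so the argument of $\bH(\cdot)=\exp(\cdot)$ is $\tilde\fh_2(x)-B(x)$, not $B(x)-\tilde\fh_2(x)$; indeed, with your sign the weight would vanish precisely when $B$ stays far \emph{above} the second curve, which contradicts the indicator $\one[g\le f]$ in the $\beta=\infty$ case. The formula as printed in \eqref{e.rn derivative} has the same sign, but the way $W$ is actually used in the paper (e.g.\ in the proof of \Cref{t.comparison}, where $B$ being well above $\hfh^\beta_{t,2}$ is used to conclude $W\approx 1$) makes clear that the intended exponent is $g(x)-f(x)$; so \eqref{e.rn derivative} contains a typo that you have reproduced rather than caught. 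Replacing $f-g$ by $g-f$ in your statement of the CH14 Radon--Nikodym derivative, and noting this is also the corrected form of \eqref{e.rn derivative}, makes your bookkeeping consistent with the literature you are invoking.
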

For the case where $\beta=\infty$ (the Airy line ensemble setting), this was established in \cite{CH11}.
For $\beta=1$ (the KPZ$_t$ line ensemble setting), such a Gibbs property was first introduced in \cite{CH14}; and the connection between the KPZ$_t$ line ensemble and CDRP was formally established in \cite{nica2021intermediate}.
The form of the Gibbs property presented here is from \cite[Proposition 2.6, Theorem 2.7]{GH22}.

A useful consequence of the Gibbs property is the monotonicity recorded below.
\begin{lemma}[Monotonicity in boundary data]\label{l.monotonicity}
Fix $a<b$, real numbers $\smash{w^{*}, z^{*}}\in \R$ and measurable functions $\smash{g^{*}}:[a,b]\to\R \cup \{-\infty\}$ for $*\in \{\uparrow, \downarrow\}$ such that $\smash{w^{\downarrow}\leq w^{\uparrow}}$, $\smash{z^{\downarrow}\leq z^{\uparrow}}$, and, for all $s\in (a,b)$, $\smash{g^{\downarrow}(s)\leq g^{\uparrow}(s)}$. 

For $*\in \{\uparrow, \downarrow\}$, let $\smash{\mathcal{Q}^{*}}$ be a process on $[a,b]$ such that $\smash{\mathcal{Q}^{*}}(a)=\smash{w^{*}}$ and $\smash{\mathcal{Q}^{*}}(b)=\smash{z^{*}}$, with Radon-Nikodym derivative with respect to Brownian bridge given by $W(\mathcal{Q}^{*}, g)$ for $\beta=1$.
Then there exists a coupling of the laws of $\smash{\mathcal{Q}^{\uparrow}}$ and $\smash{\mathcal{Q}^{\downarrow}}$ such that almost surely $\smash{\mathcal{Q}^{\downarrow}}(s)\leq \smash{\mathcal{Q}^{\uparrow}_j(s)}$ for all $s\in (a,b)$.

The same is true in the $\beta=\infty$ (zero-temperature) case if additionally $w^* > \smash{g^{*}(a)}$ and $z^* > \smash{g^{*}(b)}$, for $*\in\{\uparrow\,\downarrow\}$.
\end{lemma}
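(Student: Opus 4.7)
The plan is to establish the monotone coupling via a discrete heat-bath approximation that admits a manifestly monotone single-site update, and then pass to the continuum. On a uniform mesh $a = s_0 < s_1 < \cdots < s_n = b$ of spacing $\varepsilon$, approximate the law of $(\mathcal{Q}^*(s_i))_{i=0}^n$ by a Gibbs measure on $\R^{n-1}$ (with $x_0=w^*$, $x_n=z^*$ fixed) with density proportional to
\[
\exp\Bigl(-\tfrac{1}{4\varepsilon}\sum_{i=0}^{n-1}(x_{i+1}-x_i)^2\Bigr)\prod_{i=1}^{n-1}\psi(x_i, g^*(s_i)),
\]
where $\psi(x,g) = \exp(-2\varepsilon e^{x-g})$ for $\beta=1$ and $\psi(x,g) = \don[x > g]$ for $\beta=\infty$. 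These are the natural mesh discretizations of a rate-$2$ Brownian bridge from $w^*$ to $z^*$ reweighted by $W(\cdot, g^*)$ as in Lemma~\ref{lem:Gibbs}.

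The key step is to realize each discretized measure as the stationary distribution of a Glauber Markov chain that cyclically resamples one coordinate $x_j$ at a time, conditional on its neighbors $x_{j-1}$ and $x_{j+1}$. The single-site kernel has density proportional to a Gaussian with mean $(x_{j-1}+x_{j+1})/2$ and variance $\varepsilon$ reweighted by $\psi(\cdot, g^*(s_j))$. A direct log-density computation shows that this kernel enjoys the monotone likelihood ratio property in each of $x_{j-1}$, $x_{j+1}$, and $g^*(s_j)$: for the Gaussian factor this is classical, while for $\beta=1$ and $g^\uparrow \geq g^\downarrow$ one checks that the ratio of the $\psi$-factors is $\exp\bigl(2\varepsilon e^{x_j}(e^{-g^\downarrow}-e^{-g^\uparrow})\bigr)$, which is non-decreasing in $x_j$. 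Consequently the kernel is stochastically non-decreasing in each parameter.

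With the monotone single-site update in hand, the standard grand coupling applies: run both chains on the same mesh using a common sequence of uniform$[0,1]$ random variables pushed through the inverse CDFs of the respective kernels. Starting from any pointwise-ordered pair of initial configurations, the monotonicity of the updates preserves the order after every step, hence for all time. Ergodicity of the chain then yields in equilibrium a coupling of the discretizations of $\mathcal{Q}^\downarrow$ and $\mathcal{Q}^\uparrow$ with the order holding almost surely at every mesh point. For $\beta = \infty$, the strict inequalities $w^* > g^*(a)$ and $z^* > g^*(b)$ are needed to ensure positivity of the partition function and irreducibility of the chain near the endpoints.

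Finally, pass to the continuum limit $\varepsilon \to 0$. Since $\psi \leq 1$ in both cases, the reweighting preserves the standard Brownian equicontinuity estimates, giving tightness of the piecewise-linear interpolations in $\mathcal{C}([a,b],\R)$; any joint subsequential limit preserves the almost sure pointwise order. The main technical point requiring care is identifying the subsequential limits of the discretized measures as $\mathcal{Q}^\uparrow$ and $\mathcal{Q}^\downarrow$: for $\beta=1$ this follows from the continuity of $f \mapsto \int_a^b e^{f(x)-g^*(x)}\mrm dx$ on $\mathcal{C}([a,b],\R)$ and a uniform integrability argument for the partition functions, while for $\beta=\infty$ one must also handle possible between-mesh crossings of the barrier $g^*$, which contribute negligibly in the limit since the reweighted Brownian bridge stays uniformly above $g^*$ away from the endpoints with high probability.
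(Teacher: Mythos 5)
The paper does not prove Lemma~\ref{l.monotonicity}; it delegates it to Lemmas~2.6 and~2.7 of \cite{CH11} (for $\beta=\infty$) and \cite{CH14} (for $\beta=1$), with \cite{dimitrov2021characterization,dimitrov2022characterization} referenced for fuller expositions, so there is no in-paper proof to compare against. Your proposal is, in substance, the proof given in those references: discretize time, realize the discretized law as the stationary measure of a single-site Glauber (heat-bath) chain, verify that the single-site conditional kernel is stochastically monotone in the neighboring heights and in $g^*$ via a monotone likelihood ratio computation, use the grand coupling driven by common uniforms to preserve the pointwise order forever, and pass to the continuum. Your MLR calculation is correct: the Gaussian factor is classically MLR in its mean $(x_{j-1}+x_{j+1})/2$, and for $\beta=1$ the ratio $\psi(x,g^\uparrow)/\psi(x,g^\downarrow)=\exp\bigl(2\varepsilon e^{x}(e^{-g^\downarrow}-e^{-g^\uparrow})\bigr)$ is non-decreasing in $x$ because $g^\uparrow\geq g^\downarrow$, while the $\beta=\infty$ indicator ratio is likewise non-decreasing on the support. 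The continuum-limit technicalities you flag — uniform positivity of the discrete partition functions along $\varepsilon\to 0$ (so that the normalized, not merely unnormalized, discrete measures converge to $\mathcal Q^*$) and, for $\beta=\infty$, control of between-mesh barrier crossings — are exactly the points that the cited proofs handle carefully, and you are right that these are where the remaining care is required. So your proposal is essentially correct and coincides with the standard route that the paper implicitly relies on.
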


The positive temperature ($\beta=1$) statements are Lemmas~2.6 and 2.7 of \cite{CH14}. The zero temperature ($\beta=\infty$) statements are Lemmas~2.6 and 2.7 of \cite{CH11}. See also \cite{dimitrov2021characterization} and \cite{dimitrov2022characterization} for more detailed proofs of the respective cases.

The following inequality can be deduced using the monotonicity property.
It is contained in \cite[Theorem 2.7]{GH22}, and its proof can be found in \cite[Appendix A]{GH22}.

\begin{lemma}\label{l.fkg}
For any $t>0$, $a<b$, $\beta=1$ or $\infty$, and any pair of increasing events $A$ and $B$ in the space of all real continuous functions on $[a, b]$, 
$$\P(\fh^\beta_{t,1}|_{[a,b]} \in A,\, \fh^\beta_{t,1}|_{[a,b]} \in B) \geq \P(\fh^\beta_{t,1}|_{[a,b]} \in A)\cdot \P(\fh^\beta_{t,1}|_{[a,b]} \in B).$$
\end{lemma}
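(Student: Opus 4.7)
The plan is to reduce the claim to FKG for reweighted Brownian bridges via the Brownian Gibbs property. By a standard monotone class argument, it suffices to establish FKG for finite-dimensional cylindrical events: for any $s_1 < \cdots < s_n$ in $[a,b]$ and increasing $A, B \subseteq \R^n$, the joint law of $(\fh^\beta_{t,1}(s_i))_i$ satisfies $\P(A \cap B) \geq \P(A)\P(B)$.

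The first main step is conditional FKG. Conditioning on $\Fext([a,b])$, \Cref{lem:Gibbs} describes the law of $\fh^\beta_{t,1}|_{[a,b]}$ as a rate-$2$ Brownian bridge between the prescribed boundary values, reweighted by $W(\cdot, \fh^\beta_{t,2})$. Brownian bridge is a Gaussian process with nonnegative covariance and hence satisfies FKG by Pitt's theorem on association of Gaussian vectors. The reweighting preserves FKG in either regime: for $\beta = \infty$, $W$ is the indicator of the increasing event $\{B \geq g\}$, and conditioning an FKG measure on an increasing event preserves FKG in the path-space lattice; for $\beta=1$, the pointwise identity $\exp(\max(B,B')(x) - g(x)) + \exp(\min(B,B')(x) - g(x)) = \exp(B(x)-g(x)) + \exp(B'(x)-g(x))$ integrates to $\log W(B \vee B') + \log W(B \wedge B') = \log W(B) + \log W(B')$, so $W$ is log-modular and in particular satisfies Holley's lattice condition, from which FKG for the reweighted measure follows by the continuous analog of Holley's theorem. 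This yields $\P(A \cap B \mid \Fext) \geq \P(A \mid \Fext)\,\P(B \mid \Fext)$ almost surely.

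The second step averages out the conditioning. By the tower property,
\[
\P(A \cap B) = \E[\P(A \cap B \mid \Fext)] \geq \E\bigl[\P(A \mid \Fext)\,\P(B \mid \Fext)\bigr],
\]
so it remains to show $\E[\P(A \mid \Fext)\,\P(B \mid \Fext)] \geq \E[\P(A \mid \Fext)]\cdot \E[\P(B \mid \Fext)]$. By \Cref{l.monotonicity}, both conditional probabilities are increasing functions of the boundary values $\fh^\beta_{t,1}(a), \fh^\beta_{t,1}(b)$ and of the lower curve $\fh^\beta_{t,2}$, so the desired covariance inequality reduces to FKG for the joint law of these quantities.

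This last reduction, which I expect to be the main obstacle, has an apparently recursive character. My approach would be to run the previous argument on a larger interval $[a', b']$ with $a' \to -\infty$ and $b' \to +\infty$ while imposing fixed deterministic boundary values at $a', b'$ that are pushed up to infinity, using the uniform monotonicity from \Cref{l.monotonicity} to pass to the limit; in the limit the top-line boundary randomness is removed, and only the dependence on the lower curve remains. The same argument can then be iterated on $\fh^\beta_{t,2}$ via its Brownian Gibbs property in the ensemble. An alternative and cleaner route is to construct the whole ensemble as the stationary distribution of a monotone Gibbs resampling dynamics started from ordered initial conditions, whereby monotone coupling of the dynamics yields FKG for the stationary law. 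Either way, the essential input is the monotonicity in boundary data, which ensures that all the couplings involved respect the pointwise order on path space.
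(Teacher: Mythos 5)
The obstacle you flag at the end is the fatal gap, and your proposed resolution does not close it. Averaging out $\Fext([a,b])$ requires FKG for the joint law of $(\fh^\beta_{t,1}(a), \fh^\beta_{t,1}(b), \fh^\beta_{t,2}|_{[a,b]})$, which is no more elementary than the original claim. Your limiting scheme (letting $a'\to-\infty$, $b'\to+\infty$ with deterministic boundary values pushed up to $+\infty$) does not recover the unconditioned ensemble: the top line near $\pm a'$ tracks $-(a')^2$ with nonvanishing $O(1)$ fluctuations, and pinning it at values tending to $+\infty$ drives the conditional law to $+\infty$ rather than to the actual law. Even with the correct random boundary data you would be left with an FKG problem for $(\fh^\beta_{t,1}(a'), \fh^\beta_{t,1}(b'), \fh^\beta_{t,2})$, and iterating the same reduction on $\fh^\beta_{t,2}$ produces a regress that never terminates; the monotone-dynamics alternative hides the same difficulty inside convergence to stationarity, which is not available off the shelf for these ensembles. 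Two smaller slips in the conditional step: the assertion that conditioning an FKG measure on an increasing event preserves FKG is false in general, since the indicator of an increasing set need not be log-supermodular (e.g.\ $E=\{(x,y):x+y\geq 1\}$ with $x=(1,0)$, $y=(0,1)$: $x\wedge y\notin E$); what rescues the $\beta=\infty$ case is that $\{B\ge g\}$ is a sublattice for fixed $g$, so its indicator is actually log-modular. Likewise, Holley's theorem requires the FKG lattice condition on the density, not merely Pitt's positive association; the lattice condition does hold for Brownian-bridge finite-dimensional distributions (tridiagonal inverse covariance with nonpositive off-diagonals, by the Markov property), but that is the input one must cite.

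The paper's route, as indicated by the remark after the statement and the citation to \cite[Appendix A]{GH22}, is bottom-up and avoids Gibbs conditioning entirely: realize $\fh^\beta_{t,1}$ as a limit of free-energy or passage-time profiles of pre-limiting exactly solvable polymer or LPP models, in which the profile is a coordinatewise-increasing functional of an i.i.d.\ or Gaussian noise field and therefore satisfies FKG by the classical inequality for product or Gaussian measures; the inequality then passes to the weak limit, since positive correlation of increasing events is preserved under weak convergence. This requires no FKG statement about the boundary data and no recursion down the line ensemble.
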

This is the FKG inequality for $\fh^\beta_{t,1}|_{[a,b]}$; and for discrete models such as the exponential LPP, such a result follows from the classical FKG inequality.

\subsection{Line ensemble estimates}  \label{ssec:ee}
In this subsection, we list some estimates of the line ensembles, which hold in both zero and positive temperature settings. 
For the convenience of notations, we state them in terms of the scaled version $\hfh^\beta_{t,1}$ and $\hfh^\beta_{t,2}$.
We fix arbitrary $\varepsilon>0$ in this subsection, and therefore all constants may depend on $\varepsilon$.

We also note that most of the estimates include a condition on $L$ in terms of $t$. This is really necessary only for the $\beta=1$ case since, as mentioned earlier, $\hfh^{\beta=\infty}_t$ has distribution independent of $t$. But, again to make things slightly more streamlined, we impose the same conditions in both cases.

We start with the one-point upper-tail of $\hfh^\beta_{t,1}$.

\begin{theorem} \label{lem:fh-ut}
There exist $C>0$ and $L_0>0$ such that, for any $t>0$ and $L>(t^{-1/3-\varepsilon}\vee 1)L_0$,
\[
\exp\Big(-\tfrac{4}{3}L^{3/2}-CL^{3/4}\Big)<\PP\left(\hfh^\beta_{t,1}(0)\in (L, L+\diff L)\right)/\diff L<\exp\Big(-\tfrac{4}{3}L^{3/2}+CL^{3/4}\Big).
\]
\end{theorem}

We can also upper bound $\hfh^\beta_{t,2}$ conditional on $\hfh^\beta_{t,1}$ at one-point.

\begin{theorem}\label{l.bk}
There exist $C>0$, $c>0$, and $L_0>0$ such that, for any $t>0$ and $L>M>(t^{-1/3-\varepsilon}\vee 1)L_0$, and any $a\in \R$,
\begin{align*}
\MoveEqLeft[20]
\P\bigg( \sup_{x\in [a,a+1]}\hfh^\beta_{t,2}(x)+x^2> M+Ct^{-1/3}\log(L)\midd \hfh^\beta_{t,1}(0)\in (L, L +\diff L) \bigg)\\%
&\leq \exp\Big(-\tfrac{4}{3}M^{3/2} + CM^{3/4}\Big) + |a|t^{2/3}\exp(-cL^2).
\end{align*}
And the same is true under the conditioning $\hfh^\beta_{t,1}(0)>L$.
\end{theorem}
The above two results are proved in or quickly deduced from \cite{GH22,GHZ25}. First, \Cref{lem:fh-ut} is proved as \cite[Theorem 1]{GH22} using an input from our paper \cite{GHZ25} to verify an assumption, though, in the case of $\beta=1$, \cite[Theorem 1]{GH22} is stated for $t>t_0$ for some $t_0>0$. 
The reason for the $t>t_0$ condition is because the arguments in \cite{GH22} also take as input a priori tail estimates from \cite{corwin2020kpz}, which hold for $t>t_0$ and $L>L_0$ with $L_0$ possibly depending on $t_0$.
However, analogous estimates are also available for arbitrary $t>0$ from \cite{das2023law}, namely Theorems~1.4 (upper bound on upper tail) and 1.7 (upper bound on lower tail) there.
Using these inputs the tail estimate in \cite{GH22} for $\beta=1$ can be upgraded to cover small $t>0$.
More details will be given in \Cref{sec:appc}. Theorem~\ref{l.bk} is also proved straightforwardly using results of \cite{GH22,GHZ25}, and details are also given in Appendix~\ref{sec:appc}.

We can further deduce the following upper-tail bound for one-point distribution of $\hfh^\beta_{t,2}$, which will be useful later.

\begin{lemma}  \label{lem:fh-2ut}
There exist $C, L_0,$ and $\varepsilon'>0$ such that, for any $t>0$ and $L>(t^{-1/3-\varepsilon}\vee 1)L_0$,
\[
\PP\left(\hfh^\beta_{t,1}(0)>L, \hfh^\beta_{t,1}(0)+\hfh^\beta_{t,2}(0)>2L\right)<\exp\Big(-\tfrac{8}{3}L^{3/2}+CL^{1-\varepsilon'}\Big).
\]
\end{lemma}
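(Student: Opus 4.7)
My plan is to apply the BK inequality in \Cref{l.bk} pathwise to decouple $\hfh^\beta_{t,1}(0)$ and $\hfh^\beta_{t,2}(0)$, reducing the problem to two essentially independent one-point tails that can each be controlled by \Cref{l.upper tail}. Fix a bounded interval $[a,b]$ containing $0$ (e.g.\ $[-1,1]$). For every real $c$, the event $A_c=\{f\in\mc C([a,b],\R):f(0)>c\}$ is increasing, so \Cref{l.bk} gives
\[
\PP\bigl(\hfh^\beta_{t,2}(0)>c \bigm| \hfh^\beta_{t,1}|_{[a,b]}\bigr)\le \PP\bigl(\hfh^\beta_{t,1}(0)>c\bigr)=:g(c).
\]
Since this bound holds for every deterministic $c$, it also holds when $c$ is any $\hfh^\beta_{t,1}|_{[a,b]}$-measurable random variable, in particular $c=2L-Y$ with $Y:=\hfh^\beta_{t,1}(0)$. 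Conditioning on $\hfh^\beta_{t,1}|_{[a,b]}$ and integrating out $Y$ therefore yields
\[
\PP\bigl(Y>L,\ Y+\hfh^\beta_{t,2}(0)>2L\bigr)\le \EE\bigl[\mathbf{1}_{Y>L}\,g(2L-Y)\bigr]=\int_{L}^{\infty} f_Y(y)\,g(2L-y)\,dy,
\]
where $f_Y$ is the density of $Y=\hfh^\beta_{t,1}(0)$, whose existence follows from the Gibbs property \Cref{lem:Gibbs}.

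Next I would insert the estimates from \Cref{l.upper tail}: provided both arguments exceed $(t^{-1/3-\varepsilon}\vee 1)L_0$,
\[
f_Y(y)\le \exp\!\bigl(-\tfrac{4}{3}y^{3/2}+Cy^{3/4}\bigr),\qquad g(c)\le \exp\!\bigl(-\tfrac{4}{3}c^{3/2}+Cc^{3/4}\bigr),
\]
the tail bound being obtained by integrating the density bound and absorbing a polynomial prefactor into $C$. Split the integral at $y^{\ast}=2L-(t^{-1/3-\varepsilon}\vee 1)L_0$. On $[L,y^{\ast}]$ both bounds apply, and the integrand is at most
\[
\exp\!\Bigl(-\tfrac{4}{3}\bigl[y^{3/2}+(2L-y)^{3/2}\bigr]+C\bigl[y^{3/4}+(2L-y)^{3/4}\bigr]\Bigr).
\]
The function $\varphi(y)=y^{3/2}+(2L-y)^{3/2}$ has $\varphi'(L)=0$ and $\varphi''(L)=\tfrac{3}{2\sqrt L}>0$, so it is convex on $[L,2L]$ with minimum $\varphi(L)=2L^{3/2}$, while the error term is $O(L^{3/4})$. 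A Laplace-type expansion $\varphi(y)\ge 2L^{3/2}+\tfrac{3}{4\sqrt L}(y-L)^2$ shows the integral concentrates on a window of width $O(L^{1/4})$ around $y=L$ and is bounded by $O(L^{1/4})\exp(-\tfrac{8}{3}L^{3/2}+CL^{3/4})$, which is absorbed into $\exp(-\tfrac{8}{3}L^{3/2}+C'L^{3/4})$. On $(y^{\ast},\infty)$ we use $g(2L-y)\le 1$, giving the trivial bound $\int_{y^{\ast}}^{\infty} f_Y(y)\,dy=g(y^{\ast})\le \exp(-\tfrac{4}{3}(y^{\ast})^{3/2}+C(y^{\ast})^{3/4})$. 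After enlarging $L_0$ so that $L>3(t^{-1/3-\varepsilon}\vee 1)L_0$, one has $y^{\ast}\ge \tfrac{5L}{3}$, so $\tfrac{4}{3}(y^{\ast})^{3/2}\ge \tfrac{4}{3}(5/3)^{3/2}L^{3/2}>\tfrac{8}{3}L^{3/2}$ (since $(5/3)^{3/2}\approx 2.15>2$), making this contribution negligible compared with the target.

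The main obstacle is ensuring that the prefactors and boundary effects do not spoil the $-\tfrac{8}{3}L^{3/2}$ rate. The delicate point is the second integration region $y$ near $2L$, where the one-point tail for $g(2L-y)$ breaks down; the argument above resolves this by enlarging $L_0$ so that $y^{\ast}$ leaves a cushion $(5/3)^{3/2}>2$, comfortably exceeding the target exponent. Everything else is routine convexity and Laplace-method bookkeeping, with polynomial factors absorbed into the $CL^{3/4}$ error.
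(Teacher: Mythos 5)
Your proof is correct and takes essentially the same approach as the paper: decouple $\hfh^\beta_{t,1}(0)$ and $\hfh^\beta_{t,2}(0)$ via the BK inequality (Lemma~\ref{l.bk}), then bound the resulting convolution-type integral using the one-point tail bounds of Theorem~\ref{lem:fh-ut}. The paper states only the two key steps and leaves the final Laplace-type bookkeeping implicit; your version writes out the convexity of $y\mapsto y^{3/2}+(2L-y)^{3/2}$, the cushion needed for the tail region $y>y^*$, and the absorption of polynomial prefactors into the $CL^{3/4}$ error term, all of which are done correctly.
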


\begin{proof}
We can write the left-hand side as
\[
\int_{L}^\infty \PP\left(\hfh^\beta_{t,2}(0)>2L -\vartheta \mid \hfh^\beta_{t,1}(0)\in (\vartheta, \vartheta +\diff\vartheta)\right) \PP\left(\hfh^\beta_{t,1}(0)\in (\vartheta, \vartheta +\diff\vartheta)\right) \, \diff\vartheta.
\]
We upper bound the first factor in the integrand using \Cref{l.bk} (if $2L-\vartheta>Ct^{-1/3}\log(L)$) or by $1$ (otherwise), and the second factor in the integrand using \Cref{lem:fh-ut}. It is easy to see that the dominant contribution to the integral whose integrand is the product of these two factors is when $\vartheta$ is approximately $L-Ct^{-1/3}\log L$. Now, note that since $L > t^{-1/3-\varepsilon}$, $t^{-1/3} \log L \leq L^{1-\varepsilon'}$ for some $\varepsilon'>0$, and $(L-L^{1-\varepsilon'})^{3/2} \geq L^{3/2} - CL^{1-\varepsilon'}$. Then the conclusion follows.
\end{proof}

We have the following estimate on the one-point lower tail of $\hfh^\beta_{t,1}$.
\begin{theorem}  \label{lem:fh-lt}\label{l.lower tail}
There exists $L_0>0$ such that, for any $0<t\le 1$ and $L>(t^{-1/6}\vee 1)L_0$,
\[
\P\left(\hfh^\beta_{t,1}(0) < -L\right) \leq \exp(-cL^2t^{1/6}).
\]
If we instead assume $t>t_0$ for some $t_0>0$, and $L>L_0$, then
\[
\PP\left(\hfh^\beta_{t,1}(0)<-L\right)<\exp(-cL^{5/2}),
\]
with the constant $c$ depending on $t_0$.
\end{theorem}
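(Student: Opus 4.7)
My plan is to combine several off-the-shelf one-point lower tail bounds from the literature, which is consistent with this section being a catalog of existing estimates.

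The zero-temperature ($\beta=\infty$) case reduces to Tracy-Widom. The $1{:}2{:}3$ scaling invariance of $\cL$ from Lemma~\ref{l.dl symmetries}, applied with scaling parameter $w = t^{1/3}$, gives $\hfh^{\infty}_{t,1}(0) \stackrel{d}{=} \cL(0,0;0,1)$, whose marginal is GUE Tracy-Widom. The classical lower tail bound $\P(\cL(0,0;0,1) < -L) \leq \exp(-cL^3)$ for $L$ large (available, for instance, from the soft-edge asymptotics of Ramirez-Rider-Virag) dominates both of the claimed bounds simultaneously, so in the zero-temperature case there is nothing further to do. The substantive content is therefore the positive-temperature ($\beta=1$) case.

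For $\beta=1$ and $t \ge t_0 > 0$, my plan is to invoke the one-point lower tail of the narrow-wedge solution $\mc H(0,t)$ to the KPZ equation established in \cite{corwin2020kpz}, and refined into a matching upper-and-lower pair capturing the $L^{3}$ versus $L^{5/2}$ crossover in \cite{CG20}: for any $t_0 > 0$ and $L \ge L_0$, one has $\P(\mc H(0,t) < -L) \le \exp(-cL^{5/2})$ with $c$ depending on $t_0$. Since $\hfh^{1}_{t,1}(0)$ is obtained from $\mc H(0,t)$ by adding the deterministic centering $t/12$, this yields the second claimed bound directly, with the $t_0$-dependence of $c$ inherited from the cited bound.

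For $\beta=1$ and $0 < t \le 1$, the effective range $L > L_0 t^{-1/6}$ is precisely the window in which $\hfh^{1}_{t,1}(0)$ crosses over from KPZ-type to Edwards-Wilkinson/Gaussian behavior on scale $t^{-1/12}$, and my plan is to quote Theorem~1.7 of \cite{das2023law}, which establishes $\P(\hfh^{1}_{t,1}(0) < -L) \le \exp(-cL^2 t^{1/6})$ uniformly in $0<t\le 1$ and in the stated range of $L$. The hard part, should one wish to avoid citing \cite{das2023law}, is genuinely the short-time regime: the moderate-deviation a priori tail inputs that feed the Gibbs-property strategy of \cite{GH22} (on which Theorem~\ref{l.upper tail} and Lemma~\ref{lem:Gibbs} are leveraged in the upper-tail direction) are not uniform as $t \to 0$, so one would need independent short-time estimates (for instance chaos-expansion moment bounds for $\cZ(0,0;0,t)$ uniform down to $t=0$); supplying these is exactly the technical contribution of \cite{das2023law}, which is why it is cited here rather than re-derived.
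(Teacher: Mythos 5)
Your proposal matches the paper's own treatment essentially exactly: the paper likewise disposes of $\beta=\infty$ by identifying $\hfh^{\infty}_{t,1}(0)$ with GUE Tracy--Widom (citing \cite{TW94} or \cite[Proposition~5.1]{CG20}, whereas you cite Ramirez--Rider--Virag, but these are interchangeable sources for the classical $\exp(-cL^3)$ lower tail), and for $\beta=1$ it cites precisely \cite[Theorem~1.7]{das2023law} for the uniform-in-small-$t$ bound and \cite[Theorem~1]{CG20} for the $t>t_0$ bound. One small imprecision worth noting: with the paper's conventions $\mc H(0,t)=\fh^{\beta=1}_{t,1}(0)$ already includes the $t/12$ shift, so $\hfh^1_{t,1}(0)=t^{-1/3}\mc H(0,t)$ is related to $\mc H(0,t)$ by rescaling only (not by an additional shift); for $t\ge t_0$ this only perturbs the constant $c$ and does not affect the conclusion.
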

For $\beta=1$ these two estimates can be deduced from  \cite[Theorem 1.7]{das2023law} and \cite[Theorem 1]{CG20} respectively. 
For $\beta=\infty$, $\hfh^{\beta=\infty}_{t,1}(0)$ has GUE Tracy-Widom distribution, whose lower tail is more classical (see e.g., \cite{TW94} or \cite[Proposition 5.1]{CG20}).

The process $\hfh^\beta_{t,1}$ is also $1/2$-H\"older.
\begin{proposition}  \label{lem:fh-cont}
There exists $L_0>0$ such that, for any $t>0$ and $M^2>(t^{-1/6}\vee 1)L_0$,
and $0<d\le 1$, we have
\[
\PP\Bigl(\sup_{x\in[0,d]} |\hfh^\beta_{t,1}(x)-\hfh^\beta_{t,1}(0)|>Md^{1/2} \Bigr) < C\exp(-cM^{2}).
\]
\end{proposition}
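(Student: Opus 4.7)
The statement is a Gaussian-tailed modulus-of-continuity bound for the top curve of a Brownian Gibbs line ensemble, and my plan is to reduce it to the analogous, classical modulus bound for a rate $2$ Brownian bridge via the Brownian Gibbs property (Lemma~\ref{lem:Gibbs}).

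First, I would apply Brownian Gibbs on a short reference interval $[a,b]\supset [0,d]$, for instance $[a,b]=[-1,2]$ since $d\le 1$. Conditional on $\Fext([-1,2])$, the curve $\hfh^\beta_{t,1}|_{[-1,2]}$ is a rate $2$ Brownian bridge $B$ between the endpoint values $\hfh^\beta_{t,1}(-1)$ and $\hfh^\beta_{t,1}(2)$, reweighted by $W(B,\hfh^\beta_{t,2}|_{[-1,2]})\in(0,1]$. Next, I would introduce a \emph{good event} $\mathcal G$ that controls these endpoint values as well as the neighboring second line $\hfh^\beta_{t,2}$; concretely, I would require $|\hfh^\beta_{t,1}(-1)|,\,|\hfh^\beta_{t,1}(2)|\le K$ and $\sup_{x\in[-1,2]}\hfh^\beta_{t,2}(x)\le K$ with $K = c_0(M^{4/3}+Mt^{-1/12})$. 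Using the stationarity of $x\mapsto \hfh^\beta_{t,1}(x)+x^2$ together with Theorems~\ref{lem:fh-ut} and~\ref{lem:fh-lt} and Lemma~\ref{lem:fh-2ut}, this calibration of $K$ yields $\PP(\mathcal G^c)\le Ce^{-cM^2}$: the one-point upper tail $\exp(-cK^{3/2})$ and lower tail $\exp(-cK^2 t^{1/6})$ are both matched to the Gaussian budget $\exp(-cM^2)$ at these scales, and the hypothesis $M^2>(t^{-1/6}\vee 1)L_0$ precisely places $K$ in the range where the cited tail estimates apply.

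On $\mathcal G$, monotonicity (Lemma~\ref{l.monotonicity}) together with a direct lower-bound estimate shows that the normalization $Z=\E[W(B,\hfh^\beta_{t,2})\mid \Fext]$ is bounded below by a positive constant $z_0(K)$; hence the conditional law of $B$ is absolutely continuous with respect to the pure rate $2$ Brownian bridge with density $\le z_0(K)^{-1}$, and it suffices to establish the same modulus bound under the pure Brownian bridge law. Writing $B(x)-B(0)=\tfrac{x}{3}(B(2)-B(-1))+(B_0(x)-B_0(0))$ with $B_0$ a zero-endpoint rate $2$ Brownian bridge on $[-1,2]$, the fluctuation $B_0$ contributes a Gaussian-tailed modulus $\PP(\sup_{x\in[0,d]}|B_0(x)-B_0(0)|>Md^{1/2}/2)\le Ce^{-cM^2}$, and the linear-drift part is bounded on $\mathcal G$ by $\tfrac{2Kd}{3}$.

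\textbf{Main obstacle.} The principal delicacy is absorbing the linear drift $\tfrac{2Kd}{3}$ into $\tfrac{1}{2}Md^{1/2}$: this requires $Kd^{1/2}\lesssim M$, which holds directly only in the small-$d$ regime $d\lesssim M^{-2/3}\wedge t^{1/6}$. For larger $d$, I would instead perform the Brownian Gibbs resampling on a $d$-dependent interval such as $[-d,2d]$, so that the linear-interpolation prefactor on $[0,d]$ becomes $\tfrac{d}{3d}=\tfrac{1}{3}$ and the drift degenerates to $O(K)$ (independent of $d$), which is absorbed into $Md^{1/2}/2$ exactly when $d\gtrsim (K/M)^2$. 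The two resampling choices together cover all $d\in(0,1]$, and summing the good-event complement with the Brownian bridge modulus event delivers the claimed bound.
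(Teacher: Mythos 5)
Your strategy---reduce to a Brownian bridge modulus bound via the Brownian Gibbs resampling on an interval containing $[0,d]$, after conditioning on a good event controlling boundary data and the second line---is plausible at a high level, but the calibration of the drift-absorption step does not close, and the gap is structural rather than cosmetic. After resampling on $[-d,2d]$ the drift contribution to the increment over $[0,d]$ is indeed of order $K$ (this much you compute correctly), so absorption requires $K\lesssim Md^{1/2}$. But your good event controls the boundary values by one-point marginals, and for $\PP(\mathcal G^c)\le Ce^{-cM^2}$ you need $\exp(-cK^{3/2})\le\exp(-cM^2)$ and $\exp(-cK^2t^{1/6})\le\exp(-cM^2)$, which forces $K\gtrsim M^{4/3}$ and $K\gtrsim Mt^{-1/12}$. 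The two demands $K\lesssim Md^{1/2}$ and $K\gtrsim M^{4/3}\vee Mt^{-1/12}$ can hold together only if $d\gtrsim M^{2/3}\vee t^{-1/6}$, which exceeds $1$ under the hypothesis $M^2>(t^{-1/6}\vee 1)L_0$. So the ``large-$d$ regime'' $d\gtrsim (K/M)^2$ is empty, and the ``small-$d$ regime'' $d\lesssim M^{-2/3}\wedge t^{1/6}$ is bounded away from $1$: the two choices of resampling interval do \emph{not} cover $(0,1]$, contrary to the last sentence of your plan.

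The obstruction is not fixable by a better choice of $K$: what you actually need Gaussian control of is the \emph{increment} $\hfh^\beta_{t,1}(a)-\hfh^\beta_{t,1}(0)$ for an endpoint $a$ at distance $\sim d$ from $0$, not the one-point value $\hfh^\beta_{t,1}(a)$. One-point marginals with $\exp(-cT^{3/2})$ tails (Tracy--Widom type) simply cannot by themselves produce $\exp(-cM^2)$ increment tails at scale $Md^{1/2}$, since an arbitrary joint law is consistent with much heavier increment tails; the sub-Gaussian rate must come from genuinely joint (Gibbsian) information about the increment, which your good event discards. A secondary circularity: you want $\sup_{[-1,2]}\hfh^\beta_{t,2}\le K$ on $\mathcal G$, but \Cref{lem:fh-2ut} is only a one-point bound, and a sup bound for the second curve would require a continuity estimate of exactly the type being proved (or BK plus a sup bound on the first curve, also circular). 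For comparison, the paper does not prove the statement from scratch; it cites \cite{DV21} and \cite{DauWie}, whose arguments work directly with the law of the increment as a single object and control its Radon--Nikodym derivative against a Brownian increment---precisely the joint information your boundary-value decomposition loses.
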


For $\beta=\infty$ this is proved in \cite[Lemma 6.1]{DV21} and \cite[Lemma 3.4]{DauWie}, and their arguments carry over to the $\beta=1$ setting. We omit the details here.

We next quote the following tent behavior of $\hfh^\beta_{t,1}$, under the one-point upper-tail event.
\begin{theorem}  \label{lem:fh-tent}
There exists $L_0>0$, such that for any $t>0$ and $L> (t^{-1/3-\varepsilon}\vee 1)L_0$, we have
\[
\PP\left(\sup_{x\in[-L^{1/2}, L^{1/2}]}\left|\hfh^\beta_{t,1}(x)-L+2L^{1/2}|x|\right| > ML^{1/4} \midd \hfh^\beta_{t,1}(0)\in (L, L+\diff L) \right)<\exp(-cM^2),
\]
for any $0<M<cL^{3/4}$. 
\end{theorem}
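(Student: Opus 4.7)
The plan is to apply the Brownian Gibbs property of Lemma~\ref{lem:Gibbs} on the interval $[-L^{1/2}, L^{1/2}]$, first establishing a priori control on the boundary data at $\pm L^{1/2}$ and on the second curve $\hfh^\beta_{t,2}$, and then comparing $\hfh^\beta_{t,1}$ to an unreweighted rate--$2$ Brownian bridge whose mean path is exactly the tent $L - 2L^{1/2}|x|$. The fact that the interval has length $L^{1/2}$ is precisely what produces the Brownian scale $L^{1/4}$ in the tail.

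First I would show that, conditional on $\hfh^\beta_{t,1}(0)\in(L, L+\diff L)$, the endpoint values $\hfh^\beta_{t,1}(\pm L^{1/2})$ lie in $[-L - ML^{1/4}, -L + ML^{1/4}]$ with probability at least $1 - \exp(-cM^2)$. The lower bound uses parabolic stationarity of $\hfh^\beta_{t,1}(x) + x^2/t$, the one-point lower tail (Theorem~\ref{lem:fh-lt}) applied at $\pm L^{1/2}$, and the fact that the upper tail conditioning at $0$ can only raise the endpoint values in distribution via FKG/monotonicity (Lemma~\ref{l.fkg} and Lemma~\ref{l.monotonicity}). For the upper bound, I would bound the joint upper-tail probability $\PP\bigl(\hfh^\beta_{t,1}(0)\in(L,L+\diff L),\, \hfh^\beta_{t,1}(\pm L^{1/2}) > -L + K\bigr)$ using a BK-type argument (Lemma~\ref{l.bk}) together with Theorem~\ref{lem:fh-ut}, obtaining an extra factor of $\exp(-cK^{3/2})$; setting $K = ML^{1/4}$ gives $\exp(-cM^{3/2}L^{3/8})$, which is at most $\exp(-cM^2)$ under the assumed constraint $M < cL^{3/4}$. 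In parallel, Lemma~\ref{lem:fh-2ut} combined with a continuity estimate in the spirit of Proposition~\ref{lem:fh-cont} yields that on $[-L^{1/2}, L^{1/2}]$ one has $\hfh^\beta_{t,2}(x) \leq -x^2 + ML^{1/4}$ with probability at least $1-\exp(-cM^2)$.

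Conditionally on these a priori events, I would then condition on $\hfh^\beta_{t,1}(0)$, $\hfh^\beta_{t,1}(\pm L^{1/2})$, and $\hfh^\beta_{t,2}$, and apply the Brownian Gibbs property of Lemma~\ref{lem:Gibbs} separately on $[-L^{1/2},0]$ and $[0,L^{1/2}]$. On each half, $\hfh^\beta_{t,1}$ has the law of a rate--$2$ Brownian bridge with endpoint values near $-L$ and $L$, reweighted by $W(\cdot, \hfh^\beta_{t,2})$. The mean path is the linear interpolation, which is exactly the tent, and the algebraic identity $L - 2L^{1/2}|x| - (-x^2) = (L^{1/2}-|x|)^2$ shows that the tent sits above the controlled upper bound on $\hfh^\beta_{t,2}$ by a margin $(L^{1/2}-|x|)^2 - ML^{1/4}$, which is $\gg 1$ except in $O(ML^{1/4})$-neighborhoods of the endpoints. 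Thus the reweighting factor $\exp\bigl(-2\int e^{B-\hfh^\beta_{t,2}}\bigr)$ is $1-o(1)$ for $B$ in a neighborhood of the tent, and the standard Gaussian tail estimate for the supremum of a rate--$2$ Brownian bridge on an interval of length $L^{1/2}$ yields the claimed $\exp(-cM^2)$ tail at scale $ML^{1/4}$. The extension from the conditioning $\hfh^\beta_{t,1}(0)\in(L,L+\diff L)$ to $\hfh^\beta_{t,1}(0) > L$ is immediate since Theorem~\ref{lem:fh-ut} shows that the conditional law of $\hfh^\beta_{t,1}(0)$ given $\hfh^\beta_{t,1}(0)>L$ is concentrated within $O(L^{-1/2})$ of $L$, which shifts the tent by an amount negligible on the $L^{1/4}$ scale.

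The main obstacle I anticipate is handling the reweighting near the endpoints $\pm L^{1/2}$ of the tent, where the gap to the second curve closes and the Brownian bridge can be pushed upward by the interaction term (for $\beta=1$) or stopped by non-crossing (for $\beta=\infty$). This would be addressed by a monotonicity coupling (Lemma~\ref{l.monotonicity}) comparing $\hfh^\beta_{t,1}$ on each half-interval to a rate--$2$ Brownian bridge with an artificially lowered second-curve floor, for which the reweighting can only make the bridge smaller in distribution, combined with the simpler reverse comparison obtained by dropping the reweighting entirely (which gives a stochastic upper bound). Matching these upper and lower stochastic bounds at the scale $ML^{1/4}$ is what ultimately closes the argument; it is also why the statement is formulated only on the slightly shortened interval $[-L^{1/2}, L^{1/2}]$ rather than on the full interval of support of the tent.
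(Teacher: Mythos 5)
The paper does not give a proof of this statement at all: it quotes it directly from \cite[Theorem 9]{GH22} for $t \ge t_0$, and then in \Cref{sec:appc} merely explains how to relax the requirement $t \ge t_0$ to all $t>0$ by verifying the abstract tail-bound assumptions of \cite[Section 2.2]{GH22} using the recent small-time estimates of \cite{das2023law} (together with a small modification of \cite[Lemma 3.11]{GH22}). Your proposal instead undertakes a self-contained reproof, which is a genuinely different route; at the strategic level your sketch is consistent with what \cite{GH22} actually does to prove its Theorem 9 — control boundary data at $\pm L^{1/2}$ and the lower curve, invoke the Brownian Gibbs property on each half-interval, and compare to a rate-$2$ Brownian bridge whose linear interpolant is the tent, handling the reweighting via monotonicity.

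There is, however, a concrete step in your sketch that does not go through as written. To upper-bound $\P\bigl(\hfh^\beta_{t,1}(\pm L^{1/2}) > -L + K \mid \hfh^\beta_{t,1}(0)\in(L,L+\diff L)\bigr)$ you propose ``a BK-type argument (Lemma~\ref{l.bk}) together with Theorem~\ref{lem:fh-ut}'' to extract an extra factor of $\exp(-cK^{3/2})$. But Lemma~\ref{l.bk} bounds increasing events for the \emph{second} curve $\hfh^\beta_{t,2}$ conditionally on $\hfh^\beta_{t,1}$ by unconditioned probabilities for $\hfh^\beta_{t,1}$; it is not a van den Berg--Kesten statement for disjoint occurrence, and it says nothing about the joint law of the top curve at two separated spatial locations. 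Both $\{\hfh^\beta_{t,1}(0) > L\}$ and $\{\hfh^\beta_{t,1}(L^{1/2}) > -L + K\}$ are increasing events, so the FKG inequality (Lemma~\ref{l.fkg}) goes the \emph{wrong} way, and there is no BK inequality stated in the paper that yields the product upper bound you need. In the paper and in \cite{GH22} the analogous boundary-point control (see Lemma~\ref{l.good separation} here, which however goes the other way) is obtained via a Gibbs resampling argument with explicit stochastic comparisons, not by disjointness. You would need to replace this step with an argument of that kind for the sketch to be complete.

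Two smaller points. Your remark that ``the statement is formulated only on the slightly shortened interval $[-L^{1/2}, L^{1/2}]$'' is mistaken: the tent $L - 2L^{1/2}|x|$ meets the parabola $-x^2$ exactly at $x = \pm L^{1/2}$, so $[-L^{1/2}, L^{1/2}]$ is the full interval of positive tent excess, not a truncation. And nothing in your sketch addresses the small-$t$ regime $L > t^{-1/3-\varepsilon}L_0$, which is the only part of this theorem that the paper actually has to work for (everything with $t$ bounded below being quoted); in that regime the unconditioned a priori tail bounds degenerate, and the paper's \Cref{sec:appc} uses \cite{das2023law} precisely to repair them.
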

This follows from \cite{GH22,GHZ25} in a way similar to \Cref{lem:fh-ut}: under the same set of assumptions on line ensembles this is proved in \cite[Theorem 6]{GH22};
then by \cite[Theorem 2.7 and Proposition 1.3]{GH22} (which uses \cite{GHZ25} as an input), the assumptions are verified in both zero and positive temperature settings. This is for $t>t_0$ for any $t_0>0$ fixed, and the extension to $t>0$ under the conditions mentioned in the statement above is done in Appendix~\ref{sec:appc} along with the same for Theorem~\ref{lem:fh-ut}.

A version of this tent behavior with the conditioning replaced by $\hfh^\beta_{t,1}(0) > L$ also holds, and we will only need that for $t$ bounded away from zero. 
\begin{lemma} \label{lem:fh-tent-up}
For any $t_0>0$, there exists $L_0>0$, such that for any $t>t_0$ and $L> L_0$, we have
\[
\PP\left(\sup_{x\in[-L^{1/2}, L^{1/2}]}\left|\hfh^\beta_{t,1}(x)-L+2L^{1/2}|x|\right| > ML^{1/4} \midd \hfh^\beta_{t,1}(0)>L \right)<\exp(-cM^2),
\]
for any $0<M<cL^{3/4}$. Here the constant $c$ can depend on $t_0$.
\end{lemma}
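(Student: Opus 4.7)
The plan is to deduce Lemma~\ref{lem:fh-tent-up} from the density-conditioning version in Theorem~\ref{lem:fh-tent} by integrating out the conditioning on $\{\hfh^\beta_{t,1}(0)>L\}$. Let $A_M$ denote the event whose probability we wish to bound. The key geometric observation is a comparison between the tent profile centered at height $v$ and the one centered at $L$: for $v \geq L$ and $|x| \leq L^{1/2}$,
\[
\bigl(v - 2v^{1/2}|x|\bigr) - \bigl(L - 2L^{1/2}|x|\bigr) = (v-L) - 2(v^{1/2}-L^{1/2})|x|,
\]
which is decreasing in $|x|$ and lies in $[0, v-L]$. Therefore, if $\hfh^\beta_{t,1}$ lies within $\alpha$ of the tent at $v$ on $[-v^{1/2}, v^{1/2}]$, it lies within $\alpha + (v-L)$ of the tent at $L$ on $[-L^{1/2}, L^{1/2}]$.

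Set $\Delta := \tfrac{1}{2}M L^{1/4}$ and split according to whether $\hfh^\beta_{t,1}(0) \leq L + \Delta$ or not. On $\{\hfh^\beta_{t,1}(0) \in (v, v+\diff v)\}$ with $v \in (L, L+\Delta]$, the geometric observation shows that the tent-at-$v$ event with precision $\tfrac{1}{2}ML^{1/4}$ implies $A_M^c$. Applying Theorem~\ref{lem:fh-tent} with conditioning value $v$ and multiplier $M/2$ (permissible for $M<cL^{3/4}$ after harmlessly shrinking $c$) gives
\[
\P\bigl(A_M \mid \hfh^\beta_{t,1}(0) \in (v, v+\diff v)\bigr) \leq \exp(-cM^2/4),
\]
and integrating yields the same bound for the portion of the event with $\hfh^\beta_{t,1}(0) \leq L + \Delta$. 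For the complementary piece, we use $\P(A_M \mid \hfh^\beta_{t,1}(0)=v)\leq 1$ and bound the tail ratio $R := \P(\hfh^\beta_{t,1}(0) > L+\Delta)/\P(\hfh^\beta_{t,1}(0) > L)$. Upper-bounding the numerator by integrating the upper density bound of Theorem~\ref{lem:fh-ut} and lower-bounding the denominator by integrating the lower density bound over $(L, L+1)$, together with the convexity inequality $(L+\Delta)^{3/2} \geq L^{3/2} + \tfrac{3}{2}L^{1/2}\Delta$, yields
\[
R \leq C\exp\bigl(CL^{3/4} - 2L^{1/2}\Delta\bigr) = C\exp\bigl(-(M-C')L^{3/4}\bigr).
\]
Under the hypothesis $M < c L^{3/4}$ with $c$ small, we have $M L^{3/4} \geq M^2/c$, so for $M$ larger than an absolute constant we conclude $R \leq \exp(-c'M^2)$; for $M$ below that constant the conclusion is trivial after reducing $c'$. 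Combining the two contributions finishes the proof.

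The main delicate point is the third step, the tail ratio: the error term of order $L^{3/4}$ in the one-point density from Theorem~\ref{lem:fh-ut} is exactly of the same order as the gain $ML^{3/4}$ arising from the shift by $\Delta$, so one genuinely needs the hypothesis $M < c L^{3/4}$ (with $c$ small enough that the gain dominates by a definite multiplicative factor) in order to convert $\exp(-ML^{3/4})$ into $\exp(-cM^2)$. A conceptually cleaner but more involved alternative would be to iterate the sharp ratio of Theorem~\ref{t.intro.comparison} across strips of width $L^{1/4}$, but the coarse density integration above suffices here since we are in the $t$-bounded-away-from-zero regime.
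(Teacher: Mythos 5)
Your argument is a genuinely different route from the paper's, and the key idea --- scaling the splitting threshold as $\Delta = \tfrac12 M L^{1/4}$ so that the gain $ML^{3/4}$ in the exponent of the tail ratio outgrows the fixed $O(L^{3/4})$ error of Theorem~\ref{lem:fh-ut} --- is a clean way to avoid Theorem~\ref{t.comparison} entirely. That matters here, because the proof of Theorem~\ref{t.comparison} itself invokes a special case of this lemma; this is exactly why the paper's own proof is two-staged: it first establishes a two-point preliminary version at $M=\log L$ with a fixed threshold $C_1L^{1/4}$ (bounding the tail ratio crudely via Theorem~\ref{lem:fh-ut}), and then re-runs the argument for all $M$ with the much smaller threshold $M^2L^{-1/2}$ once Theorem~\ref{t.comparison} is available. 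Your single pass sidesteps that circularity and is correct whenever $M$ exceeds a fixed absolute constant determined by the $O(L^{3/4})$ error constants.

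The closing claim that ``for $M$ below that constant the conclusion is trivial after reducing $c'$'' is not correct, though. Since $\exp(-c'M^2)<1$ for every $c'>0$, reducing $c'$ never makes a probability bound automatic, and for $M<C'$ your ratio estimate $R\le C\exp\bigl(-(M-C')L^{3/4}\bigr)$ exceeds $1$ and yields nothing. In fact a small-ball heuristic (the conditioned profile must confine a Brownian bridge on $[-L^{1/2},L^{1/2}]$, of typical amplitude $L^{1/4}$, to a tube of half-width $ML^{1/4}$) shows the left-hand probability approaches $1$ at the much faster rate $1-\exp(-\Theta(M^{-2}))$ as $M\downarrow 0$, so the asserted inequality is only meaningful for $M$ bounded below by a constant --- an implicit restriction that applies equally to Theorem~\ref{lem:fh-tent} and to the paper's own two-stage proof. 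The appropriate fix is simply to state that you, like the paper, prove and need the bound only for $M$ above a fixed constant, rather than to assert triviality for small $M$.
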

This lemma will help with the presentation of the proof of the upcoming crucial \Cref{t.comparison}.
However, while proving this in \Cref{sec:tnp}, we will show that in fact one does not need to rely on it for the proof of \Cref{t.comparison}.

We next give a more precise comparison of the tent with Brownian bridges.
\begin{lemma}  \label{l.dp-comp-s}
For $t$ and $L$ as in \Cref{lem:fh-tent}, the following is true.
Consider the following processes, each defined on $[0,L^{1/2}/2]$,
\[
x\mapsto \hfh^\beta_{t,1}(x)-L, \quad x\mapsto \hfh^\beta_{t,1}(-x)-L,\]
conditional on $\hfh^\beta_{t,1}(0)\in (L,L+\diff L)$. Also, consider the processes (each defined on $[0,L^{1/2}/2]$ as well)
\[x\mapsto B_1(x)+2xL^{-1/2}(\hfh^\beta_{t,1}(L^{1/2}/2)-L),\quad x\mapsto B_2(x)+2xL^{-1/2}(\hfh^\beta_{t,1}(-L^{1/2}/2)-L,\]
also conditional on $\hfh^\beta_{t,1}(0)\in (L,L+\diff L)$, where $B_1, B_2$ are rate $2$ Brownian bridges in $[0, L^{1/2}/2]$, independent of each other and independent of $\hfh^\beta_{t,1}$.
They can be coupled so that under an event with probability $>1-C\exp(-cL^{3/2})$ for both, their Radon-Nikodym derivative is $1+O(\exp(-ct^{1/3}L))$.
\end{lemma}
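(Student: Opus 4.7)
The plan is to apply the Brownian Gibbs property on the interval $[0, L^{1/2}/2]$ and then compare the resulting reweighted Brownian bridge with a plain rate-2 Brownian bridge having the same (random) endpoints. By Lemma~\ref{lem:Gibbs} with $[a,b] = [0, L^{1/2}/2]$, the external $\sigma$-algebra $\Fext([0, L^{1/2}/2])$ determines $\hfh^\beta_{t,1}(0)$, $\hfh^\beta_{t,1}(L^{1/2}/2)$, and all of $\hfh^\beta_{t,2}$; conditionally on this data the restriction of $\hfh^\beta_{t,1}$ to $[0, L^{1/2}/2]$ is a rate-2 Brownian bridge between its endpoints, reweighted by $W(\cdot, \hfh^\beta_{t,2})$. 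After shifting by $-L$ this becomes a Brownian bridge from $0$ to $\hfh^\beta_{t,1}(L^{1/2}/2)-L$, again reweighted by $W$, which is exactly the first process, while the second process is the same underlying Brownian bridge without the reweighting. The left-hand pair is treated identically, and conditional on $\hfh^\beta_{t,1}(0)$ the two sides decouple by the independence of the line ensemble over disjoint intervals.

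Next I define a high-probability event $\cA$ on which the reweighting factor is essentially $\Fext$-determined. Let $\cA$ be the intersection of (i) the uniform tent estimate of Theorem~\ref{lem:fh-tent}, so that $|\hfh^\beta_{t,1}(x)-(L-2L^{1/2}|x|)| \leq ML^{1/4}$ on $[-L^{1/2}, L^{1/2}]$ for a suitable $M$; (ii) an upper bound on $\hfh^\beta_{t,2}$ obtained from Lemma~\ref{lem:fh-2ut} together with the BK-type inequality Lemma~\ref{l.bk}, keeping the second line a distance of order $L$ below the tent; and (iii) a finer Brownian modulus estimate for the deviation of $\hfh^\beta_{t,1}$ from the tent on the narrow near-origin window $[0, (t^{1/3}L^{1/2})^{-1}]$ where the Gibbs integrand is exponentially concentrated. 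Each of (i)--(iii) holds with probability at least $1-C\exp(-cL^{3/2})$ under both the conditional law of $\hfh^\beta_{t,1}$ and the plain rate-2 Brownian bridge law from $0$ to $\hfh^\beta_{t,1}(L^{1/2}/2)-L$, the latter by standard Brownian bridge tail and modulus estimates combined with the tail control on $\hfh^\beta_{t,2}$.

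On $\cA$ the core step is to show that $W/\E[W \mid \Fext] = 1 + O(\exp(-ct^{1/3}L))$. In the zero-temperature case $\beta = \infty$, $W$ is the indicator that the bridge stays above $\hfh^\beta_{t,2}$; on $\cA$ the tent lies above $\hfh^\beta_{t,2}$ by a vertical margin of order $L$, so $W = 1$ deterministically, while $\E[W \mid \Fext] \geq 1 - C\exp(-cL^{3/2})$ by the same bridge-below-tube estimate, giving the ratio bound. In the positive-temperature case $\beta = 1$, write $W = \exp(-2\int_0^{L^{1/2}/2}\exp(\hfh^\beta_{t,1}-\hfh^\beta_{t,2})\diff x)$ and change variable $u = 2t^{-1/3}L^{1/2}y$ in the unscaled integral to see that its mass lies in $u = O(1)$; event~(iii) then confines the random part of $\hfh^\beta_{t,1}-\hfh^\beta_{t,2}$ on this window to scales small enough that $\log W$ agrees with its $\Fext$-measurable leading value up to additive error $\exp(-ct^{1/3}L)$, yielding the ratio bound after dividing by $\E[W \mid \Fext]$. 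Coupling both processes as measurable functionals of a single rate-2 Brownian bridge together with the $\Fext$-data then makes the Radon--Nikodym derivative on $\cA$ exactly this ratio. The main obstacle is the fine control required in the last step for $\beta = 1$: the magnitude of $\log W$ itself scales like $\exp(t^{1/3}L)/(t^{1/3}L^{1/2})$, so the path-dependent fluctuations of the Gibbs integrand must concentrate to extraordinary precision around their $\Fext$-measurable leading term, and obtaining this requires exploiting not merely the uniform $O(L^{1/4})$ tent bound but the scale-sensitive Brownian structure of $\hfh^\beta_{t,1}-\hfh^\beta_{t,2}$ on the $O((t^{1/3}L^{1/2})^{-1})$-window where the exponential integrand lives.
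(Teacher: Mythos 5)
Your general plan---apply the Brownian Gibbs property on the two half-intervals, bound the reweighting factor $W$, conclude the Radon--Nikodym derivative is close to $1$---matches the paper's strategy in outline. The paper applies the Gibbs property once on the full interval $[-t^{2/3}L^{1/2}/2, t^{2/3}L^{1/2}/2]$ with $\Fext$ enlarged to include the value at $0$, rather than twice as you do, but that is cosmetic.

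The genuine gap is in your treatment of the $\beta=1$ reweighting, and it stems from a sign issue in how you read the Gibbs weight. You write $W = \exp\bigl(-2\int_0^{L^{1/2}/2}\exp(\hfh^\beta_{t,1}-\hfh^\beta_{t,2})\diff x\bigr)$ and conclude (correctly, given this formula) that the integrand is $\exp(\text{gap}) \approx \exp(t^{1/3}L)$ near the origin, so $|\log W|$ is exponentially \emph{large} and the proof hinges on path-by-path concentration of the Gibbs integral to precision $\exp(-ct^{1/3}L)$, which you flag as the main obstacle. This would indeed be a serious problem: no input available in the paper delivers concentration at that precision, and the Brownian comparison you are trying to prove cannot plausibly require it. But the obstacle is a phantom. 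The correct interaction penalizes the lower curve $\fh^\beta_{t,2}$ exceeding the upper curve $B$, so the integrand in the weight should be $\exp(\fh^\beta_{t,2}-B)$, i.e.\ $\exp(\text{second minus first})$. One can see this from the $\beta=\infty$ degeneration to $\don[\fh^\beta_{t,2}\le B]$, and from the physics of the KPZ$_t$ line ensemble where the Boltzmann factor is $\exp\bigl(-\int e^{t^{1/3}(\mathcal H_{n+1}-\mathcal H_n)}\bigr)$ with $n+1$ indexing the lower curve. The formula \eqref{e.rn derivative} in the paper has a typo (and so does the displayed inequality $\fh^\beta_{t,2}-B > t^{1/3}\cdot 0.1 L$ mid-proof; the two cancel). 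With the correct sign, on the event where the bridge stays above roughly $0.2L$ (scaled) and $\hfh^\beta_{t,2} < 0.1L$ (scaled), the gap $\fh^\beta_{t,2}-B$ is uniformly below $-0.1\, t^{1/3}L$, the integrand is uniformly $\le \exp(-0.1\, t^{1/3}L)$, the integral is $\le t^{2/3}L^{1/2}\exp(-0.1\,t^{1/3}L)$, and hence $W \ge 1 - C\exp(-c\,t^{1/3}L)$ deterministically on a high-probability event. Since $W\le 1$ always, the normalizer $Z=\E[W\mid\cF]$ is also in $[1-C\exp(-c\,t^{1/3}L),\,1]$, and $W/Z = 1+O(\exp(-c\,t^{1/3}L))$ with no concentration needed and no near-origin scale analysis at all. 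Replacing your step (iii) (the $(t^{1/3}L^{1/2})^{-1}$-window analysis) and the ensuing ratio argument with this uniform bound on $W$ is exactly what the paper does and closes the gap.

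Two smaller remarks. First, your phrase "the two sides decouple by the independence of the line ensemble over disjoint intervals" should be stated as \emph{conditional} independence given $\Fext$ (including the value at $0$); the line ensemble is not unconditionally independent across disjoint spatial intervals. Second, the Gibbs property as in Lemma~\ref{lem:Gibbs} is formulated for the unscaled $\fh^\beta_{t,1}$ on an unscaled interval, so you should apply it on $[0,t^{2/3}L^{1/2}/2]$ and rescale afterward; you implicitly do this but should make the change of scale explicit since the factor $t^{1/3}$ in the error rate $\exp(-ct^{1/3}L)$ comes precisely from this rescaling.
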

The proof of this as well as the following lemma will be given in \Cref{sec:tnp}.
\begin{lemma}\label{l.control near tent center}
For $t$ and $L$ as in \Cref{lem:fh-tent}, any $I\subseteq [-\frac{1}{2}L^{1/2}, \frac{1}{2}L^{1/2}]$ and $\sigma_I = \sup_{x\in I} |x|^{1/2}$, and $0 < M< L^{3/4}$,
$$\P\left(\sup_{x\in I}|\hfh^\beta_{t,1}(x) - (L - 2L^{1/2}|x|)| > M\sigma_I\midd \hfh^\beta_{t,1}(0)\in (L,L+\diff L)\right) < C\exp(-cM^2).$$
If we in addition assume $t>t_0$ for some $t_0>0$, then we have
$$\P\left(\sup_{x\in I}|\hfh^\beta_{t,1}(x) - (L - 2L^{1/2}|x|)| > M\sigma_I\midd \hfh^\beta_{t,1}(0)> L\right) < C\exp\left(-c(M^2 \wedge M\sigma_IL^{1/2})\right),$$
with the constant $c$ depending on $t_0$.
\end{lemma}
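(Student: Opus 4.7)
The plan is to reduce to controlling a Brownian bridge with a linear drift, leveraging the comparison in Lemma~\ref{l.dp-comp-s} together with endpoint control at $L^{1/2}/2$ from Theorem~\ref{lem:fh-tent}. By the reflection symmetry of $\hfh^\beta_{t,1}$ about the origin, I may assume $I\subseteq[0, L^{1/2}/2]$.

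For the first statement, conditional on $\hfh^\beta_{t,1}(0)\in(L,L+\diff L)$, Lemma~\ref{l.dp-comp-s} allows me to replace $x\mapsto\hfh^\beta_{t,1}(x)-L$ on $[0,L^{1/2}/2]$ by $x\mapsto B_1(x)+2xL^{-1/2}(\hfh^\beta_{t,1}(L^{1/2}/2)-L)$ up to an event of probability $C\exp(-cL^{3/2})$ and a Radon-Nikodym factor $1+O(\exp(-ct^{1/3}L))$; since $M<L^{3/4}$ (so $M^2<L^{3/2}$) and $t^{1/3}L\gtrsim 1$, these losses are absorbed into the desired bound $\exp(-cM^2)$. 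A short rearrangement gives
$$\hfh^\beta_{t,1}(x) - (L - 2L^{1/2}|x|) = B_1(x) + 2xL^{-1/2}\hfh^\beta_{t,1}(L^{1/2}/2)$$
for $x\in[0,L^{1/2}/2]$. Theorem~\ref{lem:fh-tent} applied at $L^{1/2}/2$ (where the tent value is $0$) gives $|\hfh^\beta_{t,1}(L^{1/2}/2)|\le ML^{1/4}$ with conditional probability $\ge 1-C\exp(-cM^2)$; on this event the drift term at $x\in I$ is at most $2M\sigma_I^2 L^{-1/4}\le \sqrt{2}M\sigma_I$, using $\sigma_I^2\le L^{1/2}/2$. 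A standard Gaussian maximal inequality for a rate~2 Brownian bridge (for which $\mathrm{Var}(B_1(x))\le 2x$) yields $\P(\sup_{x\in I}|B_1(x)|>M\sigma_I)\le C\exp(-cM^2)$, since $I\subseteq[0,\sigma_I^2]$. Combining and reparametrizing by a constant factor in $M$ completes the first part.

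For the second statement with $t>t_0$, I integrate over the value $\hfh^\beta_{t,1}(0)=L+\delta$. Bounding the density of $\hfh^\beta_{t,1}(0)$ from above and the tail $\P(\hfh^\beta_{t,1}(0)>L)$ from below via Theorem~\ref{l.upper tail} and Laplace's method, the conditional density of $\delta:=\hfh^\beta_{t,1}(0)-L$ given $\hfh^\beta_{t,1}(0)>L$ is bounded above by $CL^{1/2}\exp(-2L^{1/2}\delta+C'L^{3/4})$. I split the integration at $\delta_*:=M\sigma_I/2$. For $\delta\le \delta_*$, the tent profile at level $L+\delta$ differs uniformly on $[-L^{1/2}/2,L^{1/2}/2]$ from the target tent at level $L$ by at most $\delta$ (a short computation using $(L+\delta)^{1/2}-L^{1/2}\le \delta/(2L^{1/2})$), so the event of interest implies $\sup_{x\in I}|\hfh^\beta_{t,1}(x)-((L+\delta)-2(L+\delta)^{1/2}|x|)|>M\sigma_I/2$, which by the first part applied at level $L+\delta$ has conditional probability at most $C\exp(-cM^2)$. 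For $\delta>\delta_*$, I use the trivial bound $1$. The two resulting terms are bounded by $C\exp(-cM^2+C'L^{3/4})$ and $\exp(-M\sigma_IL^{1/2}+C'L^{3/4})$, and absorbing the $L^{3/4}$ terms into the final constant $c$ (depending on $t_0$) in the non-trivial regime, while using triviality of the claimed bound when $M^2\wedge M\sigma_IL^{1/2}\lesssim L^{3/4}$, produces the desired $C\exp(-c(M^2\wedge M\sigma_I L^{1/2}))$.

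The main technical obstacle is the bookkeeping in the second part: the $L^{3/4}$ error in the upper-tail asymptotics of Theorem~\ref{l.upper tail} contaminates both the density and the normalization, and must be absorbed cleanly. This is handled by working in the non-trivial regime where $M^2\wedge M\sigma_IL^{1/2}$ exceeds a constant multiple of $L^{3/4}$, and using the $t>t_0$ assumption so that the constants from Theorem~\ref{l.upper tail} can be chosen uniformly.
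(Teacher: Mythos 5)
Your proof of the first bound follows the paper's approach closely: use Lemma~\ref{l.dp-comp-s} to reduce to a rate-$2$ Brownian bridge plus a linear drift controlled via Theorem~\ref{lem:fh-tent} at $\pm L^{1/2}/2$, then apply a Gaussian maximal inequality on the subinterval $I$. The drift and variance computations are correct, and this part is sound.

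The second bound has a genuine gap. You replace Theorem~\ref{t.comparison} with the cruder Theorem~\ref{l.upper tail}, whose error term is $O(L^{3/4})$, and acknowledge this contaminates both pieces. Your rescue is to ``use triviality of the claimed bound when $M^2\wedge M\sigma_IL^{1/2}\lesssim L^{3/4}$,'' but the claimed bound is \emph{not} trivial there: it asserts a probability bound of the form $C\exp(-cL^{3/4})$ (with $C,c$ absolute constants not depending on $L$), which certainly does not follow from $\P(\cdot)\le 1$. Concretely, for $1\ll M\ll L^{3/8}$ the bound is non-trivial but your argument produces nothing better than $C\exp(-cM^2+C'L^{3/4})\ge 1$. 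The paper avoids this entirely by invoking the sharper Theorem~\ref{t.comparison}, whose error is $O(\delta L^{-1/4}\log L + L^{-3/2})$ rather than $O(L^{3/4})$, so the $\exp(-M\sigma_I L^{1/2})$ term in the tail piece never gets swamped. (A secondary, cosmetic issue: for the $\delta\le\delta_*$ piece you do not need a density bound at all; the decomposition $\P(A\mid X>L)\le \sup_{L'\in(L,L+\delta_*)}\P(A\mid X\in(L',L'+\diff L')) + \P(X>L+\delta_*\mid X>L)$, used in the paper, avoids the normalization issue and makes the first piece immediately $C\exp(-cM^2)$. Your tent-shift computation to compare the level-$L$ tent to the level-$(L+\delta)$ tent is correct and is also needed in the paper's version.) The fix is simply to cite Theorem~\ref{t.comparison} (which is available at this point and needs only the conditioning $t>t_0$) in place of Theorem~\ref{l.upper tail} for the tail-ratio bound.
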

The estimates above quickly lead to the following.
\begin{corollary}  \label{cor.tent-pr}
For $t$ and $L$ as in \Cref{lem:fh-tent}, for any $0<M<L^{3/4}$, and any $a>0$, we have
\begin{multline*}
 \P\left(\sup_{|x|\le L^{1/2}}\left|\hfh^\beta_{t,1}(x) - (L - 2L^{1/2}|x|)\right| (|\log(|x|/a)|+1)^{-1} |x|^{-1/2} > M\midd \hfh^\beta_{t,1}(0)\in (L,L+\diff L)\right) \\ < C\exp(-cM^2).
\end{multline*}
\end{corollary}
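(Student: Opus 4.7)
I would prove this by a dyadic chaining argument in the radial variable $|x|$: split the domain into annuli around $|x|=a$ at each logarithmic scale, apply \Cref{l.control near tent center} on each annulus, and union-bound. The role of the logarithmic weight $(|\log(|x|/a)|+1)^{-1}$ is precisely to make this sum convergent with only a constant loss in the Gaussian exponent, which is why it must appear in the statement.

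First I would dispose of the boundary region $|x|\in [L^{1/2}/2,L^{1/2}]$: there $|x|^{-1/2}\leq \sqrt{2}\,L^{-1/4}$ and the log factor is $\geq 1$, so the event that the weighted supremum on that region exceeds $M$ is contained in $\{\sup_{|x|\leq L^{1/2}}|\hfh^\beta_{t,1}(x)-(L-2L^{1/2}|x|)|>(M/\sqrt{2})L^{1/4}\}$, which has probability at most $C\exp(-cM^2)$ by \Cref{lem:fh-tent}. For the interior I would define the annuli
\[
J_k=\{x:2^k a\leq |x|\leq 2^{k+1}a\}\cap [-L^{1/2}/2,L^{1/2}/2],\qquad k\in\Z,
\]
and use three deterministic facts on $J_k$: $(|\log(|x|/a)|+1)^{-1}\leq C_0(|k|+1)^{-1}$ with an absolute $C_0>0$, $|x|^{-1/2}\leq (2^k a)^{-1/2}$, and $\sigma_{J_k}:=\sup_{x\in J_k}|x|^{1/2}\leq (2^{k+1}a)^{1/2}$. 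Combining these, if the weighted supremum on $J_k$ exceeds $M$ then
\[
\sup_{x\in J_k}\bigl|\hfh^\beta_{t,1}(x)-(L-2L^{1/2}|x|)\bigr|>c M(|k|+1)\,\sigma_{J_k}
\]
for an absolute $c>0$, and \Cref{l.control near tent center} applied with parameter $\tilde M_k=cM(|k|+1)$ bounds this probability by $C\exp\bigl(-c'M^2(|k|+1)^2\bigr)$, provided $\tilde M_k<L^{3/4}$. Using $(|k|+1)^2\geq 1+|k|$ the sum $\sum_k C\exp(-c'M^2(|k|+1)^2)$ is at most $C\exp(-c'M^2)$ for $M\geq 1$; for $M<1$ the target bound $C\exp(-cM^2)$ exceeds $1$ on choosing $C$ large and the claim is trivial.

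The main technical obstacle I expect is the tail of very negative $k$ for which $cM(|k|+1)\geq L^{3/4}$, where \Cref{l.control near tent center} cannot be invoked with my chosen parameter. These correspond to extremely small $|x|$ near the origin. I would handle them by truncating the dyadic decomposition at the largest $k_*$ with $cM(|k_*|+1)<L^{3/4}$ and treating all smaller $|x|$ as the single interval $I_*=[-2^{k_*}a,2^{k_*}a]$. A single application of \Cref{l.control near tent center} to $I_*$ at the maximal admissible parameter $L^{3/4}$ yields
\[
\sup_{x\in I_*}\bigl|\hfh^\beta_{t,1}(x)-(L-2L^{1/2}|x|)\bigr|\leq L^{3/4}\sigma_{I_*}
\]
off an event of probability $\leq C\exp(-cL^{3/2})$, and since $L^{3/2}\gtrsim M^2(|k_*|+1)^2\gtrsim M^2$ this is absorbed into $C\exp(-cM^2)$. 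On the resulting good event, writing $|\hfh^\beta_{t,1}(x)-(L-2L^{1/2}|x|)|\leq L^{3/4}\sigma_{I_*}+2L^{1/2}|x|$ and using $|x|\leq 2^{k_*}a$ together with the $(|k|+1)^{-1}$ decay of the log weight on each sub-annulus inside $I_*$, I would check that both pieces of the bound divided by $(|\log(|x|/a)|+1)|x|^{1/2}$ are at most $M$ throughout $I_*$. A final union bound over boundary, interior annuli, and $I_*$ gives the claim.
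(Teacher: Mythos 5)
Your dyadic decomposition is the same strategy the paper intends (its one-line proof says exactly ``union bound over dyadic scales, using \Cref{lem:fh-tent} or \Cref{l.control near tent center} at each scale''), and the handling of the boundary region and of the interior annuli $J_k$ with $cM(|k|+1)<L^{3/4}$ is correct. However, the final step, where you dispose of the central interval $I_*=[-2^{k_*}a,2^{k_*}a]$ by a single application of \Cref{l.control near tent center} at parameter $\sim L^{3/4}$, contains a genuine gap: the resulting bound $\sup_{x\in I_*}|\hfh^\beta_{t,1}(x)-(L-2L^{1/2}|x|)|\leq L^{3/4}\sigma_{I_*}$ is \emph{uniform} over $I_*$, whereas the normalizing weight $(|\log(|x|/a)|+1)|x|^{1/2}$ in the corollary decays to $0$ as $|x|\to 0$. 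Concretely, at $|x|=2^{k_*-n}a$ (for $n\geq 0$) the first piece of your bound divided by the weight is of order
\[
\frac{L^{3/4}\sigma_{I_*}}{(|k_*|+n+1)\,2^{-n/2}\sigma_{I_*}}=\frac{L^{3/4}\,2^{n/2}}{|k_*|+n+1},
\]
which is $\asymp M$ only at $n=0$ and grows geometrically in $n$, so it is certainly not $\leq M$ throughout $I_*$. The claim ``I would check that both pieces \ldots are at most $M$ throughout $I_*$'' is therefore false, and this is not a purely cosmetic issue since the corollary's supremum runs all the way down to $x=0$.

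The difficulty cannot be repaired by instead applying \Cref{l.control near tent center} to a geometrically shrinking chain $2^{-n}I_*$ at parameter $\sim L^{3/4}$, since then the numerator bound $\sim L^{3/4}\sigma_{2^{-n}I_*}$ does decay correctly but the probability cost $C\exp(-cL^{3/2})$ per scale no longer sums. The constraint $M<L^{3/4}$ in \Cref{l.control near tent center} is an artifact of its invocation of \Cref{lem:fh-tent} to control the boundary data at $\pm L^{1/2}/2$; once that boundary data is fixed (an event of probability $\geq 1-C\exp(-cM^2)$), the process on $[-\tfrac12 L^{1/2},\tfrac12 L^{1/2}]$ is, by \Cref{l.dp-comp-s}, absolutely continuous (with Radon--Nikodym derivative $1+O(e^{-ct^{1/3}L})$, off an event of probability $\leq C\exp(-cL^{3/2})$) with respect to a Brownian bridge with the given endpoints. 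For a genuine Brownian bridge the dyadic chaining with thresholds $\tilde M_k=cM(|k|+1)$ works at \emph{all} scales $k\to-\infty$ with no cap on $\tilde M_k$, and the geometric summability of $\exp(-cM^2(|k|+1)^2)$ closes the argument. So the fix is to invoke \Cref{l.dp-comp-s} (rather than repeatedly invoking \Cref{l.control near tent center}) for the inner region, so that the parameter restriction is never encountered.
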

This estimate is obtained via a union bound over dyadic scales, using \Cref{lem:fh-tent} or \Cref{l.control near tent center} at each scale.
The parameter $a$ represents the scale of $x$ at which the logarithm becomes of constant order, which will provide some convenient flexibility in applications.

\subsection{Gaussian estimate}\label{s.tools.brownian}
Here we recall a standard bound on the tail of centered Gaussian random variables. 

\begin{lemma}\label{l.normal bounds}
For $\sigma>0$ and $x> 0$,
$$\frac{1}{\sqrt{2\pi}}\cdot\frac{\sigma}{x}\left(1-\frac{\sigma^2}{x^2}\right)\exp\left(-\frac{x^2}{2\sigma^2}\right) \leq \P\left(\mathcal{N}(0,\sigma^2) \geq x\right) \leq \frac{1}{\sqrt{2\pi}}\cdot\frac{\sigma}{x}\exp\left(-\frac{x^2}{2\sigma^2}\right).$$
\end{lemma}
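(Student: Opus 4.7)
The plan is to reduce to the standard normal by scaling and then prove the classical Mill's ratio two-sided inequality via one integration by parts for the upper bound and two for the lower bound.

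First, substituting $t = \sigma u$ gives
\[
\P\left(\mc N(0,\sigma^2)\geq x\right) = \frac{1}{\sqrt{2\pi}}\int_{x/\sigma}^{\infty} e^{-u^2/2}\,\diff u,
\]
so it suffices to show, for every $y>0$,
\[
\frac{1}{y}\left(1-\frac{1}{y^2}\right)e^{-y^2/2} \;\leq\; \int_{y}^{\infty} e^{-u^2/2}\,\diff u \;\leq\; \frac{1}{y}e^{-y^2/2},
\]
and then apply this with $y=x/\sigma$.

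For the upper bound, I would write $e^{-u^2/2} \leq (u/y) e^{-u^2/2}$ for $u\geq y$, so that
\[
\int_{y}^{\infty} e^{-u^2/2}\,\diff u \leq \frac{1}{y}\int_{y}^{\infty} u\, e^{-u^2/2}\,\diff u = \frac{1}{y}e^{-y^2/2}.
\]
For the lower bound I would integrate by parts twice. Writing $e^{-u^2/2} = \frac{1}{u}\cdot u e^{-u^2/2}$ and using that $\frac{\diff}{\diff u} e^{-u^2/2} = -u e^{-u^2/2}$, one integration by parts yields
\[
\int_{y}^{\infty} e^{-u^2/2}\,\diff u = \frac{1}{y}e^{-y^2/2} - \int_{y}^{\infty} \frac{1}{u^2} e^{-u^2/2}\,\diff u.
\]
Applying the same trick to the remaining integral (writing $\frac{1}{u^2}e^{-u^2/2} = \frac{1}{u^3}\cdot u e^{-u^2/2}$ and integrating by parts) gives
\[
\int_{y}^{\infty} \frac{1}{u^2} e^{-u^2/2}\,\diff u = \frac{1}{y^3}e^{-y^2/2} - \int_{y}^{\infty} \frac{3}{u^4} e^{-u^2/2}\,\diff u \leq \frac{1}{y^3}e^{-y^2/2},
\]
where the inequality uses positivity of the remaining integrand. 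Combining the two displays yields the lower bound.

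There is no real obstacle here; this is a short, self-contained calculation, and the only thing to be careful about is ensuring the integration-by-parts boundary terms at infinity vanish, which is immediate since $u^{-k} e^{-u^2/2}\to 0$ as $u\to\infty$.
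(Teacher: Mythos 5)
Correct, and essentially the same approach as the paper: scale to the standard normal and bound the tail integral by the Mill's-ratio integration-by-parts identity, once for the upper bound and again for the lower bound. Your use of $e^{-u^2/2}\leq (u/y)e^{-u^2/2}$ for the upper bound is just the first integration by parts written as a domination step, so there is no substantive difference.
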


\begin{proof}
We set $\sigma=1$ without loss of generality. Now we write $\P\left(\mathcal{N}(0,\sigma^2) \geq x\right)$ as an integral of the normal density and obtain the claimed bounds by doing integration by parts one for the upper bound and again for the lower bound.
\end{proof}

\section{Coalescence and Brownian bridge comparison under upper tail}  \label{sec:coal}
This section develops the key coalescence estimates that our analysis relies on.
It will be convenient to denote (within this section)
\[
\cS^{\beta=1}_t(x,y)=t^{-1/3}(\log\cZ(t^{2/3}x,0;t^{2/3}y,t)+t/12),
\]
for any $t>0$.
One may interpret this as a `rescaled KPZ sheet'. The zero temperature analog, that is the Airy sheet, will be denoted by
\[
\cS^{\beta=\infty}_t(x,y)=t^{-1/3}\cL(t^{2/3}x,0;t^{2/3}y,1).
\] Note that actually the law of $\cS^{\beta=\infty}_t$ is the same for any $t>0$.

From these definitions, we see that for any $t>0$ and $\beta=1, \infty$, we have $\cS^\beta_t(0,x)=\hat{\fh}^\beta_{t,1}(x)=t^{-1/3}\fh^\beta_{t,1}(t^{2/3}x)$. 
By the shear, shift, and reflection invariance properties of $\cL$ and $\cZ$ (introduced in \Cref{ssec:dlg} and \Cref{sss:pf}), we have that $\cS^\beta_t$ has the same law as
\[(x,y)\mapsto\cS^\beta_t(x+a,y+b)+(x+a-y-b)^2-(x-y)^2, \quad (x,y)\mapsto \cS^\beta_t(-x,-y), \quad (x,y)\mapsto \cS^\beta_t(y,x).\]

We next deduce two uniform bounds of $\cS^\beta_t$, which are for both $\beta=1,\infty$ and any $t>0$.
The first is an H\"{o}lder estimate. 
\begin{lemma}  \label{lem:fh-int-ub}
There exists $L_0>0$ such that, for any $t>0$ and $M^2>(t^{-1/6}\vee 1)L_0$,
and $0<d\le 1$, we have
\[
\PP\left(\sup_{x,y\in[0,d]} |\cS^\beta_t(x,y)-\cS^\beta_t(0,0)|>Md^{1/2}\right) < C\exp(-cM^2).
\]
\end{lemma}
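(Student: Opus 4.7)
The plan is to deduce this two-dimensional H\"older bound from the one-dimensional bound in Proposition~\ref{lem:fh-cont} by exploiting the shift and reflection symmetries of $\cS^\beta_t$ recorded at the start of this section, and then running a standard dyadic chaining. While $\cS^\beta_t$ is not itself a one-parameter process, each of its horizontal or vertical slices will turn out, in distribution, to be a $\hat{\fh}^\beta_{t,1}$-process plus a deterministic linear correction, and this suffices to feed into the chaining.

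For the reduction to slices, I will use the shift invariance $(x,y)\mapsto \cS^\beta_t(x+a,y+b) + (x+a-y-b)^2 - (x-y)^2 \stackrel{d}{=} \cS^\beta_t(\cdot,\cdot)$ as two-variable processes. Taking $a=0$, $b=-y_0$ and evaluating at $y=y_0$ gives $\cS^\beta_t(x,y_0) \stackrel{d}{=} \cS^\beta_t(x,0) + 2xy_0 - y_0^2$ as processes in $x$, and the reflection $(x,y)\mapsto \cS^\beta_t(y,x)$ identifies $\cS^\beta_t(\cdot,0)$ in law with $\hat{\fh}^\beta_{t,1}(\cdot)$. Combining with stationarity of $\hat{\fh}^\beta_{t,1}(\cdot)+(\cdot)^2$ to shift the base point, and applying Proposition~\ref{lem:fh-cont}, I will establish that for any $x_0,y_0\in[0,d]$, any $r\in(0,d]$, and any $M^2 > (t^{-1/6}\vee 1)L_0$,
\[
\P\left(\sup_{x\in[x_0,x_0+r]}|\cS^\beta_t(x,y_0) - \cS^\beta_t(x_0,y_0)| > Mr^{1/2}\right) \le C\exp(-cM^2),
\]
together with the analogous bound on vertical slices. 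The linear correction $|2y_0(x-x_0)| \le 2dr \le 2r^{1/2}$ (using $d\le 1$ and $r\le d$) is absorbed into $Mr^{1/2}$ by slightly enlarging $M$.

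Next I will run a 2D dyadic chaining. Define the grids $G_k = \{(id2^{-k},jd2^{-k}) : 0\le i,j\le 2^k\}$ and, for each $u\in[0,d]^2$, let $u_k\in G_k$ be a grid approximant with $|u-u_k|\le d2^{-k}$. Continuity of $\cS^\beta_t$ yields the telescoping identity
\[
\cS^\beta_t(u) - \cS^\beta_t(0,0) = \sum_{k\ge 0}\bigl[\cS^\beta_t(u_{k+1}) - \cS^\beta_t(u_k)\bigr],
\]
in which each summand splits by triangle inequality into at most one horizontal and one vertical 1D slice increment of length at most $d2^{-k}$. Setting $M_k = M + \alpha\sqrt{k+1}$ for a sufficiently large $\alpha$, a union bound over the $O(4^{k+1})$ adjacent pairs $(u_k,u_{k+1})$ combined with the slice bound yields that with probability at least $1 - C4^{k+1}\exp(-cM_k^2)$ every scale-$k$ increment is at most $2M_k(d2^{-k})^{1/2}$. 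Summing $\sum_k M_k(d2^{-k})^{1/2} \le CMd^{1/2}$ and $\sum_k 4^{k+1}\exp(-cM_k^2) \le C\exp(-cM^2)$ then completes the argument after rescaling $M$ by a constant.

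The main technical point is calibrating the growth rate $\alpha$: it must be large enough that the entropy factor $4^k$ is absorbed by the Gaussian tails $\exp(-cM_k^2)$, yet small enough that the increment sum $\sum_k M_k2^{-k/2}$ stays $O(M)$; any $\alpha$ with $c\alpha^2 > 2\log 2$ works. The hypothesis $M_k^2 \ge M^2 > (t^{-1/6}\vee 1)L_0$ demanded by Proposition~\ref{lem:fh-cont} is automatically inherited, so no further restriction on $M$ arises beyond the one in the statement.
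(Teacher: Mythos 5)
Your proposal is correct, but it takes a genuinely different and considerably longer route than the paper. The paper's proof avoids chaining entirely by exploiting the quadrangle inequality (planarity) of the models: by \eqref{eq:DL-quad} or \eqref{eq:FR-quad}, for all $x,y\in[0,d]$ one has the deterministic sandwich
\[
-\cS^\beta_t(0,0) + \cS^\beta_t(0,y) + \cS^\beta_t(x,0) \le \cS^\beta_t(x,y) \le -\cS^\beta_t(d,0) + \cS^\beta_t(d,y) + \cS^\beta_t(x,0),
\]
which reduces the two-parameter supremum directly to three one-parameter suprema of horizontal or vertical slice increments; a single application of \Cref{lem:fh-cont} (together with the same symmetries you invoke to identify slices as $\hat\fh^\beta_{t,1}$ plus linear corrections) then finishes the proof without any multiscale decomposition. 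Your dyadic chaining instead uses only the continuity of $\cS^\beta_t$ and the one-dimensional tail bound, so it is a more generic argument: it would apply to any two-parameter field whose horizontal and vertical slices satisfy a H\"older-$\tfrac12$ estimate with Gaussian tails, irrespective of planarity. What it loses is length and sharpness—you have to track an entropy factor $4^{k+1}$ against the tails $\exp(-cM_k^2)$ and choose $\alpha$ accordingly, whereas the quadrangle inequality trades the whole $[0,d]^2$ supremum for a fixed, scale-independent union bound over three slice events. You also need to be slightly careful at scale $0$ (your choice $u_0\in G_0$ need not equal $(0,0)$, so one should either take $u_0=(0,0)$ with a slightly larger approximation constant, or add the base term $\cS^\beta_t(u_0)-\cS^\beta_t(0,0)$ and decompose it into at most two length-$d$ slice increments); this is routine but worth flagging, since as written the telescoping sum does not literally start at $(0,0)$ unless you impose that choice.
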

\begin{proof}
Using \eqref{eq:DL-quad} or \eqref{eq:FR-quad}, for any $x, y\in [0,d]$, we have
\[
-\cS^\beta_t(0,0) + \cS^\beta_t(0,y) + \cS^\beta_t(x,0) \le \cS^\beta_t(x,y) \le -\cS^\beta_t(d,0) + \cS^\beta_t(d,y) + \cS^\beta_t(x,0).
\]
By \Cref{lem:fh-cont}, and symmetries of $\cS^\beta_t$, we have 
\[
\PP\left(\sup_{x\in [0,d]} |\cS^\beta_t(0,x)-\cS^\beta_t(0,0)| > Md^{1/2}\right), \PP\left(\sup_{x\in [0,d]} |\cS^\beta_t(x,0)-\cS^\beta_t(0,0)| > Md^{1/2}\right) < C\exp(-cM^2), 
\]
and
\[
\PP\left(\sup_{x\in [0,d]} |\cS^\beta_t(d,x)-\cS^\beta_t(d,0)| > Md^{1/2}\right) < C\exp(-cM^2);
\]
therefore the conclusion holds.
\end{proof}
\begin{lemma}  \label{lem:fh-ub}
Fix $\varepsilon>0$. There exist $M_0>0$ and a random variable $H>0$, such that $\PP(H>M)<C\exp(-cM^{3/2})$ for any $M>(t^{-1/3-\varepsilon}\vee 1)M_0$, and
\[
|\cS^\beta_t(x,y)+(x-y)^2| < H + \log(|x|+|y|+2), \quad \forall x, y\in \RR.
\]
\end{lemma}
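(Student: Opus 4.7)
The plan is a chaining argument. Let $f(x,y) := \cS^\beta_t(x,y) + (x-y)^2$. First I would verify that $f$ is stationary as a random function on $\R^2$, using the shift and shear invariances of $\cS^\beta_t$ recorded at the start of this section. The diagonal shift $(x,y) \mapsto (x+a, y+a)$ leaves $\cS^\beta_t$ invariant in distribution and fixes $(x-y)^2$. For a shift $y \mapsto y+b$ alone, the shear gives $\cS^\beta_t(x, y+b) \stackrel{d}{=} \cS^\beta_t(x,y) - 2b(y-x) - b^2$ as processes in $(x,y)$; since $(x-y-b)^2 - (x-y)^2 = -2b(x-y) + b^2$, the two corrections cancel and $f(x,y+b) \stackrel{d}{=} f(x,y)$, and symmetrically for shifts in $x$. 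So $f$ is stationary under arbitrary translations.

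Next, for each lattice point $(m,n) \in \Z^2$, stationarity gives $f(m,n) \stackrel{d}{=} \hfh^\beta_{t,1}(0)$, so by \Cref{l.upper tail} and \Cref{l.lower tail},
\[
\P\bigl(|f(m,n)| > L\bigr) \leq C\exp(-cL^{3/2})
\]
for $L$ above the appropriate $t$-dependent threshold, with the upper tail as the binding side (the lower tail decays faster). Stationarity also implies that on each unit cell $[m,m+1] \times [n,n+1]$ the oscillation of $f$ has the same law as on $[0,1]^2$, and since $|(x-y)^2| \leq 1$ on $[0,1]^2$ this oscillation differs from $\sup |\cS^\beta_t(x,y) - \cS^\beta_t(0,0)|$ by at most $1$. \Cref{lem:fh-int-ub} then yields
\[
\P\Bigl(\sup_{(x,y) \in [m,m+1] \times [n,n+1]} |f(x,y) - f(m,n)| > M\Bigr) \leq C\exp(-cM^2) \leq C\exp(-cM^{3/2}).
\]

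I would then partition $\R^2$ into dyadic annular regions $A_k = \{(x,y) : 2^k \leq |x|+|y|+2 < 2^{k+1}\}$ for $k \geq 0$, each containing $O(2^{2k})$ unit cells and satisfying $\log(|x|+|y|+2) \geq c k$. A union bound over cells in $A_k$, combining the one-point bound at level $M + ck$ with the cell-oscillation tail, gives
\[
\P\left(\sup_{(x,y) \in A_k} \bigl[|f(x,y)| - \log(|x|+|y|+2)\bigr] > M\right) \leq C\, 2^{2k}\exp\bigl(-c(M+k)^{3/2}\bigr).
\]
Summing over $k \geq 0$ and splitting at $k = M$, the $k \leq M$ contribution is at most $C\,4^M \exp(-cM^{3/2}) \leq C\exp(-c'M^{3/2})$ for $M$ large (since $M^{3/2}$ dominates $M\log 4$), and the tail $k > M$ is bounded by $\sum_{k>M} C\,4^k \exp(-ck^{3/2}) \leq C\exp(-c'M^{3/2})$. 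Setting $H := \bigl[\sup_{(x,y) \in \R^2}\bigl(|f(x,y)| - \log(|x|+|y|+2)\bigr)\bigr]^{+}$ then produces a random variable with the claimed tail, and continuity of $\cS^\beta_t$ makes $H$ measurable and finite almost surely.

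The main technical obstacle is aligning the various $t$-dependent thresholds. For small $t$, the upper tail of \Cref{l.upper tail} becomes available only above a level of order $t^{-1/3-\varepsilon'}$, which exceeds the conclusion's threshold $t^{-1/3+\varepsilon} M_0$; on the intermediate range the one-point input is not directly applicable. This is handled by choosing the auxiliary $\varepsilon'$ in the one-point input sufficiently small and inflating $M_0$ so that for all $M > (t^{-1/3+\varepsilon}\vee 1)M_0$ and every $k\geq 0$ appearing in the union bound the shifted level $M+ck$ falls in the regime where the one-point tail and the oscillation estimate are both applicable.
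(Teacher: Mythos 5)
Your proposal is correct and follows the paper's proof: use stationarity of $f(x,y)=\cS^\beta_t(x,y)+(x-y)^2$ (from the recorded shift/shear invariance) to reduce to a unit cell, bound the cell oscillation via \Cref{lem:fh-int-ub} with $d=1$, bound $|\cS^\beta_t(0,0)|$ via \Cref{lem:fh-ut} and \Cref{lem:fh-lt}, and union-bound over cells with the logarithmic weight absorbing the polynomial cell count. One caveat on your final paragraph: the threshold mismatch you flag between the conclusion's $t^{-1/3+\varepsilon}$ and the one-point input's $t^{-1/3-\varepsilon'}$ is real, but the proposed fix of shrinking $\varepsilon'$ and inflating $M_0$ does not actually close it — the intermediate window $(t^{-1/3+\varepsilon}M_0,\,t^{-1/3-\varepsilon'}L_0)$ stays nonempty as $t\to 0$ no matter how you tune those constants, and for the $k=0$ cell there is no boost from the logarithmic weight; the paper's own (terse) proof elides this same point, and the exponent in the lemma's statement appears to be intended as $-1/3-\varepsilon$, which is what the argument as written delivers and is all that is needed in the paper's subsequent applications, where the lemma is invoked only at a fixed time.
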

\begin{proof}
It suffices to show that, for any $M>0$,
\begin{equation} \label{eq:fh-iub}
\PP\left(\sup_{x,y\in [0,1]} |\cS^\beta_t(x,y)+(x-y)^2|>M\right) < C\exp(-cM^{3/2}).
\end{equation}
Then the conclusion follows by splitting $\RR$ into intervals of length $1$, using the shear and shift invariance properties of $\cS^\beta_t$, and taking a union bound.

As for \eqref{eq:fh-iub}, we just apply \Cref{lem:fh-int-ub} for $d=1$, and use that $\PP(|\cS^\beta_t(0,0)|>M)<C\exp(-cM^{3/2})$, which can be obtained from \Cref{lem:fh-ut,lem:fh-lt}.
\end{proof}

\begin{figure}[!hbt]
    \centering
\includegraphics{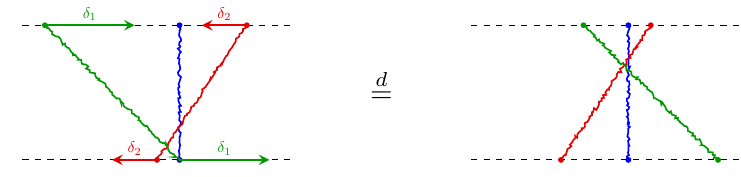}
    \caption{An illustration of the shift-invariance: the joint distributions of the three passage times/partition functions in the left and right panels are the same, under the condition that both endpoints of each path are shifted by the same amount, and the endpoints and shifts are such that the paths are all forced  by planarity to intersect both before and after the shift.}
    \label{fig:sinv}
\end{figure}

The following remarkable shift-invariance property (illustrated in \Cref{fig:sinv}) proved in \cite{borodin2022shift} will also be a key input. It follows from \cite[Theorems 7.8, 7.10]{borodin2022shift} immediately. 
\begin{lemma}  \label{lem:fr-shift}
Take any $m\in\NN$, and $x_1\le \cdots \le x_m$, $y_1\ge \cdots \ge y_m$, and $x_1'\le \cdots \le x_m'$, $y_1'\ge \cdots \ge y_m'$, such that $x_i-y_i=x_i'-y_i'$ for any $i\in\qq{1, m}$. Then $\{\cS^\beta_t(x_i,y_i)\}_{i=1}^m$ and $\{\cS^\beta_t(x_i',y_i')\}_{i=1}^m$ are equal in distribution.
\end{lemma}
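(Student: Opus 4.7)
The plan is to derive the stated shift-invariance for the continuum objects $\cS^\beta_t$ as a direct limit of the discrete shift-invariance established for pre-limit integrable KPZ-type models in \cite[Theorems 7.8, 7.10]{borodin2022shift}. The hypothesis $x_1\le\cdots\le x_m$ together with $y_1\ge\cdots\ge y_m$ places the $m$ endpoint pairs in \emph{reverse} order, which is exactly the planarity configuration in which BGW shift-invariance applies: for any $i<j$, any two continuous directed paths from $(x_i,0)$ to $(y_i,t)$ and from $(x_j,0)$ to $(y_j,t)$ are forced to intersect, since one starts weakly to the left but ends weakly to the right of the other. Under this reverse-order configuration, BGW assert invariance under the simultaneous \emph{diagonal} translations $(x_i,y_i)\mapsto(x_i+s_i,y_i+s_i)$, which are precisely the ones that preserve each individual difference $x_i-y_i$; this diagonal-shift condition coincides with the hypothesis $x_i-y_i=x_i'-y_i'$ of the lemma.

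Next, I would fix, for each $\beta\in\{1,\infty\}$, a pre-limit integrable model to which \cite[Theorems 7.8, 7.10]{borodin2022shift} applies and whose rescaled multi-point passage times (resp.\ log-partition functions) converge jointly to $\{\cS^\beta_t(x_i,y_i)\}_{i=1}^m$. For $\beta=\infty$ one can take Exponential or Brownian LPP and invoke the KPZ scaling limit to the directed landscape from \cite{DOV,DV}; for $\beta=1$ one can use a discrete integrable polymer, or the approximation of the KPZ equation by ASEP through the Cole--Hopf transform together with the identification of the CDRP partition function in \cite{alberts2014continuum,QS,wu2023kpz}. In either case, both endpoint configurations $\{(x_i,y_i)\}$ and $\{(x_i',y_i')\}$ can be approximated simultaneously by integer endpoints at the discrete level while preserving the reverse-order/planarity hypothesis, and the diagonal displacements $x_i'-x_i=y_i'-y_i$ can be taken to scale to the desired real values after the $1{:}2{:}3$ rescaling. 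Applying the pre-limit BGW shift-invariance to the resulting $2m$-tuple of pre-limit observables and then passing to the scaling limit gives equality in distribution of the two $m$-tuples in the statement.

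The main technical point I would expect to have to address is the joint convergence of the pre-limit observables at the shifted and un-shifted endpoint configurations \emph{simultaneously}; this is not a separately stated theorem in \cite{DV} or \cite{wu2023kpz}, but follows from the standard tightness-plus-finite-dimensional-convergence framework used there, since the observable is finite-dimensional and its endpoints lie in a bounded window. Beyond this bookkeeping, the continuum lemma is a direct translation of the pre-limit identity, consistent with the paper's remark that it follows immediately from the BGW results.
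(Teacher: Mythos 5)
Your proposal is mathematically sound in outline, but it re-derives a statement that BGW prove directly at the continuum level, whereas the paper simply cites those continuum theorems. In \cite{borodin2022shift}, Theorems 7.8 and 7.10 are not pre-limit statements: Section 7 of that paper contains a sequence of degenerations of their vertex-model result, and Theorems 7.8 and 7.10 already assert shift-invariance for the directed landscape / Airy sheet (hence $\cS^{\beta=\infty}_t$) and for the multiplicative SHE / continuum polymer (hence $\cS^{\beta=1}_t$), respectively. So the lemma follows by a one-line specialization: rewrite $\cS^\beta_t$ in terms of $\cL$ or $\log\cZ$ via the definitions at the start of Section 3, note that the reverse-ordering hypothesis $x_1\le\cdots\le x_m$, $y_1\ge\cdots\ge y_m$ (and likewise for primes) is the planarity/coupled-paths condition in BGW, and that $x_i-y_i = x_i'-y_i'$ is precisely the diagonal-shift condition there. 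No scaling-limit argument is needed.

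Your alternative route --- discrete shift-invariance plus joint finite-dimensional convergence of pre-limit observables to the continuum sheet --- is a legitimate alternative in principle and is, in fact, close to how BGW themselves degenerate from vertex models to the continuum. What it buys is independence from the continuum degenerations already in \cite{borodin2022shift}; what it costs is exactly the technical overhead you flag at the end, and a bit more than you acknowledge. For $\beta=\infty$, the joint convergence of multi-point LPP weights at several $(x_i,y_i)$ and $(x_i',y_i')$ to the directed landscape is available from the functional (uniform-on-compacts) convergence in \cite{DOV,DV}, so that side is fine. For $\beta=1$, however, convergence of ASEP or a discrete polymer to the CDRP partition function $\cZ(x,0;y,t)$ jointly in several $(x,y)$ pairs, with $x$ varying (i.e., several narrow-wedge starting points coupled to the \emph{same} noise), requires the sheet-level convergence of \cite{wu2023kpz,QS} rather than the one-point Bertini--Giacomin result; you would need to cite that explicitly, and in any case BGW's Theorem 7.10 already packages the outcome. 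Given that the paper explicitly quotes the continuum-level theorems, the cleanest fix to your write-up is to drop the scaling-limit layer and cite \cite[Theorems 7.8 and 7.10]{borodin2022shift} as statements about $\cL$ and $\cZ$ directly.
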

We next give the behavior of $\cS^\beta_t$ under upper tail events. To be concise, for the rest of this section we fix $t>0$. All the constants (including all $C,c>0$) are allowed to depend on $t$.
And $\beta$ is taken to equal either $1$ or $\infty$.

\subsection{Coalescence and independent tents with Brownian bridges}

We now give our main coalescence estimate, in the form of stating that the quadrangle inequalities from \eqref{eq:DL-quad} and \eqref{eq:FR-quad} are sharp under the upper tail. Its proof will be given in \Cref{ss:coap}.
\begin{proposition}  \label{prop:dp-tent-coal}
Take any $L>0$ and $L^+>L+\exp(-0.001L^{3/2})$ (including $L^+=\infty$), and denote $H=10^{-6}L^{1/2}$.
When $\beta=\infty$,
\[
\PP\left(\cS^\beta_t(x,y) = \cS^\beta_t(x,0)+\cS^\beta_t(0,y)-\cS^\beta_t(0,0),\; \forall |x|, |y|\le H\midd L<\cS^\beta_t(0,0)<L^+\right) > 1-C\exp(-cL^{3/2}).
\]
When $\beta=1$ and $L>(t^{-1/3-\varepsilon}\vee 1)L_0$, the same bound holds when the event is replaced by
\[
|\cS^\beta_t(x,y) - (\cS^\beta_t(x,0)+\cS^\beta_t(0,y)-\cS^\beta_t(0,0))| < C\exp(-cL),\quad |x|, |y|\le H.
\]
\end{proposition}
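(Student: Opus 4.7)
By the reflection symmetries of $\cS^\beta_t$, I focus on $x, y \geq 0$; the other sign patterns are analogous. In this case, the quadrangle inequalities \eqref{eq:DL-quad}/\eqref{eq:FR-quad} give the non-negative defect
$$
D(x, y) := \cS^\beta_t(0, 0) + \cS^\beta_t(x, y) - \cS^\beta_t(0, y) - \cS^\beta_t(x, 0) \geq 0,
$$
so the goal reduces to showing $D(x, y) = 0$ in zero temperature and $D(x, y) \leq C\exp(-cL)$ in positive temperature. In zero temperature, \Cref{lem:DL-quad-st} combined with \Cref{lem:quad-equa} tells us that $D(x, y) > 0$ is equivalent to the geodesics $(0, 0) \to (0, t)$ and $(x, 0) \to (y, t)$ being disjoint, in which case the two-path passage time decomposes as $\cL((0, x), 0; (0, y), t) = \cL(0, 0; 0, t) + \cL(x, 0; y, t)$. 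Under the upper-tail conditioning, the tent picture (\Cref{lem:fh-tent} together with \Cref{cor.tent-pr}) places both summands within $O(L^{1/2}(x+y)t^{1/3} + L^{1/4}(x+y)^{1/2}t^{1/3})$ of $Lt^{1/3}$, which is at most $10^{-5}Lt^{1/3}$ for $|x|,|y|\leq H = 10^{-6}L^{1/2}$; hence $D(x,y) > 0$ would force $\cL((0, x), 0; (0, y), t) \geq (2 - 10^{-5})Lt^{1/3}$ on a high-probability event.

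The plan is to rule this out by combining shift invariance (\Cref{lem:fr-shift}) with an upper-tail estimate on the aligned two-path passage time $\cL_2(0,0;0,t)$. The aligned bound comes from \Cref{lem:fh-2ut}: applied at threshold $(1-\epsilon/2)L$ for a small positive constant $\epsilon$, it gives $\P(\hfh^\beta_{t,1}(0) > (1 - \epsilon/2)L,\; \hfh^\beta_{t,1}(0) + \hfh^\beta_{t,2}(0) > (2 - \epsilon)L) \leq \exp(-\tfrac{8}{3}(1 - \epsilon/2)^{3/2}L^{3/2} + CL^{3/4})$. Dividing by the density-based lower bound $\P(\cS^\beta_t(0,0)\in (L,L^+)) \geq \exp(-(\tfrac{4}{3}+10^{-3})L^{3/2} - CL^{3/4})$ obtained from \Cref{lem:fh-ut} and the assumption $L^+ - L \geq \exp(-10^{-3}L^{3/2})$, yields a conditional probability $\leq C\exp(-cL^{3/2})$ as long as $\tfrac{8}{3}(1-\epsilon/2)^{3/2} > \tfrac{4}{3} + 10^{-3} + c$, which holds for $\epsilon$ small. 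Thus $\cL_2(0,0;0,t) \leq (2-\epsilon)Lt^{1/3}$ with probability at least $1 - C\exp(-cL^{3/2})$. Now \Cref{lem:fr-shift}, applied to the crossing pair $(\cS^\beta_t(0,y), \cS^\beta_t(x,0))$ (which does satisfy the required increasing/decreasing ordering for $x, y \geq 0$), lets us shift endpoints into alignment while preserving the differences; combined with the H\"older continuity of \Cref{lem:fh-int-ub} to absorb the residual $O(H)$ shifts, this transfers the aligned bound to $\cL((0,x),0;(0,y),t) \leq (2 - \epsilon/2)Lt^{1/3}$ with the same probability. Choosing $\epsilon > 4\cdot 10^{-5}$ produces the required contradiction, so $D(x, y) = 0$.

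For positive temperature, an analogous but quantitative version of this scheme yields $D(x, y) \leq C\exp(-cL)$. The defect is related to the multi-point partition function $\cM((0, x), 0; (0, y), t)$ (positive by \Cref{lem:pos}) through the normalized-determinant definition of $\cM$; the extension property \Cref{lem:ext} identifies $\lim_{x,y\to 0}\cM((0,x), 0; (0,y), t) = (2/t)\cZ_2(0, 0; 0, t)$, and the composition law \eqref{eq:cM-comp} together with the monotonicity of \Cref{lem:cM-t-mon} and local H\"older bounds quantitatively relate $\cM$ at distinct endpoints to this aligned limit. The aligned upper tail on $\cZ_2$, derived exactly as above, combined with tent-picture control on the individual partition functions, then produces the pointwise bound on $D(x, y)$. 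Uniformity in $|x|, |y| \leq H$ is obtained via a union bound over a polynomially dense net with H\"older continuity filling in between. The main obstacle is the shift-invariance transfer: \Cref{lem:fr-shift} applies only to joints of single passage times or partition functions, not directly to the two-path object, so the comparison of distinct-endpoint to aligned two-line must proceed through a careful joint-distribution argument consistent with the conditioning; in positive temperature this is further complicated by the need to quantify $\cM$ uniformly under small endpoint perturbations rather than exploiting the binary disjointness dichotomy of the zero-temperature case.
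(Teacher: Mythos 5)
Your high-level strategy mirrors the paper's: relate the quadrangle defect to geodesic coalescence, observe that a positive defect combined with the upper-tail conditioning forces the two-path passage time to be at least $(2-o(1))L$, and then contradict this by showing the two-path passage time is unconditionally small via \Cref{lem:fh-2ut} and the lower bound on $\P(L<\cS^\beta_t(0,0)<L^+)$. Your lower-bound half --- using the quadrangle inequality $\cS^\beta_t(x,y) \geq \cS^\beta_t(0,y)+\cS^\beta_t(x,0)-\cS^\beta_t(0,0)$ together with shift invariance on the (correctly ordered) triple $(0,y),(0,0),(x,0)$ and the tent picture --- is sound.

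The gap is in the upper bound on $\cL((0,x),0;(0,y),t)$. You claim that \Cref{lem:fr-shift}, applied to $(\cS^\beta_t(0,y),\cS^\beta_t(x,0))$, ``combined with the H\"older continuity of \Cref{lem:fh-int-ub} to absorb the residual $O(H)$ shifts, transfers the aligned bound to $\cL((0,x),0;(0,y),t)\leq(2-\epsilon/2)Lt^{1/3}$.'' This does not work: \Cref{lem:fr-shift} is a statement about the joint law of one-point passage times, and the two-path object $\cL((0,x),0;(0,y),t)$ (resp.\ $\cM((0,x),0;(0,y),t)$) is not a function of the two one-point values $\cS^\beta_t(0,y)$ and $\cS^\beta_t(x,0)$; it depends on the environment between the disjoint paths. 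Similarly \Cref{lem:fh-int-ub} controls fluctuations of the one-layer sheet, not the two-path passage time. So neither ingredient gives the claimed control. What is required is a \emph{surgery}: extend the time window to $[-\varepsilon t,(1+\varepsilon)t]$, lower-bound the aligned $\cL_2(0,-\varepsilon t;0,(1+\varepsilon)t)$ by the non-aligned two-path passage time plus extension costs that are $O(H^2/\varepsilon)$ with constant probability, and then apply \Cref{lem:fh-2ut}. This is exactly Lemmas~\ref{lem:L-bd} and \ref{lem:cM-t-H-bd} of the paper (the latter using the composition law \eqref{eq:cM-comp} and \Cref{lem:cM-t-mon}). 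Your positive-temperature sketch gestures at these lemmas but never articulates the surgery, and without it the unconditional tail bound on the non-aligned two-path passage time is unsupported.

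A secondary point on uniformity: the paper reduces the ``for all $|x|,|y|\leq H$'' claim to the single corner quadrangle at $(\pm H,\pm H)$ by telescoping the (everywhere non-negative by \eqref{eq:DL-quad}/\eqref{eq:FR-quad}) defect over a grid, so the corner defect controls all sub-defects simultaneously. At $\beta=\infty$ your ``net plus H\"older'' cannot upgrade approximate equality at net points to exact equality in between; the argument that does work is the geodesic-sandwiching monotonicity of the zero set of $D$, which is precisely what the corner reduction encodes, so you end up rederiving the paper's reduction in any case.
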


The lower bound on $L^+$  above is to ensure a lower bound on $\PP(L<\cS^\beta_t(0,0)<L^+)$ which will be needed in the proof.

The connection between the above estimate and coalescence is that, at zero temperature ($\beta=\infty$), the equality is equivalent to the coalescence of a family of geodesics, according to \Cref{lem:DL-quad-st}.
At positive temperature, almost surely the quadrangle inequality is strict (see \eqref{eq:FR-quad}), so the equality is replaced by an upper bound of $C\exp(-cL)$, which is roughly the probability for the corresponding polymers to be disjoint given the field $\cZ$.

As we have seen from e.g.~\Cref{lem:fh-tent}, there are tent behaviors under the upper tail event.
The following proposition states that conditional on the upper tail event, the two tents seen from both positive and negative directions are roughly independent, and are close to Brownian bridges. 
It is a two-sided version of \Cref{l.dp-comp-s}.
\begin{proposition}  \label{prop:dp-tent-bcomp}
There exists $L_0>0$ such that the following holds. Take any $t>0$, $L>(t^{-1/3-\varepsilon}\vee 1)L_0$ and $L^+>L+\exp(-0.001L^{3/2})$ (including $L^+=\infty$).  
Let  $H=10^{-6}L^{1/2}$.
Consider the following processes, each defined on $[0,H]$,
\[
x\mapsto \cS^\beta_t(0,x)-\cS^\beta_t(0,0), \quad x\mapsto \cS^\beta_t(0,-x)-\cS^\beta_t(0,0),\]
\[
x\mapsto \cS^\beta_t(x,0)-\cS^\beta_t(0,0), \quad x\mapsto \cS^\beta_t(-x,0)-\cS^\beta_t(0,0),
\]
conditional on $L<\cS^\beta_t(0,0)<L^+$. Further, also consider the processes (each defined on $[0,H]$ as well)
\[x\mapsto B_1(x)+x(\cS^\beta_t(0,H)-\cS^\beta_t(0,0))/H,\quad x\mapsto B_2(x)+x(\cS^\beta_t(0,-H)-\cS^\beta_t(0,0))/H,\]
\[x\mapsto B_3(x)+x(\cS^\beta_t(H,0)-\cS^\beta_t(0,0))/H,\quad x\mapsto B_4(x)+x(\cS^\beta_t(-H,0)-\cS^\beta_t(0,0))/H,\]
also conditional on $L<\cS^\beta_t(0,0)<L^+$, where $B_1, B_2, B_3, B_4$ are four rate $2$ Brownian bridges in $[0, H]$, independent of each other and independent of $\cS^\beta_t$.

There is another measure on $\mc C([0, H],\R)^4$, such that, with probability $>1-C\exp(-cL^{3/2})$, (1) its Radon-Nikodym derivative over the first set of processes is $1+O(\exp(-cL))$ and (2) it can be coupled with the second set of processes such that, the $L^{\infty}$ distance between them is $<C\exp(-cL)$.
\end{proposition}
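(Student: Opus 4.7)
The plan is to combine the one-sided Brownian bridge comparison of \Cref{l.dp-comp-s} with the coalescence statement \Cref{prop:dp-tent-coal} and with temporal independence of the noise, to upgrade the one-sided statement to a four-sided joint one.

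\textbf{Step 1: Handle each pair separately.} \Cref{l.dp-comp-s}, applied to $\hat{\fh}^\beta_{t,1}(\cdot) = \cS^\beta_t(0, \cdot)$ on the sub-interval $[0, H] \subseteq [0, L^{1/2}/2]$, produces a coupling of the two $y$-directed centered processes $x \mapsto \cS^\beta_t(0, \pm x) - \cS^\beta_t(0, 0)$ with two independent rate-$2$ Brownian bridges plus affine drifts, with Radon-Nikodym derivative $1 + O(\exp(-cL))$ on an event of probability $1 - C\exp(-cL^{3/2})$. A minor adjustment is needed since \Cref{l.dp-comp-s} is stated with endpoint at $\pm L^{1/2}/2$ rather than at $\pm H$; the Brownian bridge restriction property converts the affine drift so that it is determined by $\cS^\beta_t(0, \pm H) - \cS^\beta_t(0, 0)$ as in the statement of the proposition. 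By the time-reversal reflection symmetry of \Cref{l.dl symmetries} and \Cref{l.Z symmetries} (yielding $\cS^\beta_t(x, y) \stackrel{d}{=} \cS^\beta_t(y, x)$ as processes in $(x,y)$), the same argument applied to the reflected field gives the analogous coupling for the two $x$-directed processes $x \mapsto \cS^\beta_t(\pm x, 0) - \cS^\beta_t(0, 0)$, yielding two further independent Brownian bridges $B_3, B_4$. Integration over the conditioning $\cS^\beta_t(0,0) \in (L, L^+)$ preserves all quantitative bounds, since the hypothesis $L^+ - L > \exp(-0.001 L^{3/2})$ ensures the conditional mass is not swamped by the allowed $\exp(-cL^{3/2})$-probability error events.

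\textbf{Step 2: Joint independence of the two pairs via coalescence.} To upgrade the two separate couplings into a joint one, I fix intermediate times $t_1 = t/3$, $t_2 = 2t/3$. By the composition laws \eqref{e.compopsition law zero temp}/\eqref{eq:cZ-comp}, $\cS^\beta_t(x, y)$ decomposes into contributions from the three time strips $[0, t_1], [t_1, t_2], [t_2, t]$. Under the upper-tail conditioning $L < \cS^\beta_t(0, 0) < L^+$, the tent picture (\Cref{lem:fh-tent} and \Cref{lem:fh-tent-up}) forces the partial free energies across $[0, t_1]$ and $[t_2, t]$ to themselves be under an upper-tail conditioning with proportional peak heights near $x = 0$, the peak location having fluctuation scale $O(L^{-1/4}\log L) \ll H$. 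Applying \Cref{prop:dp-tent-coal} to each outer strip then yields, on an event of probability $1 - C\exp(-cL^{3/2})$ and uniformly in $|x|, |y| \leq H$, a factorization
\[
\cS^\beta_t(x, y) - \cS^\beta_t(0, 0) = A(x) + B(y) + O(\exp(-cL)),
\]
where $A$ is determined by the noise in $[0, t_1]$ (together with the peak location $z_1^*$ at time $t_1$) and $B$ by the noise in $[t_2, t]$ (with $z_2^*$). After discretizing over the small-support variables $(z_1^*, z_2^*)$ and conditioning on them, independence of the noise in disjoint time strips (\Cref{l.dl symmetries} and \Cref{l.Z symmetries}) gives independence of $A$ and $B$. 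Setting $y = 0$ identifies $A$ with the $x$-directed centered process and $x = 0$ identifies $B$ with the $y$-directed one. Combined with Step 1, this yields the product-Brownian-bridge structure of the proposition, with the intermediate measure being the four independent Brownian bridges-with-drift: its Radon-Nikodym derivative against the true conditional law of the axial processes is $1 + O(\exp(-cL))$, and it couples trivially (up to an $O(\exp(-cL))$ $L^\infty$ discrepancy from the coalescence error) with the second set of Brownian bridges in the statement.

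\textbf{Main obstacle.} Step 2 is the delicate part. In the positive-temperature case, coalescence of \Cref{prop:dp-tent-coal} holds only up to exponentially small additive error, and $z_i^*$ does not correspond to a single geometric point but to an effective $O(L^{-1/2})$-scale integration region around the tent peak. Extracting the decomposition $A(x) + B(y)$ as a logarithm of an integral, while bookkeeping errors across the three time strips and ensuring that the conditioning on the full-interval tent induces on each outer time-strip a distribution that is close (in total variation, up to the allowed $\exp(-cL^{3/2})$-probability error) to the hypothesis of \Cref{prop:dp-tent-coal}, is the most technical step. The tent estimates of \Cref{cor.tent-pr} and the Gibbs property of \Cref{lem:Gibbs} are the key ingredients enabling this reduction from full-interval conditioning to outer-strip conditioning with proportional peak heights.
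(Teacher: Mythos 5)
Your proposal takes a genuinely different route from the paper's proof, and the route has real gaps, primarily in Step~2.

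The paper's argument never decomposes time into three strips. Instead it relies on two ingredients in tandem: the shift invariance of Lemma~\ref{lem:fr-shift}, and the Markov/Brownian-Gibbs structure of the single curve $\hfh^\beta_{t,1}=\cS^\beta_t(0,\cdot)$. The point is that all four target processes, once shift invariance is invoked, can be written as four increments of the \emph{one-dimensional} profile $\cS^\beta_t(0,\cdot)$ on the four subintervals $[0,H]$, $[-H,0]$, $[H,2H]$, $[-2H,-H]$ (this is Lemma~\ref{lem:dp-bcomp-s}, stated as an immediate consequence of \Cref{l.dp-comp-s}). The joint independence of $B_1,\dots,B_4$ then comes for free from the Markov property of that one curve conditional on its values at $0,\pm H,\pm 2H$ and the second line; and the conditioning $\cS^\beta_t(0,0)\in(L,L^+)$ is just a conditioning on the midpoint of that same curve, so it is absorbed into the Gibbs conditioning without creating any cross-term coupling. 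Shift invariance then re-expresses two of those four increment processes as $\cS^\beta_t(\pm x,\mp H)-\cS^\beta_t(0,\mp H)$, and coalescence (\Cref{prop:dp-tent-coal}) converts them into the desired $x$-directed processes $\cS^\beta_t(\pm x,0)-\cS^\beta_t(0,0)$.

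Your Step~1 is fine for the pairwise marginal couplings (the reflection $\cS^\beta_t(x,y)\stackrel{d}{=}\cS^\beta_t(y,x)$ does give the $x$-directed pair from the $y$-directed one). The difficulty is entirely in Step~2, and I see two concrete gaps. First, the decomposition $\cS^\beta_t(x,y)-\cS^\beta_t(0,0)=A(x)+B(y)+O(\exp(-cL))$ with $A$ measurable with respect to $[0,t_1]$-noise alone is not a direct consequence of coalescence on the full strip: coalescence gives $A(x)=\cS^\beta_t(x,0)-\cS^\beta_t(0,0)$, which depends on noise in all three strips. To localize $A$ to the bottom strip you must additionally condition on the peak location $z_1^*$ at height $t_1$ (and on a peak height there), invoke coalescence again within the bottom strip, and argue that the middle and top strips contribute only $x$-independent constants. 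In the positive-temperature setting this is nontrivial, because the ``peak'' is an effective $L^{-1/2}$-scale region and the free energy is a logarithm of an integral, so the error bookkeeping is substantial. Second, and more fundamentally, even granting the decomposition into independent $A$ and $B$, the conditioning in the proposition is $\cS^\beta_t(0,0)\in(L,L^+)$, i.e.\ a conditioning on $A(0)+B(0)+\text{const}$; this couples $A$ and $B$. Your proposal does not address how the product-of-four-Brownian-bridges structure survives this one-point conditioning on the \emph{sum}. The paper avoids this issue entirely because in its reduction there is a single curve, and the conditioning is on one marked point of that curve, which the Gibbs property handles natively without coupling the four disjoint subintervals.

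In short: your approach could conceivably be made to work, but it is considerably more involved than the paper's, and as presented Step~2 is not close to a proof. The key idea you are missing is shift invariance (\Cref{lem:fr-shift}), which collapses the two-dimensional question to a one-dimensional one where the joint Brownian structure is available from \Cref{l.dp-comp-s} and the line-ensemble Gibbs property.
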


The proof of \Cref{prop:dp-tent-bcomp} involves reducing it to the following statement, using the shift-invariance of \Cref{lem:fr-shift} and \Cref{prop:dp-tent-coal}.

\begin{lemma}  \label{lem:dp-bcomp-s}
There exists $L_0>0$ such that for any $t>0$ and $L>(t^{-1/3-\varepsilon}\vee 1)L_0$ the following holds. Denote $H=10^{-6}L^{1/2}$ and consider the following processes, each defined on $[0,H]$,
\[
x\mapsto \cS^\beta_t(0,x)-\cS^\beta_t(0,0), \quad x\mapsto \cS^\beta_t(0,-x)-\cS^\beta_t(0,0),\]
\[x\mapsto \cS^\beta_t(0,-x-H)-\cS^\beta_t(0,-H), \quad x\mapsto \cS^\beta_t(0,x+H)-\cS^\beta_t(0,H),
\]
conditional on $\cS^\beta_t(0,0)\in (L,L+\diff L)$. Also, consider the processes (each defined on $[0,H]$ as well)
\[x\mapsto B_1(x)+x(\cS^\beta_t(0,H)-\cS^\beta_t(0,0))/H,\quad x\mapsto B_2(x)+x(\cS^\beta_t(0,-H)-\cS^\beta_t(0,0))/H,\]
\[x\mapsto B_3(x)+x(\cS^\beta_t(0,-2H)-\cS^\beta_t(0,-H))/H,\quad x\mapsto B_4(x)+x(\cS^\beta_t(0,2H)-\cS^\beta_t(0,H))/H,\]
also conditional on $\cS^\beta_t(0,0)\in (L,L+\diff L)$, where $B_1, B_2, B_3, B_4$ are four rate $2$ Brownian bridges in $[0, H]$, independent of each other and independent of $\cS^\beta_t$.
They can be coupled so that under an event with probability $>1-C\exp(-cL^{3/2})$ for both, their Radon-Nikodym derivative is $1+O(\exp(-cL))$.
\end{lemma}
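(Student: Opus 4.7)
The plan is to reduce this four-process comparison to the one-sided Brownian comparison of Lemma~\ref{l.dp-comp-s} via the Markov property of Brownian bridge on sub-intervals. Since $2H=2\cdot 10^{-6}L^{1/2}$ lies well inside $[0,L^{1/2}/2]$, all four processes of the statement are deterministic functionals of the profile $\hfh^\beta_{t,1}|_{[-L^{1/2}/2,\,L^{1/2}/2]}$, on which Lemma~\ref{l.dp-comp-s} already controls the law up to an exponentially small Radon--Nikodym correction.

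First I would invoke Lemma~\ref{l.dp-comp-s} on both halves to couple the conditional law $P_{\mathrm{true}}$ of $\hfh^\beta_{t,1}|_{[-L^{1/2}/2,\,L^{1/2}/2]}$ given $\hfh^\beta_{t,1}(0)\in(L,L+\diff L)$ with an \emph{idealized} law $P_{\mathrm{ideal}}$, under which the two sides $x\mapsto\hfh^\beta_{t,1}(\pm x)-L$ are independent rate-$2$ Brownian bridges $\tilde B_\pm$ on $[0,L^{1/2}/2]$ with linear tilts built from the endpoint values at $\pm L^{1/2}/2$. The Radon--Nikodym derivative $dP_{\mathrm{true}}/dP_{\mathrm{ideal}}$ is $1+O(\exp(-cL))$ on an event of probability $>1-C\exp(-cL^{3/2})$ under both laws.

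Next, in the idealized world, I would use the Markov property of Brownian bridge: conditional on $\tilde B_\pm(H)$ and $\tilde B_\pm(2H)$, the restrictions of $\tilde B_+$ (respectively $\tilde B_-$) to $[0,H]$ and $[H,2H]$ are independent rate-$2$ Brownian bridges between the matching endpoints. Writing each such restriction as ``linear interpolation between endpoints plus a centered rate-$2$ Brownian bridge from $0$ to $0$'', I obtain four centered rate-$2$ bridges $\hat B_1,\hat B_2,\hat B_3,\hat B_4$ on $[0,H]$. A direct computation shows that the contribution of the Lemma~\ref{l.dp-comp-s} endpoint tilts at $\pm L^{1/2}/2$ cancels when centering, so each $\hat B_i$ is a measurable function solely of the underlying Lemma~\ref{l.dp-comp-s} bridges, and hence is independent of $\hfh^\beta_{t,1}$ and of $\cS^\beta_t$. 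Consequently, under $P_{\mathrm{ideal}}$ the four processes of the statement have exactly the right-hand-side form, but with the tilts read off at the idealized endpoints $\tilde B_\pm(\pm H), \tilde B_\pm(\pm 2H)$ rather than the true $\cS^\beta_t$ values.

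To lift this to the true tilts, I augment the probability space by four independent rate-$2$ Brownian bridges $B_1,\ldots,B_4$ on $[0,H]$ independent of $\cS^\beta_t$, with product law $\mu_B$, and let $\Psi$ be the map that reads off the endpoint tilts at $\pm H,\pm 2H$ from the full profile and combines them with the $B_i$'s via the tilted-bridge formula of the statement. Then the right-hand side of the lemma equals $\Psi_*(P_{\mathrm{true}}\otimes\mu_B)$, while the preceding paragraph identifies the pushforward $\Phi_*P_{\mathrm{ideal}}$ of the idealized law under the four-process map $\Phi$ with $\Psi_*(P_{\mathrm{ideal}}\otimes\mu_B)$. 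Applying Lemma~\ref{l.dp-comp-s} to the first factor of the augmented product, the Radon--Nikodym derivative between $P_{\mathrm{true}}\otimes\mu_B$ and $P_{\mathrm{ideal}}\otimes\mu_B$, and hence between their $\Psi$-pushforwards, is again $1+O(\exp(-cL))$. Chaining, $\Phi_*P_{\mathrm{true}}$ is $1+O(\exp(-cL))$-close in Radon--Nikodym to $\Phi_*P_{\mathrm{ideal}}=\Psi_*(P_{\mathrm{ideal}}\otimes\mu_B)$, which is $1+O(\exp(-cL))$-close to $\Psi_*(P_{\mathrm{true}}\otimes\mu_B)$; the product of the two factors is again of the same form. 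The main technical obstacle is bookkeeping the good event through the pushforwards, handled in a standard way by taking the good event measurable with respect to the data used by $\Phi$ and $\Psi$, or by enlarging the bad event to its $\Phi$-saturation.
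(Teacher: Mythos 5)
Your plan is correct and carries out precisely the reduction the paper has in mind: the paper's own proof is the one-line remark "this lemma directly follows from Lemma~\ref{l.dp-comp-s}, and we omit the details," and your argument supplies the omitted content. Identifying that $[-2H,2H]\subset[-L^{1/2}/2,L^{1/2}/2]$ so all four processes are functionals of the window controlled by Lemma~\ref{l.dp-comp-s}, then applying the Markov property of Brownian bridge to split each half into independent centered rate-$2$ bridges on $[0,H]$ and $[H,2H]$ plus linear interpolations, with the linear tilt at $\pm L^{1/2}/2$ cancelling under centering, is exactly the intended route; the re-expression of the right-hand side as $\Psi_*(P_{\mathrm{true}}\otimes\mu_B)$ and the chaining $\Phi_*P_{\mathrm{true}}\approx\Phi_*P_{\mathrm{ideal}}=\Psi_*(P_{\mathrm{ideal}}\otimes\mu_B)\approx\Psi_*(P_{\mathrm{true}}\otimes\mu_B)$ is a clean way to handle the fact that the displayed tilts use the true rather than idealized endpoint values. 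The one place where a fully written argument would need a few extra lines is the step "their $\Psi$-pushforwards" (and likewise for $\Phi$): the good event in Lemma~\ref{l.dp-comp-s} is built from boundary data at $\pm L^{1/2}/2$ and the second curve, which are not $\Phi$- (or $\Psi$-) measurable, so one cannot literally push the RN bound through the non-injective map without either (a) replacing the good event by a conditional version given the relevant $\sigma$-algebra (at the price of an extra small-probability term), or (b) re-running the Gibbs-resampling computation of Lemma~\ref{l.dp-comp-s} directly on the sub-intervals $[0,H]$, $[H,2H]$ with the enlarged boundary $\sigma$-algebra including $\hfh^\beta_{t,1}(\pm H),\hfh^\beta_{t,1}(\pm 2H)$. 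You flag this at the end, which is appropriate; it is a bookkeeping point rather than a gap in the idea.
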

This lemma directly follows from \Cref{l.dp-comp-s}, and we omit the details.
We now prove \Cref{prop:dp-tent-bcomp} assuming \Cref{prop:dp-tent-coal} and \Cref{lem:dp-bcomp-s}.

\begin{figure}[!hbt]
    \centering

\includegraphics[width=\textwidth]{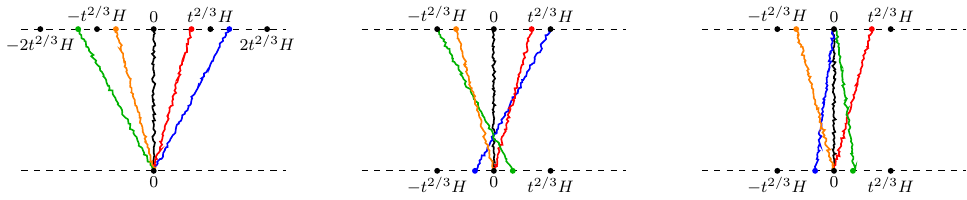}
    \caption{An illustration of transforming the weights of varying one side (left panel) into varying both sides (middle panel) using shift-invariance, then to two tents (right panel) by coalescence.}
    \label{fig:sinvS}
\end{figure}

\begin{proof}[Proof of \Cref{prop:dp-tent-bcomp}]
By shift-invariance (\Cref{lem:fr-shift}), in \Cref{lem:dp-bcomp-s} we can replace $\cS^\beta_t(0, x+H)$ by $\cS^\beta_t(-x,H)$ and $\cS^\beta_t(0,-x-H)$ by $\cS^\beta_t(x,-H)$ (see \Cref{fig:sinvS}).
Note that \Cref{lem:fr-shift} is stated in terms of finitely many points, but we can do the replacement for each $x\in [0,H]$ simultaneously since $\cS^\beta_t$ is continuous.
Therefore we get the following statement.
Consider the processes
\[
x\mapsto \cS^\beta_t(0,x)-\cS^\beta_t(0,0), \quad x\mapsto \cS^\beta_t(0,-x)-\cS^\beta_t(0,0),\]
\begin{equation}  \label{eq:tent-bcomp-pf1}
x\mapsto \cS^\beta_t(x,-H)-\cS^\beta_t(0,-H), \quad x\mapsto \cS^\beta_t(-x,H)-\cS^\beta_t(0,H),    
\end{equation}
on $[0, H]$, conditional on $L<\cS^\beta_t(0,0)<L^+$; and the processes (also on $[0, H]$)
\[x\mapsto B_1(x)+x(\cS^\beta_t(0,H)-\cS^\beta_t(0,0))/H,\quad x\mapsto B_2(x)+x(\cS^\beta_t(0,-H)-\cS^\beta_t(0,0))/H,\]
\begin{equation}  \label{eq:tent-bcomp-pf2}
x\mapsto B_3(x)+x(\cS^\beta_t(H,-H)-\cS^\beta_t(0,-H))/H,\quad x\mapsto B_4(x)+x(\cS^\beta_t(-H,H)-\cS^\beta_t(0,H))/H,
\end{equation}
conditional on $L<\cS^\beta_t(0,0)<L^+$, where $B_1, B_2, B_3, B_4$ are four rate $2$ Brownian bridges in $[0, H]$, independent of each other and independent of $\cS^\beta_t$.
They can be coupled so that under an event with probability $>1-C\exp(-cL^{3/2})$ for both, their Radon-Nikodym derivative is $1+O(\exp(-cL))$.

By \Cref{prop:dp-tent-coal}, with probability $>1-C\exp(-cL^{3/2})$, we have
\[
| (\cS^\beta_t(x,-H)-\cS^\beta_t(0,-H))-(\cS^\beta_t(x,0)-\cS^\beta_t(0,0)) | < C\exp(-cL),
\]
\[
| (\cS^\beta_t(-x,H)-\cS^\beta_t(0,H))-(\cS^\beta_t(-x,0)-\cS^\beta_t(0,0)) | < C\exp(-cL),
\]
for any $x\in [0, H]$; in particular, 
\[
| (\cS^\beta_t(H,-H)-\cS^\beta_t(0,-H))-(\cS^\beta_t(H,0)-\cS^\beta_t(0,0)) | < C\exp(-cL),
\]
\[
| (\cS^\beta_t(-H,H)-\cS^\beta_t(0,H))-(\cS^\beta_t(-H,0)-\cS^\beta_t(0,0)) | < C\exp(-cL).
\]
By plugging these estimates into \eqref{eq:tent-bcomp-pf1} and \eqref{eq:tent-bcomp-pf2} respectively, we get the conclusion.
\end{proof}
It remains to prove \Cref{prop:dp-tent-coal}, which we accomplish in the next subsection.

\subsection{Coalescence of polymers}  \label{ss:coap}

Using the quadrangle inequalities \eqref{eq:DL-quad} and \eqref{eq:FR-quad}, \Cref{prop:dp-tent-coal} can be reduced to the following lemma.

\begin{lemma}  \label{lem:dp-coal}
There exists $L_0$ such that the following holds. Take any $t>0$, $L>(t^{-1/3-\varepsilon}\vee 1)L_0$, and $L^+>L+\exp(-0.001L^{3/2})$, and denote $H=10^{-6}L^{1/2}$.
Then when $\beta=\infty$, we have
\[
\PP\left(\cS^\beta_t(-H,-H)+\cS^\beta_t(H,H)>\cS^\beta_t(-H,H)+\cS^\beta_t(H,-H) \mid L<\cS^\beta_t(0,0)<L^+\right) < C\exp(-cL^{3/2}). 
\]
And when $\beta=1$, the same estimate holds with the event replaced by
\[
\cS^\beta_t(-H,-H)+\cS^\beta_t(H,H)-\cS^\beta_t(-H,H)-\cS^\beta_t(H,-H) > C\exp(-cL).
\]
\end{lemma}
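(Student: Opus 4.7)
The plan is to reduce the statement to an upper-tail bound on a two-path passage time. By scale invariance (\Cref{l.dl symmetries}), take $t=1$; constants may depend on $t$. In the zero-temperature case, the event $\cS^\beta_t(-H,-H)+\cS^\beta_t(H,H)>\cS^\beta_t(-H,H)+\cS^\beta_t(H,-H)$ is equivalent, by \Cref{lem:DL-quad-st}, to disjointness of the geodesics from $(-H,0)\to(-H,1)$ and $(H,0)\to(H,1)$, which by \Cref{lem:quad-equa} is equivalent to the identity
\[
\cL_2\bigl((-H,H),0;(-H,H),1\bigr)=\cS^\beta_t(-H,-H)+\cS^\beta_t(H,H).
\]
So on this event the two-path passage time on the left coincides with the right-hand side. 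The positive-temperature case proceeds analogously via \eqref{eq:FR-quad} and the two-point function $\cM$ of \Cref{sss:mpp}: the excess quantifies the polymer coalescence defect through $\cM((-H,H),0;(-H,H),1)$, and a threshold of $C\exp(-cL)$ on this excess is equivalent to a corresponding lower bound on $\cM$.

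Next I would use the 1D tent estimate \Cref{lem:fh-tent} applied to the slice $y\mapsto\cS^\beta_t(0,y)$ (and by reflection to $x\mapsto\cS^\beta_t(x,0)$) to obtain, under $L<\cS^\beta_t(0,0)<L^+$ and outside an event of probability $C\exp(-cL^{3/2})$,
\[
\cS^\beta_t(0,\pm H),\ \cS^\beta_t(\pm H,0)\ \geq\ L-2L^{1/2}H-CL^{1/4}\log L.
\]
Since $H=10^{-6}L^{1/2}$, the loss $2L^{1/2}H=2\cdot 10^{-6}L$ is a small fraction of $L$. The quadrangle inequalities \eqref{eq:DL-quad}, \eqref{eq:FR-quad} then give
\[
\cS^\beta_t(\pm H,\pm H)\ \geq\ \cS^\beta_t(0,\pm H)+\cS^\beta_t(\pm H,0)-\cS^\beta_t(0,0)\ \geq\ L-CL^{1/2}H,
\]
and analogously for $\cS^\beta_t(\mp H,\pm H)$. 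Combining with the reduction of the previous paragraph yields, on the event of interest,
\[
\cL_2\bigl((-H,H),0;(-H,H),1\bigr)\ \geq\ 2L-CL^{1/2}H.
\]

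To conclude, I would bound the probability of this event via the two-line upper tail \Cref{lem:fh-2ut}. By shift invariance (\Cref{l.dl symmetries}) applied to the directed landscape, $\cL_2((-H,H),\cdot;(-H,H),\cdot)\stackrel{d}{=}\cL_2((0,2H),\cdot;(0,2H),\cdot)$. Using the continuous extension of $\cL_k$ at the diagonal endpoint $\bx=y\mathbf{1}$ (\Cref{lem:ext} for positive temperature, with its zero-temperature analog from \cite{DZ}) and chaining the monotonicity of \Cref{lem:quad-gen}, I would relate this two-path value to the two-line sum $\hfh^\beta_{1,1}(0)+\hfh^\beta_{1,2}(0)$ up to an additive correction of order $CL^{1/2}H$. \Cref{lem:fh-2ut} then gives a probability bound of $\exp(-\tfrac{8}{3}L^{3/2}+O(L^{3/4}))$; dividing by $\P(L<\cS^\beta_t(0,0)<L^+)\geq \exp(-\tfrac{4}{3}L^{3/2}-O(L^{3/4}))$, which follows from \Cref{lem:fh-ut} together with the hypothesis $L^+-L\geq \exp(-0.001L^{3/2})$, produces the claimed conditional bound $C\exp(-cL^{3/2})$ for some $c\in(0,\tfrac{4}{3})$. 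The main obstacle is this last comparison: since the endpoint separation $2H=\Theta(L^{1/2})$ is not small, H\"older-type continuity between the $2H$-separated two-path value and the coincident two-line value $\hfh^\beta_{1,1}(0)+\hfh^\beta_{1,2}(0)$ is too weak. One must instead chain \Cref{lem:quad-gen} across intermediate configurations, combined with tent-style control for the second line $\hfh^\beta_{1,2}$. In positive temperature, a quantitative version of the strict positivity in \Cref{lem:pos} must additionally be established, and this is the source of the $C\exp(-cL)$ slack in the statement.
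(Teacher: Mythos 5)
Your reduction to the two-path quantity is the right first step and matches the paper's opening move, but the remainder of the argument has a genuine gap that you partially flag yourself. Let me be precise.

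Your first half is essentially correct: on the disjointness event, $\cL_2((-H,H),0;(-H,H),1)=\cS^\beta_t(-H,-H)+\cS^\beta_t(H,H)$, and you lower bound this by roughly $2L-CL^{1/2}H$ using the tent picture and the quadrangle inequality. (A small slip: the inequality $\cS^\beta_t(\mp H,\pm H)\geq \cS^\beta_t(0,\pm H)+\cS^\beta_t(\mp H,0)-\cS^\beta_t(0,0)$ goes the wrong way by \eqref{eq:DL-quad} --- it is the \emph{same-sign} entries $\cS^\beta_t(\pm H,\pm H)$ that enjoy this lower bound, but since those are the only ones you use, no harm done. Also, to get probability $1-C\exp(-cL^{3/2})$ from \Cref{lem:fh-tent}, the tent error must be taken of order $\epsilon L$, not $CL^{1/4}\log L$; again harmless since $2L^{1/2}H=2\cdot10^{-6}L$ is already of that order.) The paper reaches the same lower bound on the two-path quantity but via a different route: shift-invariance (\Cref{lem:fr-shift}) to identify the cross terms $\cS^\beta_t(-H,H)+\cS^\beta_t(H,-H)$ in law with $\cS^\beta_t(0,2H)+\cS^\beta_t(0,-2H)$, then \Cref{lem:fh-tent}, which directly gives a conditional lower bound on the left side of the quadrangle identity without invoking the quadrangle inequality.

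The genuine gap is in the final step: bounding $\PP\bigl[\cL_2((-H,H),0;(-H,H),1)\gtrsim 2L\bigr]$ when the endpoints are separated by $2H=\Theta(L^{1/2})$. You correctly identify this as the main obstacle, but the proposed fix (``chain \Cref{lem:quad-gen} across intermediate configurations, combined with tent-style control for $\hfh^\beta_{1,2}$'') would not work as stated. Bringing the endpoints together along a chain would require modulus-of-continuity control on the two-path quantity (or on $\cM$), which degenerates as the endpoints merge; and there is no tent estimate for the second line $\hfh^\beta_{t,2}$ in this paper (\Cref{lem:fh-tent} concerns only the first line). The paper's solution is a \emph{surgery in time}, which is the key idea missing from your proposal. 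Extend the time window to $[-\varepsilon,1+\varepsilon]$ with $\varepsilon=10^{-6}$, and append, below height $0$ and above height $1$, a pair of paths from the common points $(0,-\varepsilon)$ and $(0,1+\varepsilon)$ to the two separated endpoints $(\pm H,0)$ and $(\pm H,1)$. The planarity inequality \eqref{eq:mult-quad} gives the bound
\[
\cL((0,0),-\varepsilon ;(-H,H),0) \ge \cL((0,0),-\varepsilon ;(0,0),0) + \cL(0,-\varepsilon ;-H,0) + \cL(0,-\varepsilon ;H,0) - 2\cL(0,-\varepsilon ;0,0),
\]
and the point is that with probability bounded below by a constant $w/4$ \emph{independent of $L$}, this appendage costs no more than $2H^2/\varepsilon+O(1)=O(L)$. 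Combined with the composition law, this yields
\[
\PP\Bigl(\cL_2(0,-\varepsilon ;0,1+\varepsilon)>1.99L-CL\Bigr)\;\ge\;\tfrac{w^2}{16}\,\PP\Bigl(\cL((-H,H),0;(-H,H),1)>1.99L\Bigr),
\]
and the left side can now be bounded directly by \Cref{lem:fh-2ut} since the endpoints coincide. In the positive temperature case the same surgery is implemented through the multi-point partition function $\cM$, with constant-probability lower bounds on the appendages coming from \Cref{lem:ext} and unconditional one-point control (\Cref{lem:fh-ub}); your remark about a ``quantitative version of strict positivity'' gestures at this but does not constitute the argument. You should convince yourself that the surgery idea is both necessary and nontrivial here: it converts a passage-time problem at macroscopically ($\Theta(L^{1/2})$) separated endpoints into one at coincident endpoints, at a cost that is constant in probability rather than exponentially small, which is what makes the comparison with the upper tail of $\hfh^\beta_{t,1}+\hfh^\beta_{t,2}$ viable.
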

In the $\beta=\infty$ setting, in light of \Cref{lem:DL-quad-st}, the event whose probability we wish to bound is equivalent to that (in the directed landscape) the geodesic from $(-t^{2/3}H,0)$ to $(-t^{2/3}H,t)$ and the geodesic from $(t^{2/3}H,0)$ to $(t^{2/3}H,t)$ are disjoint.
Such disjointness is unlikely to happen under the upper large since then geodesics tend to merge into the geodesic from $(0,0)$ to $(0,t)$, shortly away from the endpoints.
When $\beta=1$, the event is instead interpreted as that the multi-point partition function from the spatial coordinates $-t^{2/3}H$ and $t^{2/3}H$ at time $0$ to $-t^{2/3}H$ and $t^{2/3}H$ at time $t$ is comparable to the product of the two individual ones.
This can be then understood as a positive temperature form of disjointness.

For multi-point passage times and multi-point partition functions of size $2$, we can only estimate them when each side of the endpoints is at a single point, using \Cref{lem:fh-2ut}. 
In the proof of \Cref{lem:dp-coal} we will need to upper bound those from $-t^{2/3}H, t^{2/3}H$ at time $0$ to $-t^{2/3}H, t^{2/3}H$ at time $t$.
Our strategy is to do surgeries around time $0$ and time $1$ (see \Cref{fig:surg}): we instead upper bound those from $0, 0$ at time $-\varepsilon t$ to $0, 0$ at time $(1+\varepsilon )t$. Here $\varepsilon >0$ is a small constant.
Then we use the composition laws and lower bound various passage times and partition functions between time $-\varepsilon t$ and $0$, and time $t$ and $(1+\varepsilon )t$.
We note that such surgery arguments have appeared in the directed landscape and LPP models, see e.g.~\cite{Hexp}.

\begin{figure}[!hbt]
    \centering
\includegraphics{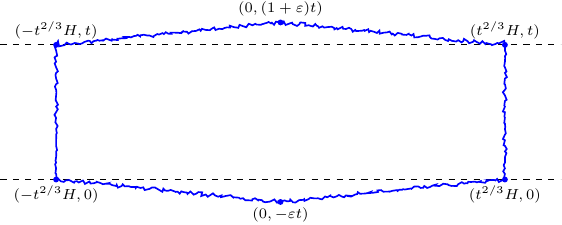}
    \caption{An illustration of the surgery, bringing the endpoints of multi-point passage times and multi-point partition functions together.}
    \label{fig:surg}
\end{figure}

We now give the details. 
For concreteness, we take $\varepsilon =10^{-6}$ in the rest of this section.
First, we note that by taking $C$ large and $c$ small, it suffices to prove \Cref{lem:dp-coal} for large enough $L$.
By \Cref{lem:fr-shift}, $\cS^\beta_t(-H,H)+\cS^\beta_t(H,-H)$ and $\cS^\beta_t(0,0)$ have the same joint distribution as $\cS^\beta_t(0,2H)+\cS^\beta_t(0,-2H)$ and $\cS^\beta_t(0,0)$.
Then by \Cref{lem:fh-tent} we have
\begin{equation}  \label{eq:fhH-c-bd}
\PP\left(\cS^\beta_t(-H,H)+\cS^\beta_t(H,-H)< (2-0.001)L-8L^{1/2}H \mid L<\cS^\beta_t(0,0)<L^+\right) < \exp(-cL^{3/2}).  
\end{equation}
Also by \Cref{lem:fh-ut}, for large $L$ we have 
\begin{equation}  \label{eq:Ltent}
    \PP\left(L<\cS^\beta_t(0,0)<L^+\right)>c\exp(-(4/3+0.005)L^{3/2}).
\end{equation}

\subsubsection{Directed landscape ($\beta=\infty$) setting}
In this case, we can assume $t=1$ since the law of $\cS^{\beta=\infty}_t$ is independent of $t$. Recall that $\cS^{\beta=\infty}_1=\cL(\cdot, 0;\cdot, 1)$.
On the event in \Cref{lem:dp-coal} whose probability we wish to bound,
 \Cref{lem:quad-equa} and \Cref{lem:DL-quad-st}  imply that
\[
 \cL(-H,0;H,1)+\cL(H,0;-H,1) < \cL(-H,0;-H,1)+\cL(H,0;H,1) = \cL((-H,H),0;(-H,H),1).
\]
Therefore, since we have lower bounded the LHS in \eqref{eq:fhH-c-bd}, to prove \Cref{lem:dp-coal} it suffices to prove the following estimate.

\begin{lemma}  \label{lem:L-bd}
For $L>0$ large enough and $H=10^{-6}L^{1/2}$, we have
\[
\PP\big[ \cL((-H,H),0;(-H,H),1)> 1.99L\big]\\
< C\exp\big( -(4/3+0.01)L^{3/2} \big).
\]
\end{lemma}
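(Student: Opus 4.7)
The plan is a surgery argument: extend the ``rectangular'' endpoint configuration $(\pm H, 0) \leftrightarrow (\pm H, 1)$ backward to coincide at a single point at time $-\varepsilon$ and forward to coincide at time $1+\varepsilon$, with $\varepsilon:=10^{-6}$. This reduces the problem to an upper tail bound on $\cL_2(0, -\varepsilon; 0, 1+\varepsilon)$, to which \Cref{lem:fh-2ut} applies directly. Concretely, concatenating disjoint pairs of paths in each of the three time strips $[-\varepsilon,0]$, $[0,1]$, $[1,1+\varepsilon]$ produces disjoint pairs on $(-\varepsilon, 1+\varepsilon)$, since the transitions occur at $(\pm H, 0)$ and $(\pm H, 1)$ (separated by $2H$) while the collapsed extremes at $(0,-\varepsilon)$ and $(0,1+\varepsilon)$ are boundary points of the simplex where disjointness is not required. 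Taking suprema yields the surgery inequality
\[
\cL_2(0, -\varepsilon; 0, 1+\varepsilon) \;\geq\; \cL((0,0), -\varepsilon; (-H, H), 0) \;+\; \cL((-H, H), 0; (-H, H), 1) \;+\; \cL((-H, H), 1; (0,0), 1+\varepsilon),
\]
where the multi-point passage times with coinciding endpoints are understood via continuity as the limit $\eta \downarrow 0$ of the corresponding quantities with endpoints $(-\eta, \eta)$.

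Next I would lower bound the two correction terms. For any fixed $\eta>0$, \Cref{lem:quad-equa} combined with the almost-sure strict quadrangle inequality at fixed endpoints (\Cref{lem:DL-quad-st}) gives
\[
\cL((-\eta, \eta), -\varepsilon; (-H, H), 0) \;=\; \cL(-\eta, -\varepsilon; -H, 0) \;+\; \cL(\eta, -\varepsilon; H, 0) \quad \text{a.s.}
\]
Applying this along a countable sequence $\eta_n \downarrow 0$ and passing to the limit using the continuity of $\cL$, almost surely
\[
\cL((0,0), -\varepsilon; (-H, H), 0) \;=\; \cL(0, -\varepsilon; -H, 0) \;+\; \cL(0, -\varepsilon; H, 0).
\]
By shear, shift and scaling invariance (\Cref{l.dl symmetries}), each summand satisfies $\cL(0, -\varepsilon; \pm H, 0) + H^2/\varepsilon \stackrel{d}{=} \varepsilon^{1/3}\cdot F_{\mathrm{GUE}}$, whose lower tail decays as $\exp(-cM^3)$. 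Hence, for any small fixed $\eta_0>0$, with probability at least $1-C\exp(-cL^3)$ we have
\[
\cL((0,0), -\varepsilon; (-H, H), 0),\; \cL((-H, H), 1; (0,0), 1+\varepsilon) \;\geq\; -2H^2/\varepsilon - \eta_0 L.
\]

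Plugging these bounds into the surgery inequality and choosing $\eta_0 = 10^{-3}$, outside an event of probability $\exp(-cL^3)$, the event $\{\cL((-H, H), 0; (-H, H), 1) > 1.99L\}$ is contained in $\{\cL_2(0, -\varepsilon; 0, 1+\varepsilon) > 1.99L - 4H^2/\varepsilon - 2\eta_0 L\}$, and with $H=10^{-6}L^{1/2}$, $\varepsilon = 10^{-6}$ the threshold is at least $1.988L$. Since trivially $\cL_2(0,0;0,t)\leq 2\cL(0,0;0,t)$, the upper tail event $\{\cL_2(0,0;0,1)>2L'\}$ forces both coordinates in the pair appearing in \Cref{lem:fh-2ut}; together with the scaling identity $\cL_2(0,-\varepsilon; 0,1+\varepsilon)\stackrel{d}{=}(1+2\varepsilon)^{1/3}\cL_2(0,0;0,1)$ (obtained by combining shift and scaling from \Cref{l.dl symmetries}), this yields for $L'\approx 0.994 L$ an upper bound of order $\exp(-\tfrac{8}{3}(0.994)^{3/2}L^{3/2}+CL^{3/4})\approx \exp(-2.64\, L^{3/2})$, comfortably below the required $\exp(-(4/3+0.01)L^{3/2})$.

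The main technical obstacle is the $\eta\downarrow 0$ limit in the second step, i.e.~extending the equality of multi-point passage times from the open simplex to its boundary (where endpoints coincide). This should follow from the joint continuity of $\cL_k$ and a countable intersection argument, but it is the step where the whole surgery becomes quantitative, so it needs to be handled carefully rather than waved through.
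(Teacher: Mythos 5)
Your surgery skeleton (collapse the endpoints to a single point at times $-\varepsilon$ and $1+\varepsilon$, reduce to $\cL_2(0,-\varepsilon;0,1+\varepsilon)$, invoke \Cref{lem:fh-2ut}) is the same as the paper's, and your final tail arithmetic checks out. The gap is in how you lower-bound the two ``corner'' terms. You assert the almost-sure identity
\[
\cL\big((0,0),-\varepsilon;(-H,H),0\big)\;=\;\cL(0,-\varepsilon;-H,0)+\cL(0,-\varepsilon;H,0),
\]
obtained from $\cL\big((-\eta,\eta),-\varepsilon;(-H,H),0\big)=\cL(-\eta,-\varepsilon;-H,0)+\cL(\eta,-\varepsilon;H,0)$ for fixed $\eta>0$ and sending $\eta\downarrow 0$. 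The fixed-$\eta$ equality is equivalent, via \Cref{lem:quad-equa}, to the disjointness of the geodesics $\pi_{(-\eta,-\varepsilon;-H,0)}$ and $\pi_{(\eta,-\varepsilon;H,0)}$. You attribute this to \Cref{lem:DL-quad-st}, but that lemma only states that strict quadrangle inequality and geodesic disjointness are a.s.\ equivalent for fixed endpoints; it does \emph{not} say either one holds with probability one. That is a separate fact, not in the paper, and I do not believe it is true in the form you need: as $\eta\downarrow 0$ the two starting points collapse, and the geodesics from the collapsed point $(0,-\varepsilon)$ to $(-H,0)$ and $(H,0)$ share a nontrivial initial segment of the geodesic tree rooted at $(0,-\varepsilon)$ with positive probability, which forces $\cL\big((0,0),-\varepsilon;(-H,H),0\big)<\cL(0,-\varepsilon;-H,0)+\cL(0,-\varepsilon;H,0)$ on that event. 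Without the equality, the quadrangle/$\cL_2$ structure only gives you the \emph{upper} bound $\cL\big((0,0),-\varepsilon;(-H,H),0\big)\le \cL(0,-\varepsilon;-H,0)+\cL(0,-\varepsilon;H,0)$, which is useless for your purpose.

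The paper sidesteps all of this by using the degenerate multi-point quadrangle inequality \eqref{eq:mult-quad}, applied twice, to get the \emph{deterministic} bound
\[
\cL\big((0,0),-\varepsilon;(-H,H),0\big)\;\ge\;\cL\big((0,0),-\varepsilon;(0,0),0\big)+\cL(0,-\varepsilon;-H,0)+\cL(0,-\varepsilon;H,0)-2\cL(0,-\varepsilon;0,0).
\]
This is weaker than the equality you want (there are extra $\cL_2(0,-\varepsilon;0,0)$ and $\cL(0,-\varepsilon;0,0)$ terms), but being a sure inequality it needs no disjointness input. The cost — that the extra terms do not have high-probability one-sided bounds — is absorbed by a fixed-positive-probability argument: $\cL\big((0,0),-\varepsilon;(0,0),0\big)>0$ with some probability $w>0$ independent of $L$, the other corner terms are controlled with probability $>1-w/4$, and independence of time strips turns a positive-probability good event into a multiplicative factor $w^2/16$ that is harmless against the $\exp(-cL^{3/2})$ bound. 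You should replace the a.s.-equality step by \eqref{eq:mult-quad} and this positive-probability trick, or else supply a correct reference and proof of a.s.\ geodesic disjointness that survives the $\eta\downarrow 0$ degeneration; as written, the argument does not close.
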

Using this lemma and \eqref{eq:Ltent}, we can bound the probability of the same event conditional on $L<\cS^\beta_t(0,0)<L^+$, by $C\exp(-0.005L^{3/2})$.
Then by \eqref{eq:fhH-c-bd}, and noting that $(2-0.001)L-8L^{1/2}H>1.99L$, we get \Cref{lem:dp-coal}.

\begin{proof}[Proof of \Cref{lem:L-bd}]
By using \eqref{eq:mult-quad} twice (first with $y_1 = -H$, $y_2 = 0$, $y_3 = H$ and then with $y_1 = y_2 = 0$ and $y_3 = -H$), we have
\[
\cL((0,0),-\varepsilon ;(-H,H),0) \ge \cL((0,0),-\varepsilon ;(0,0),0) + \cL(0,-\varepsilon ;-H,0) + \cL(0,-\varepsilon ;H,0) - 2\cL(0,-\varepsilon ;0,0).
\]
Denote $w:=\PP(\cL((0,0),-\varepsilon ;(0,0),0)>0)$.
There exists $c_*>0$, such that
\[
\PP\Bigl(\cL(0,-\varepsilon ;-H,0) + H^2/\varepsilon  < -c_*\Bigr),\; \PP\Bigl(\cL(0,-\varepsilon ;H,0) + H^2/\varepsilon  < -c_*\Bigr),\; \PP\Bigl(\cL(0,-\varepsilon ;0,0) > c_*\Bigr) < w/4.
\]
We note that $w, c_*$ are independent of $L$.
Therefore
\[
\PP\Bigl(\cL((0,0),-\varepsilon ;(-H,H),0) > -2H^2/\varepsilon  - 4c_*\Bigr) > w/4.
\]
Similarly,
\[
\PP\Bigl(\cL((-H,H),1;(0,0),1+\varepsilon ) > -2H^2/\varepsilon  - 4c_*\Bigr) > w/4.
\]
Note that
\begin{multline*}
\cL_2(0,-\varepsilon ;0,1+\varepsilon ) \\ \ge \cL((0,0),-\varepsilon ;(-H,H),0) + \cL((-H,H),0;(-H,H),1) + \cL((-H,H),1;(0,0),1+\varepsilon ).   
\end{multline*}
So we have
\[
\PP\left(\cL_2(0,-\varepsilon ;0,1+\varepsilon ) > 1.99L-4H^2/\varepsilon -8c_*\right) \ge 
\PP\Bigl( \cL((-H,H),0;(-H,H),1)> 1.99L\Bigr)w^2/16.
\]
Using \Cref{lem:fh-2ut} and the fact that $\cL_2(0,-\varepsilon ;0,1+\varepsilon )\le 2\cL(0,-\varepsilon ;0,1+\varepsilon )$ almost surely, we can bound the left-hand side by $\exp\big( -(4/3+0.01)L^{3/2} \big)$. Thus the conclusion follows.
\end{proof}

\subsubsection{Positive temperature ($\beta=1$) setting}
Recall from \Cref{sss:mpp} the notation $\cM$ for multi-point partition functions. We can then write
\begin{multline}  \label{eq:expSSS}
\exp\big(t^{1/3}(\cS^\beta_t(-H,-H)+\cS^\beta_t(H,H)-\cS^\beta_t(-H,H)-\cS^\beta_t(H,-H))\big)-1\\
= \frac{(2t^{2/3}H)^2\cM((-t^{2/3}H, t^{2/3}H), 0; (-t^{2/3}H, t^{2/3}H), t) }
{\exp\big(t^{1/3}(\cS^\beta_t(-H,H)+\cS^\beta_t(H,-H))-t/6\big)}.
\end{multline}
Therefore, to prove \Cref{lem:dp-coal}, the main task is to prove the following estimate.
\begin{lemma}  \label{lem:cM-t-H-bd}
There exists $L_0$ such that, for any $t>0$ and $L>(t^{-1/3-\varepsilon}\vee 1)L_0$, with $H=10^{-6}L^{1/2}$,
\[
\PP\Bigl( \cM\bigl((-t^{2/3}H, t^{2/3}H), 0; (-t^{2/3}H, t^{2/3}H), t)\bigr) >\exp\bigl(t^{1/3}\cdot 1.99L\bigr)\Bigr)
< C\exp\big( -(4/3+0.01)L^{3/2} \big).
\]
\end{lemma}
Using this lemma and \eqref{eq:Ltent}, we can bound the probability of the same event conditional on $L<\cS^\beta_t(0,0)<L^+$, by $C\exp(-0.005L^{3/2})$.
Then by \eqref{eq:fhH-c-bd}, conditional on $L<\cS^\beta_t(0,0)<L^+$, with probability $>1-C\exp(-cL^{3/2})$ the expression \eqref{eq:expSSS} is upper bounded by
\[
\frac{(2t^{2/3}H)^2 \exp\big(t^{1/3}\cdot 1.99L\big) }{\exp\big(t^{1/3}\cdot (2L-8L^{1/2}H-0.001L) -t/6 \big)} < C\exp(-cL).
\]
Thus we get \Cref{lem:dp-coal}. 

The rest of this section is devoted to proving \Cref{lem:cM-t-H-bd}.
We let $\cE$ denote the event whose probability we wish to bound, in the statement of \Cref{lem:cM-t-H-bd}.
By \Cref{lem:fh-2ut}, we have
\begin{multline*}
  \PP\Big( \cZ(0,-\varepsilon t;0,(1+\varepsilon )t) > \exp\big(t^{1/3}\cdot 0.98L\big),\; \cZ_2(0,-\varepsilon t;0,(1+\varepsilon )t) > \exp\big(t^{1/3}\cdot 1.96L\big) \Big) 
  \\ < C\exp\big( -(8/3-0.1)L^{3/2} \big).
\end{multline*}
By the continuity of $\cM$ (\Cref{lem:ext}), we have almost surely
\[
2^{-1}(1+2\varepsilon )t\cM((-\delta, \delta),-\varepsilon t;(-\delta, \delta),(1+\varepsilon )t) \to \cZ_2(0,-\varepsilon t;0,(1+\varepsilon )t),
\]
as $\delta\to 0$ from the right.
Then there exists small enough $\delta_L>0$, such that $\PP(\cE^+)<C\exp\big( -(8/3-0.1)L^{3/2} \big)$, with $\cE^+$ being the event where
\begin{align*}
    \cZ(0,-\varepsilon t;0,(1+\varepsilon )t) &> \exp\big(t^{1/3}\cdot 0.98L\big),\\
\cM\bigl((-\delta_L, \delta_L),-\varepsilon t;(-\delta_L, \delta_L),(1+\varepsilon )t\bigr) &> \exp\big(t^{1/3}\cdot 1.98L\big).
\end{align*}
We will show that conditional on $\cE$, with positive probability (independent of $L$), $\cE^+$ holds.
For this, we define the following events.

By \Cref{lem:ext}, there exists a small number $w>0$, such that
\[
    \PP\left(\cM((-x,x),-\varepsilon t;(-w, w),0)>w\right) > w
\]
for any $|x|\le w$.
We let $\cE_1$ be the event where
\[
\cM\bigl((-\delta_L, \delta_L),-\varepsilon t; (-w,w), 0\bigr) > w, \quad \cM\bigl( (-w,w), t; (-\delta_L, \delta_L,(1+\varepsilon )t\bigr)>w.
\]
Then $\PP(\cE_1)>w^2$.

We let $\cE_2$ be the event where for any $x, y\in\RR$, 
    \[
    \left| (\varepsilon t)^{-1/3}(\log\cZ((\varepsilon t)^{2/3}x,-\varepsilon t;(\varepsilon t)^{2/3}y,0)+\varepsilon t/12) + (x-y)^2\right| < 10^{-3}L + \log(|x-y|+2),
    \]
    and
        \[
    \left| (\varepsilon t)^{-1/3}(\log\cZ((\varepsilon t)^{2/3}x,t;(\varepsilon t)^{2/3}y,(1+\varepsilon )t)+\varepsilon t/12) + (x-y)^2\right| < 10^{-3}L + \log(|x-y|+2).
    \]
Then by \Cref{lem:fh-ub}, we have $\PP(\cE_2)>1- C\exp(-cL^{3/2})$.

By \Cref{lem:fh-int-ub} and the shift invariance of $\cZ$, we have $\PP(\cE^*)>1-C\exp(-cL^2)$, for $\cE^*$ denoting the event where
\[
\sup_{x,y\in [-H-1, -H]} \bigl|\cS^\beta_t(x,y)-\cS^\beta_t(-H,-H)\bigr|,\; \sup_{x,y\in [H, H+1]} \bigl|\cS^\beta_t(x,y)-\cS^\beta_t(H,H)\bigr| < 10^{-10}L.
\]

\begin{lemma} \label{lem:eveht}
We have $\cE_1\cap\cE_2\cap\cE^*\cap\cE \subset \cE^+$.
\end{lemma}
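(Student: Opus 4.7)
Both inequalities defining $\cE^+$ are to be proved by composition-based surgeries: decompose $\cZ(0,-\varepsilon t;0,(1+\varepsilon)t)$ (resp.~$\cM((-\delta_L,\delta_L),-\varepsilon t;(-\delta_L,\delta_L),(1+\varepsilon)t)$) via \eqref{eq:cZ-comp} (resp.~\eqref{eq:cM-comp}) at times $0$ and $t$ into a three-factor integral, restrict the integration so each factor is controlled by one of $\cE,\cE^*,\cE_1,\cE_2$, and multiply. The middle strip $[0,t]$ carries the wide-spread upper-tail mass from $\cE$, while the buffer strips $[-\varepsilon t,0]$ and $[t,(1+\varepsilon)t]$ bring the endpoints from the outer (small-spread) configuration to the interior (wide-spread) one.

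\emph{Single-polymer bound.} Set $I_-=[-t^{2/3}(H+1),-t^{2/3}H]$ and restrict the triple integral to $u,v\in I_-$. On $\cE$, dropping the positive off-diagonal in the $\cM$-determinant yields $\cZ(-t^{2/3}H,0;-t^{2/3}H,t)\cdot\cZ(t^{2/3}H,0;t^{2/3}H,t)>(2t^{2/3}H)^2\exp(t^{1/3}\cdot 1.99L)$, so at least one factor exceeds $2t^{2/3}H\exp(t^{1/3}\cdot 0.995L)$; $\cE^*$ then propagates this to $\cZ(u,0;v,t)\ge e^{-2\cdot 10^{-10}Lt^{1/3}}\cdot 2t^{2/3}H\exp(t^{1/3}\cdot 0.995L)$ uniformly for $u,v\in I_-$. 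For the boundary factors, writing $u=(\varepsilon t)^{2/3}y$ gives $y^2\le(H/\varepsilon^{2/3}+1)^2\le 10^{-4}L+O(L^{1/2})$, so $\cE_2$ yields $\log\cZ(0,-\varepsilon t;u,0)\ge -(\varepsilon t)^{1/3}\cdot O(10^{-3}L)-\varepsilon t/12$, and similarly for $\cZ(v,t;0,(1+\varepsilon)t)$. Multiplying and integrating over area $t^{4/3}$, the leading contribution $t^{1/3}\cdot 0.995L$ dominates all losses (of order $10^{-5}Lt^{1/3}$ and $\varepsilon t/6$), yielding $\cZ(0,-\varepsilon t;0,(1+\varepsilon)t)>\exp(t^{1/3}\cdot 0.98L)$ for $L$ large (with $t$-dependent threshold).

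\emph{Two-polymer bound.} Apply \eqref{eq:cM-comp} at times $0$ and $t$ and restrict $\bz,\bz'\in I_-\times I_+$ with $I_+=[t^{2/3}H,t^{2/3}(H+1)]$, so the middle factor $\cM(\bz,0;\bz',t)$ accesses the wide-spread mass from $\cE$. Expanding its $2\times 2$ $\cZ$-determinant and dividing by $(z_2-z_1)(z_2'-z_1')\approx(2t^{2/3}H)^2$: the diagonal products $\cZ(z_1,0;z_1',t)\cZ(z_2,0;z_2',t)$ are bounded below via $\cE\cap\cE^*$ as in the previous paragraph, and the off-diagonal products $\cZ(z_1,0;z_2',t)\cZ(z_2,0;z_1',t)$, whose arguments are separated by $\approx 2t^{2/3}H$, are made negligible by the parabolic upper bound in \Cref{lem:fh-ub}: $\log\cZ(z_i,0;z_j',t)\le-t^{1/3}(2H)^2+O(t^{1/3})$ on the high-probability event that the random constant there is moderate. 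This yields $\cM(\bz,0;\bz',t)>\exp(t^{1/3}\cdot 1.99L)(1-o(1))$ uniformly on the restricted region. For the two boundary $\cM$-factors, which interpolate between $(-\delta_L,\delta_L)$ and the wide-spread $\bz$, I would apply \eqref{eq:cM-comp} a further time within the buffer strip, routing through an intermediate pair near $(-w,w)$: $\cE_1$ then controls the close-to-close sub-factor while $\cE_2$ handles the short close-to-wide transition via the same parabolic computation as in the single-polymer case. Assembling all factors and integrating over the volume-$t^{8/3}$ region gives $\cM>\exp(t^{1/3}\cdot 1.98L)$.

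\emph{Main obstacle.} The delicate step is obtaining uniform control over the off-diagonal $\cZ$-terms entering the middle $\cM$-factor for $\bz,\bz'\in I_-\times I_+$: these are not directly covered by $\cE^*$ (which covers only diagonal neighborhoods of $(-H,-H)$ and $(H,H)$). I expect to handle this by intersecting the hypothesis event with a further high-probability event extracted from \Cref{lem:fh-ub}, at probability cost $1-C\exp(-cL^{3/2})$, which is negligible relative to $\PP(\cE)$ and is absorbed into the overall probabilistic bookkeeping for this step of the coalescence argument.
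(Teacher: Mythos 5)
Your single-polymer bound is essentially the paper's argument (up to minor bookkeeping about which of $\cZ(\pm t^{2/3}H,0;\pm t^{2/3}H,t)$ is large), but your two-polymer bound has a fatal gap.

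The claim that the off-diagonal products in the determinant for $\cM(\bz,0;\bz',t)$ are ``made negligible by the parabolic upper bound'' is false, and the proposed patch does not repair it. On the event $\cE$ one only knows $\cZ_{11}\cZ_{22}-\cZ_{12}\cZ_{21}>(2t^{2/3}H)^2\exp(t^{1/3}\cdot 1.99L)$; this forces $\cZ_{11}$ or $\cZ_{22}$ to be of size $\exp(t^{1/3}\Theta(L))$, which (via the lower-bound direction of \Cref{lem:fh-ub}) forces the random constant $H$ in \Cref{lem:fh-ub} to be $\gtrsim L$. But then the parabolic upper bound on the off-diagonal gives only $\cS^{\beta}_t(\approx -H,\approx H)\le H_{\mathrm{rand}}-4\cdot 10^{-12}L+O(\log L)$, while the diagonal is itself only bounded above by $H_{\mathrm{rand}}+O(\log L)$; there is no usable lower bound on the diagonal relative to $H_{\mathrm{rand}}$. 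Nothing in $\cE$, $\cE^*$, or \Cref{lem:fh-ub} gives a uniform upper bound on $\cZ_{12}\cZ_{21}/(\cZ_{11}\cZ_{22})$ bounded away from $1$: in fact on $\cE$ this ratio can be $1-\exp(-t^{1/3}\Theta(L))$, i.e.\ the two products are nearly equal, and the determinant is a delicate near-cancellation. Your ``Main obstacle'' paragraph proposes to intersect with a high-probability moderate-$H_{\mathrm{rand}}$ event, but such an event is incompatible with $\cE$, and anyway the statement to prove is a deterministic inclusion of events, not a high-probability bound; no further probabilistic intersection is permitted. The same difficulty reappears in the boundary factors, where you again propose to lower-bound a $2\times 2$ determinant (at some intermediate time $r$, for which $\cE_1$ as defined does not directly apply) from upper/lower bounds on the individual $\cZ$ entries.

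The tool you are missing is \Cref{lem:cM-t-mon}, the monotonicity/convexity-type inequality $\cM((x_1,x_2,x_3),s;(y_1,y_2,y_3),t)>0$ applied at degenerate triples. It converts the problem of lower-bounding $\cM$ at nearby endpoints into a comparison of $\cM$ at the reference endpoints (controlled directly by $\cE$, respectively $\cE_1$) against a ratio of single $\cZ$'s (controlled by $\cE^*$, respectively $\cE_2$), with no determinant expansion and hence no off-diagonal to estimate. Concretely, the paper's Step~2 writes
\[
2\cM((x,y),0;(x',y'),t) > \cM\bigl((-t^{2/3}H,t^{2/3}H),0;(-t^{2/3}H,t^{2/3}H),t\bigr)\,\frac{\cZ(x,0;x',t)\,\cZ(y,0;y',t)}{\cZ(-t^{2/3}H,0;-t^{2/3}H,t)\,\cZ(t^{2/3}H,0;t^{2/3}H,t)},
\]
and an analogous inequality in Step~1 transfers $\cM((-\delta_L,\delta_L),-\varepsilon t;(-w,w),0)$ to $\cM((-\delta_L,\delta_L),-\varepsilon t;(x,y),0)$. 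These, together with the composition laws, give $\cE^+$ directly without any estimate on off-diagonal $\cZ$'s.
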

\begin{proof}
In this proof we always take
\[x, x'\in [-t^{2/3}(H+1), -t^{2/3}H], \quad y, y'\in [t^{2/3}H, t^{2/3}(H+1)].\] 
We will use $\cE_1\cap\cE_2$ to lower bound $\cM((-\delta_L, \delta_L),-\varepsilon t; (x,y), 0)$ and $\cM((x',y'), t ; (-\delta_L, \delta_L),(1+\varepsilon )t)$,
and $\cZ(0,-\varepsilon t; x, 0)$, $\cZ(x',t; 0, (1+\varepsilon )t)$; then we will lower bound $\cM((x,y), 0; (x',y'), t)$ and $\cZ(x,0;x',t)$ for these $x, x', y, y'$, using $\cE\cap \cE^*$.
Then using the composition laws \eqref{eq:cZ-comp} and \eqref{eq:cM-comp}, we obtain $\cE^+$.

\noindent\textbf{Step 1.}
Using \Cref{lem:cM-t-mon}, we have that
\begin{multline*}
(y-x)\cM((-\delta_L, \delta_L),-\varepsilon t; (x,y), 0) > 
2w\cM((-\delta_L, \delta_L),-\varepsilon t; (-w,w), 0)    \\
\times
\frac{\cZ(-\delta_L,-\varepsilon t;x,0)\cZ(\delta_L,-\varepsilon t;y,0)}
{\cZ(-\delta_L,-\varepsilon t;-w,0)\cZ(\delta_L,-\varepsilon t;w,0)}.
\end{multline*}
Under $\cE_1$, the first factor in the right-hand side is $>2w^2$; and under $\cE_2$, the second factor in the right hand side is $>\exp(-t^{1/3}10^{-3}L)$, when $L$ is large enough.
Therefore, we have
\begin{equation}  \label{eq:Mb}
\cM((-\delta_L, \delta_L),-\varepsilon t; (x,y), 0) > c  \exp(-t^{1/3}10^{-3}L).
\end{equation}
By symmetry, the same lower bound holds for $\cM((x',y'), t ; (-\delta_L, \delta_L),(1+\varepsilon )t)$.

The event $\cE_2$ also implies that
\begin{equation}  \label{eq:Zb}
\cZ(0, -\varepsilon t;x, 0), \cZ(x', 2t; 0, (2+\varepsilon )t) > \exp(-t^{1/3}10^{-3}L).
\end{equation}
\noindent\textbf{Step 2.}
Using \Cref{lem:cM-t-mon}, we have that
\begin{multline*}
2\cM((x,y),0; (x',y'), t) > 
\cM((-t^{2/3}H, t^{2/3}H),0; (-t^{2/3}H, t^{2/3}H), t)    \\
\times
\frac{\cZ(x',0;x,t)\cZ(y',-0;y,t)}
{\cZ(-t^{2/3}H,0;-t^{2/3}H,t)\cZ(t^{2/3}H,-0;t^{2/3}H,t)}.
\end{multline*}
By $\cE$, the first factor in the right-hand side is $>\exp(t^{1/3}\cdot 1.99L)$; and by $\cE^*$, the second factor in the right-hand side is 
$>\exp(-t^{1/3}10^{-9}L)$.
Therefore we have
\begin{equation}  \label{eq:Mm}
\cM((x,y),0; (x',y'), t) > c\exp(t^{1/3}\cdot (1.99-10^{-9})L).
\end{equation}

From $\cE$, we also have that
\[
\cZ(-t^{2/3}H,0; -t^{2/3}H, t) \vee \cZ(t^{2/3}H,0; t^{2/3}H, t) > \exp(t^{1/3}\cdot 0.99L).
\]
Without loss of generality, we assume that 
\[
\cZ(-t^{2/3}H,0; -t^{2/3}H, t) > \exp(t^{1/3}\cdot 0.99L).
\]
Then by $\cE^*$, we have
\begin{equation}  \label{eq:Zm}
\cZ(x,0; x', t) > \exp(t^{1/3}\cdot (0.99-10^{-9})L).
\end{equation}
\noindent\textbf{Step 3.}
By the composition law \eqref{eq:cM-comp}, and \eqref{eq:Mb}, \eqref{eq:Mm}, we have
\[
\cM( (-\delta_L, \delta_L),-\varepsilon t; (-\delta_L, \delta_L), (1+\varepsilon )t ) > c\exp(t^{1/3}\cdot (1.99-2\cdot 10^{-3}-10^{-9})L).
\]
By the composition law \eqref{eq:cZ-comp}, and \eqref{eq:Zb}, \eqref{eq:Zm}, we have
\[
\cZ(0,-\varepsilon t;0,(1+\varepsilon )t) > c\exp(t^{1/3}\cdot (0.99-2\cdot 10^{-3}-10^{-9})L).
\]
These imply the event $\cE^+$.
\end{proof}

\begin{proof}[Proof of \Cref{lem:cM-t-H-bd}]
By \Cref{lem:eveht}, and the fact that $\cE_1, \cE_2$ are independent of $\cE^*, \cE$, we have $\PP(\cE_1\cap\cE_2)\PP(\cE^*\cap\cE)\le \PP(\cE^+)$.
Then by the bounds on $\PP(\cE_1)$, $\PP(\cE_2)$, $\PP(\cE^*)$, $\PP(\cE)$, and $\PP(\cE^+)$, we have that
\[
\PP(\cE) \le C\exp\big( -(8/3-0.1)L^{3/2} \big) (w^2-C\exp(-cL^{3/2}))^{-1} - C\exp(-cL^2), 
\]
and this is bounded by $C\exp\big(-(4/3+0.01)L^{3/2}\big)$ for $L$ large enough.
\end{proof}

\section{Tail comparison estimates}\label{ssec:tailcom}

To reduce notations, in the rest of this paper we denote
\[
\cL^{\beta=1}(x,s; y,t) = \log \mc Z(x,s; y,t) + (t-s)/12, \quad \cL^{\beta=\infty}(x,s; y,t) = \cL(x,s; y,t).
\]
We take $\beta=1$ or $\infty$ systematically unless otherwise noted.
From the above definition we have $\fh^\beta_{t,1}(x)=\cL^\beta(0,0;x,t)$.

In this section, $t$ is taken to be any $t>t_0$, where $t_0$ is an arbitrary positive number. All the constants may depend on $t_0$, but are uniform in $t$.

As indicated in \Cref{iop}, the following tail ratio estimate would be central in much of our analysis.
\begin{theorem}\label{t.comparison}
For any $L\ge 2$, and $0< \delta<L^{1/4}$, 
\begin{equation}  \label{eq:tcomparison}
\frac{\P\left(\hfh^\beta_{t,1}(0) > L+\delta\right)}{\P(\hfh^\beta_{t,1}(0)>L)}=\exp(-2\delta L^{1/2} + O(\delta L^{-1/4}\log(L) +L^{-3/2})) .
\end{equation}
For $\delta\ge L^{1/4}$, the same ratio equals $\exp(-\Omega(\delta L^{1/2}))$.
\end{theorem}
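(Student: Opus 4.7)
The plan is to reduce the theorem to a pointwise estimate on the ratio of one-point densities. Writing $p(K)\diff K := \P(\hfh^\beta_{t,1}(0) \in (K, K+\diff K))$, the goal is to establish
\[
\frac{p(K+\delta)}{p(K)} = \exp\bigl(-2\delta K^{1/2} + O(\delta L^{-1/4}\log L)\bigr)
\]
uniformly for $K$ in a window of width $O(L^{-1/2})$ above $L$. Integrating this pointwise estimate then yields \eqref{eq:tcomparison}, because by \Cref{l.upper tail} the tail mass $\P(\hfh^\beta_{t,1}(0) > L)$ is concentrated in such a window (the density decays like $\exp(-\tfrac{4}{3}K^{3/2})$, whose log-derivative at $L$ is $-2L^{1/2}$), and the $O(L^{-3/2})$ term absorbs the subleading contribution from this integration. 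For the regime $\delta \ge L^{1/4}$, I would iterate the small-$\delta$ estimate in increments of $L^{1/4}$: each step contributes a factor $\exp(-cL^{3/4})$, so the product over the $\delta L^{-1/4}$ steps telescopes to $\exp(-c\delta L^{1/2})$.

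To analyze the density ratio, I would apply the Brownian Gibbs property of \Cref{lem:Gibbs} on the interval $[-r, r]$ with $r := L^{1/2}/\log L$. Conditional on $\Fext([-r,r])$, the process $\hfh^\beta_{t,1}$ on $[-r, r]$ is a rate-$2$ Brownian bridge $B$ from $a := \hfh^\beta_{t,1}(-r)$ to $b := \hfh^\beta_{t,1}(r)$, reweighted by $W(B, \hfh^\beta_{t,2})$. Since $B(0)$ has conditional mean $(a+b)/2$ and variance $r$, the conditional density of $\hfh^\beta_{t,1}(0)$ at $K$ factors as a Gaussian factor $(2\pi r)^{-1/2}\exp\bigl(-(K-(a+b)/2)^2/(2r)\bigr)$ times a reweighting ratio $\E_B[W \mid B(0) = K]/\E_B[W]$. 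Taking the ratio at levels $K+\delta$ and $K$, the Gaussian factor contributes
\[
\exp\Bigl(-\tfrac{\delta(K-(a+b)/2)}{r} - \tfrac{\delta^2}{2r}\Bigr),
\]
and the unconditional ratio $p(K+\delta)/p(K)$ equals the expectation of the entire conditional ratio under the regular conditional law $\widetilde{\P}_K$ of $\Fext$ given $\hfh^\beta_{t,1}(0) = K$.

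Under $\widetilde{\P}_K$, the refined tent description of \Cref{l.control near tent center} at scale $r$ gives $(a+b)/2 = K - 2K^{1/2}r + O(\sqrt{r \log L})$ with probability $1 - L^{-c}$. Hence $(K - (a+b)/2)/r = 2K^{1/2} + O(L^{-1/4}\log L)$, and with the choice of $r$ above one also has $\delta^2/r = O(\delta L^{-1/4}\log L)$, so the Gaussian factor simplifies to $\exp(-2\delta K^{1/2} + O(\delta L^{-1/4}\log L))$. The $W$-reweighting ratio is $1 + O(\exp(-cL))$: under the tent picture, $\hfh^\beta_{t,2}$ lies below $\hfh^\beta_{t,1}$ by order $L$ (using \Cref{lem:fh-2ut}), so for $\beta = \infty$ the non-intersection constraint holds automatically with high probability, and for $\beta = 1$ the integrand $\exp(B - \hfh^\beta_{t,2})$ in $W$ is uniformly exponentially small.

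The main technical obstacle is that the above controls the conditional ratio in high probability under $\widetilde{\P}_K$, whereas $p(K+\delta)/p(K)$ is a genuine expectation. I would handle this via truncation: on the bad event where the tent picture fails, a crude deterministic upper bound on the conditional ratio (obtained from the quadratic Gaussian exponent together with the $L^{\infty}$ tent bounds of \Cref{lem:fh-tent}) combined with the $L^{-c}$ conditional probability of the bad event suffices to show its contribution to the expectation is negligible at the level of precision $O(\delta L^{-1/4}\log L)$. A secondary subtlety is integrating the pointwise estimate over $K$ to obtain the $O(L^{-3/2})$ error, which requires tracking the variation of $K^{1/2}$ across the $O(L^{-1/2})$-sized concentration window of the tail.
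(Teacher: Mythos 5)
Your proposal takes a genuinely different route from the paper: you reduce to a pointwise estimate on the density ratio $p(K+\delta)/p(K)$ and then integrate, whereas the paper directly compares the tail probabilities $\PF(\hfh^\beta_{t,1}(0)>L+\delta)$ and $\PF(\hfh^\beta_{t,1}(0)>L)$ as Gaussian tail integrals coming from the Brownian Gibbs resampling, on a much larger interval $[-L_M^{1/2}, L_M^{1/2}]$ with $L_M^{1/2} = L^{1/2} - \log L$ (so the boundary values are near $-L$ rather than near $L$). The paper's direct comparison of tail probabilities avoids both the existence-of-density subtleties and the integration step. That said, the density route is in principle workable, and your choice of a smaller resampling interval $[-r,r]$ with $r \ll L^{1/2}$ has the minor advantage of making the $W$-reweighting ratio $1+O(e^{-cL})$ rather than the paper's $1+O(e^{-c(\log L)^2})$, because the boundary values stay far from the second curve.

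However, there are two concrete gaps. First, your justification for restricting attention to $K$ in a window of width $O(L^{-1/2})$ above $L$ does not follow from \Cref{l.upper tail}: that theorem has an $O(L^{3/4})$ error in the exponent, which only localizes the tail mass to a window of width $O(L^{1/4})$, not $O(L^{-1/2})$. Obtaining the finer scale requires a bootstrap (first prove the density ratio, integrate to get a sharper tail bound, then deduce the sharp concentration), or one must run the integration on the coarser $O(L^{1/4})$ window and check the resulting error $2\delta(K^{1/2}-L^{1/2}) = O(\delta L^{-1/4})$ is absorbed; neither is articulated. Second, the truncation on the bad tent event is handled incorrectly as stated: the conditional density ratio $R$ is unbounded (it can be exponentially large when the resampled endpoints are unusually high), so no ``crude deterministic upper bound'' exists. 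The clean fix is the change-of-measure identity $\E_{\widetilde\P_K}[R\,\one_A] = \frac{p(K+\delta)}{p(K)}\widetilde\P_{K+\delta}(A)$, which converts the bad-event contribution into a probability estimate rather than requiring a bound on $R$ itself. Relatedly, with $r = L^{1/2}/\log L$ and the tent-fluctuation scale $\sqrt{r\log L}$ you need the failure probability $C\exp(-cM^2)$ with $M$ of order $(\log L)^{1/2}$ to be $\le L^{-3/2}$, which requires tuning the implicit constant; taking $M$ a sufficiently large constant multiple of $(\log L)^{1/2}$ makes this work while keeping the Gaussian-exponent error $O(\delta L^{-1/4}\log L)$, but this needs to be said.
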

The main term $\exp(-2\delta L^{1/2})$ comes from the following: from the tent behavior, 
one considers a Brownian bridge in $[-L^{-1/2}, L^{1/2}]$ that equals $-L$ at the two ends; then the ratio is roughly the probability that it is $>L+\delta$ at $0$ versus the probability that it is $>L$ at $0$. As mentioned in the idea of proofs Section~\ref{iop}, it can also be understood by Taylor expanding $\frac{4}{3}x^{3/2}$ around $x=L$.

The general strategy will be to use the Gibbs property (\Cref{lem:Gibbs}) to resample $\smash{\hfh^\beta_{t,1}}$ in an interval $[-L^{1/2}+M, L^{1/2}-M]$, {with some $M$ large but much smaller than $L^{1/2}$.}
The choice of $M$ is such that, 
conditioned on the upper tail large deviation event, $\smash{\hfh^\beta_{t,1}}$ in the interval $[-L^{1/2}+M, L^{1/2}-M]$ is not much affected by the second line $\smash{\hfh^\beta_{t,2}}$ since, by the tent behavior, it will have obtained some separation from the second line by that time. Thus $\smash{\hfh^\beta_{t,1}}$ can be analyzed as a Brownian bridge on that interval.
For this, we need the following estimate of $\smash{\hfh^\beta_{t,1}}$ at the end point $-L^{1/2}+M$ (and also for $L^{1/2}-M$ by symmetry).

\begin{lemma}\label{l.good separation}
{For any $0<M<L^{1/2}$ and $L\ge 2$},
\begin{align*}
\P\left(\hfh^\beta_{t,1}(-L^{1/2}+M) \leq -(L^{1/2}-M)^2 + \tfrac{1}{2}M^2 \midd \hfh^\beta_{t,1}(0)>L\right) < C\exp(-cM^{5/2}).
\end{align*}
\end{lemma}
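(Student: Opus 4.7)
\begin{sproof}

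The threshold $T = -(L^{1/2}-M)^2 + \tfrac{1}{2}M^2$ is the tent value $-L+2L^{1/2}M$ at $x_0 := -L^{1/2}+M$ minus $\tfrac{1}{2}M^2$. The statement thus bounds the probability that $\hfh^\beta_{t,1}(x_0)$ sits more than $\tfrac{1}{2}M^2$ below its typical tent height under the upper-tail conditioning. For $M$ bounded by an absolute constant the bound is trivial by the choice of $C$; I therefore assume $M \ge M_0$ for a suitably large constant $M_0$.

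\textbf{Regime $M \ge L^{1/3}$.} The uniform tent bound \Cref{lem:fh-tent-up} applied with $M' := M^2/(2L^{1/4})$ (so $M' L^{1/4} = \tfrac{1}{2}M^2$ and $M' \le L^{3/4}/2 < cL^{3/4}$ because $M \le L^{1/2}$) gives a probability bound of $\exp(-c(M')^2) = \exp(-cM^4/L^{1/2})$, and $M^4/L^{1/2} \ge M^{5/2}$ precisely because $M^{3/2} \ge L^{1/2}$.

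\textbf{Regime $M_0 \le M \le L^{1/3}$.} Here $\tfrac{1}{2}M^2$ is smaller than the uniform tent-fluctuation scale $L^{1/4}$, so I need a local argument. I will pick an interval $[x_0-r, x_0+r]$ with $r$ of order $M^{3/2}$ and compare $\hfh^\beta_{t,1}|_{[x_0-r, x_0+r]}$ to a rate-$2$ Brownian bridge between the endpoint values via the Gibbs property \Cref{lem:Gibbs}. On the favorable event that both endpoints are within $\tfrac{1}{4}M^2$ of the tent values, the bridge mean at $x_0$ matches the tent value at $x_0$ up to $\pm \tfrac{1}{4}M^2$, and its variance at $x_0$ is of order $r = M^{3/2}$; a Gaussian tail bound (\Cref{l.normal bounds}) for the remaining $\tfrac{1}{4}M^2$ deviation then yields $\exp(-cM^4/r) = \exp(-cM^{5/2})$. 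The required endpoint control is obtained by combining the stationarity of $x \mapsto \hfh^\beta_{t,1}(x) + x^2$ with one-point tail bounds (\Cref{l.upper tail} and \Cref{l.lower tail}) and the FKG inequality \Cref{l.fkg} to transfer unconditional tails to the conditional setting.

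\textbf{Main obstacle.} The principal technical difficulty is reducing $\hfh^\beta_{t,1}|_{[x_0-r, x_0+r]}$ to a true Brownian bridge, given the Gibbs reweighting by $W(B, \hfh^\beta_{t,2}) = \exp(-2\int \exp(B-\hfh^\beta_{t,2}))$ in positive temperature. Because $\hfh^\beta_{t,1}-\hfh^\beta_{t,2}$ is typically of order $L^{1/2}M$ on the tent, the reweighting is not in any simple sense a $1+o(1)$ perturbation; its net effect on the distribution of $\hfh^\beta_{t,1}(x_0)$ must be captured by a Brownian bridge comparison in the style of \Cref{l.dp-comp-s} (although that lemma as stated covers only $[0, L^{1/2}/2]$, so an analogous comparison near the endpoint $-L^{1/2}$ is required). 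This in turn needs $\hfh^\beta_{t,2}$ to be controlled below the tent of $\hfh^\beta_{t,1}$ on $[x_0-r, x_0+r]$, which is established by combining \Cref{lem:fh-2ut} with \Cref{lem:fh-cont}.

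\end{sproof}
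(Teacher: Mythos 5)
Your split into regimes $M\geq L^{1/3}$ and $M\leq L^{1/3}$ does not survive scrutiny. In the small-$M$ regime you place a Gibbs resampling window of width $r\sim M^{3/2}$ centered at $x_0=-L^{1/2}+M$, but for $M\geq M_0$ the left endpoint $x_0-r$ lies \emph{outside} the tent interval $[-L^{1/2},L^{1/2}]$, where the profile fluctuates around the parabola $-x^2$ rather than the (linearly extended) tent. A direct computation shows the parabola at $x_0-r$ is roughly $(r-M)^2\approx M^3$ below the tent value there. Thus the favorable event you assume --- the left endpoint being within $\tfrac14 M^2$ of the \emph{tent} value --- has conditional probability close to $\exp(-cM^{9/2})$, not close to 1, and $\P(\text{not favorable})$ does not decay. (If instead you condition the left endpoint near the parabola value, then the bridge mean at $x_0$ drops $\Theta(M^3)$ below the tent value, which dominates the allowed $\tfrac12 M^2$ deviation, and the Gaussian tail step fails.) You have correctly identified that the Gibbs reweighting is ``not in any simple sense a $1+o(1)$ perturbation,'' but the resolution you propose --- a Radon--Nikodym comparison in the style of \Cref{l.dp-comp-s} --- cannot work here: near the tent endpoint the reweighting must push the bridge up by $\Theta(M^3)$, which is an enormous effect on the bridge's $\Theta(M^{3/4})$ fluctuation scale. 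A separate issue is your large-$M$ regime: it cites \Cref{lem:fh-tent-up}, which is proved in this paper via \Cref{t.comparison}, which in turn uses the present lemma, so the dependency is circular unless you instead go through \Cref{lem:fh-tent} and carefully control the spread of $\hfh^\beta_{t,1}(0)$ without already having \Cref{t.comparison} available.

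The paper's proof avoids both problems. It resamples on the full interval $[-L^{1/2},0]$, whose right endpoint is controlled for free by the conditioning $\hfh^\beta_{t,1}(0)>L$, and whose left endpoint $-L^{1/2}$ sits at the tent boundary where tent and parabola agree, so a lower-tail bound of the form $\P(\hfh^\beta_{t,1}(-L^{1/2})>-L-M\mid\hfh^\beta_{t,1}(0)>L)>1-\exp(-cM^{5/2})$ (via FKG and \Cref{l.lower tail}) gives exactly the right slope for the bridge. It then handles the Gibbs reweighting not by a Radon--Nikodym comparison but by one-sided stochastic monotonicity (\Cref{l.monotonicity}): the reweighted bridge dominates an unreweighted rate-2 bridge with slightly lowered endpoints $(-L^{1/2},-L-M)$ and $(0,L)$, and this plain bridge, evaluated at $-L_M^{1/2}$, has mean near the tent and variance $\leq 2M$, giving $\exp(-cM^3)$. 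The $\exp(-cM^{5/2})$ from the left endpoint event then dominates. This approach makes the reweighting work \emph{for} you (it can only raise the process) rather than requiring it to be quantitatively controlled.
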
 

By the tent behavior, we expect the expectation of $\smash{\hfh^\beta_{t,1}(\pm(L^{1/2}-M))}$ to be around $-L+2ML^{1/2}$, with a Gaussian fluctuation of order $M^{1/2}$.
This estimate bounds the probability that it is order $M^2$ smaller than its mean.
The reason the upper bound is $\exp(-cM^{5/2})$ rather than $\exp(-c(M^2)^2/M)=\exp(-cM^3)$ is due to interactions with $\smash{\hfh^\beta_{t,2}}$.

In the following proof we will use the notation $\EF[\cdot]:=\E[\,\cdot\mid \F]$ and $\PF(\cdot) := \P(\cdot\mid \F)$ for a $\sigma$-algebra $\F$. The existence of the regular conditional distribution is justified by the fact that the $\sigma$-algebras we consider will be generated by random variables taking values in Borel spaces and then invoking abstract existence results such as \cite[Theorem~8.5]{Kallenberg}.

\begin{proof}[Proof of \Cref{l.good separation}]
Let $\F = \Fext([-t^{2/3}L^{1/2}, 0])$ (recall the definition of $\Fext$ from \Cref{ss:legp}) and $L_M = (L^{1/2}-M)^2$. Then we can write the probability of the complement of the event in the LHS in the statement of the lemma as
\begin{align}\label{e.conditional probability ratio for near tangent bound}
\frac{\E\left[\PF\left(\hfh^\beta_{t,1}(-L_M^{1/2}) > -L_M + \tfrac{1}{2}M^2\right)\one_{\hfh^\beta_{t,1}(0)>L}\right]}{\P(\hfh^\beta_{t,1}(0)>L)}.
\end{align}
Let us focus on the conditional probability in the numerator. Let $\tilde B$ be a Brownian bridge from $(-L^{1/2}, \hfh^\beta_{t,1}(-L^{1/2}))$ to $(0, \hfh^\beta_{t,1}(0))$, such that $x\mapsto t^{1/3}\tilde{B}(t^{-2/3}x)$ in $[-t^{2/3}L^{1/2}, 0]$ interacts with $\fh^\beta_{t,2}$ by the Radon Nikodym derivative reweighting \eqref{e.rn derivative}. Then the Brownian Gibbs property says that the conditional probability in the previous display equals
\begin{align}\label{e.gap probability lower bound}
\PF\left(\tilde B(-L_M^{1/2}) > -L_M + \tfrac{1}{2}M^2\right).
\end{align}
Next let $B$ be a Brownian bridge from $(-L^{1/2}, -L-M)$ to $(0,L)$ (with no lower boundary conditioning). Then, on the $\F$-measurable event 
\begin{equation}\label{e.A(L,W) definition}
A(L,M) = \left\{\hfh^\beta_{t,1}(-L^{1/2}) > -L-M, \;\hfh^\beta_{t,1}(0)>L\right\},  
\end{equation}
it holds that $\tilde B$ dominates $B$ by monotonicity (Lemma~\ref{l.monotonicity}), so \eqref{e.gap probability lower bound} is lower bounded by
\begin{align}
\PF\left(B(-L_M^{1/2}) > -L_M + \tfrac{1}{2}M^2\right)\one_{A(L, M)}. \label{e.B at L_W bound}
\end{align}
Now $B(-L_M^{1/2})$ is distributed as a normal random variable with mean 
$$-L - M + M\cdot \frac{L-(-L-M)}{L^{1/2}} = -L - M + 2L^{1/2}M + M^2L^{-1/2}$$
and variance $\sigma^2 = \frac{2M (L^{1/2}-M)}{L^{1/2}} \leq 2M$. Since $-L_M = -(L^{1/2}-M)^2 = -L + 2L^{1/2}M - M^2$, we see that \eqref{e.B at L_W bound} equals, on $A(L,M)$,
\begin{align*}
\P\left(\mathcal{N}(0,\sigma^2) > -M^2(\tfrac{1}{2}+L^{-1/2}) + M \right) \geq 1-\exp\left(-cM^3\right)
\end{align*}
using standard tail bounds for the normal distribution (Lemma~\ref{l.normal bounds}).
Putting this back in \eqref{e.conditional probability ratio for near tangent bound} and recalling the definition \eqref{e.A(L,W) definition} of $A(L,M)$ , we see that the LHS in the lemma is upper bounded by
\begin{align*}
\MoveEqLeft
1-(1-\exp(-cM^{3})) \P\left(\hfh^\beta_{t,1}(-L^{1/2}) > -L-M \midd \hfh^\beta_{t,1}(0)>L\right).
\end{align*}
By the FKG inequality (\Cref{l.fkg}) and \Cref{l.lower tail}, we have 
\begin{align*}
\P\left(\hfh^\beta_{t,1}(-L^{1/2}) > -L-M \midd \hfh^\beta_{t,1}(0)>L\right) > 1-\exp(-cM^{5/2})).
\end{align*}
This completes the proof.
\end{proof}

\begin{proof}[Proof of Theorem~\ref{t.comparison}]
We can assume that $L$ is large since otherwise the conclusion follows from Theorem~\ref{lem:fh-ut}.
For some constant $C_0$ sufficiently large, the case of $\delta > C_0L^{1/4}$ also follows from Theorem~\ref{lem:fh-ut}. In the remainder of the proof, we prove \eqref{eq:tcomparison} for $\delta < C_0L^{1/4}$. 

Denote $M=C_1\log(L)$, where $C_1>0$ is a large constant whose value is to be determined.
It suffices to show that
\begin{align*}
 \frac{\P\left(\hfh^\beta_{t,1}(0) > L+\delta\right)}{\P(\hfh^\beta_{t,1}(0)>L)}\cdot\exp(2\delta L^{1/2}) &< (1+CL^{-3/2})\exp( C\delta ML^{-1/4}) \qquad\text{and}\\
 \frac{\P\left(\hfh^\beta_{t,1}(0) > L+\delta\right)}{\P(\hfh^\beta_{t,1}(0)>L)}\cdot\exp(2\delta L^{1/2}) &> (1-CL^{-3/2})\exp( -C\delta ML^{-1/4}).
\end{align*}
We let $L_M = (L^{1/2}-M)^2$, and $\F=\Fext([-t^{2/3}L_M^{1/2}, t^{2/3}L_M^{1/2}])$. We start by considering the ratio of conditional probabilities
\begin{align*}
\frac{\PF\left(\hfh^\beta_{t,1}(0)>L + \delta\right)}{\PF\left(\hfh^\beta_{t,1}(0)>L\right)}.
\end{align*}
We adopt the notation $\B$ for the law of a Brownian bridge $B$ from $(-t^{2/3}L_M^{1/2}, \fh^\beta_{t,1}(-t^{2/3}L_M^{1/2}))$ to $(t^{2/3}L_M^{1/2}, \fh^\beta_{t,1}(t^{2/3}L_M^{1/2}))$, as well as the associated expectation.
With this notation, by the Brownian Gibbs property, the previous display equals
\[
\frac{\B\bigl(\one_{B(0) > t^{1/3}(L+\delta)}W(B,\fh^\beta_{t,2})\bigr)}{\B\bigl(\one_{B(0) > t^{1/3}L}W(B,\fh^\beta_{t,2})\bigr)}
= \frac{\B\bigl(B(0) > t^{1/3}(L+\delta)\bigr)}{\B\bigl(B(0) > t^{1/3}L\bigr)}\cdot\frac{\B\bigl(W(B,\fh^\beta_{t,2})\mid B(0)>t^{1/3}(L+\delta)\bigr)}{\B\bigl(W(B,\fh^\beta_{t,2})\mid B(0)>t^{1/3}L\bigr)},
\]
where $W(B,\fh^\beta_{t,2})$ is the weight factor from \eqref{e.rn derivative}.
Now the second ratio of terms in the previous display is lower bounded by $1$ using stochastic monotonicity properties of Brownian bridges and that $W$ is increasing in $B$. To upper bound the second ratio, we note that, since $W(B,\fh^\beta_{t,2}) \leq 1$,
it suffices to lower bound the denominator $\B\bigl(W(B,\fh^\beta_{t,2})\mid B(0)>t^{1/3}L\bigr)$. To do this we consider the $\F$-measurable event $\mrm{BdyCtrl} = \mrm{BdyCtrl}(L,M)$ defined by
\begin{align*}
\left\{\hfh^\beta_{t,1}(\pm L_M^{1/2}) \ge - L_M + \tfrac{1}{2}M^2\right\}\cap \bigcap_{i=0}^{M^{-1}L_M^{1/2}} \left\{\sup_{|x|\in[L_M^{1/2}-(i+1)M, L_M^{1/2}-iM]}\hfh^\beta_{t,2}(x)+x^2 \leq (i+1)M\right\}.
\end{align*}
By applying \Cref{l.good separation} to the first part, and taking $C_1$ large enough to apply \Cref{l.bk} to the second part (and using that the first term there dominates the term $t^{2/3}\exp(-cL^2)$ in this range of $i$), we get
\begin{multline}\label{e.A complement prob bound}
\P\left(\mrm{BdyCtrl}^c\mid \hfh^\beta_{t,1}(0)>L+\delta\right) < C\exp(-cM^{5/2}) \\ + \sum_{i=0}^{M^{-1}L_M^{1/2}} C\exp\left(-c(i+1)^{3/2}M^{3/2}+C(i+1)^{1/2}M^{1/2}\log(L)\right) < C\exp(-cM^{3/2}).
\end{multline}
Let $\ell^-$ be the line joining $(-L_M^{1/2}, -L_M + \frac{1}{4}M^2)$ and $(0, L-\frac{1}{4}M^2)$, $\ell^+$ be the line joining $(0, L-\frac{1}{4}M^2)$ and $(L_M^{1/2}, -L_M + \frac{1}{4}M^2)$, and $\ell$ be their concatenation. Consider the high corridor event 
$$\mrm{HighCorr} = \left\{t^{-1/3}B(t^{2/3}x)\geq \ell(x) - L_M^{1/2}+|x| \text{ for all } x\in[-L_M^{1/2}, L_M^{1/2}]\right\};$$
it is easy to obtain by standard Brownian bridge estimates that, on $\mrm{BdyCtrl}$, $\B[\mrm{HighCorr}\mid B(0)>t^{1/3}L] > 1-C\exp(-cM^2)$. 

Now on $\mrm{HighCorr}$ and $\mrm{BdyCtrl}$, we have $t^{-1/3}B(t^{2/3}x) \geq \hfh^\beta_{t,2}(x) + \frac{1}{4}M^2 -M$; and it is easy to check with the formula \eqref{e.rn derivative} that
$$W(B,\fh^\beta_{t,2}) > 1-Ct^{2/3}L_M^{1/2}\exp(-ct^{1/3}M^2)>1-C\exp(-cM^2).$$
Thus, these bounds on $W(B,\fh^\beta_{t,2})$ and $\B(\mrm{HighCorr}\mid B(0)>t^{1/3}L)$ yield that, on $\mrm{BdyCtrl}$,
\begin{align*}
\B\bigl(W(B,\fh^\beta_{t,2})\mid B(0)>t^{1/3}L\bigr) > 1-C\exp(-cM^{2}).
\end{align*}
So we see that, on $\mrm{BdyCtrl}(L,M)$,
\begin{align}  \label{eq:pfb}
\frac{\PF\left(\hfh^\beta_{t,1}(0)>L + \delta\right)}{\PF\left(\hfh^\beta_{t,1}(0)>L\right)} = (1+O(e^{-cM^2}))\cdot\frac{\B\bigl(B(0) > t^{1/3}(L+\delta)\bigr)}{\B\bigl(B(0) > t^{1/3}L\bigr)}.
\end{align}
Let $\mu = \frac{1}{2}(\hfh^\beta_{t,1}(-L_M^{1/2})+\hfh^\beta_{t,1}(L_M^{1/2}))$, and observe that $B(0)$ under $\B$ is distributed as a normal random variable with mean $t^{1/3}\mu$ and variance $t^{2/3}L_M^{1/2}$. 
Consider the $\F$-measurable event 
$$\mrm{MeanCtrl}(L) = \Bigl\{\mu \in [-L + 2L^{1/2}M-ML^{1/4}, -L + 2L^{1/2}M+ML^{1/4}]\Bigr\}.$$
We know from \Cref{lem:fh-tent-up} that
\begin{equation}\label{e.E complement prob bound}
\P(\mrm{MeanCtrl}(L)^c\mid \hfh^\beta_{t,1}(0)>L+\delta) < \exp(-cM^2).
\end{equation}
Using standard bounds on the tail of the normal distribution (Lemma~\ref{l.normal bounds}), on $\mrm{MeanCtrl}(L)$ (and on $\mrm{BdyCtrl}(L,M)$ as we are assuming throughout), the RHS of \eqref{eq:pfb} equals
\begin{align*}
(1+O(e^{-cM^2}))(1+O(L^{-3/2}))\cdot \frac{L-\mu}{L+\delta-\mu}\cdot \exp\left(-\frac{1}{2L_M^{1/2}}\Bigl[(L+\delta-\mu)^2 - (L - \mu)^2\Bigr]\right)
\end{align*}
and the last factor can be further written as
\begin{align*}
\exp\left(-\frac{2\delta(L-\mu) + \delta^2}{2L_M^{1/2}}\right)
&=\exp\left(-\frac{2\delta(L+L - 2L^{1/2}M+O(ML^{1/4})) + \delta^2}{2L_M^{1/2}}\right)\\
&=\exp\left(-2\delta L^{1/2} + O(\delta ML^{-1/4})\right).
\end{align*}
Overall, we have at this point established that, on $\mrm{BdyCtrl}(L,M)\cap \mrm{MeanCtrl}(L)$,
\begin{align}\label{e.comparison conditional conclusion}
\frac{\PF\left(\hfh^\beta_{t,1}(0)>L + \delta\right)}{\PF\left(\hfh^\beta_{t,1}(0)>L\right)} = (1+O(e^{-cM^2}))(1+O(L^{-3/2}))\cdot \frac{L-\mu}{L+\delta-\mu}\exp\left(-2\delta L^{1/2} +O(\delta ML^{-1/4})\right).
\end{align}
We next convert this into upper and lower bounds on $\P(\hfh^\beta_{t,1}(0)>L+\delta)$, respectively.

\medskip
\noindent\textbf{Upper bound.}
We see from \eqref{e.comparison conditional conclusion} that
\begin{align*}
\PF\left(\hfh^\beta_{t,1}(0)>L+\delta\right)
&= \PF\left(\hfh^\beta_{t,1}(0)>L+\delta\right)\left(\one_{\mrm{BdyCtrl}(L,M)\cap \mrm{MeanCtrl}(L)} + \one_{(\mrm{BdyCtrl}(L,M)\cap \mrm{MeanCtrl}(L))^c}\right)\\
&\leq (1+Ce^{-cM^2})(1+CL^{-3/2})\exp\left(-2\delta L^{1/2} + C\delta ML^{-1/4}\right)\PF\left(\hfh^\beta_{t,1}(0)>L\right)\\
&\qquad + \PF\left(\hfh^\beta_{t,1}(0)>L+\delta\right)\one_{(\mrm{BdyCtrl}(L,M)\cap \mrm{MeanCtrl}(L))^c},
\end{align*}
so that, by taking expectations,
\begin{align*}
\P\bigl(\hfh^\beta_{t,1}(0)>L+\delta\bigr) 
&\leq (1+Ce^{-cM^2})(1+CL^{-3/2})\exp\left(-2\delta L^{1/2} + C\delta ML^{-1/4}\right)\P\left(\hfh^\beta_{t,1}(0)>L\right)\\
&\qquad+ \P\Bigl(\left\{\hfh^\beta_{t,1}(0)>L+\delta\right\} \cap \bigl(\mrm{BdyCtrl}(L,M)\cap \mrm{MeanCtrl}(L)\bigr)^c\Bigr).
\end{align*}
We focus on the last term. It equals
\begin{align*}
\P\Bigl(\hfh^\beta_{t,1}(0)>L+\delta\Bigr)\cdot \P\Bigl(\bigl(\mrm{BdyCtrl}(L,M)\cap \mrm{MeanCtrl}(L)\bigr)^c\midd \hfh^\beta_{t,1}(0)>L+\delta\Bigr).
\end{align*}
If we can show that the second factor is small, we will be done. By a union bound, it is at most
\begin{align*}
\P(\mrm{BdyCtrl}(L,M)^c\mid \hfh^\beta_{t,1}(0)>L+\delta) + \P(\mrm{MeanCtrl}(L)^c\mid \hfh^\beta_{t,1}(0)>L+\delta),
\end{align*}
which, from \eqref{e.A complement prob bound} and \eqref{e.E complement prob bound}, is upper bounded by $C\exp(-cM^{3/2}) + \exp(-cM^2)$.
This completes the proof of the upper bound on $\P(\hfh^\beta_{t,1}(0)>L+\delta)$.

\medskip

\noindent\textbf{Lower bound.} We observe that, on $\mrm{MeanCtrl}$, $\frac{L-\mu}{L+\delta -\mu} = \frac{2L + O(ML^{1/2})}{2L+O(ML^{1/2}) + \delta} >1-C\delta L^{-1}$. Then we have, using \eqref{e.comparison conditional conclusion},
\begin{align*}
\MoveEqLeft[0]
\PF(\hfh^\beta_{t,1}(0)>L+\delta)\\
&\geq \PF(\hfh^\beta_{t,1}(0)>L+\delta)\cdot \one_{\mrm{BdyCtrl}(L,M)\cap \mrm{MeanCtrl}(L)}\\
&\geq (1-Ce^{-cM^2})(1-CL^{-3/2})\exp\left(-2\delta L^{1/2} -C\delta ML^{-1/4}\right)\PF(\hfh^\beta_{t,1}(0)>L)\cdot\one_{\mrm{BdyCtrl}(L,M)\cap \mrm{MeanCtrl}(L)},
\end{align*}
absorbing the factor of $1-C\delta L^{-1}$ into $\exp(-C\delta ML^{-1/4})$.
Taking expectations yields
\begin{align*}
\P\left(\hfh^\beta_{t,1}(0)>L+\delta\right)
&\geq (1-Ce^{-cM^2})(1-CL^{-3/2})\exp\left(-2\delta L^{1/2}  -C\delta ML^{-1/4}\right)\\
&\qquad\times\P\bigl(\hfh^\beta_{t,1}(0)>L, \mrm{BdyCtrl}(L,M), \mrm{MeanCtrl}(L)\bigr)\\
&\geq (1-Ce^{-cM^2})(1-CL^{-3/2})\exp\left(-2\delta L^{1/2}  -C\delta ML^{-1/4}\right)\\
&\qquad\times\P\bigl(\mrm{BdyCtrl}(L,M), \mrm{MeanCtrl}(L)\mid \hfh^\beta_{t,1}(0)>L\bigr)\cdot \P\left(\hfh^\beta_{t,1}(0)>L\right).
\end{align*}
As we saw above, the latter conditional probability is lower bounded by $1-C\exp(-cM^{3/2})$, thus completing the proof of the lower bound.
\end{proof}

\section{Tightness as continuous functions: geodesics and bounds for polymers}  \label{sec:tight}

We next establish the following tightness of the relevant path measures.

As in \Cref{thm:main-dl,thm:main-dp}, let $\pi_0$ be the geodesic from $(0,0)$ to $(0,1)$, in the directed landscape $\cL^{\beta=\infty}$; and $\Gamma_0$ be sampled from the annealed polymer measure from $(0,0)$ to $(0,1)$, under $\cL^{\beta=1}$.
\begin{proposition}[Tightness]\label{p.tightness}
As random elements in $\mc C([0,1], \R)$, $L^{1/4}\pi_0$ or $L^{1/4}\Gamma_0$ conditional on $\cL^\beta(0,0;0,1) > L$ for all $L\ge 2$ are tight.
\end{proposition}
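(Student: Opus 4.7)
The plan is to apply the Kolmogorov--Chentsov criterion for tightness in $\mc C([0,1],\R)$. Concretely, it suffices to establish a Gaussian-type tail bound on two-point increments uniformly in $L \geq 2$ and $0 \leq s < t \leq 1$: for every $M > 0$,
\begin{equation*}
\P\bigl(|\pi_0(s) - \pi_0(t)| > M(t-s)^{1/2}L^{-1/4} \bigm| \cL(0,0;0,1) > L\bigr) \leq C\exp(-cM^2),
\end{equation*}
and analogously for $L^{1/4}\Gamma_0$. Such a bound then yields tightness with H\"older exponent arbitrarily close to $1/2$.

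For the zero-temperature case, the identity $\cL(0,0;\pi_0(s),s) + \cL(\pi_0(s),s;\pi_0(t),t) + \cL(\pi_0(t),t;0,1) = \cL(0,0;0,1)$ shows that the event on the left-hand side above implies the existence of $(x,y)$ with $|x-y| > \epsilon := M(t-s)^{1/2}L^{-1/4}$ and three-segment sum exceeding $L$. I would discretize $(x,y)$ on a mesh of size $\delta = L^{-C}$ inside $[-R,R]^2$ with $R = L^{O(1)}$, using H\"older continuity (\Cref{lem:fh-cont}) to absorb the discretization error and the parabolic decay of $\cL(0,0;\cdot,s)$ to discard the tail $|x|\vee|y| > R$. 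For each fixed mesh pair, shear invariance (\Cref{l.dl symmetries}), applied independently in the three temporal strips $[0,s], [s,t], [t,1]$ (valid by independence of $\cL$ across disjoint strips), yields the pointwise distributional identity
\begin{equation*}
\cL(0,0;x,s) + \cL(x,s;y,t) + \cL(y,t;0,1) \stackrel{d}{=} \cL(0,0;0,s) + \cL(0,s;0,t) + \cL(0,t;0,1) - \tfrac{x^2}{s} - \tfrac{(y-x)^2}{t-s} - \tfrac{y^2}{1-t}.
\end{equation*}
Combining with the reverse triangle inequality $\cL(0,0;0,s)+\cL(0,s;0,t)+\cL(0,t;0,1) \leq \cL(0,0;0,1)$ and $(y-x)^2 \geq \epsilon^2$, the pointwise probability is at most $\P(\cL(0,0;0,1) > L + M^2 L^{-1/2})$. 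The tail comparison (\Cref{t.comparison}) gives the ratio of this to $\P(\cL(0,0;0,1)>L)$ as $\exp(-2M^2 + o(1))$, which dominates the polynomial-in-$L$ mesh/union-bound factor and delivers the required Gaussian tail.

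The positive-temperature case is the main obstacle. Shear invariance of $\cZ$ (\Cref{l.Z symmetries}) and \Cref{t.comparison} both apply verbatim to the free energies $\cL^{\beta=1}$, but the annealed polymer density
\begin{equation*}
\E\biggl[\frac{\cZ(0,0;x,s)\cZ(x,s;y,t)\cZ(y,t;0,1)}{\cZ(0,0;0,1)}\one_{\cL^{\beta=1}(0,0;0,1)>L}\biggr]
\end{equation*}
carries $\cZ(0,0;0,1)$ in the denominator, and its naive lower bound $\exp(L-1/12)$ on the upper-tail event is far too lossy to match $\P(\cL^{\beta=1}(0,0;0,1)>L) \asymp \exp(-\tfrac{4}{3}L^{3/2})$. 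Accordingly, this section delivers only the zero-temperature tightness together with a crude (order-$1$) transversal fluctuation bound for $\Gamma_0$. The sharp $L^{-1/4}$-tightness of $\Gamma_0$ is obtained in \Cref{sec:pos_tight} by first proving a quenched concentration of $\polyP$ in a window of size $L^{-1/2}\log L$ around a random backbone $\pi(s):=\argmax_x\{\cL^{\beta=1}(0,0;x,s)+\cL^{\beta=1}(x,s;0,1)\}$, and then proving tightness of $L^{1/4}\pi$ via essentially the same shear-and-comparison argument as in the zero-temperature case.
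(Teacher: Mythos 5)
Your high-level plan (Kolmogorov--Chentsov plus a Gaussian two-point tail, deduced via shear invariance together with the tail comparison of \Cref{t.comparison}) is the right one, and the pointwise distributional identity you write down is correct. However, the discretization-and-union-bound step in the zero-temperature argument has a genuine gap. After the pointwise shear you bound each individual mesh-pair probability by $\P(\cL(0,0;0,1) > L + M^2 L^{-1/2})$, which is indeed $\exp(-2M^2 + o(1))\,\P(\cL(0,0;0,1)>L)$ by \Cref{t.comparison}; but you then need to union-bound over $L^{O(1)}$ mesh pairs (the mesh must be at scale $L^{-C}$ to make the H\"older error $\delta^{1/2}$ small compared to the shift $M^2 L^{-1/2}$, and even after using the quadratic decay to truncate to a window of size $L^{-1/4}$ around zero, the remaining mesh count is still $L^{\Omega(1)}$). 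The resulting bound is $L^{O(1)}\exp(-2M^2)$, which is \emph{not} $\leq C\exp(-cM^2)$ uniformly in $L$ for $M$ of constant order; it only kicks in once $M\gtrsim\sqrt{\log L}$. Since Kolmogorov--Chentsov requires a moment bound on $|\pi_0(s)-\pi_0(t)|/(t-s)^{1/2}L^{-1/4}$ that is uniform over $L$, and that moment computation requires the tail bound for \emph{all} $M\geq 1$, the polynomial prefactor is not harmless: it would make the moments grow like $\mathrm{polylog}(L)$, losing tightness.

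The paper avoids this issue entirely by never discretizing. Rather than applying a shear with $\nu$ tuned to each $(x,y)$, one applies in each temporal strip a \emph{fixed} shift-or-shear (the one that moves the left endpoint by $-K$), which holds at the process level: as a process in $(x,y)$,
\[
\cL(0,0;x,s)+\cL(x,s;y,t)+\cL(y,t;0,1) \stackrel{d}{=} \cL(-K,0;x-K,s)+\cL(x-K,s;y,t)+\cL(y,t;0,1) + \frac{(x-y-K)^2-(x-y)^2}{t-s}.
\]
For $x-y>K$ the deterministic correction is $< -(t-s)^{-1}K^2$, so the supremum over $\{x-y>K\}$ is dominated by the supremum of the shifted process over \emph{all} $(x,y)$ minus $(t-s)^{-1}K^2$, and the latter supremum is precisely $\cL(-K,0;0,1)$ by the composition law \eqref{e.compopsition law zero temp}. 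This reduces the whole supremum to a single one-point event $\P(\cL(0,0;0,1) > L + ((t-s)^{-1}+1)K^2)$, with no union bound and no mesh. You should replace your pointwise shear + discretization step by this process-level argument. Your treatment of the positive-temperature case as a deferral to \Cref{sec:pos_tight} is appropriate, and your outline is roughly right, though in the paper the crude shear estimate is applied to the quenched polymer measure itself (\Cref{l.positive temp two point crude bound}) and the concentration around the backbone transfers that bound to $\pi$, rather than $\pi$ being attacked directly; and the resulting H\"older exponent is $1/11$, not $1/2$, due to the $L^{-5/8}$ truncation used in the concentration arguments.
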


As mentioned in \Cref{iop}, to prove this requires tail bounds on two-point deviations, which rely on shear invariance of the directed landscape and the CDRP free energy field. The proof in zero temperature is much simpler than in positive temperature as in the former shear invariance alone suffices to give tightness on the $L^{-1/4}$ scale. In positive temperature, the analogous argument only yields tightness on the $O(1)$ scale, and additional arguments are needed to obtain the correct scale. The reason is that in zero temperature, given the environment, the path location at a given height is determined; while in positive temperature there is no a priori concentration of the polymer location.  

In this section, we give the zero temperature proof
and some rough bounds for the transversal fluctuation of polymers.
The positive temperature part of \Cref{p.tightness} will be proved in \Cref{sec:pos_tight}.

\subsection{Geodesic tightness}\label{s.tightness.zero temp}
We start with a one-point estimate and later give the two-point estimate.
\begin{lemma}  \label{l.trans-fluc}
For all $K>0$, $L\ge 2$, and $s\in(0,1)$,
\begin{align*}
\PP\left(|\pi_0(s)| > K(s(1-s))^{1/2}L^{-1/4} \midd \cL(0,0;0,1)>L\right)
< C\exp\left(-cK^2\right).
\end{align*}
\end{lemma}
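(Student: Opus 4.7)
The plan is to implement the shear-invariance strategy outlined in \Cref{iop}. Set $r = K(s(1-s))^{1/2}L^{-1/4}$. Since $\pi_0$ is the geodesic from $(0,0)$ to $(0,1)$, the composition law for the directed landscape gives $\cL(0,0;0,1) = \cL(0,0;\pi_0(s),s) + \cL(\pi_0(s),s;0,1)$ almost surely. Therefore, on the event $\{|\pi_0(s)|>r\} \cap \{\cL(0,0;0,1)>L\}$ we have $\sup_{|x|>r}\bigl[\cL(0,0;x,s)+\cL(x,s;0,1)\bigr] > L$. Using the reflection invariance from \Cref{l.dl symmetries}, it suffices to control
$$\P\left(\sup_{x > r}\bigl[\cL(0,0;x,s) + \cL(x,s;0,1)\bigr] > L\right)$$
and double the resulting bound.

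The key step is to apply shift-shear invariance independently on the two disjoint temporal strips $[0,s]$ and $[s,1]$, which are independent by \Cref{l.dl symmetries}. I choose the shear parameters $(\nu_1,\alpha_1,\eta_1)=(-r/s,0,0)$ on the first strip and $(\nu_2,\alpha_2,\eta_2)=(r/(1-s),-r/(1-s),0)$ on the second, tuned precisely so that the endpoint at time $s$ is shifted by $-r$ while those at times $0$ and $1$ are left fixed. This yields, as processes in $x$,
\begin{align*}
\bigl(\cL(0,0;x,s) + \cL(x,s;0,1)\bigr)_{x\in\R}
&\stackrel{d}{=} \left(\cL(0,0;x-r,s) + \cL(x-r,s;0,1) - \tfrac{2rx - r^2}{s(1-s)}\right)_{x\in\R}.
\end{align*}
Substituting $y=x-r$ and taking the supremum over $x > r$ gives
$$\sup_{x>r}\bigl[\cL(0,0;x,s)+\cL(x,s;0,1)\bigr] \stackrel{d}{=} \sup_{y>0}\left[\cL(0,0;y,s)+\cL(y,s;0,1) - \tfrac{2ry}{s(1-s)}\right] - \tfrac{r^2}{s(1-s)}.$$
Since $-2ry/(s(1-s)) \leq 0$ for $y > 0$, the right-hand supremum is almost surely bounded above by $\sup_{y\in\R}[\cL(0,0;y,s)+\cL(y,s;0,1)] = \cL(0,0;0,1)$, yielding
$$\P\left(\sup_{x>r}\bigl[\cL(0,0;x,s)+\cL(x,s;0,1)\bigr] > L\right) \leq \P\left(\cL(0,0;0,1) > L + K^2 L^{-1/2}\right).$$

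Dividing by $\P(\cL(0,0;0,1)>L)$ and invoking \Cref{t.comparison} with $\delta = K^2 L^{-1/2}$ finishes the argument. In the regime $K^2 \leq L^{3/4}$ (so $\delta \leq L^{1/4}$), the ratio equals $\exp(-2K^2 + O(K^2 L^{-3/4}\log L + L^{-3/2}))$, which is $\leq C\exp(-cK^2)$ for $L$ above a universal constant, and trivially for $L$ in any bounded range after enlarging $C$. For $K^2 > L^{3/4}$, \Cref{t.comparison} gives $\exp(-\Omega(\delta L^{1/2})) = \exp(-\Omega(K^2))$. Combined with the reflection factor of $2$, this completes the bound.

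The argument is essentially a direct computation once the correct shear parameters are identified, so there is no serious technical obstacle. The only subtle point is verifying that the cross term arising from the shear has the right sign, i.e., that the penalty $-2ry/(s(1-s))$ is nonpositive on the range over which the supremum is taken; this nonpositivity is what allows us to drop the penalty and bound the supremum by $\cL(0,0;0,1)$ itself, thereby converting the spatial tail bound into a one-point tail ratio amenable to \Cref{t.comparison}.
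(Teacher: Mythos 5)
Your proof is correct and follows essentially the same route as the paper's: pass from the geodesic-location event to a supremum over the two-piece weight profile, use reflection to drop the absolute value, apply shear invariance to convert the spatial cutoff into a weight penalty, bound the supremum by $\cL(0,0;0,1)$ via the composition law, and finish with Theorem~\ref{t.comparison}. The only difference is cosmetic: you make the shear parameters $(\nu_i,\alpha_i)$ explicit and substitute $y=x-r$ before dropping the nonpositive cross term, whereas the paper keeps the variable as $x$ and bounds $(x-K)^2-x^2 \leq -K^2$ directly for $x>K$; both manipulations yield the identical penalty $-r^2/(s(1-s)) = -K^2 L^{-1/2}$ and hence the same invocation of the tail-ratio theorem.
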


In fact, for $K$ up to $L^{1/2}$ we will obtain a tail of $\exp(-2K^2)$ (via the first case of the comparison statement Theorem~\ref{t.comparison}). Note that it exactly corresponds to our ultimate goal, namely that $L^{-1/4}\pi_0(s)$ converges to $\frac{1}{2}\mathcal{N}(0, s(1-s))$, which on scale $(s(1-s))^{1/2}$ would have a tail at depth $x$ that satisfies the approximate asymptotics of $\exp(-2x^2)$.

\begin{proof}[Proof of Lemma~\ref{l.trans-fluc}]
Observe that, for any $K$, $\P(|\pi_0(s)| > K \mid \cL(0,0;0,1)>L)$ is upper bounded by
\begin{align}
\MoveEqLeft[10]
  \P\left(\sup_{|x| > K} \Bigl(\cL(0,0; x,s) + \cL(x,s; 0,1)\Bigr) > L \ \Big|\  \cL(0,0;0,1)>L\right)\nonumber\\
  &\leq 2\cdot \P\left(\sup_{x > K} \Bigl(\cL(0,0; x,s) + \cL(x,s; 0,1)\Bigr) > L \ \Big|\  \cL(0,0;0,1)>L\right)\nonumber\\
  &\leq 2\cdot \frac{\P\left(\sup_{x > K} \Bigl(\cL(0,0; x,s) + \cL(x,s; 0,1)\Bigr) > L\right)}{\P(\cL(0,0; 0,1) > L)}, \label{e.crude tf bound step}
\end{align}
the factor of 2 coming from a union bound and using the distributional symmetry of $\cL(0,0; x, s)$ and $\cL(x, s; 0,1)$ under $x\mapsto -x$ from Lemma~\ref{l.dl symmetries} (as well as the independence of the two processes) to remove the absolute value in the condition under the supremum.
Then using the shear invariance and independence properties of $\cL$ and that $x>K$ for the inequality in the second line,
\begin{align*}
\cL(0,0; x,s) + \cL(x,s; 0,1) &\stackrel{d}{=} \cL(0,0; x-K, s) + \cL(x-K, s; 0,1) + (s(1-s))^{-1}\left[(x-K)^2 - x^2\right]\\
&\leq \cL(0,0; x-K, s) + \cL(x-K, s; 0,1) - (s(1-s))^{-1}K^2
\end{align*}
as a process in $x$. Thus we see that the RHS of \eqref{e.crude tf bound step} is upper bounded by
\begin{align*}
\frac{\P\left(\sup_{x > 0} \Bigl(\cL(0,0; x,s) + \cL(x,s; 0,1)\Bigr) > L + (s(1-s))^{-1}K^2\right)}{\P(\cL(0,0; 0,1) > L)}.
\end{align*}
Now using that $\cL(0,0; 0,1) = \sup_{x\in\R}\Bigl(\cL(0,0; x,s) + \cL(x,s; 0,1)\Bigr)$ from \eqref{e.compopsition law zero temp}, it follows that the previous display is upper bounded by
\begin{align*}
\frac{\P\left(\cL(0,0; 0,1) > L + (s(1-s))^{-1}K^2\right)}{\P(\cL(0,0; 0,1) > L)}.
\end{align*}
Applying Theorem~\ref{t.comparison} now bounds the previous display by 
$$C\exp\left(-cK^2(s(1-s))^{-1}L^{1/2}\right).$$
Replacing $K$ by $K(s(1-s))^{1/2}L^{-1/4}$ completes the proof. 
\end{proof}

We next give a two-point estimate. 
Combining it with the Kolmogorov-Chentsov criterion for tightness (see e.g.~\cite[Theorem~23.7]{Kallenberg}), the $\beta=\infty$ case of \Cref{p.tightness} follows.
\begin{proposition}\label{p.zero temperature two-point}
For all $K>0$,  $L\ge 2$, and $0<s<t<1$,
\begin{align*}
\P\left(|\pi_0(s)-\pi_0(t)| > K(t-s)^{1/2}L^{-1/4} \mid \cL(0,0;0,1)>L\right) \leq C\exp(-cK^2).
\end{align*}
\end{proposition}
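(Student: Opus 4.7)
The plan is to mirror the one-point argument of \Cref{l.trans-fluc} but with a three-strip shear. Writing $F(x)=\cL(0,0;x,s)$, $H(x,y)=\cL(x,s;y,t)$, $G(y)=\cL(y,t;0,1)$ and $\Delta := K(t-s)^{1/2}L^{-1/4}$, the event $|\pi_0(s)-\pi_0(t)|>\Delta$ together with $\cL(0,0;0,1)>L$ forces the existence of some $(x,y)$ with $|y-x|>\Delta$ and $F(x)+H(x,y)+G(y)>L$. The reflection symmetry in \Cref{l.dl symmetries} and a trivial conditional bound then reduce the claim to showing
\[
\P\Bigl(\sup_{y-x>\Delta}\bigl[F(x)+H(x,y)+G(y)\bigr]>L\Bigr) \le C\exp(-cK^{2})\cdot\P\bigl(\cL(0,0;0,1)>L\bigr).
\]

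The core step is a joint three-strip shear producing a deterministic weight loss of order $\Delta^{2}/(t-s)$. Using the strip-wise shear+shift invariance and strip independence from \Cref{l.dl symmetries}, I replace $(F,H,G)$ by a triple $(\cL^{(1)},\cL^{(2)},\cL^{(3)})$ with the same joint distribution in $(x,y)$, namely
\begin{gather*}
\cL^{(1)}(0,0;x,s)=\cL(\Delta/2,0;x+\Delta/2,s),\\
\cL^{(2)}(x,s;y,t)=\cL(x+\Delta/2,s;y-\Delta/2,t)-\frac{2\Delta(y-x)-\Delta^{2}}{t-s},\\
\cL^{(3)}(y,t;0,1)=\cL(y-\Delta/2,t;-\Delta/2,1),
\end{gather*}
obtained via middle-strip shear parameters $\nu_{2}=-\Delta/(t-s)$ and $\alpha_{2}=\Delta(s+t)/(2(t-s))$ together with pure spatial shifts $\alpha_{1}=\Delta/2$, $\alpha_{3}=-\Delta/2$ (and $\nu_{1}=\nu_{3}=0$) in the outer strips. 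A direct algebraic check shows the sheared coordinates agree at times $s,t$ across adjacent strips, so the three $\cL$-factors concatenate into a single $\cL$-path from $(\Delta/2,0)$ through $(x+\Delta/2,s)$ and $(y-\Delta/2,t)$ to $(-\Delta/2,1)$, and the reverse-triangle inequality bounds their sum by $\cL(\Delta/2,0;-\Delta/2,1)$. For $y-x>\Delta$ the middle-strip correction is strictly less than $-\Delta^{2}/(t-s)$, yielding
\[
\sup_{y-x>\Delta}\bigl[F(x)+H(x,y)+G(y)\bigr] \;\leq_{\mathrm{sd}}\; \cL(\Delta/2,0;-\Delta/2,1)-\frac{\Delta^{2}}{t-s}.
\]

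One more application of shift and shear invariance gives $\cL(\Delta/2,0;-\Delta/2,1)\stackrel{d}{=}\cL(0,0;0,1)-\Delta^{2}$, so the event in the supremum exceeding $L$ is dominated by $\{\cL(0,0;0,1)>L+\Delta^{2}+\Delta^{2}/(t-s)\}$. Since $\Delta^{2}+\Delta^{2}/(t-s)\in[K^{2}L^{-1/2},\,2K^{2}L^{-1/2}]$ uniformly over $s<t$ in $[0,1]$, \Cref{t.comparison} applied with $\delta$ in that range yields the ratio bound $C\exp(-cK^{2})$ uniformly in $K>0$ and $L\ge 2$: for $K^{2}\le L^{3/4}/2$ the main factor $\exp(-2\delta L^{1/2})$ with error $O(K^{2}L^{-3/4}\log L)=o(K^{2})$ dominates, and for $K^{2}> L^{3/4}/2$ the $\delta\ge L^{1/4}$ branch of \Cref{t.comparison} directly gives $\exp(-\Omega(K^{2}))$. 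The main obstacle is the bookkeeping of the three shear/shift pairs so that, simultaneously, (i) the outer $\cL$-endpoints are symmetric about the origin (enabling the final shift+shear identity), (ii) the sheared spatial coordinates at times $s$ and $t$ match across adjacent strips (so that the reverse-triangle inequality applies to the concatenation), and (iii) the outer strips' shear corrections vanish, leaving only the middle-strip correction which is negative exactly when $y-x>\Delta$; the symmetric choice above is the essentially unique arrangement achieving all three.
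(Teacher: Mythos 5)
Your proposal is correct and follows essentially the same route as the paper's proof. Both arguments (i) reduce the two-point probability to a one-sided sup via reflection symmetry and a trivial conditional bound, (ii) apply a joint shear/shift across the three temporal strips that produces a deterministic loss of order $\Delta^2/(t-s)$ for the middle strip whenever the displacement exceeds $\Delta$, (iii) bound the shifted sup by the one-point weight $\cL(\cdot,0;\cdot,1)$ via the composition law, pick up a final $\Delta^2$ from re-centering the endpoints, and (iv) conclude with the tail ratio estimate of Theorem~\ref{t.comparison}. The only difference is the bookkeeping of the shear: the paper shifts only the first strip (mapping $(0,0)\to(-K,0)$, leaving $(0,1)$ fixed, and shearing the middle strip accordingly), while you shift both outer strips symmetrically by $\pm\Delta/2$; the resulting quadratic corrections are identical, so this is purely cosmetic. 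One nit: after the distributional replacement the bound on the sup by $\cL(\Delta/2,0;-\Delta/2,1)-\Delta^2/(t-s)$ is a pointwise almost-sure inequality, not merely a stochastic domination $\leq_{\mathrm{sd}}$ (the latter notation undersells what you have and isn't standard here). The paper also separates out $t-s\ge\tfrac12$ and handles it via the one-point Lemma~\ref{l.trans-fluc}; your uniform treatment is also fine since $\Delta^2(1+(t-s)^{-1})\asymp K^2L^{-1/2}$ holds over the whole range.
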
 

\begin{proof}
We give the proof under the assumption that $t-s\in(0,\frac{1}{2})$. The case where $t-s\in[\frac{1}{2}, 1]$ follows from Lemma~\ref{l.trans-fluc} easily.

Similar to the previous proof, the LHS of the display in the lemma is upper bounded by
\begin{align}
\MoveEqLeft[6]
  \P\left(\sup_{|x-y| > K} \Bigl(\cL(0,0; x,s) + \cL(x,s; y,t) + \cL(y,t; 0,1)\Bigr) > L \ \Big|\  \cL(0,0;0,1)>L\right)\nonumber\\
  &\leq 2\cdot \P\left(\sup_{x-y > K} \Bigl(\cL(0,0; x,s) + \cL(x,s; y,t) + \cL(y,t; 0,1)\Bigr) > L \ \Big|\  \cL(0,0;0,1)>L\right)\nonumber\\
  &\leq 2\cdot\frac{\P\left(\sup_{x-y > K} \Bigl(\cL(0,0; x,s) + \cL(x,s; y,t) + \cL(y,t; 0,1)\Bigr) > L\right)}{\P\left(\cL(0,0;0,1)>L\right)}.\label{e.crude tf bound step t-s}
\end{align}
Now, using the stationarity (and independence) properties of $\cL$,
\begin{align*}
&\cL(0,0; x,s) + \cL(x,s; y,t) + \cL(y,t; 0,1) 
\\ &\stackrel{d}{=} \cL(-K,0; x-K,s) + \cL(x-K,s; y,t) + \cL(y,t; 0,1)
 + (t-s)^{-1}\left[(x-y-K)^2-(x-y)^2\right]
\end{align*}
as a process in $(x,y)$. Since $x-y>K$, we have that
\[
(t-s)^{-1}\left[(x-y-K)^2-(x-y)^2\right] 
< -(t-s)^{-1}K^2.
\]
Thus we see that the RHS of \eqref{e.crude tf bound step t-s} is upper bounded by
\begin{align*}
2\cdot\frac{\P\left(\sup_{x-y > K} \Bigl(\cL(-K,0; x-K,s) + \cL(x-K,s; y,t) + \cL(y,t; 0,1)\Bigr) > L + (t-s)^{-1}K^2\right)}{\P\left(\cL(0,0;0,1)>L\right)}
\end{align*}
Now using that $\cL(-K,0; 0,1) = \sup_{x, y\in\R}\Bigl(\cL(-K,0; x,s) + \cL(x,s; y,t) + \cL(y,t; 0,1)\Bigr)$, and that $\cL(-K,0; 0,1)\stackrel{d}{=} \cL(0,0;0,1) - K^2$, it follows that the previous display is upper bounded by
\begin{align*}
\frac{\P\left(\cL(-K,0; 0,1) > L + (t-s)^{-1}K^2\right)}{\P(\cL(0,0; 0,1) > L)}
&= \frac{\P\left(\cL(0,0; 0,1) > L + ((t-s)^{-1}+1)K^2\right)}{\P(\cL(0,0; 0,1) > L)}.
\end{align*}
By Theorem~\ref{t.comparison}, this is upper bounded by
\begin{align*}
C\exp\left(-c(t-s)^{-1}K^2L^{1/2}\right).
\end{align*}
Replacing $K$ by $K(t-s)^{1/2}L^{-1/4}$ completes the proof.
\end{proof}

\subsection{Polymer transversal estimates}
\label{s.tightness.positive temp}
We now adapt the zero temperature arguments above to the positive temperature setting. 
Although the bounds in this subsection are not sufficient to derive the $\beta=1$ case of \Cref{p.tightness}, they will be used in the proof to be given in \Cref{sec:pos_tight}.

We start with the positive temperature analog of \Cref{l.trans-fluc}.
\begin{lemma}\label{l.positive temp crude tf}
For all $K>0$, $L\ge 2$ and $s\in(0,1)$, 
\begin{align*}
\P\left(\polyP\left[|\Gamma_0(s)| \geq K(s(1-s))^{1/2}\right] > \exp(-\tfrac{1}{2}K^2) \midd \cL^{\beta}(0,0;0,1) > L\right) < C\exp(-cK^2L^{1/2}).
\end{align*}
\end{lemma}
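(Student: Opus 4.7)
The plan is to reduce to the zero-temperature argument of \Cref{l.trans-fluc} by converting the quenched polymer mass into an integral of products of partition functions via the composition law \eqref{eq:cZ-comp}, after which the shear invariance of $\cL^\beta$ and the comparison estimate of \Cref{t.comparison} apply in nearly identical form. Writing $K' := K(s(1-s))^{1/2}$ and using the definition of the polymer measure, we have
\[
\polyP\bigl[|\Gamma_0(s)| \geq K'\bigr] = \int_{|x| \geq K'} \exp\bigl(\cL^\beta(0,0;x,s) + \cL^\beta(x,s;0,1) - \cL^\beta(0,0;0,1)\bigr)\,dx.
\]
Setting $F := \{\polyP[|\Gamma_0(s)| \geq K'] > e^{-K^2/2}\}$ and $E := \{\cL^\beta(0,0;0,1) > L\}$, the intersection $F \cap E$ is therefore contained in the event $\{\int_{|x|\geq K'} \exp(\cL^\beta(0,0;x,s) + \cL^\beta(x,s;0,1))\,dx > e^{L - K^2/2}\}$. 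By the reflection invariance of $\cL^\beta$ (\Cref{l.Z symmetries}) and a union bound splitting the integral into its $x \geq K'$ and $x \leq -K'$ parts, it suffices to bound $\P(X > \tfrac{1}{2} e^{L - K^2/2})$, where $X := \int_{x \geq K'} \exp(\cL^\beta(0,0;x,s) + \cL^\beta(x,s;0,1))\,dx$.

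Next, applying the joint shear and shift to the two temporal strips independently, exactly as in the proof of \Cref{l.trans-fluc} (which is valid for $\cL^\beta$ by \Cref{l.Z symmetries}), yields the process equality in distribution
\[
\bigl(\cL^\beta(0,0;x,s) + \cL^\beta(x,s;0,1)\bigr)_{x\in\R} \stackrel{d}{=} \Bigl(\cL^\beta(0,0;x-K',s) + \cL^\beta(x-K',s;0,1) + \tfrac{(x-K')^2 - x^2}{s(1-s)}\Bigr)_{x\in\R}.
\]
Consequently $X \stackrel{d}{=} \widetilde X$, where, after substituting $u = x - K'$,
\[
\widetilde X := \int_{u \geq 0} \exp\Bigl(\cL^\beta(0,0;u,s) + \cL^\beta(u,s;0,1) + \tfrac{u^2 - (u+K')^2}{s(1-s)}\Bigr)\,du.
\]
Since $u^2 - (u+K')^2 \leq -K'^2$ for $u \geq 0$, $K'^2/(s(1-s)) = K^2$, and $\int_\R \exp(\cL^\beta(0,0;u,s) + \cL^\beta(u,s;0,1))\,du = \exp(\cL^\beta(0,0;0,1))$ (which is \eqref{eq:cZ-comp} translated via $\cL^\beta = \log \cZ + (t-s)/12$), we obtain the almost sure bound $\widetilde X \leq \exp(-K^2 + \cL^\beta(0,0;0,1))$.

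Combining the two steps, $\P(X > \tfrac{1}{2}e^{L - K^2/2}) = \P(\widetilde X > \tfrac{1}{2} e^{L - K^2/2}) \leq \P(\cL^\beta(0,0;0,1) > L + K^2/2 - \log 2)$, and hence $\P(F \cap E) \leq 2\,\P(\cL^\beta(0,0;0,1) > L + K^2/2 - \log 2)$. Dividing by $\P(E)$ and invoking \Cref{t.comparison} with $t = 1$ and $\delta = K^2/2 - \log 2$ gives $\P(F \mid E) \leq C\exp(-cK^2 L^{1/2})$ once $K$ exceeds an absolute constant; for bounded $K$ the inequality holds trivially by choosing $C$ large enough. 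The only conceptual adaptation relative to the argument for \Cref{l.trans-fluc} is the replacement of the argmax identity $\cL(0,0;0,1) = \sup_x[\cL(0,0;x,s) + \cL(x,s;0,1)]$ by the integral composition identity for $\cZ$; once this reduction is in place, the shear-invariance computation transfers essentially verbatim, and the lack of a priori concentration of the polymer measure (which will be the real obstacle in the later tightness argument of \Cref{sec:pos_tight}) does not enter here.
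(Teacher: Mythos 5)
Your proposal is correct and follows essentially the same route as the paper's proof: the paper likewise converts the quenched polymer mass to a ratio of partition-function integrals via the composition law, removes the absolute value by reflection symmetry, applies the temporal-strip shear invariance to push the loss of $(s(1-s))^{-1}K'^2$ into the free energy, collapses the integral back to $\cZ(0,0;0,1)$, and then invokes Theorem~\ref{t.comparison}. The only differences are cosmetic — you carry out the manipulations in $\cL^\beta$ rather than $\cZ$ (with the $1/12$ constants cancelling, as you note) and fix $\varepsilon = e^{-K^2/2}$ at the outset rather than leaving it a parameter to be chosen at the end.
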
 

\remark{Unlike the zero-temperature case, here the concentration is on an $O(1)$ scale. Some more work is needed to obtain an $L^{-1/4}$ scale concentration, and we will turn to that shortly.}

\begin{proof}[Proof of Lemma~\ref{l.positive temp crude tf}]
By the convolution formula, for any $K>0$ and $\varepsilon>0$,
\begin{align}
\MoveEqLeft[4]
\P\left(\polyP\left[|\Gamma_0(s)| \geq K\right] > \varepsilon \mid \cL^\beta(0,0;0,1) > L\right)\nonumber\\
&= \P\left(\int_{|x|\geq K} \mc Z(0,0;x,s)\mc Z(x,s; 0,1)\diff x > \varepsilon \mc Z(0,0;0,1)\midd \cL^\beta(0,0;0,1) > L\right)\nonumber\\
&\leq \frac{\P\left(\int_{|x|\geq K} \mc Z(0,0;x,s)\mc Z(x,s; 0,1)\diff x > \varepsilon e^{L-1/12}\right)}{\P\left(\cL^\beta(0,0;0,1) > L\right)} \nonumber\\
&\leq  \frac{2\cdot \P\left(\int_{x\geq K} \mc Z(0,0;x,s)\mc Z(x,s; 0,1)\diff x > \tfrac{1}{2}\varepsilon e^{L-1/12}\right)}{\P\left(\cL^\beta(0,0;0,1) > L\right)}, \label{e.convolution formula for tf}
\end{align}
where the last inequality uses the reflection symmetry of $\mc Z(0,0; \cdot, s)$ and $\mc Z(\cdot, s; 0,1)$ and their independence. By the shear invariance and independence of the same two processes, we have that the following distributional equality holds as processes in $x$ for every fixed $s$:
\begin{align*}
\mc Z(0,0;x,s)\mc Z(x,s; 0,1)
&\stackrel{d}{=} \mc Z(0,0;x-K,s)\mc Z(x-K,s; 0,1) e^{(s(1-s))^{-1}[(x-K)^2-x^2]}\\
&\leq \mc Z(0,0;x-K,s)\mc Z(x-K,s; 0,1) e^{-(s(1-s))^{-1}K^2},
\end{align*}
where the inequality is due to that $x\ge K$. Substituting this into \eqref{e.convolution formula for tf} yields that 
\begin{align*}
\MoveEqLeft[6]
\P\left(\polyP\left[|\Gamma_0(s)| \geq K\right] > \varepsilon \mid \cL^\beta(0,0;0,1) > L\right)\\
&\leq \frac{2\cdot \P\left(\int_{x\geq 0} \mc Z(0,0;x,s)\mc Z(x,s; 0,1)\diff x > \tfrac{1}{2}\varepsilon e^{L-1/12 + (s(1-s))^{-1}K^2}\right)}{\P\left(\cL^\beta(0,0;0,1) > L\right)}\\
&\leq \frac{2\cdot \P\left(\int_{\R} \mc Z(0,0;x,s)\mc Z(x,s; 0,1)\diff x > \tfrac{1}{2}\varepsilon e^{L-1/12 + (s(1-s))^{-1}K^2}\right)}{\P\left(\cL^\beta(0,0;0,1) > L\right)}\\
&= \frac{2\cdot\P\left(\cL^{\beta}(0,0;0,1) > L + (s(1-s))^{-1}K^2 +\log(\varepsilon/2)\right)}{\P\left(\cL^{\beta}(0,0;0,1) > L\right)}
\end{align*}
We set $\varepsilon = \exp\left(-(s(1-s))^{-1}K^2/2\right)$ and invoke Theorem~\ref{t.comparison} to obtain that the previous display is upper bounded by
\begin{align*}
C\exp\left(-cL^{1/2}(s(1-s))^{-1}K^2\right).
\end{align*}
Replacing $K$ by $K(s(1-s))^{1/2}$ completes the proof.
\end{proof} 

We next derive a two-point estimate.

\begin{lemma}\label{l.positive temp two point crude bound}
For all $K>0$, $L\ge 2$, and $0<s<t<1$,
\begin{align*}
\P\left(\polyP\left[|\Gamma_0(s)-\Gamma_0(t)| > K(t-s)^{1/2}\right] > \exp(-\tfrac{1}{2}K^2) \midd \cL^{\beta}(0,0;0,1) > L\right) < C\exp(-cK^2L^{1/2}).
\end{align*}
\end{lemma}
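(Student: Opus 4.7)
The plan is to mirror the one-point positive temperature bound of Lemma~\ref{l.positive temp crude tf}, upgrading its one-variable shear identity to the two-variable shear identity used in the zero temperature analog Proposition~\ref{p.zero temperature two-point}. Set $\tilde K := K(t-s)^{1/2}$ and $\varepsilon := \exp(-K^2/2)$.

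First, writing the quenched probability via the convolution representation of $\mc Z$ arising from~\eqref{eq:gafi}, applying Markov's inequality under the event $\mc Z(0,0;0,1)>e^{L-1/12}$, and using the reflection symmetry of $\mc Z$ (Lemma~\ref{l.Z symmetries}) together with the independence across disjoint strips to halve the integration region, I will bound the LHS of the lemma by
\begin{align*}
\frac{2\cdot\P\left(\iint_{x-y>\tilde K} \mc Z(0,0;x,s)\mc Z(x,s;y,t)\mc Z(y,t;0,1)\diff x\diff y > \tfrac{1}{2}\varepsilon e^{L-1/12}\right)}{\P\left(\cL^\beta(0,0;0,1)>L\right)}.
\end{align*}

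Next, I will combine shift invariance on the strip $[0,s]$ (translating the leftmost factor's endpoints by $-\tilde K$) with shear invariance on the strip $[s,t]$ (translating the middle factor's starting point to $x-\tilde K$ while fixing its endpoint at $y$), making crucial use of independence across strips. The outcome is the joint distributional identity
\begin{align*}
\mc Z(0,0;x,s)\,\mc Z(x,s;y,t)\,\mc Z(y,t;0,1) \stackrel{d}{=} \mc Z(-\tilde K,0;x-\tilde K,s)\,\mc Z(x-\tilde K,s;y,t)\,\mc Z(y,t;0,1)\,e^{g(x,y)},
\end{align*}
where $g(x,y) = (t-s)^{-1}\bigl[(x-y-\tilde K)^2 - (x-y)^2\bigr]$. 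The key observation is that $g(x,y)<-\tilde K^2/(t-s)=-K^2$ on the region $x-y>\tilde K$. Extending the integration to $\R^2$ and invoking the semigroup property~\eqref{eq:cZ-comp} twice then reduces the numerator probability to $\P\bigl(\mc Z(-\tilde K,0;0,1) > \tfrac{1}{2}\varepsilon\, e^{L-1/12+K^2}\bigr)$.

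Finally, one further use of shear invariance yields $\cL^\beta(-\tilde K,0;0,1)\stackrel{d}{=}\cL^\beta(0,0;0,1)-\tilde K^2$, converting the above into
\begin{align*}
\P\bigl(\cL^\beta(0,0;0,1) > L+\tfrac{1}{2}K^2 + K^2(t-s)-\log 2\bigr);
\end{align*}
for $K$ larger than a fixed absolute constant this is at least $\P(\cL^\beta(0,0;0,1)>L+cK^2)$ for some $c>0$, and Theorem~\ref{t.comparison} then bounds the resulting ratio by $C\exp(-cK^2L^{1/2})$, as desired (the regime of small $K$ being trivial since the target bound is of order one there). The only step requiring real care is correctly tracking signs and coefficients in the two-variable shear identity of the second paragraph; everything else proceeds by the same symmetry-and-comparison template already used in this section.
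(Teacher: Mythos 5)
Your proposal is correct and follows essentially the same route as the paper's own proof: convolution representation plus Markov, reflection symmetry to halve the region, the shift/shear identity on the product $\cZ(0,0;x,s)\cZ(x,s;y,t)\cZ(y,t;0,1)$, extension of the integral to $\R^2$ via the semigroup property, a final shear to re-center, and \Cref{t.comparison}. The only cosmetic difference is that you substitute $\tilde K = K(t-s)^{1/2}$ at the outset whereas the paper keeps $K$ free and substitutes $K\mapsto K(t-s)^{1/2}$ only at the very end; the chosen $\varepsilon$ and the resulting tail depth $L + \tfrac{1}{2}K^2 + K^2(t-s) - \log 2$ agree after the change of variables. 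One small wording slip: you write that the probability at this depth is ``at least'' $\P(\cL^\beta(0,0;0,1)>L+cK^2)$ when you mean that the depth itself is at least $L + cK^2$ (equivalently, the probability is at most $\P(\cL^\beta(0,0;0,1)>L+cK^2)$), but the intended direction is clear from context and does not affect the argument.
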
 

\begin{proof}
We give the proof under the assumption that $t-s\in(0,\frac{1}{2})$, since case where $t-s\in[\frac{1}{2}, 1]$ follows from \Cref{l.positive temp crude tf} easily. Observe that, for any $\varepsilon >0$,
\begin{align}
\MoveEqLeft
\P\left(\polyP\left(|\Gamma_0(s) - \Gamma_0(t)| > K\right) \geq \varepsilon \midd \cL^\beta(0,0;0,1)>L\right)\nonumber\\
&= \P\left(\int_{|x-y|\geq K} \mc Z(0,0;x,s)\mc Z(x,s; y,t)\mc Z(y,t; 0,1)\diff x \diff y > \varepsilon \mc Z(0,0;0,1)\midd \cL^\beta(0,0;0,1) > L\right)\nonumber\\
&\leq \frac{\P\left(\int_{|x-y|\geq K} \mc Z(0,0;x,s)\mc Z(x,s; y,t)\mc Z(y,t; 0,1)\diff x  \diff y > \varepsilon e^{L-1/12}\right)}{\P\left(\cL^\beta(0,0;0,1) > L\right)} \nonumber\\
&\leq  \frac{2\cdot \P\left(\int_{x-y\geq K} \mc Z(0,0;x,s)\mc Z(x,s; y,t)\mc Z(y,t; 0,1)\diff x \diff y > \tfrac{1}{2}\varepsilon e^{L-1/12}\right)}{\P\left(\cL^\beta(0,0;0,1) > L\right)},\label{e.two-point tf positive temp derivation step}
\end{align}
where the factor of 2 comes from removing the absolute value in the condition under the supremum by a union bound and using that $\cL^\beta(x,s; y, t) \stackrel{d}{=}\cL^\beta(-x, s; -y,t)$, $\cL^\beta(0,0;x,s) \stackrel{d}{=}\cL^\beta(0,0; -x, s)$, and $\cL^\beta(y, t; 0,1) \stackrel{d}{=}\cL^\beta(-y,t; 0,1)$  each as processes in the relevant spatial variables, as well as the independence of all three processes on the LHS of the equalities.

Now, using the stationarity (and independence) properties of $\cL^\beta$,
\begin{align*}
\cL^\beta(0,0; x,s) + \cL^\beta(x,s; y,t) + \cL^\beta(y,t; 0,1) 
&\stackrel{d}{=} \cL^\beta(-K,0; x-K,s) + \cL^\beta(x-K,s; y,t) + \cL^\beta(y,t; 0,1)\\
&\qquad + (t-s)^{-1}\left[(x-y-K)^2-(x-y)^2\right]
\end{align*}
as a process in $(x,y)$. Now since $x-y>K$, we see that
\begin{align*}
(t-s)^{-1}\left[(x-y-K)^2-(x-y)^2\right] 
< -(t-s)^{-1}K^2.
\end{align*}
Thus the RHS of \eqref{e.two-point tf positive temp derivation step} is upper bounded by
\begin{align}\label{e.positive temp two point bound intermediate step}
2\cdot\frac{\P\left(\int_{x-y > K} \mc Z(-K,0; x-K,s)\mc Z(x-K,s; y,t)\mc Z(y,t; 0,1) \diff x \diff y > \frac{1}{2}\varepsilon e^{L-1/12 + (t-s)^{-1}(K)^2}\right)}{\P\left(\cL^\beta(0,0;0,1)>L\right)}.
\end{align}
Now using that 
$$\cL^\beta(-K,0; 0,1)-1/12 = \log \int_{x, y\in\R}\mc Z(-K,0; x-K,s)\mc Z(x-K,s; y,t)\mc Z(y,t; 0,1)\diff x \diff y,$$
and that $\cL^\beta(-K,0; 0,1)\stackrel{d}{=} \cL^\beta(0,0;0,1) - K^2$, it follows that \eqref{e.positive temp two point bound intermediate step} is upper bounded by
\begin{align*}
2\cdot \frac{\P\left(\cL^\beta(0,0; 0,1) > L + ((t-s)^{-1}+1)K^2+\log(\varepsilon/2)\right)}{\P(\cL^\beta(0,0; 0,1) > L)}.
\end{align*}
Set $\varepsilon = \exp(-(t-s)^{-1}K^2/2)$. Applying Theorem~\ref{t.comparison} gives that the previous display is upper bounded by
\begin{align*}
C\exp\left(-c(t-s)^{-1}K^2L^{1/2}\right).
\end{align*}
Replacing $K$ by $K(t-s)^{1/2}$ completes the proof.
\end{proof}

\section{Proportionality and estimates on sums}  \label{sec:propsum}
In this section, we record a number of estimates on the sum of passage times or free energies.
Consider $(s_1, \cdots, s_{k})\in \rDe_{k}([0,\infty))$ and $\vec y\in\R^k$ for $k\in \N$.
For the convenience of notations we denote $s_0=y_0=0$, and adopt the shorthand $\cL^\beta = \cL^\beta(0,0;0,s_k)$ and $\cL^\beta_i = \cL^\beta(s_{i-1}, y_{i-1}; s_i, y_i)$ for each $1\le i \le k$.
Estimates in this section provide control on tail probabilities for the sum $\sum_{i}\cL^\beta_i$ (such as \Cref{l.free energy individual to overall values}, \Cref{l.inf control}) or on the deviation of each $\cL^\beta_i$ from $(s_i-s_{i-1})L$ conditional on $\sum_{i}\cL^\beta_i > L$ (Lemma~\ref{l.k-point proportionality}).

We assume that $s_k<C_0$ for a large $C_0$. All the constants (within this section) can depend on $k$ and $C_0$.

Our first statement bounds the upper tail of $\sum_{i}\cL^\beta_i$, in terms of the upper tail of $\cL^\beta$.
\begin{lemma}  \label{l.free energy individual to overall values}
For any $L\geq 2$, any $M$, and any $\vec z\in \R^k$, if $\min_{1\le i \le k}s_i-s_{i-1}> L^{-1}$, we have
\[
\P\left(\sup_{\vec y: \|\vec y-\vec z\|_\infty\le L^{-2}}\sum_{i=1}^k\cL^\beta_i >M, \; \cL^\beta(0,0;z_k,s_k) < M - C\log L\right)< C\exp(-cL^2).
\]
\end{lemma}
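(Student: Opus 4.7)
The event in the statement is contained in
\[
\Bigl\{\sup_{\vec y:\ \|\vec y - \vec z\|_\infty \le L^{-2}} \sum_{i=1}^k \cL^\beta_i(\vec y) - \cL^\beta(0,0;z_k, s_k) > C\log L\Bigr\},
\]
since if the sup exceeds $M$ and $\cL^\beta(0,0;z_k,s_k) < M - C\log L$ then the above gap exceeds $C\log L$. It therefore suffices to prove this latter event has probability at most $C\exp(-cL^2)$ for a sufficiently large constant $C$.

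At zero temperature, the reverse triangle inequality from \eqref{e.compopsition law zero temp}, iterated, yields the pointwise bound $\sum_i \cL(s_{i-1}, y_{i-1}; s_i, y_i) \le \cL(0, 0; y_k, s_k)$ for every $\vec y$ with $y_0 = 0$, so the supremum is at most $\sup_{|y - z_k| \le L^{-2}} \cL(0, 0; y, s_k)$. The difference $\cL(0, 0; y, s_k) - \cL(0, 0; z_k, s_k)$, viewed as a process in $y$, is (by the stationarity of $y \mapsto \cL(0, 0; y, s_k) + y^2/s_k$) a stationary increment on scale $L^{-2}$ plus a deterministic parabolic term, and \Cref{lem:fh-cont} with $d = L^{-2}$ and $M = L$ controls its $L^\infty$-norm on the interval $[z_k - L^{-2}, z_k + L^{-2}]$ by $O(1)$ with probability at least $1 - C\exp(-cL^2)$.

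At positive temperature, I replace the reverse triangle inequality by the convolution identity \eqref{eq:cZ-comp}: writing $\cZ(0, 0; z_k, s_k)$ as an iterated integral over intermediate points $(w_1, \dots, w_{k-1})$ and restricting the integration to $\prod_{i=1}^{k-1}[z_i - L^{-2}, z_i + L^{-2}]$, one obtains
\[
\cL^{\beta=1}(0, 0; z_k, s_k) \ge \inf_{\vec w:\ \|\vec w - \vec z\|_\infty \le L^{-2},\, w_k = z_k} \sum_{i=1}^k \cL^{\beta=1}_i(\vec w) - 2(k-1)\log L + (k-1)\log 2.
\]
Consequently, the gap of interest is bounded above by $2(k-1)\log L + O(1)$ plus the total oscillation of the $k$ two-variable summands $(y_{i-1}, y_i) \mapsto \cL^\beta_i$ over the corresponding $L^{-2} \times L^{-2}$ boxes (with the convention $z_0 = 0$).

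The remaining step, common to both cases, is to bound each such oscillation by $O(1)$ with probability at least $1 - C\exp(-cL^2)$. By the shift and shear invariances of $\cL^\beta$, the oscillation of $(y_{i-1}, y_i) \mapsto \cL^\beta(s_{i-1}, y_{i-1}; s_i, y_i)$ over an $L^{-2}$-box near $(z_{i-1}, z_i)$ has the same law (modulo a deterministic parabolic correction whose contribution, after summing over $i$, is controlled using $\sum_i (z_i - z_{i-1})^2/t_i \ge z_k^2/s_k$ by Cauchy--Schwarz) as that of the sheet $\cS^\beta_{s_i - s_{i-1}}$ on a box of side $L^{-2}$, to which \Cref{lem:fh-int-ub} applies with $d = L^{-2}$ and $M = L$. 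The hypothesis $\min_i(s_i - s_{i-1}) > L^{-1}$ guarantees $M^2 = L^2 > (s_i - s_{i-1})^{-1/6} L_0$, so the input estimate is applicable, and a union bound over $i \le k$ completes the argument. The only mildly delicate point is ensuring the $O(\log L)$ loss coming from the positive-temperature restriction step is comfortably absorbed into the $C\log L$ slack, which is automatic since $k$ is treated as a fixed constant.
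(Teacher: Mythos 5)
Your proposal is correct and takes essentially the same approach as the paper's proof: subadditivity for $\beta=\infty$, the convolution formula restricted to $L^{-2}$-boxes (yielding the $O(k\log L)$ loss) for $\beta=1$, and the unconditional H\"older/local-fluctuation estimates of \Cref{lem:fh-cont} and \Cref{lem:fh-int-ub} to control the oscillation at scale $L^{-2}$ with failure probability $\exp(-cL^2)$. The only difference of note is that you explicitly flag the parabolic drift that appears when centering the boxes at a nonzero $\vec z$, invoking a Cauchy--Schwarz/summation-by-parts step to sweep it away; the paper's write-up passes over this silently (and implicitly treats $\vec z$ as bounded, which is how the lemma is used downstream), so you are no worse off, but the one-line Cauchy--Schwarz remark as stated does not quite yield a bound on the \emph{oscillation} of the drift term and would need a summation-by-parts computation if $\vec z$ were allowed to grow with $L$.
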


\begin{proof}
This is immediate in the case of $\beta=\infty$ by subadditivity, i.e., $\cL^\beta(0,0; y_k, s_k) \geq \sum_{i=1}^k \cL^\beta_i$ and the unconditional local fluctuation bound (Lemma~\ref{lem:fh-int-ub}) to obtain that $|\cL^\beta(0,0; y_k, s_k)-\cL^\beta(0,0; z_k, s_k)| \leq 1$ with probability at least $1-\exp(-cL^2)$. 

We next turn to $\beta=1$. 
We note from the unconditional fluctuation bound (\Cref{lem:fh-int-ub}) that, for any fixed intervals $I_1, \ldots, I_{k}$, each with length $L^{-2}$, and each $z_i\in I_i$,
$$\inf_{\vec y\in \prod_{i=1}^{k} I_i}\sum_{i=1}^k\cL^\beta_i > \sup_{\vec y: \|\vec y-\vec z\|_\infty\le cL^{-2}}\sum_{i=1}^k\cL^\beta_i - 1,$$
with probability at least $1-C\exp(-cL^2)$. 
Under this event, and assuming that $\sup_{\vec y: \|\vec y-\vec z\|_\infty\le L^{-2}}\sum_{i=1}^k\cL^\beta_i >M$, we have (with $y_k=z_k$)
\begin{align*}
\cL^\beta(0,0;z_k,s_k)
&\geq \log \int_{\prod_{i=1}^{k-1} I_i} \exp\left(\sum_{i=1}^k \cL^\beta_i\right)\, \prod_{y=1}^{k-1}\diff y_i\\
&> M-1 +(k-1)\log(L^{-2}).
\end{align*}
This completes the proof.
\end{proof}
By taking $L=M$ and using \Cref{lem:fh-ut}, we get the following.
\begin{corollary}   \label{cor.p.tail bound for sum}
For any large enough $L$, and any $\vec z\in \R^k$, if $\min_{1\le i \le k}s_i-s_{i-1}> L^{-0.99}$,
\[
\P\left(\sup_{\vec y: \|\vec y-\vec z\|_\infty\le L^{-2}}\sum_{i=1}^k\cL^\beta_i >L\right) < \exp\left(-\frac{4}{3}s_k^{-1/2}L^{3/2} + Cs_k^{-1/4}L^{3/4}\right).
\]
\end{corollary}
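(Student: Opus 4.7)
The plan is to combine \Cref{l.free energy individual to overall values} (with its threshold $M$ taken equal to $L$) with the one-point upper-tail density estimate of \Cref{lem:fh-ut} applied to $\hfh^\beta_{s_k,1}(0)$. The hypothesis $\min_i(s_i-s_{i-1}) > L^{-0.99}$ implies the hypothesis $\min_i(s_i - s_{i-1}) > L^{-1}$ of \Cref{l.free energy individual to overall values}, so applying that lemma with $M = L$ yields
\[
\P\left(\sup_{\vec y:\|\vec y-\vec z\|_\infty\le L^{-2}}\sum_{i=1}^k \cL^\beta_i > L\right) \le \P\left(\cL^\beta(0,0;z_k,s_k) \ge L - C\log L\right) + C\exp(-cL^2).
\]
For the first term, the parabolic shift-invariance of $\cL^\beta$ (i.e., stationarity of $x\mapsto \cL^\beta(0,0;x,s_k) + x^2/s_k$) gives $\cL^\beta(0,0;z_k,s_k) \stackrel{d}{=} \cL^\beta(0,0;0,s_k) - z_k^2/s_k$, so it is at most $\P(\hfh^\beta_{s_k,1}(0) \ge s_k^{-1/3}(L - C\log L))$ after using $\cL^\beta(0,0;0,s_k) = s_k^{1/3}\hfh^\beta_{s_k,1}(0)$. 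Integrating the density upper bound of \Cref{lem:fh-ut} over $[s_k^{-1/3}(L-C\log L),\infty)$, and absorbing the polynomial-in-$L$ prefactor from the Gaussian-type integration into the $Cs_k^{-1/4}L^{3/4}$ error, bounds this probability by $\exp(-\tfrac{4}{3}s_k^{-1/2}(L-C\log L)^{3/2} + Cs_k^{-1/4}(L-C\log L)^{3/4})$; a Taylor expansion $(L-C\log L)^{3/2} = L^{3/2} + O(L^{1/2}\log L)$ converts the exponent into $-\tfrac{4}{3}s_k^{-1/2}L^{3/2} + O(s_k^{-1/2}L^{1/2}\log L) + O(s_k^{-1/4}L^{3/4})$.

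The main obstacle is verifying the two absorptions needed to match the target bound: (i) $s_k^{-1/2}L^{1/2}\log L$ is dominated by $s_k^{-1/4}L^{3/4}$, i.e., $\log L \lesssim s_k^{1/4}L^{1/4}$; and (ii) the $C\exp(-cL^2)$ remainder from \Cref{l.free energy individual to overall values} is dominated by the main exponential, i.e., $s_k^{1/2}L^{1/2} \to \infty$. Both reduce, via $s_k \ge L^{-0.99}$, to $L^{0.0025} \to \infty$, and so hold for large $L$; this is precisely why the stronger hypothesis $s_k - s_{k-1} > L^{-0.99}$ is imposed (instead of the $L^{-1}$ threshold from \Cref{l.free energy individual to overall values}), since the small buffer $L^{0.01}$ is what absorbs the logarithmic and polynomial corrections. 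It remains to check the hypothesis of \Cref{lem:fh-ut} at $s_k^{-1/3}(L-C\log L)$, namely that this exceeds $(s_k^{-1/3-\varepsilon}\vee 1)L_0$, which again follows for large $L$ by choosing $\varepsilon$ small enough (e.g., $\varepsilon = 0.01$) and using $s_k \ge L^{-0.99}$.
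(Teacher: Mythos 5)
Your proof is correct and follows essentially the same path the paper intends: the paper's entire justification is the one-line remark "By taking $L=M$ and using \Cref{l.upper tail}, we get the following," and your write-up fills in the routine but necessary details — the shear-invariance reduction of $\cL^\beta(0,0;z_k,s_k)$ to $\cL^\beta(0,0;0,s_k)$, the rescaling to $\hfh^\beta_{s_k,1}(0)$, the integration of the density bound, and the bookkeeping showing that the hypothesis $s_k \ge \min_i(s_i-s_{i-1}) > L^{-0.99}$ makes all the error terms (the $\log L$ loss, the $L'^{-1/2}$ prefactor, the $\exp(-cL^2)$ remainder, and the applicability threshold of \Cref{lem:fh-ut}) absorbable. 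No gaps.
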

We note that the constraint $\min_{1\le i \le k}s_i-s_{i-1}> L^{-0.99}$ is to ensure that the bound is much larger than $\exp(-cL^2)$, the upper bound in \Cref{l.free energy individual to overall values}.

The following statement asserts that, conditional on the sum of independent free energies being large, the individual terms are with high probability proportionate to the total, up to a certain scale of fluctuation.

\begin{lemma}\label{l.k-point proportionality}
Fix each $y_i=0$. 
For any $L$ large enough, $K>C_1L^{3/8}$ for a constant $C_1>0$ (so that $KL^{1/4} > C_1L^{5/8}$), if $\min_{1\le i \le k}s_i-s_{i-1}> L^{-0.99}$, then for each $j=1,\ldots, k$,
\begin{align*}
\P\left(\cL^\beta_j < (s_j-s_{j-1})L - K(s_j-s_{j-1})^{1/2}L^{1/4} \midd \sum_{i=1}^{k}\cL^\beta_i > s_kL\right) < \exp(-cK^2).
\end{align*}
\end{lemma}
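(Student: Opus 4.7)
The plan is to exploit the mutual independence of the increments $\{\cL^\beta_i\}_{i=1}^k$, which holds since each $y_i=0$ and $\cL^\beta$ is independent across disjoint temporal strips (by \Cref{l.dl symmetries} and \Cref{l.Z symmetries}). Writing $a_i := s_i-s_{i-1}$, $U := \cL^\beta_j$, $V := \sum_{i\neq j}\cL^\beta_i$, and $u^* := a_jL - Ka_j^{1/2}L^{1/4}$, the conditional probability is the ratio $\int_{-\infty}^{u^*}\phi(u)\diff u \,\big/\, \int_{\R} \phi(u)\diff u$, where $\phi(u) := f_U(u)\,\P(V > s_kL-u)$ and $f_U$ is the density of $U$. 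The structural idea is that $\phi$ is a Laplace-type integrand concentrated around the saddle point $u_\star := a_jL$, at which the Lagrange optimum for $\P(U+V>s_kL)$ places each summand at its proportional share; since $u^* < u_\star$, the numerator is only the strictly left-tail portion of $\phi$.

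To quantify this, I bound $\log \phi(u) \leq G(u) + O(L^{3/4})$, where $G(u) := -\tfrac{4}{3}a_j^{-1/2}u^{3/2} - \tfrac{4}{3}(s_k-a_j)^{-1/2}(s_kL-u)^{3/2}$, using \Cref{l.upper tail} (rescaled via $\cL^\beta_j \stackrel{d}{=} a_j^{1/3}\hfh^\beta_{a_j,1}(0)$) to bound $f_U$ on $u>0$, and \Cref{cor.p.tail bound for sum} applied to the $k-1$ time strips carrying $V$ to bound $\P(V > s_kL-u)$. A direct calculation yields $G'(u_\star)=0$, $G(u_\star) = -\tfrac{4}{3}s_kL^{3/2}$, $G''(u_\star) = -L^{-1/2}s_k/(a_j(s_k-a_j))$, and $G$ is strictly increasing on $(-\infty, u_\star)$. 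Taylor expansion to second order around $u_\star$ evaluated at $u=u^*$ gives $G(u^*) \leq -\tfrac{4}{3}s_kL^{3/2} - cK^2$, so by monotonicity $\phi(u) \leq \exp(-\tfrac{4}{3}s_kL^{3/2} - cK^2 + O(L^{3/4}))$ for all $u \in [0, u^*]$. Contributions from $u \leq 0$ are handled by the lower-tail estimate \Cref{lem:fh-lt} for $f_U$ and produce a strictly stronger decay $\exp(-cL^{3/2})$ that is easily absorbed. Integrating over $u \in (-\infty, u^*]$ costs at most a polynomial-in-$L$ factor, which is absorbed in the $O(L^{3/4})$ error.

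For the denominator, independence together with $k$ applications of \Cref{l.upper tail} yields $\P(U+V > s_kL) \geq \prod_{i=1}^k \P(\cL^\beta_i > a_iL) \geq \exp(-\tfrac{4}{3}s_kL^{3/2} - CL^{3/4})$ (the condition $\min_i a_i > L^{-0.99}$ ensures the one-point estimate is available on each strip). Combining the bounds, the ratio is at most $\exp(-cK^2 + O(L^{3/4}))$, which under the hypothesis $K > C_1 L^{3/8}$ with $C_1$ sufficiently large is at most $\exp(-cK^2/2)$, proving the lemma.

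The main obstacle is the uniform control of the $O(L^{3/4})$ error terms in the one-point density and sum-tail estimates against the desired $cK^2$ bound; this forces the hypothesis $K > C_1L^{3/8}$. A secondary care is required in the regime where $K$ is close to $a_j^{1/2}L^{3/4}$ so that $u^*$ approaches zero and the second-order Taylor expansion at $u_\star$ degrades; in that case the Taylor analysis is replaced by the direct estimate $G(u^*) \leq G(0) = -\tfrac{4}{3}(s_k-a_j)^{-1/2}s_k^{3/2}L^{3/2}$, and since $\tfrac{4}{3}[(s_k-a_j)^{-1/2}s_k^{3/2} - s_k]L^{3/2}$ is a positive multiple of $L^{3/2}$, it absorbs $-cK^2$ a fortiori.
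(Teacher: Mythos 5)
Your proof is correct and follows essentially the same route as the paper's. Both arguments condition on the value of $\cL^\beta_j$ and exploit independence to compute the conditional probability as a ratio (Bayes), lower bound the denominator by $\prod_i\P(\cL^\beta_i > a_iL)$ via Theorem~\ref{l.upper tail}, upper bound the numerator by combining the one-point density bound with \Cref{cor.p.tail bound for sum} for the complementary strips, and then Taylor expand the resulting exponent about the proportional value $a_jL$ to extract the $-cK^2$ gain. The paper discretizes $\cL^\beta_j$ into unit intervals $(X \in -\ell + [-1,0])$ and sums over $\ell$, whereas you integrate against the density $f_U$ directly; it also reduces to $j=k$ by WLOG, which is the cleanest way to apply \Cref{cor.p.tail bound for sum} to a contiguous block of strips (your phrasing ``the $k-1$ time strips carrying $V$'' implicitly requires the same reordering when $j<k$). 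One small technical remark: for the Taylor step to give a \emph{uniform} constant (independent of $a_j$, as required since the section's constants may depend only on $k$ and the upper bound $C_0$ on $s_k$), one should note that the intermediate second derivative $G''(\xi)$ on $[u^*, u_\star]$ satisfies $|G''(\xi)| \geq a_j^{-1}L^{-1/2}$ because $\xi \leq a_jL$; your stated $G''(u_\star)$ suffices at the endpoint but the full claim over the interval needs the one-line monotonicity check. The fallback for $u^*$ near $0$ works for the reason you cite, though to make it uniform one should Taylor-expand $(1 - a_j/s_k)^{-1/2} - 1 \gtrsim a_j/s_k$ and match it against $K^2 \sim a_j L^{3/2}$; the constant again comes out $a_j$-independent. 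These are verification details rather than gaps.
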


The above bound is optimal except for the fact that we require $K>CL^{3/8}$, while it should hold for $K>C$; the loss is due to the non-optimal error term in our tail bound \Cref{lem:fh-ut}.
The constraint $\min_{1\le i \le k}s_i-s_{i-1}> L^{-0.99}$ is due to applying \Cref{lem:fh-ut} and \Cref{cor.p.tail bound for sum}. We also mention the related result \cite[Theorem 1.1]{liu2022geodesic} that implies that, conditional on $\cL(0,0;0,1) = L$, the weight of the geodesic up to height $s$ is $sL$ plus a random term on scale $L^{1/4}$ which, when scaled to be unit order, converges to a Gaussian random variable.

\begin{proof}[Proof of Lemma~\ref{l.k-point proportionality}]
By the independence of these $\cL_j^\beta$, without loss of generality, we prove the estimate for $j=k$.

Let $Y = \sum_{i=1}^{k-1}\cL^\beta_i$, and $X = \cL^\beta_k-(s_k-s_{k-1})L$. Then the probability in the LHS equals
\begin{align}
\MoveEqLeft[2]
 \sum_{\ell=K(s_k-s_{k-1})^{1/2}L^{1/4}}^\infty \P\left(X \in - \ell +[-1,0] \midd \sum_{i=1}^{k}\cL^\beta_i > s_kL\right) \nonumber\\
&= \frac{\sum_{\ell=K(s_k-s_{k-1})^{1/2}L^{1/4}}^\infty \P\left(\sum_{i=1}^{k}\cL^\beta_i > s_kL \midd X \in - \ell +[-1,0]\right)\cdot \P\left( X \in -\ell +[-1,0]\right)}{\P\left(\sum_{i=1}^{k}\cL^\beta_i > s_kL\right)} \nonumber\\
&\leq \frac{\sum_{\ell=K(s_k-s_{k-1})^{1/2}L^{1/4}}^\infty \P\left(Y > s_{k-1}L + \ell  \midd X \in - \ell +[-1,0]\right)\cdot \P\left( X \in -\ell +[-1,0]\right)}{\P\left(\sum_{i=1}^{k}\cL^\beta_i > s_kL\right)}. \label{e.k-point proportionality derivation}
\end{align}
We can lower bound the denominator by $\prod_{i=1}^k \P(\cL^\beta_i>(s_i-s_{i-1})L)$, which, by \Cref{lem:fh-ut}, is $>\exp\left(- \frac{4}{3}s_kL^{3/2} - Cs_k^{1/2}L^{3/4}\right)$.
For the numerator, note that $Y$ is independent of $X$, so the summand indexed by $\ell$ in \eqref{e.k-point proportionality derivation} can be upper bounded, using \Cref{cor.p.tail bound for sum} and Theorem~\ref{lem:fh-ut} by
\begin{align*}
\MoveEqLeft[0.5]
\exp\left(-\tfrac{4}{3}s_{k-1}^{-1/2}\left(s_{k-1}L+\ell\right)^{3/2} -\tfrac{4}{3}(s_k-s_{k-1})^{-1/2}\left((s_k-s_{k-1})L - \ell\right)^{3/2} + Cs_k^{1/2}L^{3/4}\right)\\
&\le  \exp\left(-\tfrac{4}{3}s_kL^{3/2} - c\ell^2(s_k-s_{k-1})^{-1}L^{-1/2} + Cs_k^{1/2}L^{3/4}\right).
\end{align*}
Substituting this into \eqref{e.k-point proportionality derivation} and using that $K> C_1L^{3/8}$ for a large enough constant $C_1$ (so that $cK^2$ is much larger than $s_k^{1/2}L^{3/4}$) yields the claim.
\end{proof} 

The following lemma provides control on the lower tail of the sum $\sum_{i=1}^k \cL^\beta_i$, conditional on the upper tail of $\cL^\beta$.

\begin{lemma}\label{l.inf control}
Assume that $\min_{1\le i \le k}s_i-s_{i-1}> t_0$ for some $t_0>0$.
For any $L>0$,
\[
\P\left(\inf_{\vec y: \|\vec y\|_{\infty} \leq L^{-1/4}\log L} \sum_{i=1}^{k}\cL^\beta_i < L - L^{5/8}\log L \midd \cL^\beta>L\right)\\
< C\exp(-c(\log L)^2),
\]
where the constants $C,c$ may depend on $t_0$.
\end{lemma}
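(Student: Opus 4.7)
The plan is to combine two ingredients: a reference configuration $\vec y^{*}\in\mathbb{B}:=\{\vec y:\|\vec y\|_\infty\leq L^{-1/4}\log L\}$ at which $\sum_i\cL^\beta_i(\vec y^{*})\geq L-o(L^{5/8}\log L)$ holds under the conditioning $\cL^\beta>L$, together with a uniform modulus of continuity for each field $\cL^\beta(\cdot,s_{i-1};\cdot,s_i)$ over $\mathbb{B}$ strong enough to absorb the Bayes factor $1/\P(\cL^\beta>L)$ when passing from unconditional to conditional bounds. Combining these yields, for every $\vec y\in\mathbb{B}$,
\[
\sum_i\cL^\beta_i(\vec y)\geq\sum_i\cL^\beta_i(\vec y^{*})-\sum_i|\cL^\beta_i(\vec y)-\cL^\beta_i(\vec y^{*})|\geq L-L^{5/8}\log L.
\]

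I would build the reference point differently in the two settings. In the zero-temperature case, I take $\vec y^{*}=(\pi(s_1),\ldots,\pi(s_{k-1}),0)$ where $\pi$ is the geodesic from $(0,0)$ to $(0,s_k)$; iterating \Cref{p.zero temperature two-point} with $K=\log L$ across the $k$ intervals and union-bounding gives $\max_i|\pi(s_i)|\leq L^{-1/4}\log L$ with conditional probability at least $1-C\exp(-c(\log L)^2)$, and on this event $\sum_i\cL(y_{i-1}^{*},s_{i-1};y_i^{*},s_i)=\cL(0,0;0,s_k)>L$ by the composition identity \eqref{e.compopsition law zero temp}. In the positive-temperature case there is no canonical path, so I would work with the integral identity $\exp(\cL^\beta)=\int\exp\bigl(\sum_i\cL^\beta_i(\vec y)\bigr)\,dy_1\cdots dy_{k-1}$ (with $y_0=y_k=0$): combining proportionality (\Cref{l.k-point proportionality}, which pins each $\cL^\beta(0,s_{i-1};0,s_i)$ near the expected value $(s_i-s_{i-1})L$) with coalescence (\Cref{prop:dp-tent-coal}) and the tent behaviour (\Cref{lem:fh-tent}) for each one-sided profile $\cL^\beta(0,s_{i-1};\cdot,s_i)$, I obtain tent-shaped upper bounds for each two-variable slice, and integrate them out to show that the mass of the integrand outside $\mathbb{B}$ is at most $\exp(L-\Omega(L^{1/4}\log L))$; the trivial estimate $\int_{\mathbb{B}}\exp(\sum_i\cL^\beta_i)\,d\vec y\leq|\mathbb{B}|\exp(\sup_{\mathbb{B}}\sum_i\cL^\beta_i)$ then produces some $\vec y^{*}\in\mathbb{B}$ achieving $\sum_i\cL^\beta_i(\vec y^{*})\geq L-O(\log L)$.

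For the modulus of continuity, I would apply \Cref{lem:fh-int-ub} to each rescaled sheet $\cS^\beta_{s_i-s_{i-1}}$ on the $O(L^{-1/4}\log L)$-sized box with $M=CL^{3/4}(\log L)^{1/2}/k$, yielding, for each $i$,
\[
\sup_{x,y\in[-L^{-1/4}\log L,L^{-1/4}\log L]}\bigl|\cL^\beta(x,s_{i-1};y,s_i)-\cL^\beta(0,s_{i-1};0,s_i)\bigr|\leq \frac{L^{5/8}\log L}{2k}
\]
with unconditional failure probability at most $C\exp(-cL^{3/2}\log L/k^2)$. Dividing by the lower bound $\P(\cL^\beta>L)\geq c\exp(-CL^{3/2})$ from \Cref{l.upper tail} keeps the conditional failure probability per interval at $C\exp(-c'L^{3/2}\log L)$ for $L$ large, and a union bound over the $k$ intervals combined with the reference-point event keeps the total conditional failure probability at most $C\exp(-c(\log L)^2)$, as required.

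The hardest step will be the positive-temperature reference-point construction: the rough polymer transversal estimates \Cref{l.positive temp crude tf}--\Cref{l.positive temp two point crude bound} alone only concentrate the polymer on scale $(\log L)^{1/2}$, well above the target scale $L^{-1/4}\log L$, so closing this gap requires the careful combination of coalescence, proportionality, and the tent picture outlined above to force the bulk of the integrand's mass into the fine box $\mathbb{B}$ before extracting the reference point.
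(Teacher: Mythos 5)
Your overall skeleton matches the paper: first find a point in the box at which the sum is $\geq L-O(\log L)$, then spread this over the whole box using the unconditional H\"older estimate \Cref{lem:fh-int-ub} and absorb the conditioning with the lower bound $\P(\cL^\beta>L)\geq\exp(-CL^{3/2})$. The zero-temperature step and the modulus-of-continuity step are essentially the same as in the paper (the paper uses \Cref{l.trans-fluc} at each $s_i$ rather than iterating \Cref{p.zero temperature two-point}, but the effect is the same), and that part of your proposal is fine.

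The gap is in the positive-temperature reference-point construction, and it stems from a misreading of \Cref{l.positive temp crude tf}. You assert that this lemma ``only concentrates the polymer on scale $(\log L)^{1/2}$'' and so cannot reach the target scale $L^{-1/4}\log L$. That is not how the lemma needs to be used. Take $K$ so that the physical scale $K(s(1-s))^{1/2}$ is about $L^{-1/4}\log L$, i.e.\ $K\sim L^{-1/4}\log L$: the annealed failure probability is then $C\exp(-cK^2L^{1/2})=C\exp(-c(\log L)^2)$, which is exactly the error rate you need, and the quenched bound on the mass outside the box is $\exp(-\tfrac12 K^2)=\exp(-\tfrac12 L^{-1/2}(\log L)^2)$. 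This last quantity is close to $1$ rather than small, but that is perfectly sufficient: it still leaves a positive (if only $\sim L^{-1/2}(\log L)^2$) fraction of $\cZ(0,0;0,1)\geq e^{L-1/12}$ inside the box, so if $\sup_{\mathbb B}\sum_i\cL^\beta_i<L-C\log L$ then the trivial volume estimate $|\mathbb B|\exp(\sup_{\mathbb B}\sum_i\cL^\beta_i)$ gives a contradiction for $C$ large. This is precisely how the paper produces the reference point, in three lines, without any appeal to proportionality, coalescence, or tent estimates. Your proposed alternative (proportionality to pin the values, coalescence to split the two-variable partition functions, tent bounds on each one-sided profile, integrate out) is not implausible, but it requires a change of conditioning with Bayes factors controlled by \Cref{t.comparison}, plus careful handling of the tent's failure events, and you do not spell out how these are controlled; as written it is both substantially over-engineered and under-specified. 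I would suggest revisiting \Cref{l.positive temp crude tf} with $K=L^{-1/4}\log L$ and abandoning the detour.
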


\begin{proof}
We assume that $L$ is large enough since otherwise the conclusion follows obviously.

We start with $\beta=1$. We first claim that, conditional on $\cL^\beta > L$, it holds with probability at least $1-C\exp(-c(\log L)^2)$ that $\sup_{\vec y : \|\vec y\|_{\infty} \leq L^{-1/4}\log L} \sum_{i=1}^{k}\cL^\beta_i > L-C\log L$. Indeed, suppose that this inequality does not hold. In the $\beta=1$ case, we know from the quenched transversal fluctuation estimate Lemma~\ref{l.positive temp crude tf} that with conditional probability at least $1-C\exp(-c(\log L)^2)$,
\begin{align*}
\exp(L) 
&< \left(1-e^{-cL^{-1/2}(\log L)^2}\right)^{-1} \int_{[-L^{-1/4}\log L,L^{-1/4}\log L]^{k-1}} \exp\left(\sum_{i=1}^{k} \cL^\beta_i\right) \prod_{i=1}^{k-1}\diff y_i,
\end{align*}
where $y_k=0$ in the integral.
If $\sup_{\vec y : \|\vec y\|_{\infty} \leq L^{-1/4}\log L} \sum_{i=1}^{k}\cL^\beta_i < L-C\log L$, the RHS is upper bounded by
\begin{align*}
C L^{1/2}(\log L)^{-2}(2L^{-1/4}\log L)^k \exp\left(L - C\log L\right) \ll \exp(L),
\end{align*}
which is a contradiction.

In the $\beta = \infty$ case, by Lemma~\ref{l.trans-fluc}, with conditional probability at least $1-C\exp(-c(\log L)^2)$ it holds that $\cL^{\beta} = \sup_{\vec y : \|\vec y\|_{\infty} \leq L^{-1/4}\log L} \sum_{i=1}^{k}\cL^\beta_i$, which implies our claim since we have conditioned on $\cL^{\beta}>L$.

Next, we know from \Cref{lem:fh-int-ub} that, with (unconditional) probability at least $1-C\exp(-cL^{3/2}\log L)$,
\[
\sup_{\|\vec y\|_{\infty} \leq L^{-1/4}\log L}\sum_{i=1}^k\left|\cL^\beta_i - \cL^\beta(0, s_{i-1}; 0, s_i)\right|
\leq \tfrac{1}{2}L^{3/4}(\log L)^{1/2}\cdot (L^{-1/4}\log L)^{1/2} = \tfrac{1}{2}L^{5/8}\log L.
\]
By Theorem~\ref{lem:fh-ut} we know $\P(\cL^\beta > L) > \exp(-CL^{3/2})$, the previous bound also holds conditionally on $\cL^\beta > L$ with probability at least $1-C\exp(-cL^{3/2}\log L)$. This completes the proof.
\end{proof}

\section{Concentration of polymers}\label{s.tightness.polymer measure concentration}

In this section, we mainly work with polymers, i.e., set $\beta=1$. We prove the following fact: the polymer measure at a given height $s$ is a Dirac mass spread out over an $L^{-1/2}$ interval around a random location.
More specifically, we define 
$$\pi(s) = \argmax_{x\in\R} \cL^\beta(0,0;x,s) + \cL^\beta(x,s; 0,1),$$
for any $s\in(0,1)$.
Note that this is a function purely of the environment. 
The main result of this section asserts that under the polymer measure $\polyP$, the polymer path at each height in $[L^{-5/8}, 1-L^{-5/8}]$ stays within $O(L^{-1/2}\log L)$ of $\pi$ with high probability.
\begin{proposition}\label{p.closeness of maximizer and polymer}
There exists $M_0>0$ such that for any $L\ge 2$, $M>M_0$ and $s\in[L^{-5/8}, 1-L^{-5/8}]$, 
\begin{align*}
\P\left(\polyP\left(|\Gamma_0(s)-\pi(s)| > ML^{-1/2}\log L\right) > L^{-2M} \midd \cL^{\beta}(0,0;0,1) > L\right) < C\exp(-c(\log(L))^2).
\end{align*}
\end{proposition}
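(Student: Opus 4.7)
The strategy is to rewrite the quenched probability via the composition formula \eqref{eq:cZ-comp} as
\[
\polyP\!\left(|\Gamma_0(s) - \pi(s)| > ML^{-1/2}\log L\right) = \frac{\int_{|x-\pi(s)| > ML^{-1/2}\log L} \exp F_s(x)\,dx}{\int_{\R} \exp F_s(x)\,dx},
\]
where $F_s(x) := \cL^\beta(0,0;x,s) + \cL^\beta(x,s;0,1)$ and the denominator equals $\exp\cL^\beta(0,0;0,1) \ge e^L$ on the conditioning event. It therefore suffices to bound the numerator by $L^{-2M}e^{L}$, with logarithmic losses absorbed by slightly increasing $M$.

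The core step is to establish, on an event of conditional probability $\ge 1 - C\exp(-c(\log L)^2)$, a tent-type upper bound
\[
F_s(x) \;\le\; F_s(\pi(s)) - 2L^{1/2}|x - \pi(s)| + C(\log L)^2\,|x-\pi(s)|^{1/2}
\]
valid for $|x - \pi(s)| \le \min(s,1-s)L^{1/2}$, together with the much stronger decay $F_s(x) \ll F_s(\pi(s)) - L$ outside this window (coming from \Cref{lem:fh-ub}). To derive it, I first argue that $|\pi(s)| \le (\log L)^C$ with the required probability, by applying the shear-invariance argument underlying \Cref{l.positive temp crude tf} to the free-energy profile $F_s$ whose argmax defines $\pi$. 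This allows discretization of the candidate positions on a polynomially fine net. At each net point $z$, \Cref{cor.p.tail bound for sum} combined with \Cref{t.comparison} forces the individual heights $\cL^\beta(0,0;z,s)$ and $\cL^\beta(z,s;0,1)$ to lie within $C(\log L)L^{1/4}$ of $sL$ and $(1-s)L$ respectively, which, in the range $s \in [L^{-5/8}, 1-L^{-5/8}]$, exceed the threshold needed by \Cref{cor.tent-pr}. Applying \Cref{cor.tent-pr} (via shift invariance) separately to $\cL^\beta(0,0;\cdot,s)$ and $\cL^\beta(\cdot,s;0,1)$, and invoking \Cref{prop:dp-tent-coal} to force the two individual peak locations to coincide with $\pi(s)$ up to an error $O(L^{-1/2}\log L)$, yields the displayed tent bound on $F_s$: the slope $2L^{1/2}$ is delivered by whichever summand has its peak on the opposite side of $x$ from $\pi(s)$, while the other summand is dominated by its peak value.

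With the tent bound in hand, a Laplace computation gives
\[
\int_{|x-\pi(s)| > ML^{-1/2}\log L} \exp F_s(x)\,dx \;\le\; CL^{-1/2}\exp\!\left(F_s(\pi(s)) - 2M\log L + o(\log L)\right),
\]
while integrating over an $L^{-1}$-neighborhood of $\pi(s)$, on which $F_s$ stays within $O(L^{-1/2+\varepsilon})$ of its maximum by the local regularity estimate \Cref{lem:fh-int-ub}, produces the denominator lower bound $cL^{-1}\exp F_s(\pi(s))$. The ratio is at most $CL^{1/2 - 2M + o(1)}$, and absorbing the $L^{1/2}$ into $L^{-2M}$ by replacing $M$ with $M+1$ completes the proof.

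The principal obstacle is uniformity: \Cref{cor.tent-pr} is stated relative to a deterministic conditioning point, whereas here the tent must be centered at the random argmax $\pi(s)$. The discretization-and-union-bound strategy above handles this, with \Cref{prop:dp-tent-coal} bridging the distance between the nearest net point and $\pi(s)$ itself at an exponentially small additive cost that is negligible on the relevant scale $L^{-1/2}\log L$.
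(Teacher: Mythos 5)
Your overall scheme---convert the quenched probability into the ratio of two integrals, establish a tent upper bound on $F_s = \cL^\beta_1 + \cL^\beta_2$ centered at $\pi(s)$ with high conditional probability, and finish with a Laplace estimate---is the same as the paper's. However, two points in your argument do not hold up.

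First, your invocation of \Cref{prop:dp-tent-coal} ``to force the two individual peak locations to coincide with $\pi(s)$'' is not what that proposition delivers. \Cref{prop:dp-tent-coal} is a quadrangle-equality (coalescence) statement comparing $\cS^\beta_t(x,y)$ to $\cS^\beta_t(x,0)+\cS^\beta_t(0,y)-\cS^\beta_t(0,0)$; it is not a statement about the argmaxes of $\cL^\beta_1$ and $\cL^\beta_2$. In fact no such coincidence is needed: once one conditions on $\cL^\beta_i(z)\in(h_i,h_i+\mathrm{d}h_i)$ at a fixed net point $z$, \Cref{cor.tent-pr} makes each $\cL^\beta_i(\cdot)$ tent-shaped with peak at $z$, so the sum is automatically tent-shaped at $z$; the argmax $\pi(s)$ is then constrained to be near $z$, and the tent can be transferred to $\pi(s)$. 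Coalescence plays no role here.

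Second, and more seriously, the discretization-and-union-bound step is where the real difficulty lives, and your sketch glides over it. You cannot directly apply a union bound over net points: \Cref{cor.tent-pr} is conditional on $\cL^\beta_i(z)\approx h_i$, whereas the ambient conditioning is $\cL^\beta>L$, and these are not the same $\sigma$-algebra. Converting the one into the other requires the explicit Bayesian comparison of \Cref{l.global conditioning to two}, whose output is a multiplicative loss of $C\exp(C(\log L)^2 L^{1/2})$ coming from the ratio $\P(\text{good heights at some }x)/\P(\cL^\beta>L)$. Consequently, the conditional probability (given heights) of failing the tent shape must be bounded not by $\exp(-c(\log L)^2)$, as your write-up suggests, but by $C\exp(-cML^{1/2}\log L)$ in order to absorb that loss; \Cref{cor.tent-pr} does yield precisely this (since $ML^{-1/2}\log L$ is the scale), but your proposal never explains why such a strong bound is needed nor how the two pieces combine. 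Also, as a minor point, the proportionality window you claim, $C(\log L)L^{1/4}$, is sharper than what \Cref{l.k-point proportionality} actually gives (which is $\sim L^{5/8}$); fortunately the weaker bound still gives slope $2L^{1/2}$ after absorbing the $(s_i-s_{i-1})^{1/2}$ factor, so this is not fatal, but it is incorrect as stated.
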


In the rest of this section, we shall always take $s\in [L^{-5/8}, 1-L^{-5/8}]$.
For the convenience of notations, we adopt the notation $s_1=s$ and $s_2=1-s$, $\cL^\beta_1(x) = \cL^\beta(0,0;x,s)$ and $\cL^\beta_2(x) = \cL^\beta(x,s; 0,1)$, and $\cL^\beta=\cL^\beta(0,0;0,1)$.

\subsection{Global to two-segments conditioning}
In the upcoming proof of Proposition~\ref{p.closeness of maximizer and polymer}, it will be useful to go from conditioning on $\cL^\beta > L$ to conditioning on $\cL^{\beta}_i(x)\in (h_i, h_i+\diff h_i)$ for each $i=1,2$, with $x, h_1, h_2$ belonging to a set of nice values. The following is the general statement that allows us to do this.

Let $\mrm{Val}\subseteq\R^2$ be defined by
\begin{align*}
\mrm{Val} = \left\{(h_1, h_2) \in \R^2: L-(\log L)^2<h_1+h_2<L+(\log L)^2, h_i > s_iL-s_i^{1/2}L^{5/8}\log L \text{ for } i=1,2\right\}.
\end{align*}
\begin{lemma}[Global to two-segments conditioning]\label{l.global conditioning to two}
For any $L>0$, $0<K\le \log L$, and any event $A$,
\begin{multline*}
\P\left(A\mid \cL^\beta > L\right)
< \max_{\substack{x\in L^{-2}\Z, |x|\leq KL^{-1/4}}} \sup_{(h_1, h_2) \in \mrm{Val}} \P\left(A \midd \cL^\beta_i(x)\in (h_i, h_i+\diff h_i), i=1,2\right) \\ \times C\exp( C(\log L)^2L^{1/2} ) + C\exp(-cK^2).
\end{multline*}
\end{lemma}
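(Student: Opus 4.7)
The plan is to decompose the upper tail event $\{\cL^\beta > L\}$ using the composition identity $e^{\cL^\beta} = \int_\R e^{\cL^\beta_1(x) + \cL^\beta_2(x)}\,\diff x$. Write $I_{in} = \int_{|x|\le KL^{-1/4}}e^{\cL^\beta_1(x)+\cL^\beta_2(x)}\,\diff x$ and $I_{out} = \int_{|x|>KL^{-1/4}}e^{\cL^\beta_1(x)+\cL^\beta_2(x)}\,\diff x$, so $\{\cL^\beta > L\} \subseteq \{I_{in} > \tfrac12 e^L\}\cup\{I_{out} > \tfrac12 e^L\}$. For the outer integral, shear invariance of $\cZ$ from Lemma~\ref{l.Z symmetries}, applied with spatial shift $-KL^{-1/4}$ and change of variables, yields
\[ \int_{x>KL^{-1/4}} e^{\cL^\beta_1(x)+\cL^\beta_2(x)}\,\diff x \;\stackrel{d}{\leq}\; e^{-K^2 L^{-1/2}/(s(1-s))}\, e^{\cL^\beta}, \]
and symmetrically for $x<-KL^{-1/4}$; combining with Theorem~\ref{t.comparison} gives $\P(I_{out}>\tfrac12 e^L) \le C\exp(-cK^2)\P(\cL^\beta > L)$, producing the additive error $C\exp(-cK^2)$.

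For the inner integral, cover $[-KL^{-1/4},KL^{-1/4}]$ by $N\asymp KL^{7/4}$ intervals of length $L^{-2}$ centered at $x_j\in L^{-2}\Z$. By Lemma~\ref{lem:fh-int-ub} and a union bound, on an event $\Omega_1$ of probability $\ge 1-C\exp(-c(\log L)^2)$ the oscillation of $\cL^\beta_1+\cL^\beta_2$ on each mesh interval is at most $C(\log L)/L$; thus on $\Omega_1$, the bound $I_{in}>\tfrac12 e^L$ forces $\max_j\bigl(\cL^\beta_1(x_j)+\cL^\beta_2(x_j)\bigr)\ge L - C\log L$ by a pigeonhole argument. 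For any such $j$, using shear invariance to reduce the joint law of $(\cL^\beta_1(x_j),\cL^\beta_2(x_j))$ to that of $(\cL^\beta(0,0;0,s),\cL^\beta(0,s;0,1))$ up to parabolic corrections of size $\le K^2 L^{1/8}\ll L^{5/8}\log L$, and applying Lemma~\ref{l.k-point proportionality} with parameter $K' = L^{3/8}\log L$, we ensure $(\cL^\beta_1(x_j),\cL^\beta_2(x_j))\in \mrm{Val}$ outside an event of probability $\exp(-c L^{3/4}(\log L)^2)$, which is easily absorbed.

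We then use the independence of $\cL^\beta_1$ and $\cL^\beta_2$ (measurable with respect to $\cZ$ on disjoint time strips) to write, with $f_{i,j}$ denoting the marginal density of $\cL^\beta_i(x_j)$,
\[ \P(A,I_{in}>\tfrac12 e^L,\Omega_1) \;\le\; \sum_j \int_{\mrm{Val}} \P\bigl(A\bigm|\cL^\beta_i(x_j)\in (h_i,h_i+\diff h_i)\bigr)\,f_{1,j}(h_1)f_{2,j}(h_2)\,\diff h_1\,\diff h_2. \]
Bounding the right side by $N$ times the supremum over $(x_j,h_1,h_2)$ of the conditional probability times $\int_{\mrm{Val}} f_{1,j}f_{2,j}$, it remains to verify $N\cdot \int_{\mrm{Val}}f_{1,j}f_{2,j} \leq \exp(C(\log L)^2 L^{1/2})\,\P(\cL^\beta>L)$. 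The integral equals $\P\bigl((\cL^\beta_1(x_j),\cL^\beta_2(x_j))\in\mrm{Val}\bigr)$, bounded by $\P\bigl(\cL^\beta_1(x_j)+\cL^\beta_2(x_j) > L-(\log L)^2\bigr)$. The tent picture of Theorem~\ref{lem:fh-tent} applied separately to $\cL^\beta_1$ and $\cL^\beta_2$ near $x_j$ shows that, on a high-probability event, $\cL^\beta_1(x)+\cL^\beta_2(x)$ decays from its peak value at $x_j$ with slope $\pm 4L^{1/2}$, so $e^{\cL^\beta}\ge \int_{|x-x_j|\le L^{-1/2}}e^{\cL^\beta_1(x)+\cL^\beta_2(x)}\,\diff x \ge c L^{-1/2}\,e^{\cL^\beta_1(x_j)+\cL^\beta_2(x_j)}$ and hence $\cL^\beta \ge \cL^\beta_1(x_j)+\cL^\beta_2(x_j)-\tfrac12\log L - C$. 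Theorem~\ref{t.comparison} with $\delta=C(\log L)^2$ then yields $\P(\cL^\beta > L - C(\log L)^2)\le \exp(C(\log L)^2 L^{1/2})\,\P(\cL^\beta > L)$, and including the polynomial factor $N\le L^2$ contributes only a $C\log L$ correction absorbed into the exponent. The main obstacle is this last step: Theorem~\ref{lem:fh-ut} alone is too crude, since its $\exp(CL^{3/4})$ error already dominates the target $\exp(C(\log L)^2 L^{1/2})$; sharpness requires the precise ratio Theorem~\ref{t.comparison} together with the tent-geometry lower bound relating the sum $\cL^\beta_1(x_j)+\cL^\beta_2(x_j)$ back to $\cL^\beta$.
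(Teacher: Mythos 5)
Your proposal follows essentially the same architecture as the paper's proof: an inner/outer decomposition of the convolution integral (the outer part is exactly the content of Lemma~\ref{l.positive temp crude tf}, which the paper invokes directly rather than rederiving), a mesh decomposition by $x_j \in L^{-2}\Z$ with unconditional H\"older control to pigeonhole a good mesh point, proportionality (Lemma~\ref{l.k-point proportionality}) to force $(h_1,h_2)\in\mrm{Val}$, independence of $\cL^\beta_1,\cL^\beta_2$ to stratify over $(x_j, h_1, h_2)$, and finally Theorem~\ref{t.comparison} to compare $\P(\cL^\beta > L - C(\log L)^2)$ with $\P(\cL^\beta > L)$ picking up exactly the factor $\exp(C(\log L)^2 L^{1/2})$.

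The one place where you depart from the paper is the step relating $\cL^\beta_1(x_j)+\cL^\beta_2(x_j)$ back to $\cL^\beta$. You invoke the tent picture (Theorem~\ref{lem:fh-tent}) to say the profile near $x_j$ decays with slope $O(L^{1/2})$, so an $L^{-1/2}$-window captures a positive fraction of the partition function. This is workable but overkill: the tent statement is conditional on the value at $x_j$, so it forces you into a stratified Bayes argument that you leave implicit. The paper instead uses the purely unconditional Lemma~\ref{l.free energy individual to overall values} — just the H\"older bound plus the convolution formula over a shorter $L^{-2}$-interval — which costs $O(\log L)$ instead of $\tfrac12\log L$ in the overshoot but is still absorbed by Theorem~\ref{t.comparison}. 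Replacing your tent-picture appeal with this cleaner unconditional bound would make your final step tight and match the paper exactly.
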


\begin{proof}
We assume that $K>1$ since otherwise the conclusion follows trivially by setting $C$ large.

We observe from Lemma~\ref{l.positive temp crude tf} that, with probability at least $1-C\exp(-cK^2)$, conditionally on $\cL^\beta > L$,
\begin{align*}
\int_{[- KL^{-1/4},  KL^{-1/4}]} \exp\left((\cL^\beta_1+ \cL^\beta_2)(x)\right)\diff x
&\geq \left(1-\exp\left(-\tfrac{1}{2}K^2L^{-1/2}\right)\right)\exp(L)\\
&\geq \exp\left(L + \log\left(\tfrac{1}{2}K^2L^{-1/2}\right)\right),
\end{align*}
which implies that
\begin{align*}
\P\left(\max_{[-KL^{-1/4}, KL^{-1/4}]} \cL^\beta_1+ \cL^\beta_2 \geq L - \log L \midd \cL^\beta > L\right) >1-C\exp(-cK^2).
\end{align*}
This with unconditional local fluctuation estimates (\Cref{lem:fh-cont}) implies that
\[
\P\left( \max_{x\in L^{-2}\Z, |x|\le KL^{-1/4}}(\cL^\beta_1+\cL^\beta_2)(x)\le L- (\log L)^2\midd \cL^\beta>L\right) <C\exp(-cL^2(\log L)^4) + C\exp(-cK^2).
\]
Also, for each $|x|\le KL^{-1/4}$, 
by \Cref{t.comparison} and \Cref{l.free energy individual to overall values},
\[
\P\left((\cL^\beta_1+\cL^\beta_2)(x) \ge L+(\log L)^2 \mid \cL^\beta > L \right) < C\exp(-c(\log L)^2L^{1/2});
\]
and by proportionality (\Cref{l.k-point proportionality}), we have
\begin{multline*}
\P\left(\cL^\beta_i(x) \le s_iL - s_i^{1/2}L^{5/8}\log L, \;(\cL^\beta_1+\cL^\beta_2)(x)>L- (\log L)^2 \midd \cL^\beta > L \right) \\
<\P\left(\cL^\beta_i(x) \le s_iL - s_i^{1/2}L^{5/8}\log L \midd (\cL^\beta_1+\cL^\beta_2)(x)> L- (\log L)^2\right)\frac{\P\left( (\cL^\beta_1+\cL^\beta_2)(x)> L- (\log L)^2\right)}{\P(\cL^\beta>L)}\\<
C\exp(-c(\log L)^2L^{3/4}),    
\end{multline*}
for each $i=1, 2$, where the ratio is bounded using \Cref{cor.p.tail bound for sum} and \Cref{lem:fh-ut}.
Combining the above three estimates, we see that with probability $>1-C\exp(-cK^2)$, there exists one $x\in L^{-2}\Z, |x|\le KL^{-1/4}$, such that $\mrm{ValueCtrl}_x$ holds; where
\begin{align*}
\mrm{ValueCtrl}_x = \left\{ L- (\log L)^2<(\cL^\beta_1+\cL^\beta_2)(x) < L+(\log L)^2, \cL^\beta_i(x) > s_iL - s_i^{1/2}L^{5/8}\log L \text{ for } i=1,2\right\}.
\end{align*}
In other words, we have
\[
    \P(\mrm{ValueCtrl}^c \mid \cL^\beta > L)<C\exp(-cK^2),
\]
where $\mrm{ValueCtrl}=\cup_{x\in L^{-2}\Z, |x|\le KL^{-1/4}}\mrm{ValueCtrl}_x$.
Now we have
\[
\P( A \mid \cL^\beta>L) < \P(A\cap \mrm{ValueCtrl} \mid \cL^\beta > L)+ C\exp(-cK^2).
\]
Note that the first term in the RHS is bounded by
\[
\max_{\substack{x\in L^{-2}\Z, |x|\leq KL^{-1/4}}} \sup_{(h_1, h_2) \in \mrm{Val}} \P\left(A \midd \cL^\beta_i(x)\in (h_i, h_i+\diff h_i), i=1,2\right) \frac{\P(\mrm{ValueCtrl})}{\P(\cL^\beta > L)}.
\]
By \Cref{l.free energy individual to overall values} and \Cref{t.comparison}, we have
\begin{multline*}
\P(\mrm{ValueCtrl})\le \sum_{x\in L^{-2}\Z, |x|\leq KL^{-1/4}}\P(\mrm{ValueCtrl}_x)\\ \le \sum_{x\in L^{-2}\Z, |x|\leq KL^{-1/4}}\P( (\cL^\beta_1+\cL^\beta_2)(x)>L- (\log L)^2) < C\exp( C(\log L)^2L^{1/2} )\P(\cL^\beta>L).
\end{multline*}
Combining the last three displays leads to the conclusion.
\end{proof}

\subsection{Random location and concentration}
We next use \Cref{l.global conditioning to two} to control the location of $\pi(s)$, as well as prove \Cref{p.closeness of maximizer and polymer}.

The next result asserts that $\pi(s)$ is of order $L^{-1/4}$.
\begin{proposition}\label{p.maximizer tf}
For any $L$ large enough, $s\in [L^{-5/8}, 1-L^{-5/8}]$, and $0<K\le \log(L)$,
\begin{align*}
\P\left(|\pi(s)| > KL^{-1/4} \mid \cL^\beta > L\right) < C\exp(-cK^2).
\end{align*}
\end{proposition}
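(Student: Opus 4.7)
The plan is to combine the global-to-two-segments reduction of Lemma~\ref{l.global conditioning to two} with the quantitative tent description of Corollary~\ref{cor.tent-pr}. For $K$ bounded above by a sufficiently large constant $K_0$ the claim is immediate by choosing $C$ large, so throughout we assume $K \in [K_0, \log L]$.

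Applying Lemma~\ref{l.global conditioning to two} with parameter $K/2$ and the event $\{|\pi(s)| > KL^{-1/4}\}$, and using that $|x^{*}| \leq (K/2)L^{-1/4}$ forces $|\pi(s) - x^{*}| > (K/2)L^{-1/4}$ whenever $|\pi(s)| > KL^{-1/4}$, we obtain
\begin{multline*}
\P\bigl(|\pi(s)| > KL^{-1/4} \mid \cL^\beta > L\bigr) \leq C\exp(-cK^2) \\ + \Bigl[\max_{x^{*},\, h_1,\, h_2} \P\bigl(|\pi(s) - x^{*}| > \tfrac{K}{2}L^{-1/4} \mid E_{x^{*}, h_1, h_2}\bigr)\Bigr] \cdot \exp\bigl(C(\log L)^2 L^{1/2}\bigr),
\end{multline*}
where $E_{x^{*}, h_1, h_2} = \{\cL^\beta_i(x^{*}) \in (h_i, h_i + \mrm{d}h_i),\, i = 1, 2\}$, and the maximum is taken over $x^{*} \in L^{-2}\Z$ with $|x^{*}| \leq (K/2)L^{-1/4}$ and $(h_1, h_2) \in \mrm{Val}$.

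To bound the conditional probability, I will invoke tent control under $E_{x^{*}, h_1, h_2}$. Since $(h_1, h_2) \in \mrm{Val}$ forces $h_i \approx s_i L$, the stationarity of $\cL^\beta_i(y) + y^2/s_i$ (together with the reflection symmetry used to relate $\cL^\beta_2$ to $\fh^\beta_{1-s,1}$) allows us to translate the conditioning at $x^{*}$ into a standard tent conditioning at $0$ with effective peak height $\tilde L_i = s_i^{-1/3}(h_i + (x^{*})^2/s_i)$, and then apply Corollary~\ref{cor.tent-pr} separately to the two independent profiles $\cL^\beta_1$ and $\cL^\beta_2$ with parameters $a = (K/2)L^{-1/4}$ and $M = c_1 K^{1/2}L^{3/8}$ for a sufficiently small $c_1 > 0$. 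Summing the two tent estimates, we obtain that outside an event of probability at most $C\exp(-cKL^{3/4})$,
\[
(\cL^\beta_1 + \cL^\beta_2)(y) < h_1 + h_2 \qquad \text{for all } y \text{ with } |y - x^{*}| \geq \tfrac{K}{2}L^{-1/4}
\]
within the common tent range; for $|y - x^{*}|$ outside this range we use the unconditional parabolic-decay bound of Lemma~\ref{lem:fh-ub} to exclude such $y$ as candidates for the argmax. Since $(\cL^\beta_1 + \cL^\beta_2)(x^{*}) = h_1 + h_2$, on this good event $\pi(s)$ lies within $(K/2)L^{-1/4}$ of $x^{*}$. Combining this conditional bound with the prefactor $\exp(C(\log L)^2 L^{1/2})$ and using that $KL^{3/4}$ dominates both $K^2$ and $(\log L)^2L^{1/2}$ for $K \in [K_0, \log L]$ and $L$ large yields the stated bound $C\exp(-cK^2)$.

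The chief technical point will be the transportation of Corollary~\ref{cor.tent-pr} from conditioning at $0$ to conditioning at the nonzero point $x^{*}$. Beyond a routine scale tracking through $\hat{\fh}^\beta_{s_i, 1}(u) = s_i^{-1/3}\cL^\beta_i(s_i^{2/3}u)$, this step introduces shear corrections of the form $-2x^{*}z/s_i$ in the slope of each $\cL^\beta_i$ about $x^{*}$, which must be verified sub-dominant to the main $2L^{1/2}|z|$ tent decay uniformly over $|z| \in [(K/2)L^{-1/4},\, s(1-s)L^{1/2}]$ and $s \in [L^{-5/8}, 1 - L^{-5/8}]$. Using the constraints on $(h_1, h_2)$ in $\mrm{Val}$ and $K \leq \log L$ this is straightforward, so the V-shape of the sum $\cL^\beta_1 + \cL^\beta_2$ around $x^{*}$ with slopes $\pm 4L^{1/2}$ is preserved and the above argument goes through.
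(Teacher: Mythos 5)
Your proposal follows essentially the same high-level route as the paper: reduce the global conditioning $\cL^\beta > L$ to a two-segment peak conditioning $E_{x^*,h_1,h_2}$ via Lemma~\ref{l.global conditioning to two}, then invoke the tent estimate Corollary~\ref{cor.tent-pr} with $a \approx KL^{-1/4}$ and $M \approx K^{1/2}L^{3/8}$ to get a conditional probability $C\exp(-cKL^{3/4})$ that comfortably beats the prefactor $\exp(C(\log L)^2L^{1/2})$. The shear bookkeeping you sketch (effective peak height $s_i^{-1/3}(h_i + (x^*)^2/s_i)$, slope correction $-2x^*/s_i$, sub-dominance since $|x^*|/s_i \lesssim L^{3/8}\log L \ll L^{1/2}$) is correct in the parameter range $|x^*| \leq (K/2)L^{-1/4}$, $K\le\log L$, $s\in[L^{-5/8},1-L^{-5/8}]$.

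The genuine gap is your treatment of $y$ with $|y-x^*|$ beyond the tent range of Corollary~\ref{cor.tent-pr}. You propose to ``use the unconditional parabolic-decay bound of Lemma~\ref{lem:fh-ub} to exclude such $y$.'' This does not go through under the conditioning $E_{x^*,h_1,h_2}$. Lemma~\ref{lem:fh-ub} bounds $\cL^\beta_i(y)+y^2/s_i$ by a \emph{random} constant $H$ (with stretched-exponential tail), and the event $E_{x^*,h_1,h_2}$ forces $\cL^\beta_i(x^*)+(x^*)^2/s_i \approx s_i L$, hence forces $H \gtrsim L$; with $H$ that large the pathwise parabolic bound gives no information about whether the profile drops below $h_1+h_2$ at moderate $|y|$. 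The paper avoids this by handling the far regime \emph{before} passing to the peak conditioning: in \eqref{e.out bound} it bounds the \emph{unconditional} probability that $\cL^\beta_1 + \cL^\beta_2$ exceeds $(1-\tfrac1{10}(s_1\wedge s_2))L$ anywhere outside $(-(s_1\wedge s_2)L^{1/2}/2, (s_1\wedge s_2)L^{1/2}/2)$ by a quantity exponentially small relative to $\P(\cL^\beta>L)$, using the tail bound \Cref{cor.p.tail bound for sum} together with shear invariance and a union bound over a mesh; only then does it apply \Cref{l.global conditioning to two} and \Cref{cor.tent-pr} on the restricted annulus. To repair your argument you should insert an analogue of \eqref{e.out bound}: establish, conditionally on $\cL^\beta>L$, that the argmax lies in a window of size $O((s_1\wedge s_2)L^{1/2})$, using a Bayes-ratio comparison of an unconditional union bound against $\P(\cL^\beta>L)$ (not a pathwise bound from Lemma~\ref{lem:fh-ub}), and then run the tent comparison only inside that window.
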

The general idea to prove \Cref{p.maximizer tf} is to (1) upper bound $\max_{(-KL^{-1/4}, KL^{-1/4})^c}\cL^\beta_1+\cL^\beta_2$ conditional on $\cL^\beta_1(x)$ and $\cL^\beta_2(x)$ for some $x=O(KL^{-1/4})$, using \Cref{cor.tent-pr}; and (2) connect the conditioning $\cL^\beta>L$ and the conditioning on the values of $\cL^\beta_1(x)$ and $\cL^\beta_2(x)$, using \Cref{l.global conditioning to two}.

\begin{proof}[Proof of \Cref{p.maximizer tf}]
We assume that $K>1$ since otherwise the conclusion follows trivially by setting $C$ large.

As in the proof of \Cref{l.global conditioning to two}, we have
\begin{align}\label{e.local max bound}
\P\left(\max\cL^\beta_1+ \cL^\beta_2 \geq L - \log L \midd \cL^\beta > L\right) >1-C\exp(-c(\log L)^2).
\end{align}
We also have
\begin{multline}  \label{e.out bound}
\P\left( \max_{(-(s_1\wedge s_2)L^{1/2}/2, (s_1\wedge s_2)L^{1/2}/2)^c} \cL^\beta_1 + \cL^\beta_2 \ge (1-\tfrac{1}{10}(s_1\wedge s_2))L\right) \\ < C\exp(-c(s_1\wedge s_2)L^{3/2})\P(\cL^\beta > L ),
\end{multline}
by upper bounding the LHS using \Cref{cor.p.tail bound for sum} plus shear invariance and a union bound, and lower bounding $\P(\cL^\beta > L)$ using \Cref{lem:fh-ut}.
Combining \eqref{e.local max bound} and \eqref{e.out bound} implies that
\begin{multline}  \label{eq:pi-trans}
\P\left(|\pi(s)|> KL^{-1/4} \mid \cL^\beta > L\right)<C\exp(-c(\log L)^2)\\
+ \P\left(\max_{[-\frac{1}{2}(s_1\wedge s_2)L^{1/2}, - KL^{-1/4}]\cup[KL^{-1/4}, \frac{1}{2}(s_1\wedge s_2)L^{1/2}]} \cL^\beta_1+ \cL^\beta_2 \ge L - \log L \midd \cL^\beta > L\right).    
\end{multline}
By \Cref{cor.tent-pr} with $a=KL^{-1/4}$, for any $|x| \le KL^{-1/4}/2$, and $(h_1, h_2) \in \mrm{Val}$, we have
\begin{multline*}
\P\Bigg(\max_{[-\frac{1}{2}(s_1\wedge s_2)L^{1/2}, - KL^{-1/4}]\cup[KL^{-1/4},\frac{1}{2}(s_1\wedge s_2)L^{1/2}]} \cL^\beta_1+ \cL^\beta_2 \ge L - \log L \\ \midd \cL^\beta_i(x)\in (h_i, h_i+\diff h_i), i=1,2\Bigg)
<C\exp(-cKL^{3/4})
\end{multline*}
Thus by \Cref{l.global conditioning to two}, the second term in the RHS of \eqref{eq:pi-trans} can be bounded by $C\exp(-cK^2)$, and the conclusion follows.
\end{proof}

We next finish proving the polymer concentration result.

\begin{proof}[Proof of Proposition~\ref{p.closeness of maximizer and polymer}]
Consider the event $\mrm{Tent}$ defined by
\[
\mrm{Tent} := \left\{\max_{|z-\pi(s)| \ge ML^{-1/2}\log L} \cL^\beta_1(z) + \cL^\beta_2(z) - (L - 2L^{1/2}|z-\pi(s)|) < 0\right\}
\]
To understand this definition, recall that we expect each of $\cL^\beta_1$ and $\cL^\beta_2$ to essentially adopt tent shapes under the conditioning $\cL^\beta > L$, where the tents each have slope approximately $\pm 2L^{1/2}$. Thus the sum $\cL^\beta_1 + \cL^\beta_2$ can be expected to be a line of slope $\pm 4L^{1/2}$ up to random fluctuations; in the definition of the event, we have reduced the slope magnitude by $1/2$ for the benefit of ignoring the random fluctuation.

Now, on $\mrm{Tent}$, 
\begin{align*}
\MoveEqLeft[10]
\int_{|x-\pi(s)| \ge ML^{-1/2}\log L}\exp\left((\cL^\beta_1+ \cL^{\beta}_2)(x)\right)\,\diff x\leq \int_{|x-\pi(s)| \ge ML^{-1/2}\log L}\exp\left(L - 2L^{1/2}|x-\pi(s)|\right)\,\diff x\\
&= 2 \int_{ML^{-1/2}\log L}^\infty \exp\left(L - 2L^{1/2}x\right)\,\diff x
= L^{-1/2}\exp\left(L-2M\log L\right),
\end{align*}
which implies that, on $\mrm{Tent}$ and $\cL^\beta > L$,
\begin{align*}
\polyP\left(|\Gamma_0(s)-\pi(s)|>ML^{-1/2}\log L \right) < L^{-2M}.
\end{align*}
Thus our task is now to upper bound $\P(\mrm{Tent}^c \mid \cL^\beta > L)$.
We note that by \Cref{p.maximizer tf}, 
\begin{equation}  \label{eq:max tf bd}
\P\left(|\pi(s)| > \log(L)L^{-1/4} \mid \cL^\beta > L\right) < C\exp(-c(\log L)^2).
\end{equation}
Then it remains to upper bound $\P(\mrm{Tent}^c \cap \{|\pi(s)| \le \log(L)L^{-1/4}\} \mid \cL^\beta > L)$.

We define the event $\mrm{Tent}_+$ by
\[
\mrm{Tent}_+ := \left\{\max_{|z| \ge (s_1\wedge s_2)L^{1/2}/3} \cL^\beta_1(z) + \cL^\beta_2(z) - (L - 3L^{1/2}|z|) < 0\right\}.
\]
Similar to \eqref{e.out bound} in the proof of \Cref{p.maximizer tf}, we have 
\[
\P(\mrm{Tent}_+^c) < C\exp(-c(s_1\wedge s_2)L^{3/2})\P(\cL^\beta > L ),
\]
by upper bounding the LHS using \Cref{cor.p.tail bound for sum} plus shear invariance and a union bound, and lower bounding $\P(\cL^\beta > L)$ using \Cref{lem:fh-ut}.
Then we have $\P(\mrm{Tent}_+^c\mid \cL^\beta>L)<C\exp(-c(s_1\wedge s_2)L^{3/2})$.
It now suffices to upper bound $\P(\mrm{Tent}^c \cap \mrm{Tent}_+\cap \{|\pi(s)| \le \log(L)L^{-1/4}\} \mid \cL^\beta > L)$, and we apply \Cref{l.global conditioning to two} for this.

Take $x\in L^{-2}\Z$, $|x|\le \log(L)L^{-1/4}$ and $(h_1, h_2)\in \mrm{Val}$, and consider
\begin{equation}  \label{eq:condptent}
\P\left(\mrm{Tent}^c\cap \mrm{Tent}_+\cap \{|\pi(s)| \le \log(L)L^{-1/4}\} \midd \cL^\beta_i(x)\in (h_i, h_i+\diff h_i), i=1,2\right).    
\end{equation}
Assuming that for each $i=1, 2$, $\cL^\beta_i(x)\in (h_i, h_i+\diff h_i)$, and
\[
\max_{ML^{-1/2}\log(L)/5\le |y|\le (s_1\wedge s_2)L^{1/2}/2} \cL^\beta_i(x+y) +\tfrac{3}{2}|y|(h_i/s_i)^{1/2}\le h_i ,
\]
and $\mrm{Tent}_+\cap\{|\pi(s)|\le \log(L)L^{-1/4}\}$, 
we must have $|\pi(s)-x|<ML^{-1/2}\log (L)/5$, and $\mrm{Tent}$ holds.
Therefore by \Cref{cor.tent-pr} (with $a=ML^{-1/2}\log L$), we have that \eqref{eq:condptent} is bounded by $C\exp(-cML^{1/2}\log L)+C\exp(-cL^{3/2})$.
Then by \Cref{l.global conditioning to two}, and taking $M_0$ large enough, we have 
$\P(\mrm{Tent}^c \mid \cL^\beta > L)<C\exp(-cL^{1/2}\log L)+C\exp(-c(\log L)^2)$. Thus the conclusion follows.
\end{proof}

\section{Tightness for polymers}  \label{sec:pos_tight}

We prove the $\beta=1$ case of \Cref{p.tightness} in this section.
We also denote $\cL^\beta=\cL^\beta(0,0;0,1)$ for simplicity of notations.

The main task is to prove the following two points estimate, which refines \Cref{l.positive temp two point crude bound}.
\begin{proposition}\label{p.two-point tf positive temp full}
For all $L\ge 2$, $0< s<t <1$ and $K>0$,
\begin{align*}
\P\left(\polyP\left(|\Gamma_0(s) - \Gamma_0(t)| > K(t-s)^{1/11}L^{-1/4}\right) >L^{-K} \midd \cL^\beta>L\right) < C\exp(-c(K\wedge \log L)^2).
\end{align*}
\end{proposition}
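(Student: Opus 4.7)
The plan is to combine the crude two-point jump estimate of Lemma~\ref{l.positive temp two point crude bound}, the polymer concentration around the random backbone from Proposition~\ref{p.closeness of maximizer and polymer}, and the annealed transversal fluctuation of the backbone from Proposition~\ref{p.maximizer tf}. The argument splits into two regimes based on the size of $\Delta := t-s$ relative to $L^{-11/18}$, which is the threshold at which the Brownian-type scale $\Delta^{1/2}$ and the target scale $\Delta^{1/11}L^{-1/4}$ coincide. For $\Delta \leq L^{-11/18}$ we can use the crude estimate directly: Lemma~\ref{l.positive temp two point crude bound} applied with the boosted parameter $K_0 = \max(K, 2\sqrt{K\log L})$ produces a jump scale $K_0\Delta^{1/2}$ that stays below $K\Delta^{1/11}L^{-1/4}$, while by construction the quenched threshold $\exp(-K_0^2/2)$ lies below $L^{-K}$, and the outer tail $C\exp(-cK_0^2 L^{1/2})$ is easily $\leq C\exp(-c(K\wedge\log L)^2)$ since $L^{1/2}$ dominates.

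For $\Delta > L^{-11/18}$ we first reduce the event to one about the deterministic backbone. Applying Proposition~\ref{p.closeness of maximizer and polymer} at both $s$ and $t$ with parameter $M=K$, on a conditional event of probability at least $1-C\exp(-c(\log L)^2)$ we have $\polyP(|\Gamma_0(r)-\pi(r)| > KL^{-1/2}\log L) \leq L^{-2K}$ for each $r\in\{s,t\}$. Since $KL^{-1/2}\log L \ll K\Delta^{1/11}L^{-1/4}$ in this regime, the event $\polyP(|\Gamma_0(s)-\Gamma_0(t)| > K\Delta^{1/11}L^{-1/4}) > L^{-K}$ then forces $|\pi(s) - \pi(t)| > \tfrac{1}{2}K\Delta^{1/11}L^{-1/4}$ on the good event, reducing the task to bounding
\[
\P\left(|\pi(s) - \pi(t)| > \tfrac{1}{2}K\Delta^{1/11}L^{-1/4} \midd \cL^\beta > L\right) < C\exp(-c(K\wedge \log L)^2).
\]

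To establish this annealed bound, I will use the reverse implication: on the event $\{|\pi(s)-\pi(t)| > \tfrac{1}{2}K\Delta^{1/11}L^{-1/4}\}$, polymer concentration (with a modest parameter) forces $\polyP(|\Gamma_0(s) - \Gamma_0(t)| > \tfrac{1}{4}K\Delta^{1/11}L^{-1/4}) \geq \tfrac{1}{2}$, which by Markov's inequality is controlled by the annealed probability, bounded in turn by Lemma~\ref{l.positive temp two point crude bound} applied with $K_0 = K\Delta^{-9/22}L^{-1/4}/4$. The hard part will be $\Delta$ of order one, where the Gaussian factor $\exp(-K_0^2/2)$ in the crude bound degrades because $K_0$ becomes too small; the remedy is to supplement with the annealed transversal bound of Proposition~\ref{p.maximizer tf}, which gives $|\pi(s)|, |\pi(t)| \leq K_1 L^{-1/4}$ with tail $C\exp(-cK_1^2)$, and to choose $K_0, K_1$ so that the two contributions add up to the claimed tail across the full range $\Delta \in (L^{-11/18}, 1)$. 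The specific exponent $1/11$ emerges from this crossover between the Brownian-type $(t-s)^{1/2}$ scale from Lemma~\ref{l.positive temp two point crude bound} and the $L^{-1/4}$ transversal scale from Proposition~\ref{p.maximizer tf}, joined through polymer concentration.
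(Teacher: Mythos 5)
Your Case 1 argument has a subtle issue: with the boosted parameter $K_0 = \max(K, 2\sqrt{K\log L})$, the constraint $K_0\Delta^{1/2}\leq K\Delta^{1/11}L^{-1/4}$ fails for $\Delta$ near the threshold $L^{-11/18}$ whenever $K \lesssim \log L$; at $\Delta = L^{-11/18}$ it collapses to $K_0 \leq K$, incompatible with $K_0 = 2\sqrt{K\log L}$ unless $K\geq 4\log L$. The paper sidesteps this by using the smaller threshold $2L^{-5/8}$ and the exact parametrization $K'=K\Delta^{-9/22}L^{-1/4}$: for $\Delta < 2L^{-5/8}$ one has $(K')^2/2 \gtrsim K^2 L^{1/88}$, which dominates $K\log L$ for $L$ large uniformly in $K$.

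More significantly, the Markov-and-$\geq\tfrac12$ route for the annealed backbone bound in Case 2 does not close. On the backbone event, Proposition~\ref{p.closeness of maximizer and polymer} indeed forces $\polyP\bigl(|\Gamma_0(s)-\Gamma_0(t)| > \tfrac14 K\Delta^{1/11}L^{-1/4}\bigr) \geq \tfrac12$, so one could try $\P(\polyP \geq \tfrac12) \leq 2\E[\polyP]$. But estimating $\E[\polyP]$ from Lemma~\ref{l.positive temp two point crude bound} gives $\E[\polyP] \leq \exp(-K_0^2/2) + C\exp(-cK_0^2 L^{1/2})$, and for $\Delta$ of order one, $K_0 = \tfrac14 K\Delta^{-9/22}L^{-1/4} \approx \tfrac14 KL^{-1/4}$ is tiny, so $\exp(-K_0^2/2) \approx 1$ and the bound is vacuous. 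The proposed remedy via Proposition~\ref{p.maximizer tf} — controlling $|\pi(s)|, |\pi(t)|$ separately — produces a tail of roughly $\exp(-cK^2\Delta^{2/11})$, which matches the target $\exp(-c(K\wedge\log L)^2)$ only when $\Delta$ is bounded below by a constant. Thus the two routes cover $\Delta \lesssim L^{-11/18}$ and $\Delta \gtrsim 1$ but leave a genuine gap in between (e.g.\ $\Delta \in [L^{-1/2}, 1/2]$ with $K$ of constant order). The paper's Proposition~\ref{p.two-point tf positive temp} avoids this by applying the crude lemma at its natural, near-one quenched threshold $\exp(-K_0^2/2)$ rather than $\tfrac12$: on the backbone event one has $\polyP(\cdot) \geq 1 - 2L^{-2K} > \exp(-K_0^2/2)$, and the crucial factor $L^{1/2}$ in the outer tail $C\exp(-cK_0^2 L^{1/2}) \leq C\exp(-cK^2)$ then carries the entire estimate, with no detour through a fixed threshold.

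Finally, a smaller but real scope issue: Propositions~\ref{p.closeness of maximizer and polymer} and \ref{p.maximizer tf} require $s\in[L^{-5/8}, 1-L^{-5/8}]$, but your Case 2 (where $\Delta > L^{-11/18}$) can have $s < L^{-5/8}$ or $t > 1-L^{-5/8}$. The paper reduces these boundary regions separately via the crude one-point bound (Lemma~\ref{l.positive temp crude tf}) before invoking the backbone machinery on the interior interval.
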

Compared to the zero temperature setting (i.e., \Cref{p.zero temperature two-point}), we weaken our demand to a H\"older $\frac{1}{11}-$ bound instead of $\frac{1}{2}-$, due to technical reasons which will be clear from its proof.

\Cref{p.two-point tf positive temp full} immediately implies the following.
\begin{corollary}\label{c.full interval two-point for positive temperature}
For all $L\ge 2$, $K>0$ and $0<s<t<1$,
\begin{align*}
\E\left[\polyP\left(|\Gamma_0(s)-\Gamma_0(t)| > K(t-s)^{1/11}L^{-1/4}\right)\midd \cL^\beta>L\right] < C\exp(-c(K\wedge \log L)^2).
\end{align*}
\end{corollary}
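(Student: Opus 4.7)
The plan is straightforward: deduce the corollary directly from Proposition~\ref{p.two-point tf positive temp full} via a standard splitting of the expectation around the threshold $L^{-K}$. Let $X := \polyP\left(|\Gamma_0(s)-\Gamma_0(t)| > K(t-s)^{1/11}L^{-1/4}\right)$. Since $X\in[0,1]$, I would write
\begin{align*}
\E[X \mid \cL^\beta > L] &= \E\bigl[X\one_{X > L^{-K}} \mid \cL^\beta > L\bigr] + \E\bigl[X\one_{X \leq L^{-K}} \mid \cL^\beta > L\bigr] \\
&\leq \P\bigl(X > L^{-K} \midd \cL^\beta > L\bigr) + L^{-K}.
\end{align*}
The first term is at most $C\exp(-c(K\wedge \log L)^2)$ by Proposition~\ref{p.two-point tf positive temp full}.

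For the second term, I would verify that $L^{-K} = \exp(-K\log L)$ is itself bounded by $C\exp(-c(K\wedge \log L)^2)$, which is a two-case inspection: if $K \leq \log L$ then $(K\wedge \log L)^2 = K^2 \leq K \log L$, so $L^{-K}\leq \exp(-(K\wedge\log L)^2)$; if $K > \log L$ then $(K\wedge \log L)^2 = (\log L)^2 \leq K\log L$, giving the same bound. Combining the two contributions yields the claimed estimate.

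There is no serious obstacle here because all the real work is done by Proposition~\ref{p.two-point tf positive temp full}, which the statement of the corollary already foreshadows as ``immediate''. The only care needed is the elementary verification that the threshold $L^{-K}$ is compatible with the desired $\exp(-c(K\wedge \log L)^2)$ decay; choosing this threshold is natural because it matches the quantifier structure appearing in the proposition.
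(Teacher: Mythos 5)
Your argument is correct and is exactly the implicit deduction the paper intends when it states that Proposition~\ref{p.two-point tf positive temp full} ``immediately implies'' the corollary: split the bounded-by-one random variable $X=\polyP(\cdot)$ at the threshold $L^{-K}$, apply the proposition to the first piece, and check the elementary inequality $L^{-K}=\exp(-K\log L)\le\exp(-(K\wedge\log L)^2)$ for the second. Nothing further is needed.
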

\begin{rem}  \label{rem:backbone}
As indicated in the introduction, from these tightness results and the polymer concentration of \Cref{p.closeness of maximizer and polymer}, one can define a random backbone $\tilde\pi$, by e.g., taking $\tilde\pi(s)=\pi(s)$ for each $s\in [0, 1]\cap L^{-10}\Z$, and linearly interpolate between them.
Then with probability $>1-C\exp(-c(\log L)^2)$, $\Gamma_0$ is within distance $L^{-1/2}(\log L)^2$ from $\tilde\pi$ at each $s\in [0, 1]\cap L^{-10}\Z$, by \Cref{p.closeness of maximizer and polymer}; and between any two points, $\Gamma_0$ can deviate at most $L^{-1}$, by \Cref{c.full interval two-point for positive temperature}.
Therefore (with the same probability) $\Gamma_0$ is within distance $L^{-1/2}(\log L)^2$ from the backbone $\tilde\pi$ throughout $[0,1]$.
\end{rem}

\begin{proof}[Proof of the $\beta=1$ case of \Cref{p.tightness}]
We recall that tightness on $\mc C([0,1], \R)$ follows by establishing a uniform modulus of continuity as well as one-point tightness. Since our processes are fixed at 0 at heights 0 and 1, the former also implies the latter, so we only need to establish a uniform modulus of continuity bound.

With \Cref{c.full interval two-point for positive temperature}, via a union bound over all $s=2^{-i}j$, $t=2^{-i}(j+1)$, $0< s<t<1$, with $i, j \in \Z$, we have the following. For any $K>0$, with probability $>1-C\exp(-c(K\wedge \log L)^2)$ conditional on $\cL^\beta>L$, 
\[
\sup_{t_0\le s<t\le 1-t_0}|\Gamma_0(s)-\Gamma_0(t)|(t-s)^{-1/12} \le KL^{-1/4}.
\]
Note that the exponent is $\frac{1}{12}$ in contrast to the $\frac{1}{11}$ present in Corollary~\ref{c.full interval two-point for positive temperature}; this is simply so that the union bound over all the scales can be performed. This completes the proof.
\end{proof}

To prove \Cref{p.two-point tf positive temp full}, the key idea is to upgrade \Cref{l.positive temp two point crude bound} using the concentration result \Cref{p.closeness of maximizer and polymer}.
It allows us to essentially say that (with high probability under $\P$) if the law of $\Gamma_0(s)$ under $\polyP$ assigns a probability greater than $\varepsilon$ (for a carefully chosen small $\varepsilon$) to the event of large transversal fluctuation, it must assign a close to $1$ probability to the event of having at least say half of the same transversal fluctuation. The latter event's probability is bounded by \Cref{l.positive temp two point crude bound}.
Note that in this argument it is actually not important that the interval around which most of the mass is spread is centered around $\pi(s)$, only that the interval is small.

We start by first upgrading the one-point estimate, at least $L^{-5/8}$ away from the ends.
\begin{proposition}\label{p.positive temp one-point tf}
For all $K>0$, $L\ge 2$, and $s\in[L^{-5/8}, 1-L^{-5/8}]$, 
\begin{align*}
\P\left(\polyP\left(|\Gamma_0(s)| \geq KL^{-1/4}\right) > L^{-K} \midd \cL^{\beta} > L\right) < C\exp(-c(K\wedge \log L)^2).
\end{align*}
\end{proposition}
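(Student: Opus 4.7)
The plan is to deduce this proposition as an essentially immediate consequence of the polymer concentration result \Cref{p.closeness of maximizer and polymer} together with the transversal bound on the random backbone from \Cref{p.maximizer tf}. The underlying picture is this: under the conditioning $\cL^\beta > L$, the quenched polymer measure is (with high $\P$-probability) concentrated in an interval of radius $O(L^{-1/2}\log L)$ around $\pi(s)$, while $\pi(s)$ itself lies within an interval of radius $O(L^{-1/4})$ of the origin. Since $L^{-1/2}\log L \ll L^{-1/4}$, the event $\{|\Gamma_0(s)|\geq KL^{-1/4}\}$ is therefore forced to coincide (modulo tiny corrections) with the event $\{|\pi(s)|\gtrsim KL^{-1/4}\}$, whose $\P$-probability is controlled.

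More concretely, I first reduce to the case $K\geq K_0$ for some absolute constant $K_0>2M_0$, since for $K< K_0$ the claimed bound is trivial by choosing $C$ large. Then I set
\[
K_1 = \min\!\bigl(K/2,\, \log L\bigr), \qquad M = K/2,
\]
so in particular $M>M_0$ and $K_1 \leq \log L$, satisfying the hypotheses of \Cref{p.maximizer tf} and \Cref{p.closeness of maximizer and polymer} respectively. Call $\mathsf A = \{|\pi(s)|\leq K_1 L^{-1/4}\}$ and $\mathsf B = \{\polyP(|\Gamma_0(s)-\pi(s)| > ML^{-1/2}\log L)\leq L^{-2M}\}$. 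By those two propositions,
\[
\P(\mathsf A^c \mid \cL^\beta>L) \leq C\exp(-cK_1^2), \qquad \P(\mathsf B^c \mid \cL^\beta>L) \leq C\exp(-c(\log L)^2),
\]
and in either case (whether $K\leq 2\log L$ or $K>2\log L$) the first bound is at most $C\exp(-c(K\wedge \log L)^2)$.

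Then, on $\mathsf A\cap \mathsf B$, for $L$ large enough so that $\log L \leq L^{1/4}/2$, we have
\[
ML^{-1/2}\log L \ =\ \tfrac{1}{2}KL^{-1/2}\log L\ \leq\ \tfrac{1}{2}KL^{-1/4}\ \leq\ (K - K_1)L^{-1/4},
\]
using $K_1\leq K/2$. Consequently any $x$ with $|x|\geq KL^{-1/4}$ satisfies $|x-\pi(s)|\geq (K-K_1)L^{-1/4} \geq ML^{-1/2}\log L$, so
\[
\polyP\bigl(|\Gamma_0(s)|\geq KL^{-1/4}\bigr)\ \leq\ \polyP\bigl(|\Gamma_0(s)-\pi(s)|> ML^{-1/2}\log L\bigr)\ \leq\ L^{-2M}\ =\ L^{-K}.
\]
Thus the event $\{\polyP(|\Gamma_0(s)|\geq KL^{-1/4}) > L^{-K}\}$ is contained in $\mathsf A^c \cup \mathsf B^c$, and a union bound delivers the claimed $C\exp(-c(K\wedge \log L)^2)$ bound.

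The argument is essentially bookkeeping on top of the two inputs, so there is no genuine obstacle; the only point requiring care is the case split around $K \lesssim \log L$ versus $K\gtrsim \log L$, needed because \Cref{p.maximizer tf} is only applicable for $K_1\leq \log L$, which is what forces the appearance of $K\wedge\log L$ on the right-hand side of the bound (rather than simply $K$). All other conditions, in particular $M>M_0$ and $ML^{-1/2}\log L \ll KL^{-1/4}$, are satisfied uniformly in the regime of interest once $L$ (and hence $K$ via the reduction step) is taken sufficiently large.
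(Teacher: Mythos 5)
Your proof is correct, but it takes a somewhat different route than the paper's. You cite \Cref{p.maximizer tf} to bound $\P(|\pi(s)| > K_1 L^{-1/4} \mid \cL^\beta > L)$ directly, and combine it with the concentration bound \Cref{p.closeness of maximizer and polymer}. The paper instead uses only \Cref{p.closeness of maximizer and polymer} together with the cruder shear-invariance estimate \Cref{l.positive temp crude tf}: it observes that, on the high-probability event where the quenched measure concentrates near $\pi(s)$, the implication chain
\[
\polyP\bigl(|\Gamma_0(s)|\geq KL^{-1/4}\bigr) > L^{-2K} \;\implies\; |\pi(s)|\geq\tfrac12 KL^{-1/4} \;\implies\; \polyP\bigl(|\Gamma_0(s)|\geq\tfrac14 KL^{-1/4}\bigr) \geq 1 - L^{-2K}
\]
holds, and the last event is then bounded by \Cref{l.positive temp crude tf}. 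In effect, the paper uses the concentration of the quenched measure both to \emph{lower} and to \emph{upper} bound $\polyP$, and never explicitly needs the $\pi(s)$ transversal estimate; your version makes the intermediate statement about $\pi(s)$ explicit by citing a lemma the paper proved separately. Both are valid since \Cref{p.maximizer tf} appears earlier in the paper (and is itself used inside the proof of \Cref{p.closeness of maximizer and polymer}, so no circularity arises); the paper's route is slightly more economical in its dependencies, while yours is more direct in its bookkeeping and avoids the implication-chain gymnastics.

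One small point worth tightening: you conclude $|x-\pi(s)|\geq (K-K_1)L^{-1/4}\geq ML^{-1/2}\log L$, but \Cref{p.closeness of maximizer and polymer} controls the event where $\polyP(|\Gamma_0(s)-\pi(s)| > ML^{-1/2}\log L)$ is large, with a \emph{strict} inequality. You should make one of the two intermediate inequalities strict (e.g.~note that for $L$ large enough $\log L < L^{1/4}$ strictly, or absorb a factor of $2$ into $M$). This is cosmetic and easily repaired.
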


\begin{proof}
We assume that $K$ is large enough since otherwise we just choose $C$ large to make the estimate hold.
We also assume that $L$ is large enough since otherwise the conclusion follows from \Cref{l.positive temp crude tf}.

Let $X(s,K) = \polyP(|\Gamma_0(s)| \geq KL^{-1/4})$. By Proposition~\ref{p.closeness of maximizer and polymer}, with probability $\ge 1-C\exp(-c(\log L)^2)$ conditional on $\cL^{\beta} > L$ we have
\begin{align*}
X(s,K)
&\leq \polyP\left(|\Gamma_0(s) - \pi(s)| \geq KL^{-1/2}\log L\right) + \one_{|\pi(s)| \geq \frac{1}{2}KL^{-1/4}}\\
&\leq L^{-2K} + \one_{|\pi(s)| \geq \frac{1}{2}KL^{-1/4}},
\end{align*}
and
\begin{align*}
X(s,\tfrac{1}{4}K)
&\geq \polyP\left(|\Gamma_0(s) - \pi(s)| \leq KL^{-1/2}\log L\right)\one_{|\pi(s)|\geq \frac{1}{2}KL^{-1/4}}\\
&\geq (1-L^{-2K})\one_{|\pi(s)|\geq \frac{1}{2}KL^{-1/4}}.
\end{align*}
Here we used that $KL^{-1/2}\log L<\frac{1}{4}KL^{-1/4}$.
These two bounds show that
\begin{align*}
X(s,K) > L^{-2K} \implies |\pi(s)| \geq \tfrac{1}{2}KL^{-1/4} \implies X(s,\tfrac{1}{4}K) \geq 1-L^{-2K},
\end{align*}
and thus
\[
\P\left(X(s,K) > L^{-2K}\midd \cL^{\beta}>L \right)< \P\left(X(s,\tfrac{1}{4}K) \geq 1-L^{-2K}\midd \cL^{\beta}>L\right)+C\exp(-c(\log L)^2).
\]
By Lemma~\ref{l.positive temp crude tf}, $\P\left(X(s,\tfrac{1}{4}K) > \exp(-K^2L^{-1/2}/32) \midd \cL^{\beta}>L\right) < C\exp(-cK^2)$. This completes the proof.
\end{proof}

Next, we turn to the two-point estimates. The proof strategies are analogous to what we just saw for the one-point: we combine a cruder estimate coming from shear invariance with the information that the polymer measure is localized on a smaller scale. We initially get the following two-point estimate, which is under the additional constraints that the two points are at least $L^{-5/8}$ away from each other, and $L^{-5/8}$ away from the ends. 
To get \Cref{p.two-point tf positive temp full} from it, it turns out that the cruder estimate \Cref{l.positive temp two point crude bound} is sufficient, since we just prove an H\"older $\frac{1}{10}-$ bound instead of $\frac{1}{2}-$.
\begin{proposition}\label{p.two-point tf positive temp}
For all $K>0$, $L\ge 2$, and $L^{-5/8}\le s<t\le 1-L^{-5/8}$, $t-s\ge L^{-5/8}$,
\begin{align*}
\P\left(\polyP\left(|\Gamma_0(s) - \Gamma_0(t)| > K(t-s)^{1/3}L^{-1/4}\right) > L^{-K} \midd \cL^\beta>L\right) < C\exp(-c(K\wedge \log L)^2).
\end{align*}
\end{proposition}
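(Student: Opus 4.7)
The plan is to mirror the proof of the one-point analog Proposition~\ref{p.positive temp one-point tf}, using the polymer concentration statement Proposition~\ref{p.closeness of maximizer and polymer} at both times $s$ and $t$ as a bootstrap, together with the crude two-point shear-invariance estimate Lemma~\ref{l.positive temp two point crude bound} in place of Lemma~\ref{l.positive temp crude tf}. As in Proposition~\ref{p.positive temp one-point tf}, one may assume $K$ and $L$ are large, absorbing the remaining range into the constants $C$.

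Write $Y(s,t,K)=\polyP(|\Gamma_0(s)-\Gamma_0(t)|>K(t-s)^{1/3}L^{-1/4})$. Since $s,t\in[L^{-5/8},1-L^{-5/8}]$, Proposition~\ref{p.closeness of maximizer and polymer} (applied with parameter $K$ at each of $s$ and $t$) gives that, with conditional probability at least $1-C\exp(-c(\log L)^2)$ given $\cL^\beta>L$,
\[
\polyP\bigl(|\Gamma_0(u)-\pi(u)|>KL^{-1/2}\log L\bigr)\le L^{-2K},\quad u\in\{s,t\}.
\]
On this event, if $Y(s,t,K)>3L^{-2K}$, a union bound yields a set of $\polyP$-mass at least $L^{-2K}$ on which simultaneously $|\Gamma_0(s)-\Gamma_0(t)|>K(t-s)^{1/3}L^{-1/4}$ and $|\Gamma_0(u)-\pi(u)|\le KL^{-1/2}\log L$ for both $u=s,t$. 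The spacing $t-s\ge L^{-5/8}$ guarantees $KL^{-1/2}\log L\ll K(t-s)^{1/3}L^{-1/4}$ for $L$ large, so this forces the environment-level bound $|\pi(s)-\pi(t)|\ge \tfrac12 K(t-s)^{1/3}L^{-1/4}$. Feeding this back through the concentration statement once more yields $Y(s,t,K/4)\ge 1-2L^{-2K}$.

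It remains to show that the latter conclusion is unlikely. I invoke Lemma~\ref{l.positive temp two point crude bound} with parameter $K'=\tfrac14 K(t-s)^{-1/6}L^{-1/4}$, chosen so that $K'(t-s)^{1/2}=\tfrac14 K(t-s)^{1/3}L^{-1/4}$. For large $K$ one has $1-2L^{-2K}>\exp(-\tfrac12 K'^2)=\exp(-\tfrac{1}{32}K^2(t-s)^{-1/3}L^{-1/2})$, so the event $Y(s,t,K/4)\ge 1-2L^{-2K}$ falls inside the exceptional event of Lemma~\ref{l.positive temp two point crude bound}, whose probability is at most $C\exp(-cK'^2 L^{1/2})=C\exp(-cK^2(t-s)^{-1/3})\le C\exp(-cK^2)$ (using $t-s\le 1$). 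Combining the $C\exp(-c(\log L)^2)$ error from the concentration step with this $C\exp(-cK^2)$ yields the advertised $C\exp(-c(K\wedge\log L)^2)$ bound.

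The main bookkeeping obstacle is ensuring that the three scales remain compatible throughout the bootstrap: the polymer-concentration window $L^{-1/2}\log L$ must be strictly smaller than the target deviation $K(t-s)^{1/3}L^{-1/4}$ (this is exactly where $t-s\ge L^{-5/8}$ is used), and the threshold $1-2L^{-2K}$ must exceed $\exp(-\tfrac12 K'^2)$ to activate Lemma~\ref{l.positive temp two point crude bound}. Neither is automatic for small $K$ or for $s,t$ near the endpoints, which is precisely why the statement explicitly excludes those regimes; the wider Proposition~\ref{p.two-point tf positive temp full} will later recover them via a separate argument that gives up a H\"older exponent.
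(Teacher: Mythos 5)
Your proof is correct and follows essentially the same route as the paper's: the same bootstrap from Proposition~\ref{p.closeness of maximizer and polymer} (polymer concentration around $\pi$ at both heights $s$ and $t$) combined with the crude shear-invariance bound Lemma~\ref{l.positive temp two point crude bound} applied at the rescaled parameter $K' = \tfrac{1}{4}K(t-s)^{-1/6}L^{-1/4}$. The only cosmetic differences are the use of the threshold $3L^{-2K}$ instead of $2L^{-2K}$, and spelling out the union-bound / triangle-inequality step slightly more explicitly than the paper does; the core chain of implications $Y(s,t,K) > 3L^{-2K} \implies |\pi(s)-\pi(t)| \gtrsim K(t-s)^{1/3}L^{-1/4} \implies Y(s,t,K/4) \geq 1-2L^{-2K}$ matches the paper's.
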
 
We note that the exponent $1/3$ can be replaced by any number $<2/5$, due to the exponent of $5/8$ (we only need that their product is $<1/4$).

\begin{proof}[Proof of \Cref{p.two-point tf positive temp}]
This proof is very similar to that of \Cref{p.positive temp one-point tf}.
Again, we can assume that $L, K$ are large enough.
Let $X(s,t, K) = \polyP\left(|\Gamma_0(s)-\Gamma_0(t)| > K(t-s)^{1/3}L^{-1/4}\right)$. By Proposition~\ref{p.closeness of maximizer and polymer}, with probability $\ge 1-C\exp(-c(\log L)^2)$ conditional on $\cL^{\beta} > L$,
\begin{align*}
X(s,t, K)
&\leq \polyP\left(\max_{r\in\{s,t\}} |\Gamma_0(r) - \pi(r)| \geq KL^{-1/2}\log L\right) + \one_{|\pi(s)-\pi(t)| \geq \frac{1}{2}K(t-s)^{1/3}L^{-1/4}}\\
&\leq 2L^{-2K} + \one_{|\pi(s)-\pi(t)| \geq \frac{1}{2}K(t-s)^{1/3}L^{-1/4}}
\end{align*}
and
\begin{align*}
X(s,t, \tfrac{1}{4}K)
&\geq \polyP\left(\max_{r\in\{s,t\}} |\Gamma_0(r) - \pi(r)| \leq KL^{-1/2}\log L\right)\one_{|\pi(s)-\pi(t)| \geq \frac{1}{2}(t-s)^{1/3}KL^{-1/4}}\\
&\geq (1-2L^{-2K})\one_{|\pi(s)-\pi(t)| \geq \frac{1}{2}(t-s)^{1/3}KL^{-1/4}}.
\end{align*}
Here we used that $KL^{-1/2}\log L < \frac{1}{4}K(t-s)^{1/3}L^{-1/4}$ since $t-s>L^{-5/8}$.
These imply that
\begin{align*}
X(s,t,K) > 2L^{-2K} \implies |\pi(s)-\pi(t)| \geq \tfrac{1}{2}K(t-s)^{1/3}L^{-1/4} \implies X(s,t,\tfrac{1}{4}K) \geq 1-2L^{-2K}.
\end{align*}
Besides, by \Cref{l.positive temp two point crude bound} we have
$\P\left(X(s,t,\frac{1}{4}K) > \exp(-K^2L^{-1/2}/32) \midd \cL^{\beta}>L\right) < C\exp(-cK^2)$. These complete the proof.
\end{proof}

\begin{proof}[Proof of \Cref{p.two-point tf positive temp full}]
We assume that $K$ is large enough by taking $C$ large (if necessary), and assume that $L$ is large enough by applying \Cref{l.positive temp two point crude bound} otherwise.

For the case where $t-s< 2L^{-5/8}$,
by \Cref{l.positive temp two point crude bound}, we can bound the conditional probability in the statement of \Cref{p.two-point tf positive temp full} by
\[
C\exp(-cK^2(t-s)^{-9/11}) < C\exp(-cK^2),
\]
since $\exp(-\frac{1}{2} K^2(t-s)^{-9/11}L^{-1/2}) \leq \exp(-cK^2L^{1/88}) < L^{-K}$.

For the case where $t-s\ge 2L^{-5/8}$, if $s<L^{-5/8}$, for $s_*=s$ or $L^{-5/8}$, we apply \Cref{l.positive temp crude tf} to get that
\begin{align*}
\P\left(\polyP\left[|\Gamma_0(s_*)| \geq K (s_*(1-s_*))^{1/11}L^{-1/4}\right] > L^{-3K} \midd \cL^{\beta} > L\right) < C\exp(-cK^2s_*^{-9/11}),
\end{align*}
since $\exp(-\frac{1}{2} K^2(s_*(1-s_*))^{-9/11}L^{-1/2}) < L^{-3K}$.
Therefore we have
\begin{align*}
\P\left(\polyP\left(|\Gamma_0(s) - \Gamma_0(L^{-5/8})| > K(L^{-5/8}-s)^{1/11}L^{-1/4}\right) >L^{-2K} \midd \cL^\beta>L\right) < C\exp(-cK^2).
\end{align*}
Similarly, if $t>1-L^{-5/8}$, we have
\begin{align*}
\P\left(\polyP\left(|\Gamma_0(t) - \Gamma_0(1-L^{-5/8})| > K(t-1+L^{-5/8})^{1/11}L^{-1/4}\right) >L^{-2K} \midd \cL^\beta>L\right) < C\exp(-cK^2).
\end{align*}
Thus we have reduced the problem of $s, t$ to the same problem of $s\vee L^{-5/8}, t\wedge (1-L^{-5/8})$, and that follows from \Cref{p.two-point tf positive temp}.
\end{proof}

The remaining sections are devoted to proving finite dimensional convergence.

\section{Estimates on free energies under conditionings}\label{s.free energy under conditioning}

As indicated in \Cref{iop}, our strategy of deducing finite dimensional limit heavily relies on realizing the conditioning on $\cL^\beta(0,0;0,1)$ as conditioning on the existence of peaks at certain heights at certain locations. As such, we will often need estimates on the probability of the existence of such peaks given the global conditioning $\cL^\beta(0,0;0,1)$, or on the probability of the latter conditioned on the former.
One such estimate (\Cref{l.inf control}) has appeared before, and in this section we provide some more refined ones.

Let us introduce the setup, and some notations needed to state these estimates. 
We will work under a setting similar to that in \Cref{sec:propsum}.
Namely, we consider $(s_1, \cdots, s_{k-1})\in \rDe_{k-1}([0,1])$ and $\vec x\in\R^{k-1}$ for $k\in \N$, and denote $s_0=x_0=x_{k}=0$ and $s_k=1$.
All the constants within this section can depend on $k$ and $(s_1, \cdots, s_{k-1})$.
For the convenience of notations we adopt the shorthand $\cL^\beta = \cL^\beta(0,0;0,1)$ and $\cL^\beta_i = \cL^\beta(s_{i-1}, x_{i-1}; s_i, x_i)$ for each $1\le i \le k$.
We also write $\vec\cL^\beta$ for the vector $\{\cL^\beta_i\}_{i=1}^k$.

Below we use $x\approx y$ to denote that $x\in y+[0, e^{-L}]$; and for any vector $\vec h\in \R^k$,  $\smash{\vec\cL^\beta\approx \vec h}$ is the event where $\cL^\beta_i\approx h_i$ for each $i$.
A main reason for introducing this notation is that we will need to invoke coalescence or Brownian comparison statements (\Cref{prop:dp-tent-coal} and \Cref{prop:dp-tent-bcomp}) which do not allow conditioning on exact values.

Take $\vec a=(a_0,\ldots, a_k)\in [-L^{5/16}\log L, L^{5/16}\log L]^{k-1}$.
For each $i=1,\ldots, k-1$, we write  
\[\pi^*(s_i) = \argmax_{x} \cL^\beta(a_{i-1}, s_{i-1}; x, s_i) + \cL^\beta(x, s_i; a_{i+1}, s_{i+1}).\]
This differs from $\pi$ because $\pi(s_i)$ is defined as the maximizer of $\cL^\beta(0,0; x, s_i) + \cL^\beta(x, s_i; 0, 1)$, while $\pi^*(s_i)$ is the maximizer of profiles within certain time strips which are disjoint for different $i$; this disjointness gives some independence which will be useful for the arguments.

We next introduce useful notations generalizing the maximum to positive temperature: for $f:\R^{k-1}\to \R$ and a set $I\subseteq \R^{k-1}$,
\begin{align}\label{e.maxbeta definition}
\maxbeta_{\vec x\in I} f = \begin{cases}
\displaystyle\log \int_{I} \exp\bigl(f(x_1, \ldots, x_{k-1})\bigr)\, \diff x_1 \cdots \diff x_{k-1} & \beta = 1\\
\displaystyle\max_{\vec x\in I} f(x_1 \cdots \diff x_{k-1}) & \beta = \infty.
\end{cases}
\end{align}
We further define the restricted free energy $\cL^\beta[\vec y,R]$ for $R>0$ and $\vec y\in \R^{k-1}$ by
\[
\cL^\beta[\vec y,R] = \maxbeta_{\|\vec x- \vec y\|_\infty\le R} \sum_{i=1}^{k} \cL^\beta_i.
\]

Let $r_{\beta=1} = 1$ and $r_{\beta=\infty}=L^{-1/2}$. These are the fluctuation scales of the total free energy conditional on the peak heights and locations. 
Let $w_{\beta=1} = L^{-1/2}$ and $w_{\beta=\infty}=L^{-1}$. These are the window sizes around each $\pi^*(s_i)$ that would affect $\cL^\beta$, under the upper tail.

The next two statements record the above-mentioned complementary estimates and will be the goal of this section.
\begin{proposition}\label{p.k-point part to whole free energy}
Take any large enough $L, M$, and $\vec h\in \R^k$, $\vec a\in[-L^{5/16}\log L, L^{5/16}\log L]^{k+1}$, $\vec x\in[-L^{5/16}\log L, L^{5/16}\log L]^{k-1}$.
Denote $H=\sum_{i=1}^kh_i$ and assume that 
$H>L/2$ and each $\left|h_i-(s_i-s_{i-1})H\right|<L^{8/9}$. Then we have
\begin{multline}  \label{eq:kppwf1}
\P\bigg(\cL^\beta > H - (k-1)\beta^{-1}\log(2H^{1/2}) + Mr_\beta,\\ \max_{i=1, \ldots, k-1}|\pi^*(s_i) -x_i| \leq w_\beta \midd \vec\cL^\beta\approx \vec h\bigg)<C\exp\left(-cM^2L^{1/2}r_\beta\right) + C\exp(-cL^{3/2}),
\end{multline}
\begin{multline}  \label{eq:kppwf2}
\P\bigg(\cL^\beta[\vec x, L^{-1/2}(\log L)^2] > H - (k-1)\beta^{-1}\log(2H^{1/2}) + Mr_\beta,\\ \max_{i=1, \ldots, k-1}|\pi^*(s_i) -x_i| \leq w_\beta \midd \vec\cL^\beta\approx \vec h\bigg)
> c\exp\left(-CM^2L^{1/2}r_\beta\right)-C\exp(-cL^{3/2}).
\end{multline}
Moreover, we also have
\begin{equation}   \label{eq:kppwf3}
\P\left(\cL^\beta > H - (k-1)\beta^{-1}\log(2H^{1/2}) + M \midd \vec\cL^\beta\approx \vec h\right)<C\exp\left(-cML^{1/2}\right).
\end{equation}
\end{proposition}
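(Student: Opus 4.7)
By the composition formula,
\begin{equation*}
\cL^\beta = \maxbeta_{\vec y \in \R^{k-1}} \sum_{i=1}^k \cL^\beta(y_{i-1}, s_{i-1}; y_i, s_i)
\end{equation*}
with $y_0 = y_k = 0$, and the conditioning $\vec\cL^\beta \approx \vec h$ fixes the integrand at $\vec y = \vec x$ to equal $H$. The plan is to localize $\vec y$ to the window $\|\vec y - \vec x\|_\infty \le L^{-1/2}(\log L)^2$ and analyze the integrand there explicitly. The localization, together with the maximizer constraint $\max_i |\pi^*(s_i) - x_i| \le w_\beta$, is justified by the tent picture (\Cref{lem:fh-tent}), \Cref{l.inf control}, and (in positive temperature) polymer concentration \Cref{p.closeness of maximizer and polymer}; contributions from outside the window are negligible since the integrand decays with combined slope $\approx 4 H^{1/2}$ on each side of $\vec x$.

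\textbf{Decoupling and Brownian comparison.} Conditional on $\vec\cL^\beta \approx \vec h$, on an event of conditional probability at least $1 - C\exp(-cL^{3/2})$, strip-by-strip coalescence (\Cref{prop:dp-tent-coal}) gives
\begin{equation*}
\cL^\beta(y_{i-1}, s_{i-1}; y_i, s_i) = \cL^\beta(x_{i-1}, s_{i-1}; y_i, s_i) + \cL^\beta(y_{i-1}, s_{i-1}; x_i, s_i) - h_i + O(e^{-cL})
\end{equation*}
uniformly for $\vec y$ in the localization window. The integrand thus reduces to $H + \sum_{i=1}^{k-1} F_i(y_i)$ with
\begin{equation*}
F_i(y) := \bigl[\cL^\beta(x_{i-1}, s_{i-1}; y, s_i) - h_i\bigr] + \bigl[\cL^\beta(y, s_i; x_{i+1}, s_{i+1}) - h_{i+1}\bigr],
\end{equation*}
the $F_i$ being independent across $i$ since they depend on disjoint temporal strips. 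Next, \Cref{prop:dp-tent-bcomp} couples each $F_i$ within the localization window, up to a Radon-Nikodym derivative $1 + O(e^{-cL})$, to $W_i(y) - 4H^{1/2}|y - x_i| + R_i(y)$, where $W_i$ is a rate-$4$ Brownian motion near $x_i$ (sum of two independent rate-$2$ bridges, one from each adjacent strip) and $R_i$ collects small linear corrections coming from the two boundary-term fluctuations of order $O(L^{1/4})$ acting on a window of width $O(L^{-1/2}(\log L)^2)$, giving $R_i = O(L^{-1/4}(\log L)^{O(1)})$ uniformly.

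\textbf{Explicit evaluation and tail bounds.} For $\beta = 1$, a direct Gaussian-integral computation gives
\begin{equation*}
\log \int_{|y-x_i|\le L^{-1/2}(\log L)^2} \exp\bigl(W_i(y) - 4H^{1/2}|y - x_i|\bigr)\,dy = -\log(2 H^{1/2}) + Y_i,
\end{equation*}
where $Y_i$ has $O(1)$ size with Gaussian upper tail of rate $L^{1/2}$ (the fluctuation scale $r_{\beta=1} = 1$ with tail $\exp(-cM^2 L^{1/2})$). For $\beta = \infty$, $\max_y\bigl(W_i(y) - 4H^{1/2}|y - x_i|\bigr)$ is a nonnegative random variable on scale $H^{-1/2} = O(L^{-1/2})$ with Gaussian-type tail of rate $L$, and its argmax lies in a window of size $w_\beta$ with probability $\ge 1 - \exp(-c(\log L)^2)$, giving the maximizer constraint. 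Summing the $k-1$ independent contributions,
\begin{equation*}
\cL^\beta \approx H - (k-1)\beta^{-1}\log(2 H^{1/2}) + \sum_{i=1}^{k-1} Z_i,
\end{equation*}
with i.i.d.~$Z_i$ having Gaussian upper tail of rate $L^{1/2}r_\beta$, which yields \eqref{eq:kppwf1}. The lower bound \eqref{eq:kppwf2} is obtained by prescribing an explicit constructive event for each $W_i$ (e.g., forcing each $Z_i$ to exceed a fixed fraction of $Mr_\beta/(k-1)$ in the core window while its argmax remains in the $w_\beta$ window), whose joint probability is $\ge c\exp(-CM^2 L^{1/2} r_\beta)$ by standard Brownian estimates. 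Finally, \eqref{eq:kppwf3} follows more directly from the tail-ratio estimate \Cref{t.comparison} and \Cref{l.free energy individual to overall values}, comparing $\P(\cL^\beta > H - (k-1)\beta^{-1}\log(2H^{1/2}) + M)$ to the unconditional probability $\P(\vec\cL^\beta \approx \vec h)$, without needing the maximizer constraint.

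\textbf{Main obstacle.} The delicate point will be tracking the exact constant $2$ in $\log(2H^{1/2})$, which emerges from the combined slope $4H^{1/2}$ (two tent slopes of $2H^{1/2}$ per peak) and requires that the Brownian comparison of \Cref{prop:dp-tent-bcomp} be applied uniformly over the admissible range of $(\vec h, \vec x)$. Additionally, the transition between the Brownian core (within $L^{-1/2}(\log L)^2$ of $\vec x$) and the tent tails outside must be handled carefully: outside contributions must be truncated with error $o(r_\beta)$ so that the Gaussian-scale fluctuation estimates driving \eqref{eq:kppwf1} and \eqref{eq:kppwf2} are not polluted by them.
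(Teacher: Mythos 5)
Your overall strategy mirrors the paper's: use coalescence (\Cref{prop:dp-tent-coal}) to decouple the composition formula across temporal strips, then apply the Brownian comparison of \Cref{prop:dp-tent-bcomp} / \Cref{l.dp-comp-s} to reduce each piece to an explicit computation with slope $\approx 4H^{1/2}$, from which the term $-(k-1)\beta^{-1}\log(2H^{1/2})$ emerges. The paper packages this differently, proving a $k=2$ sub-lemma (\Cref{l.part free energy to whole free energy}) and then reducing the general case to $k-1$ applications of it, but the essential tools and logic are the same. However, two steps of your proposal have genuine gaps.

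\textbf{The independence claim is false.} You write that the $F_i$ are ``independent across $i$ since they depend on disjoint temporal strips,'' but this is incorrect: $F_i$ depends on the profile seen from $(x_{i+1}L^{-1/4},s_{i+1})$ in the strip $[s_i, s_{i+1}]$, and $F_{i+1}$ depends on the profile seen from $(x_i L^{-1/4},s_i)$ in the \emph{same} strip $[s_i, s_{i+1}]$. Adjacent $F_i$'s are therefore correlated through the shared strip, and this matters critically for the lower bound \eqref{eq:kppwf2}, where you need a joint probability for all $k-1$ constructive events. The way out — which the paper takes — is to invoke \Cref{prop:dp-tent-bcomp} not just for the Brownian comparison but for its independence content: conditional on the peak value $h_i$, the two profiles in a strip (the one seen from above and the one seen from below) can be replaced by a product measure up to a $1+O(e^{-cL})$ Radon–Nikodym error. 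After doing this strip by strip, all $2(k-1)$ one-sided pieces become genuinely independent, and your $W_i$'s are then independent rate-$4$ Brownian motions. You do cite \Cref{prop:dp-tent-bcomp} for the Brownian coupling but not for this independence step, and your stated justification (disjoint strips) is simply wrong; without the correct argument the lower bound \eqref{eq:kppwf2} does not follow.

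\textbf{The proposed route to \eqref{eq:kppwf3} does not give the stated bound.} You suggest bounding $\P(\cL^\beta > H - (k-1)\beta^{-1}\log(2H^{1/2}) + M \mid \vec\cL^\beta \approx \vec h)$ by a direct Bayes comparison of $\P(\cL^\beta > \cdot)$ against the unconditional $\P(\vec\cL^\beta \approx \vec h)$. But $\P(\vec\cL^\beta \approx \vec h)$ carries a normalization $\approx e^{-kL}\exp(-\tfrac{4}{3}H^{3/2})$ from conditioning on a window of size $e^{-L}$ in each coordinate, while $\P(\cL^\beta > H + \Theta(M)) \approx \exp(-\tfrac{4}{3}H^{3/2} - 2MH^{1/2}(1+o(1)))$; the ratio is thus $\approx e^{kL}\exp(-2MH^{1/2})$, which is huge unless $M \gtrsim L^{1/2}$. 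And a version using \Cref{l.free energy individual to overall values} (sum of parts bounds the whole up to $C\log L$) only helps once $M \gtrsim \log L$. But \eqref{eq:kppwf3} is asserted and used (e.g.\ in \Cref{l.perturbation has effect}) for fixed $M$ of constant order. The paper's proof of \eqref{eq:kppwf3} genuinely passes through the Brownian estimate \eqref{eq:ptfree3} of \Cref{l.part free energy to whole free energy}, i.e.\ it analyzes the conditional law of the profile directly rather than using a crude Bayes comparison against the unconditioned tail. Your shortcut here does not close the gap.
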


The term $(k-1)\log(2H^{1/2})$ is meant to be present in the case $\beta=1$ and absent in the case $\beta=\infty$, and multiplying the term by $\beta^{-1}$ is a convenient notational tool to this effect (though in fact if one were to work out the arguments in the case of general  $\beta$, the term would be $(k-1)\beta^{-1}\log(2\beta H^{1/2})$).
The source of the log term for $\beta = 1$ comes from the fact that $\int_{-\infty}^\infty \exp(-4H^{1/2} |x|)\diff x = (2H^{1/2})^{-1}$, which itself is a result that the dominant contribution to the integral is from an interval of scale $L^{-1/2}$ around zero. Since on $\vec \cL^\beta\approx \vec h$ the terms in the exponential in the convolution formula for $\exp(\cL^{\beta})$ are essentially sums of two Brownian bridges with slope $-2H^{1/2}$ each, heuristically taking logarithms will yield that $\cL^\beta$ loses $(k-1)\log(2H^{1/2})$ compared to the peak height of $H$.

We next give an estimate on the probability of $\sum_{i=1}^k \cL^\beta_i$ being much smaller than $L$, conditional on $\cL^\beta>L$ and $\pi^*$. It can be viewed as a refinement of \Cref{l.inf control}.

\begin{proposition}\label{p.sum close under conditioning k-point}
For all large enough $L, M$ with $M<L^{0.01}$, and $\vec x\in[-L^{5/16}\log L, L^{5/16}\log L]^{k-1}$, $\vec x\in[-L^{5/16}\log L, L^{5/16}\log L]^{k-1}$, we have
\begin{align*}
\P\left(E_{k,M, w, L} \midd \cL^\beta > L, \max_{i=1, \ldots, k}|\pi^*(s_i) -x_i| \leq w_{\beta}\right) \leq 
\exp\left(-cM^2L^{1/2}r_\beta\right),
\end{align*}
where
\begin{multline}\label{e.E(k,m,W,L) definition}
E_{k,M,w, L} = \left\{\sum_{i=1}^k\cL^\beta_i\in L + (k-1)\beta^{-1}\log(2L^{1/2}) + [-L^{8/9}, - Mr_\beta]\right\}\\
\cap \bigcap_{i=1}^k\left\{\cL^\beta_i > (s_i-s_{i-1})L-k^{-1}L^{8/9}\right\}.
\end{multline}
\end{proposition}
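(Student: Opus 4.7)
My plan is to combine a Bayes decomposition with the two complementary bounds in \Cref{p.k-point part to whole free energy} and a sharp density-ratio comparison derived from \Cref{t.comparison}. Let $B=\{\cL^\beta>L\}\cap\{\max_i|\pi^*(s_i)-x_i|\le w_\beta\}$, so the quantity to bound is $\P(E_{k,M,w,L}\cap B)/\P(B)$. I would decompose both numerator and denominator as integrals over the possible values $\vec h$ of $\vec\cL^\beta$. Writing $H=\sum_i h_i$ and $\Delta(\vec h):=L-H+(k-1)\beta^{-1}\log(2H^{1/2})$, the constraints in \eqref{e.E(k,m,W,L) definition} force $\Delta(\vec h)\ge (M-o(1))r_\beta$ (the $o(1)$ slippage from replacing $\log(2H^{1/2})$ by $\log(2L^{1/2})$ is harmless since $|H-L|\le L^{8/9}$). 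Applying \eqref{eq:kppwf1} with parameter $\Delta(\vec h)/r_\beta$ yields, for each $\vec h$ in the integration domain,
\begin{align*}
\P(B\mid \vec\cL^\beta\approx\vec h)\le C\exp\bigl(-c\,\Delta(\vec h)^{2}\,L^{1/2}/r_\beta\bigr)+C\exp(-cL^{3/2}).
\end{align*}
For the denominator I would restrict to $\cN'=\{\vec h:H\in L+(k-1)\beta^{-1}\log(2L^{1/2})+[0,r_\beta],\;|h_i-(s_i-s_{i-1})H|\le L^{1/4}\log L\}$, on which \eqref{eq:kppwf2} furnishes $\P(B\mid \vec\cL^\beta\approx\vec h)\ge c>0$.

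The essential step is then to compare $\int_{\cN_j}f_{\vec\cL^\beta}$ to $\int_{\cN'}f_{\vec\cL^\beta}$ along the dyadic-like slices $\cN_j:=\{\vec h: \Delta(\vec h)\in[jr_\beta,(j+1)r_\beta]\}\cap\{\text{near-proportional}\}$; the restriction to near-proportional $\vec h$ (with $|h_i-(s_i-s_{i-1})H|\le L^{1/4}\log L$) costs only a factor $1-\exp(-c(\log L)^2)$ by \Cref{l.k-point proportionality}. For each such $\vec h\in\cN_j$, I would shift by the proportional vector $\vec\delta_j$ with $(\delta_j)_i=(s_i-s_{i-1})(j+1)r_\beta$, sending $\cN_j$ into $\cN'$. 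By independence of the $\cL^\beta_i$ across disjoint temporal strips, $f_{\vec\cL^\beta}$ is a product, and the density-ratio consequence of \Cref{t.comparison} applied to each factor gives $\log[f_{\cL^\beta_i}(h_i)/f_{\cL^\beta_i}(h_i+(\delta_j)_i)]=2(\delta_j)_i(s_i-s_{i-1})^{-1/2}h_i^{1/2}+O((\delta_j)_i L^{-1/4}\log L)$. Summing over $i$, the proportional choice of $\vec\delta_j$ collapses the first-order piece to $2(j+1)L^{1/2}r_\beta(1+o(1))$, whence
\begin{align*}
\int_{\cN_j}f_{\vec\cL^\beta}(\vec h)\,\diff\vec h\le \exp\bigl(2(j+1)L^{1/2}r_\beta(1+o(1))\bigr)\int_{\cN'}f_{\vec\cL^\beta}(\vec h)\,\diff\vec h.
\end{align*}

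Combining everything gives
\begin{align*}
\P(E_{k,M,w,L}\mid B)\le C\sum_{j=M}^{L^{8/9}/r_\beta}\bigl[\exp(-cj^{2}L^{1/2}r_\beta)+\exp(-cL^{3/2})\bigr]\exp\bigl(2(j+1)L^{1/2}r_\beta\bigr),
\end{align*}
where the first family is a geometric-type sum dominated by its $j=M$ term (the quadratic $-cj^2$ beats $+2j$ for $j\ge M_0$), contributing $\exp(-c'M^2L^{1/2}r_\beta)$, and the second family is absorbable into the first since $L^{25/18}\ll L^{3/2}$ and $M^2L^{1/2}r_\beta\le L^{1.02}\ll L^{3/2}$ when $M<L^{0.01}$.

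The principal obstacle will be converting the tail-ratio statement of \Cref{t.comparison} into the corresponding density-ratio statement for each $\cL^\beta_i$ with the sharpness that places the error on the negligible scale $\delta L^{-1/4}\log L$ rather than the crude $L^{3/4}$ coming from \Cref{lem:fh-ut}; and then carefully accounting for the lower-order corrections (the $\log(2H^{1/2})$ vs.~$\log(2L^{1/2})$ slippage, the $O(\cdot L^{-1/4}\log L)$ error in the density ratio, and non-proportional fluctuations within $\cN_j$, the last of which is suppressed on scale $L^{1/4}$ by the Hessian of $\sum(s_i-s_{i-1})^{-1/2}h_i^{3/2}$ restricted to the hyperplane $\sum h_i=H$).
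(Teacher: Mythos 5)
Your proposal takes a genuinely different route from the paper's. Where the paper shifts only the first coordinate $h_1$ (holding $h_2,\ldots,h_k$ fixed) to move from $\vec h$ to $\vec h'$, and is content with a crude ratio bound $\P(A_{\vec h})/\P(A_{\vec h'})\le C\exp(C(h_1'-h_1)L^{1/2})$ (which suffices because the quadratic decay $-c\Delta^2 L^{1/2}/r_\beta$ coming from \Cref{p.k-point part to whole free energy} dominates for all $\Delta\ge Mr_\beta$ once $M$ is a large constant), you shift proportionally in every coordinate, recovering the sharp leading constant $2L^{1/2}$. Your version is conceptually cleaner about \emph{why} the exponent is $M^2L^{1/2}r_\beta$, but it is also heavier: it forces you to control Hessian corrections to the free-energy-of-$h_i$ exponent and to restrict to near-proportional $\vec h$, neither of which the paper needs. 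Both approaches ought to yield the stated bound.

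There are, however, two issues you should fix. First, and most concretely, your claim that restriction to $|h_i-(s_i-s_{i-1})H|\le L^{1/4}\log L$ costs only $1-\exp(-c(\log L)^2)$ by \Cref{l.k-point proportionality} is wrong: that lemma is only valid for $K>C_1 L^{3/8}$, so the tightest proportionality window it yields is scale $L^{5/8}$ with cost $\exp(-cL^{3/4})$. This does not sink the argument — for the leading-order gradient error you should note that $\sum_i\epsilon_i=0$ (since $\sum_i h_i=H$ and $\sum_i(s_i-s_{i-1})H=H$), which kills the linear term in $\vec\epsilon$, so even the coarse $L^{8/9}$ proportionality inherent in $E_{k,M,w,L}$ is controllable, and in fact you may not need a separate proportionality restriction at all. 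But as written the cited claim fails. Second, \Cref{t.comparison} is a tail-ratio statement, not a density-ratio one, and the conversion you flag as "the principal obstacle" is genuinely needed; the paper sidesteps this cleanly by working with disjoint events $A_{\vec h}=\bigcap_i\{\cL^\beta_i\in h_i+[0,r_\beta]\}$ indexed over a mesh of width $r_\beta$, so that $\P(A_{\vec h})$ is a difference of two tail probabilities and Theorem~\ref{t.comparison} applies directly. Switching your decomposition to such boxes would remove the obstacle; otherwise you owe the reader a density estimate sharper than what Theorem~\ref{lem:fh-ut} provides.
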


Proposition~\ref{p.sum close under conditioning k-point} is proved by invoking Bayes' theorem and Proposition~\ref{p.k-point part to whole free energy}, and we give its proof now; we will return to proving Proposition~\ref{p.k-point part to whole free energy} later.
For any vector $\vec h\in\R^k$, denote
\begin{align*}
A_{\vec h} = \bigcap_{i=1}^k\left\{\cL^\beta_i \in h_i + [0,r_\beta]\right\}.
\end{align*}
\begin{proof}[Proof of Proposition~\ref{p.sum close under conditioning k-point}]
We assume without loss of generality that $M\in\N$. Define $\mrm{Val}_M$ by
\[
\mrm{Val}_M = \left\{\vec h\in (r_\beta\Z)^k : 
\parbox[c]{4in}{\centering 
$\sum_{i=1}^k h_i  \in L + (k-1)\beta^{-1}\log(2L^{1/2})  + [-L^{8/9}-r_\beta, - Mr_\beta]$ \\[6pt]
$h_i\in (s_i-s_{i-1})L + [-L^{8/9}, L^{8/9}],\quad i=1, \ldots, k$
}\right\}.
\]
By doing a disjoint decomposition and applying Bayes' theorem,
\begin{align*}
\MoveEqLeft[0]
\P\left(E_{k, M,w,L} \midd \cL^\beta > L, \max_{i=1, \ldots, k}|\pi^*(s_i) -x_i| \leq w_{\beta}\right) \nonumber\\
&\leq \frac{\sum_{\vec h\in \mrm{Val}_M}\P\left(\cL^\beta > L, \max_{i=1, \ldots, k}|\pi^*(s_i) -x_i| \leq w_{\beta} \midd A_{\vec h}\right)\cdot \P\left(A_{\vec h} \right)}{\P\left(\cL^\beta > L, \max_{i=1, \ldots, k}|\pi^*(s_i) -x_i| \leq w_{\beta}\right)}.
\end{align*}
We can also decompose the denominator in a similar fashion, but with the sum over all $\vec h$. Now for each $\vec h \in \mrm{Val}_M$, we wish to bound the ratio
\[
\frac{\P\left(\cL^\beta > L, \max_{i=1, \ldots, k}|\pi^*(s_i) -x_i| \leq w_{\beta} \midd A_{\vec h}\right)\cdot \P(A_{\vec h})}{\P\left(\cL^\beta > L, \max_{i=1, \ldots, k}|\pi^*(s_i) -x_i| \leq w_{\beta} \midd A_{\vec h'}\right)\cdot \P(A_{\vec h'})},
\]
where $\vec h'\in (r_\beta\Z)^k$ is defined as follows: for each $i=2, \ldots, k$, $h_i' = h_i$, while $\sum_{i=1}^k h_i'\in L +(k-1)\beta^{-1}\log(2L^{1/2}) +[0, 1]$,
$h_1'-h_1 \in \lfloor h_1'-h_1\rfloor +[0, r_\beta)$.

By \Cref{t.comparison} we have
\[
\frac{\P(A_{\vec h})}{\P(A_{\vec h'})} <C\exp(C(h_1'-h_1)L^{1/2}),
\]
and by \Cref{p.k-point part to whole free energy} we have
\[
\frac{\P\left(\cL^\beta > L, \max_{i=1, \ldots, k}|\pi^*(s_i) -x_i| \leq w_{\beta} \midd A_{\vec h}\right)}{\P\left(\cL^\beta > L, \max_{i=1, \ldots, k}|\pi^*(s_i) -x_i| \leq w_{\beta} \midd A_{\vec h'}\right)}
< C\exp(-c(h_1'-h_1)^2L^{1/2}r_\beta) + C\exp(-cL^{3/2}).
\]
By combining these two estimates, we can now bound the ratio as desired.
Then by summing over all $\vec h\in \mrm{Val}_M$ the conclusion follows.
\end{proof}

We next give the proof of \Cref{p.k-point part to whole free energy}. 

\subsection{$k=2$ setting}
The basic step is to prove the following $k=2$ version: from this, we can obtain the general $k$-version by invoking coalescence (Proposition~\ref{prop:dp-tent-coal}) to break down the case of general $k$ to a collection of $k=2$ cases.

The notations within this subsection are slightly different, and we setup now. We take large enough $L>0$, $h_1$ and $h_2=\Theta(L)$, $0<s_1, s_2<1$, $|x_*|\le 2L^{5/16}\log L$.
All the constants in this section can depend on $s_1\wedge s_2$.
We write $\vec\cL^\beta=(\cL^\beta(0,0;x_*,s_1), \cL^\beta(x_*,s_1;0,s_1+s_2))$.
For any $R>0$ we denote
\[\cL^\beta[x_*,R]=\maxbeta_{|x-x_*|\le R} \cL^\beta(0,0;x,s_1)+ \cL^\beta(x,s_1;0,s_1+s_2)\]
and we write $\cL^\beta[R]=\cL^\beta[0, R]$.

We also denote $\lambda = (s_1^{-1/2}h_1^{1/2} + s_2^{-1/2}h_2^{1/2})$ to be the first order of the slope of the sum $\cL^\beta(0,0;x,s_1)+ \cL^\beta(x,s_1;0,s_1+s_2)$ under the conditioning that $\vec \cL^\beta \approx \vec h$. We take some $x_-$, $x_+$ with $|x_-|, |x_+|\le 2L^{5/16}\log L$ and denote
\[
\pi^*(s_1)=\argmax \cL^\beta(x_-, 0; \cdot, s_1) + \cL^\beta(\cdot, s_1; x_+, s_1+s_2).
\]
\begin{lemma}\label{l.part free energy to whole free energy}
Denote $W=10^{-6}(h_1s_1)^{1/2}\wedge (h_2s_2)^{1/2}$ and assume $h_1,h_2 = \Theta(L)$.
In the case of $\beta=1$, for any $M>L^{1/16}\log L$ we have
\begin{multline}  \label{eq:ptfree1}
\P\Big(\cL^\beta[W] > h_1+h_2 - \beta^{-1}\log(\lambda) + ML^{-1/4}, |\pi^*(s_1)-x_*|\leq w_\beta  \midd \vec\cL^\beta\approx \vec h \Big) \\ < C\exp(-cM^2)+C\exp(-cL^{3/2}),
\end{multline}
and for any $M>0$,
\begin{multline}  \label{eq:ptfree2}
\P\Big(\cL^\beta[x_*,L^{-1/2}(\log L)^2] > h_1+h_2 - \beta^{-1}\log( \lambda) + ML^{-1/4}, |\pi^*(s_1)-x_*|\leq w_\beta \midd \vec\cL^\beta\approx \vec h\Big)\\ 
> c\exp(-CM^2)-C\exp(-cL^{3/2}).
\end{multline}
In the case of $\beta = \infty$, the first two bounds hold for any $M>0$, after (1) replacing $ML^{-1/4}$ with $ML^{-1/2}$ in both; (2) in the lower bound, replacing $\cL^\beta[x_*, L^{-1/2}(\log L)^2]$ with $\cL^\beta[x_*, L^{-1}]$.

Moreover, for both $\beta=1$ and $\beta=\infty$, and $M$ large enough, we have
\begin{equation}  \label{eq:ptfree3}
\P\Big(\cL^\beta[W] > h_1+h_2 - \beta^{-1}\log(\lambda) + M \midd \vec\cL^\beta\approx \vec h \Big) < C\exp(-cML^{1/2}).
\end{equation}
\end{lemma}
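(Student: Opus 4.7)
The plan is to reduce all three estimates to Brownian-bridge calculations by way of the tent-shape description in Proposition~\ref{prop:dp-tent-bcomp}. By shear invariance of $\cL^\beta$, for each $i=1,2$ the conditional law of $\cL^\beta(\cdot,s_{i-1};\cdot,s_i)$ under $\cL^\beta(x_{i-1},s_{i-1};x_i,s_i)\approx h_i$ is, up to a deterministic shear, the same as that of $\cL^\beta(0,0;\cdot,s_i-s_{i-1})$ conditional on $\cL^\beta(0,0;0,s_i-s_{i-1})\approx h_i$, shifted to be centered at $x_*$. After scaling from unit time to length $s_i-s_{i-1}$ via \Cref{l.dl symmetries}, Proposition~\ref{prop:dp-tent-bcomp} then couples each of the two profiles on the window $x_*+[-W,W]$ to independent rate-$2$ Brownian bridges on $[-W,0]$ and $[0,W]$ emanating from $x_*$, with endpoint values at $x_*\pm W$ given by the random tangent-line values; on an event of probability $1-Ce^{-cL^{3/2}}$ the Radon-Nikodym error is $1+O(e^{-cL})$. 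On this coupling event, writing $y=x-x_*$ and
\[
f(y):=\cL^\beta(0,0;x_*+y,s_1)+\cL^\beta(x_*+y,s_1;0,s_1+s_2)-h_1-h_2,
\]
one obtains $f(y)=-2\lambda|y|+B(y)+O(L^{-1/4}|y|)$, where $B$ is a rate-$4$ Brownian bridge on $[-W,W]$ vanishing at $y=0$ (the sum of the two independent rate-$2$ bridges) and the $O(L^{-1/4}|y|)$ correction captures the Gaussian fluctuation of the tangent-line slopes at $\pm W$.

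The task becomes estimating $\maxbeta_{|y+x_*|\le W}f(y)$ subject to the requirement that the argmax lies within $w_\beta$ of $0$. For $\beta=\infty$, this is $\max f$ restricted to $[-w_\beta,w_\beta]=[-L^{-1},L^{-1}]$; since $B$ is a rate-$4$ Brownian bridge, on a window of size $L^{-1}$ it fluctuates with standard deviation $2L^{-1/2}$, and so the maximum obeys a Gaussian tail $\P(\max f>ML^{-1/2})\le Ce^{-cM^2}$ on the constrained window by standard bridge tail bounds. The matching lower bound is obtained by asking the bridge to take a value in $[ML^{-1/2},(M+1)L^{-1/2}]$ at a specific time in $[-L^{-1},L^{-1}]$, giving probability $\ge ce^{-CM^2}$. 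For $\beta=1$, we expand $\maxbeta f=\log\int_{|y+x_*|\le W}e^{f(y)}\diff y$; the deterministic part $\log\int e^{-2\lambda|y|}\diff y=-\log\lambda$ contributes the $-\beta^{-1}\log\lambda$ correction, the tails $|y|\gg\lambda^{-1}\sim L^{-1/2}$ being negligible, while the remaining log-ratio of Brownian functionals on the effective $L^{-1/2}$ window is a random variable at scale $L^{-1/4}$ (matching the oscillation of $B$ on that window) with Gaussian tail $e^{-cM^2}$ at deviation $ML^{-1/4}$, producing \eqref{eq:ptfree1} and \eqref{eq:ptfree2}.

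To handle the constraint $|\pi^*(s_1)-x_*|\le w_\beta$ one applies the same tent-picture coupling to the endpoint-shifted profiles defining $\pi^*(s_1)$, and invokes Proposition~\ref{prop:dp-tent-coal} to conclude that, on the same coupling event of probability $1-Ce^{-cL^{3/2}}$, the argmax $\pi^*(s_1)$ coincides with (for $\beta=\infty$) or concentrates within $w_\beta$ of (for $\beta=1$, using Brownian bridge argmax concentration on the coupled bridges) the argmax defining $\cL^\beta[W]$; hence the event $\{|\pi^*(s_1)-x_*|\le w_\beta\}$ is essentially independent of the random excess $\maxbeta f+\beta^{-1}\log\lambda$, and the estimates above transfer with an additive loss of $Ce^{-cL^{3/2}}$. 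Finally, for \eqref{eq:ptfree3} there is no $\pi^*$-constraint, so the max (or log-integral) can be attained anywhere in $[-W,W]$ and the relevant object is the full supremum of a Brownian bridge with drift $-2\lambda$, whose tail is exponential: $\P(\max f>M)\le Ce^{-2\lambda M}=Ce^{-cML^{1/2}}$ for $\beta=\infty$, and the same bound holds for $\beta=1$ via Laplace's method (the log-integral being within $O(\log L)$ of the pointwise max). The main technical obstacle is the simultaneous bookkeeping of two tent-picture couplings---for the profile defining $\cL^\beta[W]$ and for that defining $\pi^*(s_1)$---together with the $O(L^{-1/4})$ random correction to the slope $-2\lambda$; the constraint $M>L^{1/16}\log L$ in \eqref{eq:ptfree1} at $\beta=1$ arises to ensure that this endpoint fluctuation does not dominate the target deviation $ML^{-1/4}$ being tested.
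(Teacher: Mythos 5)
The overall strategy you outline---reducing to Brownian bridge computations via the tent picture and coalescence, then integrating (for $\beta=1$) or maximizing (for $\beta=\infty$) a process of the form $-2\lambda|y|+B(y)$---is essentially the one the paper uses (via Lemma~\ref{l.dp-comp-s} and Corollary~\ref{cor.tent-pr}). However, there is a genuine error in the identification of the slope-correction term, and this error is precisely what would make your argument fail to yield the constraint $M>L^{1/16}\log L$ in the $\beta=1$ case of \eqref{eq:ptfree1}.

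You write the expansion as $f(y)=-2\lambda|y|+B(y)+O(L^{-1/4}|y|)$, attributing the $O(L^{-1/4}|y|)$ correction to ``the Gaussian fluctuation of the tangent-line slopes at $\pm W$,'' and you assert the constraint $M>L^{1/16}\log L$ exists to control this term. Neither claim is correct. The endpoint fluctuation at $\pm W$ is of order $W^{1/2}\sim L^{1/4}$, which over a distance $W\sim L^{1/2}$ does indeed give a slope uncertainty of $L^{-1/4}$; but evaluated at the effective integration window $|y|\sim L^{-1/2}$, this contributes only $O(L^{-3/4})$, which is negligible compared to the target deviation $ML^{-1/4}$ for any $M\geq 1$. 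So your correction term cannot be the source of the constraint. The actual dominant correction comes from the \emph{shear} induced by $x_*$: the profiles being compared are conditioned at $x_*$, and the lemma allows $|x_*|\le 2L^{5/16}\log L$. Translating the tent picture from the origin to $x_*$ via shift/shear invariance introduces a deterministic linear shift of slope $-2x_*/s_i$, so the slope of $S(x)$ near $x_*$ differs from $\mp 2\lambda$ by up to $O(L^{5/16}\log L)$. This is the $C|x-x_*|L^{5/16}\log L$ term in the paper's \eqref{eq:S11}--\eqref{eq:S12}. Evaluated at $|x-x_*|\sim L^{-1/2}$, it yields $O(L^{-3/16}\log L)$, and the requirement $ML^{-1/4}\gg L^{-3/16}\log L$ is exactly $M>L^{1/16}\log L$. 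Your expansion, which drops this term, would (if carried through honestly) give the conclusion of \eqref{eq:ptfree1} with $M>C$ rather than $M>L^{1/16}\log L$---a stronger statement that is in fact false for $x_*$ near $L^{5/16}\log L$.

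There are two secondary inaccuracies worth flagging. First, you invoke Proposition~\ref{prop:dp-tent-bcomp} for the Brownian coupling; that statement conditions on $\cS^\beta_t(0,0)>L$ and couples both $x\mapsto\cS^\beta_t(0,\cdot)$ and $x\mapsto\cS^\beta_t(\cdot,0)$, whereas here the conditioning is $\vec\cL^\beta\approx\vec h$ (two independent profiles in disjoint temporal strips conditioned on their values at $x_*$); the appropriate tool is the one-sided comparison Lemma~\ref{l.dp-comp-s}, applied separately in each strip. Second, for the upper bound \eqref{eq:ptfree3} you argue via the exponential supremum tail of a Brownian motion with drift $-2\lambda$. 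This gives the right answer, but it is a slightly different mechanism than the paper's (which extracts $\exp(-cML^{1/2})$ from the tent fluctuation bound $\exp(-cM'^2)$ with $M'\sim M^{1/2}L^{1/4}$); this alternative is valid as a route to the same conclusion, provided the shear correction is also accounted for there.
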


\begin{proof}
The general idea of this proof is to invoke \Cref{l.dp-comp-s}, then do computations of Brownian motions. 

We can assume that $M$ is large enough since otherwise the estimates trivially hold.
For the convenience of notations, we denote $S(x)=\cL^\beta(0,0;x,s_1)+\cL^\beta(x,s_1;0,s_1+s_2)$, and write $B(x)=S(x_*+x)-S(x_*)+2\lambda|x|$.
In light of \Cref{l.dp-comp-s}, we shall think of $B(x)$ as a Brownian motion in the interval $[-CL^{-1/2}\log L, CL^{-1/2}\log L]$.

\noindent\textit{Positive temperature.} 
We first consider the case of $\beta=1$ (where $w_{\beta=1}=L^{-1/2}$), and turn to the zero temperature ($\beta = \infty$) case later.

\noindent\textbf{Upper bound.} 
By the convolution formula,
\begin{equation}  \label{eq:partfreeconv}
\cL^\beta[W] = \log \int_{-W}^{W} \exp(S(x))\diff x.
\end{equation}
Since $h_1, h_2=\Theta(L)$ and $|x_*|, |x_-|, |x_+|\le 2L^{5/16}\log L$, by \Cref{cor.tent-pr}, conditional on $\vec\cL^\beta\approx \vec h$, with probability at least $1-C\exp(-cM^2)$,
\begin{equation}  \label{eq:S11}
S(x) < h_1+h_2-2\lambda|x-x_*|+C|x-x_*|L^{5/16}\log L+ ML^{-1/4},
\end{equation}
for any $|x-x_*|\le L^{-1/2}$, and 
\begin{equation}  \label{eq:S12}
S(x) < h_1+h_2-2\lambda|x-x_*|+C|x-x_*|L^{5/16}\log L+M|x-x_*|^{1/2}(|\log(|x-x_*|LM^{-2})|+1),
\end{equation}
for any $x$ with $|x-x_*|\ge L^{-1/2}$, $|x|\le W$.

To get \eqref{eq:ptfree3}, we note that by plugging \eqref{eq:S11} and \eqref{eq:S12} into \eqref{eq:partfreeconv}, we have that $\cL^\beta[W]<h_1+h_2+\log(\lambda^{-1})+CM^2L^{-1/2}$ when $M>L^{1/4}$. 
(Note that the RHS of \eqref{eq:S12} is maximized when $|x-x_*|$ is of order $M^2/L$.)
Relabeling $M$ completes the proof.

To get \eqref{eq:ptfree1},
we note that with probability $>1-C\exp(-cL^{3/2})$ conditional on $\vec\cL^\beta\approx \vec h$, the event $|\pi^*(s_1)-x_*|\leq L^{-1/2}$ implies that
$S(x)<h_1+h_2+ML^{-1/4}+C\exp(-cL)$ for all $|x|\le W$.
Plugging the minimum of this bound and \eqref{eq:S12} into \eqref{eq:partfreeconv}, we get that $\cL^\beta[W]<h_1+h_2+\log(\lambda^{-1})+CML^{-1/4}+CL^{-3/16}\log L$. Using that $M>L^{1/16}\log L$ the estimate \eqref{eq:ptfree1} follows.

\smallskip

\noindent\textbf{Lower bound.} 
As the comparison in \Cref{l.dp-comp-s} is for some interval with length of order $L^{1/2}$, we need to do a truncation for $\pi^*(s_1)$.
Namely, we claim
\begin{equation}  \label{eq:pisgCexc}
\P(|\pi^*(s_1)| > W \mid \vec\cL^\beta\approx \vec h)<C\exp(-cL^{3/2}).    
\end{equation}
Indeed, from \Cref{lem:fh-ut} and \Cref{lem:fh-cont}, and \Cref{prop:dp-tent-coal}, we can deduce that
\[
\frac{\P\left(\sup_{|x|>W} \cL^{\beta}(x_-,0;x,s_1)\ge \cL^{\beta}(x_-,0;x_*,s_1), \cL^{\beta}_1\approx h_1\right)}{\P(\cL^{\beta}_1\approx h_1)} <C\exp(-cL^{3/2}),
\]
\[
\frac{\P\left(\sup_{|x|>W} \cL^{\beta}(x,s_1;x_+,s_1+s_2)\ge \cL^{\beta}(x_*,s_1;x_+,s_1+s_2),  \cL^{\beta}_2\approx h_2\right)}{\P(\cL^{\beta}_2\approx h_2)} <C\exp(-cL^{3/2}).
\]
These together imply \eqref{eq:pisgCexc}.

Now by definition
\begin{align}  \label{eq:LbxlogL}
\MoveEqLeft[2]
\cL^\beta[x_*, L^{-1/2}(\log L)^2]= \log \int_{-L^{-1/2}(\log L)^2}^{L^{-1/2}(\log L)^2} \exp(S(x_*+x))\diff x\nonumber\\
&= S(x_*)+ \log \int_{-L^{-1/2}(\log L)^2}^{L^{-1/2}(\log L)^2} \exp\left(-2\lambda|x| + B(x)\right)\diff x. 
\end{align}
Define events $A_M$ and $E$ as
\begin{equation*}
\begin{split}
A_M 
&= \left\{\min_{|x|\in [L^{-1/2}/4, L^{-1/2}/2]} B(x) \geq ML^{-1/4}\right\} \cap \left\{\min_{|x|\le L^{-1/2}/4} B(x) \geq -L^{-1/4}\right\}\\
&\qquad \cap \left\{ B(x) \ge -|x|^{1/2}\log(|x|L^{1/2}) \text{ for all }|x|\in [L^{-1/2}/2,L^{-1/2}\log L]\right\},\\
E &= 
\left\{ \max_{|x|\ge L^{-1/2}, |x+x_*| \le W} B(x)-2\lambda|x| <   \max_{|x| \le L^{-1/2}} B(x)-2\lambda|x| -C\exp(-cL) \right\}.
\end{split}
\end{equation*}
If $B$ were replaced by a two-sided Brownian motion with bounded drift, the probability of $A_M\cap E$ would be $>c\exp(-CM^2)$, by standard Brownian motion computations.
Then by \Cref{l.dp-comp-s},
\begin{multline*}
  \P(A_M\cap E\mid  \vec\cL^\beta\approx \vec h )>c\exp(-CM^2)(1-C\exp(-cL))-C\exp(-cL^{3/2})\\>c\exp(-CM^2)-C\exp(-cL^{3/2}).
\end{multline*}
Assuming $A_M\cap E$, we can lower bound \eqref{eq:LbxlogL} by $h_1+h_2+\log(\lambda^{-1})+cML^{-1/4}-CL^{-1/4}$;
and $|\pi^*(s_1)|\le L^{-1/2}$ outside an event with probability $<C\exp(-cL^{3/2})$, by \Cref{prop:dp-tent-coal}.
Noting that $M$ is taken large enough, together with \eqref{eq:pisgCexc}, the lower bound \eqref{eq:ptfree2} follows.

These complete the proof of the lemma in the $\beta=1$ case. We now turn to the $\beta=\infty$ case.

\smallskip

\noindent\emph{Zero temperature:} We now need to bound the maximum of $S$ instead of its integral. For the upper bound, since $h_1, h_2=\Theta(L)$ and $|x_*|, |x_-|, |x_+| \le 2L^{5/16}\log L$, by \Cref{cor.tent-pr}, conditional on $\vec\cL^\beta\approx \vec h$, with probability at least $1-C\exp(-cM^2)$,
\[
S(x) < h_1+h_2-2\lambda|x-x_*|+C|x-x_*|L^{5/16}\log L+ ML^{-1/2},
\]
for any $|x-x_*|\le L^{-1}$,
and
\[
S(x) < h_1+h_2-2\lambda|x-x_*|+C|x-x_*|L^{5/16}\log L+M|x-x_*|^{1/2}(|\log(|x-x_*|LM^{-2})|+1),
\]
for any $x$ with $|x-x_*|\ge L^{-1}$, $|x|\le W$.
Then $S(x)$ in $[-W, W]$ is at most $h_1 + h_2 + CM^2L^{-1/2}$ (note that the RHS of the previous display is maximized when $|x-x_*|$ is of order $M^2/L$), and we get \eqref{eq:ptfree3}, by relabeling $M$.

Under the additional assumption that $|\pi^*(s_1)-x_*|\leq L^{-1}$ (note that $w_{\beta=\infty}=L^{-1}$), by \Cref{prop:dp-tent-coal}, we have that $S(x)$ in $[-W, W]$ is at most $h_1 + h_2 + ML^{-1/2}$ outside another event of probability $<C\exp(-cL^{3/2})$. Thus we get \eqref{eq:ptfree1}.

For the lower bound, we note that \eqref{eq:pisgCexc} still holds with the same proof. Then by \Cref{prop:dp-tent-coal}, we just need to consider the probability of
\[
\max_{|x|\le L^{-1}} B(x) \ge ML^{-1/2}, \quad 
\left|\argmax_{|x+x_*|\le W} B(x)-2\lambda|x|\right| \le L^{-1}.
\]
Again by \Cref{l.dp-comp-s}, conditional on $\vec\cL^\beta\approx \vec h$, the above happens with probability $>c\exp(-CM^2)-C\exp(-cL^{3/2})$.
This completes the proof. 
\end{proof}

Below we return to using the notations defined before this subsection, i.e., at the beginning of Section~\ref{s.free energy under conditioning}.

\subsection{Proof of \Cref{p.k-point part to whole free energy}}
Note that (with $y_0 = y_{k}=0$)
\begin{align*}
\cL^\beta = \maxbeta_{y_1, \ldots, y_{k-1}} \sum_{i=1}^k \cL\left(y_{i-1}, s_{i-1}; y_i, s_i\right).
\end{align*}
We want to simplify the RHS using coalescence (Proposition~\ref{prop:dp-tent-coal}). Since we only know that coalescence holds up to a distance of order $L^{1/2}$, we have to first argue that we can restrict the $\max^{(\beta)}$ to be over an interval centered at 0 of size much smaller than $L^{1/2}$,  with high probability conditionally on $\vec\cL^\beta\approx \vec h$.

For notational convenience let, for $R>0$ and $i=1,\ldots, k-1$, and $x\in\R$,
\begin{align*}
\cL^\beta_i[x,R] = \maxbeta_{|y-x|\le R}\cL^\beta(x_{i-1}, s_{i-1}; y, s_i) + \cL^\beta(y, s_i; x_{i+1}, s_{i+1}).
\end{align*}

\noindent\textbf{Restricting the interval and coalescence:} Let $\delta = 10^{-6}\min_{i=1, \ldots, k}(s_i-s_{i-1})$.
Let $\cE_0$ denote the event that
\[
\cL^\beta \leq \cL^\beta[\vec 0, \delta L^{1/2}] + C\exp(-cL),
\]
where $\vec 0 \in \R^{k-1}$ is the vector with every entry being $0$.
Then we have
\[
\P(\cE_0^c\mid \vec\cL^\beta\approx \vec h)
<
\P(\cE_0^c, \cL>H-C\log L \mid \vec\cL^\beta\approx \vec h) + \P(\cL\le H-C\log L \mid \vec\cL^\beta\approx \vec h) .
\]
The second term in the RHS is bounded by $C\exp(-cL^2)$ by \Cref{l.free energy individual to overall values} and \Cref{lem:fh-ut}; and the first term is bounded by
\[
\P(\cE_0^c\mid \cL>H-C\log L) \frac{\P(\cL>H-C\log L)}{\P(\vec\cL^\beta\approx \vec h)}.
\]
By \Cref{l.positive temp crude tf}, we have $\P(\cE_0^c\mid \cL>H)<C\exp(-cL^{3/2})$; and the ratio is bounded by $C\exp(CL^{25/18})$ using \Cref{lem:fh-ut} (and the assumption that $\vec x\in[-L^{5/16}\log L, L^{5/16}\log L]^{k-1}$, and that each $\left|h_i-(s_i-s_{i-1})H\right|<L^{8/9}$).
Then we conclude that $\P(\cE_0^c\mid \vec\cL^\beta\approx \vec h)<C\exp(-cL^{3/2})$.

By the convolution formula, this with \Cref{prop:dp-tent-coal} implies that $\P(\mrm{Coal}\mid \vec\cL^\beta\approx \vec h)>1-C\exp(-cL^{3/2})$, where the coalescence event $\mrm{Coal}$ is defined by
\[
\mrm{Coal} = \biggl\{\cL^\beta \in \sum_{i=1}^{k-1} \cL^\beta_i[0, \delta L^{1/2}]
- \sum_{i=2}^{k-1} \cL^\beta_i + [-C\exp(-cL), C\exp(-cL)]\biggr\}.
\]

\smallskip

\noindent\textbf{Upper bound:} Under $\vec\cL^\beta\approx \vec h$, the sum $\sum_{i=2}^{k-1} \cL^\beta_i\in\sum_{i=2}^{k-1}h_i  + [0,(k-2)e^{-L}]$. Therefore, assuming $\vec\cL^\beta\approx \vec h$ and $\mrm{Coal}$,
\begin{align*}
\MoveEqLeft[2]
\Bigl\{\cL^\beta > H - (k-1)\beta^{-1}\log(2H^{1/2}) + Mr_\beta\Bigr\}\\
&\subseteq \Biggl\{\sum_{i=1}^{k-1} \cL^\beta_i[0,\delta L^{1/2}]\geq \sum_{i=1}^{k-1} (h_i+h_{i+1}) -(k-1)\beta^{-1}\log(2H^{1/2}) + Mr_\beta - C\exp(-cL) \Biggr\}.
\end{align*}
Let $P_i$ be the probability of
\[
 \cL^\beta_i[0,\delta L^{1/2}] \geq h_i + h_{i+1} - \beta^{-1}\log(2H^{1/2}) + k^{-1}M,   
\]
conditional on $\vec \cL^\beta\approx\vec h$.
Let $P_i'$ be the probability of
\[
 \cL^\beta_i[0,\delta L^{1/2}] \geq h_i + h_{i+1} - \beta^{-1}\log(2H^{1/2}) + k^{-1}Mr_\beta ,   
\]
and that $|\pi^*(s_i)-x_i|\le w_\beta$, conditional on $\vec \cL^\beta\approx\vec h$.
By a union bound, it suffices to upper bound $\sum_{i=1}^{k-1} P_i$ and $\sum_{i=1}^{k-1} P_i'$.

For \eqref{eq:kppwf3},
we can then invoke \eqref{eq:ptfree3} in \Cref{l.part free energy to whole free energy} (using shear and translation invariance properties) to obtain an upper bound on each $P_i$. 
Similarly, for \eqref{eq:kppwf1} we invoke \eqref{eq:ptfree1} in \Cref{l.part free energy to whole free energy} to obtain an upper bound on $P_i'$.
We note that, since $|h_i-(s_i-s_{i-1})H|<L^{8/9}$, it follows that $\log((s_i-s_{i-1})^{-1/2}h_i^{1/2}+(s_{i+1}-s_i)^{-1/2}h_{i+1}^{1/2}) = \log(2H^{1/2}) + O(L^{-1/9})$, while $\beta^{-1}L^{-1/9}<r_\beta$. This completes the proof.

\noindent\textbf{Lower bound:}
By the convolution formula and \Cref{prop:dp-tent-coal}, conditional on $\vec \cL^\beta\approx\vec h$, with probability at least $1-C\exp(-cL^{3/2})$,
\begin{align}
\cL^\beta[\vec x, L^{-1/2}(\log L)^2]
&\geq \sum_{i=1}^{k-1}\cL^\beta_i[x_i, L^{-1/2}(\log L)^2] - \sum_{i=2}^{k-1}\cL^\beta_i  - C\exp(-cL)\nonumber\\
&\geq \sum_{i=1}^{k-1}\cL^\beta_i[x_i, L^{-1/2}(\log L)^2] - \sum_{i=2}^{k-1}h_i  - C\exp(-cL).\label{e.L lower bound via coalescence}
\end{align}
Take $2\le i \le k-1$.
Consider the processes $\cL^\beta(x_{i-1}, s_{i-1}; \cdot, s_i)$ and $\cL^\beta(\cdot, s_{i-1}; x_i, s_i)$ conditional on $\cL^\beta_i\approx h_i$, as a measure on $\mc C([0,1],\R)^2$.
By \Cref{prop:dp-tent-bcomp}, it can be replaced by the product measure of its marginals (up to an error of $C\exp(-cL)$), on an event with probability $>1-C\exp(-cL^{3/2})$, with Radon-Nikodym derivative bounded between $1-C\exp(-cL)$ and $1+C\exp(-cL)$.
By \Cref{l.part free energy to whole free energy} (plus shear and translation invariance properties), 
we can lower bound the probability of
\begin{equation}  \label{eq:Lbetai}
    \cL^\beta_i[x_i, L^{-1/2}(\log L)^2] \ge h_i+h_{i+1}-\beta^{-1}\log(2H^{1/2}) + Mr_\beta, |\pi^*(s_i)-x_i|\le w_\beta,
\end{equation}
conditional on $\vec\cL^\beta\approx \vec h$, by $c\exp(-CM^2L^{1/2}r_\beta)-C\exp(-cL^{3/2})$.
(Here we also use that 
$$\log((s_i-s_{i-1})^{-1/2}h_i^{1/2}+(s_{i+1}-s_i)^{-1/2}h_{i+1}^{1/2}) = \log(2H^{1/2}) + O(L^{-1/9})$$
and $\beta^{-1}L^{-1/9}<r_\beta$.)
By the above coupling to independence, the probability of \eqref{eq:Lbetai} for each $i$, conditional on  $\vec\cL^\beta\approx \vec h$, is also $>c\exp(-CM^2L^{1/2}r_\beta)-C\exp(-cL^{3/2})$.
Plugging these into \eqref{e.L lower bound via coalescence} finishes the proof of \eqref{eq:kppwf2}.
\qed

\section{Global and segment maximizers}  \label{s:gasm}

In \Cref{p.closeness of maximizer and polymer} we have shown that (conditional on the upper tail) the polymer $\Gamma_0$ at any $s\in (0,1)$ is close to the maximizer $\pi(s)$, whose definition we recall is 
$$\pi(s) = \argmax_{x\in\R} \cL^\beta(0,0;x,s) + \cL^\beta(x,s; 0,1).$$
In zero temperature, we also use $\pi$ to denote the geodesic $\pi_0$, which is defined through the same expression.

As already alluded, in our computations to obtain finite dimensional Gaussianity, it is more convenient to use another optimizer
$$\pi^*(s) = \argmax_{x\in\R} \cL^\beta(0,s_-;x,s) + \cL^\beta(x,s; 0,s_+).$$
Here $s_-\in [0, s)$ and $s_+\in (s, 1]$.
In the rest of this section, all the constants can depend on $s_-, s, s_+$.

The main goal of this section is the following closeness between $\pi(s)$ and $\pi^*(s)$.
As in previous sections, we write $\cL^\beta=\cL^\beta(0,0;0,1)$.
\begin{proposition}\label{l.strip maximizer is same as top to intermediate maximizer}
For any $L\ge 2$, we have
\[
\P\left(|\pi(s) - \pi^*(s)| > L^{-1/2}(\log L)^2 \midd \cL^\beta>L \right) < C\exp(-c(\log L)^2).
\]
\end{proposition}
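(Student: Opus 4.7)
The plan is to show that the functionals $F(x) := \cL^\beta(0,0;x,s) + \cL^\beta(x,s;0,1)$ and $G(x) := \cL^\beta(0,s_-;x,s) + \cL^\beta(x,s;0,s_+)$, whose argmaxes are $\pi(s)$ and $\pi^*(s)$ respectively, differ by an $x$-independent random constant up to an additive error $O(e^{-cL})$, uniformly on a window of size $L^{-1/4}\log L$ around $0$, on an event of conditional probability at least $1-C\exp(-c(\log L)^2)$. The tent structure of $G$ near $\pi^*(s)$ produces slopes of order $L^{1/2}$, so any deviation of $L^{-1/2}(\log L)^2$ from $\pi^*(s)$ drops $G$ by at least $c(\log L)^2$; an $F - G$ discrepancy of size $O(e^{-cL})$ is then negligible and forces $|\pi(s) - \pi^*(s)| \le L^{-1/2}(\log L)^2$.

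First, I would work on a $\mrm{Good}$ event, holding with probability at least $1-C\exp(-c(\log L)^2)$ conditional on $\cL^\beta > L$, specified by: (i) $|\pi(s_-)|, |\pi(s)|, |\pi(s_+)| \le L^{-1/4}\log L$ by \Cref{p.maximizer tf} (or \Cref{l.trans-fluc} in the $\beta=\infty$ case); (ii) the partial free energies $\cL^\beta(\pi(s_{i-1}),s_{i-1}; \pi(s_i), s_i)$ over the sub-strips determined by $(s_-, s, s_+)$ are each within $L^{5/8}\log L$ of their proportional values (in particular $\cL^\beta(\pi(s_-),s_-;\pi(s),s) \ge (s-s_-)L - L^{5/8}\log L$, and analogously for $[s,s_+]$), obtained by combining \Cref{l.inf control} with \Cref{l.k-point proportionality}; and (iii) the polymer $\Gamma_0$ is within $L^{-1/2}\log L$ of $\pi(s_-)$ at time $s_-$ and of $\pi(s_+)$ at time $s_+$, from \Cref{p.closeness of maximizer and polymer}.

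Second, on $\mrm{Good}$, the upper-tail of $\cL^\beta(\pi(s_-),s_-;\pi(s),s)$ lets me invoke \Cref{prop:dp-tent-coal} on the strip $[s_-, s]$ (via the shift-shear transformation relating it to $\cS^\beta_{s-s_-}$), yielding, uniformly for $|z|, |x| \le L^{-1/4}\log L$,
\[
\cL^\beta(z, s_-; x, s) = \cL^\beta(0, s_-; x, s) + \bigl[\cL^\beta(z, s_-; 0, s) - \cL^\beta(0, s_-; 0, s)\bigr] + O(e^{-cL}),
\]
with zero error in the $\beta=\infty$ case. Combining with the composition identity $\cL^\beta(0,0;x,s) = \maxbeta_z [\cL^\beta(0,0;z,s_-) + \cL^\beta(z,s_-;x,s)]$, and using (iii) to restrict the $\maxbeta$ to $|z - \pi(s_-)| \le L^{-1/2}\log L$ with an additive error $O(e^{-cL})$ uniform in $x$, then pulling the $z$-independent factor $\cL^\beta(0,s_-;x,s)$ out of the $\maxbeta$, yields
\[
\cL^\beta(0,0;x,s) = \cL^\beta(0,s_-;x,s) + K_- + O(e^{-cL})
\]
for an $x$-independent random constant $K_-$. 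A symmetric argument on $[s, s_+]$ gives $\cL^\beta(x,s;0,1) = \cL^\beta(x,s;0,s_+) + K_+ + O(e^{-cL})$. Summing, $F(x) - G(x) = K_- + K_+ + O(e^{-cL})$ uniformly on $|x| \le L^{-1/4}\log L$.

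To conclude, both $\pi(s)$ and $\pi^*(s)$ lie in the window $|x| \le L^{-1/4}\log L$ with conditional probability at least $1 - C\exp(-c(\log L)^2)$, applying \Cref{p.maximizer tf} (or \Cref{l.trans-fluc}) to $[0,1]$ and to $[s_-, s_+]$ respectively, using (ii) to supply the upper-tail for $[s_-, s_+]$ in the latter. By \Cref{cor.tent-pr} applied to $G$ around $\pi^*(s)$, a point $x$ with $|x - \pi^*(s)| > L^{-1/2}(\log L)^2$ satisfies $G(x) \le G(\pi^*(s)) - c(\log L)^2$, which together with $F = G + K_- + K_+ + O(e^{-cL})$ forces $F(x) < F(\pi^*(s))$, ruling out $\pi(s) = x$. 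The main obstacle will be verifying in positive temperature that the convolution defining $\cL^\beta(0,0;x,s)$ is dominated by an $x$-independent $z$-window around $\pi(s_-)$ uniformly over $x$ in our window; this requires transferring the polymer concentration of \Cref{p.closeness of maximizer and polymer}, which directly controls the polymer from $(0,0)$ to $(0,1)$, to the convolution kernel with the moving endpoint $(x,s)$, which is feasible because the integrand $\cL^\beta(0,0;z,s_-)$ is peaked at $\pi(s_-)$ independently of $x$ while the $x$-dependent factor $\cL^\beta(z,s_-;x,s)$ only shifts the profile by a mild amount for $x$ in the window.
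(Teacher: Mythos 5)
Your proposal takes a genuinely different route from the paper. You want to show directly that $F(x) - G(x)$ is an $x$-independent random constant plus an $O(e^{-cL})$ error on the window $|x|\le L^{-1/4}\log L$, and then conclude from the tent slope that the argmaxes of $F$ and $G$ can differ by at most $L^{-1/2}(\log L)^2$. The paper instead discretizes the conditioning $\cL^\beta > L$ into a union of events $\vec\cL^\beta\approx\vec h$ anchored at a deterministic grid of anchors $\vec x$, shows $\pi(s)$ and $\pi^*(s)$ are both close to the middle anchor $x_2$ on each such event (Lemmas~\ref{lem:piss516}, \ref{l:rough-clbi}, \ref{lem:condfar}), and sums. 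Your ``constant-difference'' idea is conceptually appealing and does avoid explicitly comparing $\pi(s)$ and $\pi^*(s)$ against a third point.

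However, there are substantial gaps which, if filled in rigorously, would reconstruct much of the machinery the paper develops. First, you invoke \Cref{prop:dp-tent-coal} and \Cref{cor.tent-pr} directly under the global conditioning $\cL^\beta>L$, but both are stated under a conditioning on the one-point free energy across a \emph{fixed} spatial-temporal pair, e.g.\ $L<\cS^\beta_t(0,0)<L^+$. Passing from the global conditioning to the per-strip conditioning requires a Bayes-type transfer controlling the ratio $\P(\text{per-strip event})/\P(\cL^\beta>L)$, which is exactly what the $\vec\cL^\beta\approx\vec h$ decomposition in the paper provides. Second, you anchor both the coalescence (at $\pi(s_-)$, $\pi(s)$) and the tent bound (at $\pi^*(s)$) at \emph{random} points, but these lemmas hold for deterministic anchors; applying them at random anchors forces a union over a grid of deterministic anchor locations and values, which is again the paper's structure. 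In fact your phrasing ``the upper-tail of $\cL^\beta(\pi(s_-),s_-;\pi(s),s)$ lets me invoke \Cref{prop:dp-tent-coal}'' is not a valid application, since the endpoints in the lemma must be nonrandom; you would need to pass to $\cL^\beta(0,s_-;0,s)$ and then control the bias introduced. Third, the dominance of the $z$-window in the composition $\cL^\beta(0,0;x,s)=\maxbeta_z[\cL^\beta(0,0;z,s_-)+\cL^\beta(z,s_-;x,s)]$, uniformly over $x$ in the window, requires a transversal-fluctuation estimate for the polymer/geodesic to the moving endpoint $(x,s)$ conditional on $\cL^\beta>L$, not the endpoint $(0,1)$ which \Cref{p.closeness of maximizer and polymer} and \Cref{l.trans-fluc} treat. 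You flag this as ``the main obstacle'' and sketch a heuristic, but the actual argument needs the same conditioning-transfer machinery as the other two points and is not a minor patch. In short, your approach would not produce a materially shorter proof than the paper's, and as stated it relies on applying conditional lemmas in a form they are not proved in.
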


The case where $s_-=0$ and $s_+=1$ is obvious, since then $\pi(s) = \pi^*(s)$.
Below we write the proof for the case where $s_->0$ and $s_+<1$.
The proof for the other two cases are similar and we omit them.

For the rest of this section, we shall take the shorthand $\cL^\beta_i=\cL^\beta(x_{i-1},s_{i-1}; x_i, s_i)$ for some $\vec x = (x_1, x_2, x_3)\in \R^3$,  and $x_0=x_4=0$, $s_0=0$, $s_1=s_-$, $s_2=s$, $s_3=s_+$, $s_4=1$.
We also write $\smash{\vec\cL^\beta}$ for the vector $\{\cL^\beta_i\}_{i=1}^4$; in other words, we now have four parts instead of two like in previous sections.
For any vector $\vec h\in \R^k$,  $\vec\cL^\beta\approx \vec h$ denotes the event where $\cL^\beta_i\in h_i+[0, e^{-L}]$ for each $i$.

Our general idea is to replace the conditioning $\cL^\beta>L$ by $\vec\cL^\beta\approx \vec h$, for some reasonable $\vec x\in \R^3$ and $\vec h$.
Then we bound $|\pi(s)-x_2|$ conditional on $\vec\cL^\beta\approx \vec h$, using coalescence and the tent picture.

We first restrict $\pi^*$ to an interval of length $<L^{1/2}$.
\begin{lemma}  \label{lem:piss516}
For each $i=1, 2, 3$ and $L$ large, 
\[
\P\left( |\pi^*(s_i) | > L^{5/16} \log L  \midd \cL^\beta>L \right) < C \exp(-c(\log L)^2).
\]
\end{lemma}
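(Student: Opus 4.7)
The plan is to transfer the global conditioning $\cL^\beta > L$ to a conditioning on the segment free energies $\vec\cL^\beta \approx \vec h$ at peak locations $\vec x = (x_1, x_2, x_3)$ near zero, and then invoke coalescence and the tent picture to control $\pi^*(s_i)$ under this refined conditioning. The scale $L^{5/16}\log L$ sits comfortably between the typical argmax scale $L^{-1/4}$ and the coalescence range $\Theta(L^{1/2})$, which makes the argument quite lossy. Concretely, first I would prove a four-segment analog of Lemma~\ref{l.global conditioning to two} by the same Bayes-type decomposition, reducing $\P(\,\cdot \mid \cL^\beta > L)$ to $\sup \P(\,\cdot \mid \vec\cL^\beta \approx \vec h)$ over $\vec x \in L^{-2}\Z^3$ with $|x_j| \leq C\log(L)L^{-1/4}$ (via Proposition~\ref{p.maximizer tf}) and $\vec h$ with $\sum_j h_j$ close to $L$ (via Lemma~\ref{l.inf control}) and $|h_j - (s_j - s_{j-1})L| \leq L^{5/8}\log L$ (via Lemma~\ref{l.k-point proportionality}), at the cost of a multiplicative factor $C\exp(C(\log L)^2 L^{1/2})$. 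It then suffices to bound the inner supremum by $\exp(-C(\log L)^2 L^{1/2})$.

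For such fixed nice $\vec x, \vec h$, I would invoke Proposition~\ref{prop:dp-tent-coal} on each of the sub-strips $[s_{i-1}, s_i]$ and $[s_i, s_{i+1}]$, where the relevant segment free energy is of order $L$ by proportionality. The coalescence quadrangle relation in each sub-strip then implies, with probability $\geq 1 - C\exp(-cL^{3/2})$, that on the coalescence window $|y - x_i| \leq c L^{1/2}$,
\[
f(y) := \cL^\beta(0, s_{i-1}; y, s_i) + \cL^\beta(y, s_i; 0, s_{i+1})
\]
equals, up to a $y$-independent additive constant, the pinned profile $\cL^\beta(x_{i-1}, s_{i-1}; y, s_i) + \cL^\beta(y, s_i; x_{i+1}, s_{i+1})$. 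Conditional on $\vec\cL^\beta \approx \vec h$ with $h_i, h_{i+1}$ of order $L$, this pinned profile has tent shape peaked near $x_i$ with slopes of order $\pm 2 L^{1/2}$, quantified by Corollary~\ref{cor.tent-pr}. For $|y - x_i| \geq \tfrac{1}{2}L^{5/16}\log L$ the mean tent value falls below its peak by $\Omega(L^{13/16}\log L)$, while the tent corollary bounds the fluctuation at deviation parameter $M$ by $M L^{5/32}(\log L)^{O(1)}$; so the argmax event forces $M \gtrsim L^{21/32}(\log L)^{-O(1)}$, producing a tail of $\exp(-c L^{21/16}(\log L)^{-O(1)})$. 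Since $|x_i| \leq C\log(L)L^{-1/4} \ll \tfrac{1}{2}L^{5/16}\log L$, the triangle inequality converts this into the desired bound on $|\pi^*(s_i)|$, and this tail comfortably absorbs the $\exp(C(\log L)^2 L^{1/2})$ cost accumulated in the reduction step.

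The main obstacle I anticipate is carrying out the four-segment reduction cleanly: setting up the mesh and the good value set so that the proportionality constraints match the hypotheses of Proposition~\ref{p.k-point part to whole free energy}, and handling the boundary cases $i = 1$ or $i = 3$ where one of $s_{i-1}, s_{i+1}$ equals $0$ or $1$. A secondary subtlety is lining up the peak conditioning $\vec\cL^\beta \approx \vec h$ with the one-point sheet conditioning required in Proposition~\ref{prop:dp-tent-coal}, which involves using the shift and scaling invariances of $\cL^\beta$ in each sub-strip together with the independence of $\cL^\beta$ across disjoint time intervals.
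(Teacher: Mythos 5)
Your proposal takes a genuinely different and substantially heavier route than the paper's proof. The paper's argument is elementary and avoids the peak-conditioning machinery, coalescence, and tent-fluctuation estimates entirely. It first applies Lemma~\ref{l.inf control} to get, conditional on $\cL^\beta>L$ and with probability at least $1-C\exp(-c(\log L)^2)$, the event $\cE_1$ that $\inf_{|x_1|,|x_2|,|x_3|\le L^{-1/4}\log L}\sum_{i=1}^4\cL^\beta_i\ge L-L^{5/8}\log L$. It then notes the elementary event inclusion: if moreover $|\pi^*(s_2)|>L^{5/16}\log L$, then since $\pi^*(s_2)$ maximizes the middle two segments with $x_1=x_3=0$, the four-segment sum evaluated at $x_2=\pi^*(s_2)$ is at least its value at $\vec x=\vec 0$, so the event $\cE_2:=\bigl\{\sup_{|x_2|>L^{5/16}\log L,\,x_1=x_3=0}\sum_{i=1}^4\cL^\beta_i\ge L-L^{5/8}\log L\bigr\}$ occurs. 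Finally $\P(\cE_2)$ is bounded \emph{unconditionally}: shear invariance penalizes $x_2$ by $x_2^2/(s_2-s_1)+x_2^2/(s_3-s_2)\gtrsim L^{5/8}(\log L)^2$, so Corollary~\ref{cor.p.tail bound for sum} and a union bound give $\P(\cE_2)<C\exp(-\tfrac{4}{3}L^{3/2}-cL^{9/8}(\log L)^2)$, and dividing by the lower bound on $\P(\cL^\beta>L)$ from Theorem~\ref{lem:fh-ut} finishes, the total being dominated by $\P(\cE_1^c\mid\cL^\beta>L)$. No Bayes decomposition over $\vec h$ and no comparison of conditional probabilities is needed; this is precisely why the paper proves this lemma before deploying the full four-segment conditioning apparatus in Lemma~\ref{lem:condfar} and Proposition~\ref{l.strip maximizer is same as top to intermediate maximizer}.

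Your route would also reach the conclusion and your arithmetic is correct (tent drop $\Omega(L^{13/16}\log L)$ versus fluctuation $O(ML^{5/32}(\log L)^{O(1)})$ forces $M\gtrsim L^{21/32}(\log L)^{-O(1)}$, and $\exp(-cL^{21/16}(\log L)^{-O(1)})$ easily absorbs the reduction cost $\exp(C(\log L)^2L^{1/2})$). However there is a genuine gap: your tent and coalescence analysis is confined to the window $|y-x_i|\le cL^{1/2}$, whereas $\pi^*(s_i)$ ranges over all of $\R$. For $|y|\gtrsim L^{1/2}$ the quadrangle relation from Proposition~\ref{prop:dp-tent-coal} is not available and Corollary~\ref{cor.tent-pr} applies only on $[-L^{1/2},L^{1/2}]$, so you must separately rule out the argmax escaping the coalescence window, for example by stationarity of $\cL^\beta(0,s_{i-1};\cdot,s_i)+(\cdot)^2/(s_i-s_{i-1})$ and the unconditional upper tail, as is done for the analogous issue in the proofs of \eqref{eq:pisgCexc} and Lemma~\ref{lem:condfar}. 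A smaller point: Proposition~\ref{p.maximizer tf} is stated for the polymer ($\beta=1$) setting; at $\beta=\infty$ you should instead rely on Lemma~\ref{l.trans-fluc} to constrain the mesh points $\vec x$ in your reduction step.
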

\begin{proof}
We write the proof for $i=2$; the other two cases follow similarly.

Let $\cE$ denote the event where $|\pi^*(s) | > L^{5/16} \log L$.
By \Cref{l.inf control} we have $\P(\cE_1 \mid \cL^\beta > L) > 1- C\exp(-c(\log L)^2)$, where $\cE_1$ is the event
\[
 \inf_{|x_1|, |x_2|, |x_3| \leq L^{-1/4}\log L} \sum_{i=1}^4\cL^\beta_i \ge  L - L^{5/8}\log L.
\]
We have that $\cE\cap\cE_1\subset \cE_2$, where $\cE_2$ is the event
\[
\sup_{|x_2|>L^{5/16} \log L, x_1=x_3=0}\sum_{i=1}^4\cL^\beta_i \ge  L - L^{5/8}\log L.
\]
By the shear invariance property and \Cref{cor.p.tail bound for sum}, and a union bound, we have that
$\P(\cE_2)<C\exp\left(-\frac{4}{3}L^{3/2} -cL^{9/8}(\log L)^2 \right)$.
By combining this with the tail estimate \Cref{lem:fh-ut} we get that $\P(\cE_2\mid \cL^\beta>L)<C\exp(-cL^{9/8}(\log L)^2)$.
As we can upper bound $\P(\cE\mid \cL^\beta>L)$ by $\P(\cE_1^c \mid \cL^\beta>L) + \P(\cE_2\mid \cL^\beta>L)$, the conclusion follows.
\end{proof}

In replacing the conditioning $\cL^\beta>L$ into $\vec\cL^\beta\approx \vec h$, we will need to bound the ratio
$\P( \vec\cL^\beta\approx \vec h)\P(\cL^\beta>L)^{-1}$, for which we need that $\sum_{i=1}^4 h_i >L-\log L$.
For this we will invoke \Cref{p.sum close under conditioning k-point}, as well as a rough lower bound of $\cL^\beta_i$ conditional on the upper tail $\cL^\beta>L$.

\begin{lemma}  \label{l:rough-clbi}
For any $\vec x\in [-L^{5/16}\log L, L^{5/16}\log L]^3$, and $i\in\{1, 2, 3, 4\}$, we have
\[
\P\left( \cL^\beta_i<(s_i-s_{i-1})L - L^{27/32}\log L) \midd \cL^\beta>L \right) < C\exp(-c(\log L)^2).
\]
\end{lemma}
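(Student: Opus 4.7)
The plan is to apply Bayes' theorem combined with \Cref{p.k-point part to whole free energy} and the proportionality principle of \Cref{l.k-point proportionality}. I split the event $\{\cL^\beta_i < c_i\} \cap \{\cL^\beta > L\}$ (where $c_i = (s_i - s_{i-1})L - L^{27/32}\log L$) according to the value of $H = \sum_{j=1}^{4}\cL^\beta_j$ at the fixed $\vec x$: either $H < L - \Delta$ (Case A) or $H \ge L - \Delta$ (Case B), with $\Delta = \epsilon L^{11/16}(\log L)^2$ for a small $\epsilon > 0$ to be chosen later.

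For Case A, I use Bayes' formula together with independence across time strips to write the joint probability as a sum over $\vec h$ of $\P(\cL^\beta > L \mid \vec\cL^\beta \approx \vec h)\prod_j \P(\cL^\beta_j \approx h_j)$, and bound each factor separately: the former by \eqref{eq:kppwf3}, giving $C\exp(-2(L - H)L^{1/2}(1 + o(1)))$ where the rate constant $2$ matches the tent slope $\pm 2L^{1/2}$; the latter by \Cref{l.upper tail} applied via shear invariance (whose deterministic shift $a_j = (x_j - x_{j-1})^2/(s_j - s_{j-1}) \le CL^{5/8}(\log L)^2$ is easily absorbed), yielding (at the optimum over $\vec h$ at fixed $H$, by convexity of $x^{3/2}$) a bound of $\exp(-\tfrac{4}{3}H^{3/2}(1 + o(1)))$. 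The Taylor expansion $-\tfrac{4}{3}H^{3/2} = -\tfrac{4}{3}L^{3/2} + 2L^{1/2}(L - H) - \tfrac{1}{2}L^{-1/2}(L - H)^2 + O((L - H)^3/L^{3/2})$ makes the linear terms cancel against $-2(L - H)L^{1/2}$, leaving the negative quadratic $-\tfrac{1}{2}L^{-1/2}(L - H)^2$ as the leading correction. Integrating and dividing by $\P(\cL^\beta > L) \ge c\exp(-\tfrac{4}{3}L^{3/2} - CL^{3/4})$ bounds the Case A contribution to the ratio by $\exp(-c\epsilon^2 L^{7/8}(\log L)^4 + CL^{3/4})$, which is $\ll \exp(-c(\log L)^2)$.

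For Case B, the constraint $\cL^\beta_i < c_i$ together with $H \ge L - \Delta$ and $(s_i - s_{i-1})\Delta \ll L^{27/32}\log L$ forces $\cL^\beta_i$ to be at least $cL^{27/32}\log L$ below its proportional value $(s_i - s_{i-1})H$. A direct extension of \Cref{l.k-point proportionality} to fixed $\vec x \neq 0$ (where the shear shifts $a_j$ are subleading relative to the deviation scale $L^{19/32}\log L$) and to the slightly weakened conditioning $\sum_i \cL^\beta_i > L - \Delta$ then bounds the corresponding conditional probability by $\exp(-cL^{19/16}(\log L)^2)$. Combined with the upper tail $\P(H \ge L - \Delta) \le \exp(-\tfrac{4}{3}L^{3/2} + 2L^{1/2}\Delta + CL^{3/4})$ from \Cref{cor.p.tail bound for sum} and dividing by $\P(\cL^\beta > L)$, the Case B ratio is at most $\exp(-(c - 2\epsilon)L^{19/16}(\log L)^2 + CL^{3/4})$, again $\ll \exp(-c(\log L)^2)$ once $\epsilon$ is fixed small enough.

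The main obstacle is the precise matching of constants in Case A: the rate constant in \eqref{eq:kppwf3} must be exactly $2$ (up to $1 + o(1)$) so that the linear Taylor terms cancel, leaving the Gaussian correction $-\tfrac{1}{2}L^{-1/2}(L - H)^2$ as the dominant suppression. This rate $2$ arises from the Brownian-hitting-probability rate $\exp(-2ML^{1/2})$ for a rate-$2$ Brownian motion with drift $-2L^{1/2}$ to exceed level $M$, as derived inside the proof of \Cref{l.part free energy to whole free energy}, and it is precisely this matching that forces $c_i$ to be set at the threshold $(s_i - s_{i-1})L - L^{27/32}\log L$.
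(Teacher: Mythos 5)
Your approach---a direct Bayesian decomposition on the value of $H = \sum_j \cL^\beta_j$ at the fixed $\vec x$---is genuinely different from the paper's, which first controls the free energies at $\vec y = \vec 0$ via \Cref{l.inf control} and \Cref{l.k-point proportionality}, and then transfers the resulting conditioning on $\{\cL^\beta_{j,0}\}$ to the quantities at $\vec x$ via shift-invariance (\Cref{lem:fr-shift}), the quadrangle inequality, and the tent estimate \Cref{l.control near tent center}. The paper's route never requires any sharp rate constant in a conditional upper-tail bound; only the exact constant $\tfrac{4}{3}$ in \Cref{lem:fh-ut} and \Cref{cor.p.tail bound for sum} is used, together with soft $\exp(-cK^2)$-type bounds.

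The main gap is in your Case A, and you correctly identify it yourself but then wave it away. The bound \eqref{eq:kppwf3} reads $\P(\cL^\beta > H - (k-1)\beta^{-1}\log(2H^{1/2}) + M \mid \vec\cL^\beta \approx \vec h) < C\exp(-cML^{1/2})$ for an \emph{unspecified} constant $c > 0$. Your Taylor-cancellation computation requires $c \geq 2(1 - o(1))$: writing $H' = L - H$, the numerator of your ratio is bounded by $\exp(-\tfrac{4}{3}L^{3/2} + (2 - c)H'L^{1/2} - \tfrac{1}{2}H'^2 L^{-1/2} + \ldots)$, and dividing by $\P(\cL^\beta > L) \geq \exp(-\tfrac{4}{3}L^{3/2} - CL^{3/4})$ leaves $\exp((2-c)H'L^{1/2} - \tfrac{1}{2}H'^2L^{-1/2} + CL^{3/4})$. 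If $c < 2$, the positive linear term dominates the quadratic for all $H' \lesssim (2-c)L$, so the bound is useless precisely over the range $H' \in (\Delta, (2-c)L)$ that you need to control. Tracing through the proof of \eqref{eq:ptfree3} inside \Cref{l.part free energy to whole free energy}, the constant $c$ there comes from plugging the fluctuation bounds \eqref{eq:S11}--\eqref{eq:S12} (themselves from \Cref{cor.tent-pr}, a dyadic union bound whose Gaussian constant is not optimized) into the log-integral and reparametrizing $M' = CM^2 L^{-1/2}$; nothing in that argument pins $c$ down to $2$. Your claim that the rate $2$ is ``derived inside the proof of \Cref{l.part free energy to whole free energy}'' conflates the Brownian-hitting heuristic (which does give $2$) with the rigorous bound proved there (which does not), and sharpening the lemma to the needed form $\exp(-(2 + o(1))ML^{1/2})$ would be a nontrivial reworking, essentially amounting to a conditional analogue of \Cref{t.comparison}.

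Two secondary issues, less severe but still unaddressed: (a) the Bayes decomposition over $\vec h$ at fixed $H$ includes vectors outside the hypothesis $|h_i - (s_i - s_{i-1})H| < L^{8/9}$ required by \Cref{p.k-point part to whole free energy}, so a separate treatment of those $\vec h$ is needed (this could likely be absorbed using \Cref{l.k-point proportionality}, but you do not say so); (b) the ``direct extension'' of \Cref{l.k-point proportionality} to $\vec x \neq 0$ and to the weakened conditioning $\sum_i \cL^\beta_i > L - \Delta$ is plausible---the shear shifts $(x_j - x_{j-1})^2/(s_j - s_{j-1}) \lesssim L^{5/8}(\log L)^2$ are indeed subleading to the deviation scale $L^{27/32}\log L$---but it is asserted, not checked.
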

\begin{proof}
We prove this for $i=2$, and the other cases would follow verbatim.

We use $\cL^\beta_{j,0}$ to denote  $\cL^\beta_j$ with each of $x_1, x_2, x_3$ replaced by $0$.
Let $\cE$ denote the event $\cL^\beta_2<(s_2-s_1)L - L^{27/32}\log L$, and define $\cE_0$ by
\[
\cE_0 = \left\{\cL^\beta_{j,0}>(s_j-s_{j-1})L-L^{53/64}\right\},
\]
for each $j=1,2,3,4$.
By \Cref{l.inf control}, we have
\begin{equation}  \label{eq:l:rough-clbi1}
\P\left(\sum_{j=1}^4 \cL^\beta_{j,0} <L-L^{5/8}\log L \midd \cL^\beta>L\right) < C\exp(-c(\log L)^2).    
\end{equation}
By \Cref{l.k-point proportionality}, we have
\[
  \P\left(\cE_0^c, \sum_{j=1}^4 \cL^\beta_{j,0} \ge L-L^{5/8}\log L \right) \\ <C\exp(-cL^{37/32})\cdot\P\left(\sum_{j=1}^4 \cL^\beta_{j,0} \ge L-L^{5/8}\log L \right).  
\]
By \Cref{lem:fh-ut} and \Cref{cor.p.tail bound for sum}, we can bound the ratio of the last factor over $\P(\cL^\beta>L)$ by $C\exp(CL^{9/8}\log L)$.
Therefore we have
\[
 \P\left(\cE_0^c, \sum_{j=1}^4 \cL^\beta_{j,0} \ge L-L^{5/8}\log L \midd \cL^\beta>L \right) <C\exp(-cL^{37/32}).
\]
Combining this with \eqref{eq:l:rough-clbi1} implies that
$\P(\cE_0^c \mid \cL^\beta>L ) < C\exp(-c(\log L)^2)$.

We next upper bound $\P(\cE\mid \cE_0)$.
In the case where $x_1x_2 <0$,  by the shift-invariance (\Cref{lem:fr-shift})
\[
\P(\cE\mid \cE_0)=\P\left(\cL^\beta(0,s_1;x_2-x_1,s_2) < (s_2-s_1)L - L^{27/32}\log L  \midd \cE_0 \right) .
\]
By \Cref{l.control near tent center} and using that $|x_1|, |x_2|\le L^{5/16}\log L$, this is bounded by $C\exp(-cL^{11/8}\log L)$.

In the case where $x_1x_2\ge 0$, 
by \Cref{l.control near tent center} and using that $|x_1|, |x_2|\le L^{5/16}\log L$, we have
\[
\P\left(\cL^\beta(0,s_1;x_2,s_2) < (s_2-s_1)L - L^{27/32}(\log L)/2  \midd \cE_0  \right) < C\exp(-cL^{11/8}\log L),
\]
\[
\P\left(\cL^\beta(x_1,s_1;0,s_2) < (s_2-s_1)L - L^{27/32}(\log L)/2  \midd \cE_0  \right) < C\exp(-cL^{11/8}\log L).
\]
Then by the quadrangle inequalities \eqref{eq:DL-quad} and \eqref{eq:FR-quad}, we still have $\P(\cE\mid \cE_0)<C\exp(-cL^{11/8}\log L)$.
Then since (by \Cref{lem:fh-ut}) $\P(\cE_0)\P(\cL^\beta>L)^{-1}< C\exp(CL^{85/64})$,
we have
\begin{equation*}
    \begin{split}
\P(\cE\mid \cL^\beta>L) & < C\exp(-c(\log L)^2) +\P(\cE\cap \cE_0 \mid \cL^\beta>L)       \\
& \le C\exp(-c(\log L)^2) +\P(\cE\cap \cE_0)\P(\cL^\beta>L)^{-1}
\\
& \le C\exp(-c(\log L)^2) +\P(\cE\mid \cE_0)\P(\cE_0)\P(\cL^\beta>L)^{-1}\\
& <C\exp(-c(\log L)^2).
    \end{split}
\end{equation*}
Thus the conclusion follows.
\end{proof}

We next prove that, given the tail $\vec\cL^\beta \approx \vec h$, $\pi(s)$ should be close to $x_2$.
Define
\begin{multline*}
   \mrm{Val}^4=\Bigg\{ \vec h\in \R^4: h_i \ge (s_i-s_{i-1})L-L^{27/32}\log L, \; \forall i=1,2,3,4;\\ L-\log L \le \sum_{i=1}^4 h_i <L+(\log L)^2 \Bigg\}. 
\end{multline*}
\begin{lemma}  \label{lem:condfar}
For any $\vec x\in [-L^{5/16}\log L, L^{5/16}\log L]^3$, and $\vec h \in \mrm{Val}^4$,
\[
\P\left( |\pi(s)-x_2|> L^{-1/2}(\log L)^2/2 \midd \vec\cL^\beta \approx \vec h \right) < C\exp(-cL^{1/2}(\log L)^2).
\]
\end{lemma}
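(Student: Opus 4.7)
\emph{Plan.} The driving idea is that, under the conditioning $\vec\cL^\beta\approx\vec h$, the profile $F(x):=\cL^\beta(0,0;x,s_2)+\cL^\beta(x,s_2;0,1)$ is localized near $x_2$ on the scale $L^{-1/2}$, owing to a combined two-sided tent at $x_2$ with slopes $\pm 2L^{1/2}$ generated by the inner strips $[s_1,s_2]$ and $[s_2,s_3]$. The argument has three ingredients: an additive decomposition of $F$ via coalescence in the outer time strips, a Brownian comparison in the inner strips, and a Gaussian tail bound on the maximizer.

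\emph{Step 1 (decomposition).} Using \Cref{prop:dp-tent-coal} in the strips $[0,s_2]$ and $[s_2,1]$ (after the appropriate shift and scaling), together with the one-sided tent control of \Cref{cor.tent-pr} applied under the conditionings $\cL^\beta_1\approx h_1$ and $\cL^\beta_4\approx h_4$, I would show that on an event of conditional probability $\ge 1-C\exp(-cL^{3/2})$,
\[
\bigl|F(x)-h_1-h_4-G(x)\bigr|\le C\exp(-cL)\quad\text{for all }|x-x_2|\le L^{1/2}/2,
\]
where $G(x):=\cL^\beta(x_1,s_1;x,s_2)+\cL^\beta(x,s_2;x_3,s_3)$. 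The lower bound is subadditivity (zero temperature) or concentration of the $y$-convolution around $y=x_1$ and $z=x_3$ (positive temperature, via \Cref{p.k-point part to whole free energy}); the upper bound is the coalescence assertion that in
$\cL^\beta(0,0;x,s_2)=\maxbeta_{y}\bigl[\cL^\beta(0,0;y,s_1)+\cL^\beta(y,s_1;x,s_2)\bigr]$
the optimizer/integrand concentrates at $y=x_1$ because the conditioning $\cL^\beta_1\approx h_1$ creates a sharp tent peak there.

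\emph{Step 2 (Brownian coupling and Gaussian tail).} Conditional on $\cL^\beta_2\approx h_2$ and $\cL^\beta_3\approx h_3$, which by disjoint-strip independence (\Cref{l.dl symmetries} / \Cref{l.Z symmetries}) is independent of the outer-strip conditionings, \Cref{prop:dp-tent-bcomp} applied to the two tents $\cL^\beta(x_1,s_1;\cdot,s_2)$ and $\cL^\beta(\cdot,s_2;x_3,s_3)$ couples $G$ with two independent rate-$2$ Brownian bridges $B_L,B_R$ so that, on an event of probability $\ge 1-C\exp(-cL^{3/2})$,
\[
G(x)-G(x_2)=-4L^{1/2}|x-x_2|+B_L(x-x_2)\one_{x<x_2}+B_R(x-x_2)\one_{x>x_2}+O(e^{-cL})
\]
for $|x-x_2|\le L^{-1/2}\log L$. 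By Step~1, the event $|\pi(s)-x_2|>d:=L^{-1/2}(\log L)^2/2$ forces $\sup_{|y|\ge d}G(x_2+y)\ge G(x_2)-O(e^{-cL})$, which under this coupling requires a rate-$2$ Brownian bridge of variance $\asymp 2^k d$ to exceed $4L^{1/2}\cdot 2^k d$ somewhere in an interval of length $2^k d$. The Gaussian/reflection bound (\Cref{l.normal bounds}) yields probability $\le \exp(-cL\cdot 2^k d)$ at dyadic scale $k$, and summing geometrically gives $C\exp(-cLd)=C\exp(-cL^{1/2}(\log L)^2)$, matching the target.

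\emph{Far field and main obstacle.} For $|x-x_2|>L^{-1/2}\log L$ beyond the coupling window, \Cref{cor.tent-pr} applied to $\cL^\beta(0,0;\cdot,s_2)+\cL^\beta(\cdot,s_2;0,1)$ under the conditioning already forces a drop far exceeding $(\log L)^2$ below the peak, ruling out the argmax there on an event absorbed into $C\exp(-cL^{3/2})$; the restriction $|\pi(s)|\le L^{5/16}\log L$ needed to invoke these tent estimates is supplied by \Cref{lem:piss516}. The hard part is Step~1: \Cref{prop:dp-tent-coal} is stated conditional on a single sheet value, whereas $\vec\cL^\beta\approx\vec h$ pins four different weights at once. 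One handles this by conditioning strip by strip and invoking disjoint-strip independence, but care is needed to keep the accumulated $e^{-cL}$ errors under control and to verify, using $\vec h\in\mrm{Val}^4$, that the peak heights $h_1,\ldots,h_4$ are close enough to the tent-compatible values $(s_i-s_{i-1})L$ for coalescence to apply at the scale demanded by the window $|x-x_2|\le L^{1/2}/2$.
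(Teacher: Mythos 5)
Your strategy — restrict to a window via coalescence, read off the tent picture to localize the argmax, and handle the far field separately — essentially matches the paper's, with the small-scale bound cast in terms of the Brownian coupling of \Cref{prop:dp-tent-bcomp} rather than the paper's direct application of \Cref{cor.tent-pr} to the restricted free energy $\cL^\beta_{\mrm{res}}$. Both routes would give $\exp(-cL^{1/2}(\log L)^2)$. However, three points in your execution do not hold up as stated.

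First, the Step~1 claim $|F(x)-h_1-h_4-G(x)|\leq Ce^{-cL}$ is false for $\beta=1$: writing $\cL^\beta(0,0;x,s_2)=\maxbeta_y\bigl[\cL^\beta(0,0;y,s_1)+\cL^\beta(y,s_1;x,s_2)\bigr]$, the log-integral injects an entropy correction of order $\log L$ (the $(k-1)\beta^{-1}\log(2H^{1/2})$ term appearing in \Cref{p.k-point part to whole free energy}); what coalescence actually gives, and all the argmax comparison needs, is that the $x$-\emph{dependent} part of $F-G$ is tiny, i.e.\ $|[F(x)-F(x_2)]-[G(x)-G(x_2)]|\leq Ce^{-cL}$, so the decomposition should be stated in difference form. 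Second, the window $|x-x_2|\leq L^{1/2}/2$ exceeds the reach of \Cref{prop:dp-tent-coal}: after shearing and scaling to the strip $[s_1,s_2]$ with $h_2\sim(s_2-s_1)L$, the quadrangle identity holds only on a window of size $\sim(s_2-s_1)L^{1/2}$; this is precisely why the paper restricts the intermediate heights to $[-L^{3/8},L^{3/8}]$. Third, the far field is unhandled: \Cref{lem:piss516} concerns $\pi^*$, not $\pi$, under the conditioning $\cL^\beta>L$ rather than $\vec\cL^\beta\approx\vec h$, and its $\exp(-c(\log L)^2)$ error is far larger than the target $\exp(-cL^{1/2}(\log L)^2)$. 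The paper instead defines the far-field supremum $\cL^\beta_{\mrm{out}}$ (where some intermediate height escapes $[-L^{3/8},L^{3/8}]$), bounds it unconditionally via \Cref{cor.p.tail bound for sum}, shear invariance, and a union bound, and then divides by the a priori lower bound $\P(\vec\cL^\beta\approx\vec h)>c\exp(-\tfrac{4}{3}L^{3/2}-CL^{9/8}(\log L)^2)$ to obtain a conditional bound of order $\exp(-cL^{5/4}(\log L)^2)$; you need an analogous unconditional-to-conditional argument, not a citation to a bound for a different object under a different conditioning.
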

\begin{proof}
By \Cref{lem:fh-ut} we have 
\begin{equation}  \label{eq:bdcl4}
  \P(\vec\cL^\beta \approx \vec h ) > c\exp\left(-\frac{4}{3} L^{3/2}-CL^{9/8}(\log L)^2\right).      
\end{equation}
Therefore, using \Cref{l.free energy individual to overall values} we have
\[
\P\left( \cL^\beta(0,0;x_2,s) < h_1+h_2 - C\log L \midd \vec\cL^\beta \approx \vec h \right) < C\exp(-cL^2),
\]
\[
 \P\left( \cL^\beta(x_2, s; 0,1) < h_3+h_4 - C\log L \midd \vec\cL^\beta \approx \vec h \right) < C\exp(-cL^2).
\]
Thus 
\begin{equation}  \label{eq:bdcl4cent}
\P\left( \cL^\beta(0,0;x_2,s) + \cL^\beta(x_2, s; 0,1) < L - C\log L \midd \vec\cL^\beta \approx \vec h \right) < C\exp(-cL^2).    
\end{equation}

We next restrict the intervals. We define $\cL^\beta_{\mrm{out}}$ as follows. For $\beta=\infty$, we let
\[
\cL^\beta_{\mrm{out}}=\sup_{\vec x\in \R^3\setminus [-L^{3/8}, L^{3/8}]^3} \sum_{i=1}^4 \cL^\beta_i.
\]
For $\beta=1$, we let
\begin{multline*}
\cL^\beta_{\mrm{out}}=
\left(\sup_{|x_2|>L^{3/8}} \cL^\beta(x_2,s;0,1)+\cL^\beta(0,0;x_2,s)\right) \vee \\
\left(\sup_{|x_2|\le L^{3/8}} (\cL^\beta_{\mrm{out},-}(x_2)+\cL^\beta(x_2,s;0,1))\vee (\cL^\beta(0,0;x_2,s)+\cL^\beta_{\mrm{out},+}(x_2))\right),    
\end{multline*}
where 
\[
\cL^\beta_{\mrm{out},-}(x_2) = \log \int_{|x_1|\ge L^{3/8}} \exp\left(\cL^\beta_1 + \cL^\beta_2 \right) \diff x_1,
\]
\[
\cL^\beta_{\mrm{out},+}(x_2) = \log \int_{|x_3|\ge L^{3/8}} \exp\left(\cL^\beta_3 + \cL^\beta_4 \right) \diff x_3.
\]
Using \Cref{cor.p.tail bound for sum} and shear invariance, and a union bound, we can upper bound $\P(\cL^\beta_{\mrm{out}}> L - (\log L)^{3/2})$ by $C\exp\left(-\frac{4}{3}L^{3/2}-cL^{5/4}(\log L)^2\right)$.
Then by \eqref{eq:bdcl4} we have 
\begin{equation}  \label{eq:cloutbound}
\P\left(\cL^\beta_{\mrm{out}}> L - (\log L)^{3/2} \mid \vec\cL^\beta \approx \vec h\right)<C\exp(-cL^{5/4}(\log L)^2).    
\end{equation}

We next consider the restricted part.
\[
\cL^\beta_{\mrm{res}}(x_2)=\max_{x_1, x_3\in [-L^{3/8}, L^{3/8}]} \sum_{i=1}^4 \cL^\beta_i,
\]
for $\beta=\infty$, and
\[
\cL^\beta_{\mrm{res}}(x_2)=
\log \int_{|x_1|\le L^{3/8}} \exp\left(\cL^\beta_1 + \cL^\beta_2 \right) \diff x_1 + \log \int_{|x_3|\le L^{3/8}} \exp\left(\cL^\beta_3 + \cL^\beta_4 \right) \diff x_3,
\]
for $\beta=1$.
Using \Cref{prop:dp-tent-coal} and \Cref{cor.tent-pr}, one can upper bound $\cL^\beta_{\mrm{res}}$ assuming a coalescence event, as in the proof of \Cref{p.k-point part to whole free energy}. Omitting the details, we can deduce that
\[
\P\left(\sup_{x\in I}  \cL^\beta_{\mrm{res}}(x) > L - (\log L)^{3/2} \midd \vec\cL^\beta\approx \vec h \right) < C\exp(-cL^{1/2}(\log L)^2),
\]
where $I=[-L^{3/8}, L^{3/8}]\setminus [x_2-L^{-1/2}(\log L)^2/2, x_2+L^{-1/2}(\log L)^2/2]$.
This with \eqref{eq:cloutbound} implies that
\begin{multline*}
\P\left(\sup_{|x-x_2|>L^{-1/2}(\log L)^2/2}  \cL^\beta(0,0;x_2,s) + \cL^\beta(x_2, s; 0,1) > L - (\log L)^{3/2} \midd \vec\cL^\beta\approx \vec h \right) \\ < C\exp(-cL^{1/2}(\log L)^2).    
\end{multline*}
This and \eqref{eq:bdcl4cent} imply the conclusion.
\end{proof}

We now finish proving \Cref{l.strip maximizer is same as top to intermediate maximizer}, using all the above ingredients as well as \Cref{p.sum close under conditioning k-point}, which lower bounds the sum $\sum_{i=1}^4 \cL^\beta_i$ by $L-\log L$, given that $\cL^\beta>L$.

\begin{proof}[Proof of \Cref{l.strip maximizer is same as top to intermediate maximizer}]
We can assume that $L$ is large enough, since otherwise the conclusion follows trivially.

Denote $w_{\beta=1}=L^{-1/2}$ and $w_{\beta=\infty}=L^{-1}$ as in the previous section.
By Lemmas~\ref{lem:piss516} and \ref{l:rough-clbi}, we can bound the LHS in the display by
\begin{multline}  \label{eq:strplhsm}
\sum_{\vec x\in ([-L^{5/16}\log L, L^{5/16}\log L]\cap w_\beta\Z)^3} \P\Big( \max_{i=1,2,3}|\pi^*(s_i)-x_i| \le w_\beta, |\pi(s)-x_2|>L^{-1/2}(\log L)^2/2 
 \\
 \cL^\beta_i \ge (s_i-s_{i-1})L - L^{27/32}\log L, \forall i=1,2,3,4
 \midd \cL^\beta>L \Big) + C\exp(-c(\log L)^2). 
\end{multline}
By \Cref{l.free energy individual to overall values} and \Cref{t.comparison}, we have 
\[
\P\left(\sum_{i=1}^4 \cL^\beta_i \ge L+(\log L)^2 \midd \cL^\beta>L  \right) < C\exp(-cL^{1/2}(\log L)^2).
\]
This together with \Cref{p.sum close under conditioning k-point} implies that, for any $\vec x\in ([-L^{5/16}\log L, L^{5/16}\log L]\cap w_\beta\Z)^3$,
\begin{multline*}
\P\Big(
\max_{i=1,2,3}|\pi^*(s_i)-x_i| \le w_\beta,  
\vec\cL^\beta \not\in \mrm{Val}^4,
\\
 \cL^\beta_i \ge (s_i-s_{i-1})L - L^{27/32}\log L, \forall i=1,2,3,4
\midd \cL^\beta>L
\Big)<C\exp(-c(\log L)^2) .
\end{multline*}
Then each summand in \eqref{eq:strplhsm} can be bounded by
\begin{multline*}
\sum_{\vec h}\P\Big(
\max_{i=1,2,3}|\pi^*(s_i)-x_i| \le w_\beta,  |\pi(s)-x_2|>L^{-1/2}(\log L)^2/2,
\vec\cL^\beta \approx \vec h,
\midd \cL^\beta>L
\Big)\\ +C\exp(-c(\log L)^2),
\end{multline*}
where the sum is over $<\exp(CL)$ many $\vec h\in \mrm{Val}^4$, such that any element in $\mrm{Val}^4$ is $\approx \vec h$ for one of them. 
We note that the summand for each $\vec h$ is bounded by
\[
\P\left(
|\pi(s)-x_2|>L^{-1/2}(\log L)^2/2
\midd \vec\cL^\beta \approx \vec h
\right)
\P(\vec\cL^\beta \approx \vec h)\P(\cL^\beta>L)^{-1}.\]
By \Cref{lem:condfar}, the first factor is bounded by $C\exp(-cL^{1/2}(\log L)^2)$.
Then the sum over $\vec h$ is bounded by
\[
C\exp(-cL^{1/2}(\log L)^2)\P( \vec\cL^\beta \in \mrm{Val}^4)\P(\cL^\beta>L)^{-1}.
\]
By \Cref{l.free energy individual to overall values} and \Cref{t.comparison}, we can bound this by $C\exp(CL^{1/2}\log L)$, since $\vec\cL^\beta \in \mrm{Val}^4$ implies that $\sum_{i=1}^4 \cL^\beta_i\ge L-\log L$.
Now since each summand in \eqref{eq:strplhsm} is bounded by $C\exp(-c(\log L)^2)$, plugging this estimate back leads to the conclusion.
\end{proof}

\section{Finite dimensional Brownian bridge limit}
\label{s.fdd convergence}

In this section, we prove finite dimensional convergence of $\pi_0$ and $\Gamma_0$ to Brownian bridge (under upper tails).
As in previous sections we take the setup of $(s_1, \cdots, s_{k-1})\in \rDe_{k-1}([0,1])$ for $k\in \N$, and denote $s_0=0$ and $s_k=1$; and all the constants within this section can depend on $k$ and $(s_1, \cdots, s_{k-1})$.
Also recall that we define
\[\pi^*(s_i) = \argmax_{x} \cL^\beta(0, s_{i-1}; x, s_i) + \cL^\beta(x, s_i; 0, s_{i+1}),\]
for each $i=1,\ldots, k-1$. We adopt the shorthand $\cL^\beta = \cL^\beta(0,0;0,1)$.

We shall prove that as $L\to\infty$, $L^{1/4}\{\pi^*(s_i)\}_{i=1}^{k-1}$ conditional on $\cL^\beta>L$ converges to a joint Gaussian, matching that for a Brownian bridge. 
We note that even in zero temperature, $\pi^*(s_i)$ does not a priori coincide with $\pi_0(s_i)=\pi(s_i)$, which is instead given as the maximizer of processes from height $0$ to $s_i$ and $s_i$ to $1$ (instead of $s_{i-1}$ to $s_i$ and $s_i$ to $s_{i+1}$ here), i.e., $\pi(s_i) = \argmax_z \cL^\beta(0, 0; z, s_i) + \cL^\beta(z, s_i; 0, 1)$.
However, we have shown that, conditional on the upper tail, with high probability $|\pi^*(s_i)-\pi(s_i)|$ is of order smaller than $L^{-1/4}$ (\Cref{l.strip maximizer is same as top to intermediate maximizer}); and in positive temperature the polymer measure concentrates at height $s_i$ in a window of order smaller than $L^{-1/4}$ around $\pi(s_i)$ (\Cref{p.closeness of maximizer and polymer}).
Therefore, the Gaussian limit of $\{\pi^*(s_i)\}_{i=1}^{k-1}$ as follows suffices for us to deduce our main results.

We denote $w_{\beta=1}=L^{-1/2}$ and $w_{\beta=\infty}=L^{-1}$ as before.
Fix any compact set $\cK\subseteq \R^{k-1}$, and all the constants below can depend on $\cK$.
\begin{theorem}\label{t.k-point density}
As $L\to\infty$, uniformly over $\vec x, \vec y\in \cK$ (with $x_0=x_k=y_0=y_k=0$ for the convenience of notations),
\begin{align*}
\MoveEqLeft[18]
\frac{\P\left(\max_{i=1, \ldots, k-1}|\pi^*(s_i)-x_iL^{-1/4}| \leq w_\beta\mid \cL^\beta > L\right)}{\P\left(\max_{i=1, \ldots, k-1}|\pi^*(s_i) - y_iL^{-1/4}| \leq w_\beta \mid \cL^\beta > L\right)}\\
&\to \exp\left(-2\left[\sum_{i=1}^k \frac{(x_i-x_{i-1})^2 - (y_i-y_{i-1})^2}{s_i-s_{i-1}}\right]\right).
\end{align*}
\end{theorem}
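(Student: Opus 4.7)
My plan is to compare the two probabilities by conditioning on the joint values of the per-strip free energies $\cL^\beta_i(\vec\xi) := \cL^\beta(\xi_{i-1}, s_{i-1}; \xi_i, s_i)$, where $\vec\xi$ is either $\vec x L^{-1/4}$ or $\vec y L^{-1/4}$ (and $\xi_0 = \xi_k = 0$). Writing $\mathcal A(\vec\xi) := \{\max_i|\pi^*(s_i) - \xi_i| \le w_\beta\}$, Bayes' theorem gives
\begin{equation*}
  \P\bigl(\mathcal A(\vec\xi)\mid \cL^\beta > L\bigr)
  = \frac{1}{\P(\cL^\beta>L)}\int p_{\vec\xi}(\vec h)\,\P\bigl(\mathcal A(\vec\xi),\cL^\beta > L\midd \vec\cL^\beta(\vec\xi)\approx \vec h\bigr)\,\diff \vec h,
\end{equation*}
where $p_{\vec\xi}$ is the joint density of $\vec\cL^\beta(\vec\xi) = (\cL^\beta_1(\vec\xi),\ldots,\cL^\beta_k(\vec\xi))$. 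Using the proportionality and sum-concentration estimates of \Cref{sec:propsum}, in particular \Cref{l.k-point proportionality}, \Cref{p.k-point part to whole free energy}, and \Cref{p.sum close under conditioning k-point}, the integration can be restricted (for either $\vec\xi$) to a \emph{nice} set of $\vec h$ with $h_i\approx (s_i-s_{i-1})L$ and $\sum_i h_i \in L - (k-1)\beta^{-1}\log(2L^{1/2}) + [-L^{8/9},L^{8/9}]$, with complement contributing a $o(1)$ correction once normalized by $\P(\cL^\beta>L)$. On this nice set I plan to prove two asymptotics which, combined, produce the claimed limit.

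The density factor $p_{\vec x L^{-1/4}}(\vec h)/p_{\vec y L^{-1/4}}(\vec h)$ is relatively straightforward. Shear invariance of $\cL^\beta$ (\Cref{l.dl symmetries} and \Cref{l.Z symmetries}) gives $\cL^\beta_i(\vec\xi) \stackrel{d}{=} \cL^\beta(0, s_{i-1}; 0, s_i) - (\xi_i - \xi_{i-1})^2/(s_i - s_{i-1})$, so independence across disjoint temporal strips implies that $p_{\vec\xi}$ factors as a product of shifted marginals. A differentiated form of \Cref{t.comparison} applied to each marginal at level $h_i\approx (s_i - s_{i-1})L$ (at which the one-point log-density has derivative $-2L^{1/2}+o(1)$ in the level) gives a per-strip log-density ratio of $-2[(x_i-x_{i-1})^2 - (y_i-y_{i-1})^2]/(s_i - s_{i-1}) + o(1)$. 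Taking the product over $i$ delivers precisely the exponential factor stated in the theorem.

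The main obstacle is to show that the ratio of conditional probabilities
\begin{equation*}
\frac{\P\bigl(\mathcal A(\vec x L^{-1/4}), \cL^\beta>L\midd \vec\cL^\beta(\vec x L^{-1/4})\approx \vec h\bigr)}{\P\bigl(\mathcal A(\vec y L^{-1/4}), \cL^\beta>L\midd \vec\cL^\beta(\vec y L^{-1/4})\approx \vec h\bigr)} = 1+o(1)
\end{equation*}
uniformly over $\vec h$ in the nice set. The plan here is to describe each conditional probability via the Brownian-bridge representation of the per-strip profiles furnished by \Cref{prop:dp-tent-bcomp}, using coalescence (\Cref{prop:dp-tent-coal}) to decouple the two endpoints of each strip and reduce the two-variable conditioning to a one-variable one on either side of each peak. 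Up to an event of small probability, the conditional joint law of the left/right tangent profiles around the $i$-th peak then becomes a pair of independent rate-$2$ Brownian bridges offset by deterministic tents of slope $\pm 2L^{1/2}$, whose joint law does not depend on the precise value of $\xi_i$. Both the ``maximizer near $\xi_i$'' event and the refined form of $\{\cL^\beta>L\}$ provided by \Cref{p.k-point part to whole free energy} reduce to local events on these Brownian bridges whose probabilities are, by stationarity, invariant under the $L^{-1/4}$-scale translation $\vec x\leftrightarrow \vec y$. The delicate part, to which \Cref{s.joint comparison of probabilities} will be devoted, is to carry out this comparison to $1+o(1)$ precision rather than up to constants. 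This requires (a) carefully tracking the entropy correction $(k-1)\beta^{-1}\log(2L^{1/2})$ in positive temperature, coming from the fact that $\cL^\beta$ is a logarithm of an integral dominated by an $L^{-1/2}$-window around each peak; and (b) coupling the Brownian-bridge approximations simultaneously across all $k$ peak regions on scale $L^{-1/2}(\log L)^2$, so that the shifts induced by varying $\xi_i$ on the scale $L^{-1/4}$ translate into negligible perturbations of the local Brownian structure. Once these pieces are assembled, substituting the density and conditional-probability ratios back into the Bayes decomposition and integrating over the nice set of $\vec h$ yields \Cref{t.k-point density}.
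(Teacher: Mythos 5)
Your proposal follows the paper's strategy quite closely: the Bayes decomposition over the per-strip free energies, the restriction to a nice set of $\vec h$ via proportionality (\Cref{l.k-point proportionality}) and sum-concentration (\Cref{p.sum close under conditioning k-point}), the density ratio $p_{\vec x L^{-1/4}}(\vec h)/p_{\vec y L^{-1/4}}(\vec h)$ computed via shear invariance and \Cref{t.comparison} (this is \Cref{l.h_i prob comparison}), and the conditional-probability ratio being $1+o(1)$ via Brownian-bridge comparison and coalescence (this is \Cref{l.maximizer location prob relation}). The two points of technical divergence from the paper are (i) you decompose by an integral over $\vec h$ with a density $p_{\vec\xi}$, whereas the paper uses a discrete mesh $(e^{-L}\Z)^k$ and the event $\vec\cL^{\beta,\vec x}\approx \vec h$ (meaning $\cL^{\beta,\vec x}_i\in h_i+[0,e^{-L}]$) -- the small-window conditioning is needed to apply \Cref{prop:dp-tent-coal} and \Cref{prop:dp-tent-bcomp}, which are stated for conditioning on an interval, not on an exact value; and (ii) the paper replaces the event $\{\cL^\beta>L\}$ inside the conditional probability by the local event $\{\cL^\beta[1]>(1-\beta^{-1}e^{-cL^{1/4}})L\}$ (using the transversal fluctuation bound), and the key lemma \Cref{l.maximizer location prob relation} is deliberately asymmetric in $\vec x$ and $\vec y$ -- the LHS uses the threshold $(1-\beta^{-1}e^{-cL^{1/4}})L$ but the RHS uses the cleaner threshold $L$, which is what makes the upper bound and its $\vec x\leftrightarrow \vec y$ swap close into a two-sided estimate. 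You gesture at this when you mention tracking the entropy correction and coupling the Brownian approximations, but you don't articulate why the one-sided slack in the threshold is needed.

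The genuine gap in the writeup is the last step. After substituting the two ratios into the Bayes decomposition you obtain an inequality of the form
\begin{align*}
\P\bigl(\mathcal A(\vec x L^{-1/4})\mid \cL^\beta>L\bigr)\le (1+o(1))\,e^{-2\sum_i[\cdots]}\,\P\bigl(\mathcal A(\vec y L^{-1/4})\mid \cL^\beta>L\bigr)+C\exp(-c(\log L)^2),
\end{align*}
and ``integrating over the nice set of $\vec h$ yields the theorem'' does not by itself conclude: you must show the additive error $C\exp(-c(\log L)^2)$ is subdominant, i.e., you need an a priori lower bound on $\P(\mathcal A(\vec y L^{-1/4})\mid \cL^\beta>L)$ that beats $\exp(-c(\log L)^2)$. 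The paper obtains this by summing the display over $\vec x$ ranging over a compact window of mesh $w_\beta$, invoking the transversal tightness of $\pi^*$ to show that the sum of the left-hand sides is at least $\tfrac12$, and then dividing by the (polynomially many) Gaussian weights, giving $\P(\mathcal A(\vec y L^{-1/4})\mid \cL^\beta>L)\ge L^{-C}$. Without some version of this step, the ratio in the theorem is not controlled. Everything else in your outline is the right skeleton, including the identification of the entropy term and the need to carry the Brownian-bridge coupling across all $k$ peaks simultaneously.
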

We note that the RHS is the ratio of the joint density of $(\frac{1}{2}B(s_1), \ldots, \frac{1}{2}B(s_{k-1}))$ evaluated at $\vec x$ and $\vec y$, where $B$ is a standard Brownian bridge on $[0,1]$.

Note that this is a comparison of probabilities, while we wish to show weak convergence. The following lemma allows the transition. Its proof is fairly straightforward real analysis and we relegate it to Appendix~\ref{s.abstract weak convergence}.

\begin{lemma}\label{l.fdd convergence}
Let $d\geq 1$ and suppose $\{\vec X_\varepsilon\}_{\varepsilon >0}$ is a family of $\R^d$-valued random vectors such that as $\varepsilon\to 0$, $\vec X_\varepsilon \to \vec X$ in distribution for some random vector \smash{$\vec X$}. Suppose also that there is a continuous strictly positive integrable function $f:\R^d\to(0,\infty)$ such that, for every compact set $\cK\subseteq \R^d$, uniformly over $\vec x,\vec y\in \cK$ as $\varepsilon\to0$,
\begin{equation}\label{e.local density hypothesis}
\frac{\P\left(\vec X_\varepsilon \in \vec x + [-\varepsilon,\varepsilon)^d\right)}{\P\left(\vec X_\varepsilon \in \vec y + [-\varepsilon,\varepsilon)^d\right)}\cdot \frac{f(\vec y)}{f(\vec x)} \to 1.
\end{equation}
Then $\vec X$ is absolutely continuous with respect to the Lebesgue measure of $\R^d$, and has density given by $f(\vec x)/\int_{\R^d} f(\vec z)\diff \vec z$.
\end{lemma}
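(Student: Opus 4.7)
The plan is to show that the law of $\vec X$ has density $g := f/\int_{\R^d} f$ against Lebesgue measure, by evaluating $\P(\vec X \in B) = \int_B g$ for rectangles $B$ with $\P(\vec X \in \partial B) = 0$; such rectangles form a $\pi$-system generating the Borel $\sigma$-algebra, so this determines the distribution of $\vec X$.

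The core estimate takes place inside a large rectangle $\cK = \prod_i[a_i,b_i]$, also chosen so that $\P(\vec X \in \partial \cK) = 0$ (possible since only countably many axis-aligned hyperplanes can carry positive $\vec X$-mass). I would tile $\cK$ by half-open cubes $C_i^\varepsilon := \vec z_i^{\,\varepsilon} + [-\varepsilon,\varepsilon)^d$ whose centers lie on the $2\varepsilon$-grid, and write $p_i^\varepsilon := \P(\vec X_\varepsilon \in C_i^\varepsilon)$. The uniform hypothesis \eqref{e.local density hypothesis} applied on $\cK$ gives, for every pair of grid points,
\[
p_i^\varepsilon \;=\; (1+o(1))\,\frac{f(\vec z_i^{\,\varepsilon})}{f(\vec z_j^{\,\varepsilon})}\, p_j^\varepsilon,
\]
where the $o(1)$ is uniform in $i,j$. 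Summing this relation over all $i$ and using the Riemann-sum convergence $\sum_i f(\vec z_i^{\,\varepsilon})(2\varepsilon)^d \to \int_\cK f$ (valid since $f$ is continuous and strictly positive on $\cK$, hence bounded above and away from zero there) solves for $p_j^\varepsilon$ in terms of $\P(\vec X_\varepsilon \in \cK)$; then summing only over cubes contained in a sub-rectangle $B \subseteq \cK$ with $\P(\vec X \in \partial B) = 0$ yields
\[
\P(\vec X_\varepsilon \in B_\varepsilon) \;=\; (1+o(1))\,\frac{\int_B f}{\int_\cK f}\, \P(\vec X_\varepsilon \in \cK),
\]
where $B_\varepsilon := \bigcup_{C_i^\varepsilon \subseteq B} C_i^\varepsilon$ differs from $B$ only inside the $2\varepsilon$-tube around $\partial B$.

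To conclude, I would pass to the limit $\varepsilon \to 0$ using the Portmanteau theorem: weak convergence together with the boundary conditions gives $\P(\vec X_\varepsilon \in \cK) \to \P(\vec X \in \cK)$ and $\P(\vec X_\varepsilon \in B) \to \P(\vec X \in B)$, while $\P(\vec X_\varepsilon \in B \setminus B_\varepsilon) \to 0$ since the $2\varepsilon$-tube shrinks to $\partial B$ which has no $\vec X$-mass. This yields $\P(\vec X \in B) = \P(\vec X \in \cK)\cdot \int_B f / \int_\cK f$ for all such $B \subseteq \cK$, and letting $\cK \uparrow \R^d$ through a sequence of rectangles with $\vec X$-null boundaries gives $\P(\vec X \in B) = \int_B g$, as required.

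The main (and essentially only) obstacle is careful bookkeeping of the uniform $o(1)$ across the tiling: this depends critically on the hypothesis \eqref{e.local density hypothesis} being uniform over compacts rather than merely pointwise, and on $f$ being bounded above and bounded away from zero on $\cK$ so that the ratio prefactors do not amplify the errors. The rest of the argument is soft measure-theoretic manipulation.
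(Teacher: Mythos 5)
Your proposal is correct and takes essentially the same approach as the paper: both tile a compact region by $\varepsilon$-cubes, apply the ratio hypothesis pairwise, aggregate via Riemann sums using continuity and positivity of $f$, and pass to the weak limit. The only differences are cosmetic — you normalize by comparing a sub-rectangle $B$ to a large box $\cK$ and then sending $\cK \uparrow \R^d$, whereas the paper normalizes by deriving the ratio identity for arbitrary $\vec x, \vec y$ and then summing $\delta$-boxes over a full grid to invoke integrability of $f$ directly — and you are somewhat more explicit than the paper about choosing rectangles with $\vec X$-null boundaries before invoking Portmanteau, a technicality the paper leaves implicit.
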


With these results in hand, we may prove our main theorems.

\begin{proof}[Proofs of Theorems~\ref{thm:main-dl} and \ref{thm:main-dp}]
The tightness of $\{L^{1/4}\pi_0\}_{L\geq 2}$ and $\{L^{1/4}\Gamma_0\}_{L\geq 2}$ conditional on $\cL^\beta>L$ in the space $\mc C([0,1], \R)$ is given by \Cref{p.tightness}, and it remains to establish finite dimensional convergence.

For $B$ being a standard Brownian bridge on $[0,1]$,
\Cref{t.k-point density} and \Cref{l.fdd convergence} imply that $\{2L^{1/4}\pi^*(s_i)\}_{i=1}^{k-1}\to \{B(s_i)\}_{i=1}^{k-1}$ in distribution. \Cref{l.strip maximizer is same as top to intermediate maximizer} along with the Borel-Cantelli lemma guarantees that $L^{1/4}\pi(s_i) - L^{1/4}\pi^*(s_i) \to 0$ almost surely for every $i$. This implies that, for $\beta=\infty$, $\{2L^{1/4}\pi_0(s_i)\}_{i=1}^{k-1}\to \{B(s_i)\}_{i=1}^{k-1}$ in distribution,
completing the proof of \Cref{thm:main-dl}.

For $\beta=1$, taking expectations in \Cref{p.closeness of maximizer and polymer} further yields that $\P(\max_{i=1,\ldots, k-1}|\Gamma_0(s_i)-\pi^*(s_i)| \leq ML^{-1/2}\log L\mid \cL^\beta>L)>1-L^{-cM}-C\exp(-c(\log L)^2)$, for $M$ being a large enough constant. Thus $L^{1/4}\Gamma_0(s_i) - L^{1/4}\pi^*(s_i)\to 0$ almost surely for every $i$. We then obtain that $\{2L^{1/4}\Gamma_0(s_i)\}_{i=1}^{k-1}\to \{B(s_i)\}_{i=1}^{k-1}$ in distribution,
completing the proof of \Cref{thm:main-dp}.
\end{proof}

The rest of this section and the next are devoted to proving \Cref{t.k-point density}.

\subsection{Finite dimensional convergence}

We introduce some useful shorthand to make the notation simpler: $\vec\cL^{\beta, \vec x}$ is a vector whose $i$\textsuperscript{th} component is given by
\begin{align*}
\cL^{\beta, \vec x}_i = \cL^{\beta}(x_{i-1}L^{-1/4}, s_{i-1}; x_iL^{-1/4}, s_i) 
\end{align*}
Below we also use $o(1)$ to denote any quantity that $\to 0$ as $L\to\infty$.

\begin{proof}[Proof of \Cref{t.k-point density}]
We start by noting that,
by \Cref{l.inf control},
\[
\P\left(\sum_{i=1}^k\cL^{\beta, \vec x}_i < L - L^{5/8}\log L \midd \cL^\beta > L\right) < C\exp(-c(\log L)^2);
\]
and by \Cref{l.free energy individual to overall values}, plus \Cref{lem:fh-ut} and \Cref{t.comparison}, for some $C_0>0$
\[
\P\left(\sum_{i=1}^k\cL^{\beta, \vec x}_i > L + C_0\log L \midd \cL^\beta > L\right) < C\exp(-cL^{1/2}\log L).
\]
So we can upper bound $\P(\max_{i=1, \ldots, k-1}|\pi^*(s_i)-x_iL^{-1/4}| \leq w_\beta,  \cL^\beta > L)$ by
\begin{equation*}
\P\left(\mc E_{\mrm{sum}}, \max_{i=1, \ldots, k-1}|\pi^*(s_i)-x_iL^{-1/4}|<w_\beta, \cL^\beta > L\right)+ C\exp(-c(\log L)^2)\cdot \P(\cL^\beta > L),
\end{equation*}
where $\mc E_{\mrm{sum}}$ is the event
\[
L - L^{5/8}\log L\le \sum_{i=1}^k\cL^{\beta, \vec x}_i \le  L + C_0\log L.
\]
Take $M_*$ to be a large constant, and let $\mc E_{\mrm{prop}}(M_*)$ be defined by
\begin{align*}
\mc E_{\mrm{prop}}(M_*) = \bigcap_{i=1}^k\left\{\cL^{\beta, \vec x}_i \geq (s_i-s_{i-1})L- M_*L^{7/8}\right\}.
\end{align*}
Using \Cref{l.k-point proportionality} (proportionality statement), and \Cref{cor.p.tail bound for sum} and \Cref{lem:fh-ut} (tail bounds for the sum $\sum_{i=1}^k \cL^{\beta,\vec x}_i$ and $\cL^\beta$), we have
\begin{align*}
\MoveEqLeft[6]
\P\left(\mc E_{\mrm{prop}}^c, \sum_{i=1}^k\cL^{\beta, \vec x}_i \geq L-L^{5/8}\log L\right)\\
&\leq\exp\left(-cM_*^2L^{5/4}\right)\cdot\P\left(\sum_{i=1}^k\cL^{\beta, \vec x}_i \geq L-L^{5/8}\log L\right)\\
&\leq \exp\left(-cM_*^2L^{5/4} - \frac{4}{3}(L-L^{5/8}\log L)^{3/2} + CL^{3/4}\right)\\
&\leq \exp\left(-cM_*^2L^{5/4}+ CL^{9/8}\log L\right)\cdot\P\left(\cL^\beta > L\right).
\end{align*}
As $M_*$ is large enough, this is upper bounded by $\exp(-cM_*^2L^{5/4})\cdot \P(\cL^\beta > L)$.

Thus far, we have overall shown that
\begin{align*}
&\P\left(\max_{i=1, \ldots, k-1}|\pi^*(s_i) - x_iL^{-1/4}| < w_\beta, \cL^\beta > L\right)\\
&\leq \P\left(\mc E_{\mrm{prop}}(M_*), \mc E_{\mrm{sum}}, \max_{i=1, \ldots, k-1}|\pi^*(s_i) - x_iL^{-1/4}|<w_\beta,  \cL^\beta > L\right)+ C\exp(-c(\log L)^2)\cdot\P(\cL^\beta > L).
\end{align*}
Set (as before) $r_{\beta=1} = 1$ and $r_{\beta=\infty}=L^{-1/2}$. 
Using \Cref{p.sum close under conditioning k-point}, we conclude that 
\begin{align}
\MoveEqLeft[0]
\P\left(\max_{i=1, \ldots, k-1}|\pi^*(s_i) - x_iL^{-1/4}| < w_\beta, \cL^\beta > L\right)\nonumber\\
&\leq \bigl(1-e^{-cM_*^2L^{1/2}r_\beta}\bigr)^{-1}
\P\left(\parbox{4.1in}{\centering$ \mc E_{\mrm{prop}}(M_*),\mc E_{\mrm{sum}}, \max_{i=1, \ldots, k-1}|\pi^*(s_i) - x_iL^{-1/4}|<w_\beta, \cL^\beta > L$\\[4pt]
$\sum_{i=1}^k\cL^{\beta, \vec x}_i > L+ (k-1)\beta^{-1}\log(2L^{1/2}) -M_*r_\beta$}\right) \label{e.massaged location prob to analyze}\\[4pt]
&\qquad\quad + C\exp(-c(\log L)^2)\cdot\P\left(\cL^\beta > L\right).\nonumber
\end{align}
We next do a restriction. Recall the notation $\maxbeta_{\vec x\in I} f$ for $f:\R^{k-1}\to \R$ and a set $I\subseteq \R^{k-1}$ from \eqref{e.maxbeta definition}.
Define
\begin{equation}  \label{eq:clbetaone}
\cL^{\beta}[1] = \maxbeta_{\|\vec z\|_\infty \le 1} \sum_{i=1}^k\cL^\beta(s_{i-1}, z_{i-1}; s_i, z_i),
\end{equation}
where $z_0=z_k=0$.
Then, by the (quenched) one-point transversal fluctuation estimate (\Cref{p.positive temp one-point tf} for $\beta = 1$ and \Cref{l.trans-fluc} for $\beta =\infty$), \eqref{e.massaged location prob to analyze} is bounded by
\begin{equation}\label{e.next massaged version}
\begin{split}
\MoveEqLeft
\bigl(1-e^{-cM_*^2L^{1/2}r_\beta}\bigr)^{-1}
\P\left(\parbox{3.65in}{\centering$ \mc E_{\mrm{prop}}(M_*), \mc E_{\mrm{sum}}, \max_{i=1, \ldots, k-1}|\pi^*(s_i) - x_iL^{-1/4}|<w_\beta$,\\ 
$\cL^\beta[1] > (1-\beta^{-1}e^{-cL^{1/4}})L$,\\
$\sum_{i=1}^k\cL^{\beta, \vec x}_i > L+ (k-1)\beta^{-1}\log(2L^{1/2}) -M_*r_\beta$}\right)\\
&\qquad + C\exp(-c(\log L)^2)\cdot\P\left(\cL^\beta > L\right).
\end{split}
\end{equation}
To analyze the first term in the previous display, we define
\begin{equation}  \label{eq:eeeval}
E_{\mrm{val}} = \left\{(h_1, \ldots , h_k)\in (e^{-L}\Z)^k: \parbox{110mm}{\centering $L+ (k-1)\beta^{-1}\log(2L^{1/2})- (M_*+1)r_{\beta}\le \sum_{i=1}^k h_i \le L+C_0\log L$, \\$h_i\in (s_i-s_{i-1})L + [-2M_*L^{7/8}, kM_*L^{7/8}]$ for $i=1, \ldots, k$}\right\}.    
\end{equation}
Here we take the fine mesh $(e^{-L}\Z)^k$ instead of $\R^k$ because (as mentioned before) we will need to apply \Cref{prop:dp-tent-coal} and \Cref{prop:dp-tent-bcomp}. We also recall the notation $\vec\cL^{\beta, \vec x} \approx \vec h$ for the event that each $\cL^{\beta, \vec x}_i \in h_i+[0,e^{-L}]$.

Now we see that the probability in the first term of \eqref{e.next massaged version} is bounded by
\begin{equation}
\sum_{\vec h\in E_{\mrm{val}}} \P\Bigl(\max_{i=1, \ldots, k-1}|\pi^*(s_i)-x_iL^{-1/4}|<w_\beta, \cL^\beta[1] > (1-\beta^{-1}e^{-cL^{1/4}})L  \midd  \vec\cL^{\beta, \vec x} \approx \vec h\Bigr)
\cdot \P\Bigl(\vec\cL^{\beta, \vec x} \approx \vec h\Bigr). \label{e.maximizer location probability breakup}
\end{equation}
Our goal is to relate each of these terms to the corresponding one with $\vec x$ replaced by $\vec y$, which we state precisely in the next two lemmas to be proved later. The first factor is essentially unchanged:
\begin{lemma}\label{l.maximizer location prob relation}
For $\vec h=(h_1, \ldots, h_k)\in E_{\mrm{val}}$,
\begin{align*}
\MoveEqLeft[4]
\P\Bigl(\max_{i=1, \ldots, k-1}|\pi^*(s_i)-x_iL^{-1/4}|<w_\beta, \cL^\beta[1] > (1-\beta^{-1}e^{-cL^{1/4}})L \ \Big|\  \vec\cL^{\beta, \vec x} \approx \vec h\Bigr)\\
&\leq (1+o(1))\P\Bigl(\max_{i=1, \ldots, k-1}|\pi^*(s_i)-y_iL^{-1/4}|<w_\beta, \cL^\beta[1] > L \ \Big|\  \cL^{\beta, \vec y} \approx \vec h\Bigr).
\end{align*}

\end{lemma}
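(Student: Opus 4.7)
The plan is to reduce both conditional probabilities to a common Brownian bridge computation. The key observation is that, under either the $\vec x$- or the $\vec y$-conditioning, the random quantities $\cL^\beta[1]$ and $\{\pi^*(s_i)\}$ are determined up to exponentially small error by behavior in microscopic windows around the peak locations, and this local behavior is the same up to translation under the two conditionings.

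First I would invoke the coalescence estimate Proposition~\ref{prop:dp-tent-coal} in each of the $k$ temporal strips. On its high-probability event, conditional on $\vec\cL^{\beta,\vec x}\approx \vec h$, one obtains
\[
\cL^\beta[1] = \sum_{i=1}^{k-1}\cL^\beta_i[x_iL^{-1/4},R] - \sum_{i=2}^{k-1} \cL^{\beta,\vec x}_i + O(e^{-cL})
\]
for some small fixed $R>0$, where $\cL^\beta_i[z,R]$ denotes the $\maxbeta$ over paths in the combined strip $[s_{i-1},s_{i+1}]$ with intermediate $s_i$-coordinate restricted to an $R$-window around $z$ (analogous to the surgery used in the proof of Proposition~\ref{p.k-point part to whole free energy}); the subtracted terms are essentially fixed at $h_i$ by the conditioning. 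The same coalescence shows that $\{\pi^*(s_i)\approx x_iL^{-1/4}\}$ is equivalent, up to probability $e^{-cL^{3/2}}$, to the analogous event defined using anchors at $(x_{i-1}L^{-1/4},s_{i-1})$ and $(x_{i+1}L^{-1/4},s_{i+1})$ instead of $(0,s_{i-1})$ and $(0,s_{i+1})$, because paths between these nearby anchor pairs coalesce throughout most of their trajectory.

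Next I would apply Proposition~\ref{prop:dp-tent-bcomp} after shifting spatial coordinates so each peak lies at the origin. This couples, up to Radon--Nikodym derivative $1+O(e^{-cL})$ on an event of probability $1-e^{-cL^{3/2}}$, the two local profiles around each peak to independent rate-$2$ Brownian bridges whose laws depend only on $\vec h$ (through the drift magnitudes $2h_i^{1/2}(s_i-s_{i-1})^{-1/2}$). In particular, the coupling is insensitive to whether the peak anchor sits at $x_iL^{-1/4}$ or $y_iL^{-1/4}$: both displacements are $O(L^{-1/4})$, negligible on the coalescence scale $L^{1/2}$. Running the symmetric reduction starting from the $\vec y$-conditioning produces the identical Brownian bridge probability, so the two conditional probabilities agree up to the coalescence, Brownian-coupling, and threshold-matching error terms.

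The main obstacle will be reconciling the mismatched thresholds $(1-\beta^{-1}e^{-cL^{1/4}})L$ and $L$ in the $\cL^\beta[1]$ events. After the above reduction, both events become lower bounds for sums of local $\maxbeta$ quantities, with thresholds that (since $\vec h\in E_{\mrm{val}}$) both equal $L-O(r_\beta)$ after subtracting the $h_i$'s and the $\log$ correction. The threshold gap $\beta^{-1}e^{-cL^{1/4}}L$ is exponentially smaller than the natural fluctuation scale $r_\beta$ of these Brownian events, and the quantitative bounds \eqref{eq:kppwf1}--\eqref{eq:kppwf2} of Proposition~\ref{p.k-point part to whole free energy} supply precisely the modulus of continuity in the threshold parameter needed to absorb this gap into the $1+o(1)$ factor. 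The exponentially small coalescence and Brownian-coupling error events occur with probability $e^{-cL^{3/2}}$, which is dominated by the lower bound \eqref{eq:kppwf2} on the target probability, so they also contribute only $o(1)$ to the final ratio.
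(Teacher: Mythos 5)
Your high-level strategy — coalescence to localize $\cL^\beta[1]$, then Brownian comparison of the local profiles around the peaks — is the same route the paper takes. But there is a genuine gap in the $\beta=1$ case that your plan does not close: the \emph{conditional overshoot} mechanism (Lemma~\ref{l.L overshoot}).

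Here is the problem. In order to actually run a density comparison between the $\vec x$- and $\vec y$-conditioned laws, the paper must rewrite the event $\{\cL^\beta[1] > \ldots\}$ as an event that is measurable with respect to the peak profiles restricted to windows of size $ML^{-1/2}\log L$ (the window $I_{M,L}$); this is what makes the boundary-data comparison in Lemma~\ref{lem:compb} possible. Your plan instead restricts to constant-size windows of radius $R$, which only captures the $O(e^{-cL})$ coalescence error but is still too large to be compared endpoint-by-endpoint: Proposition~\ref{prop:dp-tent-bcomp} couples the local profiles to Brownian bridges whose endpoint data are random and fluctuate on scale $L^{1/4}$, and the anchors for $\vec x$ and $\vec y$ differ by $O(L^{-1/4})$; to turn this into a $1+o(1)$ Radon--Nikodym comparison one needs the event to depend only on an $I_{M,L}$-size window (so that the window length divided by the boundary-data shift dominates, as in the proof of Lemma~\ref{lem:compb}). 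But restricting the $\beta=1$ partition-function integral to $I_{M,L}$ loses a $L^{-\Theta(M)}$ fraction, i.e., the free energy drops by a \emph{polynomial} amount $L^{-\Theta(M)}$, which is enormously larger than the exponentially small threshold discrepancy $\beta^{-1}e^{-cL^{1/4}}L$. So after restriction, you cannot conclude $\cL^\beta[1]>L$ unless you know the free energy was a priori above $L$ by at least a polynomial margin. The paper proves exactly this: Lemma~\ref{l.L overshoot}, via a single-point Brownian Gibbs resampling, shows that conditional on $\cE_{\mrm{cond}}$ the free energy overshoots $L$ by at least $\rho L^{-3}$ with probability $1-CL^{-1}$; choosing $M$ large (so $L^{-\Theta(M)}\ll L^{-3}$) then closes the loop.

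Your proposed fix — that the tail bounds \eqref{eq:kppwf1}--\eqref{eq:kppwf2} of Proposition~\ref{p.k-point part to whole free energy} "supply precisely the modulus of continuity in the threshold parameter" — does not hold. Those are envelope bounds on $\P(\cL^\beta>\text{threshold}+Mr_\beta)$ at the scale $r_\beta$; they say nothing about $\P(\cL^\beta\in (a-\delta,a])$ for $\delta$ exponentially small, and so do not rule out a concentration of mass at the threshold that would defeat the comparison. And more fundamentally, once you restrict to $I_{M,L}$, the relevant discrepancy is no longer $e^{-cL^{1/4}}L$ but the polynomial loss from the restriction, which these tail bounds certainly do not control. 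The overshoot argument is an independent piece of the proof — a quenched local resampling combined with the lower bounds on $\mu$ and $\mrm{Cor}$ and the non-degeneracy of the perturbed window's contribution (Lemmas~\ref{l.corner upper bound}, \ref{l.mu lower bound}, \ref{l.perturbation has effect}) — and your proposal would need to include it, or find an essentially different way to localize the free-energy event to $I_{M,L}$-scale.

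Two smaller points. First, your claim that the coupling is "insensitive to whether the peak anchor sits at $x_iL^{-1/4}$ or $y_iL^{-1/4}$" is the desired conclusion, not an argument; the bridge endpoint data changes by $\Theta(L^{1/4})$-scale fluctuations when you change anchors, and what saves the day is a quantitative density comparison (the paper's Lemma~\ref{lem:compb}) that uses the localization to $I_{M,L}$ to turn an $O(KL^{1/4})$ boundary-data shift into a $1+O(K^2 L^{-1/4}\log L)$ factor. Second, for $\beta=\infty$ your plan is essentially complete as written, because there is no integral and hence no polynomial restriction loss: the restricted maximizer equals the global maximizer with high probability (Lemma~\ref{l.restricted maximizer is maximizer}), so the overshoot step is simply not needed. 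This is consistent with the paper, which invokes Lemma~\ref{l.L overshoot} only when $\beta=1$.
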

The proof of this lemma is somewhat involved and will be given in \Cref{s.joint comparison of probabilities}.
For the second factor in \eqref{e.maximizer location probability breakup}, we have the following statement, which is the source of the Brownian bridge density in our result. Its proof is a straightforward consequence of \Cref{t.comparison}, and we give it after completing the proof of \Cref{t.k-point density}. 
\begin{lemma}\label{l.h_i prob comparison}
For $(h_1, \ldots, h_k)\in E_{\mrm{val}}$,
\begin{align*}
\P\left(\vec\cL^{\beta, \vec x} \approx \vec h\right)
&= (1+o(1))\exp\left(-2\sum_{i=1}^k\frac{(x_i-x_{i-1})^2 - (y_i-y_{i-1})^2}{s_i-s_{i-1}}\right)\cdot \P\left(\cL^{\beta, \vec y} \approx \vec h\right).
\end{align*}

\end{lemma}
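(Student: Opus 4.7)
The strategy is to combine independence of $\cL^\beta$ across disjoint temporal strips, shear invariance, and the tail-ratio estimate \Cref{t.comparison}. The starting observation is that, since the processes $\cL^{\beta,\vec x}_i$ are functions of $\cL^\beta$ on the disjoint intervals $[s_{i-1},s_i]$, the independence clauses of \Cref{l.dl symmetries} (resp.\ \Cref{l.Z symmetries}) give
\[
\P\bigl(\vec\cL^{\beta,\vec x}\approx\vec h\bigr) = \prod_{i=1}^k \P\bigl(\cL^{\beta,\vec x}_i \in h_i + [0,e^{-L}]\bigr),
\]
and analogously for $\vec y$. So it suffices to analyze each factor and multiply the resulting ratios.

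Next, I would apply the shift-and-shear clause of \Cref{l.dl symmetries} (or \Cref{l.Z symmetries}) with parameter $\nu_i = (x_i-x_{i-1})L^{-1/4}/(s_i-s_{i-1})$ and an appropriate spatial shift to obtain the distributional identity
\[
\cL^{\beta,\vec x}_i \stackrel{d}{=} \cL^\beta(0,s_{i-1};0,s_i) - \delta_i,\qquad \delta_i := \frac{(x_i-x_{i-1})^2}{s_i-s_{i-1}}\,L^{-1/2},
\]
and the analog for $\vec y$ with $\delta_i' := (y_i-y_{i-1})^2 L^{-1/2}/(s_i-s_{i-1})$. After the scaling $\cL^\beta(0,s_{i-1};0,s_i) \stackrel{d}{=} t^{1/3}\hfh^\beta_{t,1}(0)$ with $t = s_i-s_{i-1}$, each factor becomes a thin-window probability for $\hfh^\beta_{t,1}(0)$ at level $\bar L_i + \bar\delta_i$ (resp.\ $\bar L_i + \bar\delta_i'$) with window width $t^{-1/3}e^{-L}$, where $\bar L_i := t^{-1/3}h_i$ and $\bar\delta_i := t^{-1/3}\delta_i$.

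The third step is to convert tail comparisons into thin-window comparisons. Writing $F(u) := \P(\hfh^\beta_{t,1}(0)>u)$, the super-exponentially small window width $\eta = t^{-1/3}e^{-L}$ together with \Cref{t.comparison} gives $F(u+\eta)/F(u) = 1 - 2\eta u^{1/2}(1+o(1))$, so
\[
\P\bigl(\hfh^\beta_{t,1}(0)\in[u,u+\eta)\bigr) = 2\eta u^{1/2}F(u)\bigl(1+o(1)\bigr).
\]
Dividing the two thin-window probabilities therefore reduces to dividing $F(\bar L_i+\bar\delta_i)$ by $F(\bar L_i+\bar\delta_i')$, up to a factor $1+O(\delta_i/\bar L_i) = 1+o(1)$. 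Applying \Cref{t.comparison} to this $F$-ratio with shift $\bar\delta_i-\bar\delta_i' = O(t^{-1/3}L^{-1/2})$ produces the leading exponent $-2(\bar\delta_i-\bar\delta_i')\bar L_i^{1/2}$; substituting $\bar L_i^{1/2} = t^{1/3}L^{1/2}(1+O(L^{-1/8}))$ turns this into
\[
-2\cdot \frac{(x_i-x_{i-1})^2 - (y_i-y_{i-1})^2}{s_i-s_{i-1}}\bigl(1+o(1)\bigr),
\]
while the error term $O(\bar\delta_i \bar L_i^{-1/4}\log\bar L_i + \bar L_i^{-3/2})$ from \Cref{t.comparison} is $o(1)$. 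Multiplying over $i$ yields the claimed exponential prefactor.

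The only subtlety, which I view as the main obstacle, is ensuring the $o(1)$ errors are uniform over $\vec h\in E_{\mrm{val}}$ and $\vec x,\vec y\in\cK$. Uniformity in $\vec x,\vec y$ is immediate from compactness (forcing $\delta_i,\delta_i'=O(L^{-1/2})$), and uniformity in $\vec h$ follows because the definition of $E_{\mrm{val}}$ in \eqref{eq:eeeval} forces $h_i = (s_i-s_{i-1})L + O(L^{7/8})$, hence $\bar L_i = t^{2/3}L(1+O(L^{-1/8}))$, making every error estimate in the above chain depend only on $L$.
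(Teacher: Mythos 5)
Your first two steps (independence factorization across temporal strips, then shear invariance to reduce each factor to a level shift of $\cL^\beta(0,s_{i-1};0,s_i)$) match the paper's proof exactly. The gap is in your third step, where you assert that $\P\bigl(\hfh^\beta_{t,1}(0)\in[u,u+\eta)\bigr) = 2\eta u^{1/2}F(u)\bigl(1+o(1)\bigr)$ by reading $F(u+\eta)/F(u) = 1 - 2\eta u^{1/2}\bigl(1+o(1)\bigr)$ out of Theorem~\ref{t.comparison}. This does not follow. With $\eta \asymp e^{-L}$ and $u\asymp L$, the putative main term $2\eta u^{1/2}$ is exponentially small in $L$, whereas the error in Theorem~\ref{t.comparison} contains a term $O(u^{-3/2})$ which is only polynomially small and therefore dominates: all that the theorem yields is $F(u+\eta)/F(u) = 1 + O(u^{-3/2})$. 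The tail comparison controls nothing finer than a scale of $L^{-3/2}$, so it cannot produce a density-level asymptotic for an $e^{-L}$-window probability; in particular you have no matching lower bound for $F(u)-F(u+\eta)$, and the whole chain from this step onward is unsupported.

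The paper sidesteps this by never forming a density-type quantity. It writes the window probability directly as $F(h_i+\delta_i) - F(h_i+\delta_i+e^{-L})$ (after shear, with $\delta_i = (x_i-x_{i-1})^2 L^{-1/2}/(s_i-s_{i-1})$) and applies Theorem~\ref{t.comparison} not across the microscopic window $e^{-L}$ but across the macroscopic shift $\delta_i = \Theta(L^{-1/2})$, comparing $F(h_i+\delta_i)$ to $F(h_i)$ and $F(h_i+\delta_i+e^{-L})$ to $F(h_i+e^{-L})$. This shift is large enough that the main term $2\delta_i \bar L^{1/2} = \Theta(1)$ dominates the $O(L^{-3/4}\log L)$ error, and since the two base levels differ only by $e^{-L}$ (so $(h_i+e^{-L})^{1/2} = h_i^{1/2}+O(e^{-L})$), the two exponential prefactors agree up to $1+o(1)$ and can be pulled out in front of the original difference of tails. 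You can repair your argument by replacing the thin-window asymptotic with this difference-of-tails bookkeeping; nothing else in your proposal needs to change.
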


Inputting the information from the previous two lemmas into \eqref{e.maximizer location probability breakup} and \eqref{e.next massaged version} yields that 
\begin{align*}
\MoveEqLeft[2]
\P\left(\max_{i=1, \ldots, k-1}|\pi^*(s_i) - x_iL^{-1/4}| \leq w_\beta, \cL^\beta>L\right)\\
&< C\exp(-c(\log L)^2)\cdot\P\left(\cL^\beta > L\right)\\
&\quad+ (1+o(1))\left(1+\exp(-cM_*^2L^{1/2}r_\beta)\right)\exp\left(-2\sum_{i=1}^k\frac{(x_i-x_{i-1})^2-(y_i-y_{i-1})^2)}{s_i-s_{i-1}}\right)\\
&\qquad\times\sum_{\vec h\in E_{\mrm{val}}} \P\Bigl(\max_{i=1, \ldots, k-1}|\pi^*(s_i)-y_iL^{-1/4}|<w_\beta, \cL^\beta[1] > L \ \Big|\  \cL^{\beta, \vec y} \approx \vec h\Bigr) \cdot \P(\cL^{\beta, \vec y} \approx \vec h).
\end{align*}
Therefore we have, since $\{\cL^{\beta,\vec y} \approx \vec h\}$ are disjoint events for distinct $\vec h \in E_{\mrm{val}}$,
\begin{equation}  \label{eq:bdraxy}
\begin{split}
\MoveEqLeft[4]
\P\left(\max_{i=1, \ldots, k-1}|\pi^*(s_i)-x_iL^{-1/4}| \leq w_\beta \midd \cL^\beta > L\right)\\ 
&\leq (1+o(1))\left(1+e^{-cM_*^2L^{1/2}r_\beta}\right)\exp\left(-2\sum_{i=1}^k\frac{(x_i-x_{i-1})^2-(y_i-y_{i-1})^2)}{s_i-s_{i-1}}\right)\\
&\qquad\times \P\left(\max_{i=1, \ldots, k-1}|\pi^*(s_i)-y_iL^{-1/4}| \leq w_\beta \midd \cL^\beta > L\right)
+ C\exp(-c(\log L)^2).
\end{split}
\end{equation}
We need to lower bound $\P(\max_{i=1, \ldots, k-1}|\pi^*(s_i)-y_iL^{-1/4}| \leq w_\beta\mid \cL^\beta > L)$ in order to ensure that the error term $C\exp(-c(\log L)^2)$ is not dominating. For this we sum the previous display over $\vec x \in [-KL^{-1/4}, KL^{-1/4}]^k\cap (w_\beta Z)^k$, where $K$ is a large constant such that 
$$\sum_{\vec x\in [-KL^{-1/4}, KL^{-1/4}]^k\cap (w_\beta Z)^k}\P\left(\max_{i=1, \ldots, k-1}|\pi^*(s_i)-x_iL^{-1/4}| \leq w_\beta\midd \cL^\beta > L\right) \geq \frac{1}{2};$$
that this is possible follows from combining  transversal fluctuation bounds for $\pi(s_i)$ (\Cref{p.closeness of maximizer and polymer} and \Cref{p.maximizer tf} for $\beta=1$ or \Cref{l.trans-fluc} for $\beta=\infty$) and the bound on the closeness of $\pi(s_i)$ and $\pi^*(s_i)$ (Proposition~\ref{l.strip maximizer is same as top to intermediate maximizer}). Then from \eqref{eq:bdraxy} we have that
\begin{align*}
\left(\tfrac{1}{2}-C\exp(-c(\log L)^2)\right)
&\leq C\cdot\P\left(\max_{i=1, \ldots, k-1}|\pi^*(s_i)-y_iL^{-1/4}| \leq w_\beta\midd \cL^\beta > L\right)\\
&\quad\times \sum_{\vec x\in [-KL^{-1/4}, KL^{-1/4}]^k\cap (w_\beta Z)^k} \exp\left(-2\sum_{i=1}^k\frac{(x_i-x_{i-1})^2-(y_i-y_{i-1})^2)}{s_i-s_{i-1}}\right).
\end{align*}
This yields that $\P(\max_{i=1, \ldots, k-1}|\pi^*(s_i)-y_iL^{-1/4}| \leq w_\beta\mid \cL^\beta > L)$ is lower bounded by a polynomial in $L^{-1}$.
Thus we conclude that
\begin{align*}
\MoveEqLeft[4]
\frac{\P\left(\max_{i=1, \ldots, k-1}|\pi^*(s_i)-x_iL^{-1/4}| \leq w_\beta \mid \cL^\beta > L\right)}{\P\left(\max_{i=1, \ldots, k-1}|\pi^*(s_i)-y_iL^{-1/4}| \leq w_\beta \mid \cL^\beta > L\right)}\\ 
&\leq (1+o(1))\left(1+e^{-cM_*^2L^{1/2}r_\beta}\right)\exp\left(-2\sum_{i=1}^k\frac{(x_i-x_{i-1})^2-(y_i-y_{i-1})^2)}{s_i-s_{i-1}}\right)\\
&+C\exp(-c(\log L)^2).
\end{align*}
We can then get a lower bound of the same ratio by swapping $\vec x$ and $\vec y$.
Taking $L\to\infty$ followed by $M_*\to\infty$ completes the proof.
\end{proof}

\begin{proof}[Proof of Lemma~\ref{l.h_i prob comparison}]
It suffices to prove the case $\vec y = \vec 0$ and apply the resulting statement twice. By independence, $\P\left(\vec\cL^{\beta, \vec x} \approx \vec h\right) = \prod_{i=1}^k \P\left(\cL^{\beta, \vec x}_i \in h_i+[0, e^{-L}]\right)$. So we have to show that
\begin{align*}
\P\left(\cL^{\beta, \vec x}_i \in h_i + [0, e^{-L}]\right) = (1+o(1))\exp\left(-\frac{2x_i^2}{s_i-s_{i-1}}\right)\cdot \P\left(\cL^{\beta, \vec 0}_i \in h_i + [0, e^{-L}]\right).
\end{align*} 
Now, by shear invariance and using Theorem~\ref{t.comparison} in the third line,
\begin{align*}
\MoveEqLeft[2]
\P\left(\cL^{\beta, \vec x}_i \in h_i + [0, e^{-L}]\right)\\
&= \P\left(\cL^{\beta, \vec x}_i \geq h_i\right) - \P\left(\cL^{\beta, \vec x}_i \geq h_i+e^{-L}\right)\\
&= \P\left(\cL^{\beta, \vec 0}_i \geq h_i + \frac{(x_i-x_{i-1})^2L^{-1/2}}{s_i-s_{i-1}}\right) - \P\left(\cL^{\beta, \vec 0}_i \geq h_i + e^{-L} + \frac{(x_i-x_{i-1})^2L^{-1/2}}{s_i-s_{i-1}}\right)\\
&= (1+o(1))\exp\left(-2(s_i-s_{i-1})^{-3/2}h_i^{1/2}(x_i-x_{i-1})^2L^{-1/2}\right)\cdot\P\left(\cL^{\beta, \vec 0}_i \geq h_i \right)\\
&\quad - (1+o(1))\exp\left(-2(s_i-s_{i-1})^{-3/2}(h_i+e^{-L})^{1/2}(x_i-x_{i-1})^2L^{-1/2}\right)\cdot\P\left(\cL^{\beta, \vec 0}_i \geq h_i+e^{-L}\right).
\end{align*}
Since $(h_i+e^{-L})^{1/2} = h_i^{1/2}+O(e^{-L})$, and  $h_i^{1/2} = (s_i-s_{i-1})^{1/2}L^{1/2} + o(L^{1/2})$ due to that $h_i = (s_i-s_{i-1})L + o(L)$, the previous display equals
\begin{align*}
(1+o(1))\exp\left(-\frac{2(x_i-x_{i-1})^2}{s_i - s_{i-1}}\right)\left[\P\left(\cL^{\beta, \vec 0}_i \geq h_i\right) - \P\left(\cL^{\beta, \vec 0}_i \geq h_i+e^{-L}\right)\right],
\end{align*}
which is what we wanted to show.
\end{proof}

\section{Joint comparison of maximizer location and free energy across peaks}\label{s.joint comparison of probabilities}

In this section, we give the proof of Lemma~\ref{l.maximizer location prob relation}. 
We use the setup there: in particular, $C_0, M_*$ are large constants, $L$ is taken to be large (depending on $C_0, M_*$), $\vec h\in E_{\mrm{val}}$ for $E_{\mrm{val}}$ defined in \eqref{eq:eeeval},  
$\vec x, \vec y \in \cK$ for a compact set $\cK\subseteq \R^{k-1}$, and 
$(s_1, \cdots, s_{k-1})\in \rDe_{k-1}([0,1])$ for $k\in \N$, with $s_0=0$ and $s_k=1$.
All the constants within this section can depend on $C_0$, $M_*$, $\cK$, $k$, and $(s_1, \cdots, s_{k-1})$.

The proof strategy is to do a resampling on a small interval $I$ (with size of order $L^{-1/2}\log L$) in a way that a certain conditional probability of the event in question (the location of $\pi^*(s_i)$ for $i=1, \ldots, k$ and a lower bound on $\cL^\beta[1]$) is a function of the endpoint values at the boundary of $I$. The proof then comes down to showing that the density of these endpoint values under the conditioning $\smash{\cL^{\vec x} \approx \vec h}$ (which recall is shorthand for $\cL^{\beta,\vec x}_i \in h_i+[0,e^{-L}]$ for $i=1, \ldots, k$) is $1+o(1)$ of the density of the same under the conditioning $\cL^{\vec y} \approx \vec h$.

However, note that neither of the events $\max_{i=1, \ldots, k-1}|\pi^*(s_i) - x_iL^{-1/4}| < w_{\beta}$ and $\cL^\beta[1] > (1-\beta^{-1}\exp(-cL^{1/4}))L$ are functions of the profile on an interval of size of order $L^{-1/2}\log L$. Thus we will need to first argue that we can consider different events which do have this localized property. For the first event, we simply consider the maximizer on $I$ as a proxy, instead of on $\R$; clearly, if the first event holds, then it also holds that the restricted maximizer is $w_\beta$-close to $x_iL^{-1/4}$. We will then show (Lemma~\ref{l.restricted maximizer is maximizer}) that with high probability the restricted maximizer and true maximizer coincide. In the $\beta = \infty$ case, on this event we also have that $\cL^\beta[1]$ is a function of the profile on an interval of size of order $L^{-1/2}\log L$.

Modifying the second event, $\cL^\beta[1] > (1-\exp(-cL^{1/4}))L$, to be a local function of the profile in the $\beta = 1$ case is more difficult. A naive argument (invoking \Cref{p.closeness of maximizer and polymer} and \Cref{l.strip maximizer is same as top to intermediate maximizer}) shows that the free energy when restricted to an interval of size $ML^{-1/2}\log L$ (for a large number $M$) would capture a $(1-L^{-M})$ fraction of the total free energy. But at the end of the comparison we need to be able to return to the event of being larger than $\cL^\beta[1]>L$, and so we need to argue that, conditional on $\cL^{\vec x} \approx \vec h$ and $\cL^\beta[1] > (1-\exp(-cL^{1/4}))L$ (as well as the location of the restricted maximizer), with high probability $\cL^\beta$ will actually be larger than $L + L^{-E_*}$ for some $E_*$, i.e., the free energy overshoots by a polynomial amount. Then we know that restricting to a $ML^{-1/2}\log L$ window still results in the free energy being larger than $L$ (by picking $M$ large enough), and we can do our localized comparison argument with this event. 

In the following \Cref{s.conditional overshoot} we give the argument for saying that, conditional on $\cL^{\vec x} \approx \vec h$ and $\cL^\beta[1] > (1-\exp(-cL^{1/4}))L$, it holds with high probability that $\cL^\beta[1] > L+L^{-E_*}$ (Lemma~\ref{l.L overshoot}). In Section~\ref{s.comparison.restricted free energy} we argue that the free energy (in a form suitable for analysis in the upcoming proof of Lemma~\ref{l.maximizer location prob relation}) from the smaller interval of size $ML^{-1/2}\log L$ is also $L+L^{-E_*}$ (Lemma~\ref{l.conditional localization of event}). Then in Section~\ref{s.comparison.density comparison} we give the proof of Lemma~\ref{l.maximizer location prob relation}.

Denote $\cE_{\mrm{cond}}$ to be the event where $\max_{i=1, \ldots, k-1}|\pi^*(s_i)|<w_\beta, \cL^\beta[1] > (1-\beta^{-1}e^{-cL^{1/4}})L, \vec\cL^{\beta, \vec x} \approx \vec h$.
The following will be frequently used, to relate the conditioning on $\cE_{\mrm{cond}}$ and $\vec\cL^{\beta, \vec x} \approx \vec h$ for $\vec h\in E_{\mrm{val}}$ with $E_{\mrm{val}}$ as defined in \eqref{eq:eeeval}:
\begin{equation} \label{eq:condlowbd2}
\P\Bigg( \max_{i=1, \ldots, k-1}|\pi^*(s_i)|<w_\beta, \cL^\beta[1] > (1-\beta^{-1}e^{-cL^{1/4}})L \midd \vec\cL^{\beta, \vec x} \approx \vec h\Bigg)>c\exp(-CL^{1/2}).
\end{equation}
It follows from the lower bound in \Cref{p.k-point part to whole free energy} and $\vec h\in E_{\mrm{val}}$.

\subsection{Conditional overshoot at positive temperature}\label{s.conditional overshoot}

We work in the case of $\beta =1$ only in this subsection, in which case we recall that $w_\beta = L^{-1/2}$.

\begin{lemma}\label{l.L overshoot}
There exists a constant $\rho>0$ such that
\[
\P\Big(\cL^{\beta}[1] < L+\rho L^{-3} \midd \cE_{\mrm{cond}}\Big) < CL^{-1}.
\]
\end{lemma}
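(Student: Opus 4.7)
My plan is as follows. By Bayes' theorem and the lower bound \eqref{eq:condlowbd2}, it suffices to show that, uniformly in $\vec h\in E_{\mrm{val}}$,
\[
\P\!\left(\cL^{\beta}[1]\in A,\ \max_i|\pi^*(s_i)-x_iL^{-1/4}|<w_\beta \ \middle|\ \vec\cL^{\beta,\vec x}\approx \vec h\right)\le CL^{-1}\,\P(\cE_{\mrm{cond}}\mid \vec\cL^{\beta,\vec x}\approx \vec h),
\]
where $A=[(1-e^{-cL^{1/4}})L,L+\rho L^{-3})$ has length $O(\rho L^{-3})$, since $Le^{-cL^{1/4}}=o(L^{-3})$. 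Let $H:=\sum_i h_i$ and $K:=L+(k-1)\log(2L^{1/2})-H$, so that $K\in[-(M_*+1),C_0\log L]$ for $\vec h\in E_{\mrm{val}}$.

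I would then apply Proposition~\ref{prop:dp-tent-bcomp} to couple, conditional on $\vec\cL^{\beta,\vec x}\approx \vec h$, the profiles $\cL^\beta(\cdot,s_{i-1};\cdot,s_i)$ in spatial windows of size $\Omega(L^{1/2})$ around the peak locations $x_iL^{-1/4}$ to independent rate-$2$ Brownian bridges with affine drifts of slopes $\pm 2L^{1/2}$, up to error $\exp(-cL)$ on an event of conditional probability $1-C\exp(-cL^{3/2})$. Under this coupling, $\cL^\beta[1]$ is, up to negligible error, a smooth functional of finitely many Brownian bridges. The heart of the argument is then to show that, conditional on $\vec\cL^{\beta,\vec x}\approx \vec h$ and $\max_i|\pi^*(s_i)-x_iL^{-1/4}|<w_\beta$, the law of $\cL^\beta[1]$ admits a Lebesgue density matching (up to multiplicative constants) a Gaussian with mean $L-K$ and standard deviation $L^{-1/4}$. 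I would prove this by a resampling argument: fix one of the Brownian bridges outside a small interior sub-interval chosen away from the peaks and from the heights $s_i$, and also fix its two boundary values there; then shifting the interior at an interior point by a constant $c$ adds $c$ to the contribution of that bridge to the log-integral defining $\cL^\beta[1]$, and $c$ itself has a Gaussian density of the claimed form by the Brownian bridge structure.

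With the density bound, the numerator is at most $CL^{1/4}\cdot \rho L^{-3}\cdot e^{-cK^2L^{1/2}}$. By Proposition~\ref{p.k-point part to whole free energy}'s lower bound \eqref{eq:kppwf2} together with the Gaussian tail estimate (Lemma~\ref{l.normal bounds}) applied to the same Gaussian density, the denominator is at least $c(L^{-1/4}/\max(K,L^{-1/4}))\cdot e^{-cK^2L^{1/2}}$ (with matching constants in the exponentials). Taking the ratio yields at most $C\rho L^{-11/4}\cdot \max(KL^{1/4},1)\le C\rho L^{-5/2}\cdot L^{1/4}\log L=C\rho L^{-2}\log L$ using $K\le C_0\log L$, which is $\le CL^{-1}$ for $\rho$ small enough. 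The main obstacle is to execute the resampling rigorously so that (i) the shift does not disrupt the conditioning events $\vec\cL^{\beta,\vec x}\approx \vec h$ and $\max_i|\pi^*(s_i)-x_iL^{-1/4}|<w_\beta$ (beyond negligible probability cost coming from the $\exp(-cL)$ errors in Proposition~\ref{prop:dp-tent-bcomp}), and (ii) the constants in the Gaussian-type factors of the upper density bound match those in the lower survival bound, which may require sharpening Proposition~\ref{p.k-point part to whole free energy}'s lower tail estimate in the regime of interest.
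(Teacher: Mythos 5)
The approach you sketch has the right intuition---a one-dimensional Gaussian resampling whose scale $L^{-1/4}$ should control the fluctuations of $\cL^\beta[1]$---but the density-formulation introduces at least two gaps that the paper's argument is specifically designed to avoid.

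First, your resampling is located ``away from the peaks'', but the integral defining $\cL^\beta[1]$ is overwhelmingly concentrated in windows of size $w_\beta=L^{-1/2}$ around the peak locations $x_iL^{-1/4}$ (because the profile falls off like $-2L^{1/2}|z-x_iL^{-1/4}|$). Shifting a Brownian-bridge segment located away from the peaks by $c$ multiplies only an exponentially small fraction of the integrand by $e^c$, so it does \emph{not} ``add $c$ to the log-integral'' and has negligible effect on $\cL^\beta[1]$. If instead you place the resampling window near a peak, you must keep the value at the peak point pinned (else you violate $\vec\cL^{\beta,\vec x}\approx\vec h$), and you then need a quantitative statement that the perturbed window still carries a constant fraction of the total mass. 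In the paper this is exactly the role of Lemma~\ref{l.perturbation has effect}: the resampling is at $x_1L^{-1/4}+\frac{1}{2}w_\beta$, the value at $x_1L^{-1/4}$ is held fixed, and the lemma shows the window $x_1L^{-1/4}+[\frac14 w_\beta,\frac34 w_\beta]$ contributes at least a $\rho$-fraction of $\exp(\cL^\beta[1])$. Your plan never establishes an analogue of this and cannot without moving the window to the peak.

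Second, and independently, the numerator/denominator comparison you propose needs the constants in the Gaussian exponentials to coincide: you bound the numerator by a density-type factor $CL^{1/4}\exp(-c_1K^2L^{1/2})$ and the denominator by a survival factor $c\exp(-c_2K^2L^{1/2})$, and for $K$ up to $C_0\log L$ the ratio is uncontrolled unless $c_1\le c_2$. Neither \eqref{eq:kppwf1} nor \eqref{eq:kppwf2} gives matching constants, and you correctly flag this but offer no repair. The paper sidesteps the issue structurally: rather than comparing two probabilities with separate exponential factors, it conditions on $\cE_{\mrm{cond}}$, resamples the single Gaussian coordinate $U$, and then only needs a \emph{relative} statement---that $U$ exceeds the $\F$-measurable corner $\mrm{Cor}$ by $L^{-3}$ with conditional probability $\ge 1-L^{-1}$. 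Here the corner and mean bounds (Lemmas~\ref{l.corner upper bound},~\ref{l.mu lower bound}) give $\mrm{Cor}-\mu=O(L)$ while $\sigma^2\asymp L^{-1/2}$, so the Gaussian overshoot is automatic; the constant-matching problem never arises. You should also note that the tilting by the Radon--Nikodym weight $W^{\mrm{pt}}$ must be handled: the paper uses monotonicity of $W^{\mrm{pt}}$ to pass to the untilted Gaussian truncated above $\mrm{Cor}$, which stochastically minorizes the actual law of $U$; your plan does not address the tilt at all. In short, your density heuristic is a reasonable guide to the scale $L^{-1/4}$, but the proof needs (i) a perturbation-near-the-peak lemma in place of ``away from the peaks'', (ii) an argument that works conditionally on $\cE_{\mrm{cond}}$ rather than by a ratio of unconditional bounds, and (iii) a treatment of the Gibbs tilting.
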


This lemma is proved by a resampling argument, and we explain the setup next.
For a function $f:\R\to\R$ and an interval $[a,b]$, we define the \emph{bridge of $f$ on $[a,b]$}, denoted $f^{[a,b]}$,~by
\begin{align*}
f^{[a,b]}(x) = f(x) - \frac{x-a}{b-a}f(b) - \frac{b-x}{b-a}f(a);
\end{align*}
in words, it is the function obtained by affinely shifting $f$ to equal $0$ at $a$ and $b$.

For any $x\in \R$, we define
\[
\cL^\beta_+[1](x)=\maxbeta_{|z_2|,\ldots, |z_{k-1}| \le 1} \sum_{i=2}^k\cL^\beta(z_{i-1}, s_{i-1};z_i, s_i),
\]
with $z_1=x$, $z_k=0$; note that this is a function of $\cL^\beta$ on the temporal strip $[s_1, 1]$.

Let $\h_{s_1,1}$ and $\h_{s_1,2}$ be the top two lines in the line ensemble associated with $\smash{\cL^\beta(0,0;\cdot, s_1)}$; in particular $\smash{\h_{s_1,1} = \cL^\beta(0,0;\cdot, s_1)}$. Let $\F$ be the $\sigma$-algebra generated by
\begin{itemize}
  \item $\h_{s_1,1}(z)$ for $z\in (-\infty, x_1L^{-1/4}] \cup[x_1L^{-1/4}+w_\beta, \infty)$,
  \item $(\h_{s_1,1})^{x_1L^{-1/4}+[0, \frac{1}{2}w_\beta]}$, $(\h_{s_1,1})^{x_1L^{-1/4}+ [\frac{1}{2}w_\beta, w_\beta]}$,
  \item $\h_{s_1,2}$ and $\cL^\beta_+[1]$.
\end{itemize}
In particular, conditional on $\F$, the remaining  randomness (to determine $\h_{s_1,1}$) is the value of $U:=\smash{\cL^\beta(0,0; x_1L^{-1/4}+\frac{1}{2}w_\beta, s_1)} = \smash{\h_{s_1,1}(x_1L^{-1/4}+\frac{1}{2}w_\beta)}$, by linear interpolation.
Therefore, for $z\in[x_1L^{-1/4}, x_1L^{-1/4}+\frac{1}{2}w_\beta]$ and $u\in\R$, we denote
\begin{align}\label{e.reconstruction formula}
\mf h^{\beta, u}_{s_1,1}(z) := \frac{z-x_1L^{-1/4}}{\frac{1}{2}w_\beta}\bigl(u-\h_{s_1,1}(x_1L^{-1/4})\bigr) + \h_{s_1,1}(x_1L^{-1/4}) + (\h_{s_1,1})^{x_1L^{-1/4}+[0,\frac{1}{2}w_\beta]}(z)
\end{align}
and a similar expression for $z\in\smash{[x_1L^{-1/4}+ \frac{1}{2}w_\beta, x_1L^{-1/4}+ w_\beta]}$. 
Further, given $\cF$ and $U$, we also determine the value of $\cL^\beta[1]$ via the formula for $\h_{s_1,1}$ as well as the convolution formula, since $\cL^\beta_+[1]$ is $\F$-measurable. We also observe that the dependence of $\smash{\cL^\beta[1]}$ on $U$ is increasing since the convolution formula has an increasing dependence on $\smash{\h_{s_1,1}}$, which itself depends on $U$ in an increasing manner.

Now we apply the Brownian Gibbs property. It implies that the distribution of $U$ is a normal random variable of $\F$-measurable mean $\mu$ and variance $\sigma^2$ given by
\begin{equation}\label{e.mu and sigma}
\begin{split}
\mu &= \tfrac{1}{2}\left(\h_{s_1,1}(x_1L^{-1/4}+ w_\beta) + \h_{s_1,1}(x_1L^{-1/4})\right)\\
\sigma^2 &= 2\cdot\frac{\frac{1}{2}w_\beta\cdot \frac{1}{2}w_\beta}{w_\beta} = \tfrac{1}{2}w_\beta,
\end{split}
\end{equation}
tilted by the Radon-Nikodym derivative $W^{\mrm{pt}}(U)/Z^{\mrm{pt}}$, where $W^{\mrm{pt}}$ and $Z^{\mrm{pt}}$ are given by
\[
W^{\mrm{pt}}(u) = W(\fh^{\beta, u}_{s_1,1}, \h_{s_1,2}), \quad
Z^{\mrm{pt}} = \E_\F\left[W^{\mrm{pt}}(U)\right].
\]
where $W(\fh^{\beta, u}_{s_1,1}, \h_{s_1,2})$ is from \eqref{e.rn derivative} for the interval $[x_1L^{-1/4}, x_1L^{-1/4}+w_\beta]$.

Now, if we require that $\cL^\beta[1] > (1-e^{-cL^{1/4}})L$, this is equivalent to $U$ being larger than some $\F$-determined value, due to the increasing dependence of $\cL^\beta[1]$ on $U$ already noted.
Further, if we also require that $|\pi^*(s_1)| \leq w_\beta$, it is not hard to see that this also is equivalent to a lower bound on $U$ (which may be $-\infty$).

These in turn imply that, conditional on $\cE_{\mrm{cond}}$, and $\vec\cL^{\beta, \vec x} \approx \vec h$, the distribution of $U$ is given by a Gaussian random variable of mean $\mu$ and variance $\sigma^2$ as given in \eqref{e.mu and sigma} tilted by $W^{\mrm{pt}}(U)/Z^{\mrm{pt}}$, and is further conditioned to be larger than an $\F$-measurable random variable, which we denote by $\mrm{Cor}$ (short for ``corner'', which is terminology introduced for an analogous object in \cite{hammond2016brownian}). Now, since $W^{\mrm{pt}}$ is an increasing function, this Gaussian random variable stochastically dominates the Gaussian random variable $X$ with the same mean and variance which is conditioned only to be larger than $\mrm{Cor}$.

If we know that $\mrm{Cor}$ is not too high (say less than order $L$) and $\mu$ is not too low (say $-\mu$ less than order $L$), then the Gaussian random variable will overshoot by an amount at least polynomial in $L^{-1}$. This upper bound on $\mrm{Cor}$ and lower bound on $\mu$ are recorded in the next two lemmas and will be proved shortly.

\begin{lemma}\label{l.corner upper bound}
There exists $K_0$ such that for $K > K_0$,
\begin{align*}
\P\left(\mrm{Cor} > h_1+K \midd \cE_{\mrm{cond}}\right) \leq \exp(-cKL^{1/2}).
\end{align*}

\end{lemma}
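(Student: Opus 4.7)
The key observation is that, by the defining property of $\mrm{Cor}$, on the event $\cE_{\mrm{cond}}$ we automatically have $U>\mrm{Cor}$. Hence $\{\mrm{Cor}>h_1+K\}\cap\cE_{\mrm{cond}}\subseteq\{U>h_1+K\}\cap\cE_{\mrm{cond}}$, so it is enough to bound $\P(U>h_1+K\mid\cE_{\mrm{cond}})$. By Bayes' theorem together with the lower bound~\eqref{eq:condlowbd2},
\begin{align*}
\P(U>h_1+K\mid\cE_{\mrm{cond}})
&\le\frac{\P(U>h_1+K,\,\vec\cL^{\beta,\vec x}\approx\vec h)}{\P(\cE_{\mrm{cond}}\mid\vec\cL^{\beta,\vec x}\approx\vec h)\,\P(\vec\cL^{\beta,\vec x}\approx\vec h)} \\
&\le C\exp(CL^{1/2})\cdot\P(U>h_1+K\mid\vec\cL^{\beta,\vec x}\approx\vec h).
\end{align*}

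Next, since $\cL^\beta$ is independent across the disjoint temporal strips $[0,s_1],[s_1,s_2],\dots,[s_{k-1},1]$ (Lemmas~\ref{l.dl symmetries} and~\ref{l.Z symmetries}), and since $U$ together with $\cL^{\beta,\vec x}_1$ is measurable with respect to $\cL^\beta$ on $[0,s_1]$ while $\cL^{\beta,\vec x}_j$ for $j\ge 2$ is measurable with respect to $\cL^\beta$ on strips disjoint from $[0,s_1]$, the conditioning on $\cL^{\beta,\vec x}_j\approx h_j$ for $j\ge 2$ is irrelevant to $U$. Thus
\[
\P(U>h_1+K\mid\vec\cL^{\beta,\vec x}\approx\vec h)=\P(U>h_1+K\mid\cL^{\beta,\vec x}_1\approx h_1).
\]

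It remains to show that this last quantity is at most $C\exp(-cK^2L^{1/2})$ for $K>K_0$. Using shift invariance (Lemmas~\ref{l.dl symmetries}/\ref{l.Z symmetries}) to translate $x_1L^{-1/4}$ to $0$, this is equivalent to estimating
\[
\P\bigl(\fh^\beta_{s_1,1}(w_\beta/2)-h_1>K\bigm|\fh^\beta_{s_1,1}(0)\in(h_1,h_1+\diff h_1)\bigr).
\]
After the standard rescaling $\hfh^\beta_{s_1,1}(x)=s_1^{-1/3}\fh^\beta_{s_1,1}(s_1^{2/3}x)$, this becomes a conditioning on $\hfh^\beta_{s_1,1}(0)$ being close to the value $s_1^{-1/3}h_1=\Theta(L)$, evaluated at the rescaled point $\Theta(L^{-1/2})$. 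Applying Lemma~\ref{l.control near tent center} with $I$ of length $\Theta(L^{-1/2})$, so that $\sigma_I=\Theta(L^{-1/4})$, and the deterministic tent value at $w_\beta/2$ is $h_1-O(1)$, the conditional probability that $\fh^\beta_{s_1,1}(w_\beta/2)$ exceeds its tent value by more than $K$ is bounded by $C\exp(-cK^2L^{1/2})$.

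Putting these together,
\[
\P(\mrm{Cor}>h_1+K\mid\cE_{\mrm{cond}})\le C\exp\bigl(CL^{1/2}-cK^2L^{1/2}\bigr)\le\exp(-cKL^{1/2})
\]
for $K>K_0$ sufficiently large, giving the claimed estimate. The main technical point is verifying that the tent description in Lemma~\ref{l.control near tent center} remains effective at the very short scale $w_\beta/2=L^{-1/2}/2$ near the peak; since that scale is genuinely positive, a direct application of the lemma produces the needed Gaussian tail, and the loss from the denominator $\P(\cE_{\mrm{cond}}\mid\vec\cL^{\beta,\vec x}\approx\vec h)\gtrsim\exp(-CL^{1/2})$ is comfortably absorbed for $K$ beyond a large constant.
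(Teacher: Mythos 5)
Your proof is correct and follows essentially the same route as the paper's: dominate $\mrm{Cor}$ by $U = \h_{s_1,1}(x_1L^{-1/4}+\tfrac{1}{2}w_\beta)$, estimate the conditional distribution of $U$ given $\vec\cL^{\beta,\vec x}\approx\vec h$ by a local tent estimate, and then trade the conditioning for $\cE_{\mrm{cond}}$ at the cost of an $\exp(CL^{1/2})$ factor via~\eqref{eq:condlowbd2}. Two small points on which you are more careful than the paper are worth flagging. First, the domination $\mrm{Cor}\le U$ holds precisely on $\cE_{\mrm{cond}}$ and not unconditionally (the paper asserts it ``almost surely'', which is imprecise); your restriction of the inclusion to the conditioned event is the right way to state it, and your Bayes chain through $\P(U>h_1+K\mid\cE_{\mrm{cond}})$ handles this cleanly. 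Second, your citation of \Cref{l.control near tent center} (local tent control with deviation scale $\sigma_I$) is the correct reference here: the interval in question has length $\Theta(L^{-1/2})$, hence $\sigma_I=\Theta(L^{-1/4})$, giving a tail $\exp(-cK^2L^{1/2})$, which comfortably absorbs the $\exp(CL^{1/2})$ from~\eqref{eq:condlowbd2} once $K$ exceeds a constant. The paper instead cites \Cref{lem:fh-tent}, whose uniform deviation scale is $L^{1/4}$ over the whole tent interval, and by itself would only yield $\exp(-cK^2L^{-1/2})$; the paper almost certainly intends one of the refined local estimates (\Cref{l.control near tent center} or \Cref{cor.tent-pr}), as you use. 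Your additional remark about independence across disjoint temporal strips is a valid reduction from $\vec\cL^{\beta,\vec x}\approx\vec h$ to $\cL^{\beta,\vec x}_1\approx h_1$ and is needed in order to match the form of \Cref{l.control near tent center}, which is stated for the single-strip process $\hfh^\beta_{t,1}$.
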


\begin{lemma}\label{l.mu lower bound}
There exists $K_0$ such that,
\begin{align*}
\P\left(\mu < -K_0L \midd \cE_{\mrm{cond}}\right) \leq \exp(-cL^{3/2}).
\end{align*}
\end{lemma}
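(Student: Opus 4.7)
The plan is to reduce the conditional probability to an unconditional local continuity bound using Bayes' theorem together with the independence of $\cL^\beta$ across disjoint temporal strips. Since $\mu$ (as defined in \eqref{e.mu and sigma}) depends only on $\h_{s_1,1}$, and hence on $\cL^\beta$ restricted to the strip $[0,s_1]$, it is independent under $\P$ of the constraints $\cL^{\beta,\vec x}_i \approx h_i$ for $i \geq 2$. Combining this independence with the lower bound \eqref{eq:condlowbd2} on the conditional probability of the remaining components of $\cE_{\mrm{cond}}$ given $\vec\cL^{\beta,\vec x}\approx\vec h$, I would obtain
\[
\P(\mu < -K_0L \mid \cE_{\mrm{cond}}) \leq c^{-1}\exp(CL^{1/2}) \cdot \P(\mu < -K_0L \mid \cL^{\beta,\vec x}_1 \approx h_1).
\]

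Next I would estimate the conditional probability on the right. On $\cL^{\beta,\vec x}_1 \approx h_1$ we have $\h_{s_1,1}(x_1L^{-1/4}) = h_1 + O(e^{-L})$, and since $\vec h \in E_{\mrm{val}}$ forces $h_1 \geq s_1 L - 2M_*L^{7/8} \geq \tfrac{1}{2}s_1 L$ for large $L$. Combined with $\mu < -K_0 L$ and the definition of $\mu$, this forces
\[
|\h_{s_1,1}(x_1L^{-1/4}+w_\beta) - \h_{s_1,1}(x_1L^{-1/4})| > 2K_0 L,
\]
for any $K_0 \geq s_1$. Hence by Bayes,
\[
\P(\mu < -K_0L \mid \cL^{\beta,\vec x}_1 \approx h_1) \leq \frac{\P\bigl(|\h_{s_1,1}(x_1L^{-1/4}+w_\beta) - \h_{s_1,1}(x_1L^{-1/4})| > 2K_0 L\bigr)}{\P(\cL^{\beta,\vec x}_1 \approx h_1)}.
\]
The numerator is controlled by the local continuity bound \Cref{lem:fh-cont} applied to $\hfh^\beta_{s_1,1}$ with $d = s_1^{-2/3}w_\beta$ and $M = 2K_0L^{5/4}$, yielding the bound $C\exp(-cK_0^2 L^{5/2})$. (The stationarity of $x \mapsto \h_{s_1,1}(x)+x^2/s_1$ lets us assume $x_1=0$ for this step, up to a parabolic correction of order $L^{-1/2}$ which is negligible.) For the denominator, since $h_1 = \Theta(L)$, \Cref{lem:fh-ut} combined with the same stationarity gives the lower bound $ce^{-L}\exp(-CL^{3/2})$.

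Putting these together, the displayed ratio is at most $C\exp(-cK_0^2L^{5/2} + CL^{3/2} + L)$, and including the factor $\exp(CL^{1/2})$ from the first step yields
\[
\P(\mu < -K_0L \mid \cE_{\mrm{cond}}) \leq C\exp(-cK_0^2 L^{5/2} + CL^{3/2}),
\]
which is bounded by $\exp(-cL^{3/2})$ once $K_0$ is chosen large enough. The main point requiring care is the initial Bayes reduction via independence across strips; after that, the argument is a straightforward application of the stated estimates, with substantial room to spare since the natural fluctuation scale of $\h_{s_1,1}(x_1L^{-1/4}+w_\beta)-\h_{s_1,1}(x_1L^{-1/4})$ is of order $L^{-1/4}$, so demanding a drop of order $L$ is a very extreme deviation.
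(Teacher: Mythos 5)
Your proof is correct, and the initial reduction (pass from conditioning on $\cE_{\mrm{cond}}$ to conditioning on $\vec\cL^{\beta,\vec x}\approx\vec h$ via \eqref{eq:condlowbd2} and independence across strips) coincides with the paper's. Where you diverge is in how you bound $\P(\mu < -K_0L \mid \cL^{\beta,\vec x}_1\approx h_1)$: the paper applies the conditional tent estimate Theorem~\ref{lem:fh-tent} directly, which says that under $\cL^{\beta,\vec x}_1\approx h_1$ the profile $\h_{s_1,1}$ remains within $O(L^{3/4})$ of the tent with probability $1-\exp(-cL^{3/2})$, so both evaluations entering $\mu$ are near $h_1\approx s_1 L$. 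You instead observe that on $\cL^{\beta,\vec x}_1\approx h_1$ the bad event forces an enormous drop of order $L$ over a window of size $w_\beta = L^{-1/2}$, and you control this by a second Bayes step using the \emph{unconditional} local H\"older bound Proposition~\ref{lem:fh-cont} in the numerator and the one-point density lower bound \Cref{lem:fh-ut} in the denominator. Your route is slightly more elementary in that it avoids the tent machinery entirely, at the cost of an extra ratio; since the H\"older bound gives $\exp(-cK_0^2 L^{5/2})$ against a denominator only of size $\exp(-CL^{3/2}-L)$, there is a large margin and the argument closes. The one place you are mildly imprecise is the phrasing ``for any $K_0\geq s_1$'' — in fact $h_1 \geq \tfrac{1}{2}s_1 L$ together with $\mu<-K_0 L$ already gives a drop of more than $2K_0 L$ for any $K_0>0$; the largeness of $K_0$ is only needed at the end to absorb the $\exp(CL^{3/2}+L+CL^{1/2})$ prefactors into $\exp(-cL^{3/2})$, which you do state. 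Both proofs are valid; the paper's is one Bayes step shorter, yours leans on a weaker (unconditional) input.
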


However, knowing that the Gaussian overshoots by a polynomial-in-$L^{-1}$ amount does not immediately imply that the free energy overshoots $L$ by a comparable amount. For this we need to additionally know that the contribution to the free energy from the interval that gets perturbed by resampling $U$ is non-trivial (in fact, it contributes a positive proportion of the free energy, as we will show). This is recorded in the next lemma.

\begin{lemma}\label{l.perturbation has effect}
There exists $\rho>0$ such that, conditional on $\cE_{\mrm{cond}}$, with probability at least $1-\exp(-cL^{1/2})$, 
\begin{align*}
\int_{x_1L^{-1/4}+\frac{1}{4}w_\beta}^{x_1L^{-1/4}+\frac{3}{4}w_\beta}\exp\left(\h_{s_1,1}(x) + \cL^\beta_+[1](x)\right)\diff x\geq \rho \cL^\beta[1].
\end{align*}
\end{lemma}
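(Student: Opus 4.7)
The plan is to exploit the tent picture to show that the integrand $f(x) := \exp(\h_{s_1,1}(x) + \cL^\beta_+[1](x))$ concentrates a positive constant fraction of its total mass $\int_{|x|\leq 1} f(x)\,dx = \exp(\cL^\beta[1])$ on the small interval $x_1L^{-1/4} + [\tfrac{1}{4}w_\beta, \tfrac{3}{4}w_\beta]$. The key observation is that under $\vec\cL^{\beta,\vec x} \approx \vec h$ with $\vec h \in E_{\mrm{val}}$, the integrand $f$ has an approximate tent shape near $x_1L^{-1/4}$ with peak value $\approx e^H$ (where $H := \sum_i h_i \approx L + (k-1)\beta^{-1}\log(2L^{1/2})$) and logarithmic slopes of magnitude $\sim 2L^{1/2}$, so its effective support is a window of width $\Theta(L^{-1/2}) = \Theta(w_\beta)$ centered at $x_1L^{-1/4}$, substantially overlapping the interval in the claim.

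First I would decompose the conditioning into a disjoint union over $\vec h \in E_{\mrm{val}}$ of the events $\{\vec\cL^{\beta,\vec x} \approx \vec h\}$ intersected with $\cE_{\mrm{cond}}$, noting that \eqref{eq:condlowbd2} provides a lower bound of $c e^{-CL^{1/2}}$ on the conditional probability, so a Bayes-style restriction to a single slice costs at most a multiplicative $e^{CL^{1/2}}$. Fixing $\vec h \in E_{\mrm{val}}$, \Cref{l.control near tent center} combined with shear invariance gives $\h_{s_1,1}(x) = h_1 - \lambda_1 |x - x_1L^{-1/4}| + O(1)$ uniformly on $x_1L^{-1/4} + [-L^{-1/2}\log L, L^{-1/2}\log L]$, where $\lambda_1 = 2s_1^{-1/2}h_1^{1/2} \sim L^{1/2}$, off an event of conditional probability at most $C e^{-cL^{1/2}}$. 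For $\cL^\beta_+[1](x)$, applying \Cref{prop:dp-tent-coal} to the strip $[s_1, s_2]$ reduces $\cL^\beta_+[1](x) - \cL^\beta_+[1](x_1L^{-1/4})$ to $\cL^\beta(x, s_1; x_2L^{-1/4}, s_2) - \cL^\beta(x_1L^{-1/4}, s_1; x_2L^{-1/4}, s_2)$ up to additive error $C e^{-cL}$; an analogous tent control then yields $\cL^\beta_+[1](x) = \sum_{i=2}^k h_i - \lambda_2 |x - x_1L^{-1/4}| + O(1)$ with $\lambda_2 = 2(s_2 - s_1)^{-1/2} h_2^{1/2} \sim L^{1/2}$. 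Summing and exponentiating gives $f(x) \geq c \exp(H - \lambda |x - x_1L^{-1/4}|)$ on the window, with $\lambda = \lambda_1 + \lambda_2 \sim L^{1/2}$.

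Direct integration then yields $\int_{x_1L^{-1/4} + [\tfrac{1}{4}w_\beta, \tfrac{3}{4}w_\beta]} f(x)\, dx \geq c\lambda^{-1}(e^{-\lambda w_\beta/4} - e^{-3\lambda w_\beta/4}) e^H \geq c L^{-1/2} e^H$, while the matching tent upper bound for $f$ together with \Cref{l.part free energy to whole free energy} gives $\exp(\cL^\beta[1]) \leq C L^{-1/2} e^H$, so the ratio is a positive constant $\rho$. The main obstacle I anticipate is combining coalescence and the tent approximation for $\cL^\beta_+[1]$ uniformly on an $L^{-1/2}\log L$-window while maintaining additive error $O(1)$ on the log scale with total failure probability at most $e^{-cL^{1/2}}$: since \Cref{prop:dp-tent-coal} is only a single-strip statement, one must chain it with tent approximations across each of the $k-1$ strips and carefully track how these errors propagate to the effective slope $\lambda_2$ and peak location of $\cL^\beta_+[1]$ near $x_1L^{-1/4}$.
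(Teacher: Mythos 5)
Your plan is essentially the same strategy the paper uses: realize the conditioning as $\vec\cL^{\beta,\vec x}\approx\vec h$ via \eqref{eq:condlowbd2}, invoke the tent picture (via \Cref{cor.tent-pr} or \Cref{l.control near tent center}) plus coalescence (\Cref{prop:dp-tent-coal}) to control $\h_{s_1,1}$ and $\cL^\beta_+[1]$ near $x_1 L^{-1/4}$, and compare the small-interval integral to the full partition function. The overall approach works, but two points deserve attention.

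First, a bookkeeping issue. You write that the peak value of $f(x)=\exp(\h_{s_1,1}(x)+\cL^\beta_+[1](x))$ is $\approx e^H$ and that $\exp(\cL^\beta[1])\leq CL^{-1/2}e^H$. Both are off by powers of $L^{1/2}$ when $k>2$: since $\cL^\beta_+[1]$ is itself a $\maxbeta$ over $k-2$ intermediate points, the peak value is $\approx e^{H-(k-2)\log(2L^{1/2})}$, and the $k$-point bound \eqref{eq:kppwf3} gives $\exp(\cL^\beta[1])\lesssim e^{H-(k-1)\log(2L^{1/2})}$. These two corrections cancel in the final ratio (leaving a factor $L^{-1/2}\cdot 2L^{1/2}=2$), so your conclusion is correct, but as written the intermediate steps do not hold for $k>2$.

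Second, a comparison with the paper's route. You aim for a precise tent with slope $\lambda = \lambda_1+\lambda_2\sim 4L^{1/2}$ on the integration window, then integrate explicitly; the paper avoids this by picking a large constant $C_*$ and establishing that (a) the contribution to $\int_{|x|\le 1}f$ from $|x-x_1L^{-1/4}|\ge 10C_* w_\beta$ is at most $e^{-\frac{1}{4}C_*}\exp(\cL^\beta[1])$, and (b) the integrand is within $\exp(O(kC_*))$ of its peak value on the $10C_* w_\beta$ window, whence the ratio is $\geq (40C_*)^{-1}\exp(-40kC_*)$. The paper's route avoids the need for the upper bound on $\exp(\cL^\beta[1])$ (and hence a Bayes step with the $e^{CL^{1/2}}$ cost from \eqref{eq:condlowbd2}) in the denominator, and only requires order-constant tent control rather than a sharp slope. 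Your version is viable — since the relevant interval has size $\sim w_\beta$, \Cref{l.control near tent center} gives $O(1)$ errors with probability $1-\exp(-cL^{1/2})$, and $\lambda w_\beta = \Theta(1)$ makes your explicit integration a constant — but it's somewhat more delicate for no gain, and the upper bound on $\exp(\cL^\beta[1])$ through \Cref{p.k-point part to whole free energy} requires choosing $M$ large enough to dominate the $e^{CL^{1/2}}$ loss from \eqref{eq:condlowbd2}, a subtlety you should spell out.
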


With the previous three lemmas in hand, we now give the proof of Lemma~\ref{l.L overshoot}. Afterward, we will give the proofs of those lemmas.
\begin{proof}[Proof of Lemma~\ref{l.L overshoot}]
For every $u\in\R$, we define $\cL^{\beta, u}[1]$ the same way as $\cL^\beta[1]$ through \eqref{eq:clbetaone}, except for replacing $\cL^\beta(0,0;s_1,\cdot)=\fh^\beta_{s_1,1}$ by $\fh^{\beta,u}_{s_1,1}$.
We observe that, for any $\delta>0$, 
\begin{align*}
\exp\left(\cL^{\beta, U-\delta}[1]\right)
&= \int_{[-1,1]\setminus[x_1L^{-1/4}+\frac{1}{4}w_\beta, x_1L^{-1/4}+\frac{3}{4}w_\beta]} \exp\left(\mf h^{\beta, U-\delta}_1(x)+ \cL^\beta_+[1](x)\right)\diff x \\
&\qquad  +\int_{x_1L^{-1/4}+\frac{1}{4}w_\beta}^{x_1L^{-1/4}+\frac{3}{4}w_\beta} \exp\left(\mf h^{\beta, U-\delta}_1(x)+ \cL^\beta_+[1](x)\right)\diff x.
\end{align*}
Notice that $\mf h_{s_1,1}^{\beta, U-\delta}(x) \leq  \fh^\beta_{s_1,1}(x) - \frac{1}{2}\delta$ on $x_1L^{-1/4} + [\frac{1}{4}w_\beta, \frac{3}{4}w_\beta]$ (by the formula \eqref{e.reconstruction formula}). So the second term in the previous display is upper bounded by
\begin{align*}
\int_{x_1L^{-1/4}+\frac{1}{4}w_\beta}^{x_1L^{-1/4}+\frac{3}{4}w_\beta} \exp\left(\fh^\beta_{s_1,1}(x)+ \cL^\beta_+[1](x) -\tfrac{1}{2}\delta\right)\diff x.
\end{align*}
Note also that $\mf h_{s_1,1}^{\beta, U-\delta}(x) \leq \mf h_{s_1,1}^{\beta, U}(x) = \mf h_{s_1,1}^{\beta}(x)$, which gives an upper bound on the first term two displays above.
Then by \Cref{l.perturbation has effect}, there exists $\rho>0$ such that, with conditional probability at least $1-\exp(-cL^{1/2})$, $\exp(\cL^{\beta, U-\delta}[1])$ is upper bounded by
\begin{align*}
\left[1-\rho + e^{-\frac{1}{2}\delta}\rho\right]\int_{[-1,1]} \exp\left(\fh^\beta_{s_1,1}(x)+ \cL^\beta_+[1](x)\right)\diff x = \left[1-\rho + e^{-\frac{1}{2}\delta}\rho\right]\exp(\cL^\beta[1]).
\end{align*}
Since $e^{-x}\leq 1-x/2$ for $x\in[0,1]$, we see that the square bracket factor is at most $1-\rho\delta/4$.
In summary, with conditional probability at least $1-\exp(-cL^{1/2})$, for any $\delta>0$ sufficiently small,
\begin{align}\label{e.pertubation inequality}
\exp\left(\cL^{\beta, U-\delta}[1]\right) \leq (1-\rho\delta/4)\exp\left(\cL^\beta[1]\right).
\end{align}

Recall from the discussion preceding Lemma~\ref{l.corner upper bound} that, conditionally on $\F$, $U$ stochastically dominates a random variable $X$ which is Gaussian with mean $\mu$ and variance $\sigma^2$ as given in \eqref{e.mu and sigma} and conditioned to stay above $\mrm{Cor}$ (i.e., $X$ is not tilted by $W^{\mrm{pt}}(U)/Z^{\mrm{pt}}$). Now, since (by \Cref{l.corner upper bound} and \Cref{l.mu lower bound}) with conditional probability at least $1-\exp(-cL^{1/2})$ it holds that $\mrm{Cor} \leq h_1 + K_0$ (so that in particular $\mrm{Cor} = O(L)$) and $\mu\geq -K_0L$, it follows that, with conditional probability at least $1-L^{-1}$ that
\begin{align*}
X \geq \mrm{Cor} + L^{-3},
\end{align*}
using standard estimates for the normal tail bound (\Cref{l.normal bounds}).

In particular, since $U$ stochastically dominates $X$, taking $\delta = L^{-3}$ implies that $U-\delta \geq \mrm{Cor}$. Since $\mrm{Cor}$ is such that for any $u\geq \mrm{Cor}$ it holds that $\cL^{\beta, u}[1]\geq (1-e^{-cL^{1/4}})L$, we obtain from combining these observations with \eqref{e.pertubation inequality} that
\begin{align*}
\exp(\cL^\beta[1])  \geq (1+\tfrac{1}{5}\rho L^{-3})\exp(L).
\end{align*}
Taking logarithms and relabeling $\rho$ completes the proof. 
\end{proof}

We next prove the three lemmas that have been assumed.
\begin{proof}[Proof of Lemma~\ref{l.corner upper bound}]
We observe that by the definition of $\mrm{Cor}$, it holds almost surely that $\mrm{Cor} \leq \h_{s_1,1}(x_1L^{-1/4}+\frac{1}{2}w_\beta)$. So we obtain
\begin{align*}
\P\left(\mrm{Cor} > h_1+K \midd  \vec\cL^{\beta, \vec x} \approx \vec h\right)\leq \P\left(\h_{s_1,1}(x_1L^{-1/4}+ \tfrac{1}{2}w_\beta) > h_1+K \midd  \vec\cL^{\beta, \vec x} \approx \vec h\right).
\end{align*}
By \Cref{lem:fh-tent} this is upper bounded by $\exp(-cKL^{1/2})$. 
Combining this with \eqref{eq:condlowbd2} and taking $K$ large enough, we obtain the claimed bound.
\end{proof}

\begin{proof}[Proof of Lemma~\ref{l.mu lower bound}]
 Since $\mu = \frac{1}{2}(\h_{s_1,1}(x_1L^{-1/4}+ w_\beta) + \h_{s_1,1}(x_1L^{-1/4}))$, it holds by \Cref{lem:fh-tent} and the independence of $\cL^{\beta, \vec x}_i$ across $i$ that $\P\left(\mu < -K_0L \mid \vec\cL^{\beta, \vec x} \approx \vec h\right)<\exp(-cL^{3/2})$. Combining this with \eqref{eq:condlowbd2} completes the proof.
\end{proof}

\begin{proof}[Proof of Lemma~\ref{l.perturbation has effect}]
Take $C_*$ to be a large constant.
By using \Cref{p.k-point part to whole free energy} for a mesh of $L^2$ many $x\in [-1, 1]$, taking a union bound, and applying the unconditional local fluctuations estimates \Cref{lem:fh-cont}, we have
\begin{equation}  \label{eq:clbetalowb}
 \P\Bigg( \max_{|x|\le 1}\cL^\beta(x,s_1;0,1) > \sum_{i=2}^k h_i -(k-2)\log (2L^{1/2}) + \frac{1}{2}C_* \midd \vec\cL^{\beta, \vec x} \approx \vec h\Bigg)<\exp(-cC_*L^{1/2}).       
\end{equation}
Further by \Cref{cor.tent-pr},
\begin{multline*}
\P\left( \h_{s_1,1}(x)\le h_1 - C_* - L^{1/2}|x-x_1L^{-1/4}|/2, \;\forall x: |x-x_1L^{-1/4}|\geq 10C_*w_\beta, |x|\leq 1  \midd \vec\cL^{\beta, \vec x} \approx \vec h \right)\\
>1-C\exp(-cC_*L^{1/2}).
\end{multline*}
We can replace the conditioning $\vec\cL^{\beta, \vec x} \approx \vec h$ in the above two estimates by $\cE_{\mrm{cond}}$, using \eqref{eq:condlowbd2}.
Then from these, we have that with probability $>1-C\exp(-cL^{1/2})$ conditioning on $\cE_{\mrm{cond}}$, 
\begin{align*}
\MoveEqLeft[25]
\int_{|x-x_1L^{-1/4}|\geq 10C_*w_\beta, |x|\leq 1}\exp\left(\h_{s_1,1}(x) + \cL^\beta(x,s_1;0,1)\right)\diff x \\
\leq \exp\left(\sum_{i=1}^k h_i -(k-2)\log(2L^{1/2}) - \log(L^{1/2}) - \tfrac{1}{2}C_*\right)
&\leq \exp(L - \tfrac{1}{4}C_*)\leq \exp(- \tfrac{1}{4}C_*) \cL^\beta[1],
\end{align*}
using that $\vec h\in E_{\mrm{val}}$ and $C_*$ is large for the second inequality, and that we have conditioned on $\cL^\beta[1]>L$ for the last inequality. This implies that
\begin{align}\label{e.non-trivial contribution symmetric about zero}
\cL^\beta[1]\leq (1-\exp(- \tfrac{1}{4}C_*))^{-1} \int_{|x-x_1L^{-1/4}|\leq 10C_*w_\beta}\exp\left(\h_{s_1,1}(x) + \cL^\beta_+[1](x)\right)\diff x.
\end{align}

It also follows from \Cref{cor.tent-pr} that, with probability at least $1-C\exp(-cC_*L^{1/2})$ conditioning on $\vec\cL^{\beta, \vec x} \approx \vec h$,
\[
h_1-20C_*\leq \inf_{|x-x_1L^{-1/4}|\leq 10C_*w_\beta} \h_{s_1,1}(x) \leq \sup_{|x-x_1L^{-1/4}|\leq 10C_*w_\beta} \h_{s_1,1}(x) \leq h_1 + C_*.
\]
And by \Cref{cor.tent-pr} and \Cref{prop:dp-tent-coal}, with probability at least $1-C\exp(-cC_*L^{1/2})$ conditioning on $\vec\cL^{\beta, \vec x} \approx \vec h$,
there is
\[
\cL^\beta(x,s_{i-1};y,s_i) > h_i - 30C_*,
\]
for any $i=2,\ldots, k$, and $|x-x_{i-1}L^{-1/4}|, |y-x_iL^{-1/4}| \le 10 C_*w_\beta$; thus
\[
\inf_{|x-x_1L^{-1/4}|\leq 10C_*w_\beta} \cL^\beta_+[1](x) \geq \sum_{i=2}^k h_i -(k-2)\log (2L^{1/2}) - 30kC_*.
\]
Combining these two estimates with \eqref{eq:clbetalowb}, we get that, with probability at least $1-C\exp(-cC_*L^{1/2})$ conditioning on $\vec\cL^{\beta, \vec x} \approx \vec h$,
\begin{align*}
\frac{\int_{x_1L^{-1/4}+\frac{1}{4}w_\beta}^{x_1L^{-1/4}+\frac{3}{4}w_\beta}\exp\left(\h_{s_1,1}(x) + \cL^\beta(x,s_1;0,1)\right)\diff x}{\int_{x_1L^{-1/4}-10C_*w_\beta}^{x_1L^{-1/4}+10C_*w_\beta}\exp\left(\h_{s_1,1}(x) + \cL^\beta(x,s_1;0,1)\right)\diff x} \geq (40C_*)^{-1} \exp(-40kC_*).
\end{align*}
We can further replace the conditioning by $\cE_{\mrm{cond}}$, using \eqref{eq:condlowbd2}.
Then together with \eqref{e.non-trivial contribution symmetric about zero} the proof completes.
\end{proof}

\subsection{Free energy of restricted interval}\label{s.comparison.restricted free energy}

In this subsection, we prove the following statement. 

We introduce some notation to describe the free energy profiles in disjoint temporal strips defined by $[s_{i-1}, s_i]$. 
We define
\begin{equation}\label{e.h top bot definition}
\mf h^{\mrm{top}, i, \vec x}_1 = \cL^\beta(x_{i-1}L^{-1/4}, s_{i-1}; x_iL^{-1/4} + \cdot, s_i), \quad \mf h^{\mrm{bot}, i, \vec x}_1 = \cL^\beta(x_{i-1}L^{-1/4} + \cdot, s_{i-1}; x_iL^{-1/4}, s_i),
\end{equation}
for each $i=1,\ldots, k$.
For any vector $\vec z \in \R^{k-1}$, we always write $z_0=z_k=0$ for the convenience of notations.
We denote
\begin{equation}\label{e.hsumdef}
\fh^{\mrm{sum}}(\vec z) = \sum_{i=1}^{k-1} 
\mf h^{\mrm{top}, i, \vec x}_1(z_i) + \sum_{i=2}^k \mf h^{\mrm{bot}, i, \vec x}_1(z_{i-1}),
\end{equation}
We also write $I_{M,L}=[-ML^{-1/2}\log L, ML^{-1/2}\log L]$,
and $I_{\vec x}=L^{-1/4}\vec x+I_{M,L}^{k-1}\subset \R^{k-1}$.
Recall the $\maxbeta$ notation from \eqref{e.maxbeta definition}.

\begin{lemma}\label{l.conditional localization of event}
When $\beta=1$, there exists $\rho > 0$ such that, conditional on $\cE_{\mrm{cond}}$, it holds with probability at least $1-CL^{-1}$ that 
\begin{align*}
\maxbeta_{\vec z\in I^{k-1}_{M,L}} \fh^{\mrm{sum}}(\vec z)\geq L + \rho L^{-3} + \sum_{i=2}^{k-1} \cL^{\beta, \vec x}_i.
\end{align*}
When $\beta=\infty$, conditional on $\cE_{\mrm{cond}}$, it holds with probability at least $1-\exp(-cL^{1/2})$ that
\begin{align*}
\max_{\vec z\in I^{k-1}_{M,L}} \fh^{\mrm{sum}}(\vec z) \geq L + \sum_{i=2}^{k-1} \cL^{\beta, \vec x}_i. 
\end{align*}

\end{lemma}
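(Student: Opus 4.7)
The strategy is to apply coalescence (Proposition~\ref{prop:dp-tent-coal}) in each temporal strip in order to rewrite $\fh^{\mrm{sum}}(\vec z)$, up to an error of size $O(\exp(-cL))$, as the composition-type free energy $\sum_{i=1}^{k}\cL^\beta(x_{i-1}L^{-1/4}+z_{i-1},s_{i-1};x_iL^{-1/4}+z_i,s_i)$ shifted by $\sum_{i=2}^{k-1}\cL^{\beta,\vec x}_i$. Taking $\maxbeta$ over $I_{M,L}^{k-1}$ then relates $\maxbeta_{\vec z}\fh^{\mrm{sum}}(\vec z)$ to a window-restricted version of $\cL^\beta[1]$, and the latter is bounded from below directly by $\cL^\beta[1]>L$ when $\beta=\infty$ and by Lemma~\ref{l.L overshoot} when $\beta=1$.

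First I would establish the per-strip coalescence identity. Since $\vec h\in E_{\mrm{val}}$ forces $h_i=\Theta(L)$ and since $\vec x$ is bounded so that the shifts $x_{i\pm1}L^{-1/4}$ lie well within the coalescence window of size $\asymp(s_i-s_{i-1})^{1/2}L^{1/2}$, applying Proposition~\ref{prop:dp-tent-coal} after a shift/shear reduction and a union bound over the $k$ strips yields, uniformly in $u,v\in I_{M,L}$,
\begin{equation*}
\cL^\beta(x_{i-1}L^{-1/4}+u,s_{i-1};x_iL^{-1/4}+v,s_i)=\mf h^{\mrm{top},i,\vec x}_1(v)+\mf h^{\mrm{bot},i,\vec x}_1(u)-\cL^{\beta,\vec x}_i+\mrm{err}_i(u,v),
\end{equation*}
with $\mrm{err}_i\equiv0$ in the $\beta=\infty$ case and $|\mrm{err}_i|\le C\exp(-cL)$ in the $\beta=1$ case, on an event of probability at least $1-C\exp(-cL^{3/2})$ given $\vec\cL^{\beta,\vec x}\approx\vec h$. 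The lower bound $\P(\cE_{\mrm{cond}}\mid\vec\cL^{\beta,\vec x}\approx\vec h)\ge c\exp(-CL^{1/2})$ from Proposition~\ref{p.k-point part to whole free energy} (applied with $M=1$) transfers this probability to one given $\cE_{\mrm{cond}}$ without spoiling the $\exp(-cL^{3/2})$ rate. Summing over $i=1,\ldots,k$ with $z_0=z_k=0$, and recognizing $\mf h^{\mrm{top},k,\vec x}_1(0)=\cL^{\beta,\vec x}_k$ and $\mf h^{\mrm{bot},1,\vec x}_1(0)=\cL^{\beta,\vec x}_1$, the identity telescopes to
\begin{equation*}
\fh^{\mrm{sum}}(\vec z)=\sum_{i=1}^{k}\cL^\beta(x_{i-1}L^{-1/4}+z_{i-1},s_{i-1};x_iL^{-1/4}+z_i,s_i)+\sum_{i=2}^{k-1}\cL^{\beta,\vec x}_i+O(\exp(-cL))
\end{equation*}
uniformly in $\vec z\in I_{M,L}^{k-1}$.

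Next, writing $\cL^\beta[I_{\vec x}]:=\maxbeta_{\vec z\in I_{M,L}^{k-1}}\sum_{i=1}^{k}\cL^\beta(x_{i-1}L^{-1/4}+z_{i-1},s_{i-1};x_iL^{-1/4}+z_i,s_i)$, the identity above yields $\maxbeta_{\vec z}\fh^{\mrm{sum}}(\vec z)=\cL^\beta[I_{\vec x}]+\sum_{i=2}^{k-1}\cL^{\beta,\vec x}_i+O(\exp(-cL))$, so the task reduces to lower bounding $\cL^\beta[I_{\vec x}]$. When $\beta=\infty$, $\cE_{\mrm{cond}}$ gives $\cL^\beta[1]>L$, and combining $|\pi^{*}(s_i)-x_iL^{-1/4}|<w_\beta$ with Proposition~\ref{l.strip maximizer is same as top to intermediate maximizer} places the argmax of $\cL^\beta[1]$ in $L^{-1/4}\vec x+I_{M,L}^{k-1}$ once $M$ is chosen large enough that $ML^{-1/2}\log L$ exceeds $L^{-1/2}(\log L)^2$, hence $\cL^\beta[I_{\vec x}]=\cL^\beta[1]>L$. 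When $\beta=1$, Lemma~\ref{l.L overshoot} provides $\cL^\beta[1]>L+\rho L^{-3}$ with conditional probability $\ge 1-CL^{-1}$; and since the profiles $\mf h^{\mrm{top},i,\vec x}_1,\mf h^{\mrm{bot},i,\vec x}_1$ decay with slope $\pm2L^{1/2}$ outside $O(L^{-1/2})$ of their peaks (Corollary~\ref{cor.tent-pr}), a direct integration shows that the contribution to $\exp(\cL^\beta[1])$ from outside $L^{-1/4}\vec x+I_{M,L}^{k-1}$ is at most $CL^{-M}\exp(\cL^\beta[1])$. For $M$ a fixed constant with $M>3$, this gives $\cL^\beta[I_{\vec x}]\ge\cL^\beta[1]-CL^{-M}\ge L+\tfrac12\rho L^{-3}$, and relabeling $\rho$ completes the proof.

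The main obstacle is the positive-temperature localization step: Proposition~\ref{p.closeness of maximizer and polymer} is formulated conditional on $\cL^\beta>L$, so transferring it to the finer conditioning $\cE_{\mrm{cond}}$ would incur additional losses that must be carefully compared to the polynomial overshoot. The cleanest route, sketched above, is to bypass the polymer measure entirely in favor of a deterministic tent estimate on the integrand via Corollary~\ref{cor.tent-pr}, which is purely a consequence of the conditioning $\vec\cL^{\beta,\vec x}\approx\vec h$ and holds with probability $\ge 1-C\exp(-cL^{3/2})$. This forces $M$ to be chosen a sufficiently large constant relative to the rate constants in the tent estimates so that the resulting $L^{-M}$ window loss is dominated by $\rho L^{-3}$; this is consistent with how $M$ is introduced in the proof of Lemma~\ref{l.maximizer location prob relation}.
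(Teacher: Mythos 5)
Your $\beta=1$ argument is essentially the paper's: Lemma~\ref{l.L overshoot} for the polynomial overshoot, a tent estimate to show the free-energy contribution from outside $L^{-1/4}\vec x+I_{M,L}^{k-1}$ is $\lesssim L^{-cM}\exp(L)$ (which the paper packages as Lemma~\ref{l.outside center free energy contribution} via Lemma~\ref{l.free energy in strip contribution from outside}), and then coalescence to pass to $\fh^{\mrm{sum}}$. Aside from an off-by-a-factor exponent ($L^{-M/2}$ rather than $L^{-M}$), this is fine.

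The $\beta=\infty$ case has a genuine gap. You invoke Proposition~\ref{l.strip maximizer is same as top to intermediate maximizer} to place the maximizer of $\cL^\beta[1]$ inside $L^{-1/4}\vec x+I_{M,L}^{k-1}$, and claim this works "once $M$ is chosen large enough that $ML^{-1/2}\log L$ exceeds $L^{-1/2}(\log L)^2$." But $M$ is a fixed constant, so for all sufficiently large $L$ one has $ML^{-1/2}\log L < L^{-1/2}(\log L)^2$; the closeness scale that proposition delivers exceeds the width of $I_{M,L}$, and the localization does not follow. Moreover, Proposition~\ref{l.strip maximizer is same as top to intermediate maximizer} is stated conditionally on $\cL^\beta>L$ with a bound of only $C\exp(-c(\log L)^2)$, which cannot be transferred to the conditioning $\cE_{\mrm{cond}}$ (or $\vec\cL^{\beta,\vec x}\approx\vec h$) through the $c\exp(-CL^{1/2})$ lower bound of \eqref{eq:condlowbd2} without destroying the estimate. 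The paper instead uses Lemma~\ref{l.restricted maximizer is maximizer}, which is formulated directly under the conditioning $\vec\cL^{\beta,\vec x}\approx\vec h$, gives the far stronger probability bound $1-\exp(-cML^{1/2}\log L)$ (so that the transfer via \eqref{eq:condlowbd2} costs nothing), and identifies the unrestricted maximizer $\pi^*(s_i)$ with the $I_{M,L}$-restricted one so that, on $\cE_{\mrm{cond}}$, the argmax lies within $w_\beta$ of $x_iL^{-1/4}$ — well inside the window. You should replace the Proposition~\ref{l.strip maximizer is same as top to intermediate maximizer} step with Lemma~\ref{l.restricted maximizer is maximizer}.
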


To prove this in the $\beta=1$ case, the main thing we need is that the contribution to the free energy $\cL^\beta[1]$ from outside $I_{\vec x}$ is small, which we isolate in the following statement.
\begin{lemma}\label{l.outside center free energy contribution}
When $\beta=1$, conditional on $\cE_{\mrm{cond}}$, with probability at least $1-\exp(-cML^{1/2})$,
\begin{align*}
\maxbeta_{\vec z \in [-1,1]^{k-1}\setminus I_{\vec x}} \sum_{i=1}^k \cL^\beta(z_{i-1}, s_{i-1}; z_i, s_i) \leq L-\frac{1}{2}M\log L.
\end{align*}
\end{lemma}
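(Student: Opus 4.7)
The plan is to bound the restricted integral using the convolution formula together with the quenched polymer concentration of Proposition~\ref{p.closeness of maximizer and polymer}. For each $i\in\qq{1,k-1}$, I would integrate out $\{z_j\}_{j\neq i}$ over $[-1,1]$ in the convolution formula and use positivity of $\cZ$ (Lemma~\ref{lem:pos}) to bound each restricted partial partition function by its unrestricted analogue. This would yield
\[
\int_{\vec z\in[-1,1]^{k-1},\,z_i\notin x_iL^{-1/4}+I_{M,L}}\prod_{i'=1}^k \cZ(z_{i'-1},s_{i'-1};z_{i'},s_{i'})\diff\vec z \le \int_{z\notin x_iL^{-1/4}+I_{M,L}}\cZ(0,0;z,s_i)\cZ(z,s_i;0,1)\diff z,
\]
with the right-hand side equal to $\polyP\bigl(\Gamma_0(s_i)\notin x_iL^{-1/4}+I_{M,L}\bigr)\cdot\cZ(0,0;0,1)$. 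Using the inclusion $[-1,1]^{k-1}\setminus I_{\vec x}\subseteq\bigcup_{i=1}^{k-1}\{z_i\notin x_iL^{-1/4}+I_{M,L}\}$, a union bound together with $\exp(\sum_i\cL^\beta)=e^{1/12}\prod_{i'}\cZ$ would then reduce the task to
\[
\exp\Big(\maxbeta_{\vec z\in[-1,1]^{k-1}\setminus I_{\vec x}}\sum_i\cL^\beta\Big)\le \sum_{i=1}^{k-1}\polyP\bigl(\Gamma_0(s_i)\notin x_iL^{-1/4}+I_{M,L}\bigr)\cdot e^{\cL^\beta(0,0;0,1)}.
\]

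Next, I would control the polymer deviation probabilities by chaining three closeness estimates. By the definition of $\cE_{\mrm{cond}}$ we already have $|\pi^*(s_i)-x_iL^{-1/4}|\le w_\beta$, and Proposition~\ref{l.strip maximizer is same as top to intermediate maximizer} gives $|\pi(s_i)-\pi^*(s_i)|\le L^{-1/2}(\log L)^2$ with conditional probability $\ge 1-C\exp(-c(\log L)^2)$; together these would yield $|\pi(s_i)-x_iL^{-1/4}|\le 2L^{-1/2}(\log L)^2$. Applying Proposition~\ref{p.closeness of maximizer and polymer} with the inflated parameter $M':=\max(M,M_0)+3\log L$ would give $\polyP(|\Gamma_0(s_i)-\pi(s_i)|>M'L^{-1/2}\log L)\le L^{-2M'}$ on a further event of conditional probability $\ge 1-C\exp(-c(\log L)^2)$, and hence $\polyP(\Gamma_0(s_i)\notin x_iL^{-1/4}+I_{M,L})\le L^{-2M-\Omega(\log L)}$. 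For the exponential prefactor, I would invoke \eqref{eq:kppwf3} of Proposition~\ref{p.k-point part to whole free energy} with parameter $M_1:=10\log L$, which combined with $\vec h\in E_{\mrm{val}}$ from \eqref{eq:eeeval} (so that $\sum_i h_i\le L+C_0\log L$) would give $\cL^\beta(0,0;0,1)\le L+(C_0+10)\log L$ on an event of conditional probability $\ge 1-C\exp(-cL^{1/2}\log L)$.

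Combining these ingredients will yield
\[
\maxbeta_{\vec z\in[-1,1]^{k-1}\setminus I_{\vec x}}\sum_i\cL^\beta \le L+(C_0+10)\log L-(2M+\Omega(\log L))\log L \le L-\tfrac{1}{2}M\log L
\]
for $L$ sufficiently large. All the required events will have conditional probability $\ge 1-C\exp(-c(\log L)^2)$ given $\vec\cL^{\beta,\vec x}\approx\vec h$, and this will transfer to conditioning on $\cE_{\mrm{cond}}$ via \eqref{eq:condlowbd2} at the cost of a factor $\exp(CL^{1/2})$, still giving at least $1-\exp(-cML^{1/2})$ in the regime $M\le c(\log L)^2/L^{1/2}$, which covers the constant-$M$ setting required by the downstream application to Lemma~\ref{l.conditional localization of event}. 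For larger $M$ the same scheme should work with a finer final step, namely coupling each pair $(\mf h^{\mrm{top},j,\vec x}_1,\mf h^{\mrm{bot},j+1,\vec x}_1)$ to independent Brownian bridges via Proposition~\ref{prop:dp-tent-bcomp} (using that the bridges on opposite sides of each strip are independent, so the resulting $\{\bar B_j\}_{j=1}^{k-1}$ are mutually independent) and then applying Lipschitz Gaussian concentration to the separable integrals $\int e^{-4L^{1/2}|w|+\bar B_j(w)}\diff w$. The main obstacle I anticipate is precisely the chaining of the $L^{-1/2}(\log L)^2$-closeness bounds between $\pi^*$, $\pi$, and the target location $x_iL^{-1/4}$, which forces the $O(\log L)$-inflation of the polymer concentration parameter; fortunately the exponential-in-$M'$ improvement in Proposition~\ref{p.closeness of maximizer and polymer} absorbs this loss with substantial room to spare.
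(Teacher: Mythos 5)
Your first step—using the convolution formula and positivity of $\cZ$ to bound the restricted partition integral by $\sum_i\polyP\bigl(\Gamma_0(s_i)\notin x_iL^{-1/4}+I_{M,L}\bigr)\cdot e^{\cL^\beta(0,0;0,1)}$—is correct and a reasonable reformulation. But the probability estimates that follow do not close up, and this is a genuine gap.

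You want to control the quenched deviation probability by chaining Proposition~\ref{l.strip maximizer is same as top to intermediate maximizer} and Proposition~\ref{p.closeness of maximizer and polymer}. Both of these give failure probabilities of order $\exp(-c(\log L)^2)$, and both are stated \emph{conditional on} $\cL^\beta>L$. This is fatal for two independent reasons. First, the scale is wrong: the lemma requires failure probability $\exp(-cML^{1/2})$, and for fixed $M$ one has $\exp(-c(\log L)^2)\gg\exp(-cML^{1/2})$ as $L\to\infty$. Your fallback regime ``$M\le c(\log L)^2/L^{1/2}$'' shrinks to zero, so it does not cover the constant-$M$ setting that Lemma~\ref{l.conditional localization of event} actually needs. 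Second, the conditioning is wrong: you need these facts conditional on $\cE_{\mrm{cond}}$, which includes pinning $\vec\cL^{\beta,\vec x}\approx\vec h$ to intervals of width $e^{-L}$, an event far rarer than $\{\cL^\beta>L\}$. The cost $\exp(CL^{1/2})$ you cite from \eqref{eq:condlowbd2} only accounts for the transfer from $\vec\cL^{\beta,\vec x}\approx\vec h$ to $\cE_{\mrm{cond}}$; the transfer from $\{\cL^\beta>L\}$ to $\vec\cL^{\beta,\vec x}\approx\vec h$ costs an additional factor of roughly $e^{kL}$. Even ignoring that, $\exp(-c(\log L)^2)\cdot\exp(CL^{1/2})$ already diverges, so the transferred bound is vacuous.

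The paper sidesteps both problems by working directly with the tent picture: it uses coalescence (Proposition~\ref{prop:dp-tent-coal}) and \eqref{eq:condlowbd2} to decouple into single strips, and then applies Corollary~\ref{cor.tent-pr} \emph{directly under} the conditioning $\vec\cL^{\beta,\vec x}\approx\vec h$ with a parameter tuned to give failure probability $\exp(-cML^{1/2}\log L)$ (this is Lemma~\ref{l.free energy in strip contribution from outside}), which comfortably survives the $\exp(CL^{1/2})$ loss in the transfer to $\cE_{\mrm{cond}}$. Your closing suggestion—coupling the strip profiles to independent Brownian bridges via Proposition~\ref{prop:dp-tent-bcomp} and applying Gaussian concentration—is in spirit a rediscovery of this strip-level tent argument, but as written it does not repair the earlier steps. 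The quenched concentration in Proposition~\ref{p.closeness of maximizer and polymer} was established only at the $(\log L)^2$ probability scale and under the conditioning $\cL^\beta>L$; it cannot be invoked as a black box for a bound at the $ML^{1/2}$ scale under $\cE_{\mrm{cond}}$.
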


To handle the $\beta=\infty$ case, we will need the following statement, which is that the maximizer restricted to $I_{\vec x}$ is with high probability the same as the global maximize.
We note that it also holds when $\beta=1$.

We define the restricted (in terms of the interval over which the maximization is performed) version of $\pi^*(s_i)$:  for $i=1, \ldots, k-1$ we let
\begin{align}\label{e.pi^*,res definition}
\pi^{*,\mrm{res}, \vec x}(s_i) = \argmax_{|z|\leq ML^{-1/2}\log L} \left(\cL^\beta(0, s_{i-1}; x_iL^{-1/4}+z, s_i) + \cL^\beta(x_iL^{-1/4}+z, s_i; 0, s_{i+1})\right),
\end{align}
for $M$ being a large number.
\begin{lemma}\label{l.restricted maximizer is maximizer}
With probability $1-\exp(-cML^{1/2}\log L)$ conditional on $\vec\cL^{\beta, \vec x} \approx \vec h$, for each $i=1, \ldots, k$,
\begin{align*}
\pi^*(s_i) = x_iL^{-1/4}+\pi^{*,\mrm{res},\vec x}(s_i).
\end{align*}

\end{lemma}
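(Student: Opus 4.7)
The plan is to reduce the question, via coalescence (Proposition~\ref{prop:dp-tent-coal}) followed by the tent picture (Corollary~\ref{cor.tent-pr}), to bounding the probability that two independent tent-shaped profiles in the disjoint strips $[s_{i-1},s_i]$ and $[s_i,s_{i+1}]$ have Brownian fluctuation large enough to push the argmax of their sum more than $ML^{-1/2}\log L$ away from $x_iL^{-1/4}$. Write $S_i(x):=\cL^\beta(0,s_{i-1};x,s_i)+\cL^\beta(x,s_i;0,s_{i+1})$, whose argmax is $\pi^*(s_i)$. By independence of $\cL^\beta$ across disjoint temporal strips (Lemmas~\ref{l.dl symmetries} and \ref{l.Z symmetries}), only the conditionings $\cL^{\beta,\vec x}_i\approx h_i$ and $\cL^{\beta,\vec x}_{i+1}\approx h_{i+1}$ are relevant to the behavior of $S_i$.

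First, I would apply Proposition~\ref{prop:dp-tent-coal} in each strip after the appropriate shear/shift: conditional on $\cL^{\beta,\vec x}_i\approx h_i$ (which, via Lemma~\ref{l.dl symmetries}/\ref{l.Z symmetries} and the upper-tail form $L(1+o(1))$ of $h_i$, reduces to the hypothesis of Proposition~\ref{prop:dp-tent-coal} on the rescaled sheet), outside an event of probability $C\exp(-cL^{3/2})$ and for all $x$ in a window of size $\Theta(L^{1/2})$ around $x_iL^{-1/4}$,
\[
\cL^\beta(0,s_{i-1};x,s_i)-\cL^\beta(0,s_{i-1};x_iL^{-1/4},s_i)=\cL^\beta(x_{i-1}L^{-1/4},s_{i-1};x,s_i)-\cL^\beta(x_{i-1}L^{-1/4},s_{i-1};x_iL^{-1/4},s_i)+O(e^{-cL}),
\]
with a parallel statement for the $[s_i,s_{i+1}]$ strip when $\beta=1$ (and exact equality when $\beta=\infty$). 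Summing the two identities yields $S_i(x)-S_i(x_iL^{-1/4})=T_i(x)-T_i(x_iL^{-1/4})+O(e^{-cL})$, where
\[
T_i(x):=\cL^\beta(x_{i-1}L^{-1/4},s_{i-1};x,s_i)+\cL^\beta(x,s_i;x_{i+1}L^{-1/4},s_{i+1}).
\]
So it suffices to control the location of $\argmax T_i$, and the problem decouples into analyzing each summand under its respective tent conditioning.

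Second, I would apply Corollary~\ref{cor.tent-pr} to each summand separately (using that shear invariance converts $\cL^\beta(x_{i-1}L^{-1/4},s_{i-1};\cdot,s_i)$ conditioned on $\cL^{\beta,\vec x}_i\approx h_i$ into a shifted/scaled $\fh^\beta_{s_i-s_{i-1},1}$ conditioned on its value at a given point being $\approx h_i=\Theta(L)$), choosing the parameter $a=ML^{-1/2}\log L$. For $|x-x_iL^{-1/4}|=\ell\ge a$, the deterministic tent drop of $T_i$ from its value at $x_iL^{-1/4}$ is $4L^{1/2}\ell$, whereas the tent-estimate gives fluctuations bounded by $\tilde M\,\ell^{1/2}(\log(\ell/a)+1)$ outside an event of probability $\exp(-c\tilde M^2)$. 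A displaced argmax requires the fluctuation to overcome the drop, so $\tilde M^2\gtrsim L\,\ell/(\log(\ell/a)+1)^2$. Dyadic decomposition over scales $\ell=2^j a$ gives per-scale probability $\exp(-c\,2^j ML^{1/2}\log L/(j+1)^2)$; summing in $j$ yields the desired total bound $\exp(-cML^{1/2}\log L)$. For $\ell$ beyond the coalescence window (of size $\Theta(L^{1/2})$) the profile can be bounded by the a priori parabolic estimate Lemma~\ref{lem:fh-ub}, contributing an exponentially smaller error.

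The main obstacle will be cleanly combining the two conditionings with the coalescence and tent estimates simultaneously so that the error events (coalescence failure, tent failure, and unbounded behavior beyond the coalescence window) each contribute at most $\exp(-cML^{1/2}\log L)$ and yet the bound on $T_i$ holds uniformly in $x$ with a single exponent, not a union bound with a polynomial loss. This is handled by using the dyadic scale decomposition above — so that uniform-in-$x$ control costs only a constant factor — together with the independence of the two strips to multiply (rather than add) the two Corollary~\ref{cor.tent-pr} estimates. A minor point is that Corollary~\ref{cor.tent-pr} is stated under a strict-density conditioning $\hat{\fh}^\beta_{t,1}(0)\in(L,L+\diff L)$; since $\vec h\in E_{\mrm{val}}$ fixes $h_i$ up to a $e^{-L}$ window this causes no difficulty after rescaling.
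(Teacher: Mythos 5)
Your proposal follows the paper's route for the main part of the argument: coalescence (Proposition~\ref{prop:dp-tent-coal}) lets you replace the argmax of $S_i$ by that of the tent-shaped sum $T_i = \mf h^{\mrm{top},i,\vec x}_1 + \mf h^{\mrm{bot},i+1,\vec x}_1$ within a $\Theta(L^{1/2})$ window, and then Corollary~\ref{cor.tent-pr} with parameter $a = ML^{-1/2}\log L$ applied to each of the two independent tents (slope drop $\sim 4L^{1/2}\ell$ versus fluctuation $\sim K\ell^{1/2}(\log(\ell/a)+1)$) gives the claimed $\exp(-cML^{1/2}\log L)$ bound. This matches the paper's handling of the near regime, recorded there as \eqref{eq:hblowbd}. (Your dyadic decomposition over $\ell$ is already built into Corollary~\ref{cor.tent-pr}, so it is redundant, though not incorrect.)

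The genuine gap is in the far regime, i.e.\ ruling out an argmax of $S_i$ escaping the coalescence window entirely, which the paper records as \eqref{eq:hblowbd2}. You propose to dispatch this with Lemma~\ref{lem:fh-ub}. But that lemma gives an \emph{unconditional} parabolic envelope on the sheet, with tail $\P(H > M)< C\exp(-cM^{3/2})$ for an \emph{unspecified} constant $c$, while the event you must rule out sits under the conditioning $\vec\cL^{\beta,\vec x}\approx\vec h$, which has probability of order $\exp(-\tfrac{4}{3}(s_{i+1}-s_{i-1})L^{3/2})$. Escaping a $\Theta(L^{1/2})$ window at peak height $\Theta(L)$ needs $H$ of order $L$, whose unconditional probability is $\exp(-cL^{3/2})$, and the naive ratio $\exp(-(c - \tfrac{4}{3}(s_{i+1}-s_{i-1}))L^{3/2})$ has no reason to decay: $c$ is at most $\tfrac{4}{3}$ (since $H$ dominates a one-point value), while $\tfrac{4}{3}(s_{i+1}-s_{i-1})$ can be close to $\tfrac{4}{3}$. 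The paper instead uses the sharp one-point tail Lemma~\ref{lem:fh-ut} at each far-out $z$, obtaining $\exp(-\tfrac{4}{3}(s_{i+1}-s_{i-1})L^{3/2} - cz^2L^{1/2})$, so that the leading $\tfrac{4}{3}(s_{i+1}-s_{i-1})L^{3/2}$ cancels the conditioning probability exactly and the remaining factor is summable over a mesh. It also needs a separate Corollary~\ref{cor.tent-pr} step to lower-bound the value of $S_i$ \emph{at} $x_iL^{-1/4}$ by $h_i + h_{i+1} - L^{1/4}\log L$: this is not the conditioning itself, since the conditioned profiles emanate from $(x_{i-1}L^{-1/4}, s_{i-1})$ rather than $(0, s_{i-1})$, so the $O(L^{-1/4})$ shift of the base point must be controlled via the tent picture. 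Both of these steps are missing from your sketch.
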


We give the proof of Lemma~\ref{l.conditional localization of event} before turning to proving Lemmas~\ref{l.outside center free energy contribution} and \ref{l.restricted maximizer is maximizer}.

\begin{proof}[Proof of Lemma~\ref{l.conditional localization of event}]
In this proof, by `conditional probability', we refer to the conditioning $\cE_{\mrm{cond}}$.

\noindent\emph{Case of $\beta=\infty$}: 
We let $\cE_{\mrm{eq}}$ denote the event where $\pi^*(s_i) = x_iL^{-1/4} + \pi^{*,\mrm{res}, \vec x}(s_i)$ for each $i=1, \ldots, k-1$.
We first note that by \Cref{l.restricted maximizer is maximizer} and \eqref{eq:condlowbd2}, for all large enough $M$, $\cE_{\mrm{eq}}$ holds with conditional probability  $>1-\exp(-cML^{1/2})$.
Now on $\cE_{\mrm{eq}}$ it follows by definition that $\cL^{\beta}[1] = \max_{\vec z \in I_{\vec x}} \sum_{i=1}^k \cL^\beta(z_{i-1}, s_{i-1}; z_i, s_i)$. Further by coalescence, which holds with conditional probability $>1-C\exp(-cL^{3/2})$ by \Cref{prop:dp-tent-coal} and \eqref{eq:condlowbd2}, it follows that 
\[
\max_{\vec z\in I^k_{M,L}} \fh^{\mrm{sum}}(\vec z)
=
\max_{\vec z \in I_{\vec x}} \sum_{i=1}^k \cL^\beta(z_{i-1}, s_{i-1}; z_i, s_i)
+\sum_{i=2}^{k-1}\cL^{\beta, \vec x}_i
\geq L + \sum_{i=2}^{k-1} \cL^{\beta, \vec x}_i,
\]
completing the proof.

\smallskip

\noindent\emph{Case of $\beta=1$}: By Lemma~\ref{l.L overshoot} there exists $\rho > 0$ such that, with conditional probability at least $1-CL^{-1}$, we have $\cL^\beta[1] \geq L + \rho L^{-3}$.
By Lemma~\ref{l.outside center free energy contribution}, with conditional probability at least $1-\exp(-cML^{1/2})$,
\begin{align*}
\exp(\cL^{\beta}[1]) - \exp\left(\maxbeta_{\vec z \in I_{\vec x}} \sum_{i=1}^k \cL^\beta(z_{i-1}, s_{i-1}; z_i, s_i)\right) \leq L^{-\frac{1}{2}M}\exp(L).
\end{align*}
The previous two inequalities, along with $\exp(x)\geq 1+x$, imply that
\begin{align*}
\exp\left(\maxbeta_{\vec z \in I_{\vec x}} \sum_{i=1}^k \cL^\beta(z_{i-1}, s_{i-1}; z_i, s_i)\right)
&\geq \exp(L+\rho L^{-3}) - L^{-\frac{1}{2}M}\exp(L)\\
&\geq \exp(L)\left[1+\rho L^{-3} - L^{-\frac{1}{2}M}\right]\\
&\geq \exp(L)\left[1+\tfrac{1}{2}\rho L^{-3}\right].
\end{align*}
By coalescence (\Cref{prop:dp-tent-coal}), with conditional probability at least $1-C\exp(-cL^{3/2})$,
\begin{multline*}
\maxbeta_{\vec z\in I^k_{M,L}} \fh^{\mrm{sum}}(\vec z)
\ge
\maxbeta_{\vec z \in I_{\vec x}} \sum_{i=1}^k \cL^\beta(z_{i-1}, s_{i-1}; z_i, s_i)
+\sum_{i=2}^{k-1}\cL^{\beta, \vec x}_i - kCe^{-cL}\\
> L + \tfrac{1}{4}\rho L^{-3}- kCe^{-cL} + \sum_{i=2}^{k-1}\cL^{\beta, \vec x}_i,
\end{multline*}
Since $kCe^{-cL}\ll \rho L^{-1}$, the proof is complete by relabeling $\rho$.
\end{proof}

The main step in proving Lemma~\ref{l.outside center free energy contribution} is to handle the case of $k=2$, which is given in the following statement; then we can obtain the statement for general $k$ by making use of coalescence (Proposition~\ref{prop:dp-tent-coal}).

\begin{lemma}\label{l.free energy in strip contribution from outside}
Conditional on $\cE_{\mrm{cond}}$, it holds with probability at least $1-\exp(-cML^{1/2})$ that, for each $i=1,\ldots, k-1$,
\begin{align*}
\int_{(-x_iL^{-1/4}+[-1,1])\setminus I_{M,L}} \exp\bigl(\mf h^{\mrm{top}, i, \vec x}_1(z)+\mf h^{\mrm{bot}, i+1, \vec x}_1(z)\bigr)\diff z\leq L^{-M}\exp\left(h_i+h_{i+1}\right).
\end{align*}
and
\begin{align*}
\MoveEqLeft[6]
\int_{-x_iL^{-1/4}+[-1,1]} \exp\bigl(\mf h^{\mrm{top}, i, \vec x}_1(z)+\mf h^{\mrm{bot}, i+1, \vec x}_1(z)\bigr)\diff z\leq \exp\left(h_i+h_{i+1} - \log(2L^{1/2})+M\right).
\end{align*}
\end{lemma}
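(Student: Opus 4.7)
Plan. I will work conditional on $\vec\cL^{\beta,\vec x}\approx \vec h$, where by shift, shear, and scaling invariance each of the independent (disjoint-strip) profiles $\mf h^{\mrm{top},i,\vec x}_1$ and $\mf h^{\mrm{bot},i+1,\vec x}_1$ becomes a rescaled $\hfh^\beta_{t,1}$ with $t = s_i - s_{i-1}$, conditioned to take a specified value at $0$, so that Corollary~\ref{cor.tent-pr} applies. I will then transfer probability bounds to $\cE_{\mrm{cond}}$ via \eqref{eq:condlowbd2}: an event of probability $\leq \exp(-NL^{1/2})$ under $\vec\cL^{\beta,\vec x}\approx \vec h$ has probability $\leq C\exp(-(N-C)L^{1/2})$ under $\cE_{\mrm{cond}}$.

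The concrete step is to apply Corollary~\ref{cor.tent-pr} at each dyadic scale $|z| \in [2^j L^{-1/2}, 2^{j+1}L^{-1/2}]$ for $j = 0, \ldots, \lfloor \tfrac{1}{2}\log_2 L\rfloor$, using the parameter $M' = K M^{1/2}L^{1/4}$ (with $K$ a large constant) and the scale-adapted choice $a = 2^j L^{-1/2}$, which keeps the logarithmic factor in the corollary of order one. A union bound over the $O(\log L)$ scales and the $2(k-1)$ profiles, combined with the $\exp(CL^{1/2})$ change-of-measure loss from \eqref{eq:condlowbd2}, produces a tent-control event of probability at least $1-\exp(-cML^{1/2})$ under $\cE_{\mrm{cond}}$. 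On this event, using $h_i = (s_i-s_{i-1})L + O(L^{7/8})$ so that the linear slope is $2L^{1/2}(1+o(1))$,
\[
\mf h^{\mrm{top},i,\vec x}_1(z) + \mf h^{\mrm{bot},i+1,\vec x}_1(z) \leq h_i + h_{i+1} - 4L^{1/2}|z|(1+o(1)) + CKM^{1/2}\cdot 2^{j/2}
\]
for $z$ at dyadic scale $2^jL^{-1/2}$. For $j \geq \log_2 M$ the slope drop $\sim 2^j$ dominates the fluctuation $CKM^{1/2}\cdot 2^{j/2}$ (since $M^{1/2} \leq 2^{j/2}$), so the integrand is at most $\exp(h_i+h_{i+1} - 3\cdot 2^j)$.

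For the first claimed bound, the integral over $|z| \geq ML^{-1/2}\log L$ is controlled by summing $\sum_{j \geq \log_2(M\log L)} 2^j L^{-1/2}\exp(-3\cdot 2^j)$, which is a geometric sum dominated by its first term and bounded by $CM\log L \cdot L^{-1/2-3M}\exp(h_i+h_{i+1}) \leq L^{-M}\exp(h_i+h_{i+1})$ for $L$ large. For the second bound I decompose the integration interval into the center $|z| \leq ML^{-1/2}$ and its complement within $-x_iL^{-1/4}+[-1,1]$: on the center the fluctuation is at most $O(M)$ and the integrand at most $\exp(h_i+h_{i+1}+O(M))$ over a length $2ML^{-1/2}$, contributing $O(M L^{-1/2}e^{O(M)})\exp(h_i+h_{i+1})$, while the complement contributes $O(L^{-1/2})\exp(h_i+h_{i+1})$ by the same dyadic sum starting from $j_0 = \log_2 M$. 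Summing, taking logarithms, and absorbing the multiplicative constant in $O(M)$ into a relabeling of $M$ yields the claimed $\exp(h_i+h_{i+1} - \log(2L^{1/2}) + M)$. The main technical difficulty is the balancing act in the parameter $M' = KM^{1/2}L^{1/4}$: it must grow like $L^{1/4}$ in order for the failure probability to beat the $\exp(CL^{1/2})$ change-of-measure loss, yet be small enough that $M' \cdot 2^{j/2}L^{-1/4} \lesssim 2^j$ holds down to $2^j = M$; the scale-adapted choice of $a$ in Corollary~\ref{cor.tent-pr} is exactly what prevents logarithmic factors from accumulating and spoiling this balance.
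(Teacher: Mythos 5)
Your proof takes essentially the same approach as the paper's: reduce the conditioning from $\cE_{\mrm{cond}}$ to $\vec\cL^{\beta,\vec x}\approx\vec h$ via \eqref{eq:condlowbd2}, then use Corollary~\ref{cor.tent-pr} to control the two tent profiles pointwise and integrate the resulting exponential bound. The only differences are cosmetic: the paper applies the corollary once with a suitable choice of $a$ (relying on the built-in $|\log(|x|/a)|$ multi-scale factor) rather than union-bounding over dyadic scales, and for the second inequality it simply cites \eqref{eq:ptfree1} of Lemma~\ref{l.part free energy to whole free energy} instead of re-deriving it from the tent picture, which avoids the $O(M)$-vs-$M$ slack you absorb by relabeling.
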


\begin{proof}
The second inequality follows from \eqref{eq:ptfree1} in \Cref{l.part free energy to whole free energy} and \eqref{eq:condlowbd2}.

For the first inequality, by \eqref{eq:condlowbd2}, it suffices to show that the displayed inequality holds with probability at least $1-\exp(-cML^{1/2})$ given only that $\vec\cL^{\beta, \vec x} \approx \vec h$.
Indeed, conditional on $\vec\cL^{\beta, \vec x} \approx \vec h$, from \Cref{cor.tent-pr}, it holds with probability at least $1-C\exp(-cML^{1/2}\log L)$ that, for all $z\not\in I_{M,L}$, $z+x_iL^{-1/4} \in [-1, 1]$,
\begin{align*}
\mf h^{\mrm{top}, i, \vec x}_1(z) &\leq h_i -\tfrac{1}{2}M\log L - \tfrac{1}{4}L^{1/2}|z|,\\
\mf h^{\mrm{bot}, i+1, \vec x}_1(z) &\leq h_{i+1} - \tfrac{1}{2}M\log L - \tfrac{1}{4}L^{1/2}|z|.
\end{align*}
By plugging these into the integral, the conclusion follows.
\end{proof}

\begin{proof}[Proof of Lemma~\ref{l.outside center free energy contribution}]
By coalescence \Cref{prop:dp-tent-coal} and \eqref{eq:condlowbd2}, 
with probability at least $1-C\exp(-cL^{3/2})$ conditional on $\cE_{\mrm{cond}}$, the LHS is at most
\[
\maxbeta_{\vec z+L^{-1/4}\vec x \in [-1,1]^{k-1}\setminus I_{\vec x}} \sum_{i=1}^{k-1}\mf h^{\mrm{top}, i, \vec x}_1(z_i)+\mf h^{\mrm{bot}, i+1, \vec x}_1(z_i)+Ce^{-cL} - \sum_{i=2}^{k-1}h_i.
\]
Recall that the first term is the logarithm of an integral over $[-1,1]^{k-1}\setminus I_{\vec x}$. We can upper bound the integral by replacing $[-1,1]^{k-1}\setminus I_{\vec x}$ with the $k-1$ dimension product where all but the $i$\textsuperscript{th} one is $[-1,1]$ and the $i$\textsuperscript{th} one is $[-1,1]\setminus (x_iL^{-1/4}+I_{M,L})$, and summing over $i$ (i.e., the sum is inside the logarithm).
Using \Cref{l.free energy in strip contribution from outside}, we get an upper bound of 
\begin{align*}
\sum_{i=1}^k h_i + Ce^{-cL} - M\log L - (k-2)\log(2L^{1/2}) +M + C\log k.
\end{align*}
Using that $\vec h\in E_{\mrm{val}}$, by taking $M$ large enough the conclusion follows.
\end{proof}

\begin{proof}[Proof of Lemma~\ref{l.restricted maximizer is maximizer}]
We need to show that with high probability, conditional on $\vec\cL^{\beta, \vec x} \approx \vec h$, it holds that $\max_{i=1, \ldots, k-1}|\pi^*(s_i)-x_iL^{-1/4}|\leq ML^{-1/2}\log L$ so that $\pi^*(s_i) = x_iL^{-1/4}+\pi^{*,\mrm{res},\vec x}(s_i)$. 

Let $J_i$ denote the interval $\{z:|z+x_iL^{-1/4}|\le 10^{-6}(s_i-s_{i-1})^{1/2}\wedge (s_{i+1}-s_i)^{1/2}\}$.
By \Cref{prop:dp-tent-coal}, conditional on $\vec\cL^{\beta, \vec x} \approx \vec h$, with probability $>1-C\exp(-cL^{3/2})$ the event $\pi^*(s_i) \neq x_iL^{-1/4}+\pi^{*,\mrm{res},\vec x}(s_i)$
implies that 
\begin{equation}  \label{eq:hblowbd}
\sup_{z\in J_i\setminus I_{M,L}} \mf h_1^{\mrm{top}, i, \vec x}(z) + \mf h_1^{\mrm{bot}, i+1, \vec x}(z) > h_i + h_{i+1} - C\exp(-cL),    
\end{equation}
or 
\begin{multline}  \label{eq:hblowbd2}
\sup_{z\not\in J_i} \cL^\beta(0,s_{i-1};x_iL^{-1/4}+z,s_i) + \cL^\beta(x_iL^{-1/4}+z,s_i; 0,s_{i+1}) \\ \ge \cL^\beta(0,s_{i-1};x_iL^{-1/4},s_i) + \cL^\beta(x_iL^{-1/4},s_i; 0,s_{i+1}).
\end{multline}
We have that \eqref{eq:hblowbd} further implies that either
\[
\sup_{z\in J_i\setminus I_{M,L}} \mf h_1^{\mrm{top}, i, \vec x}(z) > h_{i+1} - C\exp(-cL),   
\]
or
\[
\sup_{z\in J_i\setminus I_{M,L}} \mf h_1^{\mrm{bot}, i+1, \vec x}(z) > h_i - C\exp(-cL),   
\]
happens; and by \Cref{cor.tent-pr}, each happens, conditional on $\vec\cL^{\beta, \vec x} \approx \vec h$, with probability $<\exp(-cML^{1/2}\log L)$.

As for \eqref{eq:hblowbd2}, by  \Cref{cor.tent-pr}, with probability $>1-C\exp(-cL^{3/4})$ conditional on $\vec\cL^{\beta, \vec x} \approx \vec h$, we have
that the RHS is $>h_i+h_{i+1}-L^{1/4}\log L$.
For the LHS, for any fixed $z\not\in J_i$ we have
\begin{multline*}
\P\left(\cL^\beta(0,s_{i-1};x_iL^{-1/4}+z,s_i) + \cL^\beta(x_iL^{-1/4}+z,s_i; 0,s_{i+1}) > h_i+h_{i+1}-L^{1/4}\log L\right) 
\\< C\exp\left(-\tfrac{4}{3}(s_{i+1}-s_{i-1})L^{3/2} - cz^2L^{1/2}\right)
\end{multline*}
by \Cref{lem:fh-ut}, and that $\vec h \in E_{\mrm{val}}$.
Then by a union bound over all $z\not\in J_i$, $z\in L^{-2}L$, and using the continuity estimate \Cref{lem:fh-cont}, we get that the LHS is $\le h_i+h_{i+1}-L^{1/4}\log L$ with probability $>1-C\exp\left(-\tfrac{4}{3}(s_{i+1}-s_{i-1})L^{3/2} - cL^{3/2}\right)$.
Then since 
$$\P\left(\cL^{\beta, \vec x}_i \in h_i + [0,e^{-L}]\right)>c\exp\left(-\tfrac{4}{3}(s_{i}-s_{i-1})L^{3/2} - CL^{3/4}\right)$$
and
$$\P\left(\cL^{\beta, \vec x}_{i+1} \in h_{i+1} + [0,e^{-L}]\right)>c\exp\left(-\tfrac{4}{3}(s_{i+1}-s_{i})L^{3/2} - CL^{3/4}\right),$$
by \Cref{lem:fh-ut}, we conclude that \eqref{eq:hblowbd2} happens with probability $<C\exp(-cL^{3/4})$ conditional on $\vec\cL^{\beta, \vec x} \approx \vec h$.
This completes the proof.
\end{proof}

\subsection{The density comparison}\label{s.comparison.density comparison}
We now finish the proof of \Cref{l.maximizer location prob relation}.

Recall the setup at the beginning of this section. Also recall the definition of $\pi^{*,\mrm{res}, \vec x}(s_i)$ from \eqref{e.pi^*,res definition} as a version of $\pi^*(s_i)$ defined for $i=1, \ldots, k-1$ but with a restricted interval over which the maximization is performed, and the definition of $\mf h^{\mrm{top}, i, \vec x}_1$ and $\mf h^{\mrm{bot}, i, \vec x}_1$
from \eqref{e.h top bot definition}, and $\fh^{\mrm{sum}}$ from \eqref{e.hsumdef}.

As before, we let $I_{M,L} = [-ML^{-1/2}\log L, ML^{-1/2}\log L]$, and $w_{\beta=1}=L^{-1/2}$, $w_{\beta=\infty}=L^{-1}$.

\begin{proof}[Proof of \Cref{l.maximizer location prob relation}] 
We want to estimate the conditional probabilities on both the LHS and the RHS and show that they are within $1+o(1)$ of each other. 
Towards this, for either $\iota=0$ or $1$ (we allow both as the RHS in Lemma~\ref{l.maximizer location prob relation} does not have the $\smash{\beta^{-1}e^{-cL^{1/4}}}$ term), we claim that
\begin{multline}  \label{eq:dencompcen}
   \P\Bigl(\max_{i=1, \ldots, k-1}|\pi^*(s_i)-x_iL^{-1/4}|<w_\beta, \cL^\beta[1] > (1-\iota\beta^{-1}e^{-cL^{1/4}})L \midd \vec\cL^{\beta, \vec x} \approx \vec h\Bigr)= (1+O(L^{-1}))\\
   \times
   \P\left(\max_{i=1, \ldots, k-1}|\pi^*(s_i)-x_iL^{-1/4}|<w_\beta, \maxbeta_{\vec z\in I_{M,L}^{k-1}} \fh^{\mrm{sum}}(\vec z) \geq L+ \beta^{-1}\rho L^{-3}+\sum_{i=2}^{k-1} \cL^{\beta,\vec x}_i \midd \vec\cL^{\beta, \vec x} \approx \vec h\right).
\end{multline}
Indeed, by Lemma~\ref{l.conditional localization of event}, the LHS is upper bounded by the RHS; while by \Cref{prop:dp-tent-coal},
\[
\cL^\beta[1] + \sum_{i=2}^{k-1} \cL^{\beta,\vec x}_i \ge \maxbeta_{\vec z\in I_{M,L}^{k-1}} \fh^{\mrm{sum}}(\vec z),
\]
with probability $>1-C\exp(-cL^{3/2})$, conditional on $\cL^{\beta, \vec x} \approx \vec h$.
Then since the LHS is lower bounded by $c\exp(-CL^{1/2})$ ($\beta=1$) or $c$ ($\beta=\infty$) by \Cref{p.k-point part to whole free energy} and $\vec h \in E_{\mrm{val}}$, the LHS is lower bounded by the RHS.

We now let $\cE_\pm$ denote the events where for each $i=1,\ldots, k-1$,
\[
\sup_{|z|\le w_\beta}\mf h^{\mrm{top}, i, \vec x}_1(z) + \mf h^{\mrm{bot}, i+1, \vec x}_1(z) > \sup_{|z|\ge w_\beta, z\in I_{M,L}}\mf h^{\mrm{top}, i, \vec x}_1(z) + \mf h^{\mrm{bot}, i+1, \vec x}_1(z) \mp C\exp(-cL),
\]
and
\[
\maxbeta_{\vec z\in I_{M,L}^{k-1}} \fh^{\mrm{sum}}(\vec z) \geq L+ \beta^{-1}\rho L^{-3}+\sum_{i=2}^{k-1} h_i \mp C\exp(-cL).
\]
By \Cref{l.restricted maximizer is maximizer} and \Cref{p.k-point part to whole free energy}, 
the probability in the RHS of \eqref{eq:dencompcen} is in
\begin{equation}  \label{eq:deninter}
  \left[\P(\cE_- \mid \vec\cL^{\beta, \vec x} \approx \vec h)-\exp(-cML^{1/2}),\;
\P(\cE_+ \mid \vec\cL^{\beta, \vec x} \approx \vec h)+\exp(-cML^{1/2})
\right].  
\end{equation}
Now we wish to make use of \Cref{prop:dp-tent-bcomp} to replace the processes $\mf h^{\mrm{top}, i, \vec x}_1$ and $\mf h^{\mrm{bot}, i, \vec x}_1$ by a collection of processes given by independent Brownian bridges, up to an exponentially small $L^\infty$ error and on an event with probability $1-\exp(-cL)$. 

Denote
\[
G=L+ \beta^{-1}\rho L^{-3}-\sum_{i=1}^k h_i
\]
Since $\vec h\in E_{\mrm{val}}$ from \eqref{eq:eeeval}, we have $G<-(k-1)\beta^{-1}\log(2L^{1/2})+Cr_\beta$ (recall that $r_{\beta=1}=1$ and $r_{\beta=\infty}=L^{-1/2}$).

\noindent\textbf{Reduce to Brownian bridges}. 
We denote $\theta=10^{-7}\min_{i=1,\ldots, k}(s_i-s_{i-1})L^{1/2}$.
We let $\Theta$ denote the collection of all 
\[
\vec b=(\vec b^{\mrm{bot},\mrm{L}}, \vec b^{\mrm{bot},\mrm{R}}, \vec b^{\mrm{top},\mrm{L}}, \vec b^{\mrm{top},\mrm{R}}) = \bigl((b^{\mrm{bot},\mrm{L}}_i)_{i=2}^k, (b^{\mrm{bot},\mrm{R}}_i)_{i=2}^k,  (b^{\mrm{top},\mrm{L}}_i)_{i=1}^{k-1}, (b^{\mrm{top},\mrm{R}}_i)_{i=1}^{k-1}\bigr) \in \R^{4(k-1)}.
\]
For any $\vec b\in \Theta$, we denote by $B^{\dagger, i,\vec b}$ a rate $2$ Brownian bridge from $(-\theta, -2\theta L^{1/2}+b^{\mrm{\dagger},\mrm{L}}_i)$, to $(0,0)$, then to $(\theta, -2\theta L^{1/2}+b^{\mrm{\dagger},\mrm{R}}_i)$, 
for $\dagger=\mrm{top}$, $i=1, \ldots, k-1$, and $\dagger=\mrm{bot}$, $i=2, \ldots, k$.
All these $B^{\dagger, i}$ are independent of each other. One should think of $B^{\dagger, i}$ as being approximately $\mf h_1^{\dagger, i, \vec x} - h_i$, which we will use in a more precise sense via Proposition~\ref{prop:dp-tent-bcomp}.

We then denote by $\cE_{\vec b}$ the event where for each
$i=1,\ldots, k-1$,
\[
\sup_{|z|\le w_\beta}B^{\mrm{top}, i, \vec b}(z) + B^{\mrm{bot}, i+1, \vec b}(z) > \sup_{|z|\ge w_\beta, z\in I_{M,L}}B^{\mrm{top}, i, \vec b}(z) + B^{\mrm{bot}, i+1, \vec b}(z),
\]
and
\[
    \maxbeta_{\vec z\in I_{M,L}^{k-1}} \sum_{i=1}^{k-1}B^{\mrm{top}, i, \vec b}(z_i) + \sum_{i=2}^{k}B^{\mrm{bot}, i, \vec b}(z_{i-1})\geq G .
\]
We also denote by $\cE_{\vec b,\pm}$ the event where for each
$i=1,\ldots, k-1$,
\[
\sup_{|z|\le w_\beta}B^{\mrm{top}, i, \vec b}(z) + B^{\mrm{bot}, i+1, \vec b}(z) > \sup_{|z|\ge w_\beta, z\in I_{M,L}}B^{\mrm{top}, i, \vec b}(z) + B^{\mrm{bot}, i+1, \vec b}(z) \mp C\exp(-cL),
\]
and
\[
    \maxbeta_{\vec z\in I_{M,L}^{k-1}} \sum_{i=1}^{k-1}B^{\mrm{top}, i, \vec b}(z_i) + \sum_{i=2}^{k}B^{\mrm{bot}, i, \vec b}(z_{i-1})\geq G \mp C\exp(-cL).
\]
It is straightforward to check that 
\begin{equation}  \label{eq:bbpmcp}
|\P(\cE_{\vec b})-\P(\cE_{\vec b, \pm})|<C\exp(-cL).    
\end{equation}

For each $a>0$ we let $\Theta_a$ denote the collection of all $\vec b\in \Theta$, where each coordinate is in $[-a, a]$.
We write $D=\log L$.

Since $\vec x\in \cK$ which is a compact set, applying \Cref{prop:dp-tent-bcomp}, and using \Cref{l.control near tent center} to bound each $\fh^{\mrm{top},i}_1$ and $\fh^{\mrm{bot},i}_1$ at $\pm \theta$, we have that
\begin{equation}  \label{eq:cemlbd}
\P(\cE_-\mid \vec\cL^{\beta, \vec x} \approx \vec h) 
\ge (1-C\exp(-cD^2)) \min_{\vec b\in \Theta_{DL^{1/4}}} \P(\cE_{\vec b,-})  - C\exp(-cL),
\end{equation}
and
\begin{multline}  \label{eq:cemlbd2}
 \P(\cE_+\mid \vec\cL^{\beta, \vec x} \approx \vec h) 
\le 
\max_{\vec b\in \Theta_{DL^{1/4}}}  \P(\cE_{\vec b,+})+
\sum_{i=2}^{\lfloor L^{1/4} \rfloor }C\exp(-ci^2D^2) \max_{\vec b\in \Theta_{iDL^{1/4}}}  \P(\cE_{\vec b,+}) \\  + C\exp(-cL) + C\exp(-cD^2L^{1/2}).   
\end{multline}
By \eqref{eq:bbpmcp}, we can replace each $\P(\cE_{\vec b, \pm})$ by $\P(\cE_{\vec b})$.
We also note that these bounds are independent of $\vec x$.

We next state a comparison lemma for different $\vec b$.
\begin{lemma}  \label{lem:compb}
For any large enough $K>0$, and $\vec b, \vec g \in \Theta$ such that $\|\vec b- \vec g\|_\infty < KL^{1/4}$, we have
\[
\P(\cE_{\vec b}) = (1+O(K^2L^{-1/4}\log L)) \P(\cE_{\vec g}) + O(\exp(-cK^2M^{-1}L^{1/2}\log L)).
\]
\end{lemma}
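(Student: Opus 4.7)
The plan is to exploit the fact that the event $\cE_{\vec b}$ depends on the bridges $B^{\dagger, i, \vec b}$ only through their restrictions to the narrow window $I_{M,L}$, whereas the dependence on $\vec b$ is entirely through the behavior of the bridges on the complement. Concretely, set $\eta = ML^{-1/2}\log L$ and let $\vec V$ be the vector of boundary values $B^{\dagger, i, \vec b}(\pm \eta)$ over all $\dagger\in\{\mrm{top},\mrm{bot}\}$ and all $i$. Since the bridges are pinned at $0$ at $t=0$, the left and right halves are independent rate-$2$ Brownian bridges, so the law of $\vec V$ under $\P$ is a product of independent Gaussians: the coordinate corresponding to $B^{\dagger, i, \vec b}(-\eta)$ has mean $m_j^{\vec b} = -2\eta L^{1/2} + (\eta/\theta) b_i^{\dagger,\mrm L}$ and variance $\sigma^2 = 2\eta(1 - \eta/\theta)$ (and symmetrically for $+\eta$). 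By the Markov property, conditional on $\vec V$, the laws of the bridges restricted to $I_{M,L}$ are products of Brownian bridges between the specified endpoints, which depend on $\vec V$ but not on $\vec b$. Since $\cE_{\vec b}$ is a function of these restricted bridges alone, the conditional probability $Q(\vec V) := \P(\cE_{\vec b}\mid \vec V)$ is independent of $\vec b$, and we may write
\[
\P(\cE_{\vec b}) \;=\; \int Q(\vec v)\, \phi^{\vec b}(\vec v)\, d\vec v,
\]
where $\phi^{\vec b}$ is the product Gaussian density of $\vec V$.

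Next, I will compute the log Radon–Nikodym derivative
\[
\log\frac{\phi^{\vec b}(\vec v)}{\phi^{\vec g}(\vec v)} \;=\; \sum_j \frac{m_j^{\vec b} - m_j^{\vec g}}{\sigma^2}\left(v_j - \tfrac{1}{2}(m_j^{\vec b}+m_j^{\vec g})\right).
\]
The displacements satisfy $|m_j^{\vec b} - m_j^{\vec g}| = (\eta/\theta)|b - g|_j \le C\eta K L^{-1/4}$ (using $\theta\asymp L^{1/2}$), while $\sigma^2 \asymp \eta$. Thus the coefficient of $v_j$ in the linear form above is $O(KL^{-1/4})$. I will then introduce the good event
\[
\cG \;=\; \Bigl\{\max_j |v_j - m_j^{\vec g}| \le K\log L\Bigr\}.
\]
On $\cG$ the displacement $|v_j - (m_j^{\vec b}+m_j^{\vec g})/2|$ is at most $CK\log L$, so the log RN derivative is bounded by $CK^2 L^{-1/4}\log L$ and $\phi^{\vec b}/\phi^{\vec g} = 1 + O(K^2 L^{-1/4}\log L)$ on $\cG$. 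The tail $\cG^c$ is estimated via the unconditional Gaussian tail with variance $\sigma^2 \asymp ML^{-1/2}\log L$:
\[
\P_{\vec b}(\cG^c) \;\le\; C\exp\bigl(-cK^2(\log L)^2/\sigma^2\bigr) \;=\; C\exp\bigl(-cK^2 M^{-1}L^{1/2}\log L\bigr).
\]

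Finally, decomposing
\[
\P(\cE_{\vec b}) \;=\; \int_{\cG}Q\,\phi^{\vec b}\, d\vec v + \int_{\cG^c}Q\,\phi^{\vec b}\, d\vec v
\;=\; (1+O(K^2L^{-1/4}\log L))\int_{\cG}Q\,\phi^{\vec g}\, d\vec v \;+\; O\bigl(e^{-cK^2M^{-1}L^{1/2}\log L}\bigr),
\]
and using $\int_{\cG}Q\,\phi^{\vec g} = \P(\cE_{\vec g}) - \int_{\cG^c}Q\,\phi^{\vec g}$ (whose second term is again absorbed into the additive error), yields the claimed identity. The argument is essentially bookkeeping once the reduction to $\vec V$ is made; the main (minor) obstacle is verifying the matching of the orders $K^2 L^{-1/4}\log L$ and $\exp(-cK^2 M^{-1}L^{1/2}\log L)$, which forces the particular choice of the threshold $K\log L$ defining $\cG$ and requires tracking the roles of $\theta \asymp L^{1/2}$ and $\eta = ML^{-1/2}\log L$ throughout.
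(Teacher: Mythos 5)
Your proposal is correct and follows essentially the same route as the paper's proof: both reduce to the boundary values of the bridges at $\pm ML^{-1/2}\log L$ (your $\vec V$ is the paper's $\mrm{BdyCtrl}$ data), note that $\cE_{\vec b}$ is a function of the bridges restricted to $I_{M,L}$ whose conditional law given $\vec V$ does not depend on $\vec b$, bound the Gaussian density ratio on the good event where the boundary values deviate by at most $K\log L$ from their means, and estimate the complement via a Gaussian tail with variance $\asymp ML^{-1/2}\log L$. The only difference is cosmetic — you make the Radon–Nikodym computation and the variance/mean bookkeeping slightly more explicit.
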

Now for each $i=1,\ldots, \lfloor L^{1/4}\rfloor$, and any $\vec b\in \Theta_{iDL^{1/4}}$, $\vec g\in \Theta_{DL^{1/4}}$, we can find a sequence of vectors in $\Theta$, $\vec b=\vec b^{(0)}, \vec b^{(1)}, \ldots, \vec b^{(i)} = \vec g$, with $\|\vec b^{(j-1)}-\vec b^{(j)}\|_\infty \le DL^{1/4}$ for each $j=1,\ldots, i$.
Then \eqref{lem:compb} implies that for each $j$
\[
\P(\cE_{\vec b^{(j-1)}}) < (1+CD^2L^{-1/4}\log L) \P(\cE_{\vec b^{(j)}}) + C\exp(-cD^2M^{-1}L^{1/2}\log L),
\]
and, since $1+x \leq \exp(x)$,
\[
 \P(\cE_{\vec b}) 
< \exp(CiD^{2}L^{-1/4}\log L)\left(   \P(\cE_{\vec g}) + C\exp(-cD^2M^{-1}L^{1/2}\log L) \right).
\]
In other words, we have (for each $i=1,\ldots, \lfloor L^{1/4}\rfloor$)
\[
\max_{\vec b\in \Theta_{iDL^{1/4}}}  \P(\cE_{\vec b}) 
< \exp(CiD^{2}L^{-1/4}\log L)\left( \min_{\vec b\in \Theta_{DL^{1/4}}}  \P(\cE_{\vec b}) + C\exp(-cD^2M^{-1}L^{1/2}\log L) \right).
\]
Therefore, by plugging this into \eqref{eq:cemlbd2} (with each $\P(\cE_{\vec b, +})$ replaced by $\P(\cE_{\vec b})$ using \eqref{eq:bbpmcp}) we have
\[
\P(\cE_+\mid \vec\cL^{\beta, \vec x} \approx \vec h) < (1+C\exp(-cD^2)) \max_{\vec b\in \Theta_{DL^{1/4}}}  \P(\cE_{\vec b}) + C\exp(-cD^2M^{-1}L^{1/2}\log L).
\]
Also, \eqref{eq:cemlbd} (with each $\P(\cE_{\vec b, -})$ replaced by $\P(\cE_{\vec b})$ using \eqref{eq:bbpmcp}) and \Cref{lem:compb} imply that
\[
\P(\cE_-\mid \vec\cL^{\beta, \vec x} \approx \vec h) > (1-C\exp(-cD^2)) \max_{\vec b\in \Theta_{DL^{1/4}}}  \P(\cE_{\vec b}) -C\exp(-cD^2M^{-1}L^{1/2}\log L).
\]
Since that (as we have seen above) \eqref{eq:dencompcen} is lower bounded by $c\exp(-CL^{1/2})$ ($\beta=1$) or $c$ ($\beta=\infty$) by \Cref{p.k-point part to whole free energy}, and the RHS of \eqref{eq:dencompcen} is in the interval \eqref{eq:deninter}, we conclude that \eqref{eq:dencompcen} equals $(1+o(1))\max_{\vec b\in \Theta_{DL^{1/4}}}  \P(\cE_{\vec b})$, which is independent of $\vec x$.
\end{proof}

It remains to prove the comparison lemma on Brownian bridges.

\begin{proof}[Proof of \Cref{lem:compb}] 
We note that the events $\cE_{\vec b}$ and $\cE_{\vec g}$ are measurable with respect to the processes $B^{\dagger, i, \vec b}$ in the interval $I_{M,L}$.
Therefore, conditional on that each $B^{\dagger, i, \vec b}$ and  $B^{\dagger, i, \vec g}$ are the same at $\pm ML^{-1/2}\log L$, the conditional probabilities of $\cE_{\vec b}$ and $\cE_{\vec g}$ would be the same.

We let $\mrm{BdyCtrl}$ be the event where for each $\dagger=\mrm{top}$, $i=1, \ldots, k-1$, and $\dagger=\mrm{bot}$, $i=2, \ldots, k$, we have
\[
|B^{\dagger, i, \vec b} (ML^{-1/2}\log L)-b^{\dagger,\mrm{R}}_i\theta^{-1}ML^{-1/2}\log L+2M\log L| < K\log L,
\]
\[
|B^{\dagger, i, \vec b} (-ML^{-1/2}\log L)-b^{\dagger,\mrm{L}}_i\theta^{-1}ML^{-1/2}\log L+2M\log L| < K\log L,
\]
\[
|B^{\dagger, i, \vec g} (ML^{-1/2}\log L)-b^{\dagger,\mrm{R}}_i\theta^{-1}ML^{-1/2}\log L+2M\log L| < K\log L,
\]
\[
|B^{\dagger, i, \vec g} (-ML^{-1/2}\log L)-b^{\dagger,\mrm{L}}_i\theta^{-1}ML^{-1/2}\log L+2M\log L| < K\log L.
\]
It follows from Gaussian tail bounds (\Cref{l.normal bounds}) (and noting that $\|\vec b-\vec g\|\le KL^{1/4}$) that
\begin{align*}
\P\left(\mrm{BdyCtrl}^c\right) \leq \exp\left(-cK^2M^{-1}L^{1/2}\log L\right).
\end{align*}
It then suffices to control the ratio of the probability densities of $B^{\dagger, i, \vec b} (\pm ML^{-1/2}\log L)$ and $B^{\dagger, i, \vec g} (\pm ML^{-1/2}\log L)$, in the interval $-ML^{1/2}\log L+[-K\log L, K\log L]$.
For this, note that $B^{\dagger, i, \vec b} (\pm ML^{-1/2}\log L)$ and $B^{\dagger, i, \vec g} (\pm ML^{-1/2}\log L)$ are Gaussian random variables with the same variance of order $ML^{-1/2}\log L$, and mean differ by $\le K\theta^{-1}ML^{-1/4}\log L$.
Therefore the density ratio is 
\[
\exp(O((K\log L)(K\theta^{-1}ML^{-1/4}\log L)(ML^{-1/2}\log L)^{-1})) = 1+O(K^2L^{-1/4}\log L).
\]
Therefore the conclusion follows.
\end{proof}

\appendix

\section{Weak convergence lemma}\label{s.abstract weak convergence}

\begin{proof}[Proof of Lemma~\ref{l.fdd convergence}]
Fix $M>0$.
We then have that
\begin{equation}  \label{e.local density hypothesis1}
    \max_{\|\vec x\|_{\infty}, \|\vec y\|_{\infty} \leq M, \|\vec x -\vec y\|_{\infty} \leq \epsilon} \left|\frac{f(\vec x)}{f(\vec y)} - 1\right| \to 1.
\end{equation}
This follows from the continuity and strict positivity of $f$ combined with the compactness of $[-M,M]^d$.
Now we take any $\delta>0$, and $\vec x, \vec y \in [-M+\delta, M-\delta]^d$. 
For any $\varepsilon$ such that $\delta\varepsilon^{-1}\in \N$, by splitting $\vec x+ [-\delta, \delta)^d$ into $(\delta\varepsilon^{-1})^d$ many translations of $[-\varepsilon, \varepsilon)^d$, applying \eqref{e.local density hypothesis} across them, and using \eqref{e.local density hypothesis1}, we have
\[
 \frac{\P\left(\vec X_\varepsilon\in \vec x + [-\delta, \delta)^d\right)}{\P\left(\vec X_\varepsilon \in \vec y + [-\delta, \delta)^d\right)} \to 
 \frac{\int_{\vec x + [-\delta, \delta)^d}f(\vec z)\diff \vec z}{\int_{\vec y + [-\delta, \delta)^d}f(\vec z)\diff \vec z},
\]
as $\epsilon \to 0$.
Then by the continuity of $f$, and that $\vec X_\epsilon\to \vec X$ in distribution, we conclude that 
\[
 \frac{\P\left(\vec X\in \vec x + [-\delta, \delta)^d\right)}{\P\left(\vec X \in \vec y + [-\delta, \delta)^d\right)} = 
 \frac{\int_{\vec x + [-\delta, \delta)^d}f(\vec z)\diff \vec z}{\int_{\vec y + [-\delta, \delta)^d}f(\vec z)\diff \vec z}.
\]
We note that by sending $M\to\infty$, this holds for arbitrary $\vec x, \vec y \in \R^d$.
Then by the integrability of $f$, we get
\[
\P\left(\vec X\in \vec x + [-\delta, \delta)^d\right) = \frac{\int_{\vec x + [-\delta, \delta)^d}f(\vec z)\diff \vec z}{\int_{\R^d}f(\vec z)\diff \vec z}.
\]
Then the conclusion follows by sending $\delta\to 0$.
\end{proof}

\section{Tent Brownian comparison and estimates}   \label{sec:tnp}

In this appendix, we provide the proofs of \Cref{lem:fh-tent-up}, \Cref{l.dp-comp-s}, and \Cref{l.control near tent center}.

\begin{proof}[Proof of \Cref{lem:fh-tent-up}]
We wish to prove \Cref{lem:fh-tent-up} from \Cref{lem:fh-tent}, by applying the tail comparison of \Cref{t.comparison}. However, the proof of \Cref{t.comparison} we give uses \Cref{lem:fh-tent-up}, and we next explain how circular arguments are avoided.

First, recall the notation $L_M^{1/2} = (L-M)^{1/2}$ from the proof of \Cref{t.comparison}. That proof used \Cref{lem:fh-tent-up} in the special case of
\begin{align*}
\P\left(\max_{x\in\{-L_M^{1/2}, L_M^{1/2}\}} \Bigl|\hfh^{\beta}_{t,1}(x) - L + 2L^{1/2}|x|\Bigr| > ML^{1/4} \midd \hfh^{\beta}_{t,1}(0) > L+\delta\right) < \exp(-cM^2),
\end{align*}
for $M = C_1\log L$ and $0<\delta<C_0L^{1/4}$, i.e., the process is considered only at the two points $\pm L_M^{1/2}$ and not on the entire interval $[-L^{1/2}, L^{1/2}]$; this was done in the proof of \Cref{t.comparison} right before \eqref{e.E complement prob bound}. This application of \Cref{lem:fh-tent-up} with this value of $M$ is the only one in the proof of Theorem~\ref{t.comparison}. We will show how to prove the above display, and then we will be allowed to use Theorem~\ref{t.comparison} to prove \Cref{lem:fh-tent-up}.

Call the event in the previous display $A_{M,L}$. Take a large $C_2>0$, then the previous display is bounded by
\begin{align}\label{e.one-point tent shape bound}
\P\left(A_{M,L}, \hfh^\beta_{t,1}(0) < L +C_2L^{1/4} \mid \hfh^\beta_{t,1}(0) > L+\delta\right) + \P\left(\hfh^\beta_{t,1}(0) > L +C_2L^{1/4} \mid \hfh^\beta_{t,1}(0) > L+\delta\right).
\end{align}
The second term is upper bounded, using a trivial upper bound and then Theorem~\ref{lem:fh-ut}, by
\begin{align}\label{e.tail comparisons for tent shape}
\frac{\P\left(\hfh^\beta_{t,1}(0) > L +C_2L^{1/4}\right)}{\P\left(\hfh^\beta_{t,1}(0) > L+\delta\right)} \leq \frac{\exp\left(-\frac{4}{3}(L+C_2L^{1/4})^{3/2} + CL^{3/4}\right)}{\exp\left(-\frac{4}{3}(L+\delta)^{3/2} - CL^{3/4}\right)} \leq \exp\left(-cL^{3/4}\right),
\end{align}
as $C_2$ is large.
The first term of \eqref{e.one-point tent shape bound} is upper bounded by
\begin{align*}
\P\left(A_{M,L}\midd \hfh^\beta_{t,1}(0) \in (L+\delta, L+C_2L^{1/4})\right) \leq \sup_{L'\in(L+\delta, L+C_2L^{1/4})} \P\left(A_{M,L}\midd \hfh^\beta_{t,1}(0) \in (L', L'+\mrm dL')\right).
\end{align*}
Now applying the \Cref{lem:fh-tent} where we condition on the value of $\hfh^\beta_{t,1}(0)$ yields that the final term is upper bounded by $\exp(-cM^{2})$ when $0< M < cL^{3/4}$. Thus the bound on \eqref{e.one-point tent shape bound} is $\exp(-cM^2) + \exp(-cL^{3/4}) \leq \exp(-cM^2)$ when $M< L^{3/8}$, as is certainly the case for $M=C_1\log L$.

This establishes the needed bound on \eqref{e.one-point tent shape bound} and so we may now make use of Theorem~\ref{t.comparison}, as noted above. We now perform the same analysis as above but for $0< M <cL^{3/4}$, and with
\begin{align*}
\tilde A_{M,L} = \left\{\sup_{|x|\leq L^{1/2}} \Bigl|\hfh^{\beta}_{t,1}(x) - L + 2L^{1/2}|x|\Bigr| > ML^{1/4}\right\}
\end{align*}
replacing $A_{M,L}$, with $M^2L^{-1/2}$ replacing $C_2L^{1/4}$, and with $0$ replacing $\delta$; the only difference is that the bound in \eqref{e.tail comparisons for tent shape} now follows from Theorem~\ref{t.comparison} instead of Theorem~\ref{lem:fh-ut} (and is now $\exp(-cM^2)$ instead of $\exp(-cL^{3/4})$). The remaining analysis is the same and yields an overall bound on the LHS of \eqref{e.one-point tent shape bound} of $\exp(-cM^2)$ for $0< M <cL^{3/4}$.
\end{proof}

\begin{proof}[Proof of \Cref{l.dp-comp-s}]
Let $\cE_0$ denote the event where $\hat{\fh}^\beta_{t,1}(0)\in (L,L+\diff L)$.
Denote by $\cE_1$ the event
\[
\hat{\fh}^\beta_{t,2}(x) < -x^2 + 0.1L, \quad \forall x\in [-\tfrac{1}{2}L^{1/2}, \tfrac{1}{2}L^{1/2}], 
\]
and by $\cE_2$ the event where
\[
\hat{\fh}^\beta_{t,1}(-L^{1/2}/2), \hat{\fh}^\beta_{t,1}(L^{1/2}/2) >  - 0.1L.
\]
Since $L>t^{-1/3-\varepsilon}$ implies that $t^{-1/3}\log L < L^{1-\varepsilon'}$ for some $\varepsilon'>0$, applying \Cref{l.bk} and absorbing the $t^{-1/3}\log L$ term yields that $\PP(\cE_1\mid \cE_0) >1-C\exp(-cL^{3/2})$.
By \Cref{lem:fh-tent} we have $\PP(\cE_2\mid \cE_0) > 1-C\exp(-cL^{3/2})$.

We now assume the event $\cE_0\cap\cE_1\cap\cE_2$, and consider the Radon-Nikodym derivative between the two sets of processes. 
Let $\cF=\Fext([-t^{2/3}L^{1/2}/2, t^{2/3}L^{1/2}/2])$ be the sigma-algebra generated by $\hat{\fh}^\beta_{t,1}$ on $(-\infty, -L^{1/2}/2]\cup\{0\}\cup [L^{1/2}/2, \infty)$ and $\hat\fh^\beta_{t,2}$.
Then according to the Gibbs property (\Cref{lem:Gibbs}),
it suffices to bound
\begin{equation}  \label{eq:expexpmo}
   Z^{-1} W(B, \fh^\beta_{t,2}),
\end{equation}
where
\begin{itemize}
    \item $\hat{B}:[-L^{1/2}/2, L^{1/2}/2]\to \RR$ is a rate $2$ Brownian bridge, conditional on $\hat{B}(-L^{1/2}/2)=\hfh^\beta_{t,1}(-L^{1/2}/2)$, $\hat{B}(0)=L$, $\hat{B}(L^{1/2}/2)=\hfh^\beta_{t,1}(L^{1/2}/2)$; and $B:[-t^{2/3}L^{1/2}/2, t^{2/3}L^{1/2}/2]\to \RR$ satisfies that $\hat{B}(x)=t^{-1/3}B(t^{2/3}x)$;
    \item $W(B, \fh^\beta_{t,2})<1$ is the weight defined through \eqref{e.rn derivative}, in the interval $[-t^{2/3}L^{1/2}/2, t^{2/3}L^{1/2}/2]$;
    \item $Z=\EE[W(B, \fh^\beta_{t,2}) \mid \cF]$ is a renormalization constant.
\end{itemize}
Assuming $\cE_0\cap\cE_1\cap\cE_2$, we have
$\PP(\hat{B}(x)\ge 0.2L, \forall x\in [-L^{1/2}/2, L^{1/2}/2]\mid \cF)>1-C\exp(-cL^2)$; and whenever $\inf_{x\in [-L^{1/2}/2, L^{1/2}/2]}\hat{B}(x)\ge 0.2L$ and also under $\cE_1$, we have 
\[
\fh^\beta_{t,2}(t^{2/3}x) - B(t^{2/3}x) > t^{1/3}\cdot 0.1 L, \quad \forall x\in [-L^{1/2}/2, L^{1/2}/2].
\]
Therefore, from \eqref{e.rn derivative} we have
\[W(B, \fh^\beta_{t,2}) > 1- 2t^{2/3}L^{1/2}\exp(-t^{1/3}\cdot 0.1L)>1-C\exp(-ct^{1/3}L).\] 
Thus we have $Z > 1-C\exp(-ct^{1/3}L)$, and \eqref{eq:expexpmo} is $1+O(\exp(-ct^{1/3}L))$ under $\cE_0\cap\cE_1\cap\cE_2$.
This combined with the estimates on $\PP(\cE_1\mid \cE_0)$ and $\PP(\cE_2\mid \cE_0)$ leads to the conclusion.
\end{proof}

We next prove \Cref{l.control near tent center}.

\begin{proof}[Proof of \Cref{l.control near tent center}]
For the first bound, by \Cref{lem:fh-tent} we have that
\[
\P\left( |\hfh^\beta_{t,1}(\pm L^{1/2}/2)| > ML^{1/4}/5 \midd \hfh^\beta_{t,1}(0)\in (L,L+\diff L)\right) < \exp(-cM^2).
\]
Then the bound follows from \Cref{l.dp-comp-s} and standard Brownian bridge estimates.

For the second bound, we use a strategy similar to the proof of \Cref{lem:fh-tent-up}. Namely, the LHS is bounded by 
\begin{align*}
\MoveEqLeft[6]
\sup_{L'\in (L,L+M\sigma_I/2)} \P\left(\sup_{x\in I}\left|\hfh^\beta_{t,1}(x) - (L - 2L^{1/2}|x|)\right| > M\sigma_I\midd \hfh^\beta_{t,1}(0)\in (L', L'+\diff L')\right)\\
&\qquad + \P\left(\hfh^\beta_{t,1}(0)> L+M\sigma_I/2\midd \hfh^\beta_{t,1}(0)> L\right).
\end{align*}
Then by the first bound and \Cref{t.comparison}, the second bound follows.
\end{proof}

\section{Tail and tent estimates for small time}  \label{sec:appc}

In this appendix we explain how to adapt the proofs in \cite{GH22} to get \Cref{lem:fh-ut}, and \Cref{lem:fh-tent}, using inputs from \cite{das2023law}, and give the proof of \Cref{l.bk}.

\subsection{Adapting arguments from \cite{GH22}}

Theorems~\ref{lem:fh-ut} and \ref{lem:fh-tent} are proven in \cite{GH22} for $t >t_0$ for any $t_0$ fixed, so here we prove it for $0 < t < t_0$ for a particular $t_0$ coming from the new inputs from \cite{das2023law}. There are two types of modifications that are required: the first, to verify that the assumptions from \cite{GH22} hold for $0 < t < t_0$, and the second in arguments estimating some partition functions. We start with the first task.

Written with our scaling, \cite[Theorem~1.4]{das2023law} asserts that, for any $\varepsilon>0$, there exist $t_0$, $c$, and $s_0$, all depending on $\varepsilon$, such that, for $0< t < t_0$ and $M > M_0 t^{-1/12}$,
\begin{align*}
\P\left(\hfh^\beta_{t,1}(0) > M + t^{-1/3}\log t^{-1}\right) \leq \exp\left(-c\frac{M^2t^{1/6}}{\sqrt{1+ Mt^{1/3-\varepsilon/2}}}\right).
\end{align*}
If we additionally assume that $M > M_0t^{-1/3-\varepsilon}$ and $t<1$, so that $M^{1/2} \geq t^{-1/6-\varepsilon/2} \geq t^{-\varepsilon/2}$, the previous display implies that
\begin{align*}
\P\left(\hfh^\beta_{t,1}(0) > 2M\right) \leq \P\left(\hfh^\beta_{t,1}(0) > M + t^{-1/3}\log t^{-1}\right) \leq \exp\left(-cM^{3/2}t^{\varepsilon/2}\right) \leq \exp(-cM).
\end{align*}
For control on the lower tail, as recorded in \Cref{lem:fh-lt}, \cite[Theorem 1.7]{das2023law} and a similar calculation shows that, if $M > t^{-1/6}$,
\begin{align}\label{e.low time lower tail}
\P\left(\hfh^\beta_{t,1}(0) < -M\right) \leq \exp(-cM^2t^{1/6}) \leq \exp(-cM);
\end{align}
and one can also instead assume $M>t^{-1/12 - \varepsilon}$ and obtain a weaker bound of the form $\exp(-cM^{\alpha})$ for some $\alpha = \alpha(\varepsilon)$.

To apply the arguments of \cite{GH22}, we also need an a priori lower bound on $\P(\hfh^\beta_{t,1}(0) > M)$. For this it is easy to check that the proof of \cite[Lemma 4.4]{GH22} applies verbatim if we assume $M > \smash{t^{-1/6}}$ (or $M > \smash{t^{-1/12 - \varepsilon}}$, as above) and use \eqref{e.low time lower tail} in place of lower tail tightness (over $t\geq t_0$) of $\hfh^\beta_{t,1}(0)$. This will then yield, for $t>0$ and $M > t^{-1/6}$,
\begin{align*}
\P\left(\hfh^\beta_{t,1}(0) > M\right) \geq \exp(-5M^{3/2}).
\end{align*}
With these estimates in place, we satisfy Assumption (iv) from \cite[Section 1.2]{GH22}. Assumptions (i), (ii)(a), and (iii) were already verified for all $t>0$ in \cite{GH22}, and Assumption (ii)(b$'$) (from \cite[Section~1.7]{GH22}) is verified in \cite{GHZ25} for . Thus, \cite[Theorems~3.1, 4.1, and 4.3]{GH22} apply as they hold only assuming these assumptions (see \cite[Proposition 1.3]{GH22}), modulo controlling the partition function arising from the Gibbs property of $\hfh^\beta_t$, and we explain this point next.

In \cite{GH22}, the estimates lower bounding the partition function are captured in Lemma 2.16, Corollary 2.17, Proposition 2.19, and Corollary 2.20. The latter two are already stated for $t>0$. Lemma 2.16 says that the partition function $Z_t$ of a single curve with respect to a lower boundary curve $p$ on an interval $[z_1,z_2]$ is lower bounded by $\exp(-2t^{2/3}e^{-t^{1/6}})\int_{z_1}^{z_2}\exp(-t^{1/3}g(u))\diff u\cdot \P(B(u) > p(u) + g(u) + t^{-1/6}, \forall u\in[z_1,z_2])$ for any non-negative function $g$, and a quick inspection of the proof shows that a minor modification yields a lower bound of $\exp(-2t^{2/3})\int_{z_1}^{z_2}\exp(-t^{1/3}g(u))\diff u)\cdot \P(B(u) > p(u) + g(u), \forall u\in[z_1,z_2])$, which is better for small $t$. Carrying this change to Corollary 2.17 and then making use of these estimates in the proofs of \cite[Theorems~3.1 and 4.1]{GH22} will then yield Theorems~\ref{lem:fh-ut} and \ref{lem:fh-tent}.

\subsection{Proving \Cref{l.bk}}

Here we give the proof of \Cref{l.bk}.

\begin{proof}[Proof of \Cref{l.bk}]
First, in the case of $\beta=1$, \cite[Theorem 1.3]{GHZ25} directly yields that there exist $C, L>0$ such that for all $L > L_0(1\vee t^{-1/6})$
\begin{align*}
\MoveEqLeft[16]
\P\bigg( \sup_{x\in [a,a+1]}\hfh^\beta_{t,2}(x)+x^2> M+Ct^{-1/3}\log(L)\midd \hfh^\beta_{t,1}(0)\in (L, L +\diff L) \bigg)\\%
&\leq \P\left(\sup_{x\in[a,a+1]} \hfh^\beta_{t,1}(x) + x^2 > M\right) + |a|t^{2/3}\exp(-cL^{2}).
\end{align*}
For the case of $\beta=\infty$, \cite[Assumption (ii)(b)]{GH22} applied to $\hfh^\beta_{t}$ (which is applicable by \cite[Theorem 2.7]{GH22}) yields that the lefthand side of the previous display is at most $\P(\sup_{x\in[a,a+1]} \hfh^\beta_{t,1}(x) + x^2 > M + Ct^{-1/3}\log L)$, which is certainly also upper bounded by the righthand side above.

Now, by stationarity the probability in the previous display is the same as when $a=0$. Then \cite[Proposition 2.10]{GH22} yields that it is upper bounded by $4M\cdot \P(\hfh^\beta_{t,1}(0) > M)$ which, by Theorem~\ref{lem:fh-ut}, is upper bounded by $\exp(-\frac{4}{3}M^{3/2} + CM^{3/4})$. This completes the proof.
\end{proof}

\bibliographystyle{alpha}
\bibliography{bibliography}

\end{document}